\newcommand{\be}{\begin{equation}}
\newcommand{\ee}{\end{equation}}
\newcommand{\ba}{\begin{array}}
\newcommand{\ea}{\end{array}}
\newcommand{\bea}{\begin{eqnarray}}
\newcommand{\eea}{\end{eqnarray}}
\newcommand{\beaa}{\begin{eqnarray*}}
\newcommand{\eeaa}{\end{eqnarray*}}
\newcommand{\bee}{\begin{eqnarray*}}
\newcommand{\eee}{\end{eqnarray*}}
\newcommand{\lli}[1]{L^{#1}(\Sigma)}
\newcommand{\lab}{\label}
\newcommand{\und}{\underline}
\newcommand{\ds}{\displaystyle}
\newcommand{\nn}{\nonumber}
\providecommand{\norm}[1]{\lVert#1\rVert}
\providecommand{\normm}[1]{\left\lVert#1\right\rVert}
\renewcommand{\c}{\cdot}
\newcommand{\les}{\lesssim}
\def\a{{\alpha}}
\def\b{{\beta}}
\def\be{{\beta}}
\def\ga{\gamma}
\def\de{\delta}
\def\De{\Delta}
\def\ep{\varepsilon}
\def\la{\lambda}
\def\si{\sigma}
\def\Si{\Sigma}
\def\om{\omega}
\def\th{\theta}
\def\nab{\nabla}
\def\pr{{\partial}}
\def\les{\lesssim}
\def\c{\cdot}
\def\BB{{\mathcal B}}
\def\CC{{\mathcal C}}
\def\MM{{\mathcal M}}
\def\II{{\mathcal I}}
\def\FF{{\mathcal F}}
\def\EE{{\mathcal E}}
\def\HH{{\mathcal H}}
\def\LL{{\mathcal L}}
\def\PP{{\mathcal P}}
\def\A{{\bf A}}
\def\D{{\bf D}}
\def\F{{\bf F}}
\def\H{{\bf H}}
\def\J{{\bf J}}
\def\N{{\bf N}}
\def\L{{\bf L}}
\def\O{{\bf O}}
\def\Q{{\bf Q}}
\def\R{{\bf R}}
\def\U{{\bf U}}
\def\g{{\bf g}}
\def\m{{\bf m}}
\def\p{{\bf p}}
\def\SSS{{\mathbb S}}
\def\RRR{{\mathbb R}}
\def\NNN{{\mathbb N}}
\def\f12{{\frac 1 2}}
\def\dual{{\,\,^*}}
\def\div{{\mbox div\,}}
\def\curl{{\mbox curl\,}}
\def\lot{\mbox{ l.o.t.}}
\def\lb{{\,\underline{L}}}
\def\Lb{{\,\underline{L}}}
\def\tr{\mbox{tr}}
\def\trchb{{\mbox tr}\, \chib}
\def\bb{{\underline{\b}}}
\def\xib{{\underline{\xi}}}
\def\th{\theta}
\def\f{\widetilde{f}}
\def\lap{\De}
\def\Da{{^{(\A)}\hskip-.15 pc \D}}
\newcommand{\nabb}{{\bf \nab} \mkern-13mu /\,}
\providecommand{\norm}[1]{\lVert#1\rVert}
\newcommand{\lsit}[1]{L^{\infty}_tL^{#1}(\Si_t)}
\newcommand{\lsitt}[2]{L^{#1}_tL^{#2}(\Si_t)}
\newcommand{\lu}[1]{L^\infty_uL^{#1}(\mathcal{H}_u)}
\newcommand{\luom}[1]{L^\infty_{\uom}L^{#1}(\mathcal{H}_{\uom})}
\newcommand{\lhs}[2]{L^{#1}_uH^{#2}(P_u)}
\def\uom{{ \, \, ^{\om}  u}}
\def\Lom{ {\,\,  ^{\om}  L}}
\def\Nom{ {\,\,  ^{\om}  N}}
\def\bom{{ \, \, ^{\om}  b}}
\def\eom{{ \, \, ^{\om}  e}}
\def\uoms{{ \, \, ^{\om,s}  u}}
\def\prb{\boldsymbol{\pr}}
\newtheorem{theorem}{Theorem}[chapter]
\newtheorem{lemma}[theorem]{Lemma}
\newtheorem{proposition}[theorem]{Proposition}
\newtheorem{corollary}[theorem]{Corollary}
\newtheorem{definition}[theorem]{Definition}
\newtheorem{remark}[theorem]{Remark}
\numberwithin{section}{chapter}
\numberwithin{equation}{chapter}
\begin{document}

\title[The Bounded $L^2$ Curvature Conjecture]{Overview of the proof of the Bounded $L^2$ Curvature Conjecture}

\author{Sergiu Klainerman}
\address{Department of Mathematics, Princeton University,
 Princeton NJ 08544}
\email{ seri@math.princeton.edu}
\author{Igor Rodnianski}
\address{Department of Mathematics, Princeton University, 
Princeton NJ 08544}
\email{ irod@math.princeton.edu}
%\subjclass{35J10\newline\newline}
\author{Jeremie Szeftel}
\address{DMA, Ecole Normale Superieure, Paris 75005}
\email{Jeremie.Szeftel@ens.fr}
\vspace{-0.3in}

\maketitle

\frontmatter

%%%%%%

\chapter*{Summary}

This memoir contains an overview of the proof of the   bounded $L^2$ curvature conjecture.      More precisely we show that  the   time
    of existence   of a classical solution to    the Einstein-vacuum
     equations  depends   only on the  $L^2$-norm
      of the  curvature    and  a lower bound of the volume radius   of the corresponding  initial data set.  We note that  though  the result is not  optimal   with respect to the  standard  scaling  of the Einstein equations, it is  nevertheless critical   with respect to  another, more subtle,   scaling tied to   its causal   geometry. Indeed, $L^2 $ bounds on the curvature    is     the minimum  requirement   necessary to  obtain lower bounds on   the radius of  injectivity of  causal  boundaries.   We note also  that, while   the first   nontrivial improvements  for well posedness     for quasilinear  hyperbolic systems in spacetime dimensions greater than $1+1$ (based on Strichartz estimates)  were obtained in   \cite{Ba-Ch1}, \cite{Ba-Ch2}, \cite{Ta1}, \cite{Ta2},   \cite{Kl-R1}  and optimized in      \cite{Kl-R2},  \cite{Sm-Ta},    the  result   we present  here   is the first   in which the  full    structure of the quasilinear hyperbolic   system, not just its principal part,  plays  a  crucial  role.  

\vspace{0.3cm}

The   entire  proof of the conjecture is contained in the following sequence of papers

\vspace{0.3cm}

\noindent\textit{S. Klainerman, I. Rodnianski, J. Szeftel, The bounded $L^2$ curvature conjecture. arXiv:1204.1767, 91 pp.}  This is the main part of the series  in which the proof is completed based on the results  of the papers below.  \\
        
\noindent\textit{J. Szeftel, Parametrix for wave equations on a rough background I: regularity of the phase at initial time. arXiv:1204.1768, 145 pp.}\\

\noindent\textit{J. Szeftel, Parametrix for wave equations on a rough background II: control of the parametrix at initial time. arXiv:1204.1769, 84 pp.}\\

\noindent\textit{J. Szeftel, Parametrix for wave equations on a rough background III: space-time regularity of the phase. arXiv:1204.1770, 276 pp.}\\

\noindent\textit{J. Szeftel, Parametrix for wave equations on a rough background IV: Control of the error term. arXiv:1204.1771, 284 pp.}\\

\noindent\textit{J. Szeftel, Sharp Strichartz estimates for the wave equation on a rough background. arXiv:1301.0112, 30 pp.}

%%%%%%%%%%%%%%%%%%

\tableofcontents

%%%%%%%%%%%%%%%%%%%%

\mainmatter

%%%%%%%%%%%%%%%%%%%%%%%%

\chapter{Introduction}\lab{chap:intro}

\section{General Introduction}
We  present a summary of  our  proof of the bounded  $L^2$-curvature
 conjecture in General Relativity.   According to  the conjecture  
     the   time    of existence   of a classical solution to    the Einstein-vacuum
     equations  depends   only on the  $L^2$-norm
      of the  curvature  and a   lower bound of the volume radius  
       of the corresponding  initial data set.  At a deep level   the  $L^2$ curvature conjecture  
   concerns   the  relationship
between the curvature tensor  and the causal  geometry of an Einstein vacuum  
space-time.  Thus,  though  the result is not  optimal   with respect to the  standard  scaling  of the Einstein equations, it is  nevertheless critical  with respect    to    a different   scaling, which we call
 \textit{null scaling},     tied  to    its      causal   properties.    More precisely,      $L^2 $   curvature       bounds    are strictly    necessary to  obtain lower bounds on   the radius of  injectivity  of  causal  boundaries.  These lower bounds  turn out  to be  crucial  for the construction of parametrices   and derivation of bilinear and trilinear spacetime estimates  for solutions  to  scalar wave equations.  We note also  that, while   the first   nontrivial improvements  for well posedness     for quasilinear  hyperbolic systems in spacetime dimensions greater than $1+1$ (based on Strichartz estimates)  were obtained in   \cite{Ba-Ch1}, \cite{Ba-Ch2}, \cite{Ta1}, \cite{Ta2},   \cite{Kl-R1}  and optimized in      \cite{Kl-R2},  \cite{Sm-Ta},    the  result   we present  here   is the first   in which the  full    structure of the quasilinear hyperbolic   system, not just its principal part,  plays  a  crucial  role.

 \subsection{Initial value problem}
  
We consider the Einstein vacuum equations (EVE),
\begin{equation}\lab{EVE}
{\bf Ric}_{\alpha\beta}=0
\end{equation}
where ${\bf Ric}_{\alpha\beta}$
denotes the  Ricci curvature tensor  of  a four dimensional Lorentzian space time  $(\mathcal{M},\,  {\bf g})$. 
An  initial data  set for \eqref{EVE}  consists of a three dimensional   $3$-surface 
$\Si_0$   together with a    Riemannian  metric $g$ and a symmetric  $2$-tensor $k$  verifying the constraint equations,
\begin{equation}\lab{const}
\left\{\begin{array}{l}
\nabla^j k_{ij}-\nabla_i \textrm{tr}k=0,\\
 {R_{scal}}-|k|^2+(\textrm{tr}k)^2=0, 
\end{array}\right.
\end{equation}
where the covariant derivative $\nabla$ is defined with respect to the metric $g$, $R_{scal}$ is the scalar curvature of $g$, and $\textrm{tr}k$ is the trace of $k$ with respect to the metric $g$.  In this  work  we restrict ourselves 
 to asymptotically flat  initial data sets with one end. 
 For a given initial data set the Cauchy problem  consists in finding a metric ${\bf g}$ satisfying \eqref{EVE}   and an embedding of $\Si_0$  in $\MM$ such that the metric induced by ${\bf g}$ on $\Si_0$ coincides with $g$ and the 2-tensor $k$ is the second fundamental form of the hypersurface $\Si_0\subset \MM$. 
 The  first   local existence and uniqueness   result for (EVE)  was   established
by Y.C. Bruhat,  see \cite{Br},  with the help of 
 wave coordinates  which
 allowed her to cast
the Einstein vacuum  equations in the form of a system of nonlinear wave 
equations to which one can apply\footnote{The original proof in 
\cite{Br} relied  on  representation formulas, following an approach pioneered by Sobolev,   see \cite{Sob}.  }   the standard theory of nonlinear  hyperbolic systems.  The optimal,
classical\footnote{Based only on energy estimates and classical Sobolev inequalities.} result states the following,

\begin{theorem}[Classical local existence \cite{FM} \cite{HKM}]
\label{thm:Bruhat} Let $(\Si_0, g, k)$ be an initial data set
for the Einstein vacuum equations \eqref{EVE}. Assume that $\Si_{0}$ can
be covered by a locally finite system of coordinate charts,
 related to each other by $C^1$ diffeomorphisms, such that
$(g,\, k )\in H^s_{loc}(\Si_0)\times H^{s-1}_{loc}(\Si_0)$
with $s>\frac{5}{2}$. Then there exists a unique\footnote{The original proof in \cite{FM}, \cite{HKM} actually requires  one more derivative for the uniqueness. The fact that uniqueness holds at the same level of regularity than the existence has been obtained in \cite{PlRo}} (up to an isometry)
globally hyperbolic   development
$(\MM, \g)$, verifying   \eqref{EVE},   for which
$\Si_0$ is a Cauchy hypersurface\footnote{That is any past directed, in-extendable  causal curve in $\MM$ intersects $\Si_0$.}.
\end{theorem}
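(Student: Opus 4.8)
The plan is to follow the classical strategy going back to Bruhat \cite{Br}, in the sharp form of \cite{FM} and \cite{HKM}: pass to wave (harmonic) coordinates to cast \eqref{EVE} as a quasilinear system of nonlinear wave equations, solve that reduced system by the energy method together with a Picard iteration, and then recover a genuine solution of \eqref{EVE} by propagating the gauge condition. Concretely, in coordinates $x^\alpha$ satisfying the wave condition $\square_{\bf g}x^\alpha=0$, the Ricci tensor can be written as
\[
{\bf Ric}_{\alpha\beta}=-\frac12\,{\bf g}^{\mu\nu}\partial_\mu\partial_\nu {\bf g}_{\alpha\beta}+Q_{\alpha\beta}({\bf g},\partial{\bf g})+\nabla_{(\alpha}H_{\beta)},
\]
where $H^\alpha:=\square_{\bf g}x^\alpha$ and $Q$ is a quadratic form in $\partial{\bf g}$ with coefficients depending smoothly on ${\bf g}$. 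Dropping the $H$-term gives the \emph{reduced Einstein equations}, a diagonal quasilinear system of wave equations ${\bf g}^{\mu\nu}\partial_\mu\partial_\nu {\bf g}_{\alpha\beta}=2Q_{\alpha\beta}({\bf g},\partial{\bf g})$ for the components ${\bf g}_{\alpha\beta}$.

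First I would build Cauchy data for the reduced system from $(\Si_0,g,k)$: the spatial part of ${\bf g}$ on $\Si_0$ is $g$, the lapse and shift at $t=0$ are prescribed freely (say lapse $1$, shift $0$), the time derivative of the spatial metric is dictated by $k$, and the time derivatives of the lapse and shift are chosen precisely so that $H^\alpha|_{\Si_0}=0$; a partition of unity subordinate to the given charts then produces data in $H^s_{loc}\times H^{s-1}_{loc}$. I would solve the reduced system by a Picard iteration: at each step one solves a linear wave equation with $H^s$ coefficients, derives energy estimates in $H^s$ that are uniform on a short time interval $[0,T]$, and shows contraction in the lower norm $H^{s-1}$, so that the limit is the unique solution in $C([0,T];H^s)\cap C^1([0,T];H^{s-1})$. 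The threshold $s>\frac52$ is exactly what makes $\partial{\bf g}\in L^\infty_{t,x}$, via the Sobolev embedding $H^{s-1}(\RRR^3)\hookrightarrow L^\infty$, which is what allows the energy estimates to close.

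It then remains to show $H^\alpha\equiv 0$, so that the reduced solution actually solves \eqref{EVE}. Applying the contracted Bianchi identity to the reduced equations one derives a homogeneous linear wave equation for $H^\alpha$, schematically $\square_{\bf g}H^\alpha=A({\bf g},\partial{\bf g})\c\partial H+B({\bf g},\partial{\bf g})\c H$; since $H^\alpha|_{\Si_0}=0$ by construction and $\partial_t H^\alpha|_{\Si_0}=0$ follows from the constraint equations \eqref{const} together with the reduced equations, uniqueness for this linear wave equation forces $H^\alpha\equiv 0$ on $[0,T]$. Patching the local pieces and passing to a maximal globally hyperbolic development yields $(\MM,\g)$ with $\Si_0$ a Cauchy hypersurface. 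Finally, geometric uniqueness up to isometry reduces, after imposing the wave gauge, to uniqueness for the reduced Cauchy problem in the class above; obtaining it at the stated regularity, rather than with one derivative to spare as in the original arguments, is the point of \cite{PlRo}.

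The main obstacle is running this scheme at the sharp classical threshold $s>\frac52$: the energy estimates for the reduced quasilinear system must be closed with only $\partial{\bf g}\in L^\infty$ in hand, with no margin, and the constraint-propagation wave equation in the last step must be treated with coefficients of exactly this regularity — which is precisely why obtaining uniqueness at the same level of regularity as existence is delicate and requires the additional input of \cite{PlRo}.
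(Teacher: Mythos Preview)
Your outline is a faithful sketch of the classical argument of \cite{FM} and \cite{HKM}: reduce to the harmonic-gauge system, solve it by energy estimates and Picard iteration at the threshold $s>\frac52$ (so that $\partial{\bf g}\in L^\infty$ via Sobolev), and propagate the constraints/gauge by the linear wave equation for $H^\alpha$ coming from the contracted Bianchi identity. Note, however, that the paper does not supply its own proof of this statement: Theorem~\ref{thm:Bruhat} is quoted as a classical result, with the references \cite{FM}, \cite{HKM} for existence and \cite{PlRo} for uniqueness at the same regularity, and is used only as background motivation for the bounded $L^2$ curvature conjecture. So there is nothing in the paper to compare your proposal against beyond the attribution, and your sketch is consistent with what those references do.
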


\subsection{Bounded $L^2$ curvature conjecture}
The classical   exponents $s>5/2$ are  clearly not   optimal. 
 By  straightforward  scaling considerations one might  expect to make sense of the initial
value problem for    $s\ge s_c=3/2$, with $s_c$ the natural scaling exponent   for  $L^2$ based Sobolev norms.  Note that   for  $s=s_c=3/2$ a local in time  existence result, for sufficiently small  data,  would be  equivalent to  a global result.  More precisely any  smooth initial data, small in the corresponding  critical norm,  would be  globally smooth. 
  Such a     well-posedness   (WP)  result   would  be thus  comparable with  the so called $\epsilon$- regularity results for   nonlinear elliptic and parabolic problems, which play such a fundamental     role in  the global  regularity properties
  of general solutions.   For quasilinear  hyperbolic problems     critical  WP  results have only been established in the case
  of  $1+1$ dimensional systems, or  spherically symmetric solutions  of higher dimensional problems, 
  in which case  the  $L^2$- Sobolev norms  can be replaced  by bounded variation (BV) type norms\footnote{Recall  that the entire theory of  shock waves  for  1+1 systems of conservation laws  is based on  BV norms, which are \textit{critical} with respect to the scaling of the equations.  Note also  that these   BV norms are not, typically,  conserved   and   that Glimm's famous  existence  result  \cite{Gl}
   can be interpreted as   a global well posedness result for   initial data with small   BV norms.  }.     A  particularly important     example of this type is the critical  BV    well-posedness  result  established by  Christodoulou    for spherically symmetric solutions of the  Einstein equations   coupled with a scalar field, see  \cite{Ch1}. The result  played a  crucial  role  in his celebrated work on Weak  Cosmic Censorship   for the same model, see  \cite{Ch2}.   As well known, unfortunately,   the   BV-norms   are completely   inadequate in higher dimensions;  the only norms which  can propagate   the regularity properties    of the data are necessarily    $L^2$ based.

The quest  for   optimal well-posedness   in  higher dimensions   has been  one of the major themes   in non-linear   hyperbolic   PDE's   in the last twenty years.    Major   advances  have been made in the particular    case of 
 semi-linear  wave equations.    In the case  of geometric   wave  equations such as Wave Maps and Yang-Mills,  which possess a well understood  null structure,
  well-posedness    holds true  for all  exponents larger 
 than the corresponding critical exponent.   For example, 
in the case of    Wave Maps  defined  from the Minkowski space $\RRR^{n+1}$ to a complete Riemannian manifold,    
 the critical  scaling exponents is $s_c=n/2$  and   well-posedness is known to hold all the way down  to $s_c$  for all  dimensions $n\ge 2$.  This  critical    well-posedness result, for $s=n/2$, plays a fundamental   role  in the recent,  large data,       global  results of  \cite{Tao}, \cite{St-Ta1}, \cite{St-Ta2} and \cite{Kr-S} for $2+1$ dimensional wave maps. \\
 
  The   role played  by   critical  exponents   for quasi-linear   equations  is  much less understood.
  The first well  posedness   results, on any     (higher dimensional)  quasilinear  hyperbolic system,  which go  beyond the classical Sobolev exponents,      obtained  in    \cite{Ba-Ch1}, \cite{Ba-Ch2}, and \cite{Ta1}, \cite{Ta2} and   \cite{Kl-R1},   do not take into account the 
  specific (null)  structure   of the   equations.  Yet the presence of such   structure   was  crucial    in the derivation of the optimal  results mentioned above,   for geometric  semilinear equations.  In the case of the Einstein equations it is not at all clear what such structure should be, if there is one at all.  Indeed,  the only specific structural condition, known for (EVE),    discovered    in \cite{wNC} under the name  of  the \textit{weak null condition},   is    not at all adequate   for   improved  well posedness results, see remark  \ref{rem:weekN}.  It is known however, see \cite{L}, that without such a structure  one cannot have well posedeness   for  exponents\footnote{Note that the dimension here is $n=3$.}  $s\le 2$. Yet (EVE)  are of  fundamental  importance   and as such    it is not unreasonable  to expect that such a structure must exist.

   Even assuming     such a structure,  a result of well-posedness  for the Einstein equations at, or near,  the critical regularity  $s_c=3/2$
is not only completely out of reach  but may in fact be  wrong.    This is due to the presence 
  of a different scaling  connected to  the geometry of boundaries of causal domains.  It is  because of  this  more subtle  scaling that we  need   at least $L^2$-bounds for the     curvature  to derive a lower bound on the radius of injectivity  of    null    hypersurfaces   and thus control their local regularity properties. This imposes a crucial   obstacle  to    well posedness below  $s=2$.  Indeed, as we will  show
   in the next subsection, any    such    result  would require, crucially,   bilinear and even trillinear  estimates  for solutions to wave equations of the form  $\square_\g \phi=F$. Such  estimates, however, depend   on    Fourier   integral representations, with a   phase function
    $u$  which   solves the     eikonal equation  $\g^{\a\b}\pr_\a  u \pr_\b u=0$.   Thus    the much needed
      bilinear estimates depend, ultimately, on the regularity properties  of the level hypersurfaces of the phase $u$ which are, of course, null.   The catastrophic        breakdown of    the regularity of these null  hypersurfaces,   in the absence of a lower bound for the injectivity  radius,  would     make     these  Fourier  integral representations    entirely  useless.
   
 These considerations  lead one to conclude that, the  following conjecture,   proposed in \cite{PDE},  is  most probably 
sharp in so far as the minimal     number of derivatives    in $L^2$  is concerned:

\noindent {\bf Conjecture} [Bounded $L^2$ Curvature Conjecture (BCC)]\quad 
 {\it The Einstein- vacuum equations  admit   local    Cauchy developments  for initial data sets   $(\Si_0, g, k)$
with locally  finite $L^2$ curvature and 
locally finite
$L^2$ norm of the  first covariant derivatives of $k$\footnote{As we shall see,  from the precise theorem     stated below, other  weaker conditions, such as  a lower bound
on the volume radius, are needed.}.}\\

\begin{remark}
It is important  to emphasize here that the conjecture
 should be primarily  interpreted as a continuation argument
for the Einstein equations; that is  the space-time
constructed by evolution from
smooth data can be smoothly continued,
together with a time foliation, 
as long as the curvature  of the foliation
and the  first 
 covariant derivatives of its second fundamental form
remain $L^2$- bounded on the leaves of the foliation. 
In particular the conjecture implies the  break-down criterion
 previously   obtained  in \cite{conditional} and improved 
 in \cite{Pa}, \cite{W}. According to that criterion a vacuum  space-time,
  endowed with   a constant mean curvature   (CMC) foliation  $\Si_t$,  can be extended, together 
  with the foliation, as long as the $L_t^1 L^\infty(\Si_t) $    norm
  of the deformation tensor of the  future unit normal to the 
  foliation remains  bounded.    It is straightforward to see, by standard energy estimates,  that   this  condition implies  bounds   for the $L_t^\infty L^2(\Si_t)$  norm of the  space-time   curvature  from which   one can derive bounds
  for the  induced curvature tensor $R$ and the  first derivatives
  of the second fundamental form $k$. Thus, 
  if  we can ensure that the time of  existence  of  a space-time  foliated by   $\Si_t$   depends only on the  $L^2$ norms of    $R$ and  first covariant  derivatives of $k$,  we can extend the space-time  indefinitely.
   \end{remark}

   \subsection{Brief history}    
 The conjecture has its roots in the  remarkable   developments  
 of the last twenty years centered around  the  issue of optimal  well-posedness for semilinear wave equations. 
 The case of the Einstein equations  turns out  to   be a lot more complicated    due to the quasilinear character of the equations.
 To make  the discussion more tangible   it is  worthwhile to recall  the  form   of the Einstein vacuum equations in the wave gauge. 
 Assuming  given coordinates $x^\a$,  verifying  $\square_\g x^\a=0$,
  the  metric coefficients $g_{\a\b}=\g(\pr_\a, \pr_\b)$,  with respect to these coordinates,  verify the system of quasilinear wave equations,
 \bea
 \label{E-wave.coord}
 g^{\mu\nu}\pr_\mu\pr_\nu g_{\a\b} =F_{\a\b}(g, \pr g)
 \eea
 where $F_{\a\b}$ are   quadratic functions of   $\pr g$, i.e. the derivatives  of $g$ with respect to the coordinates $x^\a$.
  In a first approximation we  may   compare   \eqref{E-wave.coord}
 with the semilinear wave equation,
 \bea
 \label{semil}
 \square\phi=F(\phi, \pr \phi) 
 \eea
 with $F$ quadratic in $\pr\phi$. Using standard energy estimates,
  one can prove an estimate, roughly,   of the form:
  \beaa
  \|\phi(t)\|_{s}\les   \|\phi(0)\|_{s}\exp\left(C_s\int_0^t \|\pr \phi(\tau)\|_{L^{\infty}} d\tau\right) .
  \eeaa
 The classical exponent $s >3/2+1$  arises  simply 
 from the  Sobolev embedding of $H^r$, $r>3/2$ 
  into  $L^\infty$.  To go beyond  the classical exponent, see   \cite{Po-Si}, 
 one  has to replace   Sobolev    inequalities with  
 Strichartz estimates of, roughly,  the following type,
 \beaa
\left( \int_0^t   \|\pr \phi(\tau)\|_{L^{\infty}}  ^2d\tau\right)^{1/2}
\les C\left(\|\pr \phi(0)\|_{H^{1+\ep}}+\int_0^t \|\square \phi (\tau)\|_{H^{1+\ep}}\right)
 \eeaa 
 where  $\ep>0$ can be chosen arbitrarily small. This leads to
  a gain of $1/2$ derivatives, i.e. we can prove well-posedness 
  for equations  of type  \eqref{semil}  for any exponent $s>2$.
   
 The same  type of improvement in the case of quasilinear equations requires a highly non-trivial extension of such
  estimates  for wave operators with non-smooth coefficients.
   The first improved regularity results  for  quasilinear wave equations of the type,
   \bea
   g^{\mu\nu }(\phi) \pr_\mu\pr_\nu\phi= F(\phi, \pr \phi)\label{eq:wave-intr}
   \eea
    with $g^{\mu\nu}(\phi)$    a non-linear perturbation of 
    the Minkowski metric  $m^{\mu\nu}$, are due to 
  \cite{Ba-Ch1}, \cite{Ba-Ch2}, and \cite{Ta1}, \cite{Ta2} and 
  \cite{Kl-R1}.  
   The best known  results  for  equations  of type \eqref{E-wave.coord}  were obtained 
  in 
   \cite{Kl-R2} and \cite{Sm-Ta}.  According to them
    one can lower the Sobolev exponent  $s>5/2$ in 
    Theorem \ref{thm:Bruhat} to $s>2$.  It turns out,  see \cite{L},  that 
      these  results are sharp    in the  general class of  quasilinear wave equations    of type   \eqref{E-wave.coord}. To do better one needs   to take into account the special structure of the  Einstein      equations and rely     on  a class of  estimates  which go  beyond   Strichartz, namely  the so called bilinear estimates\footnote{Note that   no  such   result,   i.e.  well-posedness  for $s=2$,  is  presently  known  for   either   scalar equations of the   form   \eqref{eq:wave-intr}      or systems of the form \eqref{E-wave.coord}.}.

      In the case of     semilinear wave equations,
      such as Wave Maps,  Maxwell-Klein-Gordon and  Yang-Mills,  
      the first results  which make use     of bilinear estimates     
       go back to     \cite{Kl-Ma1}, \cite{Kl-Ma2}, \cite{Kl-Ma3}. 
       In the particular case of the Maxwell-Klein-Gordon and  Yang-Mills equation   the
       main observation  was that,  after the choice of a special gauge (Coulomb gauge),  the   most dangerous 
       nonlinear  terms exhibit a special,  null   structure  
      for   which one can apply the  bilinear  estimates 
         derived in   \cite{Kl-Ma1}.           
          With the help of  these  
         estimates  one was able to derive a well posedness 
         result, in  the flat Minkowski space  $\RRR^{1+3}$, 
         for the exponent   $s=s_c+1/2=1$, where $s_c=1/2$         is the critical Sobolev exponent   in that case\footnote{ This   corresponds precisely    to  the $s=2$  exponent in the case of          the Einstein-vacuum equations}.

                          To  carry out   a similar program  in the case of the  Einstein equations  one would need, at the very  least, the following crucial ingredients:\\
                         
                         {\em\begin{enumerate}
\item[{\bf A}.] Provide a   coordinate   condition, relative to which the Einstein vacuum equations verifies an appropriate version of the null condition.
\item[{\bf B}.]  Provide an appropriate    geometric  framework  for  deriving          bilinear estimates     for the null quadratic terms   appearing in the previous step.  
 \item[{\bf C}.] Construct an effective   progressive wave representation    $\Phi_F$   (parametrix)   for  solutions to 
the scalar linear wave equation  $\square_\g\phi=F$,    derive  appropriate   bounds for  both the parametrix and the corresponding error term   $E=F-\square_\g\Phi_F$ and use them to derive the desired bilinear estimates.
\end{enumerate}
    }
\noindent As it turns out, the proof of several bilinear estimates of Step B reduces to the proof of sharp $L^4(\MM)$ Strichartz estimates  for a localized version of the parametrix of step C. Thus we will also need the following fourth ingredient. 
                         {\em\begin{enumerate}
\item[{\bf D}.] Prove sharp $L^4(\MM)$ Strichartz estimates for a localized version of the parametrix of step C. \\
\end{enumerate}
    }

    Note that   the last three   steps need  to be implemented using only   hypothetical $L^2$ bounds for the space-time curvature tensor,  consistent with the conjectured result. 
    To start with,   it is  not at all  clear    what should be the correct     coordinate  condition, or even if there is one for that matter.
    \begin{remark}\label{rem:weekN}
  As mentioned above,  the only known   structural condition  related  to the classical  null condition, called the \textit{weak null condition}, tied to  wave coordinates,
 fails the test. Indeed, the following  simple  system  in Minkowski space  verifies  the  weak null condition  and yet, according to \cite{L},  it is ill posed for $s=2$.
 \beaa \Box \phi=0, \qquad \Box\psi= \phi\c \De\phi.
 \eeaa
 Coordinate conditions, such as    spatial harmonic\footnote{Maximal foliation  together with  spatial harmonic  coordinates   on  the leaves of the foliation would be the  coordinate condition closest in spirit to    the Coulomb gauge.  },  also  do not  seem to work. 
\end{remark}
  We rely  instead     on a      Coulomb type condition,  for   orthonormal frames,  adapted  to a maximal foliation.  Such a gauge condition 
 appears naturally if we adopt  a  Yang-Mills description    of the Einstein    field equations using
  Cartan's formalism of moving frames\footnote{ We would like to thank L. Anderson  for pointing     out to  us the possibility of using       such a  formalism as a  potential  bridge  to  \cite{Kl-Ma2} .}, see \cite{Ca}.   It is important  to note   nevertheless
  that    it is not at all a priori   clear   that such a     choice  would  do the job. Indeed, the null  form nature 
  of the  Yang-Mills equations  in the Coulomb  gauge   is only revealed once we  commute the   resulting   equations   with     the projection operator $\PP$ on the divergence free vectorfields.  
  Such an operation is natural in that case,  since   $\PP$ commutes with the flat  d'Alembertian. 
     In the case  of the     Einstein   equations,  however,  the  corresponding   commutator  term  $[\square_\g,  \PP]$     generates\footnote{Note also that additional error terms  are generated   by projecting the    equations   on the components of the frame. } a whole host of new terms  and it is  quite a miracle that  they  can all be treated  by    an  extended version of bilinear estimates.      At an even more  fundamental level, the flat Yang-Mills equations  possess natural energy estimates  based on the  time symmetry of the Minkowski space.
 There are  no such timelike   Killing   vectorfield  in curved space.  We  have  to rely instead  on   the future unit normal to   the maximal foliation $\Si_t$     whose deformation tensor 
 is non-trivial.  This leads to another class 
 of    nonlinear terms which  have to be  treated    by a  novel trilinear estimate.

  We   will make more comments 
   concerning the implementations of   all  four  ingredients   later on,  in the   section \ref{sec:strategyproof}.
   
   \begin{remark}
   In addition to the  ingredients  mentioned  above, we also need  a mechanism of reducing     the proof of the  conjecture to  small  data,  in an appropriate sense. Indeed, even in the  flat case,  
   the Coulomb gauge condition cannot be   globally imposed for large data. In fact    \cite{Kl-Ma3}
    relied on a cumbersome technical  device based on   local Coulomb gauges, defined on domain of dependence of small  balls.      Here we rely instead on    a variant  of   the gluing construction of    \cite{Cor},  \cite{CorSch}, see section \ref{sec:reductionsmall}.         
   \end{remark}
\section{Statement of the main results}

\subsection{Maximal foliations}\lab{sec:maxfoliation}

In this section, we recall some well-known facts about maximal foliations (see for example the introduction in \cite{ChKl}). We assume the space-time $(\MM, \g)$ to be foliated by  the level surfaces $\Si_t$ of a time function $t$. Let $T$ denote the unit normal to $\Si_t$, and let $k$ the the second fundamental form of  $\Si_t$, i.e. $k_{ab}=-\g(\D_aT,e_b)$, where $e_a, a=1, 2, 3$ denotes an arbitrary frame on $\Si_t$ and $\D_aT=\D_{e_a}T$. We assume that the $\Si_t$ foliation is maximal, i.e. we have:
\bea
\label{maxfoliation}
\tr_g k=0
\eea
where $g$ is the induced metric  on $\Si_t$. The constraint equations on $\Si_t$ for a maximal 
foliation are given by:
\bea\label{constk}
\nabla^ak_{ab}=0,
\eea
where $\nabla$ denotes the induced covariant derivative on $\Si_t$, and
\begin{equation}\lab{constR}
R_{scal}=|k|^2.
\end{equation} 
Also, we denote by $n$ the lapse of the $t$-foliation, i.e. $n^{-2}=-\g(\D t, \D
t)$. $n$ satisfies the following elliptic equation on $\Si_t$:
\bea\lab{eqlapsen}
\Delta n=n|k|^2.
\eea
Finally, we recall the structure equations of the maximal foliation:
\bea\lab{eq:structfol1}
\nabla_0k_{ab}= \R_{a\,0\,b\,0}-n^{-1}\nabla_a\nabla_bn-k_{ac}k_b\,^c,
\eea
\bea\lab{eq:structfol2}
\nabla_ak_{bc}-\nabla_bk_{ac}=\R_{c0ab}
\eea
and:
\bea\lab{eq:structfol3}
R_{ab}-k_{ac}k^c\,_b=\R_{a0b0}.
\eea

\subsection{Main Theorem}

We recall below the definition of the volume radius on a general Riemannian manifold $M$.
\begin{definition}
Let $B_r(p)$ denote the geodesic ball of center $p$ and radius $r$. The volume radius $r_{vol}(p,r)$ at a point $p\in M$ and scales $\leq r$ is defined by
$$r_{vol}(p,r)=\inf_{r'\leq r}\frac{|B_{r'}(p)|}{r^3},$$
with $|B_r|$ the volume of $B_r$ relative to the metric on $M$. The volume radius $r_{vol}(M,r)$ of $M$ on scales $\leq r$ is the infimum of $r_{vol}(p,r)$ over all points $p\in M$.
\end{definition}

Our main result is the following:
\begin{theorem}[Main theorem]\lab{th:main}
Let $(\MM, {\bf g})$ an asymptotically flat solution to the Einstein vacuum equations \eqref{EVE} together with a maximal foliation by space-like hypersurfaces $\Si_t$ defined as level hypersurfaces of a time function $t$. Assume that the initial slice $(\Si_0,g,k)$ is such that  the Ricci curvature  $\mbox{Ric} \in L^2(\Si_0)$, $\nabla k\in L^2(\Si_0)$, and $\Si_0$ has a strictly positive volume radius on scales $\leq 1$, i.e. $r_{vol}(\Si_0,1)>0$. Then,
\begin{enumerate}
\item \textbf{$L^2$ regularity.} There exists a time
$$T=T(\norm{\mbox{Ric}}_{L^2(\Si_0)}, \norm{\nabla k}_{L^2(\Si_0)}, r_{vol}(\Si_0,1))>0$$
and a constant 
$$C=C(\norm{\mbox{Ric}}_{L^2(\Si_0)}, \norm{\nabla k}_{L^2(\Si_0)}, r_{vol}(\Si_0,1))>0$$
such that the following control holds on $0\leq t\leq T$:
$$\norm{\R}_{L^\infty_{[0,T]}L^2(\Si_t)}\leq C,\,\norm{\nabla k}_{L^\infty_{[0,T]}L^2(\Si_t)}\leq C\textrm{ and }\inf_{0\leq t\leq T}r_{vol}(\Si_t,1)\geq \frac{1}{C}.$$
\item \textbf{Higher regularity.} Within the same time interval as in
  part (1) we also have the higher derivative  estimates\footnote{Assuming that the initial has more regularity so that the right-hand side of \eqref{ffllffll} makes sense.},
\bea\lab{ffllffll}
\sum_{|\a|\le m}\|\D^{(\a)}\R\|_{L^\infty_{[0,T]}L^2(\Sigma_t)}\le  C_m  \sum_{|i|\le m} \bigg [\|\nab^{(i)}\textrm{Ric}\|_{L^2(\Sigma_0)} + \|\nab^{(i)} \nab k\|_{L^2(\Sigma_0)}\bigg],
\eea
where $C_m$  depends only on  the   previous $C$ and $m$. 
\end{enumerate}
\end{theorem}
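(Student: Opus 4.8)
\emph{Step 1: Reduction to small data and the bootstrap.} Since the statement is local and the Coulomb-type frame gauge on which the whole scheme rests cannot be imposed globally for large data, the first task is to reduce to a smallness regime. Using a variant of the Corvino--Schoen gluing construction one rescales and modifies $(\Si_0,g,k)$ so that it suffices to prove an a priori estimate for data that are a small perturbation of flat data in the norms $\|\mbox{Ric}\|_{L^2(\Si_0)}+\|\nabla k\|_{L^2(\Si_0)}$, the lower bound on $r_{vol}(\Si_0,1)$ being used to rule out local degeneration. One then runs a continuity (bootstrap) argument on a maximal interval $[0,T_*]$: assume that $\|\R\|_{L^\infty_tL^2(\Si_t)}$, $\|\nabla k\|_{L^\infty_tL^2(\Si_t)}$, a lower bound for $r_{vol}(\Si_t,1)$, and a package of mixed-norm (Strichartz- and bilinear-type) bounds for the connection coefficients of a spatial Coulomb frame adapted to the maximal foliation and for the optical function of Step 3 are all controlled by a large constant, and show that each is in fact controlled by half of it, so that $T_*$ extends and is bounded below only in terms of the initial data.

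\emph{Step 2: Energy estimates, and why they do not suffice.} Writing \eqref{EVE} in Cartan's moving-frame formalism yields a Yang--Mills-type system for the connection one-form of the spatial Coulomb frame. Contracting the Bel--Robinson tensor of $\R$ with the future unit normal $T$ and using the structure equations \eqref{eq:structfol1}--\eqref{eq:structfol3} together with the lapse equation \eqref{eqlapsen} gives control of $\|\R\|_{L^\infty_tL^2(\Si_t)}$ in terms of the initial curvature, modulo the deformation tensor of $T$, i.e. $k$ and $\nabla n$. The genuine difficulty is the control of $\nabla k$: the div--curl system $\nabla^ak_{ab}=0$ coupled to the transport equation for $k$ along $T$ reduces the problem to a wave-type estimate for $k$, and recovering the missing half-derivative cannot be done by Sobolev embedding (this is exactly the classical $s>5/2$, or at best $s>2$, barrier) but requires Strichartz and, crucially, bilinear estimates on the curved background --- the Yang--Mills-in-Coulomb-gauge mechanism, now burdened with the extra terms produced by the commutator $[\square_\g,\PP]$ and by the projection onto frame components, and with the nontrivial deformation tensor of $T$ which generates a further class of terms handled by a trilinear estimate.

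\emph{Step 3: The parametrix machinery (ingredients B, C, D).} This is the core. One constructs an optical function $u$ solving the eikonal equation $\g^{\a\b}\pr_\a u\,\pr_\b u=0$ and controls the regularity of its level null hypersurfaces --- the null Ricci coefficients $\trch$, $\chih$, etc., and a quantitative lower bound on their null injectivity radius --- by propagating transport equations along the null generators against the $L^2$ curvature bound. One then defines a parametrix $\Phi_F$ for $\square_\g\phi=F$ as a superposition over frequencies and directions of half-wave Fourier integral operators with phase built from $u$, proves $L^2$ and energy bounds for $\Phi_F$ and for the error $E=F-\square_\g\Phi_F$, and establishes the sharp $L^4(\MM)$ Strichartz estimate for a frequency-localized piece of the parametrix. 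From these one extracts the bilinear estimates for the null-form nonlinearities of Step 2 and the trilinear estimate absorbing the deformation-tensor terms; feeding all of this back into Step 2 closes the bootstrap and proves part (1). The principal obstacle is precisely this step, and inside it the control of the eikonal equation: with only $L^2$ curvature the null second fundamental form has extremely limited regularity, and preventing the level sets of $u$ from collapsing --- a quantitative lower bound on their injectivity radius, plus enough regularity of $u$ to make the Fourier integral parametrix and the bilinear estimates meaningful --- is the "null-scaling critical" difficulty flagged in the introduction; a secondary but substantial obstacle is the bookkeeping of every error term generated by $[\square_\g,\PP]$ and by the frame projections, each of which must be shown to be of null or trilinear type.

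\emph{Step 4: Higher regularity.} On the interval $[0,T]$ produced in part (1) the foliation, lapse, spatial connection coefficients and optical function are all controlled at the $L^2$ level. One then commutes $\D^{(\a)}$, $|\a|\le m$, through the curvature evolution system and through the elliptic and transport equations \eqref{eqlapsen}, \eqref{eq:structfol1}--\eqref{eq:structfol3} for $n$ and $k$; every commutator term is either of lower order or multilinear in quantities already bounded, so a Gr\"onwall argument closes the induction on $m$ and yields \eqref{ffllffll} with $C_m$ depending only on the constant $C$ of part (1) and on $m$.
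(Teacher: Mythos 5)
Your proposal is correct and mirrors the architecture of the paper's proof: reduction to small data via rescaling and Corvino--Schoen gluing, a bootstrap on curvature, connection, bilinear, trilinear and Strichartz quantities, the Cartan/Yang--Mills Coulomb-gauge reformulation, Bel--Robinson energy estimates with a trilinear correction, bilinear estimates via a plane-wave parametrix whose phase solves the eikonal equation, and the sharp $L^4(\MM)$ Strichartz estimate. The only minor deviation is the higher-regularity step: you propose commuting $\D^{(\a)}$ and closing by Gr\"onwall inductively in $m$, whereas the paper proves the $m=1$ case by adapting the bootstrap-improvement argument and then obtains $m\ge 2$ directly from the propagation of regularity in the classical local existence theorem; both routes work.
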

\begin{remark}
Since  the core of the  main theorem is local in nature  we do not need to be very precise here with  our asymptotic flatness assumption.    We may thus  assume the existence of a coordinate system
 at infinity,   relative to which the metric     has two derivatives bounded in $L^2$, with
  appropriate asymptotic decay. Note that such bounds could be deduced from 
   weighted $L^2$ bounds  assumptions for $\mbox{Ric}$ and $\nab k$.
\end{remark}
\begin{remark} Note  that the dependence  on
 $ \norm{\mbox{Ric}}_{L^2(\Si_0)}    , \norm{\nabla k}_{L^2(\Si_0)}$    in the main theorem  can be replaced by   dependence on $  \norm{\R}_{L^2(\Si_0)}$ where $\R$ denotes the space-time curvature tensor\footnote{Here and in what follows the notations $R, {\bf R}$ will stand for the Riemann curvature tensors of $\Sigma_t$ and ${\mathcal M}$, while 
 $Ric$, ${\bf {Ric}}$ and $R_{scal}, {\bf {R}}_{scal}$ will denote the corresponding Ricci and scalar curvatures.}.  Indeed this follows from the following well known   $L^2$   estimate (see section 8 in \cite{conditional}).
 
\bea\label{eq:L2-estim-k}
\int_{\Si_0}|\nab k|^2+\frac{1}{4} |k|^4\le\int_{\Si_0} |\R|^2.
\eea
and the Gauss equation relating $\mbox{Ric}$ to $\R$.
\end{remark}

\subsection{Reduction to small initial data}\lab{sec:reductionsmall}

We first need an appropriate covering of $\Si_0$ by harmonic coordinates. This is obtained using the following general result based on Cheeger-Gromov convergence of Riemannian manifolds. 

\begin{theorem}[\cite{An} or Theorem 5.4 in \cite{Pe}]\label{th:coordharm}
Given $c_1>0, c_2>0, c_3>0$, there exists $r_0>0$ such that any 3-dimensional, complete, Riemannian manifold $(M,g)$ with $\norm{\mbox{Ric}}_{L^2(M)}\leq c_1$ and volume radius at scales $\leq 1$ bounded from below by $c_2$, i.e. $r_{vol}(M,1)\geq c_2$, verifies the following property:

Every geodesic ball $B_r(p)$ with $p\in M$ and $r\leq r_0$ admits a system of harmonic coordinates $x=(x_1,x_2,x_3)$ relative to which we have
\begin{equation}\label{coorharmth1}
(1+c_3)^{-1}\de_{ij}\leq g_{ij}\leq (1+c_3)\de_{ij},
\end{equation}
and
\begin{equation}\label{coorharmth2}
r\int_{B_r(p)}|\partial^2g_{ij}|^2\sqrt{|g|}dx\leq c_3.
\end{equation}
\end{theorem}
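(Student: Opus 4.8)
The statement to prove is Theorem~\ref{th:coordharm}, which is a standard result in Riemannian geometry attributed to Anderson \cite{An} (or Petersen \cite{Pe}, Theorem 5.4). Let me sketch how I would prove it.

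\textbf{Approach.} The plan is to argue by contradiction using Cheeger--Gromov compactness. Suppose the conclusion fails: then there is no single $r_0>0$ that works, so for every $n\in\NNN$ there is a counterexample — a complete 3-manifold $(M_n,g_n)$ with $\norm{\mbox{Ric}}_{L^2(M_n)}\le c_1$ and $r_{vol}(M_n,1)\ge c_2$, together with a point $p_n\in M_n$ and a radius $r_n\le 1/n$, such that the geodesic ball $B_{r_n}(p_n)$ admits \emph{no} harmonic coordinate system satisfying \eqref{coorharmth1}--\eqref{coorharmth2}. The idea is then to rescale: set $\tilde g_n = r_n^{-2} g_n$ based at $p_n$, so that in the rescaled manifold the ball of radius $1$ around $p_n$ is the problematic one. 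Under this rescaling the $L^2$ curvature bound improves (scales favorably, since in dimension $3$ the $L^2$ norm of curvature has positive scaling weight) and the volume radius lower bound at scale $\le 1$ is preserved or improved, so the rescaled sequence still satisfies uniform bounds. One then extracts a subsequence converging in the pointed Cheeger--Gromov sense, in a suitable weak topology ($C^{1,\alpha}$ or $W^{2,p}$ on compact sets), to a limit manifold $(M_\infty, g_\infty, p_\infty)$. The limit has vanishing $L^2$ curvature concentrated at small scales — in fact, the rescaling forces the limit to be flat Euclidean space $\RRR^3$ (or at least a space that does admit good harmonic coordinates on unit balls). One then transfers a good harmonic coordinate chart on the unit ball of the limit back to the approximating manifolds $M_n$ for $n$ large, obtaining harmonic coordinates on $B_1(p_n)$ in the rescaled metric that satisfy \eqref{coorharmth1}--\eqref{coorharmth2} — contradicting the choice of the counterexamples.

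\textbf{Key steps in order.} First I would establish the relevant a priori estimates: given the $L^2$ bound on Ricci and the volume radius lower bound, harmonic radius estimates (in the spirit of Jost--Karcher, Anderson, or Petersen) give a uniform lower bound on the $C^{1,\alpha}$- or $W^{2,p}$-harmonic radius — but crucially this bound a priori depends on more than the stated data, which is exactly why the scaling/contradiction argument is needed rather than a direct one. Second, I would set up the rescaling and verify the scaling behavior of all hypotheses: $\int |\mbox{Ric}_{\tilde g_n}|^2 \, dV_{\tilde g_n} = r_n \int|\mbox{Ric}_{g_n}|^2\,dV_{g_n} \le r_n c_1 \to 0$ (using $n=3$), and the volume radius condition is scale-invariant in the appropriate sense. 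Third, I would invoke the Cheeger--Gromov--Anderson compactness theorem for manifolds with $L^2$ (or integral) curvature bounds and volume noncollapsing to extract a limit and identify it as flat $\RRR^3$. Fourth, I would carry out the transfer of coordinates: on $\RRR^3$ standard coordinates are harmonic and satisfy the estimates with room to spare; by $C^{1,\alpha}$-closeness (and elliptic regularity to upgrade to the $W^{2,2}$ statement \eqref{coorharmth2}), one solves the harmonic coordinate equation $\Delta_{\tilde g_n} x^i = 0$ on the ball with boundary data close to the Euclidean ones and shows the solutions satisfy \eqref{coorharmth1}--\eqref{coorharmth2} for $n$ large.

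\textbf{Main obstacle.} The delicate point is the compactness step together with the identification of the limit: one must ensure that under the weak-topology convergence the integral curvature bound really does pass to the limit and forces flatness, and that no curvature concentrates in a way that obstructs the existence of good harmonic coordinates on the limiting unit ball. This requires the $\epsilon$-regularity philosophy for the $L^2$-curvature condition in dimension $3$ — precisely the borderline case where $L^2$ curvature is scale-critical for controlling the metric's $C^{1,\alpha}$ regularity — so one needs the sharp harmonic-radius estimate that a small \emph{local} $L^2$ curvature (at the appropriate scale) plus volume noncollapsing yields a definite harmonic radius. Equally, upgrading \eqref{coorharmth1} to the integral Hessian bound \eqref{coorharmth2} requires elliptic estimates for the harmonic coordinate functions that are uniform along the sequence. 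Since this is a known result, in the paper we simply cite \cite{An} and \cite{Pe}; the sketch above indicates the structure of the underlying argument.
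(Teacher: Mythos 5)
The paper gives no proof of this statement; it is imported verbatim from the references \cite{An} and \cite{Pe}, and the theorem heading itself records this. Your sketch of the blow-up and Cheeger--Gromov compactness argument is a correct and faithful outline of the standard proof (due to Anderson and presented in Petersen): the scaling computation $\int|\mbox{Ric}_{\tilde g_n}|^2\,dV_{\tilde g_n}=r_n\int|\mbox{Ric}_{g_n}|^2\,dV_{g_n}\to 0$ in dimension $3$ is right, the volume-radius bound is indeed preserved under the rescaling because $r_n\le 1$ shrinks the set of radii over which the infimum is taken, the identification of the limit as flat $\RRR^3$ uses precisely the $\varepsilon$-regularity for $L^2$-small curvature with noncollapsing that you flag as the delicate point, and the transfer of harmonic coordinates via solving the Dirichlet problem with elliptic estimates uniform along the sequence is how one recovers the explicit bounds \eqref{coorharmth1}--\eqref{coorharmth2}. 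You correctly observe at the end that the paper only cites the result, which is an accurate description of what the paper actually does here.
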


We consider $\ep>0$ which will be chosen as a small universal constant. We apply theorem \ref{th:coordharm} to the Riemannian manifold $\Si_0$. Then, there exists a constant:
$$r_0=r_0(\norm{\mbox{Ric}}_{L^2(\Si_0)}, \norm{\nabla k}_{L^2(\Si_0)}, r_{vol}(\Si_0,1),\ep)>0$$
such that every geodesic ball $B_r(p)$ with $p\in \Si_0$ and $r\leq r_0$ admits a system of harmonic coordinates $x=(x_1,x_2,x_3)$ relative to which we have:
$$(1+\ep)^{-1}\de_{ij}\leq g_{ij}\leq (1+\ep)\de_{ij},$$
and
$$r\int_{B_r(p)}|\partial^2g_{ij}|^2\sqrt{|g|}dx\leq \ep.$$

Now, by the asymptotic flatness of $\Si_0$,  the complement of its end   can be covered by the union of a finite number of geodesic balls of radius $r_0$, where the number $N_0$ of geodesic balls required only depends on $r_0$. In particular, it is therefore enough to obtain the control of $\R$, $k$ and $r_{vol}(\Si_t,1)$ of Theorem \ref{th:main} when one restricts to the domain of dependence of one such ball. Let us denote this ball by $B_{r_0}$. Next, we rescale the metric of this geodesic ball by:
$$g_\la(t,x)=g(\la t,\la x),\, \la=\min\left(\frac{\ep^2}{\norm{R}^2_{L^2(B_{r_0})}},\,\frac{\ep^2}{\norm{\nabla k}^2_{L^2(B_{r_0})}},\, r_0\ep\right)>0.$$
Let\footnote{Since in  what follows there is no danger to confuse  the Ricci curvature  $\mbox{Ric}$ with the scalar  curvature $R$
we use the short hand  $R$    to denote the full curvature  tensor   $\mbox{Ric}$.  }    $R_\la, k_\la$ and $B_{r_0}^\la$ be the rescaled versions of $R$, $k$ and $B_{r_0}$. Then, in view of our choice for $\la$, we have:
$$\norm{R_\la}_{L^2(B_{r_0}^\la)}=\sqrt{\la} \norm{R}_{L^2(B_{r_0})}\leq \ep,$$
$$\norm{\nabla k_\la}_{L^2(B_{r_0}^\la)}=\sqrt{\la} \norm{\nabla k}_{L^2(B_{r_0})}\leq \ep,$$
and 
$$\norm{\partial^2g_\la}_{L^2(B_{r_0}^\la)}=\sqrt{\la} \norm{\partial^2g}_{L^2(B_{r_0})}\leq \sqrt{\frac{\la\ep}{r_0}}\leq \ep.$$
Note that $B_{r_0}^\la$ is the rescaled version of $B_{r_0}$. Thus, it is a geodesic ball for $g_\la$ of radius $\frac{r_0}{\la}\geq \frac{1}{\ep}\geq 1$. Now, considering $g_\la$ on $0\leq t\leq 1$ is equivalent to considering $g$ on $0\leq t\leq \la$. Thus, since $r_0$, $N_0$ and $\la$ depend only on $\norm{R}_{L^2(\Si_0)}$, $\norm{\nabla k}_{L^2(\Si_0)}$, $r_{vol}(\Si_0,1)$ and $\ep$, Theorem \ref{th:main} is equivalent to the following theorem:

\begin{theorem}[Main theorem, version 2]\lab{th:mainbis}
Let $(\MM, {\bf g})$ an asymptotically flat solution to the Einstein vacuum equations \eqref{EVE} together with a maximal foliation by space-like hypersurfaces $\Si_t$ defined as level hypersurfaces of a time function $t$. Let $B$ a geodesic ball of radius one in $\Si_0$, and let $D$ its domain of dependence. Assume that the initial slice $(\Si_0,g,k)$ is such that:
$$\norm{R}_{L^2(B)}\leq \ep,\,\norm{\nabla k}_{L^2(B)}\leq \ep\textrm{ and }r_{vol}(B,1)\geq \frac{1}{2}.$$
Let $B_t=D\cap \Si_t$ the slice of $D$ at time $t$. Then:
\begin{enumerate}
\item\textbf{$L^2$ regularity.} There exists a small universal constant $\ep_0>0$ such that if $0<\ep<\ep_0$, then the following control holds on $0\leq t\leq 1$:
$$\norm{\R}_{L^\infty_{[0,1]}L^2(B_t)}\lesssim \ep,\,\norm{\nabla k}_{L^\infty_{[0,1]}L^2(B_t)}\lesssim \ep\textrm{ and }\inf_{0\leq t\leq 1}r_{vol}(B_t,1)\geq \frac{1}{4}.$$
\item \textbf{Higher regularity.} The following bounds hold on $0\leq t\leq 1$:
\bea
\sum_{|\a|\le m}\|\D^{(\a)} \R\|_{L^\infty_{[0,1]}L^2(B_t)}  \les  
\sum_{|i|\le m}  \|\nab^{(i)} \textrm{Ric}\|_{L^2(B)} + \|\nab^{(i)} \nab k\|_{L^2(B)}.
\eea
\end{enumerate}
\end{theorem}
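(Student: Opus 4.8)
The plan is to prove Theorem~\ref{th:mainbis} by a continuation (bootstrap) argument, working throughout in the small‑data normalization that theorem provides. Starting from a smooth solution --- which exists on some time interval by Theorem~\ref{thm:Bruhat} --- let $T^*\in(0,1]$ be the largest time such that on $[0,T^*]$ a fixed collection of geometric quantities stays bounded by a large fixed multiple of $\ep$: the norm $\|\R\|_{L^\infty_tL^2(\Si_t)}$, the flux of $\R$ through null cones, $\|k\|_{L^2(\Si_t)}$ and $\|\nab k\|_{L^2(\Si_t)}$, the lapse $n$ together with $n^{-1}\nab n$, and the harmonic radius of $(\Si_t,g)$. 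The goal is to show that for $\ep<\ep_0$ small and universal each of these is in fact bounded on $[0,T^*]$ by half that multiple of $\ep$; continuity then forces $T^*=1$, which gives the $L^2$‑regularity assertion, after which the higher‑regularity bounds follow by commuting the equations with $\D$ and re‑running the argument at top order, where the estimates become linear in the highest norms.

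First I would fix the gauge (ingredient~\textbf{A}): using Cartan's moving‑frame formalism, recast \eqref{EVE} relative to the maximal foliation as a Yang--Mills‑type system for the connection $1$‑form $A$ of an orthonormal spatial frame $(e_a)$ on $\Si_t$, and impose on each slice the spatial Coulomb condition $\nab^aA_a=0$. Commuting the wave equation obeyed by $A$ with the projector $\PP$ onto divergence‑free fields on $\Si_t$ yields
\begin{equation}
\square_\g(\PP A)=\mathcal{Q}(A,\nab A)+\mathcal{C}(A)+[\square_\g,\PP]A+\mathcal{E}_{\mathrm{frame}},
\end{equation}
with $\mathcal{Q}$ a sum of null quadratic forms, $\mathcal{C}$ cubic, and the last two terms collecting the commutator and frame‑projection errors that appear because $\PP$ does not commute with $\square_\g$ and because $T$ has a non‑trivial deformation tensor. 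In parallel I would run energy estimates: via the Bel--Robinson tensor (or direct $L^2$ estimates) together with the structure equations \eqref{eq:structfol1}--\eqref{eq:structfol3}, control $\|\R\|_{L^\infty_{[0,T^*]}L^2(\Si_t)}$ by the data plus spacetime integrals of the deformation tensor of $T$, and control $k,\nab k,n$ and $n^{-1}\nab n$ elliptically on each $\Si_t$ from \eqref{constk}, \eqref{eqlapsen}, \eqref{constR} and Theorem~\ref{th:coordharm}. This reduces the $L^2$‑regularity bootstrap to estimating the nonlinear spacetime terms in the Duhamel representation of $\PP A$.

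The heart of the matter is the control of those terms. For the null quadratic part $\mathcal{Q}(A,\nab A)$ I would prove bilinear estimates of the schematic form $\|\nab\phi_1\cdot\nab\phi_2\|_{L^2(\MM)}\les\mathcal{N}[\phi_1]\,\mathcal{N}[\phi_2]$ for solutions of $\square_\g\phi_i=0$ interacting through a null form, $\mathcal{N}$ being an $H^{1+}$‑type norm of data and source (ingredient~\textbf{B}). On a background with only $L^2$ curvature these rest on a geometric parametrix
\begin{equation}
\Phi_F(t,x)=\int_{\SSS^2}\int_0^\infty e^{i\la\,u_\omega(t,x)}\,\widehat{F}(\la\omega)\,\la^2\,d\la\,d\omega,
\end{equation}
with phase $u_\omega$ solving the eikonal equation $\g^{\a\b}\pr_\a u_\omega\,\pr_\b u_\omega=0$ and $u_\omega|_{\Si_0}\approx x\cdot\omega$ (ingredient~\textbf{C}). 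A $TT^*$/stationary‑phase analysis then delivers the bilinear bound, provided one controls (i) the regularity of the null level sets $\{u_\omega=\mathrm{const}\}$, i.e. their null second fundamental form $\chi$, in particular $\mathrm{tr}\chi$ and $\chih$; (ii) a lower bound on the null radius of injectivity along those cones; and (iii) smallness of the error $E=F-\square_\g\Phi_F$ in an appropriate $L^2$‑type norm, so that $\Phi_F$ can be corrected to an exact solution by a convergent iteration. After localization in frequency and angle, several of these bilinear estimates reduce further to the sharp Strichartz bound $\|\Phi_F\|_{L^4(\MM)}\les\|F\|$ for the localized parametrix (ingredient~\textbf{D}), again via stationary phase on $u_\omega$. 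Finally, the cubic terms $\mathcal{C}(A)$ and the commutator and deformation errors are absorbed by a novel trilinear estimate, schematically $\big|\int_{\MM}\phi_1\phi_2\phi_3\big|\les\mathcal{N}[\phi_1]\,\mathcal{N}[\phi_2]\,\mathcal{N}[\phi_3]$. Feeding the bilinear and trilinear bounds into the energy inequality closes the bootstrap.

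The main obstacle I expect lies in ingredients~\textbf{C}--\textbf{D}: constructing the parametrix and showing that the null hypersurfaces $\{u_\omega=\mathrm{const}\}$ retain enough regularity to sustain the bilinear and $L^4(\MM)$ Strichartz estimates. This forces one to control $\mathrm{tr}\chi$, $\chih$ and the null injectivity radius from below using only the $L^2$ curvature flux through the cones --- precisely the place where the conjectured threshold is sharp, since below it the null geometry, and with it the Fourier‑integral representation of $\Phi_F$, degenerates catastrophically. The second serious difficulty is the sheer number of new error terms generated by $[\square_\g,\PP]$ and by the non‑vanishing deformation tensor of $T$, which have no flat Yang--Mills analogue and must all be shown to be of null or trilinear type so as to fall under the estimates above.
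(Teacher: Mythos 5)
Your proposal jumps directly into the bootstrap/parametrix machinery, but that machinery (which is the content of Chapters~\ref{part:yangmills}--\ref{part:strich} and proves Theorem~\ref{th:mainter}) requires \emph{globally defined} data on $\Si_0$: the Uhlenbeck-type lemma that produces the Coulomb frame, the operator $\PP$ and $(-\Delta)^{-1}$ used to pass to $B=(-\Delta)^{-1}\curl A$, the eikonal solution $u(x,\om)$ and its initialization as a deformation of $x\cdot\om$ (which is a global parabolic construction on all of $\Si_0$), and the Fourier-integral parametrix $\phi_f$ all live on the full asymptotically flat slice and cannot be restricted to a geodesic ball. Theorem~\ref{th:mainbis}, however, only gives you data on a unit geodesic ball $B\subset\Si_0$ and asks for control on the domain of dependence $D$. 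You state the bootstrap norms in terms of $\Si_t$ rather than $B_t=D\cap\Si_t$, which silently conflates the two settings. The missing step, and it is essential, is the reduction from Theorem~\ref{th:mainbis} to Theorem~\ref{th:mainter}: one extends $(B,g|_B,k|_B)$ to a globally asymptotically flat solution of the constraint equations by gluing a smooth solution onto an annulus just outside $B$ (via the Corvino--Schoen construction), yielding a new global data set $(\Si_0,g,k)$ that agrees with the original inside $B$, has $\norm{R}_{L^2(\Si_0)}\leq 2\ep$, $\norm{\nabla k}_{L^2(\Si_0)}\leq 2\ep$, $r_{vol}(\Si_0,1)\geq 1/4$, and (after an additional observation about the $L^2$ norm of $k$) also $\norm{k}_{L^2(\Si_0)}\les\ep$. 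One then applies the global small-data result (Theorem~\ref{th:mainter}) and invokes finite speed of propagation: the solution on $D$ is unaffected by what was glued outside $B$. This is not a cosmetic remark; without the gluing, nothing in your proposed proof can even get off the ground, because the objects you invoke are not defined.

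A secondary gap: your plan for higher regularity by ``commuting the equations with $\D$ and re-running the argument at top order'' matches the paper only for $m=1$; the paper handles $m\geq 2$ differently, by falling back on the classical propagation of regularity of Theorem~\ref{th:classicalwp} once the $m=1$ bound is in hand.
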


\noindent{\bf Notation:}\,\, In the statement of Theorem \ref{th:mainbis}, and in the rest of the paper, the notation $f_1\les f_2$ for two real positive scalars $f_1, f_2$ means that there exists a universal constant $C>0$ such that:
$$f_1\leq C f_2.$$\\ 

Theorem \ref{th:mainbis} is not  yet   in  a suitable form for our proof since some of our constructions will be global in space and may not be carried out on a subregion $B$ of $\Si_0$. Thus, we glue a smooth asymptotically flat solution of the constraint equations \eqref{const} outside of $B$, where the gluing takes place in an annulus just outside  $B$. This can be achieved using the construction  in \cite{Cor},  \cite{CorSch}. We finally get an asymptotically flat solution to the constraint equations,  defined everywhere on $\Si_0$,  which  agrees with our original data set $(\Si_0,g,k)$ inside $B$. We still denote this data set by $(\Si_0,g,k)$. It satisfies the bounds: 
$$\norm{R}_{L^2(\Si_0)}\leq 2\ep,\,\norm{\nabla k}_{L^2(\Si_0)}\leq 2\ep\textrm{ and }r_{vol}(\Si_0,1)\geq \frac{1}{4}.$$

\begin{remark}
Notice that the gluing process in \cite{Cor}--\cite{CorSch} requires the kernel of a certain linearized operator to be trivial. This is achieved by conveniently choosing the asymptotically flat solution to \eqref{const} that is glued outside of $B$ to our original data set. This choice is always possible since the metrics for which the kernel is nontrivial are non generic (see \cite{BCS}).
\end{remark}

\begin{remark}
Assuming only $L^2$ bounds on $R$ and $\nabla k$ is not enough to carry out the construction in the above mentioned  results. However, the  problem solved  there  remains  subcritical  at   our desired   level of regularity      and  thus we   believe that a closer look at the construction  in  \cite{Cor}--\cite{CorSch}, or an alternative construction, should  be able to provide
the desired result. This is an  open problem.
\end{remark}

\begin{remark}
Since  $\|k\|_{L^4(\Si_0)} ^2\le \|\mbox{Ric}\|_{L^2} $
 we  deduce   that 
$\norm{k}_{L^2(B)}\lesssim \ep^{1/2}$
 on the geodesic ball $B$ of radius one.
Furthermore, asymptotic flatness is compatible with a decay of $|x|^{-2}$ at infinity, and in particular with $k$ in $L^2(\Si_0)$. So we may assume that the gluing process is such that the resulting $k$ satisfies:
$$\norm{k}_{L^2(\Si_0)}\lesssim \ep.$$
\end{remark}

Finally, we have reduced Theorem \ref{th:main} to the case of a small initial data set:
\begin{theorem}[Main theorem, version 3]\lab{th:mainter}
Let $(\MM, {\bf g})$ an asymptotically flat solution to the Einstein vacuum equations \eqref{EVE} together with a maximal foliation by space-like hypersurfaces $\Si_t$ defined as level hypersurfaces of a time function $t$. Assume that the initial slice $(\Si_0,g,k)$ is such that:
$$\norm{R}_{L^2(\Si_0)}\leq \ep,\,\norm{k}_{L^2(\Si_0)}+\norm{\nabla k}_{L^2(\Si_0)}\leq \ep\textrm{ and }r_{vol}(\Si_0,1)\geq \frac{1}{2}.$$
Then:
\begin{enumerate}
\item\textbf{$L^2$ regularity.} There exists a small universal constant $\ep_0>0$ such that if $0<\ep<\ep_0$,  the following control holds on $0\leq t\leq 1$:
$$\norm{\R}_{L^\infty_{[0,1]}L^2(\Si_t)}\lesssim \ep,\,\norm{k}_{L^\infty_{[0,1]}L^2(\Si_t)}+\norm{\nabla k}_{L^\infty_{[0,1]}L^2(\Si_t)}\lesssim \ep\textrm{ and }\inf_{0\leq t\leq 1}r_{vol}(\Si_t,1)\geq \frac{1}{4}.$$
\item \textbf{Higher regularity.} The following control holds on $0\leq t\leq 1$:
\bea
\sum_{|\a|\le m}\|\D^{(\a)} \R\|_{L^\infty_{[0,1]}L^2(\Sigma_t)}  \les  
 \sum_{|i|\le m}\ \|\nab^{(i)} \textrm{Ric}\|_{L^2(\Sigma_0)} + \|\nab^{(i)} \nab k\|_{L^2(\Sigma_0)}.
\eea
\end{enumerate}
\end{theorem}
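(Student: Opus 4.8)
The plan is to prove Theorem \ref{th:mainter} by a continuity/bootstrap argument on the maximal foliation, the core of which is a system of estimates coupling the spacetime curvature $\R$, the connection coefficients of the $\Si_t$-foliation (the lapse $n$ and the second fundamental form $k$), the null connection coefficients of an eikonal foliation, and a parametrix for the scalar wave equation $\square_\g\phi=F$. Concretely, let $T^*\le 1$ be the supremum of the times $\tau$ such that on $0\le t\le\tau$ a family of bootstrap assumptions holds: $\norm{\R}_{L^\infty_{[0,\tau]}L^2(\Si_t)}\le\ep^{1/2}$, $\norm{k}_{L^\infty_{[0,\tau]}L^2(\Si_t)}+\norm{\nab k}_{L^\infty_{[0,\tau]}L^2(\Si_t)}\le\ep^{1/2}$, $n-1$ and $\nab n$ suitably small, $\inf_t r_{vol}(\Si_t,1)\ge\frac{1}{4}$, together with auxiliary bounds on mixed spacetime norms of $\R$ and on the null connection coefficients $\trch,\chih,\dots$ of the level sets of an eikonal phase (which are required to stay close to their Minkowski values). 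The goal is to show that on $[0,T^*]$ all of these can be improved, with $\ep^{1/2}$ replaced by $C\ep$, which by continuity forces $T^*=1$ and gives part (1); part (2) then follows by commuting the entire scheme with derivatives and inducting on $m$, the base case $m=0$ being part (1).

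First I would set up the geometric framework of ingredient \textbf{A}. Following Cartan's moving-frame / Yang--Mills description of \eqref{EVE}, I pick an orthonormal frame $e_0=T$, $(e_a)_{a=1,2,3}$ tangent to $\Si_t$, and impose a spatial Coulomb-type condition: the spatial connection one-form $A$ is divergence free on each $\Si_t$. In this gauge the frame coefficients and the curvature satisfy a coupled hyperbolic system; commuting the wave equation satisfied by $A$ with the Hodge projection $\PP$ onto divergence-free vector fields exhibits a null structure in the worst nonlinear interactions, amenable to bilinear estimates, at the price of (i) commutator terms $[\square_\g,\PP]$, (ii) error terms from projecting the equations onto the frame, and (iii) terms generated by the non-trivial deformation tensor of $T$ (there being no timelike Killing field in curved space), which are absorbed by a trilinear estimate. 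The lapse is recovered from the elliptic equation \eqref{eqlapsen}, and $k,\nab k$ from the structure equations \eqref{eq:structfol1}--\eqref{eq:structfol3}; the borderline contributions of $n-1$ and $k$ to the curvature energy are precisely what the trilinear estimate is designed to handle.

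Next comes the energy/flux estimate for $\R$: using the Bel--Robinson tensor associated with $\R$ and the vector field $T$ — equivalently, Hodge theory on $\Si_t$ applied to the Bianchi and structure equations — I derive $\norm{\R(t)}_{L^2(\Si_t)}\les\norm{\R(0)}_{L^2(\Si_0)}+\int_0^t(\text{deformation-tensor errors})$, together with the companion bound for the curvature flux through the null cones of the eikonal foliation. The error integrals are cubic in $(\R,k,\nab n)$ and are closed by the bilinear and trilinear estimates of ingredient \textbf{B}, which are themselves reduced (ingredient \textbf{C}) to the construction of a progressive-wave parametrix $\Phi_F$ for $\square_\g\phi=F$, written as a Fourier integral operator with phase $u(t,x,\omega)$ solving the eikonal equation $\g^{\alpha\beta}\partial_\alpha u\,\partial_\beta u=0$ with $u|_{t=0}=x\cdot\omega$, plus bounds for the error $E=F-\square_\g\Phi_F$; several of these bilinear estimates are in turn reduced (ingredient \textbf{D}) to sharp $L^4(\MM)$ Strichartz estimates for a frequency-localized piece of $\Phi_F$. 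Control of $\Phi_F$ and $E$ requires: the regularity of $u$ and $\partial u$ at $t=0$; the spacetime regularity of $u$, i.e. bounds on the null second fundamental form $\trch,\chih$ of the level sets $\{u=\mathrm{const}\}$ and hence a lower bound on their injectivity radius, obtained from transport equations along the null generators coupled to elliptic estimates on the $2$-surfaces $P_u$ and to the curvature flux bound above; and a stationary-phase analysis of $E$. Feeding all of this back into the two preceding steps improves every bootstrap assumption for $\ep$ small, closing the argument.

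The main obstacle — and the bulk of the work — is the control of the spacetime geometry of the eikonal phase $u$ under only the hypothesis of bounded $L^2$ curvature (ingredient \textbf{C}). This is exactly the regime, emphasized in the introduction, in which the injectivity radius of null hypersurfaces is borderline: one must show that the null cones of $u$ do not collapse or develop caustics too violently, which forces $L^2$-type bounds on $\chi$ and its derivatives along the generators and on $P_u$, and ultimately a sharp trace-type estimate that is critical with respect to the null scaling of the equations rather than their standard scaling. A secondary but also serious difficulty is the delicacy of the bootstrap itself: the curvature energy estimate needs the bilinear and trilinear estimates, which need the parametrix, which needs the null-cone geometry, which needs the curvature flux — so all four ingredients must be run simultaneously, with constants tracked carefully enough that smallness of $\ep$ genuinely closes the loop.
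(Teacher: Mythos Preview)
Your proposal matches the paper's architecture closely: the bootstrap coupling of the four ingredients \textbf{A}--\textbf{D}, the Bel--Robinson energy for $\R$, the Yang--Mills/Cartan formalism with Coulomb gauge and the role of $[\square_\g,\PP]$, the trilinear estimate for the deformation-tensor errors, and the circular dependence (curvature energy $\to$ bilinear/parametrix $\to$ null-cone geometry $\to$ curvature flux) that must close for small $\ep$. The paper organizes the bootstrap through Propositions \ref{prop:continuity}, \ref{prop:improve1}, \ref{prop:improve2}, \ref{prop:propagreg}, with a large universal constant $M$ and assumptions of the form $\le M\ep$ (improved by showing the estimates hold with a universal $M$), rather than your $\ep^{1/2}\to C\ep$ scheme, but the mechanism is equivalent.

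One step would fail as written: you initialize the eikonal phase by $u|_{t=0}=x\cdot\omega$. The paper explicitly rejects this choice (Section \ref{sec:strategyproof}, discussion of Step C1), because at the $L^2$-curvature level no coordinate system on $\Si_0$ is regular enough for $x\cdot\omega$ to carry the required estimates on $u$ and its angular derivatives. Instead the initial data for $u$ is determined geometrically by the equation $\mathrm{div}(\nabla u/|\nabla u|)=1-|\nabla u|^{-1}+k(N,N)$ on $\Si_0$, which is parabolic in the direction normal to the level sets of $u$; this is the content of Chapter \ref{part:initialu}, and it is precisely what produces the $L^\infty$ bound on $\mathrm{tr}\chi-k_{NN}$ at $t=0$ needed to seed the spacetime null-cone estimates of Step C3. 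A smaller omission: the paper does not apply $\PP$ directly to the wave equation for $A$, but introduces the scalar auxiliary $B=(-\Delta)^{-1}\mathrm{curl}\,A$ (see \eqref{defineB}), derives a wave equation for $B$ with $\|\partial\,\square B\|_{L^2(\MM)}\lesssim M^2\ep^2$, and recovers $A=\mathrm{curl}\,B+E$ with $E$ better behaved; this is how the commutator issues you flag are actually resolved.
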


The rest of this paper is devoted to the proof of Theorem \ref{th:mainter}.

\subsection{Strategy of the proof}\lab{sec:strategyproof}

The proof of Theorem \ref{th:mainter} consists of four steps. \\

\noindent{\bf Step A (Yang-Mills formalism)} We  first cast the Einstein-vacuum equations  in a Yang-Mills form. This relies on  the Cartan formalism of moving frames. The idea 
 is to give up on    a choice of coordinates and instead express 
 the Einstein vacuum equations  in terms of the connection  $1$-forms    associated to  moving orthonormal  frames,  i.e.  vectorfields  $e_\a$,   
 which verify, 
 \beaa
\g(e_\a, e_\b)=\m_{\a\b}=\mbox{diag}(-1,1,1,1).
\eeaa
The connection $1$-forms (they are to be interpreted 
as $1$-forms with respect to the  external index $\mu$  with values  in the Lie algebra of $so(3,1)$),  defined  by the formulas,
\bea
(\A_\mu)_{\a\b}=\g(\D_{\mu}e_\b,e_\a)\label{intr.A}
\eea
verify the equations,
\bea
\label{intr-YM}
\D^\mu \F_{\mu\nu} + [ \A^\mu, \F_{\mu\nu}]=0
\eea
where,  denoting $ (\F_{\mu\nu})_{\a\b}:=\R_{\a\b\mu\nu}$,
\bea
\label{intr-YM1}
( \F_{\mu\nu})_{\a\b}=\big(\D_\mu \A_\nu-\D_\nu \A_\mu
-[\A_\mu,
\A_\nu]\big)_{\a\b}.
\eea
In other words  we can interpret   the curvature   tensor    as the curvature of the $so(3,1)$-valued  connection  1-form  $\A$. Note also that the covariant derivatives    are taken only with respect to the \textit{external indices} $\mu, \nu$ and  do not affect the \textit{internal indices} $\a,\b$.
 We can rewrite \eqref{intr-YM} in the form,
 \bea
 \label{intr-YM2}
 \square_\g \A_\nu-\D_\nu(\D^\mu \A_\mu) =\J_\nu(\A, \D \A)
 \eea
 where,
 \beaa
 \J_\nu=\D^\mu([\A_\mu, \A_\nu])-[\A_\mu, \F_{\mu\nu}].
 \eeaa
 Observe that the equations  \eqref{intr-YM}-\eqref{intr-YM1}
 look just like the Yang-Mills equations    on a fixed Lorentzian
 manifold $(\MM, \g)$ except, of course,   that  in our case
 $\A$ and $\g$ are  not independent but  rather connected  by  \eqref{intr.A},
  reflecting the quasilinear  structure of the Einstein equations.
  Just as in the case   of   \cite{Kl-Ma1},  which  establishes the well-posedness of the  Yang-Mills equation  in Minkowski space 
  in   the    energy norm (i.e. $s=1$),    we    rely   in an essential manner  on   a    Coulomb type  gauge condition.   More precisely,  we  take 
   $e_0$  to be  the  future  unit normal to the 
   $\Si_t$ foliation and choose  $e_1, e_2, e_3$ an 
   orthonormal basis  to  $\Si_t$,   in such a way
   that we have, essentially (see precise discussion in 
   section \ref{sect:compatible}),  $\div A=\nab^i A_i=0$, where 
   $A$ is the spatial component of $\A$.   It turns out that $A_0$ satisfies
   an elliptic equation  while each  component  $A_i=\g(\A, e_i)$,  $ i=1,2,3$
   verifies an equation of the form,
   \bea
     \square_\g A_i  &=&-\pr_i (\pr_0 A_0)+ A^j \pr_j A_i +A^j\pr_i A_j+\lot\label{intr:null}
\eea
with $\lot$ denoting nonlinear  terms which  can be treated by more elementary techniques (including non sharp Strichartz estimates). \\     

\noindent{\bf Step B (Bilinear and trilinear estimates)} To eliminate $\pr_i (\pr_0 A_0)$  in \eqref{intr:null},  we need   to project \eqref{intr:null} onto  divergence free vectorfields   with the help of  a non-local operator which we denote  by $\PP$.      In  the  case of the   flat  Yang-Mills 
equations, treated in   \cite{Kl-Ma1},   this leads to an equation 
of the form,
\beaa
\square  A_i&=&\PP(A^j \pr_j A_i ) +\PP(A^j\pr_i A_j)+\lot
\eeaa
where  both terms on the right can be handled  by bilinear 
 estimates.    In our case  we encounter  however three 
 fundamental differences with the  flat  situation of    \cite{Kl-Ma1}.
 \begin{itemize}
 \item To start with the operator $\PP$ does not commute 
 with $\square_\g$.   It turns out,  fortunately,   that  the terms
 generated by  commutation     can still be estimated  by 
 an extended  class of bilinear estimates which includes  contractions 
 with the curvature tensor, see section \ref{sec:proofconj}. 
 \item All  energy estimates  used in   \cite{Kl-Ma1}
 are based on the standard    timelike   Killing vectorfield $\pr_t$. In our case the corresponding 
 vectorfield  $e_0=T$ ( the future unit   normal to $\Si_t$) is not Killing.
 This leads  to another class of trilinear  error terms  which 
  we  discuss in sections    \ref{sec:bobo6} and \ref{sec:proofconj}.
  \item The main difference 
   with  \cite{Kl-Ma1}  is that we now need bilinear and trilinear  estimates
    for solutions of wave equations on    background metrics 
   which   possess only limited  regularity.  
\end{itemize} 
    This last  item is a major   problem,  both conceptually and technically.
   On the conceptual side we need to rely on  a more geometric    proof of  bilinear estimates  based on   a plane    wave representation    formula\footnote{We follow   the proof   of the  bilinear estimates  outlined   in  \cite{Kl-R3}
    which differs substantially  from that of  \cite{Kl-Ma1} and
    is reminiscent of the  null frame space strategy used by   Tataru  in his      fundamental paper \cite{Ta3}. } for solutions of  scalar wave equations,
   \beaa
   \square_\g \phi=0.
   \eeaa 
   The proof of the bilinear estimates rests on the representation formula\footnote{\eqref{parametrix.intr} actually corresponds to the representation formula for a half-wave. The full representation formula corresponds to the sum of two half-waves (see section \ref{sec:parametrix})}
   \bea
   \label{parametrix.intr}
   \phi_f(t,x)=\int_{\SSS^2} \int_0^\infty e^{i\la \uom(t,x)} \,  f(\la\om) \la^2 d\la  d\om
   \eea
    where $f$ represents schematically the initial data\footnote{Here $f$ is in fact at the level of  the Fourier transform of the initial data  and the norm  $\|\la f\|_{L^2(\RRR^3)  }  $  corresponds, roughly,  to the $H^1$ norm of the data .},    and where  $\uom$ is a solution of the eikonal equation\footnote{In the flat Minkowski space   $\uom(t,x)=t\pm x\c\om$.},
   \bea
   \g^{\a\b}\pr_\a\uom\,  \pr_\b \uom=0,\label{eikonal-intr}
   \eea
   with appropriate initial conditions on $\Si_0$ and $d\om$
    the area element of the standard sphere in $\RRR^3$.\\
    
    \begin{remark}\label{rem:scalarize}
    Note that \eqref{parametrix.intr} is a parametrix for a scalar wave equation. The lack of a good parametrix for a covariant wave equation forces us to develop a strategy based on writing the main equation in components relative to a frame, i.e. instead of dealing with the tensorial wave equation \eqref{intr-YM2} directly, we consider the system of scalar wave equations \eqref{intr:null}. Unlike the flat case, this ``scalarization" procedure produces several terms which are potentially dangerous, and it is fortunate, as in yet
    another manifestation of a hidden null structure of the Einstein equations, that they can still be controlled by the use of an extended\footnote{involving  contractions  between the Riemann curvature tensor and   derivatives of  solutions of   scalar  wave equations.} class of  bilinear estimates.
    \end{remark}
    
 \vspace{0.3cm}   
 
    \noindent{\bf Step C (Control of the parametrix)} To prove the bilinear and trilinear estimates of Step B, we need in particular to control the parametrix at initial time (i.e. restricted to the initial slice $\Sigma_0$)
   \bea
   \label{parametrixinit.intr}
   \phi_f(0,x)=\int_{\SSS^2} \int_0^\infty e^{i\la \uom(0,x)} \,  f(\la\om) \la^2 d\la  d\om
   \eea
 and the error term corresponding to \eqref{parametrix.intr}    
   \bea
   \label{error-param}
  Ef(t,x)=\square_\g  \phi_f(t,x)=i \int_{\SSS^2} \int_0^\infty e^{i\la \uom(t,x)}   
  \,(\square_\g \uom )
   f(\la\om)  \la^3d\la d\om
   \eea
   i.e.  $\phi_f$   is an exact solution of  $\square_\g\phi=0$  only in flat space
   in which case $ \square_\g \uom=0$. This requires the following four sub steps
{\em\begin{enumerate}
\item[{\bf C1}] Make an appropriate choice for the equation satisfied by $\uom(0,x)$ on $\Sigma_0$, and control the geometry of the foliation of $\Sigma_0$ by the level surfaces of $\uom(0,x)$.

\item[{\bf C2}] Prove that the parametrix at $t=0$ given by \eqref{parametrixinit.intr} is bounded in $\mathcal{L}(L^2(\mathbb{R}^3), L^2(\Sigma_0))$ using the estimates for $\uom(0,x)$ obtained in {\bf C1}.

\item[{\bf C3}] Control the geometry of the foliation of $\mathcal{M}$ given by the level hypersurfaces of $\uom$.

\item[{\bf C4}] Prove that the error term \eqref{error-param} satisfies the estimate $\norm{Ef}_{L^2(\mathcal{M})}\leq C\norm{\lambda f}_{L^2(\mathbb{R}^3)}$ using the estimates for $\uom$ and $\square_{\g}\uom$ proved in {\bf C3}.
\end{enumerate}
}
   
To achieve Step C3 and Step C4, we need, at the very  least,  to control  $\square_\g \uom$
in $L^\infty$. This issue   was first addressed in 
 the sequence of papers \cite{Kl-R4}--\cite{Kl-R6} where an $L^\infty$ bound for  $\square_\g \uom$ was established, 
 depending only on the $L^2$ norm of the curvature flux along null hypersurfaces.      The proof required   an interplay between 
 both  geometric and analytic techniques and  had all the appearances 
 of being sharp, i.e.    we don't expect an $L^\infty$ bound for $\square_\g \uom$  which requires bounds on  less
 than two derivatives  in $L^2$ for the metric\footnote{classically, this requires, at the very  least, the control of $\R$ in $L^\infty$}.

 To obtain the $L^2$ bound for the Fourier integral operator $E$ defined in \eqref{error-param}, we 
 need, of course, to  go  beyond uniform   estimates 
 for $\square_\g \uom$. The classical $L^2$ bounds 
 for Fourier integral operators of the form \eqref{error-param} 
 are not at all economical in terms of the number of integration by parts which are needed.   In our case the total  number of     
 such  integration by parts  is limited by  the   regularity properties of the function $\square_\g\uom$.  To
   get an $L^2$ bound for the parametrix at initial time \eqref{parametrixinit.intr}  and the error term \eqref{error-param} within such restrictive regularity properties 
   we  need, in particular:
   \begin{itemize}
   \item In Step C1 and Step C3, a precise  control of     derivatives of $\uom$ and $\square_\g \uom$ 
   with respect to both $\om$ as well as with respect to various 
   directional derivatives\footnote{Taking  into account 
   the   different    behavior in tangential and transversal directions
    with respect to the level surfaces of $\uom$.}.   
   To get optimal control  we need,
   in particular,  a  very careful   construction of   the initial condition for    $\uom $ on $\Si_0$  and then sharp space-time estimates  of Ricci   coefficients, and their  derivatives,     associated     to the foliation induced by  $\uom$.  
   
\item  In Step C2 and Step C4, a careful decompositions of    the Fourier integral operators \eqref{parametrixinit.intr} and  \eqref{error-param} in both  $\la$  and $\om$, 
 similar to the first and second dyadic decomposition in harmonic analysis, see 
 \cite{St}, as well as a third    decomposition, which in the case of  \eqref{error-param} is done with respect 
 to the space-time variables    relying on  the geometric Littlewood-Paley  theory developed in  \cite{Kl-R6}.
\end{itemize}

Below, we make further  comments on Steps C1-C4:
\begin{enumerate}
\item \textit{The choice of $u(0,x,\om)$ on $\Sigma_0$ in Step C1.} Let us note that the typical choice $u(0,x,\om)=x\c\om$ in a given coordinate system would not work for us, since we don't have enough control on the regularity of a given coordinate system within our framework. Instead, we need to find a geometric definition of $u(0,x,\om)$. A natural choice would be
$$\square_\g u=0\textrm{ on }\Sigma_0$$
which by a simple computation turns out to be the following simple variant of the minimal surface equation\footnote{In the time symmetric case $k=0$, this is exactly the minimal surface equation}
$$\div\left(\frac{\nabla u}{|\nabla u|}\right)=k\left(\frac{\nabla u}{|\nabla u|}, \frac{\nabla u}{|\nabla u|}\right)\textrm{ on }\Sigma_0.$$
Unfortunately, this choice does not allow us to have enough control of the derivatives of $u$ in the normal direction to the level surfaces of $u$. This forces us to look for an alternate equation for $u$:
$$\div\left(\frac{\nabla u}{|\nabla u|}\right)=1-\frac{1}{|\nabla u|}+k\left(\frac{\nabla u}{|\nabla u|}, \frac{\nabla u}{|\nabla u|}\right)\textrm{ on }\Sigma_0.$$
This equation turns out to be parabolic in the normal direction to the level surfaces of $u$, and allows us to obtain the desired regularity in Step C1. A closer inspection  
reveals its relation to the mean curvature flow on $\Si_0$.

\item \textit{How to achieve Step C3.} The regularity obtained in Step C1, together with null transport equations tied to the eikonal equation, elliptic systems of Hodge type, the geometric Littlewood-Paley theory of \cite{Kl-R6}, sharp trace theorems, and an extensive use of the structure of the Einstein equations, allows us to propagate the regularity on $\Sigma_0$ to the space-time, thus achieving Step C3.

\item \textit{The regularity with respect to $\om$ in Steps C1 and C3.} The regularity with respect to $x$ for $u$ is clearly limited as a consequence of the fact that we only assume $L^2$ bounds on $\R$. On the other hand, $\R$ is independent of the parameter $\om$, and one might infer that $u$ is smooth with respect to $\om$. Surprisingly, this is not at all the case. Indeed,  the regularity in $x$ obtained for $u$ in Steps C1 and C3 is better in directions tangent to the level hypersurfaces of $u$. Now, the $\om$ derivatives of the tangential directions have non zero normal components. Thus, when differentiating the structure equations with respect to $\om$, tangential derivatives to the level surfaces of $u$ are transformed to non tangential derivatives which in turn severely limits the regularity in $\om$ obtained in Steps C1 and C3.

\item \textit{How to achieve Steps C2 and C4.} The classical arguments for proving $L^2$ bounds for Fourier operators are based either on a $T T^*$ argument, or a $T^* T$ argument, which requires several integration by parts either with respect to $x$ for $T^*T$, of with respect to $(\la, \om)$ for $TT^*$. Both methods would fail by far within the regularity for $u$ obtained in Step C1 and Step C3. This forces us to design a method which allows to take advantage both of the regularity in $x$ and $\om$. This is achieved using in particular the following ingredients:
\begin{itemize}
\item geometric integrations by parts taking full advantage of the better regularity properties in directions tangent to the level hypersurfaces of $u$,

\item the standard first and second dyadic decomposition in frequency  space, with  respect to  both size and angle (see \cite{St}), an additional decomposition in physical space relying on the geometric Littlewood-Paley projections of \cite{Kl-R6} for Step C4, as well as another decomposition involving frequency and angle for Step C2.
\end{itemize}
Even with these precautions, at several places in the proof, one encounters log-divergences which have to be tackled by ad-hoc techniques,  taking full advantage of the structure of the Einstein equations. 
\end{enumerate}

\vspace{0.5cm}

\noindent{\bf Step D (Sharp $L^4(\MM)$ Strichartz estimates)} Recall that the parametrix constructed   in Step C needs also to be used to prove  sharp $L^4(\MM)$ Strichartz estimates. Indeed the proof of several bilinear estimates of Step B reduces to the proof of sharp $L^4(\MM)$ Strichartz estimates for the parametrix \eqref{parametrix.intr} with $\la$ localized in a dyadic shell. 

More precisely, let $j\geq 0$, and let $\psi$ a smooth function on $\RRR^3$ supported in 
$$\frac 1 2 \leq |\xi|\leq 2.$$
Let $\phi_{f,j}$ the parametrix \eqref{parametrix.intr} with a additional frequency localization $\la\sim 2^j$
\begin{equation}\lab{paraml}
\phi_{f,j}(t,x)=\int_{\SSS^2} \int_0^\infty  e^{i \la \uom(t,x)}\psi(2^{-j}\la)f(\la\om)\la^2d\la d\om.
\end{equation}
We will need the sharp\footnote{Note in particular that the corresponding estimate in the flat case is sharp.} $L^4(\MM)$ Strichartz estimate
\begin{equation}\lab{strichgeneralintro}
\norm{\phi_{f,j}}_{L^4(\MM)}\les 2^{\frac{j}{2}}\norm{\psi(2^{-j}\la)f}_{L^2(\RRR^3)}.
\end{equation}
The standard procedure for proving\footnote{Note that the procedure we  describe would prove not only   \eqref{strichgeneralintro}   but the full   range  of mixed Strichartz estimates.}          \eqref{strichgeneralintro} is  based  on a  $TT^*$ argument  which  reduces  it  to an  $L^\infty$ estimate for an oscillatory integral with a phase involving $\uom$. This is then achieved  by  the method of  stationary phase which  requires  quite a  few integrations by parts.    In fact   the standard  argument   would require, at the very least\footnote{The regularity \eqref{staphase} is necessary to make sense of the change of variables involved in the stationary phase method.}, that the phase function
$u=\uom$ verifies,
\begin{equation}\lab{staphase}
\partial_{t,x} u  \in L^\infty,\,\partial_{t,x}\partial_\omega^2 u\in L^\infty.
\end{equation}
This level of regularity is, unfortunately,  incompatible with the regularity  properties    of solutions
  to our  eikonal equation  \eqref{eikonal-intr}. 
 In fact,  based  on the estimates for $\uom$  derived in step C3, 
  we  are only allowed      to assume
\begin{equation}\lab{reguintro}
\partial_{t,x}  u  \in L^\infty,\, \partial_{t,x}\partial_\omega u \in L^\infty.
\end{equation}
 We are  thus forced to follow an alternative approach\footnote{ We  refer  to  the approach  based on   the  overlap estimates for wave packets derived in \cite{Sm} and \cite{Sm-Ta} in the context of Strichartz estimates respectively for $C^{1,1}$ and $H^{2+\ep}$ metrics.  Note however that our approach    does not require a wave packet  decomposition. } to the stationary phase  method inspired by    \cite{Sm} and    \cite{Sm-Ta} .

\begin{remark}
Note that apart from the results of Chapter \ref{part:yangmills} which require the projection of various tensors on a  frame, the computations and estimates in all the other chapters are covariant. 
\end{remark}

\subsection{Structure of the paper} 

In Chapter \ref{part:yangmills}, we perform Step A and Step B, i.e. we recast the Einstein equations as a quasilinear Yang-Mills type system, we prove bilinear estimates, and we reduce the proof of Theorem \ref{th:mainter} to Step C and Step D. Next, we perform Step C on the control of the plane wave parametrix \eqref{parametrix.intr}. More precisely, in Chapter \ref{part:paramtime}, we perform Step C4 on the control of the error term \eqref{error-param}.  Next, in Chapter \ref{part:spacetimeu}, we perform Step C3 on the space-time control of the optical function $\uom$. Then, in Chapter \ref{part:paraminit}, we perform Step C2 on the control of the parametrix at initial time \eqref{parametrixinit.intr}. In Chapter \ref{part:initialu}, we perform Step C1 on the control of the optical function $\uom$ on the initial slice $\Sigma_0$. Finally, in Chapter \ref{part:strich}, we prove sharp $L^4(\MM)$ Strichartz estimates localized in frequency which corresponds to Step D.

\begin{remark}
Chapter \ref{part:yangmills} summarizes the results obtained in \cite{KRS}. Chapter \ref{part:paramtime} summarizes the results obtained in \cite{param4}. Chapter \ref{part:spacetimeu} summarizes the results obtained in \cite{param3}. 
Chapter \ref{part:paraminit} summarizes the results obtained in \cite{param2}. Chapter \ref{part:initialu} summarizes the results obtained in \cite{param1}. Finally, Chapter \ref{part:strich}, summarizes the results obtained in \cite{bil2}.
\end{remark}

\begin{remark}
The structure of this overview is such that each part motivates the next one. In particular, Chapter \ref{part:yangmills} relies on the control of the parametrix \eqref{parametrix.intr} (see Theorem \ref{prop:estparam} in Chapter \ref{part:yangmills}), and thus motivates Chapters \ref{part:paramtime}, \ref{part:spacetimeu}, \ref{part:paraminit} and \ref{part:initialu} which precisely deal with the control of that parametrix. Next, in order to control the error term \eqref{error-param} in Chapter \ref{part:paramtime}, we rely on estimates for the optical function $\uom$, which motivates Chapter \ref{part:spacetimeu} where these estimates are proved. In turn, the space-time estimates for $\uom$ in Chapter \ref{part:spacetimeu} are obtained in particular using transport equations, and we need the corresponding control for $\uom$ on the initial slice $\Sigma_0$, which motivates Chapter \ref{part:initialu}. Finally, in order to control the parametrix at initial time \eqref{parametrixinit.intr} in Chapter \ref{part:paraminit}, we rely on estimates for the function $\uom$ on $\Sigma_0$, which motivates again Chapter \ref{part:initialu}. Finally, Chapter \ref{part:yangmills} also relies on sharp $L^4(\MM)$ Strichartz estimates localized in frequency (see Proposition \ref{prop:L4strichartz} in Chapter \ref{part:yangmills}), and thus motivates Chapter \ref{part:strich}. 
\end{remark}

\subsection{Conclusion}

Though this result does not achieve the  crucial  goal of finding a scale invariant  well-posedness  criterion in GR, it is clearly optimal in terms of all currently available  ideas and  techniques. Indeed, within our current understanding,   a better  result would require enhanced   bilinear estimates, which in turn would rely heavily on parametrices. On the other hand, parametrices are based on solutions to the eikonal equation whose control 
  requires, at least, 
$L^2$ bounds for the curvature tensor, as can be seen in many instances in our work.  Thus,  if we are   
 to ultimately   find    a    scale invariant  well-posedness  criterion, it is clear that an entirely new circle        of ideas is  needed. Such a goal is clearly of fundamental importance not just to GR, but also to any physically relevant quasilinear hyperbolic system.

\subsection{\bf Acknowledgements}
This work would be inconceivable without the extraordinary  advancements made 
on nonlinear wave equations  in the last twenty years in which so many have
 participated.
We would like to  single out the contributions of those who have affected this  work  in a more direct fashion,   either through their papers or through  relevant  discussions,  in  various stages   of its long gestation.  D.  Christodoulou's     seminal work \cite{Ch2}  on the    weak cosmic censorship conjecture   had a direct motivating   role on  our program, starting with  a series of papers  of the first author and M. Machedon, in which spacetime  bilinear estimates     were first introduced and used    to take advantage of   the null structure of geometric semilinear  equations such as Wave Maps and Yang-Mills.  The works of  Bahouri- Chemin  \cite {Ba-Ch1}-\cite{Ba-Ch2}   and     D.Tataru \cite{Ta2}   were       the first  to go below  the classical  Sobolev exponent  $s=5/2$,   for any quasilinear system  in  higher dimensions. This   was, at the time,  a major psychological  and technical   breakthrough which opened the way for future developments.  Another major breakthrough  of the period,   with direct   influence on   our  approach to bilinear estimates in curved spacetimes,  is   D. Tataru's work \cite{Ta3}  on  critical well posedness      for Wave Maps, in which null frame spaces were first introduced.    His  joint work  with H. Smith \cite{Sm-Ta}  which, together with \cite{Kl-R2}   is the first to reach   optimal     well-posedness without  bilinear estimates, has also   influenced our approach on parametrices and Strichartz estimates.  
 The authors  would also  like  to acknowledge fruitful conversations with  L. Anderson,  and   J. Sterbenz.

\chapter{Einstein vacuum equations as a Yang-Mills gauge theory}\lab{part:yangmills}

%%%%%%%%%%%%%%%%%%%%%%%%%%%%%%%%%%%%%%%

Recall Steps A, B, C and D introduced in section \ref{sec:strategyproof}. In this chapter, we perform Step A and Step B, i.e. we recast the Einstein equations as a quasilinear Yang-Mills type system and we prove bilinear estimates. This allows us to reduce the proof of Theorem \ref{th:mainter} to Step C and Step D. Here, we only outline the main ideas, and we refer to \cite{KRS} for the details.

\section{Yang-Mills formalism}
\subsection{Cartan formalism} \label{sect:Cartan} Consider an Einstein vacuum space-time $(\MM, \g)$.
  We denote the  covariant differentiation by $\D$. 
   Let $e_{\a}$ be an orthonormal frame
on $\MM$, i.e. 
\beaa
\g(e_\a, e_\b)=\m_{\a\b}=\mbox{diag}(-1,1,\ldots,1).
\eeaa
Consistent with the Cartan formalism we  define the connection 1 form, 
\bea
\label{eq:YM1abstract}
(\A)_{\a\b}(X)=\g(\D_{X}e_\b,e_\a)
\eea
where $X$ is an arbitrary vectorfield in  $T(\MM)$.
Observe that,
\beaa
(\A)_{\a\b}(X)=-(\A)_{\b\a}(X)
\eeaa
i.e.  the $1$ -form $\A_\mu dx^\mu$  takes values  in 
 the Lie  algebra of $SO(1,3)$. We separate the internal indices
 $\a,\b$ from the  external indices $\mu$  according to the following 
 notation.
\bea
\label{eq:YM1}
(\A_\mu)_{\a\b}:=  (\A)_{\a\b}(\pr_\mu)      =       \g(\D_{\mu}e_\b ,e_\a)
\eea
% Note change of definition

The Riemann curvature tensor is defined by
$
\R(X, Y,  U, V)=\g\big(X, \big[ \D_U \D_V-\D_V \D_U -\D_{[U,V]} Y\big]\big)
$
with $X, Y, U, V$ arbitrary vectorfields in $T(\MM)$. 
Thus, taking $U=\pr_\mu, V=\pr_\nu$,   coordinate vector-fields,
\bea\lab{linkRA}
\R(e_\a, e_\b , \pr_\mu, \pr_\nu)&=&\pr_\mu (\A_\nu)_{\a\b}-
\pr_\nu (\A_\mu)_{\a\b}+(\A_\nu)_\a\,^\la  (\A_\mu)_{\la\b}-
(\A_\mu)_\a\,^\la  (\A_\nu)_{\la\b}.
\eea
Defining the Lie bracket,
\bea
\label{eq:YM2}
([\A_\mu, \A_\nu])_{\a\b}=(\A_\mu)_{\a}{\,^\ga} \,( \A_\nu)_{\ga\b}-
(\A_\nu)_{\a}{\,^\ga} \,( \A_\mu)_{\ga\b}
\eea
we obtain:
$$
\R_{\a\b\mu\nu}=\pr_\mu (\A_\nu)_{\a\b}-\pr_\nu (\A_\mu)_{\a\b}-([\A_\mu, \A_\nu])_{\a\b},
$$
or, since $\pr_\mu (\A_\nu)-\pr_\nu
(\A_\mu)=\D_\mu \A_\nu-\D_\nu \A_\mu$
\bea
\label{eq:YM3}
( \F_{\mu\nu})_{\a\b}=\R_{\a\b\mu\nu}=\big(\D_\mu \A_\nu-\D_\nu \A_\mu
-[\A_\mu,
\A_\nu]\big)_{\a\b}.
\eea
where   interpret   $\F$  is  the curvature of the connection  $\A$.

The  usual covariant derivative of the Riemann curvature tensor can be expressed as follows:
\bea
\label{eq:YM4}
\D_\si \R_{\a\b\mu\nu}=\Da_\si F_{\mu\nu}:= \D_\si \F_{\mu\nu}+[\A_\si, \F_{\mu\nu}]
\eea
where we denote by $\Da$ the  covariant derivative
on the corresponding vector bundle. More precisely
if $\U=\U_{\mu_1\mu_2\ldots \mu_k}$ is any $k$-tensor
 on $\MM$ with values on the Lie algebra of $SO(3,1)$,
\bea
\label{eq:YM4'}
\Da_\si  \U=
 \D_\si  \U+ [\A_\si, \U]. 
\eea
\begin{remark}
Recall that in $(\A_\mu)_{\a\b}$, $\a, \b$ are called the internal indices, while $\mu$ are called the  external indices. Now, the internal indices  are mostly irrelevant  in our work. Thus, from now on, we will drop them, except for rare instances where we will need to distinguish between internal indices of the type $ij$ and internal indices of the type $0i$.
\end{remark}

The Bianchi identities for $\R_{\a\b\mu\nu}$
 take the form 
\bea
\label{eq: YM5}
\Da_\si \F_{\mu\nu}+\Da_\mu F_{\nu\si}+\Da_\nu \F_{\si\mu}=0.
\eea
As it is well known the Einstein vacuum equations $\R_{\a\b}=0$
imply 
$ \D^\mu \R_{\a\b\mu\nu}=0.$
Thus, in view of   equation    \eqref{eq:YM4},
\bea
\label{eq:YM6}
0=\Da^\mu\, \F_{\mu\nu}= \D^\mu \F_{\mu\nu} + [ \A^\mu, \F_{\mu\nu}]
\eea
or, in view of \eqref{eq:YM3} and  the vanishing of the Ricci curvature of $\g$,.
\bea
\label{eq:YM7}
\square \A_\nu-\D_\nu(\D^\mu \A_\mu) =\J_\nu
\eea
where
\bea
\label{eq:YM8}
\J_\nu=\D^\mu([\A_\mu, \A_\nu])-[\A_\mu, \F_{\mu\nu}].
\eea
Using again the vanishing of the Ricci curvature it is easy to check,
\bea
\label{eq:YM9}
\D^\nu \J_\nu=0.
\eea

Finally we recall the general formula of transition between two different orthonormal frames  $e_\a$ and $ \widetilde{e}_\a$ on $\MM$, related by, 
\beaa
 \widetilde{e}_\a=\O_{\a}^\ga e_\ga
\eeaa
where
$
\m_{\a\b}=\O_\a^\ga \O_\b^\de \, \m_{\ga\de}
$,
i.e. $\O$ is a smooth map from $\MM$ to the Lorentz group $O(3,1)$.
In other words,  raising and lowering indices with respect to $\m$,
\bea
\O_{\a\la}\O^{\b\la}=\de_{\a}^\b
\eea
Now, $(\widetilde{\A}_\mu)_{\a\b}=\g(\D_\mu  \widetilde{e}_\b, \widetilde{e}_\a)$. Therefore,
\bea
\label{gauge.tr}
(\widetilde{\A}_\mu)_{\a\b}=\O_\a^\ga \O_\b^\de (\A_\mu)_{\ga\de}+\pr_\mu (\O_\a^\ga) \, \O_{\b}^\de\,  \m_{\ga\de}
\eea
\subsection{Compatible frames}
\label{sect:compatible}

Recall that our space-time is assumed to be foliated by  the level surfaces $\Si_t$ of a 
time function $t$,  which are maximal, i.e.  denoting by $k$
the second fundamental form of  $\Si_t$ we have,
\bea\lab{eq:YM9bis}
\tr_g k=0
\eea
where $g$ is the induced metric  on $\Si_t$. Let us choose $e_{(0)}=T$, the future unit normal to the $\Si_t$  foliation, and $e_{(i)}$, $i=1,2,3$  an orthonormal frame tangent  to $\Si_t$.  We call this   a frame compatible with our $\Si_t$ foliation.  We consider the connection coefficients \eqref{eq:YM1}
with respect to this frame. Thus, in particular, denoting   by  $A_0$,   respectively $A_i$, the temporal and spatial components
of $\A_\mu$
\bea
(A_i)_{0j}&=&(A_j)_{0i}=-k_{ij} ,\qquad i,j=1,2,3\label{eq:YM10}  \\
(A_0)_{0i}   &=&-n^{-1}\nabla_in  \qquad i=1,2,3\ \label{eq:YM11} 
\eea
where $n$ denotes the lapse of the $t$-foliation, i.e. $n^{-2}=-\g(\D t, \D
t)$. With this notation  we note  that,
\beaa
\nabla_lk_{ij}=\nabla_l(k_i)_j+k_{in}(A_l)_j\,^n=\nabla^l(A_i)_{0j}+k_{in}(A_l)_j\,^n
\eeaa
 where, as  before, the notation  $\nabla_l(k_i)_j$ or    $ \nabla^l(A_i)_{0j}   $,  is meant to suggest that
 the covariant differentiation affects only the external index $i$. Recalling from \eqref{constk} that $k$ verifies the constraint equations,
$$\nabla^ik_{ij}=0,$$
 we derive,
 \bea\lab{coulomblike1}
 \nab^i( A_i)_{0j}=k_i\,^m (A_i)_{mj}.
 \eea
 
  Besides the choice of $e_0$ we are still free to make  a choice
  for the  spatial elements of the frame $e_1, e_2, e_3$.  In other words we consider  frame transformations which keep $e_0$ fixed, i.e transformations
  of the type, 
  \beaa
  \tilde{e}_i=O_i^j e_j
  \eeaa
  with  $O$ in the orthogonal group $O(3)$. 
  We now have,  according to \eqref{gauge.tr},
  \beaa
  (\tilde{A}_m)_{ij}=O_i^k O_j^l  (A_m)_{kl}+\pr_m(O_i^k) O_j^l \de_{kl}
  \eeaa
  or, schematically,
  \bea
  \tilde{A}_m &=& O A_m O^{-1}+(\pr_m O) O^{-1}
  \eea
  formula in which we understand that   only the   spatial internal indices 
  are involved. We shall use this freedom later to exhibit a frame $e_1, e_2, e_3$ 
  such that the corresponding connection $A$ satisfies the coulomb gauge condition 
  $\nabla^l(A_l)_{ij}=0$ (see Lemma \ref{lemma:uhlenbeck}).
 
 \subsection{Notations} We use greek indices to denote general indices on $\MM$ which do not refer to the particular frame $(e_0, e_1, e_2, e_3)$. The letters $a, b, c, d$ will be used to denote general indices on $\Si_t$ which do not refer to the particular frame $(e_1, e_2, e_3)$. Finally, the letters $i, j, l, m, n$ will only denote indices relative to the frame $(e_1, e_2, e_3)$. Also, recall that $\D$ denotes the covariant derivative on $\MM$, while $\nabla$ denotes the induced covariant derivative on $\Si_t$. Furthermore, $\prb$ will always refer to the derivative of a scalar quantity relative to one component of the frame $(e_0, e_1, e_2, e_3)$, while $\pr$ will always refer to the derivative of a scalar quantity relative to one component of the the frame $(e_1, e_2, e_3)$, so that $\prb=(\pr_0, \pr)$. For example, 
$\pr A$ may be any term of the form $\pr_i(A_j)$, $\pr_0(A)$ may be any term of the form $\pr_0(A_j)$, 
$\pr(A_0)$ may be any term of the form $\pr_j(A_0)$, and $\prb\A=(\prb A, \prb(A_0))=\big(\pr_0(A_0), \pr(A_0), \pr_0(A), \pr( A) \big)$. Note that we use  brackets such as  $(A_j )$   to emphasize that  we are dealing with  $su(3,1)$ objects.
Often,  however,   we will simply   drop them. 

We introduce the curl operator $\curl$ defined for any $su(3,1)$-valued triplet $(\om_1, \om_2, \om_3)$ of functions on $\Si_t$  as  follows:
\bea\lab{def:curl}
(\curl \om)_i=\in_i\,^{jl}\partial_j(\om_l),
\eea
where $\in_{ijl}$ is fully antisymmetric and such that $\in_{123}=1$. We also introduce the divergence operator $\div$ defined for any $su(3,1)$-valued tensor $A$ on $\Si_t$  as  follows:
\bea\lab{def:div}
\div A=\nabla^l(A_l)=\pr^l(A_l)+A^2.
\eea
\begin{remark}
The term $A^2$ in \eqref{def:div} corresponds to a quadratic expression in components of $A$, 
where the particular indices do not matter. In the rest of this part, we will adopt this schematic notation for lower order terms (e.g. terms of the type $A^2$ and $A^3$) where the particular indices do not matter. 
\end{remark}

Finally, $\square A_0$ and $\square A_i$ will always be understood as $\square(A_0)$ and $\square(A_i)$, while $(\square A)_\a$ refers to the tensorial wave equation. Also, $\Delta A_0$ will always refer to $\Delta(A_0)$.
\begin{remark}
Since $\pr_0$ and $\pr_j$ are not coordinate derivatives, note that the commutators $[\pr_j, \pr_0]$ and $[\pr_j, \pr_l]$ do not vanish. In fact we have,  schematically,
\bea\lab{notcoordinate}
[\pr_i, \pr_j]\phi=A\prb\phi\textrm{ and }[\pr_j, \pr_0]\phi=\A\prb\phi,
\eea
for any scalar function $\phi$ on $\MM$.
\end{remark}

  \subsection{ Main equations  for $(A_0, A)$} 
 Using the conventions above   one can prove  the  following proposition.
\begin{proposition}
\label{prop:decomp}
Consider  an orthonormal frame $e_\a$  compatible with  a    maximal  $\Si_t$ foliation of the space-time $\MM$ 
with connection coefficients    $\A_\mu$   defined by \eqref{eq:YM1}, their decomposition  $\A=(A_0, A)$ relative to the  same frame  $e_\a$,
and Coulomb- like condition on the frame,
\beaa
\div A=A^2.
\eeaa
 In such a frame the  Einstein-vacuum equations take the form,
\bea
\De A_0 &=& \A\prb A+\A\pr (A_0)+\A^3,
\label{eq:YM21}
\eea
\bea
\label{eq:YM22}
\square A_i+\pr_i (\pr_0 A_0)   &=& A^j\pr_j A_i+A^j\pr_i A_j+A_0\prb\A+A\prb(A_0) +\A^3.
\eea
\end{proposition}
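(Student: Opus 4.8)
The plan is to derive \eqref{eq:YM21}--\eqref{eq:YM22} directly from the Yang--Mills form of the Einstein equations \eqref{eq:YM7}--\eqref{eq:YM8} by decomposing all objects into their temporal and spatial components relative to the compatible frame $e_\a$, and then using the Coulomb-like gauge condition $\div A = A^2$ together with the maximality of the foliation. First I would take $\nu = 0$ in \eqref{eq:YM7}: since $\square_\g A_0 - \D_0(\D^\mu A_\mu) = \J_0$, the key is to extract the leading elliptic operator for $A_0$. Here I would use that $\D^\mu A_\mu = \D^0 A_0 + \D^i A_i = -\pr_0 A_0 + \div A + (\text{frame terms})$, and that the maximality condition $\tr_g k = 0$ together with \eqref{eq:YM10}--\eqref{eq:YM11} ensures that the ``bad'' term $\tr k \cdot A$ and the commutator contributions organize into the schematic form $\A\prb A + \A\pr(A_0) + \A^3$. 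The wave operator $\square_\g$ applied to the scalar $A_0$ differs from $\De A_0$ by a $\pr_0^2 A_0$ term plus lapse corrections; the point is that when combined with $-\D_0(\D^\mu A_\mu) = -\D_0(-\pr_0 A_0 + \ldots)$ the $\pr_0^2 A_0$ terms cancel, leaving the elliptic equation \eqref{eq:YM21}. The $\J_0$ term from \eqref{eq:YM8} is manifestly of the cubic/quadratic schematic type $\A^3 + \A\prb\A$ after expanding the brackets and the curvature $\F$ via \eqref{eq:YM3}.

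Next I would take $\nu = i$ in \eqref{eq:YM7}: $\square_\g A_i - \D_i(\D^\mu A_\mu) = \J_i$. Expanding $\D^\mu A_\mu = -\pr_0 A_0 + \div A + \lot = -\pr_0 A_0 + A^2 + \lot$ using the gauge condition, the term $-\D_i(\D^\mu A_\mu)$ produces $\pr_i(\pr_0 A_0)$ (which moves to the left side of \eqref{eq:YM22}) plus $\pr_i(A^2)$ and frame-commutator terms of the type $\A\prb\A$. The right side $\J_i = \D^\mu([\A_\mu,\A_i]) - [\A_\mu, \F_{\mu i}]$ needs to be reorganized: the genuinely quadratic-derivative terms should be $A^j\pr_j A_i + A^j\pr_i A_j$, which I expect to emerge from $\D^j([\A_j, \A_i])$ after using $\div A = A^2$ to eliminate the $(\div A) A_i$ piece, while the remaining pieces ($[\A_0,\A_i]$ contributions, the $\F$ contractions, lapse terms from \eqref{eq:YM11}) fall into the schematic classes $A_0\prb\A$, $A\prb(A_0)$, $\A^3$. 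One also has to account for the fact that $\pr_i$, $\pr_0$ are not coordinate derivatives, so commutators \eqref{notcoordinate} generate extra $\A\prb$-type terms — these are harmless as they fit the stated schematic right-hand sides.

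The main obstacle, I expect, is the bookkeeping: carefully tracking which terms arise from (i) the difference between $\square_\g$ acting on a scalar component versus the tensorial $\square_\g \A_\nu$, (ii) the frame connection coefficients appearing when covariant derivatives hit internal indices, (iii) the non-commutativity of the frame derivatives \eqref{notcoordinate}, and (iv) the expansion of $\J_\nu$. Each of these is individually routine but collectively delicate, and the real content is verifying that after using \emph{both} the Coulomb-like condition $\div A = A^2$ \emph{and} maximality $\tr_g k = 0$ (equivalently \eqref{coulomblike1}, $\nab^i (A_i)_{0j} = k_i{}^m (A_i)_{mj} = A^2$), no uncontrolled term of the form $\pr^2 A$ or $\pr\prb A$ survives outside the displayed operators $\De A_0$, $\square A_i$, $\pr_i(\pr_0 A_0)$, and the bilinear terms $A^j\pr_j A_i + A^j \pr_i A_j$. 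In particular I would be careful that the $\D_i(\pr_0 A_0)$ extraction is clean and that the elliptic structure of \eqref{eq:YM21} is not spoiled by a stray $\pr_0$-derivative. Given the schematic nature of the target equations, once the cancellations of the second-order ``bad'' terms are checked, the remaining terms can be absorbed into the stated schematic right-hand sides without further work.
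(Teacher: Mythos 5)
Your approach — projecting the Yang--Mills wave equation \eqref{eq:YM7}--\eqref{eq:YM8} onto the frame, splitting $\A=(A_0,A)$, and invoking the Coulomb-like gauge together with the constraint equations and maximality — is exactly the one the paper has set up; the detailed bookkeeping is deferred to \cite{KRS}, and your identification of the key cancellation (the $\pr_0^2 A_0$ terms between $\square_\g A_0$ and $-\D_0(\D^\mu\A_\mu)$) is correct.

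One point in your proposal is too quick, though. You claim that after the second-order cancellations, ``the remaining terms can be absorbed into the stated schematic right-hand sides without further work,'' and you describe $\J_0$ as being ``manifestly of the schematic type $\A^3+\A\prb\A$.'' But \eqref{eq:YM21} is sharper than $\A\prb\A$: its right-hand side is $\A\prb A + \A\pr(A_0)+\A^3$, and in particular it contains \emph{no} term of the form $\A\,\pr_0 A_0$. This is not a cosmetic detail — it is precisely what makes the $A_0$ equation usable as a genuinely elliptic equation on $\Si_t$ (including when one later commutes with $\pr_t$ to control $\pr_t A_0$). Verifying the absence of $\A\pr_0 A_0$ requires noticing a further quadratic-level cancellation: after expanding $\F_{\mu 0}$ via \eqref{eq:YM3} in $\J_0$, the piece $[\A^\mu,\D_\mu\A_0]$ cancels against $-[\A^\mu,\D_\mu\A_0]$ coming from $-[\A^\mu,\F_{\mu 0}]$, leaving $[\D^\mu\A_\mu,\A_0]+[\A^\mu,\D_0\A_\mu]+\A^3$; the dangerous $[\pr_0 A_0, A_0]$ contributions from these two remaining brackets then cancel against each other (because $\A^0 = -A_0$ with the $(-,+,+,+)$ signature), so that only $[A^j,\pr_0 A_j]=\A\prb A$ survives. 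Your sketch, which treats the $\J_0$ expansion purely schematically, would miss this, so you should flag it as a cancellation to be checked rather than an absorption that is automatic.
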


 \begin{remark}
 It is extremely important to our strategy that we have reduced the covariant wave equation \eqref{eq:YM7} to the system of scalar equations \eqref{eq:YM21} \eqref{eq:YM22} (see remark \ref{rem:scalarize}).    
 \end{remark}

\section{Strategy of the proof of the main Theorem}

In this section, we discuss the strategy of the proof of the main theorem after reduction to small initial data, i.e. Theorem  \ref{th:mainter}.

\subsection{The Uhlenbeck type lemma}

In order to exhibit a frame $e_1, e_2, e_3$ such that together with $e_0=T$ we obtain a connection $\A$ satisfying our Coulomb type gauge on the slice $\Si_t$, we will need the following result in the spirit of the Uhlenbeck lemma \cite{Uhl}.
\begin{lemma}\lab{lemma:uhlenbeck}
Let $(M,g)$ a 3 dimensional Riemannian asymptotically flat manifold. Let $R$ denote its curvature tensor and $r_{vol}(M,1)$ its volume radius on scales $\leq 1$. Let $\tilde{A}$ a connection on $M$ corresponding to an orthonormal frame. Assume the following bounds:

\beaa
\|\tilde{A}\|_{L^2(M)}+ \|\nab \tilde{A}\|_{L^2(M) }+ \norm{R}_{L^2(M)}\leq \delta\qquad \textrm{ and}\,\, r_{vol}(M,1)\geq \frac{1}{4},
\eeaa
where $\delta>0$ is a small enough constant. Assume also that $\tilde{A}$ and $\nabla\tilde{A}$ belong to $L^2(M)$. Then, there is another connection $A$ on $M$ satisfying he Coulomb like gauge condition, and such that
\beaa
\|\tilde{A}\|_{L^2(M)}+ \|\nab \tilde{A}\|_{L^2(M)} \leq \de
\eeaa
 Furthermore, if $\nabla^2\tilde{A}$ belongs to $L^2(M)$, then $\nabla^2A$ belongs to $L^2(M)$.
\end{lemma}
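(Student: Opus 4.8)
The plan is to follow Uhlenbeck's original scheme \cite{Uhl}, adapted to the Riemannian setting at hand. The starting connection $\tilde A$ corresponds to some orthonormal frame $(\tilde e_1,\tilde e_2,\tilde e_3)$ on $(M,g)$, and we seek a rotation $O:M\to SO(3)$ such that the rotated connection $A$, obtained via the gauge-transformation law $A_m=OA_mO^{-1}+(\pr_m O)O^{-1}$ displayed in Section~\ref{sect:compatible}, satisfies $\div A=A^2$. Equivalently, writing $O=e^{-\Phi}$ with $\Phi$ an $so(3)$-valued function, the Coulomb condition becomes a quasilinear elliptic equation for $\Phi$ of the schematic form $\De\Phi=\div\tilde A+(\text{quadratic in }\Phi,\nabla\Phi,\tilde A)$, and the task is to solve it for the smallness parameter $\de$.

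First I would set up the functional-analytic framework. Because we only assume $\|\tilde A\|_{L^2}+\|\nabla\tilde A\|_{L^2}+\|R\|_{L^2}\le\de$ together with the volume-radius lower bound, I would first invoke the harmonic-coordinate result (Theorem~\ref{th:coordharm}) to obtain a locally finite cover of $M$ by geodesic balls carrying harmonic coordinates in which $g$ is $C^0$-close to the Euclidean metric and $\pr^2 g\in L^2$; this furnishes uniform elliptic estimates and Sobolev embeddings $H^2\hookrightarrow L^\infty$, $H^1\hookrightarrow L^6$ with constants depending only on the stated data. Then I would run a continuity/fixed-point argument: define the map $\Phi\mapsto\De^{-1}\big(\div\tilde A+Q(\Phi,\nabla\Phi,\tilde A)\big)$ on a small ball in $H^2(M)$ (so that $\nabla A\in L^2$ is retained), show it is a contraction using the quadratic structure of $Q$ and the smallness of $\de$, and extract the fixed point $\Phi$, hence $A$, satisfying $\div A=A^2$. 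The bookkeeping of the asymptotically flat end is handled by noting that $\tilde A$ and its derivatives decay there, so $\Phi\to 0$ at infinity and the same weighted estimates close.

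The bootstrap for higher regularity is routine elliptic bootstrapping: once $\Phi\in H^2$ solves the elliptic equation and $\nabla^2\tilde A\in L^2$, differentiating the equation once and using the already-established control of lower-order terms gives $\nabla^2 A\in L^2$ (and the analogous statement iterates if more regularity of $\tilde A$ is assumed). The bound $\|A\|_{L^2}+\|\nabla A\|_{L^2}\le\de$ for the new connection follows because the gauge transformation changes $\tilde A$ only by $(\pr O)O^{-1}=-\nabla\Phi+\text{h.o.t.}$, and the elliptic estimate controls $\|\nabla\Phi\|_{H^1}$ by $\|\div\tilde A\|_{L^2}+(\text{quadratic})\lesssim\de$.

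\textbf{Main obstacle.} The delicate point is that the elliptic estimate $\|\Phi\|_{H^2}\lesssim\|\De\Phi\|_{L^2}$ must hold with a constant \emph{independent of $\tilde A$} and depending only on $\|R\|_{L^2}$ and $r_{vol}(M,1)$ — i.e., uniform over the class of metrics in the hypothesis. This is exactly where the harmonic-coordinate covering of Theorem~\ref{th:coordharm} is essential, since it converts the abstract geometric hypotheses into quantitative control of the Laplacian's coefficients; without it one has no a priori handle on the constants. A secondary subtlety is ensuring the rotation stays in the small-data regime so that $O=e^{-\Phi}$ is well-defined and the implicit-function/continuity argument does not leave the ball — this is precisely why the smallness threshold $\de$ must be chosen after fixing the elliptic and Sobolev constants.
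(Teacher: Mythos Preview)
Your proposal is correct and matches the paper's approach: the paper does not give a detailed proof but simply states that it is ``a straightforward adaptation, in a simpler situation, of \cite{Uhl}.'' Your outline of that adaptation---solving for the gauge rotation via a contraction argument on the elliptic equation for $\Phi$, with uniform constants supplied by the harmonic-coordinate covering of Theorem~\ref{th:coordharm}, and bootstrapping higher regularity---is exactly the expected content of such an adaptation.
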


The proof of Lemma \ref{lemma:uhlenbeck} is a straightforward adaptation, in a simpler situation,  of   \cite{Uhl}. 

\subsection{Classical local existence}

We  rely on  the following standard  well-posedness result for the Cauchy problem for the Einstein equations \eqref{EVE} in the maximal foliation.

\begin{theorem}[Well-posedness for the Einstein equation in the maximal foliation]\lab{th:classicalwp}
Let $(\Si_0 ,g , k)$ be asymptotically flat and satisfying the constraint equations \eqref{const}, with $\textrm{Ric}$, $\nabla \textrm{Ric}$, $k$, $\nabla k$ and  $\nabla^2k$ in $L^2(\Si_0)$, and $r_{vol}(\Si_0,1)>0$. Then, there exists a unique asymptotically flat solution $(\MM, {\bf g})$ to the Einstein vacuum equations \eqref{EVE} corresponding to this initial data set, together with a maximal foliation by space-like hypersurfaces $\Si_t$ defined as level hypersurfaces of a time function $t$. Furthermore, there exists a time 
$$T_*=T_*(\norm{\nabla^{(l)}\textrm{Ric}}_{L^2(\Si_0)}, 0\leq l\leq 1, \norm{\nabla^{(j)}k}_{L^2(\Si_0)}, 0\leq j\leq 2, r_{vol}(\Si_0,1))>0$$
such that the maximal foliation exists for on $0\leq t\leq T_*$ with a corresponding control in $L^\infty_{[0,T_*]}L^2(\Si_t)$ for $\textrm{Ric}$, $\nabla\textrm{Ric}$, $k$, $\nabla k$ and $\nabla^2k$. 
\end{theorem}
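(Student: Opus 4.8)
The statement to prove is Theorem~\ref{th:classicalwp}, classical local existence for the Einstein vacuum equations in a maximal foliation, with the time of existence and control depending only on $\|\nabla^{(l)}\mathrm{Ric}\|_{L^2(\Si_0)}$ for $0\le l\le 1$, $\|\nabla^{(j)}k\|_{L^2(\Si_0)}$ for $0\le j\le 2$, and $r_{vol}(\Si_0,1)$. The plan is to deduce this from the already-available high-regularity well-posedness theory (Theorem~\ref{thm:Bruhat} and its refinements) combined with the construction and propagation of a maximal time function, and then to carefully track which norms the time of existence actually depends on. This is \emph{not} the hard part of the BCC; it is a preparatory classical statement, so the proof should be an assembly of standard ingredients rather than anything delicate.

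First I would use the hypotheses to place the data in a classical Sobolev space. By the volume radius lower bound $r_{vol}(\Si_0,1)>0$ together with $\mathrm{Ric}\in L^2$ one invokes Theorem~\ref{th:coordharm} (Anderson / Petersen) to obtain a uniformly controlled atlas of harmonic coordinate charts in which $g$ is close to the Euclidean metric and $\partial^2 g\in L^2_{loc}$; bootstrapping with the elliptic harmonic-coordinate identity $\Delta_g x^i=0$, which schematically reads $g^{ab}\partial_a\partial_b g_{ij}=Q(g,\partial g)+\mathrm{Ric}\ast g$, and using $\nabla\mathrm{Ric}\in L^2$, one upgrades to $g\in H^3_{loc}$ and $k\in H^2_{loc}$, hence in particular $(g,k)\in H^s_{loc}\times H^{s-1}_{loc}$ for $s=3>5/2$. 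Then Theorem~\ref{thm:Bruhat} produces a globally hyperbolic development $(\MM,\g)$ with $\Si_0$ a Cauchy hypersurface, and by finite speed of propagation together with the local uniform-in-chart bounds one obtains a uniform slab $[0,T_1]$ on which the solution exists with $\g\in H^3$ control, where $T_1$ depends only on the stated quantities (the dependence passes through the uniform chart constants, which by Theorem~\ref{th:coordharm} depend only on $\|\mathrm{Ric}\|_{L^2(\Si_0)}$ and $r_{vol}(\Si_0,1)$, and through the finitely many $H^3$ chart norms controlled by $\|\nabla^{(l)}\mathrm{Ric}\|_{L^2}$ and $\|\nabla^{(j)}k\|_{L^2}$).

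Next I would construct the maximal time function. Solving the prescribed-mean-curvature (here: maximal, $\mathrm{tr}_g k=0$) problem for a foliation near $\Si_0$ is a classical elliptic–hyperbolic construction: one looks for slices $\Si_t$ as graphs over $\Si_0$, which amounts to solving a quasilinear elliptic equation $\Delta_{\bar g}\phi = F(\phi,\partial\phi,\text{2nd fund.\ form})$ on each slice coupled to the evolution, equivalently the lapse $n$ solves the elliptic equation \eqref{eqlapsen}, $\Delta n = n|k|^2$, with $n\to 1$ at infinity. Standard elliptic estimates in the controlled harmonic atlas give $n-1\in H^3$ with norm controlled by $\|k\|_{L^2},\|\nabla k\|_{L^2},\|\nabla^2 k\|_{L^2}$ and the curvature norms; a continuity/bootstrap argument then shows the maximal foliation exists and retains $L^\infty_t H^3$-type control on $[0,T_*]$ for a possibly smaller $T_*\le T_1$ depending on the same data. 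Along the way one records the structure equations \eqref{eq:structfol1}--\eqref{eq:structfol3} to convert spacetime curvature control into control of $\mathrm{Ric}$, $\nabla\mathrm{Ric}$, $k$, $\nabla k$, $\nabla^2 k$ on each leaf $\Si_t$, and energy estimates for the Bianchi equations \eqref{eq: YM5} (or equivalently for \eqref{E-wave.coord}) close the bound $\|(\mathrm{Ric},\nabla\mathrm{Ric},k,\nabla k,\nabla^2 k)\|_{L^\infty_{[0,T_*]}L^2(\Si_t)}\le C$ with $C$ of the advertised form. Finally, the volume radius stays bounded below on $[0,T_*]$ after possibly shrinking $T_*$, since its time derivative is controlled by $\|k\|_{L^\infty}$, itself controlled by the $H^3$-type bounds via Sobolev embedding in the uniform charts.

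The main obstacle—though it is technical rather than deep at this level of regularity—is the \emph{uniformity} of all the above constants: one must ensure that the local existence time, the maximal-foliation existence time, and the lower bound on the volume radius do not degenerate, and that every constant is traced back solely to $\|\nabla^{(l)}\mathrm{Ric}\|_{L^2(\Si_0)}$ ($0\le l\le1$), $\|\nabla^{(j)}k\|_{L^2(\Si_0)}$ ($0\le j\le2$), and $r_{vol}(\Si_0,1)$, rather than to, say, an a priori choice of coordinates. This is exactly what Theorem~\ref{th:coordharm} is designed to handle on $\Si_0$; propagating the analogous uniform atlas to the slices $\Si_t$ for $t\in[0,T_*]$ uses the already-established $L^\infty_t H^3$ bounds together with the transport equation for the volume form along the foliation. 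Since this is the classical ($H^3$) theory and not the $L^2$-curvature regime, no Strichartz, bilinear, or parametrix input is needed; the argument is a quantified packaging of \cite{Br}, \cite{FM}, \cite{HKM}, \cite{ChKl}, \cite{An}, \cite{Pe} adapted to the maximal gauge, and I would present it as such, referring to \cite{conditional}, \cite{Pa}, \cite{W} for the energy-estimate bookkeeping that turns curvature control into control of $\mathrm{Ric}$, $k$ and their derivatives.
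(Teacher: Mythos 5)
The paper does not actually prove Theorem \ref{th:classicalwp}; it declares the result ``standard,'' states that its proof ``relies solely on energy estimates,'' and refers to Chapter~10 of \cite{ChKl} for a related statement. Your proposal fills in the outline of that standard argument, and the main steps are in the right places: (i) use Theorem~\ref{th:coordharm} to place the data in a uniformly controlled harmonic atlas with $g\in H^2_{loc}$; (ii) use the harmonic-coordinate identity $g^{ab}\partial_a\partial_b g_{ij}=-2\mathrm{Ric}_{ij}+Q(g,\partial g)$ and the extra hypothesis $\nabla\mathrm{Ric}\in L^2$ to bootstrap to $g\in H^3_{loc}$, $k\in H^2_{loc}$ (the bootstrap indeed closes at $L^2$: after the first elliptic gain one has $\partial g\in W^{1,3}\hookrightarrow L^6$ and $\partial^2 g\in L^3$, so the quadratic terms $\partial g\cdot\partial^2 g$ and $\Gamma\ast\mathrm{Ric}$ land in $L^2$); (iii) invoke Theorem~\ref{thm:Bruhat} with $s=3$; (iv) construct and propagate the maximal foliation via the lapse equation \eqref{eqlapsen} and close energy estimates via the structure equations \eqref{eq:structfol1}--\eqref{eq:structfol3}; (v) trace all constants back to $\|\nabla^{(\le1)}\mathrm{Ric}\|_{L^2}$, $\|\nabla^{(\le2)}k\|_{L^2}$, and $r_{vol}(\Si_0,1)$. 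This is exactly what the ChKl Chapter~10 machinery does, so you are reconstructing the route the paper gestures toward rather than choosing a different one.

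Two places where your write-up is thinner than a complete proof would need to be, though neither is a wrong turn. First, the step from ``Theorem~\ref{thm:Bruhat} produces a development'' to ``a uniform slab $[0,T_1]$ with $H^3$ control depending only on the stated quantities'' deserves more than ``finite speed of propagation plus local uniform-in-chart bounds'': what is actually needed is a quantitative continuation criterion (energy estimates for the reduced equations in the uniform harmonic/wave charts, plus a covering argument that uses asymptotic flatness to bound the number of nontrivial charts), since the statement of Theorem~\ref{thm:Bruhat} as quoted is qualitative. Second, the assertion that the volume radius lower bound persists because ``its time derivative is controlled by $\|k\|_{L^\infty}$'' is only a heuristic: $r_{vol}$ is an infimum over balls and one has to compare metrics on $\Si_t$ and $\Si_0$ directly, which is done in the paper (and in \cite{conditional}) by comparing the induced metric through the lapse-weighted transport of the volume form, using the $L^\infty$ bounds on $k$ and $n$ coming from the $H^3$-level Sobolev embedding. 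These are exactly the book-keeping points the paper offloads to \cite{ChKl}, so your sketch is appropriately scoped but should flag them explicitly as the places where the quantitative work lives.
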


Theorem \ref{th:classicalwp} requires two more derivatives both for $R$ and $k$ with respect to the main Theorem \ref{th:main}. Its proof is standard and relies solely on energy estimates (as opposed to Strichartz or  bilinear estimates). We refer the reader to \cite{ChKl} chapter 10 for a related statement.

\begin{remark} In the  proof  of  our main theorem  
 the result above will be used only    as an extension   tool (see steps 1 and 3 below), only      for very  tiny values of 
  the time interval.
\end{remark}

\subsection{ Weakly regular  null  hypersurfaces}  We shall be working with  null hyper surfaces in $\MM$ verifying 
 a set of     reasonable     assumptions, described below. These assumptions will be easily verified  by the level hyper surfaces  $\HH_u$   solutions $u$ of the eikonal equation $\g^{\mu\nu} \pr_\mu\pr_\nu=0$ discussed in  section \ref{sec:parametrix}. The regularity of the eikonal  equation is  studied in detail  in \cite{param3} (see also Chapter \ref{part:spacetimeu}).

\begin{definition} 
Let $\HH$ be a  null hypersurface  with future  null    normal  $L$ verifying  ${\bf g}(L,T)=-1$. Let also $N=L-T$.  We denote by $\nabb$ the induced connection along the $2$-surfaces $\HH\cap\Si_t$. We say that $\HH$ is 
 weakly  regular provided that, 
\bea\lab{assumptionH1}
\norm{\D L}_{L^3(\HH)}+\norm{\D N}_{L^3(\HH)}\les 1,
\eea
and the following Sobolev embedding holds for any scalar function $f$ on $\HH$:
\bea\lab{assumptionH2}
\norm{f}_{L^6(\HH)}\les \norm{\nabb f}_{L^2(\HH)}+\norm{L(f)}_{L^2(\HH)}+\norm{f}_{L^2(\HH)}.
\eea
\end{definition}

\subsection{Main bootstrap assumptions}
Let $M\geq 1$ a large enough constant to be chosen later in terms only of universal constants. By choosing $\ep>0$ sufficiently small, we can also ensure $M\ep$ is small enough.     From now on, we assume the following bootstrap assumptions  hold true on a fixed  interval  $[0, T^*]$,  for some $0<T^*\le 1$.  Note that  $\HH$ denotes an 
 arbitrary  weakly regular null hypersurface  with  future  normal  $L$, normalized by the condition ${\bf g}(L,T)=-1$.

\begin{itemize}
\item   \textit{Bootstrap curvature assumptions}
\bea\label{bootR}
\norm{\R}_{\lsit{2}}\leq M\ep.
\eea
Also,
\bea\lab{bootcurvatureflux}
\norm{\R\c L}_{L^2(\HH)}\leq M\ep,
\eea
where $\R\c L$ denotes any component of $\R$ such 
that at least one index is contracted with $L$. 
\item \textit{Bootstrap  assumptions for the connection}
We  also assume  that there exist  $\A=(A_0, A)$   verifying   our Coulomb type condition on  $ [0,T^*]$ ,   
such that,
\bea\label{bootA}
\norm{A}_{\lsit{2}}+\norm{\prb A}_{\lsit{2}}+\norm{A}_{\lsitt{2}{7}}\leq M\ep,
\eea
and:
\bea\label{bootA0}
\nn\norm{A_0}_{\lsit{2}}+\norm{\prb A_0}_{\lsit{2}}+\norm{A_0}_{\lsitt{2}{\infty}}+\norm{\prb A_0}_{\lsit{3}}&&\\
+\norm{\pr\prb A_0}_{\lsit{\frac{3}{2}}}&\leq& M\ep.
\eea
\end{itemize}
\begin{remark}
Together with the estimates in \cite{param3} (see section 4.4 in that paper, and also section \ref{ch4:sec:estn}),  the bootstrap  assumption \eqref{bootR}
 yields:
\bea\label{bootk}
\norm{k}_{\lsit{2}}+\norm{\nabla k}_{\lsit{2}}\lesssim M\ep.
\eea
Furthermore, the bootstrap assumption \eqref{bootcurvatureflux} together with the estimates in \cite{param3} (see section 4.2 in that paper, and also section \ref{ch4:sec:lowerboundvolrad}) yields:
\bea\lab{bootvolumeradius}
\inf_tr_{vol}(\Si_t,1)\geq \frac{1}{4}.
\eea
\end{remark}

In addition we make the following bilinear estimates assumptions for $\A$ and $\R$.:
\begin{itemize}
\item \textit{Bilinear assumptions I}.
Assume,
\bea\lab{bil1}
\norm{A^j\pr_jA}_{L^2(\MM)}\lesssim M^3\ep^2.
\eea
Also, let $B=(-\Delta)^{-1}\curl  A $ (see   \eqref{defineB}  and  the  accompanying explanations). Then, we have:
\bea\lab{bil5}
\norm{A^j\pr_j(\prb B)}_{L^2(\MM)}\lesssim M^3\ep^2,
\eea
and:
\bea\lab{bil6}
\norm{\R_{\c\,\c\,j\,0}\pr^j B}_{L^2(\MM)}\lesssim M^3\ep^2.
\eea
Finally,  for any weakly regular  null hypersurface $\HH$ and  any smooth  scalar function $\phi$ on $\MM$,
\bea\lab{bil7}
\norm{k_{j\,\c}\pr^j\phi}_{L^2(\MM)}\les M^2\ep\sup_{\HH}\norm{\nabb\phi}_{L^2(\HH)},
\eea
and
\bea\lab{bil8}
\norm{A^j\pr_j\phi}_{L^2(\MM)}\les M^2\ep\sup_{\HH}\norm{\nabb\phi}_{L^2(\HH)},
\eea
where the supremum is taken over all null hypersurfaces $\HH$.
\item  \textit{Bilinear assumptions II}. We assume,
\bea\lab{bil3}
\norm{(-\Delta)^{-\frac12}(Q_{ij}(A,A))}_{L^2(\MM)}\lesssim M^3\ep^2,
\eea
where the bilinear form $Q_{ij}$ is given by $Q_{ij}(\phi,\psi)=\pr_i\phi\pr_j\psi-\pr_j\phi\pr_i\psi$. Furthermore, we also have:
\bea\lab{bil4}
\norm{(-\Delta)^{-\frac12}(\pr(A^l)\pr_l  A)}_{L^2(\MM)}\lesssim M^3\ep^2.
\eea
\item \textit{Non-sharp Strichartz assumption}
\bea\lab{bootstrich}
\norm{A}_{\lsitt{2}{7}}\les M^2 \ep.
\eea
and, for $B=(-\De)^{-1}\curl A$, (see   \eqref{defineB}  and  the  accompanying explanations).
\bea
\norm{\pr B}_{\lsitt{2}{7}}\lesssim M^2\ep. \lab{bootstrichB}
\eea
\end{itemize}

\begin{remark}
Note that the Strichartz estimate for  $\norm{A}_{\lsitt{2}{7}}$ is far from being sharp. Nevertheless, this estimate will be sufficient for the proof as it will only be used to deal with lower order terms.
\end{remark}

Finally we also need a trilinear bootstrap  assumption.  For  this we need to introduce the Bell Robinson tensor,
 \begin{equation}\label{def:bellrobison}
 Q_{\a\b\ga\de}=\R_\a\,^\la\,\ga\,^\si \R_{\b\,\la\,\de\,\si}+\dual \R_\a\,^\la\,\ga\,^\si\dual \R_{\b\,\la\,\de\,\si}
 \ee
\begin{itemize}
\item
\textit{Trilinear bootstrap assumption.} 
  We assume the following,
\bea
\left|\int_{\MM}Q_{ij\ga\de}k^{ij}e_0^\ga e_0^\de\right| &\les& M^4 \ep^3.\label{trilinearboot}
\eea
\end{itemize}

Let us conclude this section by remarking  that the  bootstrap assumptions are verified  for a sufficiently small  final value $T^*$.
\begin{proposition}\lab{prop:continuity}
The above bootstrap assumptions are verified on $0\leq t\leq T^*$ for a sufficiently small $T^*>0$.
\end{proposition}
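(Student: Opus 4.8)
The plan is to verify each bootstrap assumption for small $T^*$ by a local-in-time continuity argument, using the classical local existence result (Theorem \ref{th:classicalwp}) to produce a smooth solution on $[0,T_*]$ and then showing that every quantity in the bootstrap list can be made $\leq M\ep$ (or the appropriate polynomial power of $M\ep$) by shrinking the time interval. First I would invoke Theorem \ref{th:classicalwp} with the glued small data of Theorem \ref{th:mainter}: since $\mathrm{Ric}$, $\nabla\mathrm{Ric}$, $k$, $\nabla k$, $\nabla^2 k$ are all in $L^2(\Si_0)$ and $r_{vol}(\Si_0,1)>0$, there is a maximal-foliated smooth solution on some $[0,T_*]$ with all these norms controlled in $L^\infty_{[0,T_*]}L^2(\Si_t)$. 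On such a smooth slab every norm appearing in the bootstrap assumptions is finite and, crucially, depends continuously on $t$; at $t=0$ the curvature and connection norms are bounded by $C\ep \ll M\ep$ (using the smallness of the data, the elliptic estimates for $A_0$ from \eqref{eq:YM21}, and the Uhlenbeck-type Lemma \ref{lemma:uhlenbeck} to pass from an arbitrary frame to one satisfying the Coulomb condition $\div A = A^2$).

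Next I would treat the assumptions in three groups. The pointwise-in-time $L^2$ norms \eqref{bootR}, \eqref{bootA}, \eqref{bootA0}, and the derived \eqref{bootk}, are handled by noting that on $[0,T_*]$ these are continuous functions of $t$ starting below $\frac12 M\ep$, hence stay $\leq M\ep$ on $[0,T^*]$ for $T^*$ small. The volume radius bound \eqref{bootvolumeradius} follows similarly since $r_{vol}(\Si_t,1)$ varies continuously and starts at $\geq \frac14$ (indeed $\geq \frac12$ after the rescaling in Section \ref{sec:reductionsmall}). The curvature flux bound \eqref{bootcurvatureflux} on weakly regular null hypersurfaces is obtained from the pointwise $L^2$ curvature control together with a short-time trace/flux estimate: on a slab of temporal width $T^*$, any such $\HH$ has $L^2(\HH)$ norm of $\R\c L$ controlled by $(T^*)^{1/2}$ times the $L^\infty_t L^2(\Si_t)$ curvature norm up to controllable error terms, so it is $\lesssim (T^*)^{1/2} M\ep \leq M\ep$. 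The space-time norms --- the bilinear quantities \eqref{bil1}, \eqref{bil5}, \eqref{bil6}, \eqref{bil7}, \eqref{bil8}, \eqref{bil3}, \eqref{bil4}, the non-sharp Strichartz bounds \eqref{bootstrich}, \eqref{bootstrichB}, and the trilinear term \eqref{trilinearboot} --- are the ones that require genuine smallness of $T^*$: on the smooth slab each of these is a finite $L^2(\MM)$ (or space-time integral) quantity, and since $|\MM| \lesssim T^*$, crude bounds using only the smoothness on $[0,T_*]$ and Hölder in time give each of them $\lesssim (T^*)^{\theta}$ for some $\theta>0$ times a constant depending on the (finite, but $\ep$- and $M$-independent-looking) higher norms from Theorem \ref{th:classicalwp}; choosing $T^*$ small enough makes them $\leq M^3\ep^2$, $\leq M^2\ep$, $\leq M^4\ep^3$ respectively. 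For the $B=(-\De)^{-1}\curl A$ quantities I would use the elliptic gain of one derivative together with the already-established control of $A$.

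The main obstacle is that several of the bootstrap quantities --- in particular the curvature flux \eqref{bootcurvatureflux} and the bilinear/trilinear space-time norms --- are not manifestly continuous functions of $t$ in a way that trivializes at $t=0$, because they involve null hypersurfaces and non-local operators that are only defined once the optical function $\uom$ and the frame $e_i$ have been constructed, which in turn presupposes part of the very structure we are bootstrapping. The way I would finesse this is to exploit that on the \emph{smooth} slab provided by Theorem \ref{th:classicalwp} all of this auxiliary structure (the eikonal solutions, the Coulomb frame via Lemma \ref{lemma:uhlenbeck}, the operators $\PP$, $(-\De)^{-1}$) is perfectly well-defined and smooth, so the only thing at issue is the \emph{size} of the resulting norms, and each of them carries at least one factor measuring the temporal extent of the region (directly, for the space-time integrals, or through a trace estimate with a $(T^*)^{1/2}$ gain, for the flux). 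Thus the whole proposition reduces to the elementary observation that a finite quantity of the form $(T^*)^\theta \cdot (\text{smooth-slab constant})$ can be made as small as we wish; the content of the argument is bookkeeping to confirm that every entry of the bootstrap list indeed has this form and that the smooth-slab constants do not themselves blow up as $T^* \to 0$. I would therefore present the proof as: (i) invoke Theorems \ref{th:classicalwp} and Lemma \ref{lemma:uhlenbeck} to set up the smooth structure; (ii) check the $t=0$ values; (iii) run the crude $(T^*)^\theta$ estimates entry by entry; (iv) fix $T^*$ small.
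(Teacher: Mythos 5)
Your proposal is correct and follows essentially the same route as the paper: the paper itself notes that the only nontrivial point is the construction of the Coulomb-gauge connection $\A$ via the Uhlenbeck-type Lemma \ref{lemma:uhlenbeck}, with all the remaining bootstrap bounds following from the initial data smallness and the classical local existence result (Theorem \ref{th:classicalwp}) for sufficiently small $T^*$, which is exactly what you identify. Your elaboration --- continuity in $t$ for the pointwise $L^2$ norms, vanishing temporal extent for the flux and space-time quantities, and the observation that all the auxiliary structure (eikonal functions, $\PP$, $(-\De)^{-1}$) is well-defined on the smooth slab so that only its size is at issue --- is a faithful expansion of the paper's terse ``all other estimates follow trivially.''
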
 
 
The   only challenge  in the proof of Proposition \ref{prop:continuity} is to show the existence of   the desired  connection $\A$ using in particular the Uhlenbeck type Lemma \ref{lemma:uhlenbeck}. All  other  estimates    follow  trivially    from   our initial  bounds and    the local existence theorem above, for sufficiently small  $T^*$. We refer to Proposition 4.6 in \cite{KRS}. 

 \subsection{Proof of the bounded $L^2$ curvature conjecture}\lab{sec:proofconj}

In the following two propositions, we state the improvement of our bootstrap assumptions. 
\begin{proposition}\label{prop:improve1}
Let us assume that all bootstrap assumptions of the previous section hold for $0\leq t\leq T^*$. If $\ep>0$ is sufficiently small, then the following improved estimates hold true on $0\leq t\leq T^*$:
\bea\label{bootRimp}
\norm{\R}_{\lsit{2}}\les \ep+M^2\ep^{\frac{3}{2}}+M^3\ep^2,
\eea
\bea\lab{bootcurvaturefluximp}
\norm{\R\c L}_{L^2(\HH)}\les \ep+M^2\ep^{\frac{3}{2}}+M^3\ep^2,
\eea
\bea\label{bootAimp}
\norm{A}_{\lsit{2}}+\norm{\prb A_i}_{\lsit{2}}\les \ep+M^2\ep^{\frac{3}{2}}+M^3\ep^2,
\eea
\bea\label{bootA0imp}
\nn\norm{A_0}_{\lsit{2}}+\norm{\prb A_0}_{\lsit{2}}+\norm{A_0}_{\lsitt{2}{\infty}}&&\\
+\norm{\prb A_0}_{\lsit{3}}+\norm{\pr\prb A_0}_{\lsit{\frac{3}{2}}}&\les& \ep+M^2\ep^{\frac{3}{2}}+M^3\ep^2,
\eea
\end{proposition}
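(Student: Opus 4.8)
The plan is to close the bootstrap argument by improving each of the estimates \eqref{bootRimp}--\eqref{bootA0imp} through a careful use of the Yang--Mills structure of Proposition \ref{prop:decomp}, the parametrix of Step C, the bilinear and trilinear assumptions, and the non-sharp Strichartz bounds. The logical skeleton is: (i) upgrade the curvature energy estimate \eqref{bootRimp} and the curvature flux estimate \eqref{bootcurvaturefluximp} from the Bianchi equations \eqref{eq: YM5} using the Bel--Robinson tensor \eqref{def:bellrobison}; (ii) feed the improved curvature bounds back into elliptic and transport estimates for $k$, $n$ and the geometry of $\Si_t$; (iii) upgrade the connection estimates \eqref{bootAimp}, \eqref{bootA0imp} by analyzing the wave equation \eqref{eq:YM22} for $A_i$ and the elliptic equation \eqref{eq:YM21} for $A_0$.

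First I would treat the curvature. Contracting the Bel--Robinson tensor $Q_{\a\b\ga\de}$ with $e_0$ in the first three slots and integrating the divergence identity $\D^\a Q_{\a\b\ga\de}=0$ (valid since $\R$ is Weyl in vacuum) over the slab $[0,t]\times\Si_t$ gives an energy identity whose boundary terms control $\norm{\R}_{\lsit 2}^2$ and $\norm{\R\c L}^2_{L^2(\HH)}$, and whose bulk term is exactly the trilinear expression $\int_\MM Q_{ij\ga\de}k^{ij}e_0^\ga e_0^\de$ (plus lower-order pieces involving the lapse $n$ and $\nabla^2 n$). By the trilinear bootstrap assumption \eqref{trilinearboot} this bulk term is $\les M^4\ep^3$, which after taking square roots and using $M\ep$ small is absorbable; together with the initial data bound $\norm{\R}_{L^2(\Si_0)}\les\ep$ this yields \eqref{bootRimp} and \eqref{bootcurvaturefluximp}. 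The improved bounds on $k$ in \eqref{bootk} and the lower volume radius bound \eqref{bootvolumeradius} then follow from the cited estimates in \cite{param3} together with the constraint equations \eqref{constk}, \eqref{constR} and the lapse equation \eqref{eqlapsen}.

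Next I would treat the connection. For $A_0$, the elliptic equation \eqref{eq:YM21} is $\De A_0 = \A\prb A+\A\pr(A_0)+\A^3$; standard $L^p$ elliptic estimates on $\Si_t$, combined with the bilinear bounds \eqref{bil1}, \eqref{bil4} for the quadratic terms, the bootstrap bounds \eqref{bootA}, \eqref{bootA0}, and Sobolev embedding on $\Si_t$, give the full collection of norms in \eqref{bootA0imp}, each gaining a power of $\ep^{3/2}$ or $\ep^2$ (the $M^2\ep^{3/2}$ terms arising from interpolation of the cubic term $\A^3$ against the non-sharp Strichartz bound \eqref{bootstrich}, the $M^3\ep^2$ terms from the genuinely bilinear contributions). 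For $A_i$, I would first eliminate $\pr_i(\pr_0 A_0)$ in \eqref{eq:YM22} by applying the Leray-type projector $\PP$ onto divergence-free fields, picking up the commutator $[\square_\g,\PP]$; this converts the equation into $\square_\g A_i = \PP(A^j\pr_j A_i)+\PP(A^j\pr_i A_j)+(\text{commutator and curvature terms})+\text{l.o.t.}$ Using the parametrix representation \eqref{parametrix.intr} one reduces energy estimates for $A_i$ and $\prb A_i$ to: the $L^2(\MM)$ bilinear bounds \eqref{bil1}, \eqref{bil3}, \eqref{bil4} for the null quadratic terms; the extended bilinear estimate \eqref{bil6} with the auxiliary field $B=(-\De)^{-1}\curl A$ for the commutator-generated curvature terms; the bounds \eqref{bil7}, \eqref{bil8} involving $\sup_\HH\norm{\nabb\phi}_{L^2(\HH)}$ controlled via the weakly regular null hypersurface assumptions \eqref{assumptionH1}, \eqref{assumptionH2} and the flux bound \eqref{bootcurvaturefluximp}; and the non-sharp Strichartz bounds \eqref{bootstrich}, \eqref{bootstrichB} for the remaining cubic and lower-order terms. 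Collecting all contributions yields \eqref{bootAimp}.

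The main obstacle is the treatment of the wave equation for $A_i$, specifically the terms produced by commuting $\square_\g$ with the nonlocal projector $\PP$: unlike the flat Yang--Mills case of \cite{Kl-Ma1}, the commutator $[\square_\g,\PP]$ is nonzero and, together with the ``scalarization'' of the tensorial equation \eqref{eq:YM7} into \eqref{eq:YM22}, generates a host of new terms — including contractions of the Riemann tensor with derivatives of solutions of scalar wave equations — which are not of pure null form. Handling these requires the \emph{extended} class of bilinear estimates (such as \eqref{bil6}), whose proof in turn rests entirely on the plane-wave parametrix \eqref{parametrix.intr} and the $L^4(\MM)$ Strichartz estimate \eqref{strichgeneralintro}, hence on all of Steps C and D. A secondary difficulty is the trilinear term in the curvature energy identity, which has no analogue in the flat theory because $e_0=T$ is not Killing; its control \eqref{trilinearboot} must itself be re-derived (this is where the Bel--Robinson flux through null cones and the bilinear structure re-enter), and this is carried out in the detailed argument of \cite{KRS}.
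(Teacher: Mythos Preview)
Your treatment of the curvature estimates \eqref{bootRimp}, \eqref{bootcurvaturefluximp} via the Bel--Robinson tensor and the trilinear bootstrap assumption \eqref{trilinearboot} matches the paper's argument in section \ref{sec:improve1} essentially exactly. Your treatment of $A_0$ is also basically right, though the paper uses only elliptic estimates and H\"older for the right-hand side of \eqref{eq:YM21} together with a commutation with $\pr_t$; the bilinear bounds \eqref{bil1}, \eqref{bil4} are not needed here.

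The substantive divergence is in your handling of $A_i$. You propose to apply the Leray projector $\PP$ directly to \eqref{eq:YM22}, analyze the commutator $[\square_\g,\PP]$, and then invoke the parametrix representation \eqref{parametrix.intr} to close the energy estimates. The paper explicitly avoids both of these moves for this proposition. First, it remarks (see the paragraph preceding \eqref{defineB}) that working with $\PP$ is ``too complicated'' and replaces it by the auxiliary variable $B=(-\Delta)^{-1}\curl A$, for which a scalar wave equation can be derived and whose source $\square B$ and $\pr\square B$ are controlled in $L^2(\MM)$ via Propositions \ref{prop:commutsquarecurl} and \ref{prop:waveeqB} (the bilinear bootstrap assumptions enter there, not at the level of $A$ itself). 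Second, the paper explicitly notes that Proposition \ref{prop:improve1} ``does not require a parametrix representation'': once $\pr\square B\in L^2(\MM)$, the \emph{energy estimate} \eqref{energy1} of Lemma \ref{lemma:energyestimatebis} gives $\pr^2 B\in L^\infty_t L^2(\Si_t)$, and one recovers $\pr A$ from $\pr^2 B$ via $A=\curl B+E$ (Lemma \ref{recoverA}). The time derivative $\pr_0 A$ is obtained directly from the algebraic relation $\pr_0(A_j)=\pr_j(A_0)+\R_{0j\cdot\cdot}$ and the already-improved bounds on $A_0$ and $\R$.

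So your route is not wrong in spirit, but it conflates the roles of Propositions \ref{prop:improve1} and \ref{prop:improve2}: in the paper the parametrix is reserved for improving the \emph{bilinear} and Strichartz bootstrap assumptions (Proposition \ref{prop:improve2}, sections \ref{sec:improve2}--\ref{sec:improve2B}), while Proposition \ref{prop:improve1} is closed purely by energy methods once the bilinear bootstrap assumptions are taken as given. The paper's separation is cleaner and sidesteps the direct analysis of $[\square_\g,\PP]$, which you flag as the main obstacle but which the paper never actually confronts.
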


\begin{proposition}\label{prop:improve2}
Let us assume that all bootstrap assumptions of the previous section hold for $0\leq t\leq T^*$. If $\ep>0$ is sufficiently small, then the following improved estimates hold true on $0\leq t\leq T^*$:
\bea\lab{bil1imp}
\norm{A^j\pr_jA}_{L^2(\MM)}\lesssim M^2\ep^2,
\eea
\bea\lab{bil5imp}
\norm{A^j\pr_j(\prb B )}_{L^2(\MM)}\lesssim M^2\ep^2,
\eea
and
\bea\lab{bil6imp}
\norm{\R_{\c\,\c\,j\,0}\pr^j  B }_{L^2(\MM)}\lesssim M^2\ep^2.
\eea
Also, for any scalar function $\phi$ on $\MM$, we have: 
\bea\lab{bil7imp}
\norm{k_{j\,\c}\pr^j\phi}_{L^2(\MM)}\les M\ep\sup_{\HH}\norm{\nabb\phi}_{L^2(\HH)},
\eea
and
\bea\lab{bil8imp}
\norm{A^j\pr_j\phi}_{L^2(\MM)}\les M\ep\sup_{\HH}\norm{\nabb\phi}_{L^2(\HH)},
\eea
where the supremum is taken over all null hypersurfaces $\HH$. Finally, we have:
\bea\lab{bil3imp}
\norm{(-\Delta)^{-\frac12}(Q_{ij}(A,A))}_{L^2(\MM)}\lesssim M^2\ep^2,
\eea
\bea\lab{bil4imp}
\norm{(-\Delta)^{-\frac12}(\pr    A^l   \pr_l A )}_{L^2(\MM)}&\lesssim &M^2\ep^2.
\eea
\bea\lab{bootstrichimp}
\norm{A}_{\lsitt{2}{7}}&\les& M \ep,\\
\norm{\pr B}_{\lsitt{2}{7}}&\lesssim& M\ep. \lab{bootstrichBimp}
\eea
and
\bea
\left|\int_{\MM}Q_{ij\ga\de}k^{ij}e_0^\ga e_0^\de\right| &\les& M^3 \ep^3.\label{trilinearbootimp}
\eea
\end{proposition}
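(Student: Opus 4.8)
The plan is to reduce every bound in the proposition to the two structural inputs supplied by the later chapters: the control of the plane--wave parametrix \eqref{parametrix.intr} on $\MM$ (Theorem~\ref{prop:estparam}, together with the error estimate $\norm{Ef}_{L^2(\MM)}\les\norm{\la f}_{L^2(\RRR^3)}$ of Step~C4 for the term \eqref{error-param}), and the sharp $L^4(\MM)$ Strichartz estimate \eqref{strichgeneralintro} of Step~D (Proposition~\ref{prop:L4strichartz}); full details are in \cite{KRS}. The first step is organizational. Using the scalar wave equations \eqref{eq:YM21}--\eqref{eq:YM22} for $(A_0,A)$, and the fact that $B=(-\De)^{-1}\curl A$ solves a wave equation whose source is produced by commuting $\square_\g$ with $(-\De)^{-1}\curl$, one writes $A$, $\prb B$, and any scalar solution $\phi$ entering \eqref{bil7imp}--\eqref{bil8imp} as a sum of a parametrix with $\Si_0$--data, a Duhamel term carrying the source, and a parametrix error. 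After this, each left--hand side in \eqref{bil1imp}, \eqref{bil5imp}, \eqref{bil6imp}, \eqref{bil7imp}, \eqref{bil8imp}, \eqref{bil3imp}, \eqref{bil4imp} is a spacetime bilinear expression in two such functions, one of them differentiated. Since the data of $A$ is $O(\ep)$ at the $H^1$ level and the energy of $A$ is $O(M\ep)$ by the curvature bootstrap \eqref{bootR} and the energy identity for \eqref{eq:YM22}, while the sources are bounded by the (original or, bootstrap-wise, improved) bilinear assumptions, a bilinear estimate that is merely $O(1)$ in $M\ep$ yields $O(M^2\ep^2)$ --- precisely the gain of one power of $M$ over the bootstrap values.

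The heart of the matter is then the family of bilinear estimates for the frequency--localized parametrix \eqref{paraml}, which split according to the algebraic type of the quadratic interaction. For the ``product'' estimates --- the interactions underlying \eqref{bil1imp}, \eqref{bil7imp}, \eqref{bil8imp} --- I would decompose both factors in frequency, pair a low--frequency factor against a high one, and bound the result by products of $L^4(\MM)$ norms of dyadic pieces, each controlled by the sharp Strichartz estimate \eqref{strichgeneralintro}; for \eqref{bil7imp}--\eqref{bil8imp}, where the right--hand side is the curvature--flux norm $\sup_\HH\norm{\nabb\phi}_{L^2(\HH)}$, one $L^4(\MM)$ trace is replaced by the $L^2(\HH)$ trace afforded by the weak regularity \eqref{assumptionH2} of $\HH$. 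The genuinely bilinear estimates --- \eqref{bil3imp}, \eqref{bil4imp}, \eqref{bil5imp}, and the curvature--contraction estimate \eqref{bil6imp} --- cannot be reduced to Strichartz; for these I would insert the representation \eqref{parametrix.intr} in both factors and run the geometric argument of \cite{Kl-R3}, using the transversality of the two families of null cones (i.e.\ the separation of the two angular parameters $\om$), together with the antisymmetry of $Q_{ij}$, the $(-\De)^{-\frac12}$ smoothing, and --- for \eqref{bil6imp} --- the contraction with a curvature component, to recover the missing integrations, all integrations by parts being confined to directions tangent to the level hypersurfaces of $\uom$ where Step~C3 provides extra regularity. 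The error from $\square_\g\phi_f=Ef\neq0$, and the many additional terms produced by the non--commuting of $\PP$ with $\square_\g$ and by projecting \eqref{eq:YM7} onto frame components, are absorbed via the Step~C4 bound for \eqref{error-param}; several of them produce log--divergent dyadic sums that must be closed by ad hoc use of the Einstein structure.

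The remaining items are softer. The non--sharp Strichartz bounds \eqref{bootstrichimp}--\eqref{bootstrichBimp} do not use the parametrix: a classical (lossy) Strichartz estimate for $\square_\g$ on the rough background, in the regime of \cite{Kl-R2} and \cite{Sm-Ta}, combined with \eqref{eq:YM22}, the improved bilinear control of its source, and \eqref{bootR}, gives $\norm{A}_{\lsitt{2}{7}}\les\ep+M^2\ep^2\les M\ep$, the $H^1$ level of the data being far more than a non--sharp $L^2_tL^7$ estimate requires; the bound for $\pr B$ follows by commuting with $(-\De)^{-1}\curl$. For the trilinear estimate \eqref{trilinearbootimp} I would start from the Bel--Robinson tensor \eqref{def:bellrobison}, which is divergence free in vacuum (Bianchi identities together with $\textrm{Ric}=0$), and integrate $\D^\a\big(Q_{\a\b\ga\de}e_0^\b e_0^\ga e_0^\de\big)=3\,Q_{\a\b\ga\de}(\D^\a e_0^\b)e_0^\ga e_0^\de$ over the slab $\bigcup_{0\leq t\leq T^*}\Si_t$. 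The right--hand side is a contraction of $Q$, hence of $\R\otimes\R$, with the deformation tensor of $e_0$, whose components are $k_{ij}$ and $n^{-1}\nabla n$, and the $k^{ij}$ piece is exactly the integrand of \eqref{trilinearboot}. Re--expressing that integrand through this divergence identity, the boundary terms are bounded by $\norm{\R}_{\lsit{2}}^2$, while the bulk term --- after using the structure equations \eqref{eq:structfol1}--\eqref{eq:structfol3} of the maximal foliation to turn the mixed curvature components into $\nabla k$ and Ricci/$k^2$ terms, and the lapse equation \eqref{eqlapsen} --- is handled by a bilinear estimate of the type \eqref{bil7imp} applied to the curvature, producing three factors of size $M\ep$, i.e.\ the bound $M^3\ep^3$.

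The hard part is, without question, the genuinely bilinear estimates \eqref{bil3imp}--\eqref{bil6imp} and, underneath them, the sharp $L^4(\MM)$ Strichartz estimate \eqref{strichgeneralintro}, because the eikonal phase $\uom$ enjoys only the borderline regularity $\pr_{t,x}u\in L^\infty$, $\pr_{t,x}\pr_\om u\in L^\infty$ of \eqref{reguintro} --- one $\om$--derivative short of what the classical $TT^*$ and stationary--phase arguments in \eqref{staphase} demand. This forces the geometric substitute: first and second dyadic decompositions in $\la$ and $\om$ (see \cite{St}), a further physical--space decomposition built on the geometric Littlewood--Paley calculus of \cite{Kl-R6}, integration by parts restricted to the tangential directions where Step~C3 gives extra smoothness, and delicate control of the resulting log--divergences. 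A secondary but real difficulty is that the scalarization of \eqref{eq:YM7} into \eqref{eq:YM21}--\eqref{eq:YM22}, and the commutator $[\square_\g,\PP]$, generate a host of new error terms, each of which must fall into the extended bilinear class (curvature--contraction estimates of the type \eqref{bil6imp}); and that $e_0$ is not Killing, which is precisely what forces the trilinear term to be absorbed as above.
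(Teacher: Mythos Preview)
Your sketch reverses the two mechanisms the paper actually uses, and the trilinear argument is circular. For the ``type I'' bilinear estimates \eqref{bil1imp}, \eqref{bil5imp}, \eqref{bil6imp}, \eqref{bil7imp}, \eqref{bil8imp} the paper does \emph{not} pair two $L^4(\MM)$ Strichartz bounds; instead it writes $A=\curl B+E$ (Lemma~\ref{recoverA}), inserts the parametrix into \emph{one} factor only, and reduces to $\|\CC(U,N)\|_{L^\infty_u L^2(\HH_u)}$ via the computation \eqref{dorbia}--\eqref{bilproof}. The point is structural: the antisymmetric $\in_{jmn}N_m$ coming from $\curl B$ kills the normal component, so $\CC(U,N)=\nabb A$ (for \eqref{bil1imp}) or $\R\cdot L$ (for \eqref{bil6imp}), both of which are flux-bounded. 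A direct $L^4\times L^4$ argument on $A^j\pr_j A$ does not close at $H^1$ regularity because there is no $(-\De)^{-\frac12}$ smoothing and no null form is visible before the $\curl B$ substitution. Conversely, it is precisely the ``type II'' estimates \eqref{bil3imp}, \eqref{bil4imp}, carrying the extra $(-\De)^{-\frac12}$, that the paper reduces to $L^4(\MM)$ Strichartz after inserting the parametrix in both factors; you have these two roles swapped. Also, the non-sharp Strichartz bounds \eqref{bootstrichimp}--\eqref{bootstrichBimp} \emph{do} use the parametrix (Corollary~\ref{cor:strichartzB} reduces \eqref{kyoto1bis} to Proposition~\ref{prop:L4strichartz}); the results of \cite{Kl-R2}, \cite{Sm-Ta} you invoke require $s>2$ and are not available at the $L^2$-curvature threshold.

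Your proposed proof of \eqref{trilinearbootimp} via the Bel--Robinson divergence identity is the identity \eqref{lybia} itself: the trilinear integral \emph{is} the bulk term produced by integrating $\D^\a P_\a$, so re-expressing it that way only gives $\big|\int_{\Si_{T^*}}|\R|^2-\int_{\Si_0}|\R|^2\big|+O(M^3\ep^3)$, and the boundary difference is $O(M^2\ep^2)$, which is \emph{larger} than the target $M^3\ep^3$ when $M\ep$ is small. The paper's argument is again the one-sided parametrix: write $k_{ij}=(A_i)_{0j}$, then $A=\curl B+E$, insert the representation for $B$, and check by a null-frame decomposition that $\in_{jm\cdot}N_m Q_{j\cdot\cdot\cdot}$ is schematically $\R(\R\cdot L)$. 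This produces the three factors $M\ep\cdot\|\R\|_{L^2(\MM)}\cdot\|\R\cdot L\|_{L^\infty_u L^2(\HH_u)}\les M^3\ep^3$, as in \eqref{belrob2}.
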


The proof of Proposition \ref{prop:improve1} is postponed to section \ref{sec:improve1}, while the proof of Proposition \ref{prop:improve2} is postponed to section \ref{sec:improve2}. We also need a proposition on the propagation of higher regularity.  
\begin{proposition}\lab{prop:propagreg}
Let us assume that the estimates corresponding to all bootstrap assumptions of the previous section hold for $0\leq t\leq T^*$ with a universal constant $M$. Then for any $t\in [0,T^*)$ and for $\ep>0$ sufficiently small, the following propagation of higher regularity holds:
$$
\|\D\R\|_{L^\infty_tL^2(\Sigma_t)}\le 2\left(\norm{\textrm{Ric}}_{L^2(\Sigma_0)}+\norm{\nabla\textrm{Ric}}_{L^2(\Sigma_0)}+\norm{k}_{L^2(\Sigma_0)}+\norm{\nabla k}_{L^2(\Sigma_0)}+\norm{\nabla^2k}_{L^2(\Sigma_0)}\right).$$
\end{proposition}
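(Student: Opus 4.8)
The estimate sits exactly one covariant derivative above the regularity controlled in Propositions~\ref{prop:improve1}--\ref{prop:improve2}, so the plan is to run a single energy/flux estimate for $\D\R$ on the slab $[0,T^*]$ that reuses \emph{only} the bounds already established in the bootstrap, with no new sharp bilinear or Strichartz input. First I would derive the evolution system for $\Phi:=\D\R$ by commuting one derivative through the contracted Bianchi identity (equivalently \eqref{eq:YM6}) and the full Bianchi identity \eqref{eq: YM5}: the curvature commutator produces, schematically, a first order Bianchi-type system for $\Phi$ with source $\R\star\R$, while its second order (wave) reformulation reads $\square_\g(\D\R)=\R\star\D\R$ after using $\mbox{Ric}=0$. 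Dually, restricting to $\Si_t$ and using the Gauss and structure equations \eqref{eq:structfol1}--\eqref{eq:structfol3} together with the lapse equation \eqref{eqlapsen}, a bound on $\norm{\D\R}_{\lsit{2}}$ is equivalent, up to $O(\ep)$ nonlinear corrections, to simultaneous bounds on $\norm{\nab\mbox{Ric}}_{\lsit{2}}$, $\norm{\nab^2k}_{\lsit{2}}$ and $\norm{\nab^2n}_{\lsit{2}}$; this is the form in which the output re-enters Theorem~\ref{th:classicalwp} in the continuation argument.

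\textbf{Energy--flux estimate.} Next I would run the Bell--Robinson energy estimate for $\Phi$, i.e.\ contract the Bell--Robinson tensor $Q=Q[\Phi]$ built as in \eqref{def:bellrobison} with $e_0\otimes e_0\otimes e_0$ and integrate $\D^\a(Q_{\a\b\ga\de}e_0^\b e_0^\ga e_0^\de)$ over a region bounded by $\Si_0$, $\Si_t$ and a weakly regular null hypersurface; as usual this controls simultaneously $\norm{\Phi}_{\lsit{2}}$ on slices and the higher order curvature flux $\norm{\Phi\c L}_{L^2(\HH)}$ (both should therefore be carried as a joint continuity assumption, analogous to \eqref{bootcurvatureflux}). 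The identity generates two genuinely nontrivial error integrals: the deformation term $\int_\MM\pi\star\Phi\star\Phi$ coming from $\D^\a(e_0^\b e_0^\ga e_0^\de)$, where $\pi$ is the deformation tensor of $T$ with components $k$ and $n^{-1}\nab n$; and the source term $\int_\MM\Phi\star\R\star\R$ (or $\int_\MM\Phi\star\R\star\D\R$ in the wave formulation). Both are estimated by putting one factor of $\Phi$ in $L^2(\MM)$ (bounded by $(T^*)^{1/2}\norm{\Phi}_{\lsit{2}}$) and the remaining factor $k\star\Phi$, resp.\ $\R\star\Phi$, in $L^2(\MM)$: recognizing $\Phi$ as two derivatives of the connection $A$, hence a derivative of a quantity solving a wave equation, these are exactly of the type bounded by the bilinear estimates \eqref{bil6imp}, \eqref{bil7imp}, \eqref{bil8imp}, namely $M\ep$ times a null-flux norm, the null-flux norm here being the higher order flux that is itself bootstrapped. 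The residual, genuinely lower order, pieces ($A\star A\star\Phi$, $A\star\Phi$, single-derivative $k\star\Phi$) are absorbed using the non-sharp Strichartz bounds \eqref{bootstrichimp}--\eqref{bootstrichBimp} and Sobolev embedding on $\Si_\tau$. Since every coefficient norm entering these bounds is $O(M\ep)$, all error integrals are $\les M\ep\,(\norm{\Phi}_{\lsit{2}}^2+\sup_\HH\norm{\Phi\c L}_{L^2(\HH)}^2)$.

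\textbf{Closing and the constant $2$.} A Gr\"onwall argument then yields $\norm{\D\R}_{L^\infty_{[0,t]}L^2(\Sigma_\tau)}^2+\sup_\HH\norm{\D\R\c L}_{L^2(\HH)}^2\le(1+O(M\ep))\norm{\D\R}_{L^2(\Sigma_0)}^2$, using that the Bell--Robinson density is comparable to $|\Phi|^2$ with constant $1+O(\ep)$ (the metric on $\Si_t$ being $(1+O(\ep))$-close to Euclidean and $T$ to the standard normal in the rescaled small-data normalization). The initial value $\norm{\D\R}_{L^2(\Sigma_0)}$ is in turn $\le(1+O(\ep))$ times the right-hand side of the proposition: the tangential components of $\D\R|_{\Si_0}$ equal $\nab\mbox{Ric}+k\star\nab k$ by Gauss, the remaining components are read off from \eqref{eq:structfol1}--\eqref{eq:structfol2} and \eqref{eqlapsen} and involve $\nab^2k$ and $\nab^2n$ (controlled elliptically by $\||k|^2\|_{L^2}+\||k|\,|\nab k|\|_{L^2}$), and every nonlinear correction carries an extra power of $\ep$ by the smallness hypotheses and $\dot H^1\hookrightarrow L^6$. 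Combining the two $(1+O(\ep))$ factors and choosing $\ep$ small gives the factor $2$.

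\textbf{Main obstacle.} The crux is the pair of error integrals above: with only $L^\infty_tL^2$ control on each of $\R$ and $\D\R$, neither $\int_\MM\pi\star\Phi\star\Phi$ nor the quadratic-curvature source $\int_\MM\Phi\star\R\star\R$ can be closed by Sobolev embedding on the slices alone, and there is no higher order analogue of the trilinear bootstrap \eqref{trilinearbootimp} to lean on. What rescues the argument is precisely that we are one derivative above threshold, so that the already-proven bilinear estimates \eqref{bil6imp}--\eqref{bil8imp}, the non-sharp Strichartz bounds, and the bootstrapped higher order curvature flux are enough --- unlike the critical Propositions~\ref{prop:improve1}--\ref{prop:improve2}, no new estimate, and in particular no new control on the parametrix of Steps C--D, is required. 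The remaining work is the routine but lengthy commutator bookkeeping needed to bring $\square_\g(\D\R)$ and the Bianchi system for $\D\R$ into the schematic forms $\R\star\D\R$ and $\R\star\R+A\star\D\R$ without producing terms that can only be paired in $L^\infty_tL^2\times L^\infty_tL^2$.
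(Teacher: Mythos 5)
Your strategy matches the one the paper indicates for this proposition (the paper only sketches it, deferring the detailed proof to \cite{KRS} and saying it ``follows along the same lines'' as Propositions \ref{prop:improve1}--\ref{prop:improve2}): a higher-order Bell--Robinson energy estimate for $\Phi=\D\R$, carried jointly with a bootstrapped higher-order null curvature flux in analogy with \eqref{bootcurvatureflux}; the deformation error $\int_\MM\pi\star\Phi\star\Phi$ controlled via the bilinear estimate \eqref{bil7imp} together with the $L^\infty$ bound on $\nabla n$ from \eqref{bootn}; the reduction of $\norm{\D\R}_{L^2(\Si_0)}$ to the five stated initial-data quantities via Gauss, the structure equations \eqref{eq:structfol1}--\eqref{eq:structfol3} and the lapse equation \eqref{eqlapsen}; and a Gr\"onwall closure yielding the constant $2$ once $\ep$ is small. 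You also correctly observe that no \emph{new} parametrix input beyond Theorem \ref{prop:estparam} is needed, since the bilinear estimates are already stated at the level needed and the representation formula of Theorem \ref{lemma:parametrixconstruction} already carries the $\pr\prb\phi^{(j)}$ and $\pr F^{(j)}$ level.

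One imprecision worth flagging: you justify applying \eqref{bil7imp} by ``recognizing $\Phi$ as two derivatives of the connection $A$, hence a derivative of a quantity solving a wave equation.'' This misstates the mechanism. In the paper's derivation of \eqref{bil7imp} (via \eqref{bil1imp}), the parametrix of Theorem \ref{lemma:parametrixconstruction} is applied to $B$ through the substitution $k_{j\c}=A_j=(\curl B)_j+E_j$; the antisymmetry $\in_{jmn}N_mN_j=0$ then leaves only $\nabb\phi$, which is estimated by $\sup_\HH\norm{\nabb\phi}_{L^2(\HH)}$. The scalar $\phi$ itself never needs to solve a wave equation --- the estimate is stated for an arbitrary scalar --- and this is precisely what lets one feed in the components of $\D\R$ (or of $\pr^2 B$) with the only new ingredient being the bootstrapped higher-order null flux. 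Your approach is correct, but the quoted rationale is unnecessary and, if taken literally, would raise the spurious obstruction you yourself worry about (needing one further derivative of $\square B$ than Proposition \ref{prop:waveeqB} provides). The same remark applies to the source term $\int\Phi\star\R\star\R$: the relevant higher-order trilinear estimate is obtained exactly as in the proof of \eqref{trilinearbootimp}, applying the parametrix to the $k\sim\curl B$ factor and observing that the Bell--Robinson contraction $\in_{jm\c}N_mQ_{j\c\c\c}[\D\R]$ reduces schematically to $\D\R\,(\D\R\c L)$, bounded by $\norm{\D\R}_{L^2(\MM)}\sup_\HH\norm{\D\R\c L}_{L^2(\HH)}$, which the Gr\"onwall argument closes.
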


The proof of Proposition \ref{prop:propagreg} follows along the same lines as the proof Proposition \ref{prop:improve1} and Proposition \ref{prop:improve2}, and we refer to \cite{KRS} for its proof. Next, let us show how Propositions  \ref{prop:continuity}, \ref{prop:improve1}, \ref{prop:improve2} and \ref{prop:propagreg} imply our main theorem \ref{th:mainter}. We proceed, by the standard  bootstrap  method , along the following steps:
\begin{itemize}
\item[\textit{Step 1}.] We show that all bootstrap assumptions are verified  for a sufficiently small  final value $T^*$. 

\item[\textit{Step2}.]  Assuming that  all bootstrap assumptions hold  for   fixed values of $0< T^*\le 1$ and $M$  sufficiently large    we   show  that,  for $\ep>0$ sufficiently small, we may improve on the constant $M$ in our bootstrap assumptions. 

\item[\textit{Step 3}.] Using the  estimates derived in step 2   we can extend  the time of existence $T^*$ to $T^*+\de$  such that  all the bootstrap assumptions remain true.
\end{itemize}

Now, \textit{Step 1} follows from Proposition \ref{prop:continuity}. \textit{Step 2} follows from Proposition \ref{prop:improve1} and Proposition \ref{prop:improve2}. In view of \textit{Step 2}, the estimates corresponding to all bootstrap assumptions of the previous section hold for $0\leq t\leq T^*$ with a universal constant $M$. Thus the conclusion of Proposition \ref{prop:propagreg} holds, and arguing as in the proof of Proposition \ref{prop:continuity}, we obtain \textit{Step 3}. Thus, the bootstrap assumptions hold on $0\leq t\leq 1$ for a universal constant $M$. In particular, this yields together with \eqref{bootk}:
\bea\lab{saionara2}
\norm{\R}_{\lsit{2}}\les \ep\textrm{ and }\norm{k}_{\lsit{2}}\les \ep\textrm{ for all }0\leq t\leq 1.
\eea
In view of  \eqref{bootvolumeradius}, we also obtain the following control on the volume radius:
\bea\lab{bootvolumeradius:bis}
\inf_{0\leq t\leq 1}r_{vol}(\Si_t,1)\geq \frac{1}{4}.
\eea
Furthermore, Proposition \ref{prop:propagreg} yields the following propagation of higher regularity
\bea\lab{gggggggggggggg}
\sum_{|\a|\le m}\|\D^{(\a)} \R\|_{L^\infty_{[0,1]}L^2(\Sigma_t)}  \leq C_m
  \sum_{|i|\le m}\\bigg[\|\nab^{(i)} \textrm{Ric}\|_{L^2(\Sigma_0)} + \|\nab^{(i)} \nab k\|_{L^2(\Sigma_0)}\bigg]
\eea
where $C_m$ only depends on $m$.

\begin{remark}
Note that Proposition \ref{prop:propagreg} only yields the case $m=1$ in \eqref{gggggggggggggg}. The fact that \eqref{gggggggggggggg} also holds for higher derivatives $m\geq 2$ follows from the standard propagation of regularity for the classical local existence result of Theorem \ref{th:classicalwp} and the bound \eqref{gggggggggggggg} with $m=1$ coming from Proposition \ref{prop:propagreg}. 
\end{remark}

Finally, \eqref{saionara2}, the control on the volume radius \eqref{bootvolumeradius:bis} and the propagation of higher regularity \eqref {gggggggggggggg} yield the conclusion of Theorem \ref{th:mainter}. Together with the reduction to small initial data performed in section \ref{sec:reductionsmall}, this concludes the proof of the main Theorem \ref{th:main}.\\

  The rest of the  chapter  deals with the proofs of propositions \ref{prop:improve1} and  \ref{prop:improve2}. 
  The core of the proofs  is to control $A$,  the spatial part of the  connection $\A$. 
  As explained     in  the introduction  we need to project  our equation for   the spatial components 
   $A$  onto divergence free vectorfields. This is needed for two reasons, to eliminate the term 
   $\pr_i (\pr_0 A_0)$ on the left  hand side of \eqref{eq:YM22} and to   obtain, on the right hand side,
   terms which exhibit the crucial null structure we need to implement our proof. Rather than work with 
   the projection $\PP$, which is too complicated, we introduce instead  the new  variable,
   \bea
   B=(-\Delta)^{-1}\curl(A) \label{defineB}
   \eea
    for which we derive a suitable    wave equation.
   Since we have (see Lemma \ref{recoverA})
$A=\curl(B)+l.o.t$
 it suffices   to obtain estimates for $B$ which   lead us  to an  improvement of the bootstrap assumption \eqref{bootA} on $A$.  In section \ref{sec:bobo5},   we derive space-time  estimates  for $\square B$ and its derivatives.     Proposition  \ref{prop:improve1}, which does not require  a parametrix representation, is  proved in \ref{sec:improve1}.   Proposition  \ref{prop:improve2} is proved 
 in sections \ref{sec:improve2} and \ref{sec:improve2B} based on the  representation formula  of theorem \ref{lemma:parametrixconstruction} derived in section \ref{sec:parametrix}. 

\section{Simple  consequences of the bootstrap assumptions}

\subsection{Sobolev embeddings and elliptic estimates on $\Si_t$.}

 The bootstrap assumption \eqref{bootR}  on $R$ and the estimate for $k$ \eqref{bootk} together with the estimates in \cite{param3} (see section 4.4 in that paper) yield the following lapse estimates:
\bea\label{bootn}
\norm{n-1}_{L^\infty(\MM)}+\norm{\nabla n}_{L^\infty(\MM)}+\norm{\nabla^2n}_{\lsit{2}}+\norm{\nabla^2n}_{\lsit{3}}&&\\
\nn+\norm{\nabla(\pr_0n)}_{\lsit{3}}+\norm{\nabla^3n}_{\lsit{\frac{3}{2}}}+\norm{\nabla^2(\pr_0(n))}_{\lsit{\frac{3}{2}}}&\lesssim& M\ep,
\eea
where $\nabla$ denotes the induced covariant derivative on $\Sigma_t$. 

\begin{remark}\lab{rem:tempura}
Recall from \eqref{eq:YM11} that
$(A_0)_{0i}=-n^{-1}\nabla_in.$
Thus, the estimates \eqref{bootn} for $n$ could in principle be deduced from the bootstrap assumptions \eqref{bootA0} for $A_0$. However, notice that $\nabla n\in L^\infty(\MM)$ in view of \eqref{bootn}, while $A_0$ is only in $\lsitt{2}{\infty}$ according to \eqref{bootA0}. This improvement for the components $(A_0)_{0i}$ of $A_0$ will turn out to be crucial (see remark \ref{rem:tempura1}). Its proof is given in section 4.4 of \cite{param3} (see also the discussion in section \ref{ch4:sec:estn}).
\end{remark}

Next, we record   the following Sobolev embeddings and elliptic estimates on $\Si_t$  derived under the assumptions \eqref{bootcurvatureflux} and \eqref{bootR} in \cite{param3} (see sections 3.5 and 4.2 in that paper).
\begin{lemma}[Calculus inequalities on $\Si_t$]\lab{lemma:estimatesit}
Assume that the assumptions \eqref{bootcurvatureflux} and \eqref{bootR} hold, and assume that the volume radius at scales $\leq 1$ on $\Si_0$ is bounded from below by a universal constant. Then, the Sobolev embedding on $\Si_t$ holds for any tensor $F$
\bea\lab{sobineqsit}
\norm{F}_{L^{6}(\Si_t)}\lesssim \norm{\nabla F}_{L^{2}(\Si_t)},
\eea 
Also, we define the operator $(-\Delta)^{-\frac{1}{2}}$ acting on tensors on $\Si_t$ as:
$$(-\Delta)^{-\frac{1}{2}}F=\frac{1}{\Gamma\left(\frac{1}{4}\right)}\int_0^{+\infty}\tau^{-\frac{3}{4}}U(\tau)Fd\tau,$$
where $\Gamma$ is the Gamma function, and where $U(\tau)F$ is defined using the heat flow on $\Si_t$:
$$(\pr_\tau-\Delta)U(\tau)F=0,\, U(0)F=F.$$
We have the following Bochner estimates:
\bea\lab{bochnerestimates}
\norm{\nabla(-\Delta)^{-\frac{1}{2}}}_{\LL(L^2(\Si_t))}\les 1\textrm{ and }\norm{\nabla^2(-\Delta)^{-1}}_{\LL(L^2(\Si_t))}\les 1,
\eea
where $\mathcal{L}(L^2(\Si_t))$ denotes the set of bounded linear operators on $L^2(\Si_t)$. \eqref{bochnerestimates} together with the Sobolev embedding \eqref{sobineqsit} yields:
\bea\lab{sob}
\norm{(-\Delta)^{-\frac{1}{2}}F}_{L^2(\Si_t)}\lesssim \norm{F}_{L^{\frac{6}{5}}(\Si_t)}.
\eea 
\end{lemma}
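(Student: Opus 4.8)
The plan is to establish the three assertions of Lemma~\ref{lemma:estimatesit} in order: the scale‑invariant $L^6$ Sobolev inequality \eqref{sobineqsit}, then the Bochner estimates \eqref{bochnerestimates}, and finally the mapping property \eqref{sob}, which is a formal consequence of the first two. For part (a) I would first recast the hypotheses intrinsically on $\Si_t$: the Gauss equation \eqref{eq:structfol3} together with \eqref{bootR} and the estimate \eqref{bootk} for $k$ gives $\norm{\mathrm{Ric}_{\Si_t}}_{L^2(\Si_t)}\les M\ep$, while \eqref{bootcurvatureflux} and the results of \cite{param3} give the uniform non‑collapsing $r_{vol}(\Si_t,1)\gtrsim 1$ (this is \eqref{bootvolumeradius}). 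With these two inputs I would invoke the Cheeger–Gromov type result Theorem~\ref{th:coordharm}: $\Si_t$ is covered by geodesic balls $B_{r_0}(p)$ of a fixed radius $r_0$, each carrying harmonic coordinates in which $g_{ij}$ is $C^0$‑close to $\de_{ij}$ and $r_0\int_{B_{r_0}(p)}|\partial^2 g|^2\sqrt{|g|}\,dx$ is small; by the Sobolev embedding $W^{2,2}\hookrightarrow W^{1,6}\cap C^{0}$ in three dimensions this also makes the Christoffel symbols small in $L^4$ on each chart. On each chart the flat $L^6$ Sobolev inequality plus the comparability of the metric and Euclidean norms yields $\norm{F}_{L^6(B_{r_0}(p))}\les \norm{\nab F}_{L^2(B_{r_0}(p))}+\norm{F}_{L^2(B_{r_0}(p))}$; summing over a partition of unity of bounded overlap gives the inequality with a lower‑order term, and to remove it and reach the scale‑invariant form \eqref{sobineqsit} I would exploit the one asymptotically flat end of $\Si_t$, where the metric is globally close to Euclidean and the sharp $\mathbb R^3$ Sobolev inequality carries no lower‑order term, via the standard concentration/absorption argument.

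For part (b), the scalar case is immediate: $\norm{\nab(-\De)^{-1/2}}_{\LL(L^2(\Si_t))}\le 1$ follows from $-\De\ge 0$ and the spectral theorem, and $\norm{\nab^2(-\De)^{-1}}_{\LL(L^2(\Si_t))}\les 1$ from the integrated Bochner identity $\int_{\Si_t}|\nab^2 u|^2=\int_{\Si_t}(\De u)^2-\int_{\Si_t}\mathrm{Ric}(\nab u,\nab u)$. For tensors $F$ I would use the Weitzenböck identities $\int|\nab F|^2=\langle -\De F,F\rangle+\int \mathcal R\ast F\ast F$ and $\int|\nab^2F|^2=\int|\De F|^2+\int\mathcal R\ast\nab F\ast\nab F+\int\mathcal R\ast F\ast\nab^2 F$, where $\mathcal R$ schematically denotes the curvature of $\Si_t$; the key point is that no \emph{derivative} of the curvature appears, so the bound $\norm{\mathcal R}_{L^2(\Si_t)}\les M\ep$ is all that is available and all that is needed. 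The curvature terms are bounded by Hölder, $|\int \mathcal R\ast F\ast F|\les M\ep\,\norm{F}_{L^4(\Si_t)}^2$ and likewise with $F$ replaced by $\nab F$, and then I would use the Gagliardo–Nirenberg inequalities coming from part~(a), together with Young's inequality and the smallness of $M\ep$, to absorb them into the left‑hand side. Equivalently — and this is the more robust route — one proves \eqref{bochnerestimates} by a perturbation argument in the harmonic charts of part~(a): the equation $\De_g F=G$ reads there as a flat‑Laplacian equation for the components of $F$ plus lower‑order terms whose coefficients are small in $L^4$ (those from $\partial g$ and $\Gamma^2$) or small in $L^2$ (the curvature‑order term $\partial\Gamma$, which one first integrates by parts in the weak formulation so that it too is carried by $\Gamma\in L^4$), and one closes a Neumann series chart by chart starting from the flat Calderón–Zygmund bounds, patching globally as in part~(a).

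Finally, part (c) follows from \eqref{sobineqsit} and \eqref{bochnerestimates} by duality: $(-\De)^{-1/2}$ is self‑adjoint on $L^2(\Si_t)$, and for $g\in L^2(\Si_t)$ one has $\norm{(-\De)^{-1/2}g}_{L^6(\Si_t)}\les\norm{\nab(-\De)^{-1/2}g}_{L^2(\Si_t)}\les\norm{g}_{L^2(\Si_t)}$, so pairing $(-\De)^{-1/2}F$ against $g$ and using Hölder in the form $L^{6/5}$–$L^6$ yields $\norm{(-\De)^{-1/2}F}_{L^2(\Si_t)}\les\norm{F}_{L^{6/5}(\Si_t)}$. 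The step I expect to be genuinely delicate is the globalization over the non‑compact slice $\Si_t$: on an asymptotically flat $3$‑manifold the natural objects $(-\De)^{-1/2}u$ and $(-\De)^{-1}G$ are not in $L^2$ — a potential generated by an $L^2$ source decays only like $|x|^{-1}$ at the end — so the interpolation and absorption steps in parts~(a) and~(b) cannot be run blindly; they must be carried out on a compact exhaustion with constants uniform in the exhaustion, or in weighted spaces adapted to the end, and the scale‑invariant (rather than lower‑order‑term‑perturbed) character of \eqref{sobineqsit} forces the interface with the asymptotically flat region to be handled with care. Keeping every constant dependent only on the bootstrap constants $M,\ep$ moreover rests entirely on the uniformity built into Theorem~\ref{th:coordharm}. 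These are precisely the points carried out in detail in \cite{param3}.
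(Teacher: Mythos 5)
There is no proof of Lemma~\ref{lemma:estimatesit} in this paper against which to compare: the paper simply records the statement and cites sections~3.5 and~4.2 of \cite{param3} for the proof. Your overall architecture — harmonic charts from Theorem~\ref{th:coordharm}, Sobolev on each chart plus patching, an elliptic estimate for \eqref{bochnerestimates}, and duality for \eqref{sob} — is the right shape, and your duality argument for \eqref{sob} is correct. But your primary route to the second Bochner estimate has a genuine gap.

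The issue is scaling. For the Weitzenböck approach you need to absorb the curvature contribution into $\norm{\nabla^2 F}_{L^2}^2$ using only $\norm{\Delta F}_{L^2}$ on the other side, since \eqref{bochnerestimates} is the scale‑invariant inequality $\norm{\nabla^2 F}_{L^2(\Si_t)}\les\norm{\Delta F}_{L^2(\Si_t)}$. Already for scalars, the Bochner identity gives a term $\int_{\Si_t}\mathrm{Ric}(\nabla u,\nabla u)$, which you propose to bound by $M\ep\norm{\nabla u}_{L^4}^2$. By Gagliardo--Nirenberg in dimension three,
$$\norm{\nabla u}_{L^4(\Si_t)}^2\les\norm{\nabla u}_{L^2(\Si_t)}^{1/2}\norm{\nabla^2 u}_{L^2(\Si_t)}^{3/2},$$
and there is no way around the appearance of $\norm{\nabla u}_{L^2}$ here: $\norm{\nabla u}_{L^4}$ scales like $\lambda^{-1/4}$ while $\norm{\nabla^2 u}_{L^2}$ and $\norm{\nabla u}_{L^6}$ both scale like $\lambda^{-1/2}$, so $\norm{\nabla u}_{L^4}$ cannot be controlled scale‑invariantly by $\norm{\nabla^2 u}_{L^2}$ alone. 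But for $u=(-\Delta)^{-1}G$ on a noncompact asymptotically flat $\Si_t$, the quantity $\norm{\nabla u}_{L^2}=\norm{\nabla(-\Delta)^{-1}G}_{L^2}$ is \emph{not} bounded by $\norm{G}_{L^2}$ — the operator $\nabla(-\Delta)^{-1}$ is of order $-1$ and unbounded on $L^2$ of an asymptotically flat slice (check the scaling: $\norm{\nabla(-\Delta)^{-1}G_\lambda}_{L^2}/\norm{G_\lambda}_{L^2}\sim\lambda$). So the absorption does not close, even for scalars. For tensors the situation is worse: the integrated commutator identity produces, in addition to $\int R\ast\nabla F\ast\nabla F$, a term schematically of the form $\int R\ast F\ast\nabla^2 F$ (obtained after integrating by parts the $\nabla R$ contribution, as you must, since only $R\in L^2$ is available). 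You do not address this term, and it runs into the same obstruction: bounding it requires $\norm{F}_{L^\infty}$ or $\norm{\nabla F}_{L^2}$, neither of which is controlled by $\norm{G}_{L^2}$. A side remark: your first Weitzenböck identity $\int|\nabla F|^2=\langle-\Delta F,F\rangle+\int R\ast F\ast F$ is not correct for the Bochner Laplacian $\nabla^*\nabla$ (there the identity is exact, with no curvature term, and the first Bochner estimate is then immediate — which is fine).

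Your alternative ``more robust route'' — a Neumann‑series / Calderón--Zygmund perturbation argument in the uniform harmonic charts of Theorem~\ref{th:coordharm}, integrating by parts the curvature‑order coefficient so that everything is carried by $\Gamma\in L^4$ (or $L^6$) against flat CZ bounds, and patching with a bounded‑overlap partition of unity — is the one that can actually work, and is more in the spirit of what must be done in \cite{param3}. As written, though, it is a one‑sentence sketch: the uniform small‑data closure chart by chart, the treatment of the commutator with the partition of unity (which also reintroduces a $\nabla g$ term), and the passage from local $W^{2,2}$ control to the global scale‑invariant statement all need to be supplied. You should either present this route in detail and drop the Weitzenböck route, or substitute a genuinely scale‑invariant treatment of the curvature term (for instance by expressing $\mathrm{Ric}=\nabla\ast k+k\ast k+\nabla^2\log n+\dots$ via \eqref{eq:structfol3} and further integrating by parts, or by working with heat‑kernel bounds for $U(\tau)$ directly, which is natural given the definition of $(-\Delta)^{-1/2}$ in the statement); the present version leaves the central estimate of part~(b) unproved.
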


\subsection{ Elliptic  estimates for $B$}
Here we    record   simple    estimates for $B$, based   the bootstrap
 assumptions \eqref{bootA} \eqref{bootA0} for $A$ and $A_0$ and standard elliptic estimates such
  as the Bochner and Sobolev  inequalities      on $\Si_t$, see  \eqref{bochnerestimates} and   \eqref{sob}.

\begin{proposition}\lab{lemma:estB}
Let $B_i=(-\Delta)^{-1}(\curl(A)_i)$. Then, we have, for each component of $B$:
\bea
\norm{\prb B}_{\lsit{2}}+\norm{\pr^2 B}_{\lsit{2}}+\norm{\pr(\pr_0 B)}_{\lsit{2}}\les M\ep.
\label{squareB2}  
\eea
\end{proposition}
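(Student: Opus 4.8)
The plan is to establish the three estimates in \eqref{squareB2} by a direct application of the elliptic theory on $\Si_t$ recorded in Lemma \ref{lemma:estimatesit}, combined with the bootstrap bounds \eqref{bootA}, \eqref{bootA0} and the derived estimate \eqref{bootk} for $k$. The starting point is the definition $B=(-\Delta)^{-1}\curl(A)$, from which $\pr^2 B$ is morally a zeroth-order operator applied to $\pr A$ and $\prb B$ involves $(-\Delta)^{-1}\pr(\pr A)$, i.e.\ again a bounded operator applied to $\pr A$.

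First I would bound $\pr^2 B$. Writing $\curl(A)=\pr A + A^2$ schematically (recall the notation convention \eqref{def:curl} and the presence of Christoffel/connection lower-order terms when $\nab$ is not a coordinate derivative), we have $B=(-\Delta)^{-1}\nab(A)+(-\Delta)^{-1}(A^2)$. Applying the second Bochner estimate in \eqref{bochnerestimates}, $\norm{\nab^2(-\Delta)^{-1}}_{\LL(L^2(\Si_t))}\les 1$, gives $\norm{\pr^2 B}_{\lsit{2}}\les \norm{\pr A}_{\lsit{2}}+\norm{\nab\big((-\Delta)^{-1}(A^2)\big)}_{\lsit{2}}$. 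The first term is controlled by $M\ep$ via \eqref{bootA}; for the quadratic term I would use the first Bochner bound $\norm{\nab(-\Delta)^{-1}(-\Delta)^{\frac12}}$-type estimate together with \eqref{sob}, namely $\norm{\nab(-\Delta)^{-1}(A^2)}_{\lsit{2}}=\norm{\nab(-\Delta)^{-\frac12}(-\Delta)^{-\frac12}(A^2)}_{\lsit{2}}\les \norm{(-\Delta)^{-\frac12}(A^2)}_{\lsit 2}\les \norm{A^2}_{L^{6/5}(\Si_t)}\les \norm{A}_{L^{12/5}(\Si_t)}^2$, which is bounded by $\norm{A}_{\lsit 2}^{1/2}\norm{A}_{L^6(\Si_t)}^{3/2}\les\norm{A}_{\lsit 2}^{1/2}\norm{\nab A}_{\lsit 2}^{3/2}\les (M\ep)^2$ using interpolation and the Sobolev embedding \eqref{sobineqsit}. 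Then I would treat $\prb B=(\pr B,\pr_0 B)$: the spatial derivative $\pr B$ is handled exactly as $\pr^2 B$ but using the first Bochner estimate in \eqref{bochnerestimates} (one fewer derivative), giving $\norm{\pr B}_{\lsit 2}\les\norm{A}_{\lsit 2}+\|A^2\|\les M\ep$; the time derivative $\pr_0 B$ requires commuting $\pr_0$ with $(-\Delta)^{-1}$, which by \eqref{notcoordinate} produces only $\A\prb$-type lower order terms, and then using the structure equation \eqref{eq:YM22}/\eqref{eq:YM21} (or directly the transport equations for $A$) to express $\pr_0\curl A$ in terms of $\pr\prb A_0$, $A\prb\A$ and curvature; all of these are controlled in $\lsit{3/2}$ or better by \eqref{bootA}, \eqref{bootA0}, \eqref{bootn}, \eqref{bootR}, and one more application of \eqref{sob} promotes $L^{3/2}$ to the needed level after applying $(-\Delta)^{-1}$. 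The mixed term $\pr(\pr_0 B)$ is estimated by the same scheme with one spatial derivative in place of the inverse Laplacian, i.e.\ by the first Bochner bound applied to $\pr_0\curl A$.

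The main obstacle I anticipate is the treatment of $\pr_0 B$ and $\pr(\pr_0 B)$: unlike the purely spatial estimates, these require genuinely using the equations of motion (Proposition \ref{prop:decomp}) to trade the time derivative landing on $A$ for spatial derivatives of $A_0$ and quadratic connection terms, and one must be careful that the resulting terms — in particular $\pr\prb A_0$, which by \eqref{bootA0} lives only in $\lsit{3/2}$ — are acceptable once the smoothing operator $(-\Delta)^{-1}$ (which gains two derivatives in an $L^2$-based sense, hence via \eqref{sob} maps $L^{6/5}\to L^2$ and by duality/interpolation handles $L^{3/2}$ inputs after one derivative) is applied. Keeping track of the non-coordinate commutators \eqref{notcoordinate} and verifying that every lower-order term genuinely closes at the $M\ep$ level — rather than only at $M^2\ep^2$ or worse — is the bookkeeping-heavy part; but since we only claim an $M\ep$ bound (not an improvement), the quadratic terms are harmless, and the linear terms are all directly furnished by the bootstrap assumptions. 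I would refer to \cite{param3} for the precise elliptic estimates on $\Si_t$ used throughout.
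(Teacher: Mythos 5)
Your overall strategy---treat $B=(-\Delta)^{-1}\curl A$ by elliptic theory on $\Si_t$ via the Bochner bounds \eqref{bochnerestimates}, the Sobolev embeddings \eqref{sobineqsit}, \eqref{sob}, and the bootstrap bounds \eqref{bootA}, \eqref{bootA0}, \eqref{bootk}---is the one the paper indicates, and the $\pr B$, $\pr^2 B$ estimates go through essentially as you describe (with the caveat that $L^2$-boundedness of the zeroth-order operator $\pr(-\Delta)^{-1}\curl$ is not literally contained in \eqref{bochnerestimates} and needs a short integration-by-parts argument of its own). The problems lie in the time derivative.

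Your claim that the commutator $[\pr_0,(-\Delta)^{-1}]$ ``by \eqref{notcoordinate} produces only $\A\prb$-type lower order terms'' is incorrect: \eqref{notcoordinate} records only the frame commutators $[\pr_i,\pr_j]$ and $[\pr_j,\pr_0]$, whereas what is needed here is $[\pr_0,\Delta]$, a genuinely \emph{second-order} operator whose leading coefficient involves $k$, schematically $[\pr_0,\Delta]B\sim k^{ab}\nab_a\nab_b B+\cdots$ (compare the terms feeding into \eqref{commsquare3}). This cannot be absorbed without first writing $k^{ab}\nab_a\nab_b B=\nab_a(k^{ab}\nab_b B)$ via the constraint \eqref{constk}, exactly as in the proof of Proposition \ref{prop:waveeqB}; without this divergence-form step the estimate for $\pr(\pr_0 B)$ does not close at the stated level. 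Separately, expanding $\pr_0\curl A$ through the curvature relation $\pr_0 A_j=\pr_j A_0+\R_{0j\c\c}+\cdots$ produces, after applying $\curl$, a $\pr\R$ contribution that is not in $\lsit{\frac32}$ or any other space furnished by the bootstrap, so your statement that all the resulting terms are ``controlled in $\lsit{3/2}$ or better'' is wrong as written. No such expansion is actually needed: \eqref{bootA} already gives $\|\pr_0 A\|_{\lsit{2}}\les M\ep$ directly, so it suffices to write $\pr_0\curl A=\curl(\pr_0 A)+[\pr_0,\curl]A$ (the second term genuinely lower order) and use the zeroth-order $L^2$-boundedness of $\pr(-\Delta)^{-1}\curl$; the equations of motion \eqref{eq:YM21}, \eqref{eq:YM22} play no role in this Proposition.
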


\subsection{Decomposition for $A$}

Recall that $B=(-\Delta)^{-1}(\curl(A))$. We indicate below  how to recover $A$ from $B$:
\begin{lemma}\lab{recoverA}
We have the following estimate:
$$A=\curl(B)+E$$
where $E$ satisfies:
$$\norm{\pr E}_{\lsit{3}}+\norm{\pr^2E}_{\lsit{\frac{3}{2}}}+\norm{E}_{\lsitt{2}{\infty}}\les M^2\ep^2.$$
\end{lemma}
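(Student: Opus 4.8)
The goal is to establish Lemma \ref{recoverA}, which asserts that $A=\curl(B)+E$ with $\norm{\pr E}_{\lsit{3}}+\norm{\pr^2E}_{\lsit{\frac{3}{2}}}+\norm{E}_{\lsitt{2}{\infty}}\les M^2\ep^2$, where $B=(-\Delta)^{-1}(\curl(A))$.

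\medskip

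\textbf{Approach.} The plan is to use the Hodge-type decomposition on $\Si_t$. Since we are in three dimensions, any vector field $A$ splits (up to harmonic pieces, which are absent under our asymptotic flatness / decay assumptions and the smallness of $A$ in $L^2$) into its curl-free and divergence-free parts. The standard identity is $-\Delta A=\curl\curl A-\nabla\div A$ plus curvature correction terms coming from the Bochner/Weitzenb\"ock formula on the curved manifold $\Si_t$. Applying $(-\Delta)^{-1}$ and using $\curl B=\curl(-\Delta)^{-1}\curl A$, one gets $A=\curl B+(-\Delta)^{-1}\nabla\div A+(\text{curvature terms})=\curl B+E$. So I would \emph{define} $E:=A-\curl B$ and then show it has the claimed regularity by exploiting that (i) $\div A=A^2$ is quadratically small by the Coulomb-type gauge condition of Proposition \ref{prop:decomp}, and (ii) the commutator/curvature terms arising from reordering $\curl\curl$ versus $\nabla\div$ on $\Si_t$ involve $R\cdot A$, which is a product of a curvature term (controlled in $L^2(\Si_t)$ by \eqref{bootR}) with $A$ (controlled in $\lsit{2}$ and $\lsitt{2}{7}$ by \eqref{bootA}), hence also quadratically small.

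\medskip

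\textbf{Key steps in order.} First, write the Weitzenb\"ock identity on $\Si_t$ relating $\Delta A$, $\curl\curl A$, $\nabla\div A$ and a Ricci-curvature contraction $\mathrm{Ric}\ast A$; schematically $-\Delta A=\curl\curl A-\nabla\div A+R\ast A$. Second, substitute $\div A=A^2$ from the Coulomb-type gauge condition, so $E=(-\Delta)^{-1}(\nabla(A^2)+R\ast A)$ up to harmless relabelling. Third, estimate each piece: for $\norm{E}_{\lsitt{2}{\infty}}$ use the Bochner bound $\norm{\nabla^2(-\Delta)^{-1}}_{\LL(L^2)}\les 1$ together with Sobolev embedding $L^6\hookrightarrow L^\infty$ fails, so instead one bounds $\norm{E}_{L^\infty}$ via $\norm{\nabla E}_{L^6}+\norm{E}_{L^6}$ and the elliptic estimate $\norm{\nabla E}_{L^6}\les\norm{\Delta(-\Delta)^{-1}(\ldots)}\ldots$; more directly, use that $(-\Delta)^{-1}$ gains two derivatives so $E$ is two derivatives smoother than $A^2$ and one smoother than $R\ast A$, then close using the product estimates $\norm{A^2}\les\norm{A}_{L^2}\norm{A}_{L^\infty\text{-ish}}$ controlled by $\norm{A}_{\lsit{2}}\norm{A}_{\lsitt{2}{7}}\cdot(\text{Sobolev})\les (M\ep)^2$ and $\norm{R\ast A}_{L^{6/5}}\les\norm{R}_{L^2}\norm{A}_{L^3}\les (M\ep)^2$, followed by \eqref{sob}. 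For $\norm{\pr E}_{\lsit{3}}$ and $\norm{\pr^2 E}_{\lsit{\frac32}}$, differentiate the elliptic equation for $E$ once or twice, use $\norm{\nabla(-\Delta)^{-1/2}}_{\LL(L^2)}\les 1$ and Sobolev $H^{1}\hookrightarrow L^6$, $W^{1,3/2}\hookrightarrow L^3$, plus the bootstrap bounds \eqref{bootR}, \eqref{bootA}, \eqref{bootk}, \eqref{bootn}, \eqref{bootstrich} and Proposition \ref{lemma:estB}.

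\medskip

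\textbf{Main obstacle.} The hard part will be carefully tracking the commutator and curvature terms generated when passing between $\curl\curl$ and $\Delta$ on the curved slice $\Si_t$ whose metric has only $\partial^2 g\in L^2$ regularity, and verifying that \emph{every} such term is genuinely quadratic (or higher) in the small quantities — i.e., that no linear-in-$A$ term with a bad constant survives. In particular one must make sure the $L^\infty_t L^\infty(\Si_t)$ bound for $E$ is attainable: this uses the extra structure that $\div A=A^2$ is not merely small but is a product which, after the two-derivative gain from $(-\Delta)^{-1}$, lands comfortably in $L^\infty$ via Sobolev on $\Si_t$ (using the elliptic estimates of Lemma \ref{lemma:estimatesit} and the lapse bounds \eqref{bootn}), whereas naively one would only get $L^6$. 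Once the algebraic structure is pinned down, the estimates themselves are routine applications of the Bochner inequalities \eqref{bochnerestimates}, \eqref{sob}, the Sobolev embedding \eqref{sobineqsit}, and Hölder, with the bootstrap assumptions supplying the $M^2\ep^2$ on the right-hand side. For full details we refer to \cite{KRS}.
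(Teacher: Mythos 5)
Your approach matches the paper's: start from the Weitzenb\"ock (Hodge) identity on $\Si_t$, substitute the Coulomb-type gauge condition $\div A=A^2$, and close the error estimates with the elliptic/Bochner inequalities of Lemma~\ref{lemma:estimatesit} and the bootstrap assumptions, deferring the remaining details to \cite{KRS} just as the paper does.

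One imprecision worth fixing, though. When you write $A=\curl B+(-\Delta)^{-1}\nabla\div A+(\text{curvature terms})$ and then set $E=(-\Delta)^{-1}(\nabla(A^2)+R\ast A)$, you have implicitly used $(-\Delta)^{-1}\curl\curl A=\curl(-\Delta)^{-1}\curl A=\curl B$. On the curved slice $\Si_t$ the operator $\curl$ does \emph{not} commute with $(-\Delta)^{-1}$, so $E$ contains the additional term $-(-\Delta)^{-1}[\Delta,\curl]B$, which the paper keeps explicitly in its second display. This commutator term is of the same schematic type as the ones you do list (since $[\Delta,\curl]$ is second order with curvature coefficients, $[\Delta,\curl]B\sim R\ast\pr B$, and $\pr B$ is at the level of $A$), and the estimates you outline handle it in the same way; the issue is only that your ``up to harmless relabelling'' formula for $E$ is incomplete without it, and your ``commutator/curvature terms arising from reordering $\curl\curl$ versus $\nabla\div$'' language points at the Weitzenb\"ock correction rather than this separate commutator. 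Otherwise the plan — in particular the use of $\div A=A^2$ to render $\nabla\div A$ quadratic and of $R\sim\pr A+A^2$ to make the curvature contraction quadratic — is exactly the paper's.
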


\begin{proof}
We   have, symbolically, 
\beaa
 A=(-\Delta)^{-1}\curl(\curl(A)+(-\Delta)^{-1} (A\pr  A+A^3).
\eeaa
from which,
\beaa
A &=& \curl(B)-(-\Delta)^{-1}[\Delta,\curl]B+(-\Delta)^{-1} (A \pr  A+A^3)\\
\eeaa
The rest of the proof uses elliptic estimates on $\Si_t$, the bootstrap assumptions for $\A$ and $\R$, and the  bootstrap assumption \eqref{bootstrich}. We refer to \cite{KRS} for the details.
\end{proof}

\section{Estimates   for  $\square B $}\lab{sec:bobo5}
 We  outline the proof of two important  propositions concerning  estimates  for $\square \curl A$ and $\square  B$,  with   $B=\Delta^{-1}\curl(A)$. The proofs makes use of the special structure of various  bilinear expressions and thus  is based    not only on the bootstrap assumptions for $A_0, A$, $k$ and $\R$  but also  some of our  bilinear bootstrap assumptions.

 We record first   the following straightforward   commutation lemma, see  \cite{KRS}. 
\begin{lemma}\lab{lemma:commutation}
Let $\phi$ a $so(3,1)$  scalar function on $\MM$. We have, schematically,
\bea\lab{commsquare2}
\pr_j(\square\phi)-\square (\pr_j(\phi))=
2(A^\la)_j\,^{\mu}\,\, \pr_\la   \pr_\mu\phi+\pr_0(A_0)\prb\phi 
+ \A^2\prb\phi.
\eea
We also have:
\bea\lab{commsquare3}
[\square, \Delta]\phi
&=&-4k^{ab}\nabla_a\nabla_b(\pr_0\phi)+4n^{-1}\nabla_bn\nabla_b(\pr_0(\pr_0\phi))
 -2\nabla_0k^{ab}\nabla_a\nabla_b\phi\\
 &+&F^{(1)}\prb^2\phi+F^{(2)}\prb\phi,\nn\\
 F^{(1)}&=& \prb A_0+\A^2,\nn\\
 F^{(2)}&=& \pr\prb A_0+\A\prb\A+\A^3,\nn
\eea
where $\nabla_a$ and $\nabla_b$ denote induced covariant derivatives on $\Si_t$ applied to the scalars $\phi$, $\pr_0\phi$ and $\pr_0(\pr_0\phi)$.
\end{lemma}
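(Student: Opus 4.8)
The statement to prove is Lemma \ref{lemma:commutation}, the commutation formulas \eqref{commsquare2} and \eqref{commsquare3}.

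\bigskip

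The plan is to compute the commutators $[\pr_j,\square]$ and $[\square,\Delta]$ directly, being careful to track all terms generated by the fact that $\pr_0,\pr_j$ are frame derivatives rather than coordinate derivatives. First I would record the basic commutation relations \eqref{notcoordinate}, namely $[\pr_i,\pr_j]\phi=A\,\prb\phi$ and $[\pr_j,\pr_0]\phi=\A\,\prb\phi$, which follow from \eqref{eq:YM1}, \eqref{eq:YM10}, \eqref{eq:YM11} together with the structure equations \eqref{eq:structfol1}--\eqref{eq:structfol3} of the maximal foliation; in the schematic notation, any commutator of two frame derivatives costs one factor of $\A$ times $\prb\phi$. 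For the covariant wave operator I would write $\square\phi = -\pr_0\pr_0\phi + \Delta\phi + (\text{first order})$, where the first order terms come from the second fundamental form $k$ (the mean curvature term vanishes since the foliation is maximal, $\tr_g k=0$) and from $n^{-1}\nabla n$; more precisely $\square\phi = -\pr_0^2\phi + \Delta_g\phi + n^{-1}\nabla^a n\,\nabla_a\phi + k^{ab}(\cdots)$ after expressing everything in the frame. These coefficients are exactly $A_0$ and $A$ by \eqref{eq:YM10}--\eqref{eq:YM11}, which is why the final answer is phrased in terms of $\A$.

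\bigskip

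For \eqref{commsquare2}: I would commute $\pr_j$ through each term of $\square\phi$. Commuting $\pr_j$ past $\pr_\la\pr_\mu$ produces, by the Leibniz rule applied twice to the non-commuting relations, a term $2(A^\la)_j{}^\mu\,\pr_\la\pr_\mu\phi$ (the principal term, where the derivative hits the "connection coefficient" sitting between the two frame derivatives) plus lower order terms in which $\pr_j$ hits the structure coefficients $\A$, producing $(\pr\A)\prb\phi + \A^2\prb\phi$; the piece $\pr_0(A_0)\prb\phi$ is isolated from the $\pr\A$ terms because $\pr_0 A_0$ appears with a distinguished weight in our bootstrap hierarchy \eqref{bootA0}, whereas all other $\pr\A$ contributions are absorbed into the $\A^2\prb\phi$ schematic (using e.g. that spatial derivatives of $A$ and $\pr_0 A$ can be re-expressed). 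The main bookkeeping point is to verify that no genuinely second-order term other than $2(A^\la)_j{}^\mu\pr_\la\pr_\mu\phi$ survives.

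\bigskip

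For \eqref{commsquare3}: I would compute $[\square,\Delta]\phi = [-\pr_0^2 + \Delta + \text{l.o.t.},\,\Delta]\phi$. The leading contributions come from $[-\pr_0^2,\Delta]$ and from commuting the first-order coefficients of $\square$ past $\Delta$. Writing $\Delta=\nabla^a\nabla_a$ on $\Si_t$ and using $\pr_0 g_{ab} = -2nk_{ab}$ (the evolution of the induced metric under the CMC foliation, with lapse $n$), the commutator $[\pr_0^2,\Delta]$ generates $4k^{ab}\nabla_a\nabla_b(\pr_0\phi)$ (from one $\pr_0$ differentiating the metric in $\Delta$) and $-2(\nabla_0 k^{ab})\nabla_a\nabla_b\phi$ (from the second $\pr_0$), together with the term $4n^{-1}\nabla_b n\,\nabla_b(\pr_0^2\phi)$ coming from the interplay of $\pr_0$ with the normalization of the frame / the lapse; remaining terms have at most two total derivatives on $\phi$ with coefficients $F^{(1)}=\prb A_0+\A^2$ acting on $\prb^2\phi$ and $F^{(2)}=\pr\prb A_0+\A\prb\A+\A^3$ acting on $\prb\phi$ — these collect all the Ricci-type curvature terms via \eqref{eq:structfol1}--\eqref{eq:structfol3} and \eqref{eqlapsen}, and the derivatives-of-lapse terms via \eqref{eq:YM11}.

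\bigskip

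The main obstacle is purely organizational rather than deep: keeping the schematic notation honest, i.e. checking that every term produced can genuinely be placed into one of the listed buckets ($\A^2\prb\phi$, $\A^3$, $F^{(1)}\prb^2\phi$, $F^{(2)}\prb\phi$, etc.) at the claimed level of derivatives and with the claimed regularity, and in particular that the three explicitly displayed second-order terms in \eqref{commsquare3} (the ones involving $k^{ab}$, $n^{-1}\nabla n$, $\nabla_0 k^{ab}$) are exactly the ones that cannot be absorbed into $F^{(1)}\prb^2\phi$ because they are the terms we will later integrate by parts against using the trilinear bootstrap assumption \eqref{trilinearboot} and the bilinear assumption \eqref{bil7}. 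One must also use that the foliation is maximal to discard a $\trch$-type term and use the lapse equation \eqref{eqlapsen} to keep $\Delta n$ at the right order. As the excerpt indicates, this is a "straightforward commutation lemma" and the detailed verification is deferred to \cite{KRS}; accordingly I would present the computation at the schematic level, exhibiting the principal terms explicitly and indicating how the commutation relations \eqref{notcoordinate} and the structure equations generate the error buckets.
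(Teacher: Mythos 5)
The paper itself does not contain a proof of this lemma -- it declares it ``straightforward'' and cites \cite{KRS}; so there is no in-paper argument to compare against. That said, your overall plan (compute $[\pr_j,\square]$ and $[\square,\Delta]$ directly, track the errors generated by the non-coordinate frame derivatives via \eqref{notcoordinate}, and use the maximal foliation, the constraint equations, the Coulomb gauge, and the structure equations to identify which lower-order buckets the errors land in) is the correct and indeed only route, and your identification of the purpose of the three explicit second-order terms in \eqref{commsquare3} is apt.

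However, your explanation of why the term $\pr_0(A_0)\prb\phi$ appears isolated in \eqref{commsquare2} is wrong, and this is precisely the step you needed to get right. You justify it by saying ``$\pr_0 A_0$ appears with a distinguished weight in our bootstrap hierarchy'' -- but whether a term appears in a commutator is a mathematical fact, not a choice; the bootstrap hierarchy is built \emph{around} that fact, not the other way around. The actual mechanism is structural: writing $\square\phi=\m^{\mu\nu}(\pr_\mu\pr_\nu\phi-(A_\mu)^\la{}_\nu\pr_\la\phi)$ and expanding $\m^{\mu\nu}[\pr_j,\pr_\mu\pr_\nu]\phi$, the first-order-in-$\phi$ piece containing $\m^{\mu\nu}\pr_\mu(A_\nu)^\la{}_j$ splits as $-\pr_0(A_0)^\la{}_j+\sum_i\pr_i(A_i)^\la{}_j$, and the second summand is a divergence of $A$ over the \emph{external} index and hence is $\A^2$ by the Coulomb condition, leaving the bare $\pr_0(A_0)$. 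Your sketch says nothing about this, and more importantly it does not address the companion contribution $\m^{\mu\nu}\pr_\mu(A_j)^\la{}_\nu$ nor the derivatives of the first-order coefficient $\m^{\mu\nu}(A_\mu)^\la{}_\nu$ of $\square$: these produce genuine $\pr\A$-type terms (e.g.\ $\pr_0 k_{ij}$, $\pr_i(A_j)_{li}$, $\pr_j(n^{-1}\nabla_l n)$) which must be shown -- via $\text{tr}k=0$, $\nabla^ik_{ij}=0$, $\pr_0 k=\R+\pr A_0+\A^2$ from \eqref{eq:structfol1}, the Coulomb condition, and the antisymmetry of $\A$ -- to collapse into the claimed $\A^2$ and $\pr_0(A_0)$ buckets. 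That reduction \emph{is} the content of the lemma, and waving it away as ``organizational'' without exhibiting the cancellations leaves a genuine gap. (Minor point: with $\pr_0=T$ the unit normal, the evolution is $\mathcal{L}_T g_{ab}=-2k_{ab}$, not $-2nk_{ab}$; the extra $n$ belongs to $\mathcal{L}_{\pr_t}$, though this does not affect the schematic shape of \eqref{commsquare3}.)
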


The estimates for $\square \curl A$ and $\square  B$ are given by the following propositions.
\begin{proposition}\lab{prop:commutsquarecurl}  We have 
\bea
\sum_{i=1}^3\norm{(-\Delta)^{-\frac12}      \square(\curl  A_i)        }_{L^2(\MM)}\lesssim M^2\ep^2.\label{squareB1}
\eea
\end{proposition}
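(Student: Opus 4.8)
The goal is to bound $\|(-\Delta)^{-\frac12}\square(\curl A_i)\|_{L^2(\MM)}$ by $M^2\ep^2$. The plan is to start from the wave equation \eqref{eq:YM22} for $A_i$, apply the curl operator, commute $\square$ through $\curl$ using Lemma \ref{lemma:commutation}, and then systematically classify every resulting term into three categories: (i) genuine bilinear expressions of the type $\curl(A^j\pr_jA)$, $\curl(A^j\pr_iA_j)$, or $Q_{ij}(A,A)$ that are covered by the bilinear bootstrap assumptions \eqref{bil1}, \eqref{bil3}, \eqref{bil4}; (ii) terms containing $\pr_i(\pr_0A_0)$, which is the dangerous term on the left-hand side of \eqref{eq:YM22} — here taking $\curl$ is exactly what kills it, since $\curl\,\nabla(\cdot)$ is a commutator $[\curl,\nabla](\pr_0A_0)$ of lower order that can be absorbed using the $A_0$ estimates \eqref{bootA0}; and (iii) genuinely lower-order cubic terms $\A^3$ and mixed terms $A_0\prb\A$, $A\prb(A_0)$, which are handled by H\"older in spacetime combined with the Sobolev embeddings of Lemma \ref{lemma:estimatesit}, the non-sharp Strichartz bound \eqref{bootstrich}, and the $L^\infty_tL^2(\Si_t)$ energy bounds.

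The order I would carry this out: First, write $\square(\curl A_i)=\curl(\square A_i)+[\square,\curl]A_i$, and expand $[\square,\curl]A_i$ using \eqref{commsquare2} applied componentwise — each term there is of the form $A\cdot\pr^2A$ or $\prb A_0\cdot\prb A$ or $\A^2\prb A$, which after applying $(-\Delta)^{-\frac12}$ and using \eqref{sob} reduces to estimating these in $L^{6/5}(\Si_t)$ pointwise in $t$, then integrating in time; the $\prb^2$ terms need to be paired against the good elliptic gain of $(-\Delta)^{-\frac12}$ together with \eqref{bochnerestimates} so that effectively only one derivative lands badly. Second, substitute \eqref{eq:YM22} for $\square A_i$; the leading quadratic terms $A^j\pr_jA_i$ and $A^j\pr_iA_j$ are taken care of directly by \eqref{bil1} and (after recognizing the antisymmetrized combination produced by $\curl$) by \eqref{bil3}--\eqref{bil4}, noting $\|(-\Delta)^{-\frac12}(\cdot)\|_{L^2(\MM)}$ is the relevant norm. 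Third, for the term $\curl\,\pr_i(\pr_0A_0)$, write it as $[\curl,\pr_i]\pr_0A_0$ plus a term where the two antisymmetrized spatial derivatives collapse; estimate using \eqref{bootA0} — specifically $\|\pr\prb A_0\|_{\lsit{3/2}}$ and the improved bound $\nabla n\in L^\infty$ from Remark \ref{rem:tempura} which is what really makes this term affordable. Finally, collect the cubic and mixed remainder terms $\A^3$, $A_0\prb\A$, $A\prb(A_0)$ and estimate each by a plain H\"older/Sobolev argument as in category (iii).

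The main obstacle I expect is category (ii): controlling $\curl\,\pr_i(\pr_0 A_0)$ after applying $(-\Delta)^{-\frac12}$. Naively $\pr_0A_0$ has two spatial derivatives hitting it (one from $\pr_i$, one net from $\curl$ after the commutator is extracted), giving $\pr\prb A_0$ which is only in $\lsit{3/2}$ by \eqref{bootA0}, and $(-\Delta)^{-\frac12}$ of an $L^{3/2}(\Si_t)$ function is not in $L^2(\Si_t)$ by Sobolev — so one cannot be cavalier. The resolution must exploit that the double curl structure produces, modulo commutators, the expression $(-\Delta)^{-\frac12}\curl\curl$ acting on something, i.e.\ essentially a zeroth-order operator in the "good" combination, together with the crucial fact noted in Remark \ref{rem:tempura} that the relevant components $(A_0)_{0i}=-n^{-1}\nabla_i n$ enjoy the much stronger bound $\nabla n\in L^\infty(\MM)$, not merely the generic $A_0\in\lsitt{2}{\infty}$ bound. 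Tracking this improvement carefully through the commutator terms, and pairing it with the energy estimate \eqref{bootR} for $\R$ wherever a curvature term appears from commuting non-coordinate frame derivatives \eqref{notcoordinate}, is where the real work lies; the rest is bookkeeping against the bilinear and Strichartz bootstrap assumptions. I would refer to \cite{KRS} for the detailed term-by-term verification.
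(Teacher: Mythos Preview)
Your overall strategy is correct and matches the paper's approach: write $\square(\curl A_i)=\curl(\square A_i)+[\square,\curl]A_i$, expand the commutator via \eqref{commsquare2}, substitute \eqref{eq:YM22} for $\square A_i$, use that $\curl$ annihilates the gradient $\pr_i(\pr_0A_0)$ up to commutators, and reduce the surviving quadratic terms to the bilinear bootstrap assumptions \eqref{bil1}, \eqref{bil3}, \eqref{bil4}. The paper itself gives no detailed proof of this proposition, saying only that it is ``similar in spirit'' to the outlined proof of Proposition~\ref{prop:waveeqB} and deferring to \cite{KRS}.

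One correction to your identification of the main obstacle. Your category~(ii) is easier than you suggest, and the $\nabla n\in L^\infty$ bound of Remark~\ref{rem:tempura} is \emph{not} what resolves it. The term $\in_i^{\,jl}\pr_j\pr_l(\pr_0A_0)$ is, by antisymmetry, a pure commutator $\tfrac12\in_i^{\,jl}[\pr_j,\pr_l](\pr_0A_0)$, and since $e_j,e_l$ are tangent to $\Si_t$ their bracket is tangent as well, so \eqref{notcoordinate} gives schematically $A\cdot\pr(\pr_0A_0)$ with an explicit factor of $A$. Now $A\in\lsit{6}$ and $\pr\prb A_0\in\lsit{3/2}$ by \eqref{bootA0}, so the product lies in $\lsit{6/5}$, and \eqref{sob} sends this to $\lsit{2}\hookrightarrow L^2(\MM)$. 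There is no need to invoke the special structure $(A_0)_{0i}=-n^{-1}\nabla_in$ here. The estimate $\nabla n\in L^\infty(\MM)$ is genuinely crucial in the proof of Proposition~\ref{prop:waveeqB} --- specifically for the term $N_2$ involving $n^{-1}\nabla_bn\,\nabla_b(\pr_0^2B)$, see Remark~\ref{rem:tempura1} --- but that term arises from the commutator \eqref{commsquare3} with $\Delta$, not from the commutator \eqref{commsquare2} with $\pr_j$ that you are using here.
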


\begin{proposition}[Estimates  for $\square B$]\lab{prop:waveeqB}
The components  $B_i=(-\Delta)^{-1}(\curl(A)_i)$ verify the following estimate,
\bea\lab{squareB3}
\sum_{i=1}^3\big(\|\square B_i\|_{L^2(\MM)}+ \norm{\pr \,\square B_i}_{L^2(\MM)}  \big)  \lesssim M^2\ep^2.\label{spacetimeB}
\eea
We also have,
\bea
\sum_{i=1}^3\norm{\pr_0\pr_0 B_i}_{L^2(\MM)}\lesssim M\ep.
\eea
\end{proposition}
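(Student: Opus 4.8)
The plan is to derive a wave equation for $B$ from equation \eqref{eq:YM22} for $A$, and to estimate its right-hand side. Since $\curl A=-\Delta B$, commuting $\square$ with $\Delta$ yields
$$\square B_i=(-\Delta)^{-1}\big(\square(\curl A)_i+[\square,\Delta]B_i\big),$$
so everything reduces to understanding $\square(\curl A)$ and $[\square,\Delta]B$. First I would apply $\curl$ to \eqref{eq:YM22}: by the non-coordinate commutators \eqref{notcoordinate}, the dangerous linear term $\pr_i(\pr_0A_0)$ becomes quadratic, $\in_i\,^{jl}\pr_j\pr_l(\pr_0A_0)=\frac12\in_i\,^{jl}[\pr_j,\pr_l](\pr_0A_0)=\A\,\prb(\pr_0A_0)$ schematically; combining this with the commutation identity \eqref{commsquare2} for $[\square,\curl]$ and with the Coulomb condition $\div A=A^2$ of Proposition \ref{prop:decomp} (used to trade $\pr\,\div A$ for $A\pr A$ and thereby recast the surviving $\pr A\,\pr A$ pieces in divergence form), one obtains, modulo purely cubic terms, a representation of $\square(\curl A)_i$ of the schematic shape $\pr_j\big(A\pr A+A_0\prb\A+A\prb A_0\big)+\A\,\prb(\pr_0A_0)$, in which the bilinear terms are precisely the genuine null quadratic expressions. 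This is the structure already controlled by Proposition \ref{prop:commutsquarecurl}.

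For the $L^2(\MM)$ bound on $\square B_i$ I would exploit the divergence form: $(-\Delta)^{-1}\curl$ and $(-\Delta)^{-1}\pr_j$ gain one net derivative and hence, by the elliptic estimates of Lemma \ref{lemma:estimatesit} (in particular \eqref{sob}), map $L^{6/5}(\Si_t)$ into $L^2(\Si_t)$; by Hölder on $\Si_t$, the Sobolev embedding \eqref{sobineqsit}, interpolation, and the bootstrap bounds \eqref{bootA}, \eqref{bootA0} — the refined bounds $\prb A_0\in\lsit 3$ and $\pr\prb A_0\in\lsit{\frac32}$ being crucial for the $A_0$-terms and for $\A\,\prb(\pr_0A_0)$ — every bilinear factor lies in $L^2_tL^{6/5}(\Si_t)$ at the required level, while the terms for which plain Hölder loses are placed via the bilinear bootstrap assumptions \eqref{bil1}, \eqref{bil3}, \eqref{bil4}, \eqref{bil6}. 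The commutator $(-\Delta)^{-1}[\square,\Delta]B$ is treated by expanding \eqref{commsquare3}: its worst pieces, $k^{ab}\nabla_a\nabla_b(\pr_0B)$, $n^{-1}\nabla n\cdot\nabla(\pr_0^2B)$ and $\nabla_0k^{ab}\nabla_a\nabla_bB$, are handled by applying $(-\Delta)^{-1}$, integrating by parts to move a derivative onto the low-order factors $k$ and $n$, and then using the $B$-estimates $\prb B,\pr^2B,\pr(\pr_0B)\in\lsit 2$ of Proposition \ref{lemma:estB} together with \eqref{bootk} and \eqref{bootn}. For the derivative bound $\norm{\pr\,\square B_i}_{L^2(\MM)}$ one commutes one further spatial derivative through (again by \eqref{commsquare2}); since $\pr_j(-\Delta)^{-1}$ is itself of order $-1$, the natural input is now exactly Proposition \ref{prop:commutsquarecurl}, because $\pr_j(-\Delta)^{-1}\square(\curl A)_i=\big(\pr_j(-\Delta)^{-1/2}\big)\big((-\Delta)^{-1/2}\square(\curl A)_i\big)$ with $\pr_j(-\Delta)^{-1/2}$ bounded on $L^2(\Si_t)$, and the extra commutator terms are again bilinear/cubic.

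The weaker bound $\norm{\pr_0\pr_0 B_i}_{L^2(\MM)}\lesssim M\ep$ would come not from the divergence structure but simply by solving the wave equation for the second time derivative: in the frame $(e_0,e_i)$ one has $\square\phi=-\pr_0^2\phi+\Delta\phi-n^{-1}\nabla n\cdot\nabla\phi+\lot$, hence $\pr_0^2B_i=-\square B_i-\curl A_i-n^{-1}\nabla n\cdot\nabla B_i+\lot$, so that $\norm{\pr_0^2B_i}_{L^2(\MM)}\lesssim\norm{\square B_i}_{L^2(\MM)}+\norm{\pr A}_{\lsit 2}+\norm{\nabla n}_{L^\infty(\MM)}\norm{\pr B}_{\lsit 2}\lesssim M^2\ep^2+M\ep\lesssim M\ep$. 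This term is genuinely only $O(M\ep)$ since it carries the full $\Delta B=-\curl A\sim\pr A$.

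The main obstacle is the structural step in the first paragraph: verifying that, after applying $\curl$ and all the commutations, $\square(\curl A)$ really is a sum of divergence-form bilinear terms of genuine null type plus harmless cubic terms — in particular that commuting $\curl$ past $\pr_i(\pr_0A_0)$ leaves only the quadratic $\A\,\prb(\pr_0A_0)$ with enough integrability (which is what forces the refined $A_0$-bounds into play), and that the resulting bilinear terms fall exactly into the classes covered by the bilinear bootstrap assumptions. A secondary but persistent difficulty is integrability bookkeeping: $(-\Delta)^{-1}$ alone never maps an $L^p(\Si_t)$ function back to $L^2(\Si_t)$, so one must keep every nonlinear term in divergence form in order to use the full order $-1$ gain $L^{6/5}(\Si_t)\to L^2(\Si_t)$ — which is precisely why the reorganization via the Coulomb gauge $\div A=A^2$ is indispensable.
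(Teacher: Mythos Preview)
Your overall architecture is the paper's: write $\square B_i=-(-\Delta)^{-1}[\square,\Delta]B_i+(-\Delta)^{-1}\square(\curl A)_i$, feed Proposition~\ref{prop:commutsquarecurl} into the second piece, expand the commutator via \eqref{commsquare3}, and recover $\pr_0^2B$ from the identity $\pr_0^2B=-\square B+\Delta B+n^{-1}\nabla n\cdot\nabla B$. The gap is in your treatment of the commutator piece, which you dispatch in one line by appealing to Proposition~\ref{lemma:estB} and the bounds \eqref{bootk}, \eqref{bootn}. That is not enough, for two separate reasons.

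First, the term $n^{-1}\nabla_bn\,\nabla_b(\pr_0^2B)$ in \eqref{commsquare3} contains $\pr_0^2B$, and $\pr_0^2B$ is \emph{not} among the quantities controlled by Proposition~\ref{lemma:estB} (which only gives $\prb B,\ \pr^2B,\ \pr\pr_0B\in L^\infty_tL^2$). After you integrate by parts and use $\nabla n\in L^\infty$, what survives is still $\|\pr_0^2B\|_{L^2(\MM)}$. So the commutator estimate and the $\pr_0^2B$ estimate are \emph{coupled}: one actually proves
\[
\|\pr\square B\|_{L^2(\MM)}\lesssim M^3\ep^2+M\ep\,\|\pr_0^2B\|_{L^2(\MM)},
\]
and then closes by substituting $\pr_0^2B=-\square B+\Delta B+n^{-1}\nabla n\cdot\nabla B$ (this is exactly \eqref{csd14} in the paper). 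You present the $\pr_0^2B$ bound as an independent afterthought; it is in fact needed inside the commutator argument, and the sharp bound $\nabla n\in L^\infty$ from \eqref{bootn} is what makes the loop close with a small constant $M\ep$ in front.

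Second, for the $\pr\square B$ estimate the other two bad pieces of \eqref{commsquare3} cannot be handled by integration by parts plus Proposition~\ref{lemma:estB} alone. After writing $k^{ab}\nabla_a\nabla_b(\pr_0B)=\nabla_a(k^{ab}\nabla_b(\pr_0B))$ via the constraint, you need $\|k^{ab}\pr_b(\pr_0B)\|_{L^2(\MM)}$, and plain H\"older on $k\in L^p$, $\pr(\pr_0B)\in L^2$ fails; this is precisely what the bilinear assumption \eqref{bil5} is for. Likewise, the piece $\nabla_0k^{ab}\nabla_a\nabla_bB$ is not handled by moving a derivative onto $k$: one must first use the structure equation \eqref{eq:structfol1} to replace $\nabla_0k_{ab}$ by $E_{ab}=\R_{a0b0}$ up to lower order, then write $E^{ab}\nabla_a\nabla_bB=\nabla_a(E^{ab}\nabla_bB)-(\nabla^aE_{ab})\nabla_bB$, invoke the Bianchi-type identity $\nabla^aE_{ab}=\A\,\R$, and finally use the bilinear assumption \eqref{bil6} for $\|\R_{\cdot\cdot j0}\pr^jB\|_{L^2(\MM)}$. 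You cite \eqref{bil6} only in connection with $\square(\curl A)$; it is in fact indispensable for the commutator term as well.

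In short: the ``main obstacle'' is not the null structure of $\square(\curl A)$ (that is already packaged into Proposition~\ref{prop:commutsquarecurl}) but the three worst commutator terms in \eqref{commsquare3}, each of which needs a specific device---the $\pr_0^2B$ coupling for $N_2$, and the bilinear bootstrap assumptions \eqref{bil5}, \eqref{bil6} together with the structure equation for $\nabla_0k$ for $N_1$, $N_3$---that your outline does not supply.
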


The proof of Proposition \ref{prop:commutsquarecurl} and Proposition \ref{prop:waveeqB} are similar in spirit. We give below a short outline of the proof of Proposition \ref{prop:waveeqB} which is slightly more difficult.

\begin{proof}
In what follows we   outline the main steps  in the proof of  space-time estimates  \eqref{squareB1}, \eqref{squareB3} for $\square B$ and  $\pr_0^2 B$.
We have:
\beaa
\square(B_i)&=&[\square,(-\Delta)^{-1}](\curl(A)_i)+(-\Delta)^{-1}(\square(\curl(A)_i))\\
&=& -(-\Delta)^{-1}[\square,\Delta](-\Delta)^{-1}(\curl(A)_i)+(-\Delta)^{-1}(\square(\curl(A)_i))\\
&=& -(-\Delta)^{-1}[\square,\Delta](B_i)+(-\Delta)^{-1}(\square(\curl(A)_i)).
\eeaa
Thus,  using the  $L^2$ boundedness of  $\partial (-\De)^{1/2}$ and  result of   proposition \ref{prop:commutsquarecurl},  we obtain:
\bea\lab{csd1}
\norm{\pr\square(B_i)}_{L^2(\MM)} &\les & \norm{(-\Delta)^{-\frac12}[\square,\Delta](B_i)}_{L^2(\MM)}+M^3\ep^2,
\eea
It remains to estimate   $\norm{(-\Delta)^{-\frac12}[\square,\Delta](B_i)}_{L^2(\MM)}$. We rely on the commutator formula \eqref{commsquare3} to write,
\bea\lab{csd2}
[\square, \Delta] B
&=&-4k^{ab}\nabla_a\nabla_b(\pr_0 B)+4n^{-1}\nabla_bn\nabla_b(\pr_0(\pr_0\ B))
 -2\nabla_0k^{ab}\nabla_a\nabla_b B\\
 &+&   F^{(1)}\prb^2B +F^{(2)}\prb B\nn\\
 F^{(1)}&=& \prb(A_0)+\A^2,\nn\\
 F^{(2)}&=& \pr\prb(A_0)+\A\prb\A+\A^3.\nn
\eea
with $B$ any component $(B_l)$, $l=1,2,3$.
  It is easy to check that,
\beaa
\norm{F^{(1)}}_{\lsit{3}}+    \norm{F^{(2)}}_{\lsit{\frac{3}{2}}}\  \les M\ep,
\eeaa
We write,
\bea\lab{csd6}
&&\norm{(-\Delta)^{-\frac12}[\square, \Delta](B_l)}_{L^2(\MM)}\les  N_1+N_2+N_3+N_4\\
\nn\\
&&N_1= \norm{(-\Delta)^{-\frac12}[k^{ab}\nabla_a\nabla_b(\pr_0(B_l))]}_{L^2(\MM)}\nn\\
&&N_2= \norm{(-\Delta)^{-\frac12}[n^{-1}\nabla_bn\nabla_b(\pr_0(\pr_0(B_l)))]}_{L^2(\MM)}\nn\\
&&N_3= \norm{(-\Delta)^{-\frac12}[\nabla_0k^{ab}\nabla_a\nabla_b(B_l)]}_{L^2(\MM)}\nn\\
&&N_4=\norm{F^{(1)}}_{\lsit{3}}\norm{\prb^2(B_l)}_{L^2(\MM)}
+\norm{F^{(2)}}_{\lsit{\frac{3}{2}}}\norm{\prb(B_l)}_{\lsit{6}}\nn
\eea
Using the estimates   \eqref{squareB2} we  easily infer that 
\bea
N_4\les M\ep^2. \label{N4}
\eea
To estimate $N_1$ we proceed as follows, using  the constraint equations \eqref{constk} for $k$,
\beaa
k^{ab}\nabla_a\nabla_b(\pr_0(B_l))&=& \nabla_a[k^{ab}\nabla_b(\pr_0(B_l))]
\eeaa
Together with the Bochner inequality on $\Si_t$ \eqref{bochnerestimates} and the bilinear assumption \eqref{bil5} , we obtain:
\bea\lab{csd7}
 N_1\les \norm{k^{ab}\pr_b(\pr_0(B_l))]}_{L^2(\MM)}\les M^3\ep^2.
\eea
To estimate $N_2$ we write,
$$n^{-1}\nabla_bn\nabla_b(\pr_0(\pr_0(B_l)))=\nabla^b[n^{-1}\nabla_bn\pr_0(\pr_0(B_l))]-(n^{-1}\Delta n-n^{-2}|\nabla n|^2)\pr_0(\pr_0(B_l)).$$
Together with the estimates \eqref{bootn} for the lapse $n$ and the Sobolev embedding on $\Si_t$ \eqref{sob}, this yields:
\bea\lab{csd8}
N_2&\les& \norm{n^{-1}\nabla_bn\pr_0(\pr_0(B_l))}_{L^2(\MM)}+\norm{(n^{-1}\Delta n-n^{-2}|\nabla n|^2)\pr_0(\pr_0(B_l))}_{\lsitt{2}{\frac{6}{5}}}\nn\\
\nn&\les& (\norm{\nabla n}_{L^\infty}+\norm{n^{-1}\Delta n-n^{-2}|\nabla n|^2}_{\lsit{3}})\norm{\pr_0(\pr_0(B_l))}_{L^2(\MM)}\\
&\les& M\ep\norm{\pr_0(\pr_0(B_l))}_{L^2(\MM)}.
\eea

\begin{remark}\lab{rem:tempura1}
Note that there is no room in the estimate \eqref{csd8}. Indeed the sharp   estimate $\norm{\nabla n}_{L^\infty(\MM)}\les M\ep$ given by \eqref{bootn} is   crucial as emphasized in remark \ref{rem:tempura}.      
\end{remark}
Finally, we consider $N_3$.
 Recall from \eqref{eq:structfol1} that the second fundamental form satisfies the following equation:
\bea\lab{csd9}
\nabla_0k_{ab}= E_{ab} -n^{-1}\nabla_a\nabla_bn-k_{ac}k_b\,^c=E_{ab}+\lot
\eea
where $E$ is the 2-tensor on $\Si_t$ defined as
$E_{ab}=\R_{a\,0\,b\,0}$. In view of the estimates \eqref{bootk} for $k$ and \eqref{bootn} for $n$, 
\beaa
\norm{\lot}_{\lsit{3}}\les \norm{\nabla^2n}_{\lsit{3}}+\norm{k}^2_{\lsit{6}}\les M\ep.
\eeaa
Thus  instead of estimating  
$
\nabla_0k^{ab}\nabla_a\nabla_b B $ in the definition of $N_3$    it suffices to estimate the term
 \beaa
 E^{ab}\nabla_a\nabla_b B&=&\nab_a(E^{ab}\nab_b B )- \nab_a E^{ab}\nab_b B
 \eeaa
Using the maximal foliation assumption, the Bianchi identities and the symmetries of $\R$, we   can write, schematically,
$\nabla^aE_{ab}=\A\R$ and therefore,  together with the bootstrap assumptions \eqref{bootA} for $A$ and \eqref{bootA0} for $A_0$, and the bootstrap assumption \eqref{bootR} for $\R$ yields:
\bea\lab{csd11}
\norm{\nabla^aE_{ab}}_{\lsit{\frac{3}{2}}}\les \norm{\A}_{\lsit{6}}\norm{\R}_{\lsit{2}}\les M^2\ep^2.
\eea
We thus have,
\beaa
\nabla_0k_{ab}=\nab_a(E^{ab}\nab_b B )+\lot
\eeaa
Now, in view of the  bilinear assumption \eqref{bil6},
\beaa
\|(-\De)^{1/2}\nab_a(E^{ab}\nab_b B )\|_{L^2(\MM)}&\les&  \norm{ \R_{0a0b}\nab_b B}_{L^2(\MM)}\les M^3 \ep^2
\eeaa
Hence, putting all the above  together we infer that,
\beaa
N_3\les M^3 \ep^2.
\eeaa
 Together with \eqref{csd6},  \eqref{csd7},  \eqref{csd8} and \eqref{N4},  we derive,
 \bea\lab{csd14}
\norm{\pr\, \square B}_{L^2(\MM)}\les  M^3\ep^2+M\ep\norm{\pr_0(\pr_0 B)}_{L^2(\MM)}.
\eea
 To close the estimate for $\norm{\pr\, \square B}_{L^2(\MM) }$  it  remains to control the right-hand side of \eqref{csd14}. This is achieved relying in particular on the following formula
$$\pr_0(\pr_0 B)=-\square  B+\Delta B+n^{-1}\nabla n\cdot\nabla B.$$
\end{proof}

\section{Energy estimate for the wave equation on a curved background}\lab{sec:bobo6}
\label{sect:energy}

Recall that $e_0=T$, the future unit normal to the $\Si_t$ foliation. Let $\pi$ be the deformation tensor of $e_0$, that is the symmetric 2-tensor on $\MM$ defined as:
$$\pi_{\a\b}=\D_\a T_\b+\D_\b T_\a.$$
In view of the definition of the second fundamental form $k$ and the lapse $n$, we have:
\bea\lab{defpi}
\pi_{ab}=-2k_{ab},\, \pi_{a0}=\pi_{0a}=n^{-1}\nabla_an,\,\pi_{00}=0.
\eea
In what follows $\HH$ denotes an arbitrary  weakly regular null  hypersurface\footnote{i.e. it   satisfies assumptions \eqref{assumptionH1} and \eqref{assumptionH2}}  with future normal  $L$  verifying  ${\bf g}(L,T)=-1$.  We denote by $\nabb$     the induced connection on the $2$-surfaces  $\HH\cap \Si_t$.  
\begin{lemma}\lab{lemma:energyestimatebis}
Let $F$ a scalar function on $\MM$, and let $\phi_0$ and $\phi_1$ two scalar functions on $\Si_0$. Let $\phi$ the solution of the following wave equation on $\MM$:
\bea\lab{eq:wave}
\left\{\begin{array}{l}
\square\phi=F,\\
\phi|_{\Si_0}=\phi_0,\, \pr_0(\phi)|_{\Si_0}=\phi_1. 
\end{array}\right.
\eea
Let $\EE_0, \EE_1$ denote the energy quantities,
\beaa
\EE_0[\phi]&:=&\norm{\prb\phi}_{\lsit{2}}+\sup_{\HH}(\norm{\nabb\phi}_{L^2(\HH)}+\norm{L(\phi)}_{L^2(\HH)})\\
\EE_1[\phi]&:=&\norm{\pr(\prb\phi)}_{\lsit{2}}+\norm{\pr_0(\pr_0\phi)}_{L^2(\MM)}+\sup_{\HH}\left(\norm{\nabb(\pr\phi)}_{L^2(\HH)}+\norm{L(\pr\phi)}_{L^2(\HH)}\right)
\eeaa
where the supremum is taken over all  weakly  regular null hypersurfaces $\HH$ (satisfying assumptions \eqref{assumptionH1} and \eqref{assumptionH2}). 
The following estimates hold true, provided that $\ep M^2$ is sufficiently small, 
\bea
\EE_0&\les& \norm{\nabla\phi_0}_{L^2(\Si_0)}+\norm{\phi_1}_{L^2(\Si_0)}+\norm{F}_{L^2(\MM)},\lab{energy0}\\
\EE_1&\les& \norm{\nabla^2\phi_0}_{L^2(\Si_0)}+\norm{\nabla\phi_1}_{L^2(\Si_0)}+\norm{\nabla F}_{L^2(\MM)}.
\lab{energy1}
\eea
\end{lemma}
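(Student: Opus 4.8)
The plan is to use the standard energy--momentum (Morawetz/vectorfield) method adapted to the maximal foliation, coupled to the bilinear estimates already assumed in the bootstrap. For the solution $\phi$ of $\square\phi=F$ introduce the energy--momentum tensor $Q_{\a\b}=\pr_\a\phi\,\pr_\b\phi-\frac12\g_{\a\b}\,\g^{\ga\de}\pr_\ga\phi\,\pr_\de\phi$, which satisfies $\D^\a Q_{\a\b}=F\,\pr_\b\phi$. Contracting with the future timelike vectorfield $e_0=T$ gives the current $P_\a=Q_{\a\b}T^\b$ with $\D^\a P_\a=F\,T(\phi)+\frac12 Q^{\a\b}\pi_{\a\b}$, where $\pi$ is the deformation tensor of $T$, whose components are recorded in \eqref{defpi}. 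First I would integrate this divergence identity over the slab $\bigcup_{0\le\tau\le t}\Si_\tau$, and separately over the space-time region bounded by $\Si_0$, $\Si_t$ and an arbitrary weakly regular null hypersurface $\HH$. Since $T$ is future timelike and $\HH$ is null with generator $L$ normalized by $\g(L,T)=-1$, the boundary terms are coercive: the $\Si_\tau$ term reproduces $\norm{\prb\phi}_{L^2(\Si_\tau)}^2$ and the $\HH$ term reproduces a positive multiple of $\norm{\nabb\phi}_{L^2(\HH)}^2+\norm{L(\phi)}_{L^2(\HH)}^2$; the contributions to the $\HH$ integral caused by $L$ not being geodesic are absorbed using \eqref{assumptionH1}.

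It then remains to control the two bulk error terms. The term $\int_\MM F\,T(\phi)$ is handled by Cauchy--Schwarz, $\les\norm{F}_{L^2(\MM)}\norm{\prb\phi}_{L^2_tL^2(\Si_t)}\le\norm{F}_{L^2(\MM)}\,\EE_0[\phi]$, using $T^*\le1$. For $\int_\MM Q^{\a\b}\pi_{\a\b}$ the crucial point is that the foliation is maximal, so $\pi_{ij}=-2k_{ij}$ is trace free; the $Q_{ij}\g^{ij}$-piece therefore drops and this term reduces, schematically, to $\int_\MM k^{ij}\pr_i\phi\,\pr_j\phi+\int_\MM n^{-1}\nabla n\cdot\nabla\phi\,\pr_0\phi$. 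The $n$-term is controlled by the sharp bound $\norm{\nabla n}_{L^\infty(\MM)}\les M\ep$ from \eqref{bootn}, contributing $\les M\ep\,\EE_0[\phi]^2$. The $k$-term is written as $\int_\MM (k_{ij}\pr^i\phi)\,\pr^j\phi$ and the first factor is estimated by the bilinear assumption \eqref{bil7} (with $\phi$ the solution itself), giving $\les M^2\ep\,(\sup_\HH\norm{\nabb\phi}_{L^2(\HH)})\,\norm{\pr\phi}_{L^2(\MM)}\les M^2\ep\,\EE_0[\phi]^2$. Since $\ep M^2$ is small both contributions are absorbed, and \eqref{energy0} follows by the usual quadratic (Young) absorption argument.

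For \eqref{energy1} I would commute the equation with the frame derivatives $\pr_j$. By Lemma \ref{lemma:commutation}, $\square(\pr_j\phi)=\pr_j F-2(A^\la)_j{}^\mu\pr_\la\pr_\mu\phi-\pr_0(A_0)\prb\phi-\A^2\prb\phi$, so $\pr_j\phi$ solves a wave equation of the form \eqref{eq:wave} with source $\pr_jF$ plus commutator terms. Applying \eqref{energy0} to $\pr_j\phi$ — and reading $\norm{\pr_0\pr_0\phi}_{L^2(\MM)}$ off the equation via $\pr_0\pr_0\phi=-F+\Delta\phi+n^{-1}\nabla n\cdot\nabla\phi$ — it remains to bound the commutator source in $L^2(\MM)$. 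The pieces $\A\,\pr\prb\phi$ and $k\,\pr\prb\phi$ are handled by the bilinear estimates \eqref{bil8} and \eqref{bil7} with $\prb\phi$ in place of $\phi$, producing $\les M^2\ep\,\EE_1[\phi]$; the pieces $A_0\,\prb^2\phi$, $\pr_0(A_0)\prb\phi$ and $\A^2\prb\phi$ are handled by Hölder in $(t,x)$ using the bootstrap norms \eqref{bootA}, \eqref{bootA0} for $\A,A_0$ together with the Sobolev embedding \eqref{sobineqsit} on $\Si_t$ (to upgrade $\norm{\pr\prb\phi}_{\lsit2}$ to $\norm{\prb\phi}_{\lsit6}$), again yielding small multiples of $\EE_1[\phi]$ that get absorbed. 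The initial data contribution is dealt with by using the equation on $\Si_0$ to express $\pr_0^2\phi|_{\Si_0}$ through $\nabla\phi_1$, $\nabla^2\phi_0$ and $F|_{\Si_0}$, the last one being controlled by $\norm{\nabla F}_{L^2(\MM)}$ via a trace inequality.

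The main obstacle is the deformation-tensor error term, and specifically its $k$-piece: because $k$ is only bounded in $L^\infty_tL^2(\Si_t)$ and \emph{not} in $L^1_tL^\infty$, the naive Gr\"onwall argument that succeeds for smooth metrics fails completely, and one genuinely needs the bilinear estimate \eqref{bil7}, which captures a hidden null structure in the contraction $k_{j\,\c}\pr^j\phi$ and whose proof (carried out elsewhere, ultimately via the parametrix of Step C) is the heart of the matter; this is exactly where the energy estimate is inextricably coupled to the bilinear machinery. A secondary difficulty is the bookkeeping for \eqref{energy1}: one must verify that every commutator term lands in $L^2(\MM)$ with a spare factor $M\ep$ or $M^2\ep$, which forces use of essentially all the bootstrap norms — including the non-sharp Strichartz bound \eqref{bootstrich} for lower-order terms and, crucially, the sharp $L^\infty$ bound on $\nabla n$.
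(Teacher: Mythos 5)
Your argument for \eqref{energy0} matches the paper's proof essentially line for line: same energy--momentum tensor $Q$, same current $P_\a=Q_{\a 0}$, same split of $Q^{\a\b}\pi_{\a\b}$ into the $n^{-1}\nabla n$ piece and the trace-free $k$ piece using \eqref{defpi} and the maximal foliation, with the $n$-piece closed by the $L^\infty$ bound on $\nabla n$ from \eqref{bootn}, the $k$-piece closed by the bilinear bootstrap assumption \eqref{bil7}, and absorption via smallness of $\ep M^2$. For \eqref{energy1} the paper only notes that the argument ``follows the same ideas'' and refers to \cite{KRS}; your plan (commute with the spatial frame derivatives $\pr_j$ via Lemma~\ref{lemma:commutation}, apply \eqref{energy0} to $\pr_j\phi$, recover $\pr_0\pr_0\phi$ from the equation, control the commutator source via \eqref{bil7}, \eqref{bil8}, the Sobolev embedding \eqref{sobineqsit} and the bootstrap norms \eqref{bootA}, \eqref{bootA0}) is the natural implementation of those ideas.

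One small wobble: you propose to control an initial-data contribution through $F|_{\Si_0}$ ``via a trace inequality'' from $\nabla F\in L^2(\MM)$. Such a trace bound would also require control of $\pr_0 F$, which is not among the hypotheses; more importantly it is not needed. Since you commute only with the spatial derivatives $\pr_j$, the initial data entering \eqref{energy0} are $\nabla(\pr_j\phi)|_{\Si_0}$ and $\pr_0(\pr_j\phi)|_{\Si_0}=\pr_j\phi_1+[\pr_0,\pr_j]\phi|_{\Si_0}$, both controlled directly by $\nabla^2\phi_0$ and $\nabla\phi_1$; the quantity $\pr_0^2\phi$ in $\EE_1$ is then estimated in the bulk from the equation rather than from its initial trace, so $F|_{\Si_0}$ never enters. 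Dropping that paragraph leaves your argument intact.
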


\begin{proof}
We introduce the energy momentum tensor $Q_{\a\b}$ on $\MM$ given by:
$$Q_{\a\b}=Q_{\a\b}[\phi]=\pr_\a\phi\pr_\b\phi-\frac{1}{2}\g_{\a\b}\left(\g^{\mu\nu}\pr_\mu\phi\pr_\nu\phi\right).$$
In view of the equation \eqref{eq:wave} satisfied by $\phi$, we have:
$$\D^\a Q_{\a\b}=F\pr_\b\phi.$$
Now,  consider  the divergence of the  1-tensor  $P_\a=\Q_{\a\b}e_0^\b=Q_{\a 0},$   
$$\D^\a P_\a=F\pr_0\phi+\frac12 Q_{\a\b}\pi^{\a\b},$$
where $\pi$ is the deformation tensor of $e_0$. Integrating over well-chosen regions of $\MM$, we easily  obtain:
\bea\lab{nrj1}
\EE_0&\les& \norm{\nabla\phi_0}^2_{L^2(\Si_0)}+\norm{\phi_1}^2_{L^2(\Si_0)}+\left|\int_{\MM}F\pr_0\phi d\MM\right|+\left|\int_{\MM}Q_{\a\b}\pi^{\a\b}d\MM\right|\\
\nn&\les& \norm{\nabla\phi_0}^2_{L^2(\Si_0)}+\norm{\phi_1}^2_{L^2(\Si_0)}+\norm{F}_{L^2(\MM)}\norm{\pr_0\phi}_{L^2(\MM)}+\left|\int_{\MM}Q_{\a\b}\pi^{\a\b}d\MM\right|.
\eea

Next, we deal with the last term in the right-hand side of \eqref{nrj1}. In view of \eqref{defpi}  and our maximal foliation assumption, we have:
\beaa
\int_{\MM}Q_{\a\b}\pi^{\a\b}d\MM &=&-2\int_{\MM}\pr_a\phi\pr_b\phi k^{ab}d\MM+\int n^{-1}\nabla^an\pr_a\phi\pr_0\phi d\MM.
\eeaa
 Together with the bilinear bootstrap assumptions \eqref{bil7} and the estimates \eqref{bootn} for the lapse $n$, this yields:
\beaa
\left|\int_{\MM}Q_{\a\b}\pi^{\a\b}d\MM\right|&\les&\norm{k_{a\,\c}\pr^a\phi}_{L^2(\MM)}\norm{\pr\phi}_{L^2(\MM)}+\norm{\nabla n}_{L^\infty(\MM)}\norm{\prb\phi}^2_{L^2(\MM)}\\
&\les& M^2\ep\left(\sup_{\HH}\norm{\nabb\phi}_{L^2(\HH)}\right)\norm{\prb\phi}_{L^2(\MM)}+M\ep\norm{\prb\phi}^2_{L^2(\MM)},
\eeaa
which together with \eqref{nrj1} concludes the proof of the \eqref{energy0}. Though more technical  the proof  of \eqref{energy1} follows the same ideas, and we refer to \cite{KRS} for the details.
\end{proof}

\section{Improvement of the bootstrap assumptions (part 1)}\lab{sec:improve1}

In this section, we discuss the proof of Proposition \ref{prop:improve1}. More precisely, we derive estimates for $\R, A_0$ and $A$ which allow us to improve the basic bootstrap assumptions  \eqref{bootR}, \eqref{bootcurvatureflux}, \eqref{bootA} and \eqref{bootA0}.

\subsection{Curvature estimates} We derive the curvature estimates using the Bell-Robinson tensor,
 \beaa
 Q_{\a\b\ga\de}=\R_\a\,^\la\,\ga\,^\si \R_{\b\,\la\,\de\,\si}+\dual \R_\a\,^\la\,\ga\,^\si\dual \R_{\b\,\la\,\de\,\si}
 \eeaa
 Let 
 $$P_\a=Q_{\a\b\ga\de}e_0^\b e_0^\ga e_0^\de.$$ 
Then, we have:
\bea\lab{lybia}
D^\a P_\a=3Q_{\a\b\ga\de}\pi^{\a\b}e_0^\ga e_0^\de,
\eea
where $\pi$ is the deformation tensor of $e_0$. We introduce the  Riemannian metric,
\bea
h_{\a\b}&=&g_{\a\b}+2(e_0)_\a (e_0)_\b
\eea
 and use it to define  the following  space-time  norm for  tensors $U$:
$$|U|^2=U_{\a_1\cdots\a_k}U_{\a_1'\cdots\a'_k} h^{\a_1\a_1'}\cdots h^{\a_k\a'_k}.$$
Given two space-time tensors $U, V$ we denote by $U\c V$ a given contraction between
the two tensors and by $|U\c V|$ the  norm of the contraction according to the above definition.

Let $\HH$ be a weakly regular  null hypersurface with future normal  $L$,   ${\bf g}(L,T)=-1$.  Integrating \eqref{lybia} on a well-chosen, causal,  space-time region, we have:
$$\int_{\Si_t}|\R|^2+\int_{\HH}|\R\c L|^2\les \norm{\R}^2_{L^2(\Si_0)}+\left|\int_{\MM}Q_{\a\b\ga\de}\pi^{\a\b}e_0^\ga e_0^\de\right|\les \ep^2+\left|\int_{\MM}Q_{\a\b\ga\de}\pi^{\a\b}e_0^\ga e_0^\de\right|.$$
We need to estimate the term in the right-hand side of the previous inequality. Note that since $\pi_{00}=0$, $\pi_{0j}=n^{-1}\nabla_jn$, and $\pi_{ij}=k_{ij}$, the bootstrap assumption \eqref{bootR} for $\R$, and the estimates \eqref{bootn} for $n$ yield:
\beaa
\int_{\Si_t}|\R|^2+\int_{\HH}|\R\c L|^2&\les& \ep^2+\norm{\nabla n}_{L^\infty}\norm{\R}^2_{\lsit{2}}+\left|\int_{\MM}Q_{ij\ga\de} k^{ij}e_0^\ga e_0^\de\right|\\
&\les& \ep^2+(M\ep)^3+\left|\int_{\MM}Q_{ij\ga\de}k^{ij}e_0^\ga e_0^\de\right|.
\eeaa

The term in the right-hand side of the previous inequality is dangerous. Schematically it has the form 
$\left|\int_{\MM}k\R^2\right|.$ 
Typically  this term is  estimated by:
\beaa
\left|\int_{\MM}k\R^2\right|
&\les& \norm{k}_{\lsitt{2}{\infty}} \norm{\R}^2_{\lsit{2}},
\eeaa
which requires   a   Strichartz estimate for $k$ which is false even in flat space.
It is for this reason that we need  the trilinear  bootstrap assumption \eqref{trilinearboot}.
Using it we derive,
\bea\lab{camarche}
\int_{\Si_t}|\R|^2+\int_{\HH}|\R\c L|^2&\les& \ep^2+M^4 \ep^3.
\eea
which, for small $\ep$,   improves  the bootstrap assumptions \eqref{bootR} and  \eqref{bootcurvatureflux}.

\subsection{Improvement of the bootstrap assumption for $A_0$} Recall \eqref{eq:YM21}
\bea\label{tues5}
\De A_0 =\A\pr A+\A\pr A_0+\A^3.
\eea
Then, using \eqref{tues5}, elliptic estimates on $\Si_t$, and commuting  \eqref{tues5} with $\pr_t$ in order to control $\pr_tA_0$, we are able to obtain the improved estimate \eqref{bootA0imp} (see \cite{KRS} for the details).
 
\subsection{Improvement of the bootstrap assumption for $A$}

Using the estimates for $\square B_i$ derived in Lemma \ref{prop:waveeqB},   the estimates for $B$  on the initial slice $\Si_0$, and the energy estimate   \eqref{energy1} derived in Lemma \ref{lemma:energyestimatebis}, we have:
\bea\lab{tuesday2}
\norm{\pr^2B}_{\lsit{2}}\les \ep+M^2\ep^2.
\eea
Using  then \eqref{tuesday2} with Lemma \ref{recoverA}, we obtain:
\bea\lab{tuesday3}
\norm{\pr A}_{\lsit{2}}\les \norm{\pr^2B}_{\lsit{2}}+\norm{\pr E}_{\lsit{2}}\les \ep+M^2\ep^2.
\eea
which  proves corresponding  estimate  in \eqref{bootAimp}.

 To estimate $\pr_0 A$.  we recall that, 
$\pr_0(A_j)=\pr_j(A_0)+\R_{0j\c\c}.$
Thus, we have:
$$\norm{\pr_0 A}_{\lsit{2}}\les \norm{\pr A_0}_{\lsit{2}}+\norm{\R}_{\lsit{2}},$$
which together with the improved estimates for $\R$ and $A_0$ yields:
\bea\lab{tuesday4}
\norm{\pr_0 A}_{\lsit{2}}\les \ep+(M\ep)^{\frac{3}{2}}.
\eea

\section{Parametrix for the wave equation}\lab{sec:parametrix}

Let $u_\pm$ two families, indexed by $\om\in\SSS^2$, of scalar functions on the space-time $\MM$ satisfying the Eikonal equation for each $\om\in\SSS^2$. We also denote $\uom_\pm(t,x)=u_\pm(t,x,\om)$. We have the freedom of choosing $\uom_\pm$ on the initial slice $\Si_0$, and in order for the results in \cite{param2},  \cite{param4} to apply, we need to initialize $\uom_\pm$ on $\Si_0$ as in \cite{param1} (see also Chapter \ref{part:initialu}). 

Let $\HH_{\uom_\pm}$ denote the corresponding null level hypersurfaces. Let $\Lom_\pm$ its normal. $\Lom_\pm$ is null, and we fix it by imposing ${\bf g}(\Lom_\pm, T)=-1$. Let the vectorfield tangent to $\Si_t$ $\Nom_\pm$ be defined such as to satisfy:
$$\Lom_\pm=\pm T+\Nom_\pm.$$
We pick $(\eom_\pm)_A,\, A=1, 2$ vectorfields in $\Si_t$ such that together with $\Nom_\pm$ we obtain an orthonormal basis of $\Si_t$. Finally, we denote by $\nabb_\pm$ derivatives in the directions $(\eom_\pm)_A,\, A=1, 2$.

\begin{remark}
Note that $\HH_{\uom_\pm}$ satisfy assumptions \eqref{assumptionH1} and \eqref{assumptionH2} from the results in \cite{param3} (see Theorem 2.15 and section 3.4 in that paper).
\end{remark}

For any pair of functions $f_\pm$ on $\RRR^3$, we define the following scalar function on $\MM$:
$$\psi[f_+, f_-](t,x)=\int_{\SSS^2} \int_0^\infty  e^{i \la \uom_+(t,x)}f_+(\la\om)\la^2d\la d\om
 +\int_{\SSS^2} \int_0^\infty e^{i\la \uom_-(t,x)}f_-(\la\om)\la^2d\la d\om.$$
We appeal to the following result from \cite{param2} \cite{param4} (see also Chapters \ref{part:paramtime} and \ref{part:paraminit}):
\begin{theorem}\lab{prop:estparam}
Let $\phi_0$ and $\phi_1$ two scalar functions on $\Si_0$. Then, there is a unique pair of functions $(f_+, f_-)$ such that: 
$$\psi[f_+, f_-]|_{\Si_0}=\phi_0\textrm{ and }\pr_0(\psi[f_+, f_-])|_{\Si_0}=\phi_1.$$
Furthermore, $f_\pm$ satisfy the following estimates:
$$\norm{\la f_+}_{L^2(\RRR^3)}+\norm{\la f_-}_{L^2(\RRR^3)}\les \norm{\nabla\phi_0}_{L^2(\Si_0)}+\norm{\phi_1}_{L^2(\Si_0)},$$
and:
$$\norm{\la^2f_+}_{L^2(\RRR^3)}+\norm{\la^2f_-}_{L^2(\RRR^3)}\les \norm{\nabla^2\phi_0}_{L^2(\Si_0)}+\norm{\nabla\phi_1}_{L^2(\Si_0)}.$$
Finally, $\square\psi[f_+, f_-]$ satisfies the following estimates:
$$\norm{\square\psi[f_+, f_-]}_{L^2(\MM)}\les M\ep(\norm{\nabla\phi_0}_{L^2(\Si_0)}+\norm{\phi_1}_{L^2(\Si_0)}),$$
and:
$$\norm{\pr\square\psi[f_+, f_-]}_{L^2(\MM)}\les M\ep(\norm{\nabla^2\phi_0}_{L^2(\Si_0)}+\norm{\nabla\phi_1}_{L^2(\Si_0)}).$$
\end{theorem}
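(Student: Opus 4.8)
The plan is to obtain Theorem \ref{prop:estparam} as the synthesis of Steps C1--C4 from Section \ref{sec:strategyproof}. I would first isolate the \emph{initial-time} operator
$$T_0[f_+,f_-]\ :=\ \Big(\psi[f_+,f_-]\big|_{\Si_0},\ \pr_0\psi[f_+,f_-]\big|_{\Si_0}\Big)$$
on pairs of functions on $\RRR^3$. The first assertion of the theorem---existence and uniqueness of $(f_+,f_-)$ matching the Cauchy data $(\phi_0,\phi_1)$---is then exactly the bijectivity of $T_0$, the weighted $L^2$ bounds for $f_\pm$ are the operator bounds for $T_0^{-1}$, and the bounds on $\square_\g\psi[f_+,f_-]$ are the error-term estimates of Step C4. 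Before anything else one must fix the initialization of the two families $u_\pm^\om$ on $\Si_0$: rather than the flat choice $u_\pm^\om(0,\cdot)=\pm x\c\om$ (no sufficiently regular coordinates are available), I would solve on $\Si_0$ the parabolic variant of the minimal-surface / mean-curvature-flow equation for the level sets of $u_\pm^\om(0,\cdot)$ singled out in Section \ref{sec:strategyproof}; with this choice the hypersurfaces $\HH_{u_\pm^\om}$ are weakly regular, and one extracts from \cite{param1} the control of $u_\pm^\om(0,\cdot)$---in particular its better regularity in directions tangent to its level surfaces, and the single $\om$-derivative of control---needed in what follows.

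Next I would prove invertibility of $T_0$ (Steps C1--C2). In the flat case, after passing to polar coordinates in frequency, $T_0$ is the classical splitting of Cauchy data into forward/backward half-waves via the Fourier transform, an isomorphism on the relevant weighted $L^2$ spaces; the curved $T_0$ is to be compared with this model. The key analytic step is the boundedness
$$\norm{\psi[f_+,f_-](0,\cdot)}_{L^2(\Si_0)}\ \les\ \norm{\la f_+}_{L^2(\RRR^3)}+\norm{\la f_-}_{L^2(\RRR^3)}$$
and its once-differentiated analogue, which I would establish by a $TT^*$/$T^*T$ argument in which the integrations by parts are \emph{geometric}---exploiting the tangential regularity of $u_\pm^\om(0,\cdot)$---combined with a first and second dyadic decomposition in the size and angle of the frequency variable (cf. \cite{St}) and one further frequency/angle decomposition. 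Feeding in the Step C1 estimates, the difference between $T_0$ and the flat model gains a power of $M\ep$, so $T_0$ is invertible by a Neumann series; inverting it delivers at once the existence, the uniqueness, and the two weighted bounds on $f_\pm$.

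For the error term (Steps C3--C4), applying $\square_\g$ and using that each $u_\pm^\om$ solves \eqref{eikonal-intr} gives
$$\square_\g\psi[f_+,f_-]\ =\ i\sum_{\pm}\int_{\SSS^2}\!\int_0^\infty e^{i\la u_\pm^\om(t,x)}(\square_\g u_\pm^\om)f_\pm(\la\om)\la^3\,d\la\,d\om\ =:\ E_+f_++E_-f_-,$$
so I must show $\norm{E_\pm}\les M\ep$ from $\{\la^{-1}L^2\}$ into $L^2(\MM)$, together with the once-differentiated version. This requires first the quantitative control of $\square_\g u_\pm^\om$ of Step C3: writing it essentially as $\textrm{tr}\chi$ minus its flat value plus curvature terms and propagating it off $\Si_0$ by the null (Raychaudhuri-type) transport equation along the generators of $\HH_{u_\pm^\om}$, coupled to Hodge systems for the other Ricci coefficients, the geometric Littlewood--Paley calculus of \cite{Kl-R6}, sharp trace estimates, and the structure of the Einstein equations, one gets $\square_\g u_\pm^\om\in L^\infty(\MM)$ of size $M\ep$ plus control of its tangential derivatives and of a single $\om$-derivative. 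Then I would estimate the Fourier integral operators $E_\pm$: since the phase obeys only \eqref{reguintro}, not \eqref{staphase}, classical stationary phase is out of reach, so instead I would run a $TT^*$ argument on $E_\pm E_\pm^*$ after a first and second dyadic decomposition in $(\la,\om)$ and an additional decomposition in the space-time variables built from the geometric Littlewood--Paley projections of \cite{Kl-R6}, integrating by parts geometrically along the level sets of $u_\pm^\om$; summing the dyadic pieces, the extra power $\la^3$ (vs.\ $\la^2$) is absorbed by the $\la^{-1}$ weight and by $\norm{\square_\g u_\pm^\om}_{L^\infty}\les M\ep$. The bound for $\pr\,\square_\g\psi$ follows the same scheme after commuting $\pr$ through, using now the higher-regularity bounds on $u_\pm^\om$ and $\square_\g u_\pm^\om$ and the $\norm{\la^2 f_\pm}_{L^2}$ norm.

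The main obstacle is Steps C3--C4: controlling the operators $E_\pm$ when the phase carries only the regularity \eqref{reguintro}---far below what classical $L^2$ bounds for Fourier integral operators require---which is precisely why the geometric integration-by-parts scheme and the extra physical-space decomposition are forced on us, and why the log-divergences appearing when the dyadic pieces are summed must be closed by ad hoc arguments exploiting the detailed structure of the Einstein equations. Propagating the eikonal regularity to the space-time in the first place, with only $L^2$ curvature and a single $\om$-derivative of control (since $\pr_\om$ converts tangential derivatives into transversal ones), is comparably delicate and is the heart of \cite{param3}.
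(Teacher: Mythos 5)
Your overall architecture is right — reduce Theorem \ref{prop:estparam} to (i) invertibility of the initial-time operator $T_0$ on $\Si_0$ together with operator bounds, which is the content of \cite{param2} (Chapter \ref{part:paraminit}), and (ii) $L^2(\MM)$ control of the error-term Fourier integral operator from \cite{param4} (Chapter \ref{part:paramtime}) with the phase initialized as in \cite{param1}. The discussion of Steps C3--C4, though telescoped, reflects the paper's scheme: first and second dyadic decompositions, a further physical-space decomposition using the geometric Littlewood--Paley projections of \cite{Kl-R6}, and geometric integrations by parts along the level sets of $u_\pm^\om$, closing several log-divergences by exploiting structure.

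The genuine gap is in your existence argument. You assert that ``the difference between $T_0$ and the flat model gains a power of $M\ep$, so $T_0$ is invertible by a Neumann series.'' This presupposes a direct operator-norm comparison $\norm{T_0-T_0^{\mathrm{flat}}}\les M\ep$. That bound is not available: the phase $u(x,\om)$ agrees with a flat phase $\phi_\nu(x)\c\om$ only to order $\ep|\om-\nu|^2$ (the estimate \eqref{ch5:gl23}), i.e.\ only locally in angle, so the pointwise kernel difference is not uniformly small — for $\la$ large the phase discrepancy $\la(u-u^{\mathrm{flat}})$ is $O(\la M\ep)$, not small. What the paper actually proves is the near-isometry
$\norm{Uf}_{L^2(\Si_0)}^2=\norm{f}_{L^2(\RRR^3)}^2+O\bigl(\tfrac{\ep}{\de^2}+\de^{1/2}\bigr)\norm{f}_{L^2(\RRR^3)}^2$
(equation \eqref{ch5:re30}), i.e.\ $U^*U\approx I$ after a carefully balanced decomposition in frequency and angle of size $\de 2^{-j/2}$. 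This yields injectivity and the lower bound $\norm{f}\les\norm{Uf}$ (Proposition \ref{ch5:re2}), hence uniqueness and the two weighted $L^2$ bounds on $(f_+,f_-)$, but it does \emph{not} give surjectivity: $U$ could be an isometry onto a proper closed subspace. The paper proves surjectivity by an entirely different mechanism — a continuity/connectedness argument: one introduces the set $V$ of data $(\Si_0,g,k)$ satisfying the constraint equations and the smallness bounds, and the subset $W\subset V$ on which the system \eqref{ch5:choicef3bis} is solvable for all $(\phi_0,\phi_1)$; $W$ is nonempty (it contains flat data), and $V$ is connected while $W$ is both open and closed in $V$, whence $W=V$. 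You would need to supply this step (or an equivalent) to complete the proof.

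A minor additional point: for the error term you write that you would ``run a $TT^*$ argument on $E_\pm E_\pm^*$.'' The paper emphasizes that both the pure $TT^*$ and $T^*T$ schemes fail by a wide margin at the available regularity \eqref{reguintro} — $T^*T$ because it would need many $x$-integrations by parts and $TT^*$ because it would need many $\om$-integrations by parts — and that a hybrid method taking advantage of regularity in both $x$ and $\om$ is forced. In Chapter \ref{part:paramtime} the almost-orthogonality in frequency and in angle is proved by integration by parts in $u$ (and in tangential and $L$ directions) combined with Shur's Lemma, and the diagonal term is controlled not by stationary phase but via a transport equation along $L_\nu$ together with the energy estimate for the scalar wave equation. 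Attributing this to a single $TT^*$ argument undersells the point of the construction; a genuine $TT^*$ computation does appear, but only for the localized diagonal pieces of the initial-time operator in Chapter \ref{part:paraminit}.
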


\begin{remark}
The existence of $f_\pm$ and the first two estimates of Theorem \ref{prop:estparam} are proved in \cite{param2} (see also Chapter \ref{part:paraminit}), while the last two estimates in Theorem \ref{prop:estparam} are proved in \cite{param4} (see also Chapter \ref{part:paramtime}).
\end{remark}

We associate to any pair of functions $\phi_0, \phi_1$ on $\Si_0$ the function $\Psi_{om}[\phi_0,\phi_1]$ defined for $(t,x)\in\MM$ as:
$$\Psi_{om}[\phi_0,\phi_1]=\psi[f_+, f_-]$$
where $(f_+, f_-)$ is defined in view of Theorem \ref{prop:estparam} as the unique pair of functions associated to $(\phi_0, \phi_1)$. In particular, we obtain:
$$\norm{\la f_+}_{L^2(\RRR^3)}+\norm{\la f_-}_{L^2(\RRR^3)}\les \norm{\nabla\phi_0}_{L^2(\Si_0)}+\norm{\phi_1}_{L^2(\Si_0)},$$
$$\norm{\la^2f_+}_{L^2(\RRR^3)}+\norm{\la^2f_-}_{L^2(\RRR^3)}\les \norm{\nabla^2\phi_0}_{L^2(\Si_0)}+\norm{\nabla \phi_1}_{L^2(\Si_0)},$$
\bea\lab{par1:bis}
\norm{\square\Psi_{om}[\phi_0,\phi_1]}_{L^2(\MM)}\les M\ep(\norm{\nabla\phi_0}_{L^2(\Si_0)}+\norm{\phi_1}_{L^2(\Si_0)}),
\eea
and:
\bea\lab{par1}
\norm{\pr\square\Psi_{om}[\phi_0,\phi_1]}_{L^2(\MM)}\les M\ep(\norm{\nabla^2\phi_0}_{L^2(\Si_0)}+\norm{\nabla \phi_1}_{L^2(\Si_0)}).
\eea

Next, let $\uoms_\pm$ two families, indexed by $\om\in\SSS^2$ and $s\in \RRR$, of scalar functions on the space-time $\MM$ satisfying the Eikonal equation for each $\om\in\SSS^2$ and $s\in\RRR$. We have the freedom of choosing $\uoms_\pm$ on the slice $\Si_s$, and in order for the results in \cite{param2} \cite{param4} to apply, we need to initialize $\uoms_\pm$ on $\Si_s$ as in \cite{param1}.  
Note that the families $\uom_\pm$ correspond to $\uoms$ with the choice $s=0$. For any pair of functions $f_\pm$ on $\RRR^3$, and for any $s\in\RRR$, we define the following scalar function on $\MM$:
$$\psi_s[f_+, f_-](t,x,s)=\int_{\SSS^2} \int_0^\infty  e^{i \la \uoms_+(t,x)}f_+(\la\om)\la^2d\la d\om
 +\int_{\SSS^2} \int_0^\infty e^{i\la \uoms_-(t,x)}f_-(\la\om)\la^2d\la d\om.$$
We have the following straightforward corollary of Theorem \ref{prop:estparam}:
\begin{corollary}\lab{cor:estparam}
Let $s\in\RRR$. Let $\phi_0$ and $\phi_1$ two scalar functions on $\Si_s$. Then, there is a unique pair of functions $(f_+, f_-)$ such that: 
$$\psi_s[f_+, f_-]|_{\Si_s}=\phi_0\textrm{ and }\pr_0(\psi_s[f_+, f_-])|_{\Si_s}=\phi_1.$$
Furthermore, $f_\pm$ satisfy the following estimates:
$$\norm{\la f_+}_{L^2(\RRR^3)}+\norm{\la f_-}_{L^2(\RRR^3)}\les \norm{\nabla\phi_0}_{L^2(\Si_s)}+\norm{\phi_1}_{L^2(\Si_s)},$$
and:
$$\norm{\la^2f_+}_{L^2(\RRR^3)}+\norm{\la^2f_-}_{L^2(\RRR^3)}\les \norm{\nabla^2\phi_0}_{L^2(\Si_s)}+\norm{\nabla\phi_1}_{L^2(\Si_s)}.$$
Finally, $\square\psi_s[f_+, f_-]$ satisfies the following estimates:
$$\norm{\square\psi_s[f_+, f_-]}_{L^2(\MM)}\les M\ep(\norm{\nabla\phi_0}_{L^2(\Si_s)}+\norm{\phi_1}_{L^2(\Si_s)}),$$
and:
$$\norm{\pr\square\psi_s[f_+, f_-]}_{L^2(\MM)}\les M\ep(\norm{\nabla^2\phi_0}_{L^2(\Si_s)}+\norm{\nabla\phi_1}_{L^2(\Si_s)}).$$
\end{corollary}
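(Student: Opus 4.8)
The plan is to deduce Corollary \ref{cor:estparam} from Theorem \ref{prop:estparam} by observing that the entire construction underlying Theorem \ref{prop:estparam} --- the initialization of the phase on a space-like slice, the propagation of its regularity along the eikonal equation, and the $L^2$-boundedness and error estimates for the resulting Fourier integral operator --- depends on the slice only through the bootstrap assumptions, which hold uniformly on the whole slab $[0,T^*]$, and not through the distinguished value $s=0$. In other words, the corollary is a slice-independence (``time-translation'') statement.

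First I would record that for any $s\in[0,T^*]$ the slice $(\Si_s,g,k)$ inherits, from \eqref{bootR}, \eqref{bootcurvatureflux}, \eqref{bootA}, \eqref{bootA0} together with \eqref{bootk} and \eqref{bootvolumeradius}, exactly the quantitative hypotheses that $\Si_0$ satisfies: $\norm{\R}_{L^2(\Si_s)}\les M\ep$, $\norm{k}_{L^2(\Si_s)}+\norm{\nabla k}_{L^2(\Si_s)}\les M\ep$, and $r_{vol}(\Si_s,1)\geq \frac14$. Since, by the hypothesis of the corollary, the families $\uoms_\pm$ are initialized on $\Si_s$ by the very same prescription used for $\uom_\pm$ on $\Si_0$ (the construction of \cite{param1}, cf. Step C1), the results of \cite{param1}, \cite{param3} apply verbatim with $\Si_0$ replaced by $\Si_s$: one obtains the regularity of $\uoms_\pm$ on $\Si_s$ and then, propagating it along the slab, the space-time control of $\uoms_\pm$ and of $\square\uoms_\pm$ with constants depending only on $M\ep$, hence uniform in $s$. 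In particular $\HH_{\uoms_\pm}$ are again weakly regular.

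With these inputs in hand, the proofs in \cite{param2}, \cite{param4} of the four assertions of Theorem \ref{prop:estparam} --- existence and uniqueness of the pair $(f_+,f_-)$ matching prescribed Cauchy data $(\phi_0,\phi_1)$ on the initializing slice, the two $L^2$ bounds on $\la f_\pm$ and $\la^2 f_\pm$, and the two bounds on $\square\psi_s[f_+,f_-]$ and its derivative --- are carried out by arguments that never use the location of the slice: the $TT^*$-type decompositions in $(\la,\om)$, the physical-space geometric Littlewood--Paley decompositions, and the geometric integrations by parts are all insensitive to the shift. I would simply re-run those arguments over the slab with $\Si_s$ as Cauchy hypersurface in place of $\Si_0$, and the resulting constants are again controlled by the same universal $M\ep$.

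The only point requiring a little care --- and the one I would flag as the main (minor) obstacle --- is that Theorem \ref{prop:estparam} asserts estimates on the \emph{full} space-time $\MM$, whereas the natural evolution from $\Si_s$ splits $\MM$ into the regions $\{t\geq s\}$ and $\{t\leq s\}$. I would handle this by applying the forward construction of \cite{param3}, \cite{param4} on $[s,T^*]$ and the time-reversed construction on $[0,s]$, using that the eikonal estimates of \cite{param3} and the energy estimates of Lemma \ref{lemma:energyestimatebis} are symmetric under time reversal and that $\Si_s$ is a common Cauchy slice for both sub-slabs; summing the two contributions yields the stated bounds on all of $\MM$. Apart from this bookkeeping the corollary is immediate, which is why it is labelled ``straightforward''.
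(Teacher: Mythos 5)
The paper offers no explicit argument for the corollary—it simply labels it a ``straightforward corollary'' of Theorem \ref{prop:estparam}—and your proposal supplies precisely the intended reasoning: the bootstrap assumptions hold uniformly in $t$ on $[0,T^*]$, so the slice $\Si_s$ inherits the same quantitative hypotheses as $\Si_0$, and since the phases $\uoms_\pm$ are by hypothesis initialized on $\Si_s$ via the prescription of \cite{param1}, the constructions of \cite{param1}–\cite{param4} apply verbatim with $\Si_s$ in the role of Cauchy slice. Your flag about splitting $\MM$ into $\{t\geq s\}$ and $\{t\leq s\}$ and using time-reversal symmetry of the eikonal and energy estimates is a correct and appropriately careful way to close the small gap between the one-sided evolution of \cite{param3}, \cite{param4} and the stated two-sided estimate on $\MM$; the argument is sound and there is no genuine gap.
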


Next, for any $s\in \RRR$, we associate to any function $F$ on $\Si_s$ the function $\Psi(t,s)F$ defined for $(t,x)\in\MM$ as:
$$\Psi(t,s)F=\psi_s[f_+, f_-](t)$$
where $(f_+, f_-)$ is defined in view of Corollary \ref{cor:estparam} as the unique pair of functions associated to the choice $(\phi_0, \phi_1)=(0,F)$. In particular, we obtain:
$$\norm{\la f_+}_{L^2(\RRR^3)}+\norm{\la f_-}_{L^2(\RRR^3)}\les \norm{F}_{L^2(\Si_s)},$$
$$\norm{\la^2f_+}_{L^2(\RRR^3)}+\norm{\la^2f_-}_{L^2(\RRR^3)}\les \norm{\nabla F}_{L^2(\Si_s)},$$
\bea\lab{par2:bis}
\norm{\square\Psi(t,s)F}_{L^2(\MM)}\les M\ep\norm{F}_{L^2(\Si_s)},
\eea
and:
\bea\lab{par2}
\norm{\pr\square\Psi(t,s)F}_{L^2(\MM)}\les M\ep\norm{\nabla F}_{L^2(\Si_s)}.
\eea

Now, we are in position to construct an exact  parametrix for the wave equation \eqref{eq:wave}:
\begin{theorem}[Representation formula]\lab{lemma:parametrixconstruction}
Let $F$ a scalar function on $\MM$, and let $\phi_0$ and $\phi_1$ two scalar functions on $\Si_0$. Let $\phi$ the solution of the wave equation \eqref{eq:wave} on $\MM$. Then, there is a sequence $\phi^{(j)}$, $j\ge 0$, of scalar functions  approximations of $\phi$ and  a sequence $F^{(j)}$, $j\ge 0$,  of scalar functions  on $\MM$,   with 
of the form:
\beaa
\phi^{(0)}=\Psi_{om}[\phi_0,\phi_1]+\int_0^t\Psi(t,s)F^{(0)}(s,.)ds,\qquad F^{(0)}=F
\eeaa
and for all $j\geq 1$:
$$\phi^{(j)}=\int_0^t\Psi(t,s)F^{(j)}(s,.)ds,$$
such that,
$$\phi=\sum_{j=0}^{+\infty}\phi^{(j)},$$ 
 and    such that  $\phi^{(j)}$ and $F^{(j)}$ satisfy the following estimates:
$$\norm{\prb\phi^{(j)}}_{\lsit{2}}+\norm{F^{(j)}}_{L^2(\MM)}\les (M\ep)^j(\norm{\nabla\phi_0}_{L^2(\Si_0)}+\norm{\phi_1}_{L^2(\Si_0)}+\norm{F}_{L^2(\MM)}),$$
and:
$$\norm{\pr\prb\phi^{(j)}}_{\lsit{2}}+\norm{\pr F^{(j)}}_{L^2(\MM)}\les (M\ep)^j(\norm{\nabla^2\phi_0}_{L^2(\Si_0)}+\norm{\nabla\phi_1}_{L^2(\Si_0)}+\norm{\pr F}_{L^2(\MM)}),$$

\end{theorem}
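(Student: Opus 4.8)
The plan is a Duhamel/Neumann–series iteration in which the operators $\Psi_{om}[\cdot,\cdot]$ (Theorem~\ref{prop:estparam}) and $\Psi(t,s)$ (Corollary~\ref{cor:estparam}) play the role of approximate solution operators, and the defect $\square\,\Psi-\mathrm{Id}$ is treated as a small perturbation, costing a factor $M\ep\ll1$ by the error bounds \eqref{par1:bis}--\eqref{par2}. First I set $F^{(0)}=F$ and $\phi^{(0)}=\Psi_{om}[\phi_0,\phi_1]+\int_0^t\Psi(t,s)F^{(0)}(s,\cdot)\,ds$. By construction $\phi^{(0)}|_{\Si_0}=\phi_0$ and $\pr_0\phi^{(0)}|_{\Si_0}=\phi_1$, since the Duhamel integral and its first $\pr_0$-derivative vanish on $\Si_0$ (here one uses $\Psi(s,s)G=0$). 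A direct computation with $\square$ written in the $\Si_t$-foliation, together with the defining properties $\Psi(s,s)G=0$ and $\pr_0(\Psi(t,s)G)|_{t=s}=G$, yields the Duhamel identity
\[
\square\Big(\int_0^t\Psi(t,s)G(s,\cdot)\,ds\Big)=G+\int_0^t\square\big(\Psi(t,s)G(s,\cdot)\big)\,ds ,
\]
so that $\square\phi^{(0)}=F-F^{(1)}$, where $F^{(1)}:=-\square\Psi_{om}[\phi_0,\phi_1]-\int_0^t\square(\Psi(t,s)F^{(0)}(s,\cdot))\,ds$. Then I iterate: for $j\ge1$ put $\phi^{(j)}=\int_0^t\Psi(t,s)F^{(j)}(s,\cdot)\,ds$ and $F^{(j+1)}:=-\int_0^t\square(\Psi(t,s)F^{(j)}(s,\cdot))\,ds$, so $\square\phi^{(j)}=F^{(j)}-F^{(j+1)}$ with vanishing data on $\Si_0$. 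Summing telescopes: $\square\big(\sum_{j=0}^N\phi^{(j)}\big)=F-F^{(N+1)}$, and $\sum_{j=0}^N\phi^{(j)}$ has data $(\phi_0,\phi_1)$.

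The estimates go by induction on $j$. For the source terms, Minkowski's integral inequality, the parametrix error bounds \eqref{par1:bis}, \eqref{par2:bis}, and Cauchy--Schwarz in $s$ over $[0,1]$ give
\[
\norm{F^{(j+1)}}_{L^2(\MM)}\le\int_0^1\norm{\square(\Psi(t,s)F^{(j)}(s,\cdot))}_{L^2(\MM)}\,ds\les M\ep\int_0^1\norm{F^{(j)}(s,\cdot)}_{L^2(\Si_s)}\,ds\les M\ep\,\norm{F^{(j)}}_{L^2(\MM)},
\]
and similarly $\norm{F^{(1)}}_{L^2(\MM)}\les M\ep(\norm{\nabla\phi_0}_{L^2(\Si_0)}+\norm{\phi_1}_{L^2(\Si_0)}+\norm{F}_{L^2(\MM)})$; hence $\norm{F^{(j)}}_{L^2(\MM)}\les(M\ep)^j(\norm{\nabla\phi_0}_{L^2(\Si_0)}+\norm{\phi_1}_{L^2(\Si_0)}+\norm{F}_{L^2(\MM)})$ for all $j\ge0$. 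For the $\phi^{(j)}$ I feed $\square\phi^{(j)}=F^{(j)}-F^{(j+1)}$ into the energy estimate \eqref{energy0} of Lemma~\ref{lemma:energyestimatebis} (with data $(\phi_0,\phi_1)$ for $j=0$, zero data for $j\ge1$): this bounds $\norm{\prb\phi^{(j)}}_{\lsit2}\le\EE_0[\phi^{(j)}]$ by $\norm{F^{(j)}}_{L^2(\MM)}+\norm{F^{(j+1)}}_{L^2(\MM)}$ (plus the data for $j=0$), which is the first family of estimates. The higher-order estimates have identical structure: differentiate, use $\norm{\nabla F^{(j)}}_{L^2(\MM)}=\norm{\pr F^{(j)}}_{L^2(\MM)}$ for scalars, the bounds \eqref{par1}, \eqref{par2}, and the energy estimate \eqref{energy1}; the only extra point is that the spatial derivative commutes with $\int_0^t(\cdot)\,ds$ up to the harmless lower-order terms of \eqref{notcoordinate}, giving $\norm{\pr F^{(j+1)}}_{L^2(\MM)}\les M\ep\norm{\pr F^{(j)}}_{L^2(\MM)}$. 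Finally, since $M\ep$ is small, $\sum_j\norm{\prb\phi^{(j)}}_{\lsit2}<\infty$, so $\phi:=\sum_j\phi^{(j)}$ converges, has data $(\phi_0,\phi_1)$, and satisfies $\square\phi=\lim_N(F-F^{(N+1)})=F$ in $L^2(\MM)$ because $\norm{F^{(N+1)}}_{L^2(\MM)}\to0$; by the uniqueness contained in \eqref{energy0}, this $\phi$ coincides with the given solution of \eqref{eq:wave}.

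The main obstacle is the Duhamel identity itself. One must verify carefully how $\square$ — which in the $\Si_t$-foliation reads $-\pr_0^2+\Delta+n^{-1}\nabla n\cdot\nabla$ with $\pr_0$ the \emph{non-coordinate} derivative $T=n^{-1}\pr_t$ — acts on the time integral $\int_0^t\Psi(t,s)G(s,\cdot)\,ds$, so that exactly the source $G$ is produced on the slice $s=t$ (modulo lapse factors $n=1+O(M\ep)$, which can be absorbed into $F^{(j+1)}$) while every other contribution reassembles as $\int_0^t\square(\Psi(t,s)G)\,ds$. Getting the boundary term and its sign right, and checking in the higher-order step that the spatial derivative of the Duhamel term does not produce a second, uncancelled copy of $\pr F$, is the delicate bookkeeping. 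Everything else is a routine Neumann–series argument driven by the smallness $M\ep\ll1$ and the previously established parametrix error bounds \eqref{par1:bis}--\eqref{par2} and energy inequalities \eqref{energy0}--\eqref{energy1}.
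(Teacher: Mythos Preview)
Your proposal is correct and matches the paper's approach exactly: the paper defines $F^{(0)}=F$, $\phi^{(0)}=\Psi_{om}[\phi_0,\phi_1]+\int_0^t\Psi(t,s)F^{(0)}(s,\cdot)\,ds$, and then iteratively $F^{(j)}=F^{(j-1)}-\square\phi^{(j-1)}$, $\phi^{(j)}=\int_0^t\Psi(t,s)F^{(j)}(s,\cdot)\,ds$, which is precisely your recursion once the Duhamel identity is unwound; the paper then appeals to \eqref{par1:bis}--\eqref{par2} and Lemma~\ref{lemma:energyestimatebis} and refers to \cite{KRS} for the details you have spelled out. Your Minkowski/Cauchy--Schwarz argument in $s\in[0,1]$ for the geometric decay of $\norm{F^{(j)}}_{L^2(\MM)}$ and $\norm{\pr F^{(j)}}_{L^2(\MM)}$ is the right way to close the induction, and your identification of the Duhamel boundary computation (with the lapse factor) as the only genuine bookkeeping point is accurate.
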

\begin{proof}
Let us define:
$$F^{(0)}=F\textrm{ and }\phi^{(0)}=\Psi_{om}[\phi_0,\phi_1]+\int_0^t\Psi(t,s)F^{(0)}(s,.)ds.$$
Then, we define iteratively for $j\geq 1$:
$$F^{(j)}=-\square\phi^{(j-1)}+F^{(j-1)}\textrm{ and }\phi^{(j)}=\int_0^t\Psi(t,s)F^{(j)}(s,.)ds.$$
The proof follows from the estimates \eqref{par1:bis}, \eqref{par1}, \eqref{par2:bis} and \eqref{par2}, together with the energy estimates for the wave equation of Lemma \ref{lemma:energyestimatebis}. We refer to \cite{KRS} for the details.
\end{proof}

\section{Improvement of the bootstrap assumptions (part 2)}\lab{sec:improve2}

The goal of this section  and next  section is to prove Proposition \ref{prop:improve2}. This requires in particular to write $B$ using the representation formula of Theorem \ref{lemma:parametrixconstruction}. In this section  we derive the improved bilinear estimate \eqref{bil1imp}, \eqref{bil5imp}, \eqref{bil6imp}, \eqref{bil7imp} and \eqref{bil8imp} of Proposition \ref{prop:improve2}. We also  derive  the improved trilinear estimate  \eqref{trilinearbootimp}.

\subsection{Improvement of the bilinear bootstrap assumptions I}\lab{sec:proofbil1} 

In this section, we give the main ideas on the  how we  derive the improved bilinear estimate \eqref{bil1imp}, \eqref{bil5imp}, \eqref{bil6imp}, \eqref{bil7imp} and \eqref{bil8imp} of Proposition \ref{prop:improve2}. These bilinear estimates all involve the norm in $L^2(\MM)$ of quantities of the type:
$$\CC(U,\pr\phi),$$
where $\CC(U,\pr\phi)$ denotes a contraction with respect to one index between a tensor $U$ and $\pr\phi$, with $\phi$ being a scalar function which is solution to the wave equation \eqref{eq:wave} with $F, \phi_0$ and $\phi_1$ satisfying the estimate:
$$\norm{\nabla^2\phi_0}_{L^2(\Si_0)}+\norm{\nabla\phi_1}_{L^2(\Si_0)}+\norm{\pr F}_{L^2(\MM)}\les M\ep.$$
In particular, we may use the parametrix constructed in Lemma \ref{lemma:parametrixconstruction} for $\phi$:
$$\phi=\sum_{j=0}^{+\infty}\phi^{(j)},$$
with:
$$\phi^{(0)}=\Psi_{om}[\phi_0,\phi_1]+\int_0^t\Psi(t,s)F(s,.)ds,$$
and for all $j\geq 1$:
$$\phi^{(j)}=\int_0^t\Psi(t,s)F^{(j)}(s,.)ds.$$
Thus, we need to estimate the norm in $L^2(\MM)$ of contractions of quantities of the type:
$$\CC(U, \pr(\Psi_{om}[\phi_0,\phi_1]))+\sum_{j=0}^{+\infty}\int_0^t\CC(U, \pr(\Psi(t,s)F^{(j)}(s,.)))ds.$$
After using the definition of $\Psi_{om}$ and $\Psi(t,s)$, and the estimates for $F^{(j)}$ provided by Lemma \ref{lemma:parametrixconstruction}, this reduces to estimating:
$$\int_{\SSS^2} \int_0^\infty  \CC(U, \pr(e^{i \la \uom_+(t,x)}))f_+(\la\om)\la^2d\la d\om
 +\int_{\SSS^2} \int_0^\infty \CC(U,\pr(e^{i\la \uom_-(t,x)}))f_-(\la\om)\la^2d\la d\om,$$
where $f_\pm$ in view of Theorem \ref{prop:estparam} and the estimates for $F, \phi_0$ and $\phi_1$ 
satisfies:
$$\norm{\la^2f_\pm}_{L^2(\RRR^3)}\les M\ep.$$
Since both half waves parametrices are estimated in the same way, the bilinear estimates \eqref{bil1},  \eqref{bil5}, \eqref{bil6}, \eqref{bil7} and \eqref{bil8} all estimate the norm in $L^2(\MM)$ of  contractions of quantities of the type:
$$\int_{\SSS^2} \int_0^\infty \CC(U,\pr(e^{i \la \uom(t,x)}))f(\la\om)\la^2d\la d\om,$$
where $f$ satisfies:
\bea\lab{shouf}
\norm{\la^2f}_{L^2(\RRR^3)}\les M\ep.
\eea
Furthermore we observe that 
$\pr_j(e^{i \la \uom})=i\la e^{i \la \uom}\pr_j(\uom),$
and  that  the gradient of $\uom$ on $\Si_t$ is given by:
$\nabla(\uom)=\bom^{-1}\Nom,$
with  $\bom=|\nabla(\uom)|^{-1}$ is the null lapse, and 
$\Nom=\frac{\nabla\uom}{|\nabla\uom|}$
is the unit normal to $\HH_{\uom}\cap\Si_t  $  along $\Si_t$. Thus, the bilinear estimates \eqref{bil1},  \eqref{bil5}, \eqref{bil6}, \eqref{bil7} and \eqref{bil8} all  reduce to  $L^2(\MM)$-estimates of  expressions of the form:

\bea
\frak{C}[U,f]:=\int_{\SSS^2} \int_0^\infty e^{i \la \uom(t,x)}\bom^{-1}\CC(U,\Nom)f(\la\om)\la^3d\la d\om,
\eea
where $f$ satisfies \eqref{shouf}. To estimate $\frak{C}[U,f]$  we  follow the strategy of \cite{Kl-R3}. 
\bea\lab{dorbia}
&&\left\|\frak{C}[U,f]\right\|_{L^2(\MM)}\\
\nn&\les&
 \int_{\SSS^2} \left\|  \bom^{-1}\CC(U, \Nom)\left(\int_0^{+\infty}e^{i \la \uom(t,x)}f(\la\om)\la^3d\la\right)\right\|_{L^2(\MM)} d\om\\
\nn&\les & \int_{\SSS^2} \norm{\bom^{-1}}_{L^\infty(\MM)}\norm{\CC(U, \Nom)}_{\luom{2}}\left\|\int_0^{+\infty}e^{i \la \uom(t,x)}f(\la\om)\la^3d\la\right\|_{L^2_{\uom}} d\om\\
\nn&\les & \left(\sup_{\om\in\SSS^2}\norm{\bom^{-1}}_{L^\infty(\MM)}\right)\left(\sup_{\om\in\SSS^2}\norm{\CC(U, \Nom)}_{\luom{2}}\right)\left(\int_{\SSS^2}\norm{\la^3f(\la\om)}_{L^2_\la}d\om\right)\nn\\
\nn&\les & \left(\sup_{\om\in\SSS^2}\norm{\bom^{-1}}_{L^\infty(\MM)}\right)\left(\sup_{\om\in\SSS^2}\norm{\CC(U, \Nom)}_{\luom{2}}\right)\norm{\la^2f}_{L^2(\RRR^3)},
\eea
Now, since $\uom$ has been initialized on $\Si_0$ as in \cite{param1}, and satisfies the Eikonal equation on $\MM$, the results in \cite{param3} (see Theorem 2.15 in that paper, and also \eqref{ch4:estb}) under the assumption of Theorem \ref{th:mainter} imply:
$$\sup_{\om\in\SSS^2}\norm{\bom^{-1}}_{L^\infty(\MM)}\les 1.$$
Together with the fact that $f$ satisfies \eqref{shouf}, and with \eqref{dorbia}, we finally obtain:
\bea\lab{bilproof} 
&&\left\|\int_{\SSS^2} \int_0^\infty  e^{i \la \uom(t,x)}\bom^{-1}\CC(U, \Nom)f(\la\om)\la^3d\la d\om\right\|_{L^2(\MM)}\\
\nn&\les & M\ep\left(\sup_{\om\in\SSS^2}\norm{\CC(U, \Nom)}_{\luom{2}}\right).
\eea
It remains to estimate the right-hand side of \eqref{bilproof} for the contractions appearing in the bilinear estimates \eqref{bil1}, \eqref{bil5}, \eqref{bil6}, \eqref{bil7} and \eqref{bil8}. Since all the estimates in the proof will be uniform in $\om$, we drop the index $\om$ to ease the notations.

\begin{remark}
In the    proof of  bilinear estimates \eqref{bil1imp}, \eqref{bil5imp}, \eqref{bil6imp}, \eqref{bil7imp} and \eqref{bil8imp}, 
the tensor $U$ appearing  in the expression $\CC(U, N)$ is either $\R $ or    derivatives of solutions $\phi$ of a 
a scalar wave equation.    In view of the bootstrap assumption \eqref{bootcurvatureflux} for the curvature flux,  as well  as the first  energy estimate for the wave equation in Lemma \ref{lemma:energyestimatebis}, we can control 
$ \| \CC(U, N)\|_{\lu{2}}$  as long as we can show that $\CC(U, N)$ can be expressed\footnote{In other words, our goal is to check that the term $\CC(U, N)$  does not involve the dangerous terms of the type
$\underline{\a}\textrm{ and }\Lb\phi$,
where $\Lb$ is the vectorfield defined as $\Lb=2T-L$, and $\underline{\a}$ is the two tensor on $\Si_t\cap \HH_u$ defined  by 
$\underline{\a}_{AB}=\R_{\Lb\,A\,\Lb\,B}.$}  in terms of,
$\R\c L,\, \nabb\phi\textrm{ and }L(\phi).$
\end{remark}

 \subsubsection{Proof of   \eqref{bil1imp}}
 Since $A=\curl(B)+E$ in view of Lemma \ref{recoverA} and  bootstrap assumption \eqref{bootA}, we have:
\bea\lab{sodeska}
\norm{A^j\pr_j(A)}_{L^2(\MM)}&\les& \norm{(\curl(B))^j\pr_j(A)}_{L^2(\MM)}+\norm{E}_{\lsitt{2}{\infty}}\norm{\pr A}_{\lsit{2}}\\
\nn&\les& \norm{(\curl(B))^j\pr_j(A)}_{L^2(\MM)}+M^2\ep^2,
\eea
To estimate $\norm{(\curl(B))^j\pr_j(A)}_{L^2(\MM)}$ we write, 
$(\curl(B))^j\pr_j(A)=\in_{jmn}\pr_m(B_n)\pr_j(A).$
We are now ready to apply  the representation theorem \ref{lemma:parametrixconstruction}  to $B$. 
Indeed, according to  Lemma \ref{prop:waveeqB},  and proposition   \ref{lemma:estB},    we have
\bea\lab{reppossible}
\square B=F, \qquad 
\|\pr F\|_{L^2(\MM)}&\les& M^2\ep^2\label{squareB.}\\
\norm{\prb B(0)}_{L^2(\Si_0)}+\norm{\pr^2 B(0)}_{L^2(\Si_0)}+\norm{\pr(\pr_0 B(0))}_{L^2(\Si_0)}&\les& M\ep.\nn
\eea

We are thus in a position to apply  the reduction discussed in the  subsection above and  reduce our desired bilinear estimate to an estimate for,
\beaa
\CC(U, N)&=&\in_{jm\c}N_m\pr_j(A)
\eeaa

Now, we decompose $\pr_j$ on the orthonormal frame $N, f_A, A=1, 2$ of $\Si_t$, where we recall that $f_A, A=1, 2$ denotes an orthonormal basis of $\HH_u\cap \Si_t$. We have schematically:
\bea\lab{decompositionpr}
\pr_j=N_jN+\nabb,
\eea
where $\nabb$ denotes derivatives which are tangent to $\HH_u\cap \Si_t$. Thus, we have:
$$\in_{jm\c}N_m\pr_j(A)=\in_{jm\c}N_mN_j\pr_N(A)+\nabb(A)=\nabb(A),$$
where we used the antisymmetry of $\in_{jm\c}$ in the last equality. Therefore, we obtain in this case:
$$\norm{\CC(U, N)}_{\lu{2}}\les \norm{\nabb(A)}_{\lu{2}}.$$
It remains  to  estimate $\norm{\nabb(A)}_{\lu{2}}$. Since $A=\curl(B)+E$ we have, using  Lemma \ref{recoverA} again, 
followed by  Proposition \ref{prop:waveeqB} and Lemma \ref{lemma:energyestimatebis}:
\beaa
\norm{\nabb(A)}_{\lu{2}}&\les&  \norm{\nabb(\pr B)}_{\lu{2}}+\norm{\nabb(E)}_{\lu{2}}\\
&\les&  \norm{\nabb(\pr B)}_{\lu{2}}+\norm{\pr E}_{\lsit{3}}+\norm{\pr^2E}_{\lsit{\frac{3}{2}}}\\
&\les & \norm{\nabb(\pr B)}_{\lu{2}}+M\ep,\\
&\les& M\ep.
\eeaa
Therefore,
\beaa
\norm{A^j\pr_j(A)}_{L^2(\MM)}&\les& \norm{(\curl(B))^j\pr_j(A)}_{L^2(\MM)}+M^2\ep^2\les M^2 \ep^2,
\eeaa
as desired.

\subsubsection{Proof of \eqref{bil5imp}} 

The proof of \eqref{bil5imp} is similar to the one of \eqref{bil1imp} in view of Lemma \ref{recoverA}. 
 
 \subsubsection{Proof of  \eqref{bil6imp}} Since $B$ satisfies a wave equation in view of Lemma \ref{prop:waveeqB}, the quantity $\CC(U, N)$ is in this case\footnote{Use also $L=T+N$, $\Lb=T-N$ and the symmetries of $\R$},
$$\CC(U, N)=N_j\R_{0\,j\,\c\,\c}=\R_{0\,N\,\c\,\c}=\frac{1}{2}\R_{L\,\Lb\,\c\,\c}$$
which together with the bootstrap assumption for the curvature flux \eqref{bootcurvatureflux} improves the bilinear estimate \eqref{bil6}.

\subsubsection{Proof of  \eqref{bil7imp}} We have $k_{j\,\c}=A^j$ and $A=\curl(B)+E$ in view of Lemma \ref{recoverA}. Arguing as in \eqref{sodeska}, we reduce the proof to the estimate of: 
$$\norm{(\curl B)^j\pr_j\phi}_{L^2(\MM)}.$$
Then, the proof proceeds as the one of \eqref{bil1imp}.

\subsubsection{Proof of \eqref{bil8imp}} 

The proof of \eqref{bil8imp} proceeds as in \eqref{bil7imp}.

\subsection{Improvement of the trilinear estimate}

In this section, we shall derive the improved trilinear estimate \eqref{trilinearbootimp}.
To estimate the trilinear quantity
$
\left|\int_{\MM}Q_{ij\ga\de}k^{ij}e_0^\ga e_0^\de\right|.
$ we first write, according to Lemma \ref{recoverA}, $A=\curl(B)+E$ by. Arguing as in \eqref{sodeska}, 
we reduce the proof   of   \eqref{trilinearbootimp}    to an  estimate for: 
$$\left|\int_{\MM}Q_{\c\,j\ga\de}(\curl(B))_je_0^\ga e_0^\de\right|.$$
Making use of the wave equation \eqref{squareB.} for $B$ 
 we argue as in the beginning of section \ref{sec:proofbil1}  to  reduce the proof to an  estimate of the following:
$$\left|\int_{\MM}\int_{\SSS^2} \int_0^\infty e^{i \la \uom(t,x)}\bom^{-1}\left(\in_{jm\c}\Nom_m Q_{j\,\c\,\c\,\c}\right)f(\la\om)\la^3d\la d\om d\MM\right|$$
where $f$ satisfies:
$$\norm{\la^2f}_{L^2(\RRR^3)}\les M\ep.$$
Arguing  exactly  as in \eqref{dorbia} \eqref{bilproof}, we   can estimate the latter integral  by  the quantity  $\sup_{\om\in\SSS^2}\norm{\in_{jm\c}N_m Q_{j\,\c\,\c\,\c}}_{L^2_{\uom}L^1(\HH_{\uom})}M\ep$.
In other words, 
\bea\lab{kabul}
\left|\int_{\MM}Q_{ij\ga\de}k^{ij}e_0^\ga e_0^\de\right|\les\sup_{\om\in\SSS^2}\norm{\in_{jm\c}N_m Q_{j\,\c\,\c\,\c}}_{L^2_{\uom}L^1(\HH_{\uom})}M\ep +M^3\ep^3.
\eea

Next, we estimate the right-hand side of \eqref{kabul}. Since all the estimates in the proof will be uniform in $\om$, we drop the index $\om$ to ease the notations. The formula \eqref{def:bellrobison} for the Bell-Robinson tensor $Q$ yields:
\beaa
Q_{j\c\c\c}&=&\R_j\,^\la\,\c\c \R_{\c\,\la\,\c\,\c}+dual\\
&=& -\frac{1}{2}\R_{j\,L\,\c\c} \R_{\c\,\Lb\,\c\,\c}-\frac{1}{2}\R_{j\,\Lb\,\c\c} \R_{\c\,L\,\c\,\c}+\R_{j\,A\,\c\c} \R_{\c\,A\,\c\,\c}+dual,
\eeaa
where we used the frame $L, \Lb, f_A, A=1, 2$ in the last equality. Thus, we have schematically:
$$\in_{jm\c}N_m Q_{j\,\c\,\c\,\c}=\R(\R\c L+\in_{jm\c}N_m\R_{j\,A\,\c\c})$$

 Decomposing $e_j$  with respect  to the   orthonormal  frame $N, f_B, B=1, 2$,  we note that:
$$\in_{jm\c}N_m \R_{jA\c\c}=\in_{jm\c}N_jN_m \R_{NA\c\c}+\in_{jm\c}(f_B)_jN_m \R_{BA\c\c}=\R_{BA\c\c}.$$
On the other hand, decomposing  $\R_{BA\c\c}$ further  and using the symmetries of $\R$, one easily checks that $\R_{BA\c\c}$ must contain at least one $L$ so that it is of the type $\R\c\L$.
  Thus, we have schematically:
  \bea
  \in_{jm\c}N_m Q_{j\,\c\,\c\,\c}=\R(\R\c L).
  \eea
Thus,  in view of   \eqref{kabul},   making use of  the bootstrap assumptions \eqref{bootR} on $R$ and \eqref{bootcurvatureflux} on the curvature flux, we deduce,
\beaa
\left|\int_{\MM}Q_{ij\ga\de}k^{ij}e_0^\ga e_0^\de\right|
&\les& (M\ep)^3+M\ep\norm{\R  \R_L}_{L^2_uL^1(\HH_u)}\\
&\les& (M\ep)^3+M\ep\norm{\R}_{L^2(\MM)}\norm{\R_L}_{\lu{2}}\\
&\les& M^3 \ep^3
\eeaa
In other words,
\bea\lab{belrob2}
\left|\int_{\MM}Q_{ij\ga\de}k^{ij}e_0^\ga e_0^\de\right|\les (M\ep)^3.
\eea
 which yields  the desired  improvement of the trilinear estimate \eqref{trilinearboot}.

\section{Improvement of the bootstrap assumptions (part 3)}\lab{sec:improve2B}

In this section, we conclude the proof of Proposition \ref{prop:improve2}. More precisely, we give the main ideas in the improvement of  the bilinear bootstrap assumptions II. We start with a discussion of the sharp $L^4(\MM)$- Strichartz estimate.

\subsection{The sharp Strichartz $L^4(\MM)$ estimate}

To a function $f$ on $\RRR^3$ and a family $\uom$ indexed by $\om\in\SSS^2$ of scalar functions on the space-time $\MM$ satisfying the Eikonal equation for each $\om\in\SSS^2$, we associate a half wave parametrix:
$$\int_{\SSS^2} \int_0^\infty  e^{i \la \uom(t,x)}f(\la\om)\la^2d\la d\om.$$
Let an integer $p$ and a smooth cut-off function $\psi$ on $(0,+\infty)$ supported in a shell. We call a half wave parametrix localized at frequencies of size $\la\sim 2^p$ the following Fourier integral operator:
$$\int_{\SSS^2} \int_0^\infty  e^{i \la \uom(t,x)}\psi(2^{-p}\la)f(\la\om)\la^2d\la d\om.$$
We have the following $L^4(\MM)$ Strichartz estimates localized in frequency for a half wave parametrix which are proved in \cite{bil2} (see also Chapter \ref{part:strich}):
\begin{proposition}[Corollary 2.8 in \cite{bil2}]\lab{prop:L4strichartz}
Let $f$ a function on $\RRR^3$, let $p\in\mathbb{N}$, and let $\psi$ a smooth function on $(0, +\infty)$  compactly supported in the shell $1/2\leq\la\leq 2$. Let $\uom$ a family indexed by $\om\in\SSS^2$ of scalar functions on the space-time $\MM$ satisfying the Eikonal equation for each $\om\in\SSS^2$ and initialized on the initial slice $\Si_0$ as in \cite{param1}. Let $\phi_p$ the scalar function on $\MM$ defined by the following oscillatory integral:
$$\phi_p(t,x)=\int_{\SSS^2} \int_0^\infty  e^{i \la \uom(t,x)}\psi(2^{-p}\la)f(\la\om)\la^2d\la d\om.$$
Then, we have the following $L^4(\MM)$ Strichartz estimates for $\phi_p$:
\bea
\norm{\phi_p}_{L^4(\MM)}&\les& 2^{\frac{p}{2}}\norm{\psi(2^{-p}\la)f}_{L^2(\RRR^3)},\\
\norm{\pr\phi_p}_{L^4(\MM)}&\les& 2^{\frac{3p}{2}}\norm{\psi(2^{-p}\la)f}_{L^2(\RRR^3)},\\
\norm{\pr^2\phi_p}_{L^4(\MM)}&\les& 2^{\frac{5p}{2}}\norm{\psi(2^{-p}\la)f}_{L^2(\RRR^3)}.
\eea
\end{proposition}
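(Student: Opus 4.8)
The plan is to reduce the problem to an $L^\infty$ decay estimate for an oscillatory integral kernel via a $TT^*$ argument, but — because of the limited regularity \eqref{reguintro} of $\uom$, which only controls $\pr_{t,x}\pr_\om u$ and not $\pr_{t,x}\pr_\om^2 u$ — one cannot run the classical stationary phase argument. Instead I would follow the wave-packet-free approach of \cite{Sm}, \cite{Sm-Ta}. First, rescale so that we work at frequency $\la\sim 1$ (replacing $\MM$ by a unit-size piece of spacetime after the parabolic rescaling $x\mapsto 2^p x$); the three estimates for $\phi_p$, $\pr\phi_p$, $\pr^2\phi_p$ then all follow from the single normalized $L^4$ bound since each derivative $\pr_{t,x}$ acting on $e^{i\la\uom}$ produces a factor $\la\,\pr_{t,x}\uom$ with $\pr_{t,x}\uom\in L^\infty$ by \eqref{reguintro}. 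So it suffices to prove $\|\phi_0\|_{L^4(\MM)}\les\|\psi f\|_{L^2}$ for the unit-frequency piece.

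Next, by the standard $L^4=L^2\cdot L^2$ duality, $\|\phi_0\|_{L^4(\MM)}^2=\|\phi_0^2\|_{L^2(\MM)}$, and writing $\phi_0^2$ as a double integral over $\SSS^2\times\SSS^2$ and over $(\la,\la')$ one is led to a bilinear interaction integral. The key observation, exactly as in \cite{Sm-Ta}, is that the phase $\la\uom(t,x)+\la'u^{\om'}(t,x)$ is non-stationary unless the directions $\om,\om'$ are close: the gradients $\pr_{t,x}\uom$ and $\pr_{t,x}u^{\om'}$ are both null covectors normalized by ${\bf g}(L,T)=-1$, hence $\pr_{t,x}(\uom - u^{\om'})$ has size comparable to $|\om-\om'|$ where the two null cones separate. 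One then decomposes $\SSS^2$ into caps $\om$ of angular size $\sim 2^{-q}$ for dyadic $q\ge 0$, localizes $f$ accordingly into pieces $f_q^\nu$, and estimates the diagonal ($\nu=\nu'$) and off-diagonal contributions separately. For the diagonal piece one uses an energy/orthogonality estimate (an almost-orthogonality in the physical variable transverse to the cone, using the weakly-regular-hypersurface Sobolev inequality \eqref{assumptionH2} and the control $\|\bom^{-1}\|_{L^\infty}\les 1$ from \cite{param3}); for the off-diagonal pieces one integrates by parts once in the angular variable $\om$, which is permissible precisely because \eqref{reguintro} grants one derivative $\pr_\om\pr_{t,x}u\in L^\infty$, gaining a factor $2^{-q}$ per integration that sums the geometric series.

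The main obstacle will be Step C3-type regularity bookkeeping: making the single angular integration by parts rigorous with only the borderline regularity \eqref{reguintro}, and in particular controlling the error terms generated when $\pr_\om$ hits $\bom^{-1}$ and $\Nom$ (the null lapse and the unit normal), whose $\om$-derivatives bring in non-tangential derivatives of $\uom$ with exactly the limited regularity discussed after remark \ref{rem:scalarize}. One also has to handle the log-divergence that appears when summing the $q$-decomposition at the endpoint; as remarked at the end of the discussion of Steps C1--C4, this is tackled by an ad hoc argument exploiting the structure of the Einstein equations, and that is where the genuine work (carried out in \cite{bil2}) lies. The scheme above only sketches the reduction; the detailed overlap estimates for the angular caps, which replace the wave-packet decomposition, are the technical heart of the proof.
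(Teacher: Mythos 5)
Your proposal takes a genuinely different route from the paper. You propose squaring the $L^4$ norm, writing $\phi_0^2$ as a bilinear interaction integral over $\SSS^2\times\SSS^2$, decomposing into angular caps of size $2^{-q}$, and handling off-diagonal interactions by integration by parts in $\omega$. The paper instead runs the standard linear $TT^*$ argument: after rescaling, the $L^pL^q$ Strichartz estimate is reduced to a pointwise dispersive bound $|K(t,x,s,y)|\lesssim |t-s|^{-1}$ for the kernel
$$K(t,x,s,y)=\int_{\SSS^2}\int_0^\infty e^{i\la\,2^j[u(t/2^j,x/2^j,\omega)-u(s/2^j,y/2^j,\omega)]}\psi(\la)^2\la^2\,d\la\,d\omega,$$
which involves a \emph{single} angle $\omega$ and a phase $\phi=u(t,x,\omega)-u(s,y,\omega)$. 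The paper then integrates by parts in $\la$ (not $\omega$), leaving $|K|\lesssim\int_{\SSS^2}(1+2^{2j}\phi^2)^{-1}d\omega$, and the entire burden shifts to a geometric lower bound on $|\phi|$ (Lemma \ref{chap7:lemma:key}): splitting $\Sigma_s$ into the ``cone'' set $S$ and regions $A_{int}$, $A_{ext}$, one obtains estimates of the type $|\phi|\gtrsim|s-t||\omega-\omega_0|^2$, from which the dispersive bound follows by a direct angular integration.

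The thing you miss — and this is the conceptual heart of the chapter, highlighted in Remark \ref{chap7:rem:compstatphase} — is \emph{why} this route is forced and why it succeeds. The classical stationary phase argument needs the asymptotic equality $\phi=(s-t)A(\omega-\omega_0)\cdot(\omega-\omega_0)+o(\cdots)$ with an invertible Hessian $A$, which requires $\partial_{t,x}\partial_\omega^2 u\in L^\infty$; the available regularity only gives $\partial_{t,x}\partial_\omega u\in L^\infty$. The paper's observation is that the one-sided \emph{inequality} $|\phi|\gtrsim|s-t||\omega-\omega_0|^2$ needs exactly one angular derivative less than the equality, and can be extracted by the identity $u(\eta(1),\omega)=u(\eta(0),\omega)+\int_0^1{\bf g}(L_{\eta},\eta')d\sigma$ along well-chosen curves plus the closeness-to-flat hypotheses \eqref{chap7:regb}, \eqref{chap7:regpoN}, \eqref{chap7:ad1}. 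Your bilinear route, by contrast, would still need something playing the role of Plancherel (the $L^2$ orthogonality of the angular pieces), and in a variable-coefficient setting that requires a change of variables to Fourier coordinates whose Jacobian again involves $\partial_\omega^2 u$ — the very quantity one lacks. You gesture at ``energy/orthogonality in the physical variable'' for the diagonal, but this would need more justification, and the off-diagonal angular integration by parts on $\phi_0\overline{\phi_0}$ will generate the same second $\omega$-derivative of the phase that the paper is designed to avoid. Finally, note that \cite{Sm} and \cite{Sm-Ta} \emph{do} use wave packets — the paper's approach is inspired by their \emph{overlap estimates} but is explicitly wave-packet-free; you have the attribution reversed.
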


Note that this Strichartz estimate is sharp. 

\subsection{Improvement of the non sharp Strichartz estimates} Here, we derive the improved non sharp Strichartz estimates \eqref{bootstrichimp} and  \eqref{bootstrichBimp}. In view of Lemma \ref{recoverA}, \eqref{bootstrichimp} easily follows from \eqref{bootstrichBimp}, so we focus on the later improved estimate. 

\begin{corollary}\lab{cor:strichartzB}
$B$ satisfies the following Strichartz estimate:
$$\norm{\pr B}_{\lsitt{2}{7}}\lesssim M\ep.$$
\end{corollary}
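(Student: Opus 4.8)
The plan is to obtain the Strichartz bound for $\pr B$ by combining the representation formula of Theorem \ref{lemma:parametrixconstruction} with the sharp frequency-localized $L^4(\MM)$ estimates of Proposition \ref{prop:L4strichartz} and interpolation. First I would recall, from Proposition \ref{prop:waveeqB} and Proposition \ref{lemma:estB}, that $B$ satisfies a wave equation $\square B=F$ with $\norm{\pr F}_{L^2(\MM)}\les M^2\ep^2$ together with data bounds $\norm{\prb B(0)}_{L^2(\Si_0)}+\norm{\pr^2 B(0)}_{L^2(\Si_0)}\les M\ep$, so that the hypotheses of the representation formula hold with the right-hand sides controlled by $M\ep$. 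Applying Theorem \ref{lemma:parametrixconstruction} we write $B=\sum_{j\ge 0}\phi^{(j)}$ where $\phi^{(0)}=\Psi_{om}[B(0),\pr_0 B(0)]+\int_0^t\Psi(t,s)F(s,\cdot)ds$ and $\phi^{(j)}=\int_0^t\Psi(t,s)F^{(j)}(s,\cdot)ds$ for $j\ge 1$, with the geometric bounds
\bea\nn
\norm{\pr\prb\phi^{(j)}}_{\lsit{2}}\les (M\ep)^j M\ep.
\eea
Because the series converges geometrically in $M\ep$, it suffices to estimate each $\phi^{(j)}$ and in particular the building blocks $\Psi_{om}[\phi_0,\phi_1]$ and $\Psi(t,s)F$, i.e. half-wave parametrices of the form appearing in Proposition \ref{prop:L4strichartz}.

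Next I would reduce $\norm{\pr B}_{\lsitt{2}{7}}$ to the corresponding Strichartz-type bound for these oscillatory integrals. Since $L^\infty_tL^7(\Si_t)$ sits between $L^\infty_tL^2(\Si_t)$ and, say, $L^4_tL^q(\Si_t)$-type spaces reachable from $L^4(\MM)$ plus the energy estimates of Lemma \ref{lemma:energyestimatebis}, the plan is: control $\norm{\pr\prb\phi^{(j)}}_{\lsit{2}}$ by the energy estimate (already provided), control $\norm{\pr\phi^{(j)}}_{L^4(\MM)}$-type quantities by decomposing each half-wave parametrix dyadically in $\la$, applying the second estimate of Proposition \ref{prop:L4strichartz} on each shell $\la\sim 2^p$ to get $\norm{\pr\phi_p}_{L^4(\MM)}\les 2^{\frac{3p}{2}}\norm{\psi(2^{-p}\la)f}_{L^2(\RRR^3)}$, and then summing the dyadic pieces using Littlewood–Paley/almost-orthogonality together with the data bound $\norm{\la^2 f}_{L^2(\RRR^3)}\les M\ep$ coming from Theorem \ref{prop:estparam}. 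Interpolating the resulting $L^4(\MM)$ (equivalently $L^4_tL^4(\Si_t)$ after using the uniform control of the foliation) control of $\pr\phi^{(j)}$ against the $L^\infty_tL^2(\Si_t)$ control of $\pr\prb\phi^{(j)}$, and using the Sobolev embedding $\norm{F}_{L^6(\Si_t)}\les\norm{\nabla F}_{L^2(\Si_t)}$ from Lemma \ref{lemma:estimatesit}, yields $\norm{\pr\phi^{(j)}}_{\lsitt{2}{7}}\les (M\ep)^j M\ep$; summing in $j$ gives $\norm{\pr B}_{\lsitt{2}{7}}\les M\ep$, which is exactly Corollary \ref{cor:strichartzB}, and hence \eqref{bootstrichBimp}.

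I expect the main obstacle to be the dyadic summation in frequency for the derivative quantity $\pr\phi^{(j)}$: the factor $2^{3p/2}$ in Proposition \ref{prop:L4strichartz} for $\norm{\pr\phi_p}_{L^4(\MM)}$ loses $3/2$ derivatives relative to $L^2$, so one cannot simply sum shells against $\norm{\la^2 f}_{L^2}$ without care. The resolution I would pursue is to interpolate \emph{before} summing — i.e. bound $\norm{\pr\phi_p}_{\lsitt{2}{7}}$ by interpolating the $L^4(\MM)$ estimate with the $L^\infty_tL^2$ energy estimate $\norm{\pr\prb\phi_p}_{\lsit{2}}\les 2^{2p}\norm{\psi(2^{-p}\la)f}_{L^2}$ and the trivial $\norm{\pr\phi_p}_{\lsit{2}}\les 2^p\norm{\psi(2^{-p}\la)f}_{L^2}$ — so that the effective power of $2^p$ on each shell matches the exponent appearing in the $H^2$-level data norm $\norm{\la^2 f}_{L^2(\RRR^3)}$; square-summing the shells via orthogonality of the $\psi(2^{-p}\la)f$ then closes the estimate with no logarithmic loss. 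The other delicate point, which I would handle exactly as in the proof of the bilinear estimates in section \ref{sec:proofbil1}, is that the $\om$-integration must be performed last (peeling off $\int_{\SSS^2}d\om$ by Minkowski's inequality and using the uniform-in-$\om$ control $\sup_\om\norm{\bom^{-1}}_{L^\infty(\MM)}\les 1$ and the sharp estimates for the phase $\uom$ from \cite{param3}), so that the one-dimensional-in-$\la$ Strichartz estimate of Proposition \ref{prop:L4strichartz} applies for each fixed $\om$.
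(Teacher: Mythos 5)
Your overall architecture is the same as the paper's: apply the representation formula of Theorem~\ref{lemma:parametrixconstruction} to $B$ (using \eqref{reppossible}), sum the geometric series in $j$, reduce to a non-sharp $L^2_tL^7(\Si_t)$ Strichartz bound for a frequency-localized half-wave parametrix, establish that bound from Proposition~\ref{prop:L4strichartz}, and perform the $\om$-integration last by Minkowski. This is precisely the route the paper takes (the paper states it tersely and refers to \cite{KRS} for the details).

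However, there is a genuine numerological gap in the interpolation step as you have described it. You propose to interpolate three inputs on each dyadic shell: $\norm{\pr\phi_p}_{L^4(\MM)}\les 2^{3p/2}$, $\norm{\pr\prb\phi_p}_{\lsit{2}}\les 2^{2p}$ (which via the Sobolev embedding of Lemma~\ref{lemma:estimatesit} gives $\norm{\pr\phi_p}_{L^\infty_tL^6(\Si_t)}\les 2^{2p}$), and the energy bound $\norm{\pr\phi_p}_{\lsit{2}}\les 2^p$. In the $(1/p,1/q)$ diagram, these three points are $(1/4,1/4)$, $(0,1/6)$, and $(0,1/2)$ (the first two cannot be improved spatially even after using $L^r_{[0,1]}\hookrightarrow L^2_{[0,1]}$ for $r\ge 2$). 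Their convex hull lies entirely in the half-plane $1/q\ge 1/6$, i.e.\ you can only reach $L^q(\Si_t)$ with $q\le 6$. The target $q=7$ is strictly beyond this range, so no choice of interpolation weights will produce the required $L^2_tL^7(\Si_t)$ bound from these three inputs alone.

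The fix is to also invoke the \emph{third} estimate of Proposition~\ref{prop:L4strichartz}, namely $\norm{\pr^2\phi_p}_{L^4(\MM)}\les 2^{5p/2}\norm{\psi(2^{-p}\la)f}_{L^2(\RRR^3)}$ (you only cite the second one). Combining the $\pr\phi_p$ and $\pr^2\phi_p$ $L^4(\MM)$ estimates via Gagliardo--Nirenberg on $\Si_t$, i.e.\ $\norm{\pr\phi_p(t,\cdot)}_{L^{12}(\Si_t)}\les\norm{\pr\phi_p(t,\cdot)}_{L^4(\Si_t)}^{1/2}\norm{\pr^2\phi_p(t,\cdot)}_{L^4(\Si_t)}^{1/2}$, followed by Cauchy--Schwarz in $t$, gives $\norm{\pr\phi_p}_{L^4_tL^{12}(\Si_t)}\les 2^{2p}\norm{\psi(2^{-p}\la)f}_{L^2(\RRR^3)}$. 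Interpolating this with the energy bound $\norm{\pr\phi_p}_{\lsit{2}}\les 2^p\norm{\psi(2^{-p}\la)f}_{L^2(\RRR^3)}$ with weight $\theta=1/7$ yields $\norm{\pr\phi_p}_{L^{14/3}_tL^7(\Si_t)}\les 2^{13p/7}\norm{\psi(2^{-p}\la)f}_{L^2(\RRR^3)}$, hence $\norm{\pr\phi_p}_{\lsitt{2}{7}}\les 2^{13p/7}\norm{\psi(2^{-p}\la)f}_{L^2(\RRR^3)}$. Since $13/7<2$, each shell contributes $2^{-p/7}\norm{\psi(2^{-p}\la)\la^2f}_{L^2(\RRR^3)}$ and the dyadic sum closes by Cauchy--Schwarz against $\norm{\la^2f}_{L^2(\RRR^3)}\les M\ep$, with no logarithmic loss. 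With this modification your argument is correct and recovers the Corollary.
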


\begin{proof}
Recall \eqref{reppossible} which allows us  to apply  the representation formula of Theorem \ref{lemma:parametrixconstruction}  to $B$. By a straightforward reduction  the proof then reduces to the following  non-sharp    Strichartz estimate  for a half wave parametrix:
\bea\lab{kyoto1bis}
\left\|   \pr \left( \int_{\SSS^2} \int_0^\infty  e^{i \la \uom(t,x)}  f(\la\om)\la^2d\la d\om\right)\right\|_{\lsitt{2}{7}}\les \norm{\la^2 f}_{L^2(\RRR^3)}.
\eea
Then, the proof of Corollary \ref{cor:strichartzB} follows in particular from the sharp Strichartz  estimate of  Proposition \ref{prop:L4strichartz}. We refer to \cite{KRS} for the details.
\end{proof}

\subsection{Improvement of the bilinear bootstrap assumptions II}\lab{sec:proofbil2} 
In this section, we  sketch  the proofs of  the improved bilinear estimates \eqref{bil3imp} and \eqref{bil4imp} of Proposition \ref{prop:improve2}.  Based on  the decomposition $A=\curl(B)+E$ of Lemma \ref{recoverA} it is easy to show that that the proof of the bilinear estimates \eqref{bil3} and \eqref{bil4} reduces to:
\bea\lab{domo}
\norm{(-\Delta)^{-\frac12}(Q_{ij}(\pr B,\pr B))}_{L^2(\MM)}\lesssim M^2\ep^2.
\eea
Decomposing $B$  according to Theorem  \ref{lemma:parametrixconstruction},
\bea\lab{domo0}
\norm{(-\Delta)^{-\frac12}(Q_{ij}(\pr B,\pr B))}_{L^2(\MM)}\leq \sum_{m, n=0}^{+\infty}\norm{(-\Delta)^{-\frac12}(Q_{ij}(\phi^{(m)},\phi^{(n)}))}_{L^2(\MM)}.
\eea
Thus it suffices to prove for all $m, n\geq 0$:
\bea\lab{domo1}
\norm{(-\Delta)^{-\frac12}(Q_{ij}(\phi^{(m)},\phi^{(n)}))}_{L^2(\MM)}\lesssim (M\ep)^{m+1}(M\ep)^{n+1}.
\eea
The estimates in \eqref{domo1} are analogous for all $m, n$, so it suffices to prove \eqref{domo1} in the case $(m,n)=(0,0)$. In view of the definition of $\phi^{(0)}$, the estimates for $B$ on the initial slice $\Si_0$, estimate \eqref{spacetimeB} for $\pr \square B$, and the definition of $\Psi_{om}$ and $\Psi(t,s)$, \eqref{domo1} reduces to the following bilinear estimate for half wave parametrices:
\bea
\left\|(-\Delta)^{-\frac12}\left(Q_{ij}(\phi_{f_1}, \phi_{f_2}\right)\right\|_{L^2(\MM)}&\les& \norm{\la f_1}_{L^2(\RRR^3)}\norm{\la f_2}_{L^2(\RRR^3)} \lab{domo1bis}
\eea
with,
$$\phi_f =\int_{\SSS^2} \int_0^\infty  e^{i \la \uom(t,x)}f(\la\om)\la^2d\la d\om.$$
We then decompose  $f_1, f_2$  with respect to frequency  and reduce the desired estimate to 
 $L^4(\MM)$ Strichartz estimate localized in frequency of Proposition \ref{prop:L4strichartz},  see details in \cite{KRS}. 
 
 This concludes the proof of Proposition \ref{prop:improve2}.

%%%%%%%%%%%%%%%%%%%%%%%%%%%%%%%%%%%%%%%

\chapter{Control of the error term}\lab{part:paramtime}

%% Control of the error term

\renewcommand{\be}{\begin{equation}}
\renewcommand{\ee}{\end{equation}}

\newcommand{\trt}{\textrm{tr}\theta}
\newcommand{\uo}{u(0,x,\o)}
\newcommand{\dd}{{\bf D}}

\renewcommand{\R}{\mathbb{R}}
\renewcommand{\N}{\mathbb{N}}
\renewcommand{\D}{\mathcal{D}}
\newcommand{\rf}{\mathcal{R}}
\renewcommand{\II}{\mathcal{I}_0}
\newcommand{\no}{\mathcal{N}_1}
\newcommand{\noo}{\mathcal{N}_2}
\newcommand{\dcal}{\mathcal{D}_1}
\newcommand{\dcall}{\mathcal{D}_2}
\newcommand{\dcalll}{{}^*\mathcal{D}_1}
\newcommand{\dcallll}{{}^*\mathcal{D}_2}

\newcommand{\rr}{{\bf R}}
\renewcommand{\gg}{{\bf g}}

\renewcommand{\th}{\theta}
\renewcommand{\ep}{\varepsilon}
\newcommand{\hth}{\widehat{\theta}}
\newcommand{\muu}{\mu_u}
\renewcommand{\p}{P_u}
\newcommand{\gn}{{\bf g}(N,N')}
\newcommand{\gl}{{\bf g}(L,L')}
\newcommand{\h}[1]{H^{#1}(\R^3)}
\renewcommand{\l}[2]{L^{#1}_uL^{#2}(\mathcal{H}_u)}
\newcommand{\lprime}[2]{L^{#1}_{u'}L^{#2}(\mathcal{H}_{u'})}
\renewcommand{\ll}[1]{L^{#1}(\mathcal{M})}
\renewcommand{\le}[1]{L^{#1}(\R^3)}
\newcommand{\lp}[1]{L^{#1}(P_u)}
\renewcommand{\lap}{\mbox{$\Delta \mkern-13mu /$\,}}
\newcommand{\lapa}{a^{-1}\mbox{$\Delta \mkern-13mu /$\,($a$)}}
\newcommand{\divb}{\mbox{$\textrm{div} \mkern-13mu /$\,}}
\newcommand{\curlb}{\mbox{$\textrm{curl} \mkern-13mu /$\,}}
\newcommand{\ddb}{{\bf \nab} \mkern-13mu /\,}
\newcommand{\ana}{\mbox{$a^{-1}\nabla \mkern-13mu /\,a$\,}}
\newcommand{\nabn}{\nabla_N}
\renewcommand{\lg}{\log(a)}
\renewcommand{\a}{\alpha}
\renewcommand{\b}{\beta}
\renewcommand{\de}{\delta}

%%%%%%%%%%%%%%%%%

\newcommand{\kep}{\epsilon}
\renewcommand{\Si}{\Sigma}
\newcommand{\Sit}{\Sigma_t}
\renewcommand{\pr}{\partial}
\newcommand{\dmt}{d\mu_{t,u}}
\newcommand{\dmo}{d\mu_{0,u}}
\newcommand{\ptu}{P_{t,u}}
\newcommand{\pou}{P_{0,u}}
\newcommand{\gax}{\ga_{x'}}
\newcommand{\half}{\frac{1}{2}}

\renewcommand{\o}{\omega}
\renewcommand{\S}{\mathbb{S}^2}
\newcommand{\po}{\partial_{\omega}}
\newcommand{\xo}{x\cdot\omega}

\renewcommand{\H}{\mathcal{H}}
\newcommand{\li}[2]{L^{#1}_uL^{#2}(\mathcal{H}_u)}
\newcommand{\lh}[1]{L^{#1}(\mathcal{H}_u)}
\newcommand{\tx}[2]{L^{#1}_tL^{#2}_{x'}}
\newcommand{\xt}[2]{L^{#1}_{x'}L^{#2}_t}
\newcommand{\lpt}[1]{L^{#1}(P_{t,u})}
\newcommand{\lpo}[1]{L^{#1}(P_{0,u})}
\renewcommand{\lsit}[2]{L^{#1}_tL^{#2}(\Sit)}
\newcommand{\ux}[2]{L^{#1}_uL^{#2}_{x'}}

\renewcommand{\a}{\alpha}
\renewcommand{\b}{\beta}
\newcommand{\ab}{\underline{\alpha}}
\renewcommand{\bb}{\underline{\beta}}
\renewcommand{\ga}{\gamma}
\renewcommand{\d}{\delta}
\renewcommand{\r}{\rho}
\newcommand{\s}{\sigma}
\newcommand{\kb}{\overline{k}}
\newcommand{\etah}{\widehat{\eta}}
\newcommand{\kepb}{\overline{\epsilon}}
\newcommand{\db}{\overline{\delta}}

\newcommand{\z}{\zeta}
\newcommand{\zb}{\underline{\zeta}}
\newcommand{\trc}{\textrm{tr}\chi}
\newcommand{\hch}{\widehat{\chi}}
\newcommand{\chb}{\underline{\chi}}
\renewcommand{\trchb}{\textrm{tr}{\underline{\chi}}}
\newcommand{\hchb}{\widehat{\underline{\chi}}}
\renewcommand{\xib}{\underline{\xi}}
\newcommand{\het}{\widehat{\eta}}

In this chapter, we consider the Fourier integral operator $E$ given by \eqref{error-param} in which corresponds to the error term of a plane wave type parametrix. Recall that $E$ is given by:
$$Ef(t,x)=\int_{\S}\int_{0}^{+\infty}e^{i\lambda u(t,x,\o)}\square_{\bf g}u(t,x,\o)f(\lambda\o)\lambda^3 d\lambda d\o,\,(t,x)\in\mathcal{M},$$
where $u(.,.,\o)$ is a solution to the eikonal equation ${\bf g}^{\alpha\beta}\partial_\alpha u\partial_\beta u=0$ on $\mathcal{M}$ such that $u(0,x,\o)\sim \xo$ when $|x|\rightarrow +\infty$ on $\Si_0$ (see section \ref{ch3:sec:defu}). 
The goal of this chapter is to outline the main ideas allowing us to obtain the control for the error term $E$ in \cite{param4}.

\section{Geometric set-up and main results}

\subsection{Geometry of the foliation of $\mathcal{M}$ by $u$}\lab{ch3:sec:defu}

Recall that $u$ is a solution to the eikonal equation $\gg^{\alpha\beta}\partial_\alpha u\partial_\beta u=0$ on $\mathcal{M}$ depending on a extra parameter $\o\in \S$.  The level hypersufaces $u(t,x,\o)=u$  of the optical function $u$ are denoted by  $\H_u$. Let $L'$ denote the space-time gradient of $u$, i.e.:
\be\lab{ch3:def:L'0}
L'=-\gg^{\a\b}\pr_\b u \pr_\a.
\ee
Using the fact that $u$ satisfies the eikonal equation, we obtain:
\be\lab{ch3:def:L'1}
\dd_{L'}L'=0,
\ee
which implies that $L'$ is the geodesic null generator of $\H_u$.

We foliate the space-time $\mathcal{M}$ by space-like hypersurfaces $\Sit$ defined as level hypersurfaces of a time function $t$ and we denote by  $T$ the unit, future oriented, normal to $\Si_t$. We have: 
$$T(u)=\pm |\nab u|$$
where $|\nab u|^2=\sum_{i=1}^3|e_i(u)|^2$ relative to an orthonormal frame $e_i$ on $\Si_t$. Since the sign of $T(u)$ is irrelevant, we choose by convention:
\be\lab{ch3:it1'}
T(u)=|\nab u|.
\end{equation}
We denote by $P_{t,u}$  the surfaces of intersection
between $\Si_t$ and  $\H_u$. 
\begin{definition}[\textit{Canonical null pair}]
 \be\lab{ch3:it2}
L=bL'=T+N, \qquad \lb=2T-L=T-N
\end{equation}
where $L'$ is the space-time gradient of $u$ \eqref{ch3:def:L'0}, $b$  is  the  \textit{lapse of the null foliation} (or shortly null lapse)
\be\lab{ch3:it3}
b^{-1}=-<L', T>=T(u),
\end{equation} 
and $N$ is a unit normal, along $\Si_t$, to the surfaces $P_{t,u}$. Since $u$ satisfies the 
eikonal equation $\gg^{\alpha\beta}\partial_\alpha u\partial_\beta u=0$ on $\mathcal{M}$, 
this yields $L'(u)=0$ and thus $L(u)=0$. In view of the definition of $L$ and \eqref{ch3:it1'}, 
we obtain:
\be\lab{ch3:it3bis}
N=-\frac{\nabla u}{|\nabla u|}.
\end{equation} 
\label{ch3:def:nulllapse}
\end{definition}

\begin{definition} A  null frame   $e_1,e_2,e_3,e_4$ at a point $p\in P_{t,u}$ 
  consists,
in addition to the null pair     $e_3=\lb,
e_4=L$, of {\sl  arbitrary  orthonormal}  vectors  $e_1,e_2$ tangent
to $P_{t,u}$. 
%All the   estimates in this paper  are in fact local and
% independent of the
% choice of a particular frame. We do not need to worry
% that these  frames cannot be  globally defined.
\end{definition}
\begin{definition}[\textit{Second fundamental form}]

 Let  $e_1,e_2,e_3,e_4$ be a null frame on
$P_{t,u}$ as above.   The second fundamental form on  $P_{t,u}$ associated to our canonical null pair is given by 
$$\chi_{AB}=<\dd_A e_4,e_B>.$$

We decompose $\chi$ into its  trace and traceless component.
$$\trc = \gg^{AB}\chi_{AB},\,\,\hch_{AB}=\chi_{AB}-\half \trc \gg_{AB}.$$
\end{definition}
Recall that $\trc$ satisfies a transport equation called the Raychaudhuri equation:
\be\lab{ch3:raychaudhuri}
L(\trc) + \half (\trc)^2 = - |\hch|^2 +\cdots
\ee
(see precise equation in \eqref{ch4:D4trchi}).
%We also recall the transport equation satisfied by the null lapse $b$:
%\be\lab{ch3:D4a}
%L(b) = -\db b.
%\ee

We conclude this section with the identification of the symbol $\square_{\bf g}u$ of the error term. We have (see for example \cite{param3} for a proof):
\begin{equation}\label{ch3:symbolE}
\square_{\bf g}u=b^{-1}\trc.
\end{equation} 
Thus, we may rewrite the error term $E$ as:
\be\lab{ch3:err1} 
Ef(t,x)=\int_{\S}\int_{0}^{+\infty}e^{i\lambda u(t,x,\o)}b^{-1}(t,x,\o)\trc(t,x,\o)f(\lambda\o)\lambda^3 d\lambda d\o. 
\ee

\subsection{Some norms}

We define some norms on $\HH$. For any $1\leq p\leq +\infty$ and for any tensor $F$ on $\HH_u$, we have:
$$\norm{F}_{\lh{p}}=\left(\int_0^1dt\int_{P_{t,u}}|F|^p\dmt\right)^{\frac{1}{p}},$$
where $\dmt$ denotes the area element of $P_{t,u}$.

Let $x'$ a coordinate system on $\pou$. By transporting this coordinate system along the null geodesics generated by $L$, we obtain a coordinate system $(t,x')$ of $\H$. We define the following norms:
$$\norm{F}_{\xt{\infty}{2}}=\sup_{x'\in P_{0,u}}\left(\int_0^1 |F(t,x')|^2dt\right)^{\half},$$
$$\norm{F}_{\xt{2}{\infty}}=\normm{\sup_{0\leq t\leq 1}|F(t,x'))|}_{L^2(\pou)}.$$

\subsection{Estimates for the space-time foliation}\lab{ch3:sec:estneeded}

In this section, we collect the estimates that are needed to follow the discussion of the control of the error term contained in this chapter. An outline of the proof of these estimates will be given in Chapter \ref{part:spacetimeu} (see \cite{param3} for the complete proof).

We start with the regularity in $(t,x)$ of the lapse $b$ and the second fundamental for $\chi$. We need:
\be\lab{ch3:estneeded1}
\norm{\trc}_{L^\infty(\MM)}+\norm{\nabla\trc}_{\xt{\infty}{2}}+\norm{b-1}_{L^\infty(\MM)}+\norm{\nabla b}_{\l{\infty}{2}}+\norm{\hch}_{\xt{\infty}{2}}+\norm{\nabb\hch}_{\li{\infty}{2}}\les\ep.
\ee

\begin{remark}
In this section, all estimates hold for any $\o\in\S$ with the constant in the right-hand side being independent of $\o$. Thus, one may take the supremum in $\o$ everywhere. To ease the notations, we do not explicitly write down this supremum. 
\end{remark}

We also need an estimate for two derivatives of $\trc$ with respect to $\nabn$
\be\lab{ch3:estneeded2}
\normm{\nabn P_j(\nabn\trc)}_{L^2(\H_u)}\les \ep 2^j+ 2^{\frac{j}{2}}\mu(u),
\ee
where $\mu$ in a function satisfying:
$$\norm{\mu}_{L^2(\R)}\les\ep.$$
Next, we consider the regularity with respect to $\o$. We have:
\be\lab{ch3:estricciomega}
\norm{\po b}_{L^\infty(\MM)}\lesssim \ep,
\ee
\begin{equation}\label{ch3:threomega1ter}
|N(t,x,\o)-N(t,x,\o')|\simeq |\o-\o'|,\,\forall (t,x)\in\MM, \o,\o'\in\S,
\end{equation}
and 
\be\lab{ch3:estNomega}
\norm{\po N}_{L^\infty(\MM)}\lesssim 1.
\ee 
Furthermore, we have the following decomposition for $\hch$:
\be\lab{ch3:dechch}
\hch=\chi_1+\chi_2,
\ee
where $\chi_1$ and $\chi_2$ are two symmetric traceless $\ptu$-tangent 2-tensors satisfying in particular, for any $2\leq p<+\infty$:
\be\lab{ch3:dechch2}
\norm{\chi_1}_{\tx{p}{\infty}}+\norm{\po\chi_2}_{\lh{6_-}}\lesssim \ep.
\ee

\begin{remark}
The point of decomposition \eqref{ch3:dechch} is that $\chi_1$ has a better regularity with respect to $(t, x)$ than $\hch$, while $\chi_2$ has a better regularity with respect to $\o$ than $\hch$ (see explanation in section \ref{ch4:sec:firstomegader}).
\end{remark}

Finally, we need to compare quantities evaluated at two angles $\o$ and $\nu$ in $\S$ satisfying $|\o-\nu|\les 2^{-\frac{j}{2}}$. We have the following decomposition for $N(t,x,\o)-N(t,x,\nu)$:
\be\lab{ch3:decNom}
2^{\frac{j}{2}}(N(t,x,\o)-N(t,x,\nu))=F^j_1(t,x,\nu)+F^j_2(t,x,\o,\nu)
\ee
where the tensor $F^j_1$ does not depend on $\o$ and satisfies:
$$\norm{F^j_1}_{L^\infty}\les 1,$$
and the tensor $F^j_2$ satisfies:
$$\norm{F^j_2}_{L^\infty_u\lh{2}}\les 2^{-\frac{j}{2}}.$$
Here $L^\infty_u\lh{2}$ is defined with respect to $u=u(t,x,\o)$. 
We also have the following decomposition for $\trc$:
\be\lab{ch3:dectrcom}
\trc(t,x,\o)=f^j_1(t,x,\nu)+f^j_2(t,x,\o, \nu)
\ee
where the scalar $f^j_1$ does not depend on $\o$ and satisfies:
$$\norm{f^j_1}_{L^\infty}\les \ep,$$
and where the scalar $f^j_2$ satisfies:
$$\norm{f^j_2}_{L^\infty_u\lh{2}}\les \ep 2^{-\frac{j}{2}}.$$

\subsection{Main result}

The main result of this chapter is the following. 
\begin{theorem}\label{ch3:part3:th1}
Let $u$ be a function on $\MM\times\S$ satisfying suitable assumptions (we refer to \cite{param4} for the complete set of assumptions, and to section \ref{ch3:sec:estneeded} for some typical assumptions). Let $E$ the Fourier integral operator with phase $u(t,x,\o)$ and symbol $\square_{\bf g}u$:
\be\lab{ch3:err2} 
Ef(t,x)=\int_{\S}\int_{0}^{+\infty}e^{i\lambda u(t,x,\o)}b^{-1}(t,x,\o)\trc(t,x,\o)f(\lambda\o)\lambda^3 d\lambda d\o. 
\ee
Then, $E$ satisfies the estimate:
\begin{equation}\label{ch3:l2}
\norm{Ef}_{L^2(\mathcal{M})}\lesssim \ep\norm{\lambda f}_{L^2(\mathbb{R}^3)}.
\end{equation}
\end{theorem}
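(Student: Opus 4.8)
The plan is to establish \eqref{ch3:l2} by a $TT^*$ argument, but one implemented so as to exploit the better regularity of $u$ in the directions tangent to the level hypersurfaces $\H_u$ and the (limited, one-derivative) regularity in $\o$. First I would write $\|Ef\|_{L^2(\MM)}^2 = \langle Ef, Ef\rangle = \langle f, E^*Ef\rangle$ and reduce matters to bounding the kernel of $E^*E$, which after integrating out the spacetime variable takes the schematic form
$$
K(\la\o,\mu\nu)=\la^3\mu^3\int_{\MM}e^{i(\mu u(t,x,\nu)-\la u(t,x,\o))}\,b^{-1}\trc(t,x,\o)\,b^{-1}\trc(t,x,\nu)\,d\MM.
$$
The first reduction is a dyadic (first and second Littlewood--Paley) decomposition in the radial frequency $\la\sim 2^p$ and in the angle: using \eqref{ch3:threomega1ter}, which says $N$ is bi-Lipschitz in $\o$, one gets that the phase is genuinely oscillatory in $\o-\nu$ once $\la|\o-\nu|^2\gtrsim 1$, so on each angular cap of size $2^{-p/2}$ one exploits near-orthogonality in $\o$; off-diagonal caps are handled by non-stationary phase in $\o$, integrating by parts once (this is all the regularity \eqref{ch3:estNomega}, \eqref{ch3:estricciomega} allow) and using the decompositions \eqref{ch3:decNom}, \eqref{ch3:dectrcom} to control the resulting terms. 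Combined with an $L^\infty$ bound on the symbol coming from $\|\trc\|_{L^\infty(\MM)}+\|b-1\|_{L^\infty(\MM)}\les\ep$ in \eqref{ch3:estneeded1}, this gives the $\ep$ gain.

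Second, for the radial/diagonal piece on a fixed cap, instead of the classical stationary phase in $x$ (which would need $\pr_{t,x}\pr_\o^2 u\in L^\infty$, unavailable), I would integrate by parts along the null generator $L=T+N$, exploiting $L(u)=0$: the phase $u(t,x,\o)$ is \emph{constant} along $L$, so one foliates $\MM$ by the hypersurfaces $\H_u$ and performs the $L^2(\MM)$ estimate as an iterated integral $\int_u(\,\cdot\,)$ over the $\H_u$'s, reducing to a one-dimensional oscillatory integral in the radial parameter along $\H_u$. Here the key structural input is the Raychaudhuri equation \eqref{ch3:raychaudhuri} together with the identification $\square_\g u=b^{-1}\trc$ from \eqref{ch3:symbolE}, plus the spacetime regularity $\|\nabla\trc\|_{\xt{\infty}{2}}$, $\|\nabla b\|_{\l{\infty}{2}}$ and the second-derivative estimate \eqref{ch3:estneeded2}; these let one carry out just enough integrations by parts (tangentially along $\H_u$, using $\nabb$, where the regularity is better) to produce the decay in $|\la-\mu|$ and in the cap separation needed to sum the dyadic pieces. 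The geometric Littlewood--Paley projections $P_j$ on $\H_u$ (as in \eqref{ch3:estneeded2}) are the right tool to localize in this direction.

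Third, assembling the pieces: after the two decompositions and the two integration-by-parts arguments, one obtains $|K(\la\o,\mu\nu)|$ bounded by $\ep^2$ times a kernel that is summable in the dyadic and angular parameters and satisfies a Schur-type bound; a Schur test (or Cotlar--Stein for the almost-orthogonal decomposition) then yields $\|E^*E\|_{L^2\to L^2}\les\ep^2$ on the weighted space, i.e. $\|Ef\|_{L^2(\MM)}\les\ep\|\la f\|_{L^2(\RRR^3)}$, where the weight $\la$ exactly accounts for the $\la^3$ in \eqref{ch3:err2} versus the $\la^2$ in the definition of the half-wave parametrix.

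\textbf{Main obstacle.} The hard part is the limited regularity: one has only $\pr_{t,x}\pr_\o u\in L^\infty$ (not two $\o$-derivatives), so the standard stationary phase machinery is unavailable, and moreover the $(t,x)$-regularity of $u$ is only borderline in the transversal ($\Lb$) direction. Everything must therefore be rebalanced onto tangential derivatives along $\H_u$, where the estimates \eqref{ch3:estneeded1}, \eqref{ch3:estneeded2} and the Hodge/transport structure tied to the eikonal equation give the extra $1/2$ derivative — and this borrowing has essentially no slack (as flagged already for the analogous estimate \eqref{csd8}), so log-divergences appear and must be absorbed by ad hoc use of the Einstein structure. Controlling the error terms generated by commuting $P_j$ past $L$ and $\nabb$, uniformly in $\o$, is where the bulk of the genuine work lies; the full treatment is carried out in \cite{param4}.
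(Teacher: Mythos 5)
Your scaffolding matches the paper: first and second dyadic decompositions in $\lambda$ and $\omega$, near-orthogonality from integration by parts combined with Schur/Cotlar--Stein, the use of geometric Littlewood--Paley projections $P_j$ on the $\ptu$ to localize transversally, and the awareness that everything must be funnelled into tangential ($\nabb$) derivatives and that log-losses appear. But there is a genuine gap in the heart of the argument, namely the control of the diagonal term $E^\nu_jf$ on a single cap.

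You propose to handle the diagonal piece by foliating $\MM$ by the $\HH_u$, integrating by parts along $L$ (which kills the phase), and then reducing to a one-dimensional oscillatory integral in the radial parameter, i.e. a kernel/$TT^*$-type bound. This is exactly the method the paper uses for the parametrix at \emph{initial time} on $\Sigma_0$ (Proposition \ref{ch5:di12}), and the paper explicitly flags why it cannot work for the error term on $\MM$ (Remark \ref{ch5:rem:seppi}): the spacetime regularity of the foliation is only of the type $L^\infty_u L^2(\HH_u)$, which forces you to estimate the $\lambda$-integral in $L^2_u$ via Plancherel, so you never ``see'' the physical-space localization needed to close a kernel estimate. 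The paper's actual mechanism for the diagonal term is different in kind: after freezing the $\omega$-dependence of the symbol $b^{-1}\trc$ via \eqref{ch3:dectrcom}, one observes that $L_\nu$ applied to $\int_{\S}F_j(u)\eta^\nu_j(\o)\,d\o$ produces the factor $\gg(L(\cdot,\o),L(\cdot,\nu))\sim|\o-\nu|^2\sim 2^{-j}$, which absorbs the dangerous $2^j$, and that the residual quantity $\int_{\S}(2^{j/2}(\o-\nu))^2 F_j(u)\eta^\nu_j(\o)\,d\o$ \emph{approximately solves the wave equation} because $\square_\gg u=b^{-1}\trc$. The bound then comes from the energy estimate \eqref{nrj1} for $\square_\gg$, together with a trace estimate for the deformation-tensor component $\pi_{N_\nu N_\nu}$. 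This is a PDE/energy argument, not an oscillatory-integral argument, and it is where the wave-equation structure of $\square_\gg u$ is really used, beyond its appearance as a symbol. You correctly cite $\square_\gg u=b^{-1}\trc$ but do not make this leap.

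A secondary, smaller discrepancy: the removal of the log-loss in the angular almost-orthogonality step is not generic ``ad hoc use of the structure.'' It has a precise mechanism (section \ref{sec:logremoval}): decompose $\trc$ by $P_l$, then after each integration by parts in $L$ expand $L$ in the primed frame $L',N',e'_A$ and arrange so that the only log-divergent contribution (the one with two $\nabb$-derivatives) always lands on the \emph{lower}-frequency factor $P_m\trc'$. Your proposal would stall at \eqref{ch3:bisoa5} without this specific bookkeeping. The ingredients you list — \eqref{ch3:estneeded1}, \eqref{ch3:estneeded2}, \eqref{ch3:decNom}, \eqref{ch3:dectrcom}, the $P_j$ on $\ptu$ — are the right ones, but the way they must be combined for the diagonal term is the energy-estimate route, not the kernel route.
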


\subsection{Geometric Littlewood-Paley projections on the 2-surfaces $P_{t,u}$}\label{sec:LP}

Throughout the paper, we will use the geometric Littlewood-Paley projections on 2-surfaces ($\ptu$ in our case) constructed in \cite{Kl-R6}. In that paper, the following properties are proved
\begin{theorem}\label{ch3:thm:LP}
 The LP-projections $P_j$ verify the following
 properties:

i)\quad {\sl $L^p$-boundedness} \quad For any $1\leq
p\leq \infty$, and any interval $I\subset \Bbb Z$,
\be\lab{ch3:eq:pdf1}
\|P_IF\|_{\lpt{p}}\lesssim \|F\|_{\lpt{p}}
\end{equation}

ii) \quad  {\sl Bessel inequality} 
$$\sum_j\|P_j F\|_{\lpt{2}}^2\lesssim \|F\|_{\lpt{2}}^2$$

iii)\quad {\sl Finite band property}\quad For any $1\leq p\leq \infty$.
\begin{equation}
\begin{array}{lll}
\|\lap P_j F\|_{\lpt{p}}&\lesssim & 2^{2j} \|F\|_{\lpt{p}}\\
\|P_jF\|_{\lpt{p}} &\lesssim & 2^{-2j} \|\lap F \|_{\lpt{p}}.
\end{array}
\end{equation}

In addition, the $L^2$ estimates
\begin{equation}
\begin{array}{lll}
\|\nabb P_j F\|_{\lpt{2}}&\lesssim & 2^{j} \|F\|_{\lpt{2}}\\
\|P_jF\|_{\lpt{2}} &\lesssim & 2^{-j} \|\nabb F  \|_{\lpt{2}}
\end{array}
\end{equation}
hold together with the dual estimate
$$\| P_j \nabb F\|_{\lpt{2}}\lesssim 2^j \|F\|_{\lpt{2}}$$

iv) \quad{\sl Weak Bernstein inequality}\quad For any $2\leq p<\infty$
\begin{align*}
&\|P_j F\|_{\lpt{p}}\lesssim (2^{(1-\frac 2p)j}+1) \|F\|_{\lpt{2}},\\
&\|P_{<0} F\|_{\lpt{p}}\lesssim \|F\|_{\lpt{2}}
\end{align*}
together with the dual estimates 
\begin{align*}
&\|P_j F\|_{\lpt{2}}\lesssim (2^{(1-\frac 2p)j}+1) \|F\|_{\lpt{p'}},\\
&\|P_{<0} F\|_{\lpt{2}}\lesssim \|F\|_{\lpt{p'}}
\end{align*}
\end{theorem}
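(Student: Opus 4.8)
This is Theorem \ref{ch3:thm:LP}, established in \cite{Kl-R6}; I would reproduce its proof via the heat-flow construction of the projections $P_j$ on the $2$-surfaces $\ptu$. Let $\lapp$ be the induced Laplace--Beltrami operator on $\ptu$ — a nonpositive self-adjoint operator on $L^2(\ptu)$ — and let $U(\tau)=e^{\tau\lapp}$ be the associated heat semigroup, so $\pr_\tau U=\lapp U$, $U(0)=\mathrm{Id}$, and $U(\infty)$ is the projection onto constants. Fix a smooth $m$ supported in $\{1/4\le s\le 1\}$ with $\sum_{j\in\mathbb Z}m(4^{j}s)\equiv 1$ for $s>0$, and set
$$P_jF=-\int_0^\infty m(4^{j}\tau)\,\pr_\tau U(\tau)F\,d\tau,\qquad P_IF=\sum_{j\in I}P_jF,\qquad P_{<0}=\mathrm{Id}-\sum_{j\ge0}P_j.$$
Summing over all $j$ and using $U(\infty)-U(0)=-\mathrm{Id}$ on mean-zero functions gives $\sum_jP_j=\mathrm{Id}$ off the zero mode. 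Spectrally $P_j=m_j(-\lapp)$ with $m_j$ concentrated near $\lambda\sim 4^{j}$ and rapidly decaying away from it, which is the structural fact underlying all four properties.

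For (i), integrate by parts once in $\tau$: $P_jF=4^{j}\int_0^\infty m'(4^{j}\tau)\,U(\tau)F\,d\tau$, with no boundary terms since $m$ vanishes near $0$ and $\infty$. Thus $P_j$ is an average of $U(\tau)$ against a kernel of $L^1$-mass $\int_0^\infty|m'(s)|\,ds$, independent of $j$; since $U(\tau)$ is a positivity-preserving contraction on $L^p(\ptu)$ for $1\le p\le\infty$ by the maximum principle on the compact surface $\ptu$, we get $\|P_jF\|_{\lpt{p}}\lesssim\|F\|_{\lpt{p}}$ uniformly, and the same holds for $P_IF$ with $m$ replaced by $\sum_{j\in I}m(4^j\cdot)$, whose derivative still has uniformly bounded $L^1$-mass. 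For (ii), by the annular concentration of $m_j$ one has $\sum_j|m_j(\lambda)|^2\le C$ uniformly in $\lambda$, so the spectral theorem gives $\sum_j\|P_jF\|_{\lpt{2}}^2=\int\big(\sum_j|m_j(\lambda)|^2\big)\,d\langle E_\lambda F,F\rangle\lesssim\|F\|_{\lpt{2}}^2$.

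For (iii), integrating by parts twice yields $\lapp P_jF=-\int_0^\infty m(4^j\tau)\pr_\tau^2U(\tau)F\,d\tau=-4^{2j}\int_0^\infty m''(4^j\tau)U(\tau)F\,d\tau$, a kernel of mass $4^{j}\int|m''|$, hence $\|\lapp P_jF\|_{\lpt{p}}\lesssim 4^{j}\|F\|_{\lpt{p}}=2^{2j}\|F\|_{\lpt{p}}$; the reverse bound follows by writing $P_j=4^{-j}Q_j\lapp$ with $Q_j=(m_j(\lambda)/(4^{-j}\lambda))(-\lapp)$, a legitimate multiplier at scale $4^j$ since $m$ is supported away from $0$, and $Q_j$ is $L^p$-bounded uniformly by the same heat-flow argument. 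The $L^2$ gradient estimates come from $\|\nabb g\|_{\lpt{2}}=\|(-\lapp)^{1/2}g\|_{\lpt{2}}$ together with the spectral bounds $\lambda|m_j(\lambda)|^2\lesssim 2^{2j}$ and $|m_j(\lambda)|^2\lesssim 2^{-2j}\lambda$; the dual bound $\|P_j\nabb F\|_{\lpt{2}}\lesssim 2^j\|F\|_{\lpt{2}}$ follows by pairing with a test function and using $\|\divb P_jg\|_{\lpt{2}}\lesssim 2^j\|g\|_{\lpt{2}}$, itself a consequence of the finite band property.

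Property (iv) is the one step where the intrinsic geometry of $\ptu$ genuinely enters. Here I would use the on-diagonal heat kernel bound $p_\tau(x,x)\lesssim \tau^{-1}$ on the $2$-surface $\ptu$ — equivalently $\|U(\tau)\|_{L^1(\ptu)\to L^\infty(\ptu)}\lesssim \tau^{-1}$ — which by interpolation with the $L^p$-contractivity gives $\|U(\tau)\|_{L^2(\ptu)\to L^p(\ptu)}\lesssim\tau^{-(\frac12-\frac1p)}$ for $2\le p\le\infty$. Applying this inside the integration-by-parts representation of $P_j$, whose kernel $4^{j}m'(4^{j}\cdot)$ is concentrated at $\tau\sim 4^{-j}$ with unit mass, produces $\|P_jF\|_{\lpt{p}}\lesssim 4^{j(\frac12-\frac1p)}\|F\|_{\lpt{2}}=2^{(1-\frac2p)j}\|F\|_{\lpt{2}}$; for $j\le0$ the uniform area lower bound on $\ptu$ caps the heat estimate and yields instead $\|P_jF\|_{\lpt{p}}\lesssim\|F\|_{\lpt{2}}$, accounting for the $+1$ and for the $P_{<0}$ bound. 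The dual estimates are immediate by transposition. The heat kernel bound itself is derived in \cite{Kl-R6} from an isoperimetric/Sobolev inequality on $\ptu$ requiring only a lower bound on the volume radius of $\ptu$ and $L^2$-type (not pointwise) control of $\trc$ and $\hch$ — precisely the quantities controlled here via \eqref{bootR}, \eqref{bootvolumeradius} and \eqref{ch3:estneeded1}. I expect the main obstacle to be establishing this Sobolev inequality uniformly over the entire family of surfaces $\ptu$, with constants independent of $t$, $u$ and $\om$; this is exactly where the robustness of the heat-flow construction over the classical Fourier-based theory is essential.
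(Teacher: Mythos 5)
The paper does not prove this theorem; it is quoted verbatim from \cite{Kl-R6} (see the sentence preceding the statement: ``In that paper, the following properties are proved''). Your heat-flow reconstruction matches the approach of that reference --- defining $P_j$ by averaging the heat semigroup $U(\tau)$ against a bump at scale $\tau\sim 4^{-j}$, deducing (i)--(iii) from $L^p$-contractivity, spectral calculus and integration by parts in $\tau$, and obtaining (iv) from the on-diagonal heat kernel bound $p_\tau(x,x)\lesssim\tau^{-1}$ on $\ptu$, itself a consequence of a uniform Sobolev/isoperimetric inequality on the surfaces --- so there is nothing to contrast here beyond noting that the only genuinely geometric input is indeed the step you single out.
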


\section{Control of the error term}

\subsection{The basic computation}

We start the proof of Theorem \ref{ch3:part3:th1} with the following instructive computation:
\begin{equation}\label{ch3:bisb5}
\begin{array}{ll}
\ds\norm{Ef}_{\ll{2}}\leq &\ds\int_{\S}\normm{b(t,x,\o)^{-1}\trc(t,x,\o)\int_{0}^{+\infty}e^{i\lambda u}f(\lambda\o)\lambda^2 d\lambda}_{\ll{2}}d\o\\
&\ds\leq\int_{\S}\norm{b(t,x,\o)^{-1}\trc(t,x,\o)}_{\li{\infty}{2}}\normm{\int_{0}^{+\infty}e^{i\lambda u}f(\lambda\o)\lambda^2 d\lambda}_{L^2_{u}}d\o\\
&\ds\leq \ep\norm{\lambda^2f}_{L^2(\R^3)},
\end{array}
\end{equation}
where we have used Plancherel with respect to $\la$, Cauchy-Schwarz with respect to $\o$, the estimates \eqref{ch3:estneeded1} for $b$ and $\trc$. \eqref{ch3:bisb5} misses the conclusion \eqref{ch3:l2} of Theorem \ref{ch3:part3:th1} by a power of $\lambda$. Now, assume for a moment that we may replace a power of $\lambda$ by a derivative on $b(t,x,\o)^{-1}\trc(t,x,\o)$. Then, the same computation yields:
\begin{equation}\label{ch3:bisb6}
\begin{array}{ll}
&\ds\normm{\int_{\S}\int_{0}^{+\infty}\nabla(b(t,x,\o)^{-1}\trc(t,x,\o)) e^{i\lambda u}f(\lambda\o)\lambda d\lambda d\o}_{\ll{2}}\\
\ds\leq &\ds \int_{\S}\norm{\nabla(b(t,x,\o)^{-1}\trc(t,x,\o))}_{\li{\infty}{2}}\normm{\int_{0}^{+\infty}e^{i\lambda u}f(\lambda\o)\lambda^2 d\lambda}_{L^2_{ u}}d\o\\
\ds\leq &\ds \ep\norm{\la f}_{L^2(\R^3)},
\end{array}
\end{equation}
where we used the fact that 
\be\lab{ch3:etoile}
\norm{\nabla(b(t,x,\o)^{-1}\trc(t,x,\o))}_{\li{\infty}{2}}\les\ep
\ee 
in view of \eqref{ch3:estneeded1}. Note that the estimate provided by \eqref{ch3:bisb6} is consistent with the control of the error term \eqref{ch3:l2}. This suggests a strategy which consists in making integrations by parts to trade powers of $\lambda$ against derivatives of the symbol $b(t,x,\o)^{-1}\trc(t,x,\o)$. 

\subsection{Structure of the proof of Theorem \ref{ch3:part3:th1}}\lab{ch3:sec:structureproof}

The proof of Theorem \ref{ch3:part3:th1} proceeds in three steps. We first localize in frequencies of size $\la\sim 2^j$. We then localize the angle $\o$ in patches on the sphere $\S$ of diameter $2^{-j/2}$. Finally, we estimate the diagonal terms.

\subsubsection{Step 1: decomposition in frequency}
 
For the first step, we introduce $\varphi$  and $\psi$ two smooth compactly supported functions on $\R$ such that: 
\begin{equation}\label{ch3:bisb7}
\varphi(\lambda)+\sum_{j\geq 0}\psi(2^{-j}\lambda)=1\textrm{ for all }\lambda\in\R.
\end{equation}
We use \eqref{ch3:bisb7} to decompose $Ef$ as follows:
\begin{equation}\label{ch3:bisb8}
Ef(t,x)=\sum_{j\geq -1}E_jf(t,x),
\end{equation}
where for $j\geq 0$:
\begin{equation}\label{ch3:bisb9}
E_jf(t,x)=\int_{\S}\int_{0}^{+\infty}e^{i\lambda u}b(t,x,\o)^{-1}\trc(t,x,\o)\psi(2^{-j}\lambda)f(\lambda\o)\lambda^2 d\lambda d\o,
\end{equation}
and 
\begin{equation}\label{ch3:bisb10}
E_{-1}f(t,x)=\int_{\S}\int_{0}^{+\infty}e^{i\lambda u}b(t,x,\o)^{-1}\trc(t,x,\o)\varphi(\lambda)f(\lambda\o)\lambda^2 d\lambda d\o.
\end{equation}
This decomposition is classical and is known as the first dyadic decomposition (see \cite{St}). The goal of this first step is to prove the following proposition:
\begin{proposition}\label{ch3:bisorthofreq}
The decomposition \eqref{ch3:bisb8} satisfies an almost orthogonality property, from which it follows that:
\begin{equation}\label{ch3:bisorthofreq1}
\norm{Ef}_{\ll{2}}^2\lesssim\sum_{j\geq -1}\norm{E_jf}_{\ll{2}}^2+\ep^2\norm{f}^2_{L^2(\R^3)}.
\end{equation}
\end{proposition}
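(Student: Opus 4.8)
First note that, because the frequency cut-off is already built into \eqref{ch3:bisb9}--\eqref{ch3:bisb10}, the operator $E_j$ only sees the localized piece $f_j$ of $f$, where $f_j(\la\o):=\psi(2^{-j}\la)f(\la\o)$ for $j\ge 0$ and $f_{-1}(\la\o):=\varphi(\la)f(\la\o)$; in particular $\sum_{j\ge-1}\|f_j\|_{L^2(\R^3)}^2\simeq\|f\|_{L^2(\R^3)}^2$. Expanding the square,
$$\|Ef\|_{L^2(\MM)}^2=\sum_{j,j'\ge-1}\langle E_jf,E_{j'}f\rangle_{L^2(\MM)}.$$
Fix a large integer $N_0$, to be chosen depending only on universal constants. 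By Cauchy--Schwarz, the contribution of the indices with $|j-j'|\le N_0$ is $\lesssim N_0\sum_{j}\|E_jf\|_{L^2(\MM)}^2$, which is absorbed into the right-hand side of \eqref{ch3:bisorthofreq1}; thus everything reduces to bounding $\sum_{|j-j'|>N_0}|\langle E_jf,E_{j'}f\rangle|$.

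For the off-diagonal terms, the point I would exploit is that although $u$ is only weakly regular, the phase $\la u(\cdot,\o)$ oscillates at the expected rate $\sim\la$ in the directions tangent to $\Sit$: indeed $|\nabla u|=T(u)=b^{-1}$ and \eqref{ch3:estneeded1} gives $\|b-1\|_{L^\infty(\MM)}\lesssim\ep$, so $|\nabla u|\simeq 1$ uniformly in $\o$. Hence, along the spatial unit direction $N^\o=-\nabla u/|\nabla u|$ one has $N^\o(e^{i\la u(\cdot,\o)})=i\la\,b^{-1}e^{i\la u(\cdot,\o)}$, and (taking $j>j'$) one may integrate by parts a fixed number $K$ of times in $\langle E_jf,E_{j'}f\rangle$, inverting this identity and moving the $N^\o$-derivatives onto the conjugate factor. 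Each time a derivative lands on $e^{-i\mu u(\cdot,\nu)}$, with $\mu\sim2^{j'}$, it produces the gain $\mu/\la\sim2^{-(j-j')}$; each time it lands on the symbol $b^{-1}\trc$, on the coefficient $b$ of the vector field, or on the area density, the estimates \eqref{ch3:estneeded1}--\eqref{ch3:estneeded2} (and \eqref{ch3:etoile}) bound the resulting factor by $\lesssim\ep$, still carrying the accompanying $(i\la)^{-1}$. With $K$ chosen as large as the two $(t,x)$-derivatives of $b$ and $\trc$ available from \eqref{ch3:estneeded1}--\eqref{ch3:estneeded2} allow, and closing each of the two resulting factors by the elementary Minkowski/Cauchy--Schwarz/Plancherel estimate of \eqref{ch3:bisb5} (with the appropriate powers of $\la$), this should yield, for some $\delta>0$,
$$|\langle E_jf,E_{j'}f\rangle_{L^2(\MM)}|\;\lesssim\;2^{-\delta|j-j'|}\Big(\|E_jf\|_{L^2(\MM)}\|E_{j'}f\|_{L^2(\MM)}+\ep^2\|f_j\|_{L^2(\R^3)}\|f_{j'}\|_{L^2(\R^3)}\Big).$$
Summing over $|j-j'|>N_0$ and applying Schur's test to the kernel $2^{-\delta|j-j'|}$ turns the first summand into $\lesssim\sum_j\|E_jf\|_{L^2(\MM)}^2$ and the second into $\lesssim\ep^2\|f\|_{L^2(\R^3)}^2$, which is precisely \eqref{ch3:bisorthofreq1}.

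Several points require genuine care, and I expect the main obstacle to lie there. The inner product couples two distinct phases $u(\cdot,\o)$ and $u(\cdot,\nu)$ and is integrated over $(\o,\nu)\in\S\times\S$; one must verify that this angular double integration costs no more than a Cauchy--Schwarz, so that the essentially one-dimensional almost-orthogonality in $\la$ survives --- this is where the transversality property \eqref{ch3:threomega1ter} and the $\o$-regularity bounds \eqref{ch3:estricciomega}, \eqref{ch3:estNomega} enter. Integrating by parts along $N^\o$ inside $\MM$ also generates boundary contributions on the lateral (null) boundary of the region, which must be recognized as terms of the types already handled. Most importantly, within the limited regularity budget for $u$ only a bounded number of integrations by parts is permitted, and one must track, factor by factor, which of them produce the clean geometric decay $2^{-\delta|j-j'|}$ and which produce the irreducible $\ep^2\|f\|^2$ tail; combining these gains with the elementary bound \eqref{ch3:bisb5} into a per-term inequality of the form displayed above is the real technical heart of the argument, and it is precisely at this step that the sharp space-time estimates \eqref{ch3:estneeded1}--\eqref{ch3:estneeded2} for the foliation by the optical function $u$ are used in an essential way.
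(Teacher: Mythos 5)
Your overall strategy — expand the square, absorb the near-diagonal by Cauchy--Schwarz, and control the off-diagonal pairing $\langle E_jf,E_kf\rangle$ ($j>k$, say) by repeated integration by parts in the direction $\pr_u$ (equivalently $N^\o$, since $\nabla u=b^{-1}N$ on $\Si_t$) followed by Schur's test — is the same as the paper's, and your observation that a derivative falling on $e^{-i\la' u(\cdot,\nu)}$ gains $\la'/\la\sim 2^{-(j-k)}$ while the denominator $\la-\la'\tfrac{b}{b'}g(N,N')$ stays comparable to $\la$ by \eqref{ch3:estneeded1} is precisely the identity \eqref{ch3:bisof4} the paper uses. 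There is, however, a genuine gap at the step you summarize by ``$K$ chosen as large as the two $(t,x)$-derivatives of $b$ and $\trc$ available from \eqref{ch3:estneeded1}--\eqref{ch3:estneeded2} allow.''

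The problem is the term produced when the first integration by parts puts the derivative on the high-frequency symbol, yielding (schematically) $2^{-j}\int\nabn\trc\, F_j(u)\,\overline{E_kf}\,d\MM$. Closing that directly by the basic computation \eqref{ch3:bisb5} and \eqref{ch3:etoile} gives only $\lesssim\ep^2\ga_j\ga_k$, with \emph{no} decay in $|j-k|$; the paper makes exactly this point in the remark following \eqref{ch3:jetla3}, so this term cannot be absorbed into a summable Schur kernel. A second naive integration by parts in $u$ would need $\nabn^2\trc$ in $L^2(\HH_u)$ (or better), but no such bound is available: the estimate \eqref{ch3:estneeded2} is a \emph{frequency-localized} bound, $\|\nabn P_l(\nabn\trc)\|_{L^2(\HH_u)}\lesssim\ep 2^l+2^{l/2}\mu(u)$, which grows in $l$ and does not yield an unlocalized bound for $\nabn^2\trc$. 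Thus your ``iterate $K$ times'' scheme fails at the second iteration. The paper's actual mechanism is to first decompose $\nabn\trc=P_{\le(j+k)/2}(\nabn\trc)+P_{>(j+k)/2}(\nabn\trc)$ using the geometric Littlewood--Paley theory on $P_{t,u}$. The high-frequency piece is rewritten via $P_l=2^{-2l}\lapp P_l$ and $\lapp$ is integrated by parts tangentially (two \emph{angular} derivatives of $\trc$ are available, giving $\nabb P_l(\nabn\trc)$ plus $\nabb E_k$), while the low-frequency piece undergoes the second integration by parts in $u$ and is estimated via \eqref{ch3:estneeded2} summed over $l\le(j+k)/2$. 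The choice of cut-off scale $(j+k)/2$ balances the two contributions and is what produces the (modest) off-diagonal decay $2^{-|j-k|/4}$ in \eqref{ch3:bisof2}; this balancing step is absent from your proposal and, without it, the Schur summation does not close.
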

A discussion of the proof of Proposition \ref{ch3:bisorthofreq} is postponed to section \ref{ch3:bissec:orthofreq}. 

\subsubsection{Step 2: decomposition in angle}

Proposition \ref{ch3:bisorthofreq} enables us to  estimate $\norm{E_jf}_{\ll{2}}$ instead of 
$\norm{Ef}_{\ll{2}}$. The analog of computation \eqref{ch3:bisb5} for $\norm{E_jf}_{\ll{2}}$ yields:
\begin{equation}\label{ch3:bisb5bis}
\begin{array}{l}
\ds\norm{E_jf}_{\ll{2}}\leq \ep\norm{\lambda\psi(2^{-j}\la) f}_{L^2(\s)}\lesssim \ep2^j\norm{\psi(2^{-j}\la) f}_{\le{2}},
\end{array}
\end{equation}
which misses the wanted estimate by a power of $2^j$. We thus need to perform a second dyadic decomposition (see \cite{St}). We introduce a smooth partition of unity on the sphere $\S$:
\begin{equation}\label{ch3:bisb14}
\sum_{\nu\in\Gamma}\eta^\nu_j(\o)=1\textrm{ for all }\o\in\S, 
\end{equation}
where $\Gamma$ is a lattice on $\S$ of size $2^{-\frac{j}{2}}$, where the support of $\eta^\nu_j$ is a patch on $\S$ of diameter $\sim 2^{-j/2}$. We use \eqref{ch3:bisb14} to decompose $E_jf$ as follows:
\begin{equation}\label{ch3:bisb15}
E_jf(t,x)=\sum_{\nu\in\Gamma}E^\nu_jf(t,x),
\end{equation}
where:
\begin{equation}\label{ch3:bisb16}
E^\nu_jf(t,x)=\int_{\S}\int_{0}^{+\infty}e^{i\lambda u}b(t,x,\o)^{-1}\trc(t,x,\o)\psi(2^{-j}\lambda)\eta^\nu_j(\o)f(\lambda\o)\lambda^2 d\lambda d\o.
\end{equation}
We also define:
\begin{equation}\label{ch3:bisdecf}
\begin{array}{l}
\ds\ga_{-1}=\norm{\varphi(\la)f}_{\le{2}},\, \ga_j=\norm{\psi(2^{-j}\la)f}_{\le{2}},\,j\geq 0, \\
\ds\ga^\nu_j=\norm{\psi(2^{-j}\la)\eta^\nu_j(\o)f}_{\le{2}},\,j\geq 0,\,\nu\in\Gamma, 
\end{array}
\end{equation}
which satisfy:
\begin{equation}\label{ch3:bisdecf1}
\norm{f}_{\le{2}}^2=\sum_{j\geq -1}\ga_j^2=\sum_{j\geq -1}\sum_{\nu\in\Gamma}(\ga^\nu_j)^2.
\end{equation}
The goal of this second step is to prove the following proposition:
\begin{proposition}\label{ch3:bisorthoangle}
The decomposition \eqref{ch3:bisb15} satisfies an almost orthogonality property, from which it follows that
\begin{equation}\label{ch3:bisorthoangle1}
\norm{E_jf}_{\ll{2}}^2\lesssim\sum_{\nu\in\Gamma}\norm{E^\nu_jf}_{\ll{2}}^2+\ep^2\ga_j^2.
\end{equation}
\end{proposition}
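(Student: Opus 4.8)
\textbf{Proof proposal for Proposition \ref{ch3:bisorthoangle}.}

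The plan is to exploit the fact that the operators $E^\nu_j$ are microlocalized to angular patches of size $2^{-j/2}$ at frequency $2^j$, so that for fixed $j$ the phases $u(t,x,\o)$ with $\o$ in different patches separate in the spatial variable and the pieces $E^\nu_j f$ become almost orthogonal in $L^2(\MM)$. Concretely, I would compute $\norm{E_jf}_{L^2(\MM)}^2 = \sum_{\nu,\mu} \langle E^\nu_j f, E^\mu_j f\rangle$ and estimate the off-diagonal inner products. Expanding each inner product and integrating first in the $\lambda$ variables using Plancherel (as in the basic computation \eqref{ch3:bisb5}), one is left with an integral over $\MM\times\S\times\S$ of the form $\int b^{-1}\trc(\o)\,\overline{b^{-1}\trc(\mu)}\,e^{i\lambda(u(t,x,\o)-u(t,x,\mu))}(\cdots)$. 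The crucial point is a lower bound for the gradient of the phase difference: using \eqref{ch3:threomega1ter}, which gives $|N(t,x,\o)-N(t,x,\mu)|\simeq|\o-\mu|$, together with $N=-\nabla u/|\nabla u|$ and the estimate $\norm{b-1}_{L^\infty(\MM)}\les\ep$, one obtains that the spatial gradient $\nabla(u(\cdot,\cdot,\o)-u(\cdot,\cdot,\mu))$ has size comparable to $|\o-\mu|$. Hence, for $|\o-\mu|\gtrsim 2^{-j/2}$ one can integrate by parts repeatedly in the spatial variables (or equivalently use a non-stationary phase estimate), each integration by parts gaining a factor $(\lambda|\o-\mu|)^{-1}\lesssim 2^{-j/2}\cdot(2^{j/2}|\o-\mu|/2^{-j/2})^{-1}$, which produces rapid decay in the lattice distance between the patches $\nu$ and $\mu$.

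The cost of integrating by parts is that each spatial derivative falls either on the amplitude $b^{-1}\trc$ or on $|\nabla u|^{-1}$, and the number of integrations by parts we can afford is limited precisely by the regularity available for these quantities in \eqref{ch3:estneeded1}, \eqref{ch3:estricciomega}, \eqref{ch3:estNomega} and the decompositions \eqref{ch3:dechch}, \eqref{ch3:decNom}, \eqref{ch3:dectrcom}. I would use the splitting $\hch=\chi_1+\chi_2$ and $\trc(t,x,\o)=f^j_1(t,x,\nu)+f^j_2(t,x,\o,\nu)$ to separate the part of the amplitude that is $\o$-independent (and thus harmless when differentiated, since it commutes out of the $\o$-integral) from the genuinely $\o$-dependent but small remainder, controlling the latter in the mixed norms $L^\infty_u\lh{2}$ with the quantitative $2^{-j/2}$ gain. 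Summing the resulting geometric-type series over the lattice $\Gamma$ — whose cardinality is $\sim 2^j$ — and using the orthogonality of the frequency-angle localized data recorded in \eqref{ch3:bisdecf1}, one obtains $\norm{E_jf}_{L^2(\MM)}^2\lesssim \sum_{\nu\in\Gamma}\norm{E^\nu_jf}_{L^2(\MM)}^2 + \ep^2\ga_j^2$, where the $\ep^2\ga_j^2$ term collects the contributions of the small $\o$-dependent remainders in the amplitude.

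The main obstacle, I expect, is the borderline regularity of the amplitude $b^{-1}\trc$ with respect to the angular variable $\o$: one cannot afford two full $\o$-derivatives of $\trc$ in $L^\infty$, only the weaker control \eqref{ch3:estricciomega}, \eqref{ch3:estNomega}, \eqref{ch3:dechch2}, and consequently the naive stationary/non-stationary phase argument does not close. This is where the geometric decompositions \eqref{ch3:decNom}--\eqref{ch3:dectrcom} must be used with care — isolating an $\o$-independent main part for which the spatial integration by parts is clean, and treating the genuinely $\o$-dependent error separately with the $2^{-j/2}$ gain — and where, as the authors warn elsewhere, one is likely to encounter log-divergences that need ad-hoc treatment exploiting the null structure. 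A secondary technical point is that the surfaces $P_{t,u}$ and the measure $\dmt$ depend on $\o$, so the integration by parts must be performed in the ambient space-time coordinates rather than adapted coordinates, and one has to keep track of how the Jacobians vary with $\o$; the estimate \eqref{ch3:threomega1ter} is exactly what makes this manageable. A full proof is given in \cite{param4}.
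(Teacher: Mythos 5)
Your proposal captures the right spirit — the transversality of the phases $u(\cdot,\cdot,\o)$ and $u(\cdot,\cdot,\nu)$ enables integration by parts, and the geometric decompositions \eqref{ch3:decNom}, \eqref{ch3:dectrcom}, \eqref{ch3:dechch} play a role — but there is a genuine gap in the mechanism for removing the log-divergence, which is the entire content of the paper's argument, and it is precisely where you punt. Your plan of "integrating by parts repeatedly" to get "rapid decay in the lattice distance" cannot work: the symbol $b^{-1}\trc$ does not admit two full tangential derivatives in $L^\infty$ (only $\nabb\trc\in L^2_{x'}L^\infty_t$ and $\lb\trc\in L^2_{x'}L^\infty_t$ from \eqref{ch3:estneeded1}), so one is limited to exactly two integrations by parts, which yields $(2^{j/2}|\nu-\nu'|)^{-2}$ decay in the worst term — borderline log-divergent when summed over the lattice, as recorded in \eqref{ch3:bisoa5}. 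Calling the fix "ad-hoc techniques exploiting null structure" does not identify what that fix is.

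The paper's actual removal of the log-loss hinges on ingredients absent from your proposal: (i) integration by parts in the null direction $L$ via \eqref{ch3:ibpl}, not just "in the spatial variables" — $L$ gains a factor $\gg(L,L')^{-1}\sim |\nu-\nu'|^{-2}$, and crucially $L\trc$ can be replaced by $-|\hch|^2$ through the Raychaudhuri equation \eqref{ch3:raychaudhuri}, which has better regularity; (ii) after $L$ falls on the low-frequency factor, its decomposition on the frame $L',N',e'_A$ as in \eqref{ch3:encoreuneffort}, with the dangerous $\nabb'$ component isolated; (iii) a further decomposition $E^\nu_j f=\sum_{l\geq j/2}E^{\nu,l}_jf$ using the geometric Littlewood-Paley projections $P_l$ applied to $\trc$, together with the scheme of section \ref{sec:logremoval} of always routing the remaining tangential derivative onto the \emph{lowest} LP frequency; and (iv) the mixed-norm bound of Proposition \ref{ch3:bisdiiiipropppp1} in $L^2_{u_\nu,x_\nu}L^\infty_t$, whose proof already requires the energy estimate for the wave equation in section \ref{ch3:bissec:diagonal}. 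One smaller point: your remark that "the integration by parts must be performed in the ambient space-time coordinates rather than adapted coordinates" is the reverse of what the paper does — the directional derivatives used ($\nabb$, $L$, $N$) are exactly those adapted to the $u$-foliation, which is what makes the regularity assumptions \eqref{ch3:estneeded1}, \eqref{ch3:estneeded2} usable.
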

A discussion of the proof of Proposition \ref{ch3:bisorthoangle} is postponed to section \ref{ch3:bissec:orthoangle}. 

\subsubsection{Step 3: control of the diagonal term}

Proposition \ref{ch3:bisorthoangle} allows us to  estimate $\norm{E^\nu_jf}_{\ll{2}}$ instead of $\norm{E_jf}_{\ll{2}}$. The analog of computation \eqref{ch3:bisb5} for $\norm{E^\nu_jf}_{\ll{2}}$ yields:
\bea\label{ch3:bisb5ter}
&&\norm{E_j^\nu f}_{\ll{2}}\\
\nn&\leq &\ds\int_{\S}\norm{b(t,x,\o)^{-1}\trc(t,x,\o)}_{\l{\infty}{2}}\normm{\int_{0}^{+\infty}e^{i\lambda  u}\psi(2^{-j}\la)\eta^\nu_j(\o)f(\lambda\o)\lambda^2 d\lambda}_{L^2_{ u}}d\o\\
\nn&\leq &\ds 2^j\ep\sqrt{\textrm{vol}(\textrm{supp}(\eta^\nu_j))}\norm{\lambda\psi(2^{-j}\la)\eta^\nu_j(\o)f}_{\le{2}}\\
\nn&\lesssim &\ds \ep2^{j/2}\gamma^\nu_j,
\eea
where the term $\sqrt{\textrm{vol}(\textrm{supp}(\eta^\nu_j))}$ comes from the fact that we apply Cauchy-Schwarz in $\o$. Note that we have used in \eqref{ch3:bisb5ter} the fact that the support of $\eta^\nu_j$ is 2 dimensional and has diameter $2^{-j/2}$ so that:
\begin{equation}\label{ch3:bissuppangle}
\sqrt{\textrm{vol}(\textrm{supp}(\eta^\nu_j))}\lesssim 2^{-j/2}.
\end{equation}
Now, \eqref{ch3:bisb5ter} still misses the wanted estimate by a power of $2^{j/2}$. Nevertheless, using more refined techniques, we are able to estimate the diagonal term:
\begin{proposition}\label{ch3:bisdiiiiagonal}
The diagonal term $E^\nu_jf$ satisfies the following estimate:
\begin{equation}\label{ch3:bisdiiiiagonal1}
\norm{E^\nu_jf}_{\ll{2}}\lesssim \ep\ga^\nu_j.
\end{equation}
\end{proposition}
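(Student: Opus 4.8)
The plan is to recover the single lost factor of $2^{j/2}$ in \eqref{ch3:bisb5ter} by one geometric integration by parts in the frequency variable $\lambda$, arranged so that the resulting derivative falls on the symbol $b^{-1}\trc$ (which has bounded tangential derivative, by \eqref{ch3:estneeded1}) rather than producing an unfavourable power of $\lambda$. First I would foliate $\MM$ by the level hypersurfaces $\HH_u$ of $u(\cdot,\cdot,\nu)$, the optical function at the \emph{center} $\nu$ of the cap, and use the (essentially isometric, by \eqref{ch3:estneeded1}) identification $\norm{F}_{L^2(\MM)}^2\simeq\int_{\R}\norm{F}_{L^2(\HH_u)}^2\,du$. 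On a fixed leaf $\HH_u$ one has $u(t,x,\nu)\equiv u$, so the factor $e^{i\lambda u}$ is \emph{constant} along $\HH_u$ and the only oscillation carried by $E^\nu_jf|_{\HH_u}$ comes from the residual phase $v_\o:=u(t,x,\o)-u$. Using \eqref{ch3:threomega1ter}, \eqref{ch3:decNom} together with ${\bf g}(L,T)=-1$ and $L=T+N(\nu)$, one finds $v_\o=O(|\o-\nu|)=O(2^{-j/2})$ with $L(v_\o)=O(|\o-\nu|^2)=O(2^{-j})$, so $\lambda L(v_\o)=O(1)$ on the leaf; the only direction in which $\lambda v_\o$ is genuinely large, of size $\sim 2^{j/2}$, is the one tangent to the $2$-surfaces $P_{t,u}$.

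Second, I would decouple the $\o$-dependence from the rough $(t,x)$-dependence \emph{before} integrating over the cap. The obstruction is that $\trc$, $\hch$ and the normal $N$ all depend on $\o$ while enjoying only very limited $(t,x)$-regularity; the decompositions \eqref{ch3:dechch}, \eqref{ch3:decNom} and \eqref{ch3:dectrcom} are designed precisely for this, splitting each such quantity into a piece that is $\o$-independent — hence may be pulled out of the $\o$-integral and controlled in the strong $(t,x)$-norms of \eqref{ch3:estneeded1} (in particular with $O(\ep)$ tangential derivative) — plus a remainder that is small, of size $O(2^{-j/2})$ in $L^\infty_uL^2(\HH_u)$ after rescaling the cap, but only $L^2$-regular in $(t,x)$. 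Substituting these splittings into $E^\nu_jf|_{\HH_u}$ reduces it to a sum of terms in each of which the $\o$-average over the $2^{-j/2}$-cap and the $L^2(\HH_u)$-estimate can be carried out separately, exactly as in the model computation \eqref{ch3:bisb5}.

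Third, on each $P_{t,u}$ I would run the geometric Littlewood--Paley decomposition $1=\sum_k P_k$ of Theorem \ref{ch3:thm:LP} applied to the tangential factor. For low tangential frequencies $2^k\lesssim 2^{j/2}$, where $v_\o$ is non-oscillatory at that scale, a single integration by parts in the $\nabb v_\o$-direction (whose size $2^{-j/2}$ appears in the denominator) trades $\lambda^2$ for $\lambda\cdot 2^{-j/2}$ times either $\nabb(b^{-1}\trc)$, controlled by $\ep$ through \eqref{ch3:estneeded1}--\eqref{ch3:estneeded2}, or $P_k$-localized second tangential derivatives of $v_\o$, which — since $R$ is only in $L^2$ — cannot be taken pointwise but are absorbed through the finite-band property and the weak Bernstein/trace estimates of Theorem \ref{ch3:thm:LP}. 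For high tangential frequencies $2^k\gg 2^{j/2}$ one does not integrate by parts but sums the tail using the finite-band bound $\norm{P_kF}\les 2^{-2k}\norm{\lap F}$, the weak Bernstein inequality, and Bessel's inequality. Reassembling — Plancherel in $\lambda$, Cauchy--Schwarz in $\o$ over the cap, which costs $\sqrt{\mathrm{vol}(\mathrm{supp}\,\eta^\nu_j)}\les 2^{-j/2}$ by \eqref{ch3:bissuppangle}, Bessel in $k$, and integration in $u$ — yields $\norm{E^\nu_jf}_{L^2(\MM)}\les \ep\,2^{j/2}\cdot 2^{-j/2}\,\gamma^\nu_j=\ep\,\gamma^\nu_j$, which is \eqref{ch3:bisdiiiiagonal1}.

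The step I expect to be the main obstacle is the combination of the borderline tangential frequency $2^k\sim 2^{j/2}$ with the scarcity of regularity: the available control of $u$ (one $\o$-derivative, and only $L^2$-type bounds on two tangential derivatives, per \eqref{ch3:estneeded1}--\eqref{ch3:estneeded2}) permits \emph{exactly one} integration by parts and no more, so every estimate must be engineered to close after a single IBP, and summation over $k$ near $j/2$ — and subsequently over $j$ — generates logarithmic divergences. Removing these divergences requires exploiting the fine structure: the transport (Raychaudhuri) equation \eqref{ch3:raychaudhuri} for $\trc$, the precise splitting \eqref{ch3:dechch} of $\hch$ into a $(t,x)$-better and an $\o$-better part (cf. \eqref{ch3:dechch2}), and the extra smallness hidden in the right-hand side of \eqref{ch3:estneeded2}. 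This is the technically heaviest ingredient, and is where the bulk of the work in \cite{param4} is spent.
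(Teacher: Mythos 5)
Your Step 1 — freezing the $\o$--dependence of the symbol $b^{-1}\trc$ via the decompositions \eqref{ch3:dectrcom}, \eqref{ch3:decNom} so that the problem reduces to bounding $\int_{\S}F_j(u)\eta^\nu_j(\o)\,d\o$ with symbol $\equiv 1$ — is indeed how the paper begins (it is exactly \eqref{ch3:wimby1}--\eqref{ch3:wimby4}). But the mechanism you propose to recover the missing $2^{-j/2}$ after that point diverges from the paper's, and I believe it does not close as written.

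You integrate by parts once tangentially on $P_{t,u_\nu}$ against $\nabb v_\o$, $v_\o=u(\cdot,\cdot,\o)-u(\cdot,\cdot,\nu)$. The favourable term, where the derivative falls on $b^{-1}\trc$, does produce the factor $(\la|\nabb v_\o|)^{-1}\sim 2^{-j/2}$ with a controlled symbol. But the IBP also produces the term where the derivative falls on $\nabb v_\o/|\nabb v_\o|^2$. Since $|\nabb v_\o|\sim 2^{-j/2}$, this contributes $(\la|\nabb v_\o|^2)^{-1}\,\nabb^2 v_\o\sim\nabb^2 v_\o$, i.e.\ \emph{no gain at all over \eqref{ch3:bisb5ter}}; and $\nabb^2 v_\o$ is governed by $\hch$ and derivatives of the null lapse, which by \eqref{ch3:estneeded1} are available only in trace norms of type $L^2_{x'}L^\infty_t$ and $L^\infty_uL^2(\HH_u)$, not pointwise. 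Pushing this term into the geometric Littlewood--Paley machinery does not salvage the lost half power: the finite-band and weak Bernstein inequalities of Theorem \ref{ch3:thm:LP} give back exactly the $2^{j/2}$ at the borderline frequency $2^{k}\sim 2^{j/2}$ you yourself flag, and the available regularity permits no second IBP. In short, the natural IBP strategy reproduces the $2^{j/2}$ loss and does not provide a mechanism to beat it.

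The paper bypasses this entirely by a different idea (Steps 2--4 of section \ref{ch3:bissec:diagonal}). After freezing the symbol it sets up a \emph{transport equation along $L_\nu$}: since ${\bf g}(L_\o,L_\nu)=-1+{\bf g}(N_\o,N_\nu)\sim|\o-\nu|^2\sim 2^{-j}$, the derivative $L_\nu\big(\int_\S F_j(u)\eta^\nu_j\,d\o\big)$ is $O(1)\cdot\int_\S(2^{j/2}(\o-\nu))^2F_j(u)\eta^\nu_j\,d\o$ up to acceptable errors (see \eqref{ch3:rg2}--\eqref{ch3:rg6bis}). The closure is then obtained not by IBP but by the observation that $\phi=\int_\S(\o-\nu)^2F_j(u)\eta^\nu_j\,d\o$ satisfies an \emph{approximate wave equation}: $\square_\gg\phi$ reintroduces a factor $\square_\gg u=b^{-1}\trc=O(\ep)$ (see \eqref{ch3:rg7bis}--\eqref{ch3:rg8}). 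The energy estimate \eqref{nrj1}, together with the control of the parametrix at $t=0$ from \cite{param2} for the initial data term and the sharp trace bound $\|\pi_{N_\nu N_\nu}\|_{L^\infty_{u_\nu,x'_\nu}L^2_t}\les\ep$ for the deformation tensor term \eqref{bodega2}--\eqref{bodega3}, then yields a self-improving inequality that closes for small $\ep$, and moreover produces the stronger $L^2_{u_\nu,x_\nu}L^\infty_t$ bound of Proposition \ref{ch3:bisdiiiipropppp1}, which is needed later in the proof of the angular almost-orthogonality (cf.\ \eqref{ch3:fff2}). It is this use of the eikonal identity $\square_\gg u=b^{-1}\trc$ to replace the missing regularity of $\nabb^2 u$ that your argument is lacking.
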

A discussion of the proof of Proposition \ref{ch3:bisdiiiiagonal} is postponed to section \ref{ch3:bissec:diagonal}. 

\begin{remark}
Note that Proposition \ref{ch3:bisorthoangle} together with Proposition \ref{ch3:bisdiiiiagonal} yields the estimate:
\begin{equation}\label{ch3:bisof13}
\norm{E_jf}_{\ll{2}}\lesssim \ep\ga_j.
\end{equation}
Now, since the proof of Proposition \ref{ch3:bisorthoangle} and the proof of Proposition \ref{ch3:bisdiiiiagonal} do not depend on the proof of Proposition \ref{ch3:bisorthofreq}, we are allowed to use the conclusion of Proposition \ref{ch3:bisorthoangle} and Proposition \ref{ch3:bisdiiiiagonal} in the proof of Proposition \ref{ch3:bisorthofreq}. In particular, the estimate \eqref{ch3:bisof13} will be used for the proof of Proposition \ref{ch3:bisorthofreq}. In the same spirit, since the proof of Proposition \ref{ch3:bisdiiiiagonal} does not depend on the proof of Proposition \ref{ch3:bisorthoangle}, we are allowed to use the conclusion of Proposition \ref{ch3:bisdiiiiagonal} in the proof of Proposition \ref{ch3:bisorthoangle}.
\end{remark}

\noindent{\bf Convention.} In the rest of this chapter, we will use several integration by parts. In turn, these integration by parts will each generate a large number of terms. For the sake of simplicity, we will only discuss few typical terms. We will constantly use the notation "$+\cdots$" in various identities and estimates in order to refer to the additional terms. That is not to say that these additional terms are lower order or estimated in the same way, but simply that the typical terms that we exhibit allow for a simple exposition of the main ideas of the proof. We refer the reader to \cite{param4} for a complete proof which contains the control of the typical terms discussed here as well as the numerous additional terms.\\

\subsubsection{Proof of Theorem \ref{ch3:part3:th1}}

Proposition \ref{ch3:bisorthofreq}, \ref{ch3:bisorthoangle} and \ref{ch3:bisdiiiiagonal} 
immediately yield the proof of Theorem \ref{ch3:part3:th1}. Indeed, \eqref{ch3:bisorthofreq1}, \eqref{ch3:bisdecf1}, \eqref{ch3:bisorthoangle1} and \eqref{ch3:bisdiiiiagonal1} imply:
\begin{equation}\label{ch3:biscclth3}
\begin{array}{ll}
\ds\norm{Ef}_{\ll{2}}^2 & \ds\lesssim\sum_{j\geq -1}\norm{E_jf}_{\ll{2}}^2+\ep^2\norm{f}^2_{\le{2}}\\
& \ds\lesssim\sum_{j\geq -1}\sum_{\nu\in\Gamma}\norm{E^\nu_jf}_{\ll{2}}^2+\ep^2\sum_{j\geq -1}\gamma_j^2+\ep^2\norm{f}^2_{\le{2}}\\
& \ds\lesssim \ep^2\sum_{j\geq -1}\sum_{\nu\in\Gamma}(\gamma_j^\nu)^2+\ep^2\sum_{j\geq -1}\gamma_j^2+\ep^2\norm{f}^2_{\le{2}}\\
& \ds\lesssim \ep^2\norm{f}^2_{\le{2}},
\end{array}
\end{equation}
which is the conclusion of Theorem \ref{ch3:part3:th1}. 

The rest of this chapter is dedicated to a discussion of the proof of Propositions \ref{ch3:bisorthofreq}, \ref{ch3:bisorthoangle} and \ref{ch3:bisdiiiiagonal}. The details of the proofs being very involved, we only give a very sketchy summary of the main ideas. We refer the reader to \cite{param4} for the details.

\section{Almost orthogonality in frequency}\lab{ch3:bissec:orthofreq}

We have to prove \eqref{ch3:bisorthofreq1}:
\begin{equation}\label{ch3:bisof1}
\norm{Ef}_{\ll{2}}^2\lesssim\sum_{j\geq -1}\norm{E_jf}_{\ll{2}}^2+\ep^2\norm{f}^2_{\le{2}}.
\end{equation}
This will result from the following inequality using Shur's Lemma:
\begin{equation}\label{ch3:bisof2}
\left|\int_{\MM}E_jf(t,x)\overline{E_kf(t,x)}d\MM \right| \lesssim \ep^22^{-\frac{|j-k|}{4}}\gamma_j\ga_k\textrm{ for }|j-k|> 2.
\end{equation}
In turn, \eqref{ch3:bisof2} will follow from integrations by parts in $u$.

\subsection{A first integration by parts}\label{ch3:sec:firstibpu} 

From now on, we focus on proving \eqref{ch3:bisof2}. We may assume $j\geq k+3$. We have:
\bea\label{ch3:bisof3}
&&\ds\int_{\MM}E_jf(t,x)\overline{E_kf(t,x)}d\MM \\
\nn&= & \ds\int_{\S}\int_{0}^{+\infty}\int_{\S}\int_{0}^{+\infty}\left(\int_{\MM}e^{i\lambda u-i\la' u'}b(t,x,\o)^{-1}\trc(t,x,\o)\overline{b(x,\o')^{-1}\trc(t,x,\o')}d\MM\right)\\
\nn&& \ds\times\psi(2^{-j}\lambda)f(\lambda\o)\lambda^2 \psi(2^{-k}\lambda')\overline{f(\lambda'\o')}(\lambda')^2 d\lambda d\o d\lambda' d\o'.
\eea

We consider the coordinate system $(t,u,x')$ on $\MM$, and we would like to integrate by parts with respect to $\pr_u$ in \eqref{ch3:bisof3}. Since $\nabla u=b^{-1}N$ and $\nabla u'={b'}^{-1}N'$, we have:
\begin{equation}\label{ch3:bisof4}
e^{i\lambda u-i\la' u'}=-\frac{i}{\la-\la'\frac{b}{b'}g(N,N')}\partial_u(e^{i\lambda u-i\la' u'}),
\end{equation}
where we use the notation $u$ for $u(t,x,\o)$, $b$ for $b(t,x,\o)$, $N$ for $N(t,x,\o)$, $u'$ for $u(t,x,\o')$, $b'$ for $b(t,x,\o')$ and $N'$ for $N(x,\o')$. We will also use the notation $\trc$ for $\trc(t,x,\o)$ and $\trc'$ for $\trc(t,x,\o')$. Using \eqref{ch3:bisof4}, we obtain:
\begin{equation}\label{ch3:bisof5}
\begin{array}{ll}
\ds\int_{\MM}e^{i\lambda u-i\la' u'}b\overline{b'}d\MM = & \ds i\int_{\MM}e^{i\lambda u-i\la' u'}\frac{b^{-1}\partial_{ u}\trc\overline{{b'}^{-1}\trc'}}{\la-\la'\frac{b}{b'}g(N,N')}d\MM\\
&\ds +i\int_{\MM}e^{i\lambda u-i\la' u'}\frac{b^{-1}\trc\partial_{ u}(\overline{{b'}^{-1}\trc'})}{\la-\la'\frac{b}{b'}g(N,N')}d\MM+\cdots,
\end{array}
\end{equation}
where the additional terms in \eqref{ch3:bisof5} arise when $\pr_u$ falls on the volume element of $\MM$ or on the denominator in the right-hand side of \eqref{ch3:bisof4}. Note that:
$$\left|\frac{\la'}{\la}\frac{b}{b'}g(N,N')\right|\leq \frac{\la'}{\la}\left|\frac{b}{b'}\right|\leq \frac{1}{2}+O(\ep)<1,$$
where we used the estimate \eqref{ch3:estneeded1} satisfied by $b$ and $b'$ and the fact that $j\geq k+3$ so that $\la'\leq \la/2$. Thus, we may expand the fraction in \eqref{ch3:bisof5}:
\begin{equation}\label{ch3:bisof6}
\frac{1}{\la-\la'\frac{b}{b'}g(N,N')}=\sum_{p\geq 0}\frac{1}{\la}\left(\frac{\la'\frac{b}{b'}g(N,N')}{\la}\right)^p.
\end{equation}

\begin{remark}\label{ch3:rem:simplification}
The expansion \eqref{ch3:bisof6} generates quantities of the type
$$\int_{0}^{+\infty}e^{i\lambda u}\psi(2^{-j}\lambda)f(\lambda\o)(2^{-j}\la)^{p}\lambda^2 d\lambda.$$
where $p\in\mathbb{Z}$. For simplicity, we omit the index $p$ and denote them by 
\begin{equation}\label{ch3:bisof7}
F_j( u)=\int_{0}^{+\infty}e^{i\lambda u}\psi(2^{-j}\lambda)f(\lambda\o)\lambda^2 d\lambda.
\end{equation}
since they are essentially equivalent. Note that Plancherel yields:
\begin{equation}\label{ch3:bisof24}
\norm{F_j}_{L^2_{\o, u}}\leq\norm{\psi(2^{-j}\la)f(\la\o)\la}_{\le{2}}\lesssim 2^j\ga_j.
\end{equation}
Also, using Cauchy-Schwarz in $\la$, we have
\begin{equation}\label{ch3:bisof24infty}
\norm{F_j}_{L^2_\o L^\infty_u}\leq 2^{\frac{j}{2}}\norm{\psi(2^{-j}\la)f(\la\o)\la}_{\le{2}}\lesssim 2^{\frac{3j}{2}}\ga_j.
\end{equation}
\end{remark}

\eqref{ch3:bisof3}, \eqref{ch3:bisof5} and \eqref{ch3:bisof6} imply:
\bea\label{ch3:bisof8}
&&\int_{\MM}E_jf(t,x)\overline{E_kf(t,x)}d\MM \\
\nn&=&2^{-j}\int_{\MM}\left(\int_{\S}\nabn \trc F_j( u)d\o\right)\overline{\left(\int_{\S}{b'}^{-1}\trc'F_k( u')d\o'\right)}d\MM\\
\nn&& +2^{-j}\int_{\MM}\left(\int_{\S}\trc NF_j( u)d\o\right)\cdot\overline{\left(\int_{\S}\nabla ({b'}^{-1}\trc')F_k( u')d\o'\right)}d\MM+\cdots,
\eea
where we only kept the first term in the expansion \eqref{ch3:bisof6} in order to simplify the exposition\footnote{note that in the last term in the right-hand side of \eqref{ch3:bisof8}, we wrote $\nabla_N({b'}^{-1}\trc')$ as $N\cdot\nabla ({b'}^{-1}\trc')$}.

\begin{remark}
The second term in the right-hand side of \eqref{ch3:bisof8} is easier because the derivative falls on the low frequency term. This is why we estimate this term directly while the other term requires a more elaborate treatment which is explained in section \ref{ch3:sec:morepreciseest}.
\end{remark}

We estimate the second term in the right-hand side of \eqref{ch3:bisof8}. We have:
\bea\lab{ch3:jetlag}
&&\left|2^{-j}\int_{\MM}\left(\int_{\S}\trc NF_j( u)d\o\right)\cdot\overline{\left(\int_{\S}\nabla ({b'}^{-1}\trc')F_k( u')d\o'\right)}d\MM\right|\\
\nn&\les& 2^{-j}\normm{\int_{\S}\trc NF_j( u)d\o}_{L^2(\MM)}\normm{\int_{\S}\nabla ({b'}^{-1}\trc')F_k( u')d\o'}_{L^2(\MM)}.
\eea
We have the following analog of \eqref{ch3:bisof13}:
\begin{equation}\label{ch3:jetlag1}
\normm{\int_{\S}\trc NF_j( u)d\o}_{L^2(\MM)}\les \ep\ga_j.
\end{equation}
Indeed, one can show that the symbol $\trc$ satisfies regularity assumptions which are at least as good as $b^{-1}\trc$ (see \cite{param3}, and also section \ref{sec:mainres}), so that the proof of \eqref{ch3:bisof13} may adapted in a straightforward manner to obtain \eqref{ch3:jetlag1}.

Next, we consider the second term in the right-hand side of \eqref{ch3:jetlag}. Then proceeding as in the basic computation \eqref{ch3:bisb5}, and using the estimate \eqref{ch3:etoile}, we obtain
\be\lab{ch3:jetlag2}
\normm{\int_{\S}\nabla ({b'}^{-1}\trc')F_k( u')d\o'}_{L^2(\MM)}\les \ep\norm{\la\psi(2^{-k}\la)f}_{L^2(\RRR^3)}\les\ep 2^k\ga_k.
\ee
Together with \eqref{ch3:jetlag} and \eqref{ch3:jetlag1}, we finally obtain:
\be\lab{ch3:jetla3}
\left|2^{-j}\int_{\MM}\left(\int_{\S}\trc NF_j( u)d\o\right)\cdot\overline{\left(\int_{\S}\nabla ({b'}^{-1}\trc')F_k( u')d\o'\right)}d\MM\right|\les 2^{-j+k}\gamma_k\ga_j,
\ee
which is consistent with \eqref{ch3:bisof2}.

\begin{remark}
Estimating the first term in the right-hand side of \eqref{ch3:bisof8} in the same way would only yield:
\be\lab{ch3:jetlag5}
\left|2^{-j}\int_{\MM}\left(\int_{\S}\nabn \trc F_j( u)d\o\right)\overline{\left(\int_{\S}{b'}^{-1}\trc'F_k( u')d\o'\right)}d\MM\right|\les \ep^2\ga_j\ga_k,
\ee
which is not sufficient to obtain \eqref{ch3:bisof2}.
\end{remark}

\subsection{A more precise estimate}\lab{ch3:sec:morepreciseest}

In this section, we estimate the first term the right-hand side of \eqref{ch3:bisof8}. Using the geometric Littlewood-Paley projections on the 2-surfaces $\ptu$, we decompose $\nabn\trc$ as:
$$\nabn\trc=P_{\leq \frac{j+k}{2}}(\nabn\trc)+P_{>\frac{j+k}{2}}(\nabn\trc).$$ 
In turn, this yields a decomposition for the first term in the right-hand side of \eqref{ch3:bisof8}:
\begin{equation}\label{ch3:bisof21}
2^{-j}\int_{\MM}\left(\int_{\S}\nabn \trc F_j( u)d\o\right)\overline{\left(\int_{\S}{b'}^{-1}\trc'F_k( u')d\o'\right)}d\MM=A_1+A_2,
\end{equation}
where:
\begin{equation}\label{ch3:bisof22}
\begin{array}{l}
\ds A_1=2^{-j}\int_{\MM}\left(\int_{\S}P_{> \frac{j+k}{2}}(\nabn\trc)F_j( u)d\o\right)\overline{E_kf(t,x)}d\MM ,\\
\ds A_2=2^{-j}\int_{\MM}\left(\int_{\S}P_{\leq \frac{j+k}{2}}(\nabn\trc)F_j( u)d\o\right)\overline{E_kf(t,x)}d\MM .
\end{array}
\end{equation}

We first estimate the easier term $A_1$. The definition of $P_l$ implies $P_l= 2^{-2l}\lap P_l$, and thus
\bee
P_{> \frac{j+k}{2}}(\nabn\trc)&=& \sum_{l> \frac{j+k}{2}}P_l(\nabn\trc)\\
&=& \sum_{l> \frac{j+k}{2}}2^{-2l}\lap P_l(\nabn\trc),
\eee
which yields the following decomposition for $A_1$:
\be\lab{ch3:eq:A101}
A_1 = \sum_{l> \frac{j+k}{2}}A_{1,l}
\ee
where $A_{1,l}$ is given by:
$$A_{1,l} =2^{-j-2l}\int_{\MM}\left(\int_{\S}\lap P_l(\nabn\trc)F_j( u)d\o\right)\overline{E_kf(t,x)}d\MM.$$
Integrating by parts $\lap$ on $\ptu$ and using the fact that $\nabb F_{j, -1}(u)=0$, we obtain:
$$A_{1,l} =-2^{-j-2l}\int_{\S}\int_{t,u}\left(\int_{\ptu}\nabb P_l(\nabn\trc)\nabb(\overline{E_kf(t,x)}b)\dmt \right)F_j( u)du\,dt\,d\o+\cdots,$$
where the additional term corresponds to the case where the derivative falls on the volume element of $\MM$. Next, we apply Cauchy-Schwartz to the integral on $\MM$ and obtain:
\bea\label{ch3:bisof23}
|A_{1,l}| &\leq& 2^{-j-2l}\int_{\S}\norm{\nabb P_l(\nabn\trc)F_j( u)}_{L^2(\MM)}\norm{\nabb E_k}_{L^2(\MM)}d\o\\
\nn&\les& 2^{-j-2l}\int_{\S}\norm{\nabb P_l(\nabn\trc)}_{\li{\infty}{2}}\norm{F_j( u)}_{L^2_u}\norm{\nabb E_k}_{L^2(\MM)}d\o\\
\nn&\les& 2^{-j-l}\int_{\S}\norm{\nabn\trc}_{\li{\infty}{2}}\norm{F_j( u)}_{L^2_u}\norm{\nabb E_k}_{L^2(\MM)}d\o\\
\nn&\les& \ep 2^{-j-l}\int_{\S}\norm{F_j( u)}_{L^2_u}\norm{\nabla E_k}_{L^2(\MM)}d\o,
\eea
where we used the finite band property for $P_l$ and the estimates \eqref{ch3:estneeded1} for $\trc$. In view of \eqref{ch3:bisof23}, we also need to estimate $\norm{\nabla E_k}_{L^2(\MM)}$. We have:
\begin{equation}\label{ch3:bisof26}
\begin{array}{ll}
\ds\nabla E_kf(t,x)= & \ds\int_{\S}\int_0^{+\infty} e^{i\la u}\nabla (b^{-1}\trc)\psi(2^{-k}\la)f(\la\o)\la^2d\la d\o\\
& \ds +i2^k\int_{\S}\int_0^{+\infty} e^{i\la u}  b^{-1}\trc\nabla u\psi(2^{-k}\la)(2^{-k}\la)f(\la\o)\la^2d\la d\o.
\end{array}
\end{equation}
Using the basic computation \eqref{ch3:bisb5} for the first term together with the estimate \eqref{ch3:etoile}, and \eqref{ch3:bisof13} for the second term together with the fact that $\trc L$ satisfies the same regularity assumptions than $b^{-1}\trc$, we obtain:
\begin{equation}\label{ch3:bisof27}
\norm{\nabla E_k}_{\ll{2}}\lesssim \ep2^k\ga_k.
\end{equation}
\eqref{ch3:bisof23}, \eqref{ch3:bisof24}, and \eqref{ch3:bisof27} yield:
$$|A_{1,l}|\lesssim \ep^2 2^{-l+k}\ep^2\ga_j\ga_k.$$
Together with \eqref{ch3:eq:A101}, this yields:
\begin{equation}\label{ch3:bisof28}
|A_1|\lesssim \left(\sum_{l>\frac{j+k}{2}}2^{-l}\right)\ep2^k\ep^2\ga_j\ga_k\les \ep^2 2^{-\frac{j-k}{2}}\ga_j\ga_k,
\end{equation}
which is consistent with \eqref{ch3:bisof2}.

\subsection{A second integration by parts in $u$}\label{ch3:sec:2ibpu}

To estimate $A_2$, we perform a second integration by parts relying again on \eqref{ch3:bisof4}. This leads to:  
\bea\label{ch3:bisof29}
A_2&= &\ds 2^{-2j}\int_{\MM}\int_{\S}\nabn P_{\leq \frac{j+k}{2}}(\nabn\trc)F_j(u)\overline{E_kf(t,x)}d\MM+\cdots,
\eea
where we only keep the worst term, which is the one containing two derivatives of $\trc$. It is at this stage that we need the estimate \eqref{ch3:estneeded2} for $\nabn P_l(\nabn\trc)$ which we recall now. We have:
\be\lab{ch3:estneeded2bis}
\normm{\nabn P_l(\nabn\trc)}_{L^2(\H_u)}\les \ep 2^l+ 2^{\frac{l}{2}}\mu(u),
\ee
where $\mu$ in a function satisfying:
$$\norm{\mu}_{L^2(\R)}\les\ep.$$
In view of the estimate \eqref{ch3:estneeded2bis}, we have:
\bee
\normm{\nabn P_{\leq \frac{j+k}{2}}(\nabn\trc)}_{L^2(\H_u)}&\les & \sum_{l\leq \frac{j+k}{2}}\norm{\nabn P_l(\nabn\trc)}_{L^2(\H_u)}\\
&\les& \sum_{l\leq \frac{j+k}{2}}(2^l\ep+2^{\frac{l}{2}}\mu(u))\\
&\les& \ep 2^{\frac{j+k}{2}}+ 2^{\frac{j+k}{4}}\mu(u).
\eee
In view of \eqref{ch3:bisof29}, this yields after applying Cauchy-Schwartz:
\bea\lab{ch3:elias14}
\ds |A_2|&\les &\ds 2^{-2j}\norm{E_kf}_{L^2(\MM)}\int_{\S}\bigg\|\Big\|\nabn P_{\leq \frac{j+k}{2}}(\nabn\trc)\Big\|_{L^2(\H_u)}F_j( u)\bigg\|_{L^2_u}d\o+\cdots\\
\nn&\les& 2^{-2j}\ep\gamma_k\int_{\S}\normm{(\ep 2^{\frac{j+k}{2}}+\ep 2^{\frac{j+k}{4}}\mu(u))F_j(u)}_{L^2_u}d\o+\cdots\\
\nn&\les& 2^{-2j}\ep\gamma_k\bigg(\ep 2^{\frac{j+k}{2}}\int_{\S}\norm{F_j(u)}_{L^2_u}d\o
+2^{\frac{j+k}{4}}\int_{\S}\norm{\mu}_{L^2(\R)}\norm{F_j(u)}_{L^\infty_u}d\o\bigg)+\cdots\\
\nn&\les& 2^{-\frac{j-k}{4}}\ep^2\gamma_k\gamma_j+\cdots,
\eea
where we used \eqref{ch3:bisof13} for $E_kf$, Cauchy-Schwarz in $\o$, and the estimates \eqref{ch3:bisof24} and \eqref{ch3:bisof24infty} for $F_j(u)$.

\subsection{End of the proof of Proposition \ref{ch3:bisorthofreq}} 

In view of \eqref{ch3:bisof8}, \eqref{ch3:jetla3}, \eqref{ch3:bisof21}, \eqref{ch3:bisof28} and \eqref{ch3:elias14}, we obtain:
\begin{equation}\label{ch3:bisof35}
\left|\int_{\MM}E_jf(t,x)\overline{E_kf(t,x)}d\MM \right|\lesssim \ep^22^{-\frac{|j-k|}{4}}\ga_j\ga_k\textrm{ for }|j-k|>2.
\end{equation}
Finally, \eqref{ch3:bisof35} together with Shur's Lemma yields:
\begin{equation}\label{ch3:bisof36}
\norm{Ef}_{\ll{2}}^2\lesssim\sum_{j\geq -1}\norm{E_jf}_{\ll{2}}^2+\ep^2\norm{f}^2_{\le{2}}.
\end{equation}
This concludes the proof of Proposition \ref{ch3:bisorthofreq}. 

\section{Control of the diagonal term}\lab{ch3:bissec:diagonal}

Since the orthogonality argument in angle is the core of this chapter, we choose to deal first with 
the control of the diagonal term in this section. We will then proceed with the orthogonality argument in angle in the rest of the chapter.

In order to control the diagonal term, we have to prove \eqref{ch3:bisdiiiiagonal1}:
\begin{equation}\label{ch3:bisdiiii1}
\norm{E^\nu_jf}_{\ll{2}}\lesssim \ep\ga^\nu_j.
\end{equation}
Recall that $E^\nu_j$ is given by:
\begin{equation}\label{ch3:bisdiiii2}
E^\nu_jf(t,x)=\int_{\S}b^{-1}(t,x,\o)\trc(t,x,\o)F_j( u)\eta_j^\nu(\o)d\o,
\end{equation}
where $F_j( u)$ is defined by:
\begin{equation}\label{ch3:bisdiiii3}
F_j( u)=\int_0^{+\infty}e^{i\la u}\psi(2^{-j}\la)f(\la\o)\la^2d\la.
\end{equation}
The proof of the estimate \eqref{ch3:bisdiiii1} will proceed in four steps:
\begin{itemize}
\item[Step 1.] We first consider a decomposition roughly of the type:
$$E^\nu_jf(t,x)=b^{-1}(t,x,\nu)\trc(t,x,\nu)\left(\int_{\S}F_j( u)\eta_j^\nu(\o)d\o\right)+\cdots,$$
so that we have to prove estimate \eqref{ch3:bisdiiii1} with $b^{-1}\trc$ replaced by 1.

\item[Step 2.] That estimate is obtained by considering the transport equation along $L_\nu$:
$$L_\nu\left(\int_{\S}F_j( u)\eta_j^\nu(\o)d\o\right)=\cdots.$$

\item[Step 3.] A certain term in the transport equation of Step 2 needs to be estimated using an energy estimate for the wave equation.

\item[Step 4.] We conclude the proof using the estimates obtained in Step 2 and Step 3.
\end{itemize}

\subsection{Step 1: freezing the $\o$ dependance in $b^{-1}\trc$}

In view of the estimate \eqref{ch3:estricciomega} for $\po b$, the estimate \eqref{ch3:estneeded1} for $b$, and the decomposition \eqref{ch3:dectrcom} for $\trc$, we have:
\be\label{ch3:wimby1}
b^{-1}(t,x,\o)\trc(t,x,\o)= f^j_1(t,x,\nu)+f^j_2(t,x,\nu,\o),
\ee
where $f^j_1$ only depends on $(t,x,\nu)$ and satisfies:
\be\label{ch3:wimby2}
\norm{f^j_1}_{L^\infty}\les \ep,
\ee
and where $f^j_2$ satisfies:
\be\label{ch3:wimby3}
\norm{f^j_2}_{L^\infty_u\lh{2}}\les 2^{-\frac{j}{2}}\ep,
\ee
with $u=u(.,\o)$. \eqref{ch3:wimby1} yields the following decomposition for the diagonal term:
\bee
E^\nu_jf(t,x)= f^j_1(t,x,\nu)\int_{\S}F_j( u)\eta_j^\nu(\o)d\o+\int_{\S}F_j( u)f^j_2(t,x,\o,\nu)\eta_j^\nu(\o)d\o,
\eee
which implies:
\bea
\label{ch3:wimby4}&&\normm{E^\nu_jf(t,x)}_{\ll{2}}\\
\nn&\les& \norm{f^j_1}_{\ll{\infty}}\normm{\int_{\S}F_j( u)\eta_j^\nu(\o)d\o}_{\ll{2}}+\int_{\S}\norm{F_j(u)}_{L^2_u}\norm{f^j_2}_{\li{\infty}{2}}\eta_j^\nu(\o)d\o\\
\nn&\les& \ep\normm{\int_{\S}F_j( u)\eta_j^\nu(\o)d\o}_{\ll{2}}+\ep\ga^\nu_j,
\eea
where we used in the last inequality the estimates \eqref{ch3:wimby2} and \eqref{ch3:wimby3}, Cauchy-Schwarz in $\o$, the size of the patch, and the estimate \eqref{ch3:bisof24} for $F_j(u)$.

\begin{remark}
The point of the decomposition \eqref{ch3:wimby1} is to allow us to replace in the diagonal term \eqref{ch3:bisdiiii2} the symbol $b^{-1}\trc$ with 1. An obvious way to achieve this is to write the following decomposition:
\be\lab{ch3:onexplique}
b^{-1}\trc(t,x,\o)=b^{-1}\trc(t,x,\nu)+(b^{-1}\trc(t,x,\o)-b^{-1}\trc(t,x,\nu)).
\ee
The first term clearly satisfies \eqref{ch3:wimby2} in view of the estimate \eqref{ch3:estneeded1} for $b$ and $\trc$. On the other hand, we obtain in \cite{param3} (see also \eqref{ch4:estricciomega}) 
the estimate $\po\trc\in\li{\infty}{2}$ which together with the estimate \eqref{ch3:estricciomega} for $\po b$ yields:
\be\lab{ch3:onexplique1}
\norm{\po(b^{-1}\trc)}_{\li{\infty}{2}}\les\ep.
\ee
Now, we have:
$$b^{-1}\trc(t,x,\o)-b^{-1}\trc(t,x,\nu)=(\o-\nu)\int_0^1\po(b^{-1}\trc)(t,x,\o_\sigma)d\sigma$$
which together with \eqref{ch3:onexplique1} is not enough to conclude since $\li{\infty}{2}$ and $L^\infty_{u_\sigma}L^2(\HH_{u_\sigma})$ are not comparable. 
We refer the reader to \cite{param3} where the decomposition \eqref{ch3:wimby1} as well as several others are proved (see also the discussion in section \ref{ch4:sec:adddec}).
\end{remark}

The following proposition allows us to estimate the right-hand side of \eqref{ch3:wimby4}.
\begin{proposition}\label{ch3:bisdiiiipropppp1}
We have the following bound:
\begin{equation}\label{ch3:bisdiiiipropppp2}
\normm{\int_{\S}F_j( u)\eta_j^\nu(\o)d\o}_{L^2_{u_\nu,x_\nu}L^\infty_t}\lesssim\gamma_j^\nu.
\end{equation}
\end{proposition}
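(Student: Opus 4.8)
\textbf{Proof plan for Proposition \ref{ch3:bisdiiiipropppp1}.}

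The plan is to reduce \eqref{ch3:bisdiiiipropppp2} to a one-dimensional oscillatory estimate along the null geodesic flow of $L_\nu$, exploiting the fact that $F_j(u)$ is a superposition of half-waves $e^{i\la u(t,x,\o)}$ with $\o$ confined to a cap of diameter $2^{-j/2}$ around $\nu$. First I would set up good coordinates: use the coordinate system $(t, u_\nu, x_\nu')$ adapted to the foliation $\HH_{u_\nu}$ (with $x_\nu'$ transported along the generators of $L_\nu$), so that the $L^2_{u_\nu,x_\nu'}L^\infty_t$ norm is literally an $L^2$ in $(u_\nu,x_\nu')$ of a sup in $t$. The key structural fact is that $L_\nu$ is the null generator of $\HH_{u_\nu}$, so $L_\nu(u_\nu)=0$; applying $L_\nu$ to $\int_{\SSS^2} F_j(u)\eta_j^\nu(\o)\,d\o$ produces a transport equation in which the phase derivative $L_\nu(u(t,x,\o))$ appears. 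The point is that for $\o$ in the cap, $u(t,x,\o)$ and $u_\nu=u(t,x,\nu)$ are close, and the comparison estimates \eqref{ch3:decNom}, \eqref{ch3:dectrcom}, together with $|N(t,x,\o)-N(t,x,\nu)|\simeq|\o-\nu|\les 2^{-j/2}$, give $L_\nu(u(t,x,\o)) = O(|\o-\nu|^2)$ — a gain of $2^{-j}$ in the most oscillatory factor. This is exactly what converts the naive loss of $2^{j/2}$ (visible in \eqref{ch3:bisb5ter}) into the sharp bound.

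Next I would integrate this transport equation along the flow of $L_\nu$ starting from $t=0$ (or from $\Si_0$, where the parametrix data is normalized as in \cite{param1}), obtaining a pointwise-in-$t$ representation
$$\int_{\SSS^2} F_j(u)\eta_j^\nu(\o)\,d\o \;=\; \left.\int_{\SSS^2} F_j(u)\eta_j^\nu(\o)\,d\o\right|_{t=0} \;+\; \int_0^t L_\nu\!\left(\int_{\SSS^2} F_j(u)\eta_j^\nu(\o)\,d\o\right)dt'.$$
For the initial-time term one invokes the control of the parametrix at $t=0$ from \cite{param2} (Chapter \ref{part:paraminit}); for the integral term one uses the gain $L_\nu u = O(2^{-j})$ described above, Plancherel in $\la$ (giving $\norm{F_j}_{L^2_{\o,u}}\les 2^j\ga_j$, cf. \eqref{ch3:bisof24}), Cauchy-Schwarz in $\o$ over a cap of area $\les 2^{-j}$, and the regularity estimates \eqref{ch3:estneeded1} for $\trc$, $\hch$, $b$. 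The factor $2^j$ from $F_j$, the $2^{-j}$ from the phase gain, and the $2^{-j/2}$ from $\sqrt{\mathrm{vol}(\mathrm{supp}\,\eta_j^\nu)}$ multiply against the Plancherel $\la$-weights so that everything collapses to $\ga_j^\nu$. One subtlety requiring care: when $L_\nu$ falls on $\eta_j^\nu(\o)$ or on the geometric weights rather than on the phase, and when one changes between the $u_\nu$ foliation and the $u(\cdot,\o)$ foliations — here the decompositions \eqref{ch3:decNom}, \eqref{ch3:dectrcom} of the relevant quantities into an $\o$-independent piece plus a small remainder (and the splitting $\hch=\chi_1+\chi_2$ of \eqref{ch3:dechch}) are exactly what allow the two foliations to be compared without loss.

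I expect the main obstacle to be controlling precisely this second derivative / two-foliation interaction: one has to differentiate the phase relation twice in $\o$ (or once in $\o$ and once along $L_\nu$) while only having the limited regularity \eqref{ch3:estneeded1}, \eqref{ch3:estneeded2} and the $\o$-regularity \eqref{ch3:estricciomega}--\eqref{ch3:dechch2} at one's disposal, and the $L^2_{u_\nu}$ and $L^\infty_{u(\cdot,\o)}L^2(\HH_{u(\cdot,\o)})$ norms are genuinely incomparable, so all estimates must be routed through the $\o$-independent pieces $F^j_1$, $f^j_1$, $\chi_1$ and their small remainders. This is the technical heart; the remaining bookkeeping (the frequency Plancherel, the cap-size Cauchy-Schwarz, and assembling the powers of $2^j$) is routine. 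The full details are in \cite{param4}, and here I would only present the transport-equation reduction and the counting of powers, deferring the numerous error terms to that reference in the spirit of the ``$+\cdots$'' convention adopted above.
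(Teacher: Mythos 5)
Your plan correctly identifies the transport equation along $L_\nu$ and the key phase gain $L_\nu(u(t,x,\o))=\frac{1}{2}b^{-1}|N-N_\nu|^2 = O(|\o-\nu|^2)=O(2^{-j})$ as the first step — this is indeed what the paper does (deriving \eqref{ch3:rg1} and \eqref{ch3:rg6bis}). However, your power counting does not close, and the step that closes it is missing. Going through the arithmetic carefully: the extra factor $i\la\simeq 2^j$ from differentiating the phase, times the gain $2^{-j}$ from $L_\nu u$, times the Cauchy--Schwarz factor $2^{-j/2}$ over the cap, times the Plancherel weight $\norm{F_j}_{L^2_{\o,u}}\lesssim 2^j\gamma_j^\nu$ gives $2^j\cdot 2^{-j}\cdot 2^{-j/2}\cdot 2^j\gamma_j^\nu = 2^{j/2}\gamma_j^\nu$, not $\gamma_j^\nu$. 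This is the same $2^{j/2}$ loss already visible in \eqref{ch3:bisb5ter}: applying $L_\nu$ merely trades an $L^\infty_t$ norm for one derivative, but that derivative acting on the phase contributes an overall $O(1)$ factor and therefore gains nothing. So FTC plus the naive $L^2$ estimate reduces the problem to a quantity of exactly the same size, and no amount of more careful bookkeeping of the powers of $2^j$ as you have set them up can recover the remaining $2^{-j/2}$.

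The ingredient that actually closes the argument is the energy estimate for the wave equation (Step 3 of the paper's proof). After Step 2, one is left with $\normm{\int_{\S}(2^{j/2}(\o-\nu))^2F_j(u)\eta_j^\nu d\o}_{L^2(\MM)}$, which is a priori of size $2^{j/2}\gamma_j^\nu$. The paper observes that $\phi=\int_{\S}(\o-\nu)^2F_j(u)\eta_j^\nu d\o$ is an approximate solution of $\square_{\bf g}\phi=0$, with error controlled by $\square_{\bf g}u=b^{-1}\trc=O(\ep)$; see \eqref{ch3:rg7bis}--\eqref{ch3:rg8}. Applying the energy inequality \eqref{bodega}, whose left side is a genuine $L^\infty_tL^2(\Si_t)$ norm squared and whose right side is initial data (controlled by $\gamma_j^\nu$ from \cite{param2}) plus a product $\norm{\square_{\bf g}\phi}_{L^2}\norm{\dd\phi}_{L^2}$ plus a deformation tensor term, and using the smallness of $\trc$ and of $\pi$ together with a trace estimate for $\pi_{N_\nu N_\nu}$, one obtains $\norm{\dd\phi}_{L^2(\MM)}\lesssim\ep\norm{\dd\phi}_{L^2_{u_\nu,x_\nu}L^\infty_t}+\gamma_j^\nu$ (this is \eqref{ch3:onemoretime}). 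It is precisely this $\ep$ in front of the dangerous term — coming ultimately from the smallness of $\square_{\bf g}u$ — that allows the bootstrap to absorb the $2^{j/2}$ loss. Your proposal has all the right moving parts around Step 2 (the $L_\nu$ transport, the $\o$-independent decompositions, the initial-data control) but treats the estimate as a one-shot integration, whereas the actual proof is an absorption argument in which the superposition is fed into the PDE energy estimate and the output norm appears on both sides with a small constant. Without this step, the plan gives $2^{j/2}\gamma_j^\nu$ and not $\gamma_j^\nu$.
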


\begin{remark}
In order to control the diagonal term, it suffices to have a bound of the $\ll{2}$ norm for the left-hand side of \eqref{ch3:bisdiiiipropppp2}. The improvement to a bound for the $L^2_{u_\nu,x_\nu}L^\infty_t$ norm will be crucial when proving the almost orthogonality in angle.
\end{remark}

Assuming the result of the proposition, estimates \eqref{ch3:bisdiiiipropppp2} and \eqref{ch3:wimby4} yield:
$$\normm{E^\nu_jf(t,x)}_{\ll{2}}\les\ep\ga^\nu_j,$$
which together with \eqref{ch3:bisdiiii2} and \eqref{ch3:wimby4} implies:
$$\norm{E_j^\nu f}_{\ll{2}}\les \ep\ga^\nu_j.$$
which is the wanted estimate \eqref{ch3:bisdiiii1}. This concludes the proof of Proposition \ref{ch3:bisdiiiiagonal}. 

\subsection{Step 2: A transport equation in the $L_\nu$ direction}

We still need to prove Proposition \ref{ch3:bisdiiiipropppp1}. Note that it suffices to show:
\be\label{ch3:rg1}
\normm{L_\nu\left(\int_{\S}F_j( u)\eta_j^\nu(\o)d\o\right)}_{\ll{2}}\lesssim\gamma_j^\nu.
\ee
Now, since the space-time gradient of $u$ is given by $b^{-1}L$, we have:
$$L_\nu\left(\int_{\S}F_j( u)\eta_j^\nu(\o)d\o\right)=2^j\int_{\S}b^{-1}\gg\Big(L(t,x,\o),L(t,x,\nu)\Big)F_j( u)\eta_j^\nu(\o)d\o,$$
where $F_j$ has been defined in \eqref{ch3:bisof7}. In view of \eqref{ch3:rg2}, we have:
\bea\label{ch3:rg2}
&& L_\nu\left(\int_{\S}F_j( u)\eta_j^\nu(\o)d\o\right)\\
\nn &=& 2^j b^{-1}(t,x,\nu)\int_{\S}\gg(L(t,x,\o),L(t,x,\nu))F_j( u)\eta_j^\nu(\o)d\o\\
\nn &&+2^j\int_{\S}(b^{-1}(t,x,\o)-b^{-1}(t,x,\nu))\gg(L(t,x,\o),L(t,x,\nu))F_j( u)\eta_j^\nu(\o)d\o.
\eea
Next, we estimate the second term in the right-hand side of \eqref{ch3:rg2}. We have:
\be\label{ch3:rg3}
\gg\Big(L(t,x,\o),L(t,x,\nu)\Big)=\gg(N(t,x,\o)-N(t,x,\nu),N(t,x,\o)-N(t,x,\nu)).
\ee
Thus, the estimate \eqref{ch3:estNomega} for $\po N$ and the size of the patch yields:
\be\lab{ch3:rg4}
\norm{\gg(L(t,x,\o),L(t,x,\nu))}_{\lh{\infty}}\les 2^{-j},
\ee
which implies:
\bea\label{ch3:rg5}
&&\normm{\int_{\S}(b^{-1}(t,x,\o)-b^{-1}(t,x,\nu))\gg(L(t,x,\o),L(t,x,\nu))F_j( u)\eta_j^\nu(\o)d\o}_{\ll{2}}\\
\nn&\les& \int_{\S}\norm{b^{-1}(t,x,\o)-b^{-1}(t,x,\nu)}_{\li{\infty}{2}}\norm{\gg(L(t,x,\o),L(t,x,\nu))}_{L^\infty}\norm{F_j( u)}_{L^2_u}\eta_j^\nu(\o)d\o\\
\nn&\les& 2^{-j}\ep\gamma_j^\nu,
\eea
where we used in the last inequality \eqref{ch3:rg4}, the estimate \eqref{ch3:estricciomega} for $\po b$, Cauchy-Schwarz in $\o$, the size of the patch, and \eqref{ch3:bisof24} for $F_j(u)$. \eqref{ch3:rg2} together with \eqref{ch3:rg5} and the estimate \eqref{ch3:estricciomega} for $\po b$ yields:
\bea\label{ch3:rg6}
&&\normm{L_\nu\left(\int_{\S}F_j( u)\eta_j^\nu(\o)d\o\right)}_{\ll{2}}\\
\nn&\les& 2^j\normm{\int_{\S}\gg(L(t,x,\o),L(t,x,\nu))F_j( u)\eta_j^\nu(\o)d\o}_{\ll{2}}+\ep\gamma_j^\nu.
\eea

Next, we estimate the right-hand side of \eqref{ch3:rg6}. Using \eqref{ch3:rg3}, the decomposition \eqref{ch3:decNom} for $N-N'$, and arguing as in \eqref{ch3:wimby4}, we obtain:
$$\normm{\int_{\S}\gg(L(t,x,\o),L(t,x,\nu))F_j( u)\eta_j^\nu(\o)d\o}_{\ll{2}} \les  \normm{\int_{\S}(\o-\nu)^2F_j( u)\eta_j^\nu(\o)d\o}_{\ll{2}}+2^{-j}\gamma_j^\nu.$$
Together with \eqref{ch3:rg6}, this implies:
\be\label{ch3:rg6bis}
\normm{L_\nu\left(\int_{\S}F_j( u)\eta_j^\nu(\o)d\o\right)}_{\ll{2}}\les\normm{\int_{\S}(2^{\frac{j}{2}}(\o-\nu))^2F_j( u)\eta_j^\nu(\o)d\o}_{\ll{2}}+\gamma_j^\nu.
\ee

Finally, we need to estimate the first term in the right-hand side of \eqref{ch3:rg6bis}. We will rely on the energy estimate for the wave equation. 

\subsection{Step3: The energy estimate for the wave equation}

Recall from \eqref{ch3:symbolE} that: 
$$\square_{\bf g}u=b^{-1}\trc.$$
Thus, we have:
\be\lab{ch3:rg7bis}
\square_{\gg}\left(\int_{\S}(\o-\nu)^2F_j( u)\eta_j^\nu(\o)d\o\right)=\int_{\S}b^{-1}(t,x,\o)\trc(t,x,\o)(2^{\frac{j}{2}}(\o-\nu))^2F_j( u)\eta_j^\nu(\o)d\o.
\ee
Arguing as in \eqref{ch3:wimby4}, we may replace $b^{-1}\trc$ by 1:
\bee
&&\normm{\int_{\S}b^{-1}(t,x,\o)\trc(t,x,\o)(2^{\frac{j}{2}}(\o-\nu))^2F_j( u)\eta_j^\nu(\o)d\o}_{\ll{2}}\\
&\les& \ep\normm{\int_{\S}(2^{\frac{j}{2}}(\o-\nu))^2F_j( u)\eta_j^\nu(\o)d\o}_{\ll{2}}+\ep\gamma^\nu_j
\eee
which together with \eqref{ch3:rg7bis} implies:
\bea\lab{ch3:rg8}
&&\normm{\square_{\gg}\left(\int_{\S}(\o-\nu)^2F_j( u)\eta_j^\nu(\o)d\o\right)}_{\ll{2}}\\
\nn&\les& \ep\normm{\int_{\S}(2^{\frac{j}{2}}(\o-\nu))^2F_j( u)\eta_j^\nu(\o)d\o}_{\ll{2}}+\ep\gamma^\nu_j.
\eea
Let $\phi$ be the scalar function in the left-hand side of \eqref{ch3:rg7bis}, i.e.:
$$\phi=\int_{\S}(\o-\nu)^2F_j( u)\eta_j^\nu(\o)d\o.$$
Then, using the energy estimate for the wave equation \eqref{nrj1} we obtain:
\bea\lab{bodega}
\nn\norm{\dd\phi}_{L^\infty_tL^2(\Sigma_t)}^2&\lesssim& \norm{\nabla\phi(0,.)}^2_{L^2(\Si_0)}+\norm{T\phi(0,.)}^2_{L^2(\Si_0)}+\norm{\square_{\gg}\phi}_{L^2(\MM)}\norm{\dd\phi}_{L^2(\MM)}\\
&&+\left|\int_{\MM}Q_{\a\b}\pi^{\a\b}d\MM\right|,
\eea
where $\pi$ is the deformation tensor of $T$
$$\pi_{\a\b}=\dd_\a T_\b+\dd_\b T_\a.$$
and $Q_{\a\b}$ on $\MM$ is the energy momentum tensor associated to $\phi$
$$Q_{\a\b}=Q_{\a\b}[\phi]=\pr_\a\phi\pr_\b\phi-\frac{1}{2}\g_{\a\b}\left(\g^{\mu\nu}\pr_\mu\phi\pr_\nu\phi\right).$$
The control of the parametrix at initial time in \cite{param2} (see also Chapter \ref{part:paraminit}) yields
\be\lab{bodega1}
\norm{\nabla\phi(0,.)}_{L^2(\Si_0)}+\norm{T\phi(0,.)}_{L^2(\Si_0)}\les\gamma^\nu_j.
\ee
Next, we consider the last term in the right-hand side of \eqref{bodega}. From the maximal foliation assumption, $\pi$ is traceless, so that
\bee
Q_{\a\b}\pi^{\a\b}&=&\pi^{\a\b}\pr_\a\phi\pr_\b\phi\\
&=& 2^{2j}\int_{\S}\int_{\S}\pi_{N N'}(\o-\nu)^2(\o'-\nu)^2F_j( u)F_j(u')\eta_j^\nu(\o)\eta_j^\nu(\o')d\o d\o'
\eee
Using \eqref{ch3:estNomega}, one obtains
$$Q_{\a\b}\pi^{\a\b}= 2^{2j}\pi_{N_\nu N_{\nu}}\phi^2+\cdots,$$
and thus
\be\lab{bodega2}
Q_{\a\b}\pi^{\a\b}\simeq \pi_{N_\nu N_{\nu}}(\dd\phi)^2+\cdots.
\ee
It turns out that we have a trace estimate for $\pi_{N_\nu N_\nu}$ (see details in \cite{param4}):
$$\norm{\pi_{N_\nu N_\nu}}_{L^\infty_{u_\nu, {x'}_\nu}L^2_t}\les\ep$$
which together with \eqref{bodega2} implies
\bea\lab{bodega3}
\norm{Q_{\a\b}\pi^{\a\b}}_{L^2(\MM)}&\les& \norm{\pi_{N_\nu N_\nu}}_{L^\infty_{u_\nu, {x'}_\nu}L^2_t}\normm{\dd\phi}_{L^2_{u_\nu,x_\nu}L^\infty_t}\normm{\dd\phi}_{L^2(\MM)}\\
\nn&\les& \ep\normm{\dd\phi}_{L^2_{u_\nu,x_\nu}L^\infty_t}\normm{\dd\phi}_{L^2(\MM)}.
\eea
Finally, \eqref{ch3:rg8}-\eqref{bodega3} implies
$$\norm{\dd\phi}_{L^2(\MM)}\les \ep\norm{\dd\phi}_{L^2_{u_\nu,x_\nu}L^\infty_t}+\gamma^\nu_j,$$
which is equivalent to
\be\lab{ch3:onemoretime}
\normm{\int_{\S}(2^{\frac{j}{2}}(\o-\nu))^2F_j( u)\eta_j^\nu(\o)d\o}_{\ll{2}}\les \ep\normm{\int_{\S}(2^{\frac{j}{2}}(\o-\nu))^2F_j( u)\eta_j^\nu(\o)d\o}_{L^2_{u_\nu,x_\nu}L^\infty_t}+\gamma^\nu_j.
\ee
In view of \eqref{ch3:onemoretime} and \eqref{ch3:rg6bis}, we obtain:
\bee
\normm{L_\nu\left(\int_{\S}F_j( u)\eta_j^\nu(\o)d\o\right)}_{\ll{2}}&\les&\ep\normm{\int_{\S}(2^{\frac{j}{2}}(\o-\nu))^2F_j( u)\eta_j^\nu(\o)d\o}_{L^2_{u_\nu,x_\nu}L^\infty_t}+\gamma_j^\nu,
\eee
and thus:
$$\normm{\int_{\S}F_j( u)\eta_j^\nu(\o)d\o}_{L^2_{u_\nu,x_\nu}L^\infty_t}\les\gamma_j^\nu,$$
which is the desired estimate \eqref{ch3:bisdiiiipropppp2}.

\section{Almost orthogonality in angle}\label{ch3:bissec:orthoangle}

We have to prove \eqref{ch3:bisorthoangle1}:
\begin{equation}\label{ch3:bisoa1}
\norm{E_jf}_{\ll{2}}^2\lesssim\sum_{\nu\in\Gamma}\norm{E^\nu_jf}_{\ll{2}}^2+\ep^2\ga_j^2.
\end{equation}
This will result from an estimate for:
\be\lab{ch3:pc}
\left|\int_{\MM}E^\nu_jf(t,x)\overline{E^{\nu'}_jf(t,x)}d\MM \right|.
\ee

Let us introduce integration by parts first with respect to tangential directions, and then with respect to $L$.

\subsection{Integration by parts}

\subsubsection{Integration by parts in tangential directions}\label{ch3:sec:ibptangential}

By definition of $\nabb$, we have $\nabb h=\nabla h-(\nabn h)N$ for any function $h$ on $\Sigma_t$. In particular, we have $\nabb( u)=0$ and $\nabb( u')={b'}^{-1}N'-{b'}^{-1}\gg(N',N) N$. Now, since $\gg(N'-\gn N, N')=1-\gg(N',N)^2$ and $\nabla u'={b'}^{-1}N'$, we deduce:
\bea\label{ch3:bisoa15} 
e^{i\la u-i\la' u'}&=&\frac{i}{\la'\gg(N'-\gn N, \nabla u')}\nabla_{N'-\gn N}(e^{i\la u-i\la' u'})\\
\nn&=&\frac{ib'}{\la'(1-\gg(N',N)^2)}\nabla_{N'-\gn N}(e^{i\la u-i\la' u'}),
\eea
where we have used the fact that $N'-\gn N$ is a tangent vector with respect of the level surfaces of $ u$. We consider an oscillatory integral of the following form:
$$\int_{\MM}\int_{\S\times\S}h(t,x)F_j(u)F_j(u')\eta_j^\nu(\o)\eta_j^{\nu'}(\o')d\o d\o'd\MM,$$
where $h$ is a scalar function on $\MM$. Integrating by parts once using \eqref{ch3:bisoa15} yields:
\bee
&&\int_{\MM}\int_{\S\times\S}h(t,x)F_j(u)F_j(u')\eta_j^\nu(\o)\eta_j^{\nu'}(\o')d\o d\o'd\MM\\
\nn&=&   -i2^{-j}\int_{\MM}\int_{\S\times\S}\frac{b'}{1-\gn^2}((N'-\gn N)(h)+\cdots)\\
\nn&&\times F_j(u)F_j(u')\eta_j^\nu(\o)\eta_j^{\nu'}(\o')d\o d\o'd\MM,
\eee
where we only kept the term where the derivative falls on $h$, and neglected for the simplicity of the exposition the terms when the derivative falls on the denominator of the right-hand side of \eqref{ch3:bisoa15} or on the volume element of $\MM$. In view of \eqref{ch3:threomega1ter}, we have:
\be\lab{ch3:borekisback}
N'-\gn N\sim N'-N\sim |\o-\o'|\sim |\nu-\nu'|,
\ee
and:
\be\lab{ch3:borek}
1-\gn=\frac{\gg(N-N',N-N')}{2}\sim |\o-\o'|^2\sim |\nu-\nu'|^2,
\ee
and we thus obtain:
\bea\lab{ch3:fete}
&&\int_{\MM}\int_{\S\times\S}h(t,x)F_j(u)F_j(u')\eta_j^\nu(\o)\eta_j^{\nu'}(\o')d\o d\o'd\MM\\
\nn&=&   i\frac{1}{2^j|\nu-\nu'|}\int_{\MM}\int_{\S\times\S}b'\nabb(h) F_j(u)F_j(u')\eta_j^\nu(\o)\eta_j^{\nu'}(\o')d\o d\o'd\MM+\cdots.
\eea

\begin{remark}\lab{ch3:remark:simplify}
In the formula \eqref{ch3:fete}, we neglect two types of terms for the simplicity of the exposition. First, we neglect the term  when the derivative falls on the denominator of the right-hand side of \eqref{ch3:bisoa15} or on the volume element of $\MM$. Next, make the following approximation:
$$\frac{N'-\gn N}{1-\gn^2}\sim \frac{1}{2^j|\nu-\nu'|}.$$
In the actual proof, we use \eqref{ch3:borek} to derive the following expansion:
\be\lab{ch3:nice27}
\frac{1}{1-\gn^2}=\frac{1}{|N_\nu-N_{\nu'}|^2}\left(\sum_{p, q\geq 0}c_{pq}\left(\frac{N-N_\nu}{|N_\nu-N_{\nu'}|}\right)^p\left(\frac{N'-N_{\nu'}}{|N_\nu-N_{\nu'}|}\right)^q\right),
\ee
for some explicit real coefficients $c_{pq}$ such that the series 
$$\sum_{p, q\geq 0}c_{pq}x^py^q$$
has radius of convergence 1. Then, \eqref{ch3:fete} corresponds to the first term in the expansion \eqref{ch3:nice27} with the additional simplification which consists in replacing $|N_\nu-N_{\nu'}|$ with $|\nu-\nu'|$ again in view of \eqref{ch3:borek}. While these approximations greatly simplify the exposition, they still allow us to exhibit typical terms in the proof of the almost orthogonality in angle. 
\end{remark}

\subsubsection{Integration by parts in $L$}

Next, we also introduce integrations by parts with respect to $L$. Since $L(u)=0$ and $L(u')={b'}^{-1}\gg(L,L')$, we have:
\begin{equation}\label{ch3:ibpl} 
e^{i\la u-i\la' u'}=\frac{ib'}{\la'\gg(L,L')}L(e^{i\la u-i\la' u'}).
\end{equation}
We consider an oscillatory integral of the following form:
$$\int_{\MM}\int_{\S\times\S}h(t,x)F_j(u)F_j(u')\eta_j^\nu(\o)\eta_j^{\nu'}(\o')d\o d\o'd\MM,$$
where $h$ is a scalar function on $\MM$. Integrating by parts once using \eqref{ch3:bisoa15} yields:
\bee
&&\int_{\MM}\int_{\S\times\S}h(t,x)F_j(u)F_j(u')\eta_j^\nu(\o)\eta_j^{\nu'}(\o')d\o d\o'd\MM\\
\nn&=&   -i2^{-j}\int_{\MM}\int_{\S\times\S}\frac{b'}{\gl}(L(h)+\cdots)F_j(u)F_j(u')\eta_j^\nu(\o)\eta_j^{\nu'}(\o')d\o d\o'd\MM,
\eee
where we only kept the term where the derivative falls on $h$, and neglected for the simplicity of the exposition the term when the derivative falls on the denominator of the right-hand side of \eqref{ch3:ibpl} or on the volume element. Using the fact that:
\be\lab{ch3:nice24}
\gg(L,L')=-1+\gn
\ee
together with \eqref{ch3:borek}, and keeping only the first term in the expansion \eqref{ch3:nice27}, with the additional simplification which consists in replacing $|N_\nu-N_{\nu'}|$ with $|\nu-\nu'|$, we obtain:
\bea\lab{ch3:fetebis}
&&\int_{\MM}\int_{\S\times\S}h(t,x)F_j(u)F_j(u')\eta_j^\nu(\o)\eta_j^{\nu'}(\o')d\o d\o'd\MM\\
\nn&=&   i\frac{1}{2^j|\nu-\nu'|^2}\int_{\MM}\int_{\S\times\S}b'L(h) F_j(u)F_j(u')\eta_j^\nu(\o)\eta_j^{\nu'}(\o')d\o d\o'd\MM+\cdots.
\eea

\subsection{Presence of a log-loss}\lab{ch3:sec:logloss}

Let us explain why proceeding directly by integration by parts in \eqref{ch3:pc} results in a log-loss. Let us define $\mathcal{E}_{j, \nu, \nu'}$ as: 
$$\mathcal{E}_{j, \nu, \nu'}=\int_{\MM}E^\nu_jf(t,x)\overline{E^{\nu'}_jf(t,x)}d\MM.$$
We have:
\bee
\mathcal{E}_{j, \nu, \nu'}&=&   \int_{\S\times\S}\int_0^{+\infty}\int_0^{+\infty}
\left(\int_{\MM}e^{i\la u-i\la' u'} b^{-1}\trc {b'}^{-1}\trc'd\MM\right)\\
&&\times\eta_j^\nu(\o)\eta_j^{\nu'}(\o')\psi(2^{-j}\la)\psi(2^{-j}\la') f(\la\o)f(\la'\o')\la^2 {\la'}^2d\la d\la' d\o d\o'.
\eee
We integrate by parts tangentially using \eqref{ch3:fete}. Consider the term where the tangential derivative falls on $\trc$, which is of the form:
\bee
\frac{1}{2^j|\nu-\nu'|}&&\int_{\S\times\S}\int_0^{+\infty}\int_0^{+\infty}\left(\int_{\MM}e^{i\la u-i\la' u'} b^{-1}\nabb\trc {b'}^{-1}\trc'd\MM\right)\\
&&\times\eta_j^\nu(\o)\eta_j^{\nu'}(\o')\psi(2^{-j}\la)\psi(2^{-j}\la') f(\la\o)f(\la'\o')\la^2 {\la'}^2d\la d\la' d\o d\o'.
\eee
Since $L\nabb\trc$ is the only derivative of $\nabb\trc$ for which we have an estimate, our next integration by parts must be with respect to $L$, that is we use \eqref{ch3:fetebis}. Consider the term where the $L$ derivative falls on $\trc'$, which is of the form:
\bea\lab{ch3:fof}
\frac{1}{2^{2j}|\nu-\nu'|^3}&&\int_{\S\times\S}\int_0^{+\infty}\int_0^{+\infty}\left(\int_{\MM}e^{i\la u-i\la' u'} b^{-1}\nabb\trc {b'}^{-1}L(\trc')d\MM\right)\\
\nn&&\times\eta_j^\nu(\o)\eta_j^{\nu'}(\o')\psi(2^{-j}\la)\psi(2^{-j}\la') f(\la\o)f(\la'\o')\la^2 {\la'}^2d\la d\la' d\o d\o'.
\eea
Now, note in view of \eqref{ch3:nice24}, \eqref{ch3:borek} and the estimate \eqref{ch3:estNomega} for $\po N$, that:
$$\gg(L,L')\sim |\nu-\nu'|^2,\,\gg(L,e'_A)=\gg(L-L',e'_A)\sim |\nu-\nu'|\textrm{ and }\gg(L,\lb')=-2+\gg(L,L')\sim -2.$$
Thus, decomposing $L$ on the frame $L', N', e'_A$, we obtain:
\be\lab{ch3:encoreuneffort}
L\sim L'+|\nu-\nu'|\nabb'+|\nu-\nu'|^2N'.
\ee
Together with \eqref{ch3:fof}, we finally obtain the sum of three terms:
\bea\lab{ch3:fff}
&&\mathcal{E}_{j, \nu, \nu'}\\
\nn&=&\mathcal{E}_{j, \nu, \nu'}[1]+\mathcal{E}_{j, \nu, \nu'}[2]+\mathcal{E}_{j, \nu, \nu'}[3]\\
\nn&=&\frac{1}{2^{2j}|\nu-\nu'|^3}\int_{\S\times\S}\int_0^{+\infty}\int_0^{+\infty}\left(\int_{\MM}e^{i\la u-i\la' u'} b^{-1}\nabb\trc {b'}^{-1}L'(\trc')d\MM\right)\\
\nn&&\times\eta_j^\nu(\o)\eta_j^{\nu'}(\o')\psi(2^{-j}\la)\psi(2^{-j}\la') f(\la\o)f(\la'\o')\la^2 {\la'}^2d\la d\la' d\o d\o'\\
\nn&&+\frac{1}{2^{2j}|\nu-\nu'|^2}\int_{\S\times\S}\int_0^{+\infty}\int_0^{+\infty}\left(\int_{\MM}e^{i\la u-i\la' u'} b^{-1}\nabb\trc {b'}^{-1}\nabb'(\trc')d\MM\right)\\
\nn&&\times\eta_j^\nu(\o)\eta_j^{\nu'}(\o')\psi(2^{-j}\la)\psi(2^{-j}\la') f(\la\o)f(\la'\o')\la^2 {\la'}^2d\la d\la' d\o d\o'\\
\nn&&+\frac{1}{2^{2j}|\nu-\nu'|}\int_{\S\times\S}\int_0^{+\infty}\int_0^{+\infty}\left(\int_{\MM}e^{i\la u-i\la' u'} b^{-1}\nabb\trc {b'}^{-1}N'(\trc')d\MM\right)\\
\nn&&\times\eta_j^\nu(\o)\eta_j^{\nu'}(\o')\psi(2^{-j}\la)\psi(2^{-j}\la') f(\la\o)f(\la'\o')\la^2 {\la'}^2d\la d\la' d\o d\o'.
\eea
We consider the second term in the right-hand side of \eqref{ch3:fff} which is of the form:
\bee
\mathcal{E}_{j, \nu, \nu'}[2]&=&\frac{1}{2^{2j}|\nu-\nu'|^2}\int_{\MM}\left(\int_{\S}b^{-1}\nabb\trc F_j(u)\eta^\nu_j(\o)d\o\right)\\
&&\times\left(\int_{\S}{b'}^{-1}\nabb'\trc' F_j(u')\eta^{\nu'}_j(\o')d\o'\right)d\MM.
\eee
We claim that such a term leads to a log-loss. Indeed, we have:
\bea
\nn&&\left|\mathcal{E}_{j, \nu, \nu'}[2]\right|\\
\nn&\les& \frac{1}{2^{2j}|\nu-\nu'|^2}\normm{\int_{\S}b^{-1}\nabb\trc F_j(u)\eta^\nu_j(\o)d\o}_{L^2(\MM)}\normm{\int_{\S}{b'}^{-1}\nabb'\trc' F_j(u')\eta^{\nu'}_j(\o')d\o'}_{L^2(\MM)}\\
\nn&\les& \frac{1}{2^{2j}|\nu-\nu'|^2}\left(\int_{\S}\norm{b^{-1}\nabb\trc F_j(u)}_{L^2(\MM)}\eta^\nu_j(\o)d\o\right)\\
\nn&&\times\left(\int_{\S}\norm{{b'}^{-1}\nabb'\trc' F_j(u')}_{L^2(\MM)}\eta^{\nu'}_j(\o')d\o'\right)\\
\nn&\les& \frac{1}{2^{2j}|\nu-\nu'|^2}\left(\int_{\S}\norm{b^{-1}}_{L^\infty}\norm{\nabb\trc}_{\li{\infty}{2}}\norm{F_j(u)}_{L^2_u}\eta^\nu_j(\o)d\o\right)\\
\nn&& \times\left(\int_{\S}\norm{{b'}^{-1}}_{L^\infty}\norm{\nabb'\trc'}_{\li{\infty}{2}} \norm{F_j(u')}_{L^2_{u'}}\eta^{\nu'}_j(\o')d\o'\right)\\
\lab{ch3:chuddington}&\les &\frac{\ep^2\gamma^\nu_j\gamma^{\nu'}_j}{(2^{\frac{j}{2}}|\nu-\nu'|)^2},
\eea
where we used in the last inequality Cauchy-Schwartz in $\o$ and $\o'$ which gains the square root of the volume of the patch, the estimates \eqref{ch3:estneeded1} for $b$ and $\trc$, and the estimate \eqref{ch3:bisof24} for $F_j(u)$ and $F_j(u')$. This leads to a log-loss since we have:
\begin{equation}\label{ch3:bisoa5}
\sup_{\nu}\sum_{\nu'} \frac{1}{(2^{j/2}|\nu-\nu'|)^{2}}\sim j.
\end{equation}
Indeed, note that $\nu'$ runs on a lattice on $\S$ of basic size $2^{-j/2}$ so that \eqref{ch3:bisoa5} corresponds to the sum 
$$\sum_{l\in\mathbb{Z}^2,\, 1\leq |l|\leq 2^{j/2}}\frac{1}{|l|^2}\sim j.$$

\subsection{Strategy of the proof of Proposition \ref{ch3:bisorthoangle}}

Let us explain informally the strategy of the proof. As we noticed in the previous section, the second term in \eqref{ch3:fff} contains a log-loss. Let us start by showing that the first and the third term in the right-hand side of \eqref{ch3:fff} do not contain a log-loss.

\subsubsection{Control of the first term in the right-hand side of \eqref{ch3:fff}} We have
\bee
\mathcal{E}_{j, \nu, \nu'}[1]&=&\frac{1}{2^{2j}|\nu-\nu'|^3}\int_{\MM}\left(\int_{\S}b^{-1}\nabb\trc F_j(u)\eta^\nu_j(\o)d\o\right)\\
&&\times\left(\int_{\S}{b'}^{-1}L'(\trc') F_j(u')\eta^{\nu'}_j(\o')d\o'\right)d\MM.
\eee
In view of the Raychaudhuri equation \eqref{ch3:raychaudhuri}, we have:
$$L'(\trc')=-|\hch'|^2+\cdots,$$
where we keep only the worst term. Thus, we obtain
$$\mathcal{E}_{j, \nu, \nu'}[1]=\frac{1}{2^{2j}|\nu-\nu'|^3}\int_{\MM}\left(\int_{\S}b^{-1}\nabb\trc F_j(u)\eta^\nu_j(\o)d\o\right)\left(\int_{\S}{b'}^{-1}|\hch'|^2 F_j(u')\eta^{\nu'}_j(\o')d\o'\right)d\MM.$$
Let us decompose:
\be\lab{ch3:fff1}
{b'}^{-1}|\hch'|^2=b^{-1}_{\nu'}\hch_{\nu'}\hch+({b'}^{-1}|\hch'|^2-b^{-1}_{\nu'}\hch_{\nu'}\hch),
\ee
and let us assume for the moment that we can control the second term in \eqref{ch3:fff1}. Then, we are led to control:
$$\frac{1}{2^{2j}|\nu-\nu'|^3}\int_{\MM}\left(\int_{\S}b^{-1}\hch\nabb\trc F_j(u)\eta^\nu_j(\o)d\o\right)b^{-1}_{\nu'}\hch_{\nu'}\left(\int_{\S} F_j(u')\eta^{\nu'}_j(\o')d\o'\right)d\MM.$$
We have:
\bea
\nn&&\left|\frac{1}{2^{2j}|\nu-\nu'|^3}\int_{\MM}\left(\int_{\S}b^{-1}\hch\nabb\trc F_j(u)\eta^\nu_j(\o)d\o\right)b^{-1}_{\nu'}\hch_{\nu'}\left(\int_{\S} F_j(u')\eta^{\nu'}_j(\o')d\o'\right)d\MM\right|\\
\nn&\les& \frac{1}{2^{2j}|\nu-\nu'|^3}\normm{\int_{\S}b^{-1}\hch\nabb\trc F_j(u)\eta^\nu_j(\o)d\o}_{L^2(\MM)}\normm{b^{-1}_{\nu'}\hch_{\nu'}\int_{\S} F_j(u')\eta^{\nu'}_j(\o')d\o'}_{L^2(\MM)}\\
\nn&\les& \frac{1}{2^{2j}|\nu-\nu'|^3}\left(\int_{\S}\norm{b^{-1}\hch\nabb\trc F_j(u)}_{L^2(\MM)}\eta^\nu_j(\o)d\o\right)\\
\nn&&\times\normm{b^{-1}_{\nu'}\hch_{\nu'}}_{L^\infty_{u_{\nu'}, {x'}_{\nu'}}L^2_t}\normm{\int_{\S}F_j(u')\eta^{\nu'}_j(\o')d\o'}_{L^2_{u_{\nu'}, {x'}_{\nu'}}L^\infty_t}\\
\nn&\les& \frac{\ep \ga^{\nu'}_j}{2^{2j}|\nu-\nu'|^3}\int_{\S}\norm{b^{-1}}_{L^\infty(\MM)}\norm{\hch}_{\xt{\infty}{2}}\norm{\nabb\trc}_{\xt{\infty}{2}} \norm{F_j(u)}_{L^2_u}\eta^\nu_j(\o)d\o\\
\lab{ch3:fff2}&\les& \frac{\ep^2\ga^\nu_j\ga^{\nu'}_j}{(2^{\frac{j}{2}}|\nu-\nu'|)^3},
\eea
where we used the estimate \eqref{ch3:estneeded1} for $\trc$, $\hch$ and $b$, the estimate \eqref{ch3:bisdiiiipropppp2}, Cauchy-Schwartz in $\o$, the size of the patch, and the estimate \eqref{ch3:bisof24} for $F_j(u)$. Note that the right-hand side of \eqref{ch3:fff2} does not contain a log-loss since:
\begin{equation}\label{ch3:fff3}
\sup_{\nu}\sum_{\nu'} \frac{1}{(2^{j/2}|\nu-\nu'|)^3}\les 1.
\end{equation}

\begin{remark}
While the estimate obtained in \eqref{ch3:fff2} is correct, one has to modify slightly the method leading to it. Indeed, $\hch$ does not have enough regularity with respect to $\o$ to be able to handle the second term in the decomposition \eqref{ch3:fff1}. The way to overcome this is to make use of the decomposition \eqref{ch3:dechch} for $\hch$:
$$\hch=\chi_1+\chi_2.$$
Then, we exploit the fact that, in view of the estimate \eqref{ch3:dechch2}, $\chi_1$ has better regularity than $\hch$ with respect to $(t,x)$, while $\chi_2$ has better regularity than $\hch$ with respect to $\o$. We refer to \cite{param4} for more details.
\end{remark}

\subsubsection{Control of the third term in the right-hand side of \eqref{ch3:fff}}

We have
\bee
\mathcal{E}_{j, \nu, \nu'}[3]&=&\frac{1}{2^{2j}|\nu-\nu'|}\int_{\MM}\left(\int_{\S}b^{-1}\nabb\trc F_j(u)\eta^\nu_j(\o)d\o\right)\\
&&\times\left(\int_{\S}{b'}^{-1}N'(\trc') F_j(u')\eta^{\nu'}_j(\o')d\o'\right)d\MM.
\eee
Now, we have:
\bea
\nn&&\left|\frac{1}{2^{2j}|\nu-\nu'|}\int_{\MM}\left(\int_{\S}b^{-1}\nabb\trc F_j(u)\eta^\nu_j(\o)d\o\right)\left(\int_{\S}{b'}^{-1}N'\trc' F_j(u')\eta^{\nu'}_j(\o')d\o'\right)d\MM\right|\\
\nn&\les& \frac{1}{2^{2j}|\nu-\nu'|}\normm{\int_{\S}b^{-1}\nabb\trc F_j(u)\eta^\nu_j(\o)d\o}_{L^2(\MM)}\normm{\int_{\S}{b'}^{-1}N'\trc' F_j(u')\eta^{\nu'}_j(\o')d\o'}_{L^2(\MM)}\\
\nn&\les& \frac{1}{2^{2j}|\nu-\nu'|}\left(\int_{\S}\norm{b^{-1}\nabb\trc F_j(u)}_{L^2(\MM)}\eta^\nu_j(\o)d\o\right)\\
\nn&&\times\left(\int_{\S}\norm{{b'}^{-1}N'\trc' F_j(u')}_{L^2(\MM)}\eta^{\nu'}_j(\o')d\o'\right)\\
\nn&\les& \frac{1}{2^{2j}|\nu-\nu'|}\left(\int_{\S}\norm{b^{-1}}_{L^\infty}\norm{\nabb\trc}_{\li{\infty}{2}}\norm{F_j(u)}_{L^2_u}\eta^\nu_j(\o)d\o\right)\\
\nn&& \times\left(\int_{\S}\norm{{b'}^{-1}}_{L^\infty}\norm{N'\trc'}_{\li{\infty}{2}} \norm{F_j(u')}_{L^2_{u'}}\eta^{\nu'}_j(\o')d\o'\right)\\
\lab{ch3:fff4}&\les &\frac{\ep^2\gamma^\nu_j\gamma^{\nu'}_j}{2^{\frac{j}{2}}(2^{\frac{j}{2}}|\nu-\nu'|)},
\eea
where we used in the last inequality Cauchy-Schwartz in $\o$ and $\o'$ which gains the square root of the volume of the patch, the estimates \eqref{ch3:estneeded1} for $b$ and $\trc$, and the estimate \eqref{ch3:bisof24} for $F_j(u)$ and $F_j(u')$. Note that the right-hand side of \eqref{ch3:fff4} does not contain a log-loss since we have:
\begin{equation}\label{ch3:fff5}
\sup_{\nu}\sum_{\nu'} \frac{1}{2^{\frac{j}{2}}(2^{\frac{j}{2}}|\nu-\nu'|)}\les 1.
\end{equation}

\subsubsection{A decomposition for $E^\nu_jf$}

To remove the log-loss exhibited in \eqref{ch3:chuddington} \eqref{ch3:bisoa5}, we rely on a decomposition of $\trc$ using the geometric Littlewood-Paley projections $P_j$. We have:
$$\trc=P_{\leq j/2}(\trc)+\sum_{l>j/2}P_l\trc$$
which in turn yields the following decomposition for $E^\nu_jf$:
\begin{equation}\label{ch3:bisoa7}
E^\nu_jf(t,x)=\sum_{l\geq j/2}E^{\nu,l}_jf(t,x),
\end{equation}
where:
\begin{equation}\label{ch3:bisoa8}
E^{\nu,l}_jf(t,x)=\int_{\S}b(t,x,\o)^{-1}P_l\trc(t,x,\o)F_j(u)\eta^\nu_j(\o) d\o,\,\,\,\forall l>\frac{j}{2}
\end{equation}
and:
\begin{equation}\label{ch3:bisoa8:1}
E^{\nu,j/2}_jf(t,x)=\int_{\S}b(t,x,\o)^{-1}P_{\leq j/2}\trc(t,x,\o)F_j(u)\eta^\nu_j(\o) d\o.
\end{equation}
In order to prove almost orthogonality in angle, i.e. \eqref{ch3:bisoa1}, we will estimate:
\be\lab{ch3:fff6}
\left|\sum_{l,m}\int_{\MM}E^{\nu,l}_jf(t,x)\overline{E^{\nu',m}_jf(t,x)}d\MM \right|.
\ee

\subsubsection{The mechanism to remove the log-loss}\lab{sec:logremoval}

In order to explain the mechanism which allows us to remove the log-loss, let us assume for convenience that $m\leq l$ in \eqref{ch3:fff6}. Then, notice first from \eqref{ch3:fff}, \eqref{ch3:bisoa5}, \eqref{ch3:fff3} and \eqref{ch3:fff5} that the only term in the right-hand side of \eqref{ch3:fff} which contains a log-loss is the second one, i.e the term which contains only tangential derivatives. In order to remove the log-loss, our goal will be to always put more tangential derivatives on the lowest frequency, i.e. $P_m\trc'$ (as opposed to the higher frequency $P_l\trc$). This is achieved as follows (see \cite{param4} for the details):
\begin{enumerate}
\item Integrate by parts with respect to $L$ using \eqref{ch3:fetebis}. 

\item One term corresponds to the case where the $L$ derivative falls on the largest frequency $P_l\trc$, while the other term corresponds to the case where $L$ falls on the lowest frequency $P_m\trc'$. For the second term,  decompose the $L$ derivative on the frame $L', N', e'_A$ as in \eqref{ch3:encoreuneffort}. 

\item Notice that the terms involving $L$, $L'$ or $N'$ are estimated in the spirit of \eqref{ch3:fff2} and \eqref{ch3:fff4}, and should in principle contain no log-loss in view of \eqref{ch3:fff3} and \eqref{ch3:fff5}.

\item Finally, the last term is the one containing the $\nabb'$ derivative. This term is the only one which contains the log-loss exhibited in \eqref{ch3:bisoa5}. Now, we have achieved our goal since after integration by parts, the tangential derivative fell on $P_m\trc'$ which is the lowest frequency.
\end{enumerate}
  
\begin{remark}
Due to the decomposition \eqref{ch3:bisoa7}, we now not only need to obtain summability in $(\nu, \nu')$, but also in $(l, m)$. This creates additional difficulties, in particular when estimating the terms $\mathcal{E}_{j, \nu, \nu'}[1]$ and $\mathcal{E}_{j, \nu, \nu'}[3]$ in \eqref{ch3:fff}. We refer to \cite{param4} for more details.
\end{remark}

%%%%%%%%%%%%%%%%%%%%%%%%%%%%%%%%%%%%%%

\chapter{Control of the space-time foliation}\lab{part:spacetimeu}

%%%%%%%%%%%%%%%%%%%%%%%%%%%%%%%%%%%%%%%

%% Control of the space-time foliation

The goal of this chapter is to prove the estimates on the control of the space-time foliation by the optical function $u$ 
which are needed for the proof of Theorem \ref{ch3:part3:th1} (see section \ref{ch3:sec:estneeded}), i.e. for the control of the error term. Here, we outline the main ideas and we refer to \cite{param3} for the details. 

\section{Geometric set-up and main results}

\subsection{Geometry of the foliation of $\mathcal{M}$ by $u$}

Recall from section \ref{sec:maxfoliation} that the space-time $\MM$ is foliated by space-like hypersurfaces $\Sit$ defined as level hypersurfaces of a time function $t$, where $T$ denotes the unit, future oriented, normal to $\Si_t$ and $k$ its second fundamental form. Recall also that $u$ is a solution to the eikonal equation $\gg^{\alpha\beta}\partial_\alpha u\partial_\beta u=0$ on $\mathcal{M}$ depending on a extra parameter $\o\in \S$.  The level hypersufaces $u(t,x,\o)=u$  of the optical function $u$ are denoted by  $\H_u$. Let $L'$ denote the space-time gradient of $u$, i.e.:
\be\lab{ch4:def:L'0}
L'=-\gg^{\a\b}\pr_\b u \pr_\a.
\ee
Using the fact that $u$ satisfies the eikonal equation, we obtain:
\be\lab{ch4:def:L'1}
\dd_{L'}L'=0,
\ee
which implies that $L'$ is the geodesic null generator of $\H_u$.

We have: 
$$T(u)=\pm |\nab u|$$
where $|\nab u|^2=\sum_{i=1}^3|e_i(u)|^2$ relative to an orthonormal frame $e_i$ on $\Si_t$. Since the sign of $T(u)$ is irrelevant, we choose by convention:
\be\lab{ch4:it1'}
T(u)=|\nab u|.
\end{equation}
We denote by $P_{t,u}$  the surfaces of intersection
between $\Si_t$ and  $\H_u$. They play a fundamental role
in our discussion.
\begin{definition}[\textit{Canonical null pair}]
 \be\lab{ch4:it2}
L=bL'=T+N, \qquad \lb=2T-L=T-N
\end{equation}
where $L'$ is the space-time gradient of $u$ \eqref{ch4:def:L'0}, $b$  is  the  \textit{lapse of the null foliation} (or shortly null lapse)
\be\lab{ch4:it3}
b^{-1}=-<L', T>=T(u),
\end{equation} 
and $N$ is a unit normal, along $\Si_t$, to the surfaces $P_{t,u}$. Since $u$ satisfies the 
eikonal equation $\gg^{\alpha\beta}\partial_\alpha u\partial_\beta u=0$ on $\mathcal{M}$, 
this yields $L'(u)=0$ and thus $L(u)=0$. In view of the definition of $L$ and \eqref{ch4:it1'}, 
we obtain:
\be\lab{ch4:it3bis}
N=-\frac{\nabla u}{|\nabla u|}.
\end{equation} 
\label{ch4:def:nulllapse}
\end{definition}

\begin{remark}\lab{ch4:defnullgeod}
$u$ is prescribed on $\Si_0$ as in \cite{param1}. For any $(0,x)$ on $\Si_0$, $L$ is defined as $L=T+N$ where $T$ is the unit normal to $\Si_0$ at $(0,x)$ and $N=-\nabla u/|\nabla u|$ at $(0,x)$, and $b$ is defined as $b^{-1}=|\nabla u|$. Let $\kappa_x(t)$ denote the null geodesic parametrized by $t$ and such that $\kappa_x(0)=(0,x)$ and $\kappa_x'(0)=b^{-1}L$. Then, we claim that 
\be\lab{ch4:defnullgeod1}
\kappa_x'(t)=b(\kappa_x(t))^{-1}L_{\kappa_x(t)}\textrm{ for all }t. 
\ee
Indeed, $L'=b^{-1}L$ is the geodesic null generator of $\H_u$ (see \eqref{ch4:def:L'1}).
\end{remark}

\begin{definition} A  null frame   $e_1,e_2,e_3,e_4$ at a point $p\in P_{t,u}$ 
  consists,
in addition to the null pair     $e_3=\lb,
e_4=L$, of {\sl  arbitrary  orthonormal}  vectors  $e_1,e_2$ tangent
to $P_{t,u}$. 
%All the   estimates in this paper  are in fact local and
% independent of the
% choice of a particular frame. We do not need to worry
% that these  frames cannot be  globally defined.
\end{definition}
\begin{definition}[\textit{Ricci coefficients}]

 Let  $e_1,e_2,e_3,e_4$ be a null frame on
$P_{t,u}$ as above.   The following tensors on  $S_{t,u}$ 
\begin{alignat}{2}
&\chi_{AB}=<\dd_A e_4,e_B>, &\quad 
&\chb_{AB}=<\dd_A e_3,e_B>,\label{ch4:chi}\\
&\z_{A}=\half <\dd_{3} e_4,e_A>,&\quad
&\zb_{A}=\half <\dd_{4} e_3,e_A>,\nn\\
&\xib_{A}=\half <\dd_{3} e_3,e_A>.\nn
\end{alignat}
are called the Ricci coefficients associated to our canonical null pair.

We decompose $\chi$ and $\chb$ into
their  trace and traceless components.
\begin{alignat}{2}
&\trc = \gg^{AB}\chi_{AB},&\quad &\trchb = \gg^{AB}\chb_{AB},
\label{ch4:trchi}\\
&\hch_{AB}=\chi_{AB}-\half \trc \gg_{AB},&\quad 
&\hchb_{AB}=\chb_{AB}-\half \trchb \gg_{AB},
\label{ch4:chih} 
\end{alignat}
\end{definition}

\begin{definition}\label{ch4:def:nullcurv}
The null components of the curvature tensor
$\rr$ of the space-time metric $\gg$ are given
by:
\bea
\a_{ab}&=&\rr(L,e_a,L, e_b)\,,\qquad \b_a=\half \rr(e_a, L,\lb, L) ,\\ \r &=&\frac{1}{4}\rr(\lb, L, \lb,
L)\,,
\qquad
\s=\frac{1}{4}\, ^{\star} \rr(\lb, L, \lb, L)\\
\bb_a&=&\half \rr(e_a,\lb,\lb, L)\,,\qquad \underline{\a}_{ab}=\rr(\lb, e_a,\lb, e_b)
\eea
where $^\star \rr$ denotes the Hodge dual of $\rr$. 
\end{definition}
Observe that all tensors defined above are $\ptu$-tangent.

\begin{remark}\lab{ch4:remark:linkpart1}
Note that $\ab$ is the only null component which does not contain a contraction of $\rr$ with $L$. With the notation of Chapter \ref{part:yangmills} (see for instance \eqref{bootcurvatureflux}), we have:
$$\rr\c L=(\a, \b, \rho, \s, \bb).$$
\end{remark}

\begin{definition}\lab{ch4:def:decompositionk}
We decompose the symmetric traceless 2 tensor $k$ into the scalar $\d$, the $\ptu$-tangent 1-form $\epsilon$, and the $\ptu$-tangent symmetric 2-tensor $\eta$ as follows:
\be\lab{ch4:decompositionk}
\left\{\begin{array}{l}
k_{NN}=\d\\
k_{AN}=\kep_A\\
k_{AB}=\eta_{AB}.
\end{array}\right.
\ee
\end{definition}

The following  \textit{Ricci equations} can be easily derived from the definition  
of $T$, the fact that $L'$ is geodesic \eqref{ch4:def:L'1}, 
and the definition \eqref{ch4:chi} of the Ricci coefficients (see \cite{ChKl} p. 171): 
%They express  the covariant 
%derivatives $\dd$ of the null frame $(e_A)_{A=1,2}, e_3, e_4$ relative to itself.
\begin{alignat}{2}
&\dd_A e_4=\chi_{AB} e_B - \kep_{A} e_4, &\quad 
&\dd_A e_3=\chb_{AB} e_B + \kep_{A} e_3,\nn\\
&\dd_{4} e_4 = -\db   e_4, &\quad 
&\dd_{4} e_3= 2\zb_{A} e_A + \db   e_3, 
  \label{ch4:ricciform} \\
 &\dd_{3} e_4 = 2\z_{A}e_A +
(\d +n^{-1}\nab_Nn)  e_4,&\quad &\dd_{3} e_3 = 2\xib_{A}e_A -
(\d +n^{-1}\nab_Nn) e_3,\nn\\ &\dd_{4} e_A = \ddb_{4} e_A +
\zb_{A} e_4,&\quad &\dd_{3} e_A = \ddb_{3} e_A+\z_A e_3
+ \xib_A e_4,
\nn\\
&\dd_{B}e_A = \nabb_B e_A +\half \chi_{AB}\, e_3 +\half
 \chb_{AB}\, e_4\nn
\end{alignat}
where, $\ddb_{ 3}$, $\ddb_{ 4}$ denote the 
projection on $P_{t,u}$ of $\dd_3$ and $\dd_4$, $\nabb$
denotes the induced covariant derivative on $P_{t,u}$
and $\db, \kepb$ are defined by: 
\be\lab{ch4:newk}
\db=\d-n^{-1}N(n),\,\kepb_A=\kep_A-n^{-1}\nab_A n.
\end{equation}
Also,
\begin{align}
&\chb_{AB}=-\chi_{AB}-2k_{AB},\nn\\
&\zb_A = -\kepb_A,\label{ch4:etab}\\
&\xib_A = \kep_{A}+n^{-1} \nabb_A n-\z_A\nn.
\end{align}

\subsection{Null structure equations}\label{ch4:sec:nullstructure}

Below we  write down our main structure equations (see \cite{ChKl} chapter 7 or \cite{param3} for a proof). 
\begin{proposition}
The components $\trc,\, \hch,\,\z$ and the lapse
$b$ verify the following equations\footnote{which can be interpreted as
transport equations along the  null geodesics 
generated by $L$. Indeed  observe that if a $\ptu$-tangent tensor 
 $\Pi$ satisfies the homogeneous equation  $\ddb_4\Pi=0$ then $\Pi$
is parallel transported along null geodesics. }:
\begin{align}
&L(b) = - b\, \db  , \label{ch4:D4a}\\
&L(\trc) + \half (\trc)^2 = - |\hch|^2 -\db   \trc,
\label{ch4:D4trchi}\\
&\ddb_{4} \hch + \trc \hch = 
-\db   \hch - \a,
\label{ch4:D4chih}\\
&\ddb_{4}\z_A + \half (\trc)\z_A = -(\kepb_B +\z_B)\hch_{AB} -
\half \trc \kepb_A - \b_A,\label{ch4:D4eta}.
\end{align}
\label{ch4:proptransp}
\end{proposition}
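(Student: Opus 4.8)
The four equations in Proposition~\ref{ch4:proptransp} are all of the same nature: each is an identity obtained by expressing a covariant derivative $\dd_{e_4}$ (or $\dd_{e_3}$) of one of the quantities $b$, $\trc$, $\hch$, $\z$ in terms of the Ricci coefficients and null curvature components, using only (i) the fact that $L'=b^{-1}L$ is a geodesic null generator of $\H_u$, i.e.\ $\dd_{L'}L'=0$ from \eqref{ch4:def:L'1}, (ii) the Ricci equations \eqref{ch4:ricciform}, and (iii) the definition of the curvature tensor together with its null decomposition in Definition~\ref{ch4:def:nullcurv}. So the plan is to derive each identity separately by a direct computation in the null frame $e_1,e_2,e_3,e_4$, being careful about the normalisation $L=bL'$, which is where the lapse $b$ and the terms $\db$ enter.

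\textbf{Step 1: the equation for $b$.} Starting from $b^{-1}=T(u)=-\langle L',T\rangle$ and $L=bL'$, I would compute $L(b)$ by differentiating the relation $\langle L,T\rangle=-1$ along $L$, i.e.\ $L(\langle L,T\rangle)=0$, expand using $\dd_L L = \dd_{bL'}(bL')= b L'(b) L' + b^2\dd_{L'}L' = L(b)\, b^{-1} L$ (since $\dd_{L'}L'=0$), and use the Ricci equation for $\dd_4 e_4$, which reads $\dd_4 e_4 = -\db\, e_4$. Pairing with $T = \tfrac12(e_3+e_4)$ and recalling the definition \eqref{ch4:newk} of $\db=\d - n^{-1}N(n)$ should yield \eqref{ch4:D4a} directly. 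This is the shortest of the four.

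\textbf{Step 2: Raychaudhuri and the $\hch$, $\z$ transport equations.} For \eqref{ch4:D4trchi}, \eqref{ch4:D4chih} I would use the standard structure: from $\chi_{AB}=\langle\dd_A e_4, e_B\rangle$, apply $\dd_4$, commute $\dd_4$ and $\dd_A$ picking up a curvature term $\rr(e_4,e_A,e_4,e_B)=\a_{AB}$, and use the Ricci equations \eqref{ch4:ricciform} for $\dd_4 e_A$ and $\dd_A e_4$ to re-express all the frame derivatives; tracing gives \eqref{ch4:D4trchi} with the $|\hch|^2$ term coming from $|\chi|^2 - \tfrac12(\trc)^2$, and the traceless part gives \eqref{ch4:D4chih}. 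The $\db$ terms again appear because $e_4=L=bL'$ rather than the affinely parametrised $L'$. For \eqref{ch4:D4eta}, the same method applied to $\z_A=\tfrac12\langle\dd_3 e_4,e_A\rangle$, commuting $\dd_4$ past $\dd_3$ (curvature term $\b_A$) and using the Ricci equations for $\dd_4 e_3$, $\dd_4 e_A$, $\dd_3 e_4$, yields the stated identity, with the $\kepb$ and $\hch$ contractions arising from the reshuffling. I would also invoke the relation $\xib_A=\kep_A+n^{-1}\nabb_A n - \z_A$ from \eqref{ch4:etab} to simplify.

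\textbf{Main obstacle.} The genuinely delicate point is bookkeeping of the lapse $b$: because the canonical null pair uses $L=bL'$ with $L'$ geodesic but $L$ not, every commutation $[\dd_4,\dd_A]$ and every appearance of $\dd_4 e_4$ produces extra $\db$ and $n^{-1}\nab n$ contributions, and one must track the distinction between $\db,\kepb$ and $\d,\kep$ throughout — this is precisely the content of the normalisation \eqref{ch4:newk}, \eqref{ch4:etab}. Getting the signs and the traceless projections right in \eqref{ch4:D4chih}, \eqref{ch4:D4eta} is the part most likely to require care. Since these are classical computations (cf.\ \cite{ChKl} chapter~7), I would follow that reference's conventions and only highlight the modifications forced by the maximal foliation gauge; a complete derivation is in \cite{param3}.
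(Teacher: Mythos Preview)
Your proposal is correct and matches the paper's treatment: the paper does not give a proof of this proposition but simply states it, deferring (as for the neighboring propositions) to \cite{ChKl} chapter~7 and \cite{param3}. Your outline---using the geodesic property $\dd_{L'}L'=0$, the Ricci equations \eqref{ch4:ricciform}, and curvature commutation to produce the $\a,\b$ terms, with the $\db,\kepb$ contributions arising from the non-affine normalisation $L=bL'$---is exactly the standard derivation in those references, and your identification of the lapse bookkeeping as the main point of care is accurate.
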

\begin{remark}
Equation \eqref{ch4:D4trchi} is known as the Raychaudhuri equation in the relativity
literature.
\end{remark}

To obtain estimates for $\chi$, we may use the transport equations \eqref{ch4:D4trchi} \eqref{ch4:D4chih}. However, this does not allow us to get enough regularity. Instead, we follow \cite{ChKl} \cite{Kl-R2} \cite{Kl-R4} and consider \eqref{ch4:D4trchi} for $\trc$ together with an elliptic system of Hodge type for $\hch$.
\begin{proposition}
The expression $(\divb\hch)_A=\nabb^B \hch_{AB}$ 
verifies the following equation:
\be\label{ch4:Codaz}
(\divb \hch)_A + \hch_{AB}\kep_{B}=\half (\nabb_A \trc + \kep_{A} \trc) - 
\b_A.
\ee
\end{proposition}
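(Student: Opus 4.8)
The final statement to prove is the Codazzi equation for $\hch$:
\be\label{Codaz-plan}
(\divb \hch)_A + \hch_{AB}\kep_{B}=\half (\nabb_A \trc + \kep_{A} \trc) - \b_A.
\ee

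The plan is to derive this purely algebraically from the Gauss--Codazzi formalism relating the ambient curvature $\rr$ of $(\MM,\gg)$ to the induced geometry on the $2$-surfaces $P_{t,u}$, exactly as on p.~171 of \cite{ChKl}. First I would recall the standard Codazzi-type identity obtained by contracting the ambient curvature against the null frame: for any $\ptu$-tangent vectors $e_A, e_B, e_C$, the commutator $\dd_A \dd_B e_4 - \dd_B \dd_A e_4$ applied inside $\gg(\cdot, e_C)$ produces, on the one hand, $\rr(e_A, e_B, e_4, e_C)$ (a component which after using the first Bianchi identity and Definition~\ref{ch4:def:nullcurv} reduces to $\b$), and on the other hand, derivatives of $\chi$ via the Ricci equations \eqref{ch4:ricciform}. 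Concretely, differentiating the relation $\dd_A e_4 = \chi_{AB} e_B - \kep_A e_4$ from \eqref{ch4:ricciform} in the direction $e_C$, projecting onto $P_{t,u}$, and antisymmetrizing in $A,C$, the $\nabb\chi$ terms appear with the connection terms $\half\chi e_3 + \half\chb e_4$ from the last line of \eqref{ch4:ricciform} correcting $\dd$ to $\nabb$.

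The key steps, in order: (i) write down the Gauss equation and the two Codazzi equations for $P_{t,u}$ as a spacelike-codimension-two surface, using the null frame $e_1,e_2,e_3,e_4$; (ii) isolate the Codazzi equation carrying $\nabb\chi$, namely $\nabb_A \chi_{BC} - \nabb_B \chi_{AC} = (\text{curvature term } \rr_{ABC4}) + (\text{lower order in } \chi,\chb,\kep)$, where the lower-order terms come from $\dd_A e_B = \nabb_A e_B + \half\chi_{AB}e_3 + \half\chb_{AB}e_4$ and the relation $\chb = -\chi - 2k$ from \eqref{ch4:etab}; (iii) take the trace over $B,C$ with the induced metric $\gg^{BC}$: the left side becomes $\gg^{BC}\nabb_A\chi_{BC} - \nabb^C\chi_{AC} = \nabb_A \trc - (\divb\chi)_A$, and since $\chi_{AC} = \hch_{AC} + \half\trc\gg_{AC}$ we have $(\divb\chi)_A = (\divb\hch)_A + \half\nabb_A\trc$; (iv) identify the traced curvature term: $\gg^{BC}\rr(e_A,e_B,e_4,e_C)$ is, by the symmetries of $\rr$ and Definition~\ref{ch4:def:nullcurv}, equal to $\b_A$ (up to sign and normalization fixed by the convention $\b_a = \half\rr(e_a,L,\lb,L)$); (v) collect the lower-order terms, which after substituting $\chb = -\chi - 2k$ and using the decomposition $k_{AN} = \kep_A$, $k_{AB} = \eta_{AB}$ from Definition~\ref{ch4:def:decompositionk}, reorganize into $\hch_{AB}\kep_B$ and $\half\kep_A\trc$. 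Rearranging gives precisely \eqref{Codaz-plan}.

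The main obstacle is bookkeeping rather than conceptual: keeping track of all the lower-order contractions generated by replacing $\dd$ with $\nabb$ in the Ricci equations \eqref{ch4:ricciform} and by eliminating $\chb$ and $\z$ in favor of $\chi$, $k$, $\kep$, $n$ via \eqref{ch4:etab} and \eqref{ch4:newk}, and verifying that all the terms involving $n^{-1}\nabb n$ and $\xib$ cancel so that only $\hch_{AB}\kep_B$ and $\half\kep_A\trc$ survive on the geometric side. One must also be careful with sign conventions for $\rr$ and for the Hodge dual entering Definition~\ref{ch4:def:nullcurv}, so that the curvature term comes out as $-\b_A$ with the stated sign. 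Since this is a classical computation, I would ultimately just cite \cite{ChKl} chapter~7 (and \cite{param3}) for the detailed verification, presenting here only the contraction scheme above to make the derivation transparent.
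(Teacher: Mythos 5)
Your proposal is correct and follows exactly the paper's approach: the paper itself gives no proof here, simply pointing to \cite{ChKl} chapter 7 (and \cite{param3}), and the Gauss--Codazzi contraction scheme you outline is precisely the derivation carried out in that reference. The one detail worth flagging is that the traced curvature term comes out as $\gg^{BC}\rr(e_A,e_B,e_4,e_C)=-\b_A$ (using Ricci-flatness and the pair symmetry), matching the $-\b_A$ in \eqref{ch4:Codaz}, rather than $+\b_A$ as you state before hedging on sign; otherwise the bookkeeping you describe is accurate.
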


See \cite{ChKl} chapter 7 or \cite{param3} for a proof.

Finally, we consider the control of $\z$ and $\lb\trc$. To this end, we follow again \cite{Kl-R2} \cite{Kl-R4}: we derive an elliptic system of Hodge type for $\z$ and a transport equation for $\lb\trc$.
\begin{proposition}
We have:
\be\lab{ch4:D3trc}
\lb(\trc)+\half\trchb\trc= 2\divb\z+(\d +n^{-1}\nab_Nn)\trc-\hch\c\hchb+2\z\c\z+2\r.
\ee
Also, the expressions  $\divb
\z=\nabb^B\z_B$ and \mbox{$\curlb \z=\in^{AB}\nabb_A\z_B$} 
verify the following equations:
\begin{align}
&\divb\,\z = \half\bigg(\mu +\half\trc\trchb +\hch\c\hchb -2|\z|^2\bigg)
 - \rho,\label{ch4:diveta}\\
&\curlb\,\z =  -\half\hch \wedge  \hchb+\s,
\label{ch4:curleta}   
\end{align}
where for $F, G$ symmetric traceless $\ptu$-tangent 2-tensors, we denote by $F\wedge G$ the tensor $F\wedge G_{AB}=\in_{AB}F_{AC}G_{BC}$. Finally, setting, 
\be\lab{ch4:eqmu}
\mu=\lb(\trc) - 
\big (\d +n^{-1}\nab_N n\big )\trc
\end{equation}
we find 
\begin{equation}
\begin{split}
L(\mu) + \trc \mu &=2(\zb-\z)\c\nabb\trc
-2\hch\c\Bigl (\nabb\widehat{\otimes}\z + \z\widehat{\otimes}\z  -\d\hch\Bigr )
\\& -\trc 
\bigg (2\divb\z+2\z\c\z+4 (\kep - \z)\c n^{-1} \nabb n -2\db(\d+n^{-1}\nab_Nn)+
4\rho \\&
-\half\trc\trchb+2|\kep|^2+3|\hch|^2+4\hch\c\etah-2|n^{-1}N(n)|^2\bigg).
\end{split}
\label{ch4:D4tmu}
\end{equation}
\end{proposition}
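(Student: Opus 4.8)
The plan is to prove all four identities by direct computation in a null frame $e_1,e_2,e_3,e_4$ adapted to $P_{t,u}$, using only the Ricci equations \eqref{ch4:ricciform} together with the relations \eqref{ch4:etab}, the transport equations of Proposition \ref{ch4:proptransp}, the Codazzi equation \eqref{ch4:Codaz}, the Gauss equation of the $2$-surfaces $P_{t,u}$, the null decomposition of $\rr$ of Definition \ref{ch4:def:nullcurv}, and the null Bianchi identities. This is the computation carried out in \cite{ChKl}, Chapter 7 (see also \cite{Kl-R2}, \cite{Kl-R4}, \cite{param3}); I would only organize it, leaving the routine algebra implicit.

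First I would derive the transport equation \eqref{ch4:D3trc}. Starting from $\trc=\gg^{AB}\chi_{AB}$ with $\chi_{AB}=\langle\dd_Ae_4,e_B\rangle$, I apply the commutation identity
$$\dd_3\dd_Ae_4-\dd_A\dd_3e_4-\dd_{[e_3,e_A]}e_4=\rr(e_3,e_A)e_4,$$
pair with $e_B$, and replace each covariant derivative of a frame vector by the corresponding right-hand side of \eqref{ch4:ricciform} (in particular $\dd_3e_4=2\z_Ae_A+(\d+n^{-1}\nab_Nn)e_4$), expanding $[e_3,e_A]$ the same way. Tracing over $A,B$, identifying $\gg^{AB}\rr(e_3,e_A,e_4,e_B)$ with $2\r$ modulo lower-order curvature, reading off $2\divb\z$ from the trace of $\nabb_A\z_B$, and assembling the remaining Ricci-coefficient products (using $\chb=-\chi-2k$ from \eqref{ch4:etab}) into $(\d+n^{-1}\nab_Nn)\trc-\hch\c\hchb+2|\z|^2$ yields \eqref{ch4:D3trc}.

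Next, for the Hodge system \eqref{ch4:diveta}--\eqref{ch4:curleta}, I would compute $\nabb_A\z_B$ directly from $\z_A=\frac12\langle\dd_3e_4,e_A\rangle$, writing $\nabb_A\z_B=e_A(\z_B)-\z(\nabb_Ae_B)$ and substituting $\dd_Ae_3$, $\dd_Ae_4$, $\dd_Be_A$ from \eqref{ch4:ricciform}. This expresses $\nabb_A\z_B$ in terms of $\frac12\langle\rr(e_A,e_3)e_4,e_B\rangle$ plus quadratic Ricci-coefficient terms. Taking the symmetric trace and the antisymmetric part, inserting Definition \ref{ch4:def:nullcurv}, and using the Gauss equation for $P_{t,u}$ (whose Gauss curvature equals $-\frac14\trc\trchb+\frac12\hch\c\hchb+\r+\cdots$, which is exactly why the combination $\mu$ of \eqref{ch4:eqmu} appears in \eqref{ch4:diveta}, while the antisymmetric part produces $\s$ and $-\frac12\hch\wedge\hchb$) and collecting terms gives \eqref{ch4:diveta} and \eqref{ch4:curleta}.

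Finally, the transport equation \eqref{ch4:D4tmu} for $\mu$ is the heart of the proposition and the step I expect to be the main obstacle, because the stated right-hand side contains no derivatives of curvature and only emerges after a precise cancellation --- the cancellation that motivates the very definition of $\mu$. I would combine \eqref{ch4:eqmu} with the just-proved \eqref{ch4:D3trc} to get the algebraic identity $\mu=-\frac12\trchb\trc+2\divb\z-\hch\c\hchb+2|\z|^2+2\r$, then apply $L=e_4$ and treat each term: $L(\trc)$ by \eqref{ch4:D4trchi}, $L(\trchb)$ by the structure equation for $\ddb_4\chb$, $L(\hch)$ and $L(\z)$ by \eqref{ch4:D4chih} and \eqref{ch4:D4eta}, $L(\divb\z)$ by commuting $e_4$ past $\nabb$ (the commutator $[\ddb_4,\nabb]$ generating $\chi$-contractions via \eqref{ch4:ricciform}) followed again by \eqref{ch4:D4eta}, and $L(\r)$ by the null Bianchi identity $\dd_4\r+\frac32\trc\,\r=\divb\b+\cdots$. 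The two occurrences of $2\divb\b$ --- one from $L(\r)$, one from the $\b_A$ on the right of \eqref{ch4:D4eta} inside $L(\divb\z)$ --- must cancel identically; checking this, using the Codazzi relation \eqref{ch4:Codaz} to eliminate $\divb\hch$, and recombining the surviving $\nabb\trc$ terms (those produced by the commutator $[e_4,e_3]$, whose tangential part is $2(\zb-\z)_Ae_A$ by \eqref{ch4:ricciform}) into $2(\zb-\z)\c\nabb\trc$, reduces everything to the quadratic and cubic Ricci-coefficient expressions plus the linear curvature term $4\r$ displayed in \eqref{ch4:D4tmu}. The entire difficulty is bookkeeping: a single sign slip in the $\divb\b$ or $\nabb\trc$ recombination would wreck the identity.
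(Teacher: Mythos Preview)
Your outline is correct and is precisely the standard derivation; the paper itself gives no proof here at all, simply writing ``See \cite{Kl-R2} or \cite{param3} for a proof.'' What you sketch --- deriving \eqref{ch4:D3trc} from the commutation $\dd_3\dd_Ae_4-\dd_A\dd_3e_4-\dd_{[e_3,e_A]}e_4=\rr(e_3,e_A)e_4$ and the Ricci equations, reading off $\divb\z$ and $\curlb\z$ from $\nabb_A\z_B$, and obtaining \eqref{ch4:D4tmu} by applying $L$ to the algebraic expression $\mu=-\tfrac12\trchb\trc+2\divb\z-\hch\c\hchb+2|\z|^2+2\r$ and invoking the Bianchi identity for $L(\r)$ --- is exactly the computation carried out in those references, and you have correctly singled out the essential point: the two $\divb\b$ contributions (from $L(\r)$ via Bianchi and from $L(\divb\z)$ via \eqref{ch4:D4eta}) must cancel, which is what makes $\mu$ the right quantity.
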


See \cite{Kl-R2} or \cite{param3} for a proof.

\subsection{Commutation formulas}\label{ch4:sec:commutationf}

We have the following useful commutation formulas (see \cite{ChKl} p. 159):
\begin{lemma}
Let $U_{\und{A}}$ be an m-covariant tensor tangent to the surfaces $P_{t,u}$.
Then,
\bea
\lab{ch4:comm1}\nabb_B \ddb_{4} U_{\und{A}} - \ddb_{4}\nabb_B U_{\und{A}} &=&
\chi_{BC} \nabb_C U_{\und{A}} - n^{-1} \nabb_B n \ddb_{4} U_{\und{A}}\\
&+& \sum_i (\chi_{A_i B} \kepb_C -
\chi_{BC}\kepb_{A_i} -\in_{A_i C}{}^*\b_B) U_{A_1..\Check{C}..A_m},\nn
\eea
\bea\lab{ch4:comm2}
\nn\nabb_B \ddb_{3} U_{\und{A}} - \ddb_{3}\nabb_B U_{\und{A}} &=&
\chb_{BC} \nabb_C U_{\und{A}} - \xib_B \ddb_{4} U_{\und{A}}- b^{-1}\nab_Bb \ddb_{3}U_{\und{A}}+ \sum_i (-\chi_{A_i B} \xib_{C} \\
&+&\chi_{BC}\xib_{A_i}  -\chb_{A_i B} \z_{C} +
\chb_{BC}\z_{A_i } 
+\in_{A_i C}{}^*\bb_B) U_{A_1..\Check{C}..A_m},
\eea
\bea
\lab{ch4:comm3}\ddb_{3}\ddb_{4} U_{\und{A}} - \ddb_{4}\ddb_{3} U_{\und{A}} &=&
 -\db  \ddb_{3} U_{\und{A}}
+ (\d + n^{-1} \nab_Nn)\ddb_{4} U_{\und{A}} +2(\z_{B}-\zb_B) \nabb_B U_{\und{A}} \\ 
&+& 2\sum_i (\zb_{A_i} \z_{C} -
\zb_{C}\z_{A_i} +\in_{A_i C}{}^*\s) U_{A_1..\Check{C}..A_m}.\nn
\eea
\end{lemma}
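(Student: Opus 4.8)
The plan is to derive the three commutation identities directly from the Ricci equations \eqref{ch4:ricciform}, the definition of the Riemann curvature tensor, and the identities \eqref{ch4:etab}, \eqref{ch4:newk}, projecting onto $P_{t,u}$ at the end. I would treat the three formulas by the same mechanism; only the roles of $e_3$ and $e_4$ and the accompanying Ricci coefficients change. Throughout, $U_{\und A}$ is an $m$-covariant $P_{t,u}$-tangent tensor, extended arbitrarily off $P_{t,u}$; the final projection onto $P_{t,u}$ removes the dependence on the extension.

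For \eqref{ch4:comm1}, I would first compute the full spacetime commutator $\dd_B\dd_L U_{\und A}-\dd_L\dd_B U_{\und A}$. By the definition of $\rr$ this equals $-\sum_i\rr(e_4,e_B,e_{C},e_{A_i})U_{A_1..\Check{C}..A_m}$ plus the term $\dd_{[e_B,e_4]}U_{\und A}$ coming from the non-vanishing frame commutator, which by \eqref{ch4:ricciform} is a combination of $\chi$, $\kepb$ and $n^{-1}\nabb n$ acting on $U$. I would then replace $\dd_B U$ and $\dd_L U$ by their $P_{t,u}$-projections $\nabb_B U$ and $\ddb_4 U$ using the last lines of \eqref{ch4:ricciform}: each substitution produces correction terms of the schematic form $\chi\c U$, $\kepb\c U$, built from the $e_3$- and $e_4$-components of $\dd_B e_A$ and $\dd_4 e_A$. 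Finally, projecting the whole identity onto $P_{t,u}$, the curvature contraction $\rr(e_4,e_B,e_C,e_{A_i})$ restricted to tangential indices reduces, by the symmetries of $\rr$ and Definition \ref{ch4:def:nullcurv}, to exactly $\in_{A_iC}{}^*\b_B$, while the tangential part of $\dd_L U$ along the $e_3,e_4$ directions contributes the $n^{-1}\nabb_B n\,\ddb_4 U$ term. Collecting terms gives \eqref{ch4:comm1}.

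The derivation of \eqref{ch4:comm2} is identical with $e_4$ replaced by $e_3$; the extra terms $-\xib_B\ddb_4 U$ and $-b^{-1}\nab_B b\,\ddb_3 U$ appear because $\dd_3 e_3$ and $\dd_B e_3$ carry $e_4$- and $e_3$-components governed by $\xib$, $\z$ and the null lapse $b$ via \eqref{ch4:D4a}, and the tangential curvature contraction now produces $\in_{A_iC}{}^*\bb_B$ instead of $\in_{A_iC}{}^*\b_B$. For \eqref{ch4:comm3} I would instead start from $\dd_3\dd_4 U-\dd_4\dd_3 U=-\sum_i\rr(e_4,e_3,e_C,e_{A_i})U_{A_1..\Check{C}..A_m}+\dd_{[e_3,e_4]}U$, expand $[e_3,e_4]$ using \eqref{ch4:ricciform} (which generates the $\db\,\ddb_3 U$, $(\d+n^{-1}\nab_N n)\ddb_4 U$ and $2(\z-\zb)\c\nabb U$ terms), and identify the tangential curvature contraction $\rr(e_4,e_3,e_C,e_{A_i})$ with $\in_{A_iC}{}^{*}\s$ using the Hodge dual $\,^{\star}\rr$ and Definition \ref{ch4:def:nullcurv}.

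I expect the main obstacle to be purely bookkeeping: correctly tracking all the lower-order projection terms generated when commuting the projection onto $P_{t,u}$ with $\dd_3$, $\dd_4$ and verifying that the stray $e_3$- and $e_4$-components recombine into precisely the $\kepb$, $\xib$, $\z$, $\zb$ coefficients of the statement, with the correct signs. There is no analytic content here; the computation is the one carried out in \cite{ChKl} (p.~159), and I would follow it line by line, using \eqref{ch4:etab} and \eqref{ch4:newk} to rewrite any $\chb$, $\zb$, $\xib$ in terms of $\chi$, $k$, $\z$ and $n$ wherever the stated normalization demands it.
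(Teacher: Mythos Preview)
Your proposal is correct and matches the paper's approach: the paper does not give a proof but simply cites \cite{ChKl} p.~159, which is exactly the computation you outline (spacetime commutator via the curvature tensor and frame bracket, substitution of the Ricci equations \eqref{ch4:ricciform}, projection onto $P_{t,u}$, and identification of the curvature components with $\b$, $\bb$, $\s$). There is nothing to add; the content is purely the bookkeeping you describe.
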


\subsection{Bianchi identities}

In view of the formulas on p. 161 of \cite{ChKl}, the Bianchi equations for $\a, \b, \rho, \s, \bb$ are:
\bea
\ddb_L(\b)&=&\divb\a-\db  \b+(2\kep-\kepb)\c\a\lab{ch4:bianc1}\\
\ddb_{\lb}(\b)&=&\nabb\r+(\nabb\s)^*+2\hch\c\bb+(\d+n^{-1}\nabla_Nn)\b+\xib\c\a+3(\z\r+{}^*\z\s)\lab{ch4:bianc1bis}\\
L(\rho)&=&\divb\b-\half\hchb\c\a+(\kep-2\kepb)\c\b\lab{ch4:bianc2}\\
\lb(\rho)&=&-\divb\bb-\half\hch\c\ab+2\xib\c\b+(\kep-2\z)\c\bb\lab{ch4:bianc3}\\
L(\s)&=&-\curlb\b+\half\hchb\,{}^*\a+(-\kep+2\kepb)\,{}^*\b\lab{ch4:bianc4}\\
\lb(\s)&=&-\curlb\bb-\half\hch\,{}^*\ab-2\xib\,{}^*\b+(\kep-2\z)\,{}^*\bb\lab{ch4:bianc5}\\
\ddb_L(\bb)&=&-\nabb\r+(\nabb\s)^*+2\hchb\c\b+\db\bb-3(\zb\r-{}^*\zb\s)\lab{ch4:bianc6}
\eea

\subsection{Main results}\lab{ch4:sec:mainres}

We introduce the $L^2$ curvature flux $\rf$ relative to the time foliation: 
\be\lab{ch4:curvflux}
\rf=\left(\norm{\a}^2_{\lh{2}}+\norm{\b}_{\lh{2}}^2+\norm{\r}_{\lh{2}}^2+\norm{\s}_{\lh{2}}^2+\norm{\bb}_{\lh{2}}^2\right)^{\half}.
\ee
In view of Remark \ref{ch4:remark:linkpart1}, we have $\rf=\norm{\rr\c L}_{\lh{2}}$. Thus, the we may rewrite the bootstrap assumptions of Chapter \ref{part:yangmills} on $\rr$ as:
$$\norm{\rr}_{\lsit{\infty}{2}}\leq M\ep,\,\sup_u\rf\leq M\ep.$$
To ease the notations, we drop the bootstrap constant $M$:
\be\lab{ch4:curvflux1}
\norm{\rr}_{\lsit{\infty}{2}}\leq\ep,\,\sup_u\rf\leq\ep.
\ee
The goal of this part is to control the geometry of the null hypersurfaces $\H_u$ of $u$ up to time $t=1$ when only assuming the smallness assumption \eqref{ch4:curvflux1}. 

\begin{remark}
In the rest of the chapter, all inequalities, except the ones of Theorem \ref{ch4:thregx1} below, hold for any $u$ with the constant in the right-hand side being independent of $u$. Thus, one may take the supremum in $u$ in these inequalities. To ease the notations, we do not explicitly write down the supremum in $u$ in these estimates. 
\end{remark}

$u$ is a solution to the eikonal equation $\gg^{\alpha\beta}\partial_\alpha u\partial_\beta u=0$ on $\mathcal{M}$ depending on a extra parameter $\o\in \S$. Now, for $u$ to be uniquely defined, we need to prescribe it on $\Si_0$ (i.e. at $t=0$). This issue has been settled in \cite{param1} (see also Chapter \ref{part:initialu}). From now on, we assume that $u$ is the solution to the eikonal equation $\gg^{\alpha\beta}\partial_\alpha u\partial_\beta u=0$ on $\mathcal{M}$ which is prescribed on $\Si_0$ as in \cite{param1}.

\begin{remark}
In the rest of the chapter, all inequalities hold for any $\o\in\S$ with the constant in the right-hand side being independent of $\o$. Thus, one may take the supremum in $\o$ everywhere. To ease the notations, we do not explicitly write down this supremum. 
\end{remark}

We define some norms on $\H_u$. For any $1\leq p\leq +\infty$ and for any tensor $F$ on $\H_u$, we have:
$$\norm{F}_{\lh{p}}=\left(\int_0^1dt\int_{P_{t,u}}|F|^p\dmt\right)^{\frac{1}{p}},$$
where $\dmt$ denotes the area element of $P_{t,u}$.
We also introduce the following norms:
$$\no(F)=\norm{F}_{\lh{2}}+\norm{\nabb F}_{\lh{2}} +\norm{\ddb_LF}_{\lh{2}},$$
$$\noo(F)=\no(F)+\norm{\nabb^2 F}_{\lh{2}} +\norm{\nabb\ddb_LF}_{\lh{2}}.$$
%The structure equations involve transport equations along the null geodesics generated by $L$ (see for example \eqref{ch4:D4a} and \eqref{ch4:D4trchi}). 
Let $x'$ a coordinate system on $\pou$. By transporting this coordinate system along the null geodesics generated by $L$, we obtain a coordinate system $(t,x')$ of $\H$. We define the following norms:
$$\norm{F}_{\xt{\infty}{2}}=\sup_{x'\in P_{0,u}}\left(\int_0^1 |F(t,x')|^2dt\right)^{\half},$$
$$\norm{F}_{\xt{2}{\infty}}=\normm{\sup_{0\leq t\leq 1}|F(t,x'))|}_{L^2(\pou)}.$$

The following theorem investigates the regularity of $u$ with respect to $(t,x)$:
\begin{theorem}\lab{ch4:thregx}
Assume that $u$ is the solution to the eikonal equation $\gg^{\alpha\beta}\partial_\alpha u\partial_\beta u=0$ on $\mathcal{M}$ such that $u$ is prescribed on $\Si_0$ as in \cite{param1}. Assume also that the estimate \eqref{ch4:curvflux1} is satisfied. Then, null geodesics generating $\H_u$ do not have conjugate points and distinct null geodesics do not intersect. Furthermore, the following estimates are satisfied:
\be\lab{ch4:estn}
\norm{n-1}_{L^\infty}+\norm{\nabla n}_{\tx{\infty}{2}}+\norm{\nabla^2n}_{\tx{\infty}{2}}+\norm{\nabla \dd_Tn}_{\tx{\infty}{2}}\lesssim\ep,
\ee
\be\lab{ch4:estk}
\no(k)+\norm{\ddb_{\lb}\kep}_{\lh{2}}+\norm{\lb(\d)}_{\lh{2}}+\norm{\kepb}_{\xt{\infty}{2}}+\norm{\db}_{\xt{\infty}{2}}\lesssim\ep,
\ee
\be\lab{ch4:estb}
\norm{b-1}_{L^\infty}+\noo(b)+\norm{\lb(b)}_{\xt{2}{\infty}}\lesssim\ep,
\ee
\be\lab{ch4:esttrc}
\norm{\trc}_{L^\infty}+\norm{\nabb\trc}_{\xt{2}{\infty}}+\norm{\lb\trc}_{\xt{2}{\infty}}
\lesssim\ep,
\ee
\be\lab{ch4:esthch}
\norm{\hch}_{\xt{2}{\infty}}+\no(\hch)+\norm{\ddb_{\lb}\hch}_{\lh{2}}
\lesssim\ep,
\ee
\be\lab{ch4:estzeta}
\norm{\z}_{\xt{2}{\infty}}+\no(\z)\lesssim\ep.
\ee
\end{theorem}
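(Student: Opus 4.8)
The plan is to prove Theorem \ref{ch4:thregx} by a bootstrap argument along the null hypersurfaces $\H_u$, propagating control from the initial slice $\Si_0$ (where estimates for $u$ are supplied by \cite{param1}) up to time $t=1$, using only the smallness hypothesis \eqref{ch4:curvflux1}. The heart of the matter is to interweave the transport equations of Proposition \ref{ch4:proptransp} (the equations \eqref{ch4:D4a}--\eqref{ch4:D4eta} along the generators of $L$), the Hodge-type elliptic systems \eqref{ch4:Codaz}, \eqref{ch4:diveta}, \eqref{ch4:curleta} on the $2$-surfaces $P_{t,u}$, the transport equation \eqref{ch4:D4tmu} for the renormalized quantity $\mu$, and the Bianchi identities \eqref{ch4:bianc1}--\eqref{ch4:bianc6} for the null curvature components. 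Because the curvature is only in $L^2$, one cannot close purely in transport norms (this would require $L^1_tL^\infty$ control of $\R$); instead, following \cite{Kl-R2}, \cite{Kl-R4}, one must trade a derivative via the elliptic estimates and exploit the curvature flux $\rf$ as the basic controlled quantity. The geometric Littlewood-Paley theory of \cite{Kl-R6} on $P_{t,u}$ (Theorem \ref{ch3:thm:LP}) is the essential analytic tool for handling the borderline regularity, in particular for commutator estimates where $\nabb$ and $\ddb_L$ do not commute (Lemma on commutation formulas \eqref{ch4:comm1}--\eqref{ch4:comm3}).

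Concretely, I would proceed as follows. \textbf{Step 1: lapse and initial comparison.} First establish \eqref{ch4:estn} for $n$ from the elliptic equation \eqref{eqlapsen}, $\Delta n = n|k|^2$, using the bootstrap bound $\norm{k}_{\lsit{\infty}{2}}\les\ep$ and Sobolev embedding on $\Si_t$ from Lemma \ref{lemma:estimatesit}; this is essentially decoupled from the null geometry and also gives the $\ddb_T n$ bound after differentiating in $t$. \textbf{Step 2: control of $b$ and nonintersection.} From $L(b)=-b\,\db$ \eqref{ch4:D4a} and $\norm{\db}\les\ep$ (which follows once $k$ and $n$ are controlled), integrate along the generators to get $\norm{b-1}_{L^\infty}\les\ep$; this plus the Raychaudhuri equation \eqref{ch4:D4trchi} and a standard continuity/contradiction argument shows the generators of $\H_u$ have no conjugate points and distinct generators do not meet, so the coordinates $(t,x')$ are well-defined on $\H_u$. \textbf{Step 3: first-order Ricci coefficients.} Control $\trc$ from \eqref{ch4:D4trchi} coupled with the Codazzi system \eqref{ch4:Codaz} for $\hch$: the transport equation gives $\trc$ from $\nabb\trc$ and $|\hch|^2$, the elliptic estimate recovers $\nabb\hch$ in $L^2(\H_u)$ from $\nabb\trc$ and $\b$ (hence from $\rf$); iterating gives $\no(\hch)$, $\norm{\hch}_{\xt{2}{\infty}}$, $\norm{\trc}_{L^\infty}$, $\norm{\nabb\trc}_{\xt{2}{\infty}}$, i.e.\ \eqref{ch4:esttrc}, \eqref{ch4:esthch}, and \eqref{ch4:estk} for $k$. \textbf{Step 4: the conjugate null data $\z$, $\lb\trc$, $\ddb_{\lb}\hch$.} This is where the renormalization matters: one controls $\z$ via the Hodge system \eqref{ch4:diveta}, \eqref{ch4:curleta} together with the transport equation \eqref{ch4:D4eta}, and $\lb\trc$ via \eqref{ch4:D3trc} after first controlling $\mu$ through its transport equation \eqref{ch4:D4tmu}; the point is that $\mu$ satisfies an $L(\mu)+\trc\mu=\dots$ equation whose right-hand side, after using the Bianchi identities to replace bad curvature terms, is controlled in $L^1_tL^2$ by $\rf$ and the already-established first-order bounds. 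This yields \eqref{ch4:estzeta} and the remaining terms of \eqref{ch4:estb}, \eqref{ch4:esttrc}, \eqref{ch4:esthch}.

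The main obstacle, and the step I expect to consume most of the work, is \textbf{Step 4} — the control of $\mu$, $\lb\trc$, and $\ddb_{\lb}\hch$ at the sharp $L^2(\H_u)$ level. Here one is genuinely at the borderline: the naive estimate loses a derivative on the curvature, and one must use the special null structure of the Einstein equations (the fact that $\ab$, the one curvature component \emph{not} controlled by the flux — see Remark \ref{ch4:remark:linkpart1} — appears only in combinations that can be handled by integration by parts or the maximal foliation condition) together with the geometric Littlewood-Paley decomposition to absorb the commutator terms \eqref{ch4:comm1}--\eqref{ch4:comm3}. A secondary technical difficulty is the trace-type estimates: passing between the norms $\lh{2}$, $\xt{\infty}{2}$ and $\xt{2}{\infty}$ requires sharp trace theorems along the $2$-surface foliation of $\H_u$, which are delicate in this low-regularity setting and rely on the transport structure to upgrade fixed-time $L^2(P_{t,u})$ bounds to the mixed norms. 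I would assemble all these estimates inside a single bootstrap on $[0,t_*]$ and show $t_*=1$ by absorbing all nonlinear terms with the smallness $\ep$; the detailed verification of each absorption is routine given the structure, so I would refer to \cite{param3} for it, as the excerpt does.
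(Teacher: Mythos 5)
Your proposal is correct in spirit and uses the right ingredients, but it follows a genuinely different and much longer route than the paper's proof. You propose to re-derive all the trace estimates directly in the time foliation by coupling the transport equations of Proposition \ref{ch4:proptransp} with the Hodge systems and Littlewood-Paley machinery; the paper explicitly avoids this, describing it as ``rather lengthy.'' Instead, in section \ref{ch4:sec:tfvsgf}, the paper observes that the hardest estimates --- the $L^\infty$ bound for $\trc$ and the $\xt{2}{\infty}$ trace norms for $\hch$ and $\z$ --- were already established in \cite{Kl-R4}, \cite{Kl-R6}, \cite{Kl-R5} for the \emph{geodesic} foliation of $\H_u$, and transfers them to the time foliation via the explicit frame change \eqref{ch4:comptg}, which produces the algebraic relations \eqref{ch4:comptg0} for the Ricci coefficients. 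The key observation that makes this transfer painless is Remark \ref{ch4:remark:equivnorm}: since $dt/ds = n^{-1}b^{-1}\sim 1$, the trace norms over $t$ and over the affine parameter $s$ are equivalent. The only genuinely new quantity requiring an estimate is the frame-rotation tensor $F'$, which is handled through the Besov bound \eqref{ch4:estgeodesicfolter}; similarly, the trace norms for $\db$ and $\kepb$ reduce via \eqref{ch4:tracek4} to $k_{L'L'}$, $k_{L'A}$, and use the sharp-trace decomposition \eqref{ch4:tracek12} already established in \cite{Kl-R4}. Your approach would require re-proving essentially all of \cite{Kl-R4}--\cite{Kl-R5} in the time foliation; what the paper's shortcut buys is a reduction to a modest frame-comparison argument once the geodesic-foliation results are granted. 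One further point you omit: for the \emph{qualitative} statement that generators have no conjugate points and do not intersect, the paper invokes an additional $C^2$ regularity assumption on $\g$ (equation \eqref{ch4:grossereg}) purely for the continuity/bijectivity argument with $\Phi_t$; the $L^2$ curvature bound alone controls the quantitative estimates but not this soft step, so ``a standard continuity/contradiction argument'' as you write needs that auxiliary hypothesis to be made explicit.
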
 

We introduce the family of intrinsic Littlewood-Paley projections
$P_j$ which have been constructed in \cite{Kl-R6} using the heat flow 
on the surfaces $P_{t,u}$ (see also section \ref{sec:LP}). This allows us to state our second theorem 
which investigates the regularity of $\lb\lb\trc$ and $\ddb_\lb\z$.
\begin{theorem}\lab{ch4:thregx1}
Assume that $u$ is the solution to the eikonal equation $\gg^{\alpha\beta}\partial_\alpha u\partial_\beta u=0$ on $\mathcal{M}$ such that $u$ is prescribed on $\Si_0$ as in \cite{param1}. Assume also that the assumption \eqref{ch4:curvflux1} is satisfied. Then, there exists a function $\lambda$ in $L^2(\R)$ such that for all $j\geq 0$, we have:
\be\lab{ch4:estlblbtrc}
\norm{P_j\lb\lb\trc}_{\lh{2}}\lesssim 2^j\ep+2^{\frac{j}{2}}\lambda(u),
\ee
and
\be\label{ch4:estlbzeta}
\norm{P_j\ddb_{\lb}(\z)}_{\lh{2}}\lesssim \ep+2^{-\frac{j}{2}}\lambda(u).
\ee
\end{theorem}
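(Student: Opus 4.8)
The strategy is to derive transport equations along the null generators $L$ of $\H_u$ for (essentially) $\lb\lb\trc$ and $\ddb_\lb\z$, to project them with the geometric Littlewood--Paley operators $P_j$ of Theorem \ref{ch3:thm:LP}, and to prove the two estimates \eqref{ch4:estlblbtrc} and \eqref{ch4:estlbzeta} \emph{simultaneously}, since at top order each of them feeds into the source term of the other. First I would reduce both statements. By \eqref{ch4:eqmu} one has $\lb(\trc)=\mu+(\d+n^{-1}\nab_Nn)\trc$, so using the bounds of Theorem \ref{ch4:thregx} for $\d$, $n$ and $\trc$ the estimate \eqref{ch4:estlblbtrc} is equivalent to the same bound for $P_j\lb\mu$. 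Likewise, the Hodge system \eqref{ch4:diveta}--\eqref{ch4:curleta} for $\z$, together with the elliptic estimates on $P_{t,u}$, reduces \eqref{ch4:estlbzeta} to controlling $\divb(\ddb_\lb\z)$ and $\curlb(\ddb_\lb\z)$ in $\lh{2}$ after $P_j$-projection; commuting $\ddb_\lb$ with $\divb,\curlb$ via \eqref{ch4:comm2}, this amounts to estimating $P_j\ddb_\lb$ of the right-hand sides of \eqref{ch4:diveta}--\eqref{ch4:curleta}, the crucial term being $\ddb_\lb\mu$ --- which is exactly why the two bounds cannot be decoupled.

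Next I would write the transport equation for $\ddb_\lb\mu$. Commuting the equation \eqref{ch4:D4tmu} for $\mu$ with $\ddb_\lb$ by means of \eqref{ch4:comm3} produces, schematically,
\[
L(\ddb_\lb\mu)+\trc\,\ddb_\lb\mu=\ddb_\lb\big(\textrm{r.h.s. of }\eqref{ch4:D4tmu}\big)-(\lb\trc)\,\mu+(\textrm{commutator terms}),
\]
the commutator terms being contractions of $\db$, $\d+n^{-1}\nab_Nn$, $\z$, $\zb$, $\s$ with $\ddb_3\mu$, $\ddb_4\mu$, $\nabb\mu$, all controlled by Theorem \ref{ch4:thregx} and \eqref{ch4:estlblbtrc}. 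The dangerous contributions on the right come from $\ddb_\lb$ hitting the top-order terms $\trc\,\divb\z$, $\trc\,\rho$ and $\hch\c(\nabb\widehat{\otimes}\z+\cdots)$ in \eqref{ch4:D4tmu}: one gets $\trc\,\divb(\ddb_\lb\z)$, handled by the $\z$-estimate being run in parallel, together with $\lb\rho$ and $\lb\s$. By the Bianchi identities \eqref{ch4:bianc3} and \eqref{ch4:bianc5}, $\lb\rho$ and $\lb\s$ are given by $\divb\bb$, $\curlb\bb$, $\hch\c\ab$ and lower-order products of Ricci coefficients with curvature. After applying $P_j$ and invoking the finite band and dual Bernstein inequalities of Theorem \ref{ch3:thm:LP}, the pieces $P_j\divb\bb$, $P_j\curlb\bb$ are bounded by $2^j\norm{\bb}_{\lh{2}}\lesssim 2^j\ep$, while the remaining curvature and second-order Ricci-coefficient contributions, estimated with the trace bounds of Theorem \ref{ch4:thregx}, the curvature flux bound \eqref{ch4:curvflux1} and Bessel's inequality, produce a function $\lambda$ with $\norm{\lambda}_{L^2(\R)}\lesssim\ep$ entering with the weight $2^{\frac j2}$; all other terms are quadratic in already-controlled quantities and contribute $O(\ep^2)$. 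The analogous computation for the $\z$-system, where curvature enters through $\rho,\s,\bb$ directly rather than through their derivatives, is one order less singular, which accounts for the weights $\ep$ and $2^{-\frac j2}\lambda(u)$ in \eqref{ch4:estlbzeta}.

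The third step is to integrate the two coupled transport equations along the generators of $\H_u$ starting from $\Si_0$; the required bounds on $P_j\ddb_\lb\mu$ and $P_j\ddb_\lb\z$ at $t=0$ come from the control of $u$ on the initial slice established in \cite{param1}, and commuting $L$ past $P_j$ introduces a commutator $[L,P_j]$ whose effect is absorbed using the $L^p$-boundedness and finite band properties of Theorem \ref{ch3:thm:LP} together with the transport equations \eqref{ch4:D4a}--\eqref{ch4:D4eta}. Collecting the estimates for the two quantities, the coupled inequalities close once $\ep$ is small enough, giving \eqref{ch4:estlblbtrc} and \eqref{ch4:estlbzeta}. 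The main obstacle is precisely this top-order coupling combined with the curvature terms: the sources for $\lb\lb\trc$ and $\ddb_\lb\z$ each carry one ``bad'' ($\lb$- or $\nabb$-) derivative of the other, so one must track the frequency weights $2^{j}$ versus $2^{-j}$ so that the bootstrap is self-consistent, and the curvature flux terms $\divb\bb$, $\curlb\bb$ arising from Bianchi are genuinely only $L^2$ in the transversal variable $u$ --- which is why the geometric Littlewood--Paley decomposition and the error function $\lambda(u)$ are unavoidable, and why making these contributions summable in $j$ requires the full force of the finite band, Bernstein and Bessel properties of the $P_j$'s, supplemented by the sharp trace estimates for the Ricci coefficients.
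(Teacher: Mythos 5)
Your overall architecture --- transport equation for $\lb\mu$ (the paper works with $\mu_1=b\lb\mu$) obtained by commuting $\ddb_\lb$ through \eqref{ch4:D4tmu}, a coupled Hodge system for $\ddb_\lb\z$, Bianchi identities to resolve $\lb\rho$ and $\lb\s$, then geometric Littlewood--Paley projection --- coincides with the paper's. But the proposal, as written, cannot close, because it does not contain the one idea the paper emphasizes as indispensable for the estimate of the worst source term.

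Concretely: after projecting the transport equation for $\mu_1$ with $P_j$ and integrating from $\Si_0$, the dominant term is
\[
\normm{P_j\Big(\int_0^t bn\,\hch\c\big(\nabb\widehat\otimes\ddb_{\lb}\z\big)\,d\tau\Big)}_{\lh{2}} .
\]
Estimating this directly, with Bessel for $P_j$ applied to $\ddb_\lb\z$ and the bound on $\nabb\hch$ from Theorem \ref{ch4:thregx}, produces (paper's \eqref{jfkwaiting}) a sum $\sum_{l,q}2^j2^{-|q-l|/2}\gamma^{(1)}_q\gamma^{(2)}_l$ with $\gamma^{(1)}\in\ell^2$ and $\gamma^{(2)}\in\ell^\infty$, which is \emph{not} summable. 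So "closing the coupled bootstrap once $\ep$ is small" fails: the difficulty is not smallness but a logarithmic divergence in the frequency sum. The paper removes it by the decomposition $\nabb(bn\hch)=\ddb_{nL}P+E$ with $\no(P)+\norm{E}_{\PP^0}\les\ep$ (itself nontrivial; it descends from the $\a=\ddb_{L'}P+E$ structure of \cite{Kl-R4}), and then treats the two pieces differently: for the $\ddb_{nL}P$ piece one integrates by parts in $L$ twice, using the transport equation for $\z$ plus the Bianchi identity \eqref{ch4:bianc1bis} to rewrite $\ddb_{nL}\ddb_\lb\z$ in terms of a zeroth-order operator applied to $\bb$, giving a summable bound $2^j\no(P)\norm{\bb}_{\lh{2}}$; for the $E$ piece the Besov improvement turns the $\ell^\infty$ sequence into $\ell^1$ and restores summability. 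Your proposal mentions none of this, and the vague invocation of "finite band, Bernstein and Bessel, supplemented by the sharp trace estimates" does not substitute for it.

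A secondary misattribution: you claim the $\lambda(u)$ in \eqref{ch4:estlblbtrc} comes from "remaining curvature and second-order Ricci-coefficient contributions." In the paper, the $2^{j/2}\lambda(u)$ term comes from the \emph{initial data} $\mu_1\big|_{t=0}$, which is only controlled in $L^2(\R)$ in $u$ via the estimates on $\Si_0$ from \cite{param1}; the curvature flux and the quantities $F_1$, $f_2$ contribute the $2^j\ep$ term via Bernstein and finite band. Getting this attribution right matters, since it determines which pieces are permitted to produce a mere $L^2_u$ function and which must be uniformly bounded.
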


The following theorem investigates the regularity with respect to the parameter $\o\in\S$.
\begin{theorem}\lab{ch4:thregomega}
Assume that $u$ is the solution to the eikonal equation $\gg^{\alpha\beta}\partial_\alpha u\partial_\beta u=0$ on $\mathcal{M}$ such that $u$ is prescribed on $\Si_0$ as in \cite{param1}. Assume also that the estimate \eqref{ch4:curvflux1} is satisfied. Then, we have the following estimates:
\be\lab{ch4:estNomega}
\norm{\po N}_{L^\infty}\lesssim 1,
\ee
\be\lab{ch4:estricciomega}
\norm{\dd\po N}_{\xt{2}{\infty}}+\norm{\po b}_{L^\infty}+\norm{\nabb\po b}_{\xt{2}{\infty}}+\norm{\po\chi}_{\xt{2}{\infty}}+\norm{\po\z}_{\xt{2}{\infty}}\lesssim \ep.
\ee
Furthermore, we have the following decomposition for $\hch$:
\be\lab{ch4:dechch}
\hch=\chi_1+\chi_2,
\ee
where $\chi_1$ and $\chi_2$ are two symmetric traceless $\ptu$-tangent 2-tensors satisfying: 
\be\lab{ch4:dechch1}
\no(\chi_1)+\norm{\ddb_{\lb}\chi_1}_{\lh{2}}+\norm{\po\chi_1}_{\tx{\infty}{2}}+\no(\chi_2)+\norm{\ddb_{\lb}\chi_2}_{\lh{2}}+\norm{\po\chi_2}_{\tx{\infty}{2}}\lesssim \ep
\ee
and for any $2\leq p<+\infty$, we have:
\be\lab{ch4:dechch2}
\norm{\chi_1}_{\tx{p}{\infty}}+\norm{\po\chi_2}_{\tx{p}{4_-}}+\norm{\po\chi_2}_{\lh{6_-}}+\norm{\nabb\po\chi_2}_{\lh{2}}\lesssim \ep,
\ee
where for any real number $a$, $a_-=a-\delta$ for any $\delta>0$.
\end{theorem}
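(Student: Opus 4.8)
The plan is to prove Theorem \ref{ch4:thregomega} by differentiating, with respect to the parameter $\o\in\S$, the transport equations and Hodge systems of Section \ref{ch4:sec:nullstructure}, and then running the same scheme of estimates that yields the $(t,x)$-regularity of Theorem \ref{ch4:thregx}. The starting point is the observation that $\po N$, $\po b$, $\po\chi$, $\po\z$ are again $\ptu$-tangent tensors (up to lower-order corrections coming from the fact that $\po$ does not commute with projection onto $P_{t,u}$), and that on the initial slice $\Si_0$ the corresponding $\o$-derivative bounds are provided by \cite{param1} (Chapter \ref{part:initialu}). Concretely: first I would establish \eqref{ch4:estNomega} by differentiating $N=-\nabla u/|\nabla u|$ along the null geodesics $\kappa_x(t)$ of Remark \ref{ch4:defnullgeod}, using \eqref{ch4:defnullgeod1} to convert $\po$ of the geodesic flow into a Jacobi-type equation whose coefficients are controlled by $\chi$, $\z$ and the curvature via the Ricci equations \eqref{ch4:ricciform}; the $L^\infty$ bound $\norm{\po N}_{L^\infty}\les1$ then follows by Gronwall, with the curvature entering only through its flux $\rf\les\ep$ along $\H_u$, exactly as in \eqref{ch4:curvflux1}.

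\textbf{The Ricci-coefficient $\o$-derivatives.} Next I would commute $\po$ through the transport equation \eqref{ch4:D4a} for $b$ and the Raychaudhuri-type system \eqref{ch4:D4trchi}--\eqref{ch4:D4eta}, together with the Codazzi system \eqref{ch4:Codaz} and the Hodge system \eqref{ch4:diveta}--\eqref{ch4:curleta} for $\z$. Schematically $\ddb_L(\po\chi)+\trc\,\po\chi = \po(\text{RHS}) + [\,\po,\ddb_L\,]\chi$, where the commutator term is expressible through $\po N$, $\po b$ (already controlled) and the Ricci coefficients; one then feeds this into the transport-plus-elliptic scheme of \cite{param3} to get $\norm{\po\chi}_{\xt{2}{\infty}}\les\ep$, and similarly for $\norm{\po\z}_{\xt{2}{\infty}}$, $\norm{\nabb\po b}_{\xt{2}{\infty}}$ and $\norm{\dd\po N}_{\xt{2}{\infty}}$, proving \eqref{ch4:estricciomega}. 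The curvature terms appearing after differentiation are still of the form $\rr\c L$ (no $\ab$), by Remark \ref{ch4:remark:linkpart1}, so the curvature flux bound \eqref{ch4:curvflux1} remains the only curvature input; this is the structural miracle that keeps the argument closed at $L^2$ regularity.

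\textbf{The decomposition of $\hch$.} The decomposition \eqref{ch4:dechch}, $\hch=\chi_1+\chi_2$, is obtained by splitting the source terms in the Hodge/transport system for $\hch$: $\chi_1$ collects the contribution of the "good" pieces (those with extra tangential regularity, e.g. arising from $\nabb\trc$ via \eqref{ch4:Codaz}), integrated via a transport equation that gains one power of regularity in $(t,x)$, giving $\norm{\chi_1}_{\tx{p}{\infty}}\les\ep$ as in \eqref{ch4:dechch2}; $\chi_2$ absorbs the curvature term $\a$ in \eqref{ch4:D4chih}, which cannot be differentiated more than once in $(t,x)$ but whose $\o$-derivative is controllable because $\rr$ is $\o$-independent, so that $\po\chi_2$ only picks up $\po$ of the phase geometry (essentially $\po N$, $\po b$ and $\po$ of the null frame), landing in $\lh{6_-}$ with $\nabb\po\chi_2\in\lh{2}$. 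One then verifies \eqref{ch4:dechch1} by running $\no$ and $\ddb_\lb$ estimates on each piece separately, reusing the elliptic systems of Section \ref{ch4:sec:nullstructure} and the commutation formulas \eqref{ch4:comm1}--\eqref{ch4:comm3}.

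\textbf{Main obstacle.} The hard part will be the derivative-loss phenomenon flagged in the introduction (Remark on "regularity with respect to $\o$"): differentiating the structure equations in $\o$ turns tangential derivatives of the Ricci coefficients into non-tangential ones, because $\po$ of a frame tangent to $P_{t,u}$ has a nonzero $N$-component. So the naive commutation produces terms like $\nabn\chi$ or $\lb\hch$ for which only borderline estimates are available, and these must be reorganized---this is precisely why one is forced into the two-tensor decomposition \eqref{ch4:dechch} and why the exponents in \eqref{ch4:dechch2} are $6_-$ and $4_-$ rather than $6$ and $4$. Controlling these borderline terms, and in particular showing that the loss is only "$\delta$-loss" and can be absorbed, will require the sharp trace theorems and the geometric Littlewood-Paley calculus of \cite{Kl-R6} (Theorem \ref{ch3:thm:LP}), together with a careful bookkeeping of which derivative falls where; I would expect this to be the technical heart of the argument, and I would refer to \cite{param3} for the full details after isolating the typical terms.
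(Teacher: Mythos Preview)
Your overall strategy---differentiate the null structure equations in $\o$, compute the commutator $[L,\po]=-\nabb_{\po N}$ (using that $\gg(N,N)=1$ forces $\po N$ to be $\ptu$-tangent, so an $\o$-derivative costs essentially one tangential derivative), and close via transport with the initial data from \cite{param1}---is exactly what the paper does for \eqref{ch4:estNomega} and \eqref{ch4:estricciomega}. Your observation that $\rr$ is $\o$-independent, so that $\po$ of a null curvature component only differentiates the frame and therefore stays in the flux, is precisely the structural point the paper exploits.

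Where you diverge is the decomposition $\hch=\chi_1+\chi_2$. You propose to split via the transport equation \eqref{ch4:D4chih}, with $\chi_2$ absorbing the curvature source $\a$. The paper instead splits via the \emph{Codazzi} equation \eqref{ch4:Codaz}, $\divb\hch=\tfrac12\nabb\trc-\b+\cdots$, setting $\chi_2=-\mathcal{D}^{-1}\b$ and $\chi_1=\tfrac12\mathcal{D}^{-1}\nabb\trc+\cdots$, where $\mathcal{D}^{-1}$ is the Hodge inverse on $\ptu$. This is not a stylistic choice. The improved $(t,x)$-regularity $\norm{\chi_1}_{\tx{p}{\infty}}\les\ep$ comes from the fact that $\trc$ is already one derivative better than $\hch$ (it lies in $L^\infty$ with $\nabb\trc\in\xt{2}{\infty}$), and $\mathcal{D}^{-1}\nabb$ is order zero. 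The improved $\o$-regularity of $\chi_2$ comes from the \emph{elliptic} gain: since $\po\b=(\a+\rho+\s)\po N$ schematically has the same $\lh{2}$ bound as $\b$ itself, one gets $\po\chi_2\approx-\mathcal{D}^{-1}\po\b$ with a full tangential derivative gained, whence $\nabb\po\chi_2\in\lh{2}$ as required in \eqref{ch4:dechch2}. A transport-based definition $\chi_2\sim\int_0^t\a$ gains only $t$-integrability, not the tangential derivative needed for \eqref{ch4:dechch2}; so as stated your splitting would not close, and you should route the decomposition through Codazzi rather than \eqref{ch4:D4chih}.
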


\begin{remark}\lab{ch4:rem:dehch}
Notice from \eqref{ch4:dechch1} that $\chi_1$ and $\chi_2$ have at least the same regularity as $\hch$. Now, the point of the decomposition \eqref{ch4:dechch} is that both $\chi_1$ and $\chi_2$ have better regularity properties than $\hch$. Indeed, in view of \eqref{ch4:dechch2}, $\chi_1$ has better regularity with respect to $(t,x)$ while $\chi_2$ has better regularity with respect to $\o$.
\end{remark}

Next, the following theorem contains estimates for second order derivatives with respect to $\o$.
\begin{theorem}\lab{ch4:thregomega2}
Assume that $u$ is the solution to the eikonal equation $\gg^{\alpha\beta}\partial_\alpha u\partial_\beta u=0$ on $\mathcal{M}$ such that $u$ is prescribed on $\Si_0$ as in \cite{param1}. Assume also that the estimate \eqref{ch4:curvflux1} is satisfied. Then, we have the following estimates:
\be\lab{ch4:estNomega2}
\norm{\po^2 N}_{\xt{2}{\infty}}\lesssim 1,
\ee
\be\lab{ch4:estNomega2bis}
\norm{\ddb_{L}\Pi(\po^2 N)}_{\lh{2}}\lesssim \ep.
\ee
\be\lab{ch4:estricciomega2}
\norm{P_j\ddb_{\lb}\Pi(\po^2 N)}_{\tx{p}{2}}+\norm{P_j\Pi(\po^2\chi)}_{\tx{\infty}{2}}+\norm{P_j\Pi(\po^2\z)}_{\tx{p}{2}}\lesssim 2^j\ep,
\ee
where $p$ is any real number such that $2\leq p<+\infty$, and where $\Pi$ denotes the projection on $\ptu$-tangent tensors. 
\end{theorem}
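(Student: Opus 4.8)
The plan is to control second order $\o$-derivatives of the optical quantities by differentiating twice, in $\o$, the transport and Hodge-elliptic systems governing the first order quantities, and then running essentially the same analytic machinery (transport estimates along $L$, elliptic estimates of Hodge type on $P_{t,u}$, and the geometric Littlewood-Paley calculus of \cite{Kl-R6}) that was used in Theorems \ref{ch4:thregx}, \ref{ch4:thregx1} and \ref{ch4:thregomega}. Concretely, I would first establish \eqref{ch4:estNomega2}: starting from $N=-\nabla u/|\nabla u|$ and the fact that $\po N$ satisfies a transport equation along $L$ obtained by commuting $\po$ into $\dd_{L'}L'=0$ (this is the mechanism already used to get \eqref{ch4:estNomega}), I would apply $\po$ once more. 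The commutator $[\po,\ddb_L]$ produces terms schematically of the form $\po L\cdot\nabb(\po N)$ plus curvature contractions with $\po N$; since $\po L=\po N$ is bounded in $L^\infty$ by \eqref{ch4:estNomega} and since $\po$ of the Ricci coefficients is controlled in $\xt{2}{\infty}$ by \eqref{ch4:estricciomega}, the transport equation for $\po^2 N$ has a right-hand side that is integrable in $t$ after taking $\sup_t$, which yields the $\xt{2}{\infty}$ bound with constant $\les 1$ (not $\ep$), reflecting the fact that $N$ itself is not small. The bound \eqref{ch4:estNomega2bis} for $\ddb_L\Pi(\po^2 N)$ then follows by reading off the transport equation directly and using that the forcing terms are now quadratic in small quantities, hence $\les\ep$.

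Next I would turn to \eqref{ch4:estricciomega2}. The key structural point, exactly as in the first-order case, is that one does not differentiate the transport equations for $\trc$, $\hch$, $\z$ twice and hope for the best; instead one differentiates the elliptic Hodge systems \eqref{ch4:Codaz}, \eqref{ch4:diveta}, \eqref{ch4:curleta} (for $\hch$ and $\z$) and the transport equation \eqref{ch4:D4trchi} together with \eqref{ch4:D3trc}-\eqref{ch4:D4tmu} (for $\trc$ and $\lb\trc$). Applying $\po^2$ and projecting onto $P_{t,u}$-tangent tensors with $\Pi$, one obtains, for each of $\Pi(\po^2\chi)$, $\Pi(\po^2\z)$, and for $\ddb_\lb\Pi(\po^2 N)$, a system whose principal part is the same first-order Hodge operator but whose source contains: (i) genuinely new top-order terms $\po^2$ of curvature components, which must be absorbed using the curvature flux bound \eqref{ch4:curvflux1} combined with the $\po$-regularity of the null frame (this is where \eqref{ch4:threomega1ter}, \eqref{ch4:estNomega}, \eqref{ch4:estNomega2} are used to convert $\o$-derivatives of frame vectors into controlled tensors), and (ii) lower-order products like $(\po\hch)(\po\z)$, $(\po N)(\po^2 R)$, which are handled by Hölder in the norms \eqref{ch4:estricciomega}, \eqref{ch4:dechch2} together with the weak Bernstein and finite-band properties of $P_j$ from Theorem \ref{ch3:thm:LP}. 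After frequency-localizing with $P_j$ and using the elliptic estimate $\norm{P_j F}\les 2^{-j}\norm{\nabb F}$ one gains exactly the one derivative needed to land on the stated $2^j\ep$ with $L^p_t$ in time, the loss of a half-power (compared to the sharper $L^\infty_t$ in \eqref{ch4:dechch1}) being due to the presence of curvature flux terms that are only square-integrable in $t$.

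The main obstacle I expect is precisely the treatment of the new top-order source term involving $\po^2$ of the curvature. Unlike $\po R$, which can be absorbed into the flux after one frame-differentiation, $\po^2 R$ has no a priori bound at all — $R$ solves Bianchi, which does not see $\o$, so all $\o$-regularity of $\po^2 R$ along $\H_u$ is borrowed from the $\o$-variation of the foliation itself, i.e. from the fact that the null hypersurfaces $\H_{u(\cdot,\o)}$ sweep out a genuine congruence as $\o$ varies. The delicate point, already flagged in the introduction (the discussion following Step C3 on why $u$ is not smooth in $\o$), is that $\po$ of a tangential direction has a nonzero normal component, so $\po^2$ applied to a tangential contraction of $R$ produces transversal curvature components, and in particular threatens to produce $\ab=\R_{\lb A\lb B}$, the one component not controlled by the flux. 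The resolution must be that, after carefully expanding using the Ricci equations \eqref{ch4:ricciform} and the commutation formulas \eqref{ch4:comm1}-\eqref{ch4:comm3}, together with the decomposition \eqref{ch4:dechch} which isolates the part of $\hch$ with good $\o$-regularity, every would-be $\ab$ term either cancels by symmetry or is paired with two factors of $\po N$ in a way that, via \eqref{ch4:decNom}-type decompositions, reconstitutes a flux-controlled quantity; verifying this cancellation structure term by term is the technical heart of the argument and is where I would expect to spend essentially all of the real work, referring to \cite{param3} for the full bookkeeping.
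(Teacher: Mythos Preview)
Your diagnosis of the danger is exactly right: the enemy is $\ab$, the one null component not bounded by the flux on $\H_u$. But your proposed cure walks straight into it. You choose to differentiate the Hodge systems \eqref{ch4:diveta}--\eqref{ch4:curleta} for $\z$; their curvature sources are $\rho$ and $\sigma$. Since $\rho=\tfrac14\rr(\lb,L,\lb,L)$ and $\po L=\po N$, $\po\lb=-\po N$ are tangent, one application of $\po$ already turns part of $\rho$ into $\bb$, and the second produces, with no cancellation whatsoever, the term $\rr(\lb,\po N,\lb,\po N)=\ab(\po N,\po N)$. Your hoped-for symmetry cancellation does not occur, and $\ab$ on $\H_u$ is simply unavailable. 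The same failure mode is flagged for the initial-slice problem in Remark~\ref{theend}.

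The paper avoids this by going the opposite route from what you propose: it differentiates the \emph{transport} equation \eqref{ch4:D4eta} twice in $\o$, not the Hodge system. The curvature source there is $\b=\tfrac12\rr(\cdot,L,\lb,L)$, which carries two $L$-contractions; since each $\po$ converts at most one null leg to a tangent one, $\po^2\b$ never reaches $\ab$. This yields a transport equation of the schematic form
\[
\ddb_L\bigl(\Pi(\po^2\z)\bigr)=-\chi\cdot\Pi(\po^2\z)+\nabb F_1+F_2,
\]
with $F_1\in L^2(\H_u)$ and $F_2\in L^1_{x'}L^2_t$. There is a second idea you are missing: the linear term $-\chi\cdot\Pi(\po^2\z)$ cannot be handled by Gronwall because $\hch$ is only in $L^2_{x'}L^\infty_t$, not $L^\infty$. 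The paper removes it with an integrating-factor tensor $M$ solving $\ddb_LM=M\chi$, $M|_{t=0}=\ga$ (Lemma~\ref{ch4:lemma:poo3}); one shows $\|M-\ga\|_{L^\infty}+\|\nabb M\|_{\BB^0}\les\ep$, so that multiplication by $M^{-1}$ preserves the target norm. After this, applying $P_j$ and using $\|P_j\nabb G\|_{L^2}\les 2^j\|G\|_{L^2}$ on the $\nabb F_1$ piece gives exactly the stated $2^j\ep$ bound. (Note also that your finite-band inequality is pointed the wrong way for this purpose.)
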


Finally, we need to compare quantities evaluated at two angles $\o$ and $\nu$. The following decompositions are used in sections \ref{ch3:bissec:diagonal} and \ref{ch3:bissec:orthoangle}

\begin{theorem}\lab{ch4:cor:xx1bis}
Let $\o$ and $\nu$ in $\S$ such that $|\o-\nu|\les 2^{-\frac{j}{2}}$. Let $u=u(.,\o)$, $N=N(.,\o)$ and $N_\nu=N(.,\nu)$. For any $j\geq 0$, we have the following decomposition for $N-N_\nu$:
\be\lab{ch4:decNom}
2^{\frac{j}{2}}(N-N_\nu)=F^j_1+F^j_2
\ee
where the tensor $F^j_1$ does not depend on $\o$ and satisfies:
$$\norm{F^j_1}_{L^\infty}\les 1,$$
and where the tensor $F^j_2$ satisfies:
$$\norm{F^j_2}_{L^\infty_u\lh{2}}\les 2^{-\frac{j}{2}}.$$
We also have following decomposition for $\trc$:
\be\lab{ch4:dectrcom}
\trc=f^j_1+f^j_2
\ee
where the scalar $f^j_1$ does not depend on $\o$ and satisfies:
$$\norm{f^j_1}_{L^\infty}\les \ep,$$
and where the scalar $f^j_2$ satisfies:
$$\norm{f^j_2}_{L^\infty_u\lh{2}}\les \ep 2^{-\frac{j}{2}}.$$
\end{theorem}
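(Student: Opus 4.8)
The plan is to obtain Theorem~\ref{ch4:cor:xx1bis} as a corollary of the $\o$-regularity already established in Theorems~\ref{ch4:thregomega} and \ref{ch4:thregomega2}, by Taylor expanding $N(\cdot,\cdot,\o)$ and $\trc(\cdot,\cdot,\o)$ in $\o$ about the base angle $\nu$ and exploiting that on a patch $|\o-\nu|\les 2^{-\frac{j}{2}}$ each $\o$-differentiation is worth a gain of $2^{-\frac{j}{2}}$. The explicit monomials in $\o-\nu$ that survive in the ``$\o$-independent'' part are harmless in the applications of Chapter~\ref{part:paramtime} because they are absorbed by powers of $2^{\frac{j}{2}}$; the substantive point is that their geometric coefficients are frozen at $\nu$.

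I would handle $\trc$ first, as the easier case. Set $f^j_1:=\trc(\cdot,\cdot,\nu)$, which is manifestly $\o$-independent and satisfies $\norm{f^j_1}_{L^\infty}\les\ep$ by \eqref{ch4:esttrc}, and set $f^j_2:=\trc(\cdot,\cdot,\o)-\trc(\cdot,\cdot,\nu)$. Writing $f^j_2=(\o-\nu)\int_0^1(\po\trc)(\cdot,\cdot,\o_\sigma)\,d\sigma$ along the segment $\o_\sigma=\nu+\sigma(\o-\nu)$ and using $|\o-\nu|\les 2^{-\frac{j}{2}}$, the bound $\norm{f^j_2}_{L^\infty_u\lh{2}}\les\ep 2^{-\frac{j}{2}}$ reduces to controlling $(\po\trc)(\cdot,\cdot,\o_\sigma)$ in $L^2$ of the leaf $\H_{u(\cdot,\o)}$ uniformly in $\sigma$. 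Theorem~\ref{ch4:thregomega} (through \eqref{ch4:estricciomega} and the embedding $\norm{F}_{\lh{2}}\les\norm{F}_{\xt{2}{\infty}}$) gives such control on the leaf $\H_{u(\cdot,\o_\sigma)}$ attached to the \emph{same} angle, so the only additional input needed is a transfer of $L^2$-control between the $2^{-\frac{j}{2}}$-close null foliations $\H_{u(\cdot,\o)}$ and $\H_{u(\cdot,\o_\sigma)}$; this I would get either by a second $\o$-expansion bringing in the projected estimate \eqref{ch4:estricciomega2} (the extra $|\o-\nu|$ paying for the one-derivative loss) or by integrating, along the generators of $\H_{u(\cdot,\o)}$, the transport equation for $\po\trc$ obtained by commuting $\po$ through the Raychaudhuri equation \eqref{ch4:D4trchi}, whose coefficients are under control by \eqref{ch4:curvflux1} and Theorem~\ref{ch4:thregx}.

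For $N$ I would proceed in the same spirit but keep one more Taylor term, since the zeroth-order difference vanishes: $2^{\frac{j}{2}}(N(\cdot,\cdot,\o)-N(\cdot,\cdot,\nu))=F^j_1+F^j_2$ with $F^j_1:=2^{\frac{j}{2}}(\o-\nu)\cdot(\po N)(\cdot,\cdot,\nu)$ and $F^j_2$ the second-order remainder $2^{\frac{j}{2}}(\o-\nu)\int_0^1\big[(\po N)(\cdot,\cdot,\o_\sigma)-(\po N)(\cdot,\cdot,\nu)\big]d\sigma$. The principal part $F^j_1$ has its geometric factor $\po N$ evaluated at $\nu$, and $\norm{F^j_1}_{L^\infty}\les 2^{\frac{j}{2}}|\o-\nu|\,\norm{\po N}_{L^\infty}\les 1$ by \eqref{ch4:estNomega}. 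One further $\o$-integration turns $F^j_2$ into a double integral of $\po^2N$, so that $\norm{F^j_2}_{L^\infty_u\lh{2}}\les 2^{\frac{j}{2}}|\o-\nu|^2\sup_\sigma\norm{\po^2N}_{\lh{2}}\les 2^{-\frac{j}{2}}$, using $2^{\frac{j}{2}}|\o-\nu|^2\les 2^{-\frac{j}{2}}$ and $\norm{\po^2N}_{\xt{2}{\infty}}\les 1$ from \eqref{ch4:estNomega2}, with the leaf mismatch again resolved through the transport estimate \eqref{ch4:estNomega2bis} for the $P_{t,u}$-projection $\Pi(\po^2N)$ — exactly the object Theorem~\ref{ch4:thregomega2} was designed to control. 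The $\trc$-decomposition of the statement then follows by the same argument, without the need for the $\po^2$ step.

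The hard part is this leaf mismatch: $\po$ does not commute with restriction to $\H_u$, because the null foliation itself moves with $\o$, and $\po$ applied to $P_{t,u}$-tangent tensors produces tensors with a nonzero normal component (as noted in the discussion preceding this section). This is precisely why one must route the estimate through the projected, transport-controlled second-order quantities of Theorem~\ref{ch4:thregomega2} rather than differentiating twice naively; the detailed verification — commuting $\po$ and $\po^2$ through the null structure equations of Section~\ref{ch4:sec:nullstructure}, the Bianchi identities, and the commutation formulas of Section~\ref{ch4:sec:commutationf}, and then solving the resulting transport and Hodge systems — is carried out in \cite{param3}, to which I refer for the full details.
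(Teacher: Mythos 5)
Your general plan — Taylor expand $\trc$ and $N$ in $\o$ about $\nu$, use the $\o$-derivative estimates of Theorems~\ref{ch4:thregomega} and \ref{ch4:thregomega2}, and recognize that the remaining obstruction is transferring an $L^\infty_{u(\cdot,\o_\sigma)}L^2(\H_{u(\cdot,\o_\sigma)})$ bound to the foliation attached to $\o$ — is the right starting point, and you correctly single out the leaf mismatch as the substance of the theorem. But your proposed resolutions of that mismatch do not close the argument, and this is precisely the trap the paper itself flags in the remark around \eqref{ch3:onexplique1}: the naive fundamental-theorem-of-calculus decomposition together with $\po\trc\in\li{\infty}{2}$ is explicitly ``not enough to conclude since $\li{\infty}{2}$ and $L^\infty_{u_\sigma}L^2(\H_{u_\sigma})$ are not comparable.'' A further $\o$-expansion merely swaps one mismatch for another (the higher $\o$-derivatives are again controlled on the wrong foliation), and the transport estimate \eqref{ch4:estNomega2bis} controls $\ddb_L\Pi(\po^2 N)$ \emph{along} the generators of $\H_u$, which gives no information in the direction transverse to $\H_u$ — the direction you actually need in order to compare $\H_{u(\cdot,\o)}$ with a nearby $\H_{u(\cdot,\o_\sigma)}$.

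The mechanism the paper actually uses is a quantitative norm-comparison inequality across nearby foliations: Lemma~\ref{ch4:lemma:xx2} bounds $\norm{F}_{L^\infty_{u'}L^2(\H_{u'})}$ by $\norm{F}_{L^\infty_{u}L^2(\H_{u})}$ plus a correction of size $|\o-\o'|^{1/4}$ carrying one full derivative $\dd F$ (not just $\ddb_L F$) integrated over a slab of $u$-width $|\o-\o'|$. It is proved via the global coordinate system $\Phi_{t,\o}$ of \eqref{ch4:gl1} and an interpolation/trace argument, and it is refined into the Littlewood--Paley form of Corollary~\ref{ch4:lemma:xx3} together with the commutator estimate of Lemma~\ref{ch4:lemma:xx5}. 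That is the tool that converts the first-order $\o$-regularity of \eqref{ch4:estricciomega} into the $\lupm{2}$ control on the correct foliation, and the thin-slab gain is what produces the $2^{-j/2}$ in the bound for $f^j_2$ and $F^j_2$. The second missing ingredient for $N$ is the scalarization step the paper describes: Corollary~\ref{ch4:lemma:xx3} and Lemma~\ref{ch4:lemma:xx5} are stated for scalars, so $\po N$ must first be expressed relative to an $\o$-independent basis of $T\Sit$ before they can be applied. Your proposal omits both the comparison lemma and the scalarization, and defers the hardest part to \cite{param3}, so as written it does not constitute a proof.
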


Let us conclude this section by mentioning several ingredients of \cite{param3} that have been omitted here for the 
sake of simplicity: 
\begin{itemize}
\item estimates for the transport equations along $L$, and the elliptic systems of Hodge type on $\ptu$ involved in the null structure equations

\item embeddings on $\H_u$, $\Sigma_t$ and $\ptu$

\item geometric Littlewood-Paley projections and Besov spaces on $\Sigma_t$

\item control of the Gauss curvature of $\ptu$ 

\item Bochner inequalities on $\Sigma_t$ and $\ptu$ 

\item estimates for various commutator terms of the type: $[\dd_L, \nabb]$, $[\dd_{\lb}, \nabb]$, $[\dd_L, P_j]$, $[\dd_{\lb}, P_j]$, ...
\end{itemize}

\section{Regularity of the foliation with respect to $(t,x)$}

In this section, we outline the main ideas of the proof of Theorem \ref{ch4:thregx}. We assume the following bootstrap assumptions:
\be\lab{ch4:boot1}
\norm{n-1}_{\lh{\infty}}+\norm{b-1}_{\lh{\infty}}\leq\frac{1}{10},
\ee
\be\lab{ch4:boot2}
\norm{\nabla n}_{\tx{\infty}{2}}+\norm{\nabla^2n}_{\tx{\infty}{2}}+\norm{\nabla \dd_Tn}_{\tx{\infty}{2}}+\noo(b)+\norm{\lb(b)}_{\xt{2}{\infty}}\leq D\ep,
\ee
\be\lab{ch4:boot3}
\no(k)+\norm{\ddb_{\lb}\kep}_{\lh{2}}+\norm{\dd_{\lb}\d}_{\lh{2}}+\norm{\kepb}_{\xt{\infty}{2}}+\norm{\db}_{\xt{\infty}{2}}\leq D\ep,
\ee
\be\lab{ch4:boot4}
\norm{\trc}_{\lh{\infty}}+\norm{\nabb\trc}_{\xt{2}{\infty}}+\norm{\lb\trc}_{\xt{2}{\infty}}
\leq D\ep,
\ee
\be\lab{ch4:boot5}
\norm{\hch}_{\xt{2}{\infty}}+\no(\hch)+\norm{\ddb_{\lb}\hch}_{\lh{2}}
\leq D\ep,
\ee
\be\lab{ch4:boot6}
\norm{\z}_{\xt{2}{\infty}}+\no(\z)\leq D\ep,
\ee
where $D>0$ is a large enough constant. We will improve on these estimates.

\subsection{Non intersection of null geodesics on $\H_u$}\lab{ch4:sec:nointersec}

The control we obtain on the geometric quantities associated to our foliation is only valid as long as there are no conjugate points and null geodesics do not intersect. The goal of  this section is to prove that this holds at least until $t=1$. In addition to the bound \eqref{ch4:curvflux1} on the curvature tensor $\rr$ of $\gg$, we make the following regularity assumption on $\gg$. There exists a coordinate chart such that 
\be\lab{ch4:grossereg}
\norm{\gg}_{C^2(\mathcal{M})}\leq M,
\ee
where $M$ is a very large constant. 

\begin{remark}
The assumption \eqref{ch4:grossereg} is only used to prove the absence of caustic and that null geodesics do not intersect at least until $t=1$, which is a qualitative property. On the other hand, we only rely on the bound \eqref{ch4:curvflux1} on $\rr$ to prove the various quantitative bounds of Theorems \ref{ch4:thregx}, \ref{ch4:thregx1}, \ref{ch4:thregomega} and \ref{ch4:thregomega2}.
\end{remark}

For $(0,x)$ in $\Si_0$, recall the definition in Remark \ref{ch4:defnullgeod} of the null geodesic $\kappa_x(t)$. For all $0\leq t\leq 1$, let $\Phi_t:\Si\rightarrow\Sit$ defined by $\Phi_t(0,x)=\kappa_x(t)$. We have $\Phi_0(0,x)=(0,x)$ on $\Si_0$. We define $t_0\geq 0$ as the supremum of $0\leq t\leq 1$ such that $\Phi_t$ is bijective from $\Si_0$ to $\Sit$. 

\begin{remark}\label{ch4:rem:ctoutbon}
As long as $0\leq t<t_0$, there are no conjugate points and no distinct null geodesic intersections. Thus, we may assume that the u-foliation exists and satisfies the bounds \eqref{ch4:boot1}-\eqref{ch4:boot6} given by the bootstrap assumptions. Furthermore, we may assume the identity \eqref{ch4:defnullgeod1} for the null geodesics $\kappa_x(t)$.
\end{remark}

Our goal is to show that we have in fact $t_0=1$. We proceed in three steps (see \cite{param3} for the details):
\begin{itemize}
\item[\textit{Step 1.}] As noticed in Remark \ref{ch4:rem:ctoutbon}, the $L^\infty$ bound for $\trc$ given by \eqref{ch4:boot4} holds for $0\leq t<t_0$. Furthermore, using the Raychaudhuri equation \eqref{ch4:D4trchi} and the bound \eqref{ch4:grossereg}, we obtain the existence of a constant $\delta>0$ depending on $M$ such that the $L^\infty$ bound for $\trc$ given by \eqref{ch4:boot4} holds for $0\leq t<t_0+\delta$. This control for $\trc$ allows us to prove that there are no conjugate points on $0\leq t<t_0+\delta$. 

\item[\textit{Step 2.}] Next, we prove that $\Sigma_t=\cup_u\ptu$ for $0\leq t\leq t_0+\delta$ where $\delta>0$ is a constant depending on $M$. This requires the bound \eqref{ch4:grossereg}, and the control it induces on forward and backward light cones for small time intervals with a size depending on $M$.  
 
\item[\textit{Step 3.}] Assume now that $0<t_0<1$. In view of \textit{Step 1} and \textit{Step 2}, the only thing that can go wrong at $t=t_0$ is that two distinct null geodesics intersect in $\Sigma_{t_0}$. Assume by contradiction that this is indeed the case so that there exists $(0,x_1)\neq (0,x_2)$ two points in $\Si_0$ such that $\kappa_{x_1}(t_0)=\kappa_{x_2}(t_0)=(t_0,x_0)$. Since
$$\kappa'_{x_j}(t)=b(\kappa_{x_j(t)})^{-1}L_{\kappa_{x_j(t)}},\,\, j=1, 2,$$ 
in view of Remark \ref{ch4:defnullgeod}, the regularity of $b$ and $L$ yields $\kappa_{x_1}'(t_0)=\kappa_{x_2}'(t_0)$.    From the classical uniqueness result for ODEs, we deduce that $\kappa_{x_1}(t)=\kappa_{x_2}(t)$ for all $t$. In particular, taking $t=0$, we obtain $(0,x_1)=(0,x_2)$ which yields a contradiction. 
\end{itemize}

Finally, Steps 1, 2 and 3, yield $t_0\geq 1$. In particular, we have:
\be\lab{ch4:conclusionfoliation}
\begin{array}{l}
\textrm{On }0\leq t\leq 1, \textrm{ there are no conjugate points and no intersection of distinct}\\ 
\textrm{null geodesics. In particular, }u\textrm{ exists on }0\leq t\leq 1\textrm{ and the bootstrap}\\ 
\textrm{assumptions \eqref{ch4:boot1}-\eqref{ch4:boot6} hold. Furthermore, }\Sit=\cup_u\ptu\textrm{ for all }0\leq t\leq 1. 
\end{array}
\ee

\subsection{Lower bound on the volume radius of $\Sit$}\label{ch4:sec:lowerboundvolrad}

%\subsection{Coordinate systems on $\Sit$ and $\ptu$}

In this section, we prove the lower bound on the volume radius of $\Sit$ given by the estimate \eqref{bootvolumeradius}. We use the global coordinate system $x'=(x^1,x^2)$ on $\pou$  which has been constructed in \cite{param1} (see also Proposition \ref{gl0}). Transporting this coordinate system along the null geodesics generated by $L$, we obtain a coordinate system $x'$ of $\ptu$, which in particular satisfies
\be\lab{ch4:eq:coordchartbis}
(1-O(\ep))|\xi|^2\leq \gamma_{AB}(p)\xi^A\xi^B\leq (1+O(\ep))|\xi|^2, \qquad \mbox{uniformly for  all }
\,\, p\in\ptu,
\ee
where $\gamma$ is the metric induces by $\g$ on $\ptu$. We denote by $x'$ this global coordinate system on $\ptu$. 

Next, we obtain a global coordinate system on $\Sit$ as follows. First, recall from \eqref{ch4:conclusionfoliation} that $\Sit=\cup\ptu$ so that $u$ is defined on $\Sit$. To any  $p\in\Sit$, we associate the coordinates $(u(p),x'(p))$ where $u(p)$ is the value of the optical function $u$ at $p$, and $x'(p)$ are the coordinate of $p$ in the coordinate system of $\ptu$. In this coordinate system, the metric $g_t$ on $\Sit$ (i.e. the restriction of $\gg$ on $\Sit$) takes the following form:
\be\label{ch4:eq:coordchartsit0}
g_t=\left(\begin{array}{ll}
b^{-2} & 0\\
0& \ga
\end{array}\right),
\ee
where $\ga$ is the induced metric on $\ptu$. Together with the estimate \eqref{ch4:boot1} for $b$ and \eqref{ch4:eq:coordchartbis} for $\ga$, we obtain the following lower bound on the volume radius of $\Sit$ at scales $\leq 1$:
\be\label{ch4:volradbound}
r_{vol}(\Sit,1)\geq \frac{1}{4},
\ee
which is the estimate \eqref{bootvolumeradius}.

\subsection{Estimates for the second fundamental form $k$ and the lapse $n$}\label{ch4:sec:estn}

We first estimate $k$ on $\Sit$. $k$ satisfies the following symmetric Hodge system on $\Sit$:
\be\lab{ch4:eqksit}
\left\{\begin{array}{l}
\textrm{curl} k_{ij}={}^*\rr_{\mu i\nu j}T^\mu T^\nu,\\
\nabla^jk_{ij}=0,\\
\textrm{tr}k=0,
\end{array}\right.
\ee
where curl$k_{ij}=\frac{1}{2}(\in_i^{lm}\nabla_lk_{mj}+\in_j^{lm}\nabla_lk_{mi})$ and tr$k=g^{ij}k_{ij}$. Using an elliptic estimate for the Hodge system \eqref{ch4:eqksit}, we easily obtain:
\be\lab{ch4:eqksit3}
\norm{\nabla k}_{\lsit{\infty}{2}}\lesssim \ep.
\ee

Recall from \eqref{eqlapsen} that the lapse $n$ satisfies the following elliptic equation on $\Sit$:
\be\lab{ch4:lapsen1bis}
\Delta n=|k|^2n.
\ee
Using \eqref{ch4:lapsen1bis} and \eqref{ch4:eqksit3}, together with elliptic estimates on $\Sit$, we improve the estimate for $n$ in the bootstrap assumptions \eqref{ch4:boot1} \eqref{ch4:boot2}. We also prove the following estimate which is needed for the estimate \eqref{bootn}
\be\label{ch4:estnmoreinvolved}
\norm{\nabla n}_{L^\infty(\MM)}\lesssim \ep.
\ee
Using \eqref{ch4:lapsen1bis} and \eqref{ch4:eqksit3} together  with  the Sobolev embedding on the three dimensional riemannian manifold $\Sit$ yields $\Delta n\in \lsit{\infty}{3}$. Together with elliptic estimates, this implies $\nabla^2n\in\lsit{\infty}{3}$, and thus $\nabla n$ misses  to be in $L^\infty(\MM)$ by a log divergence.  However, one can overcome this loss by exploiting the Besov improvement with respect to the Sobolev embedding on $\Sit$. This requires to introduce a geometric Littlewood-Paley theory on $\Sit$\footnote{Note that we use a geometric construction based on the heat flow on $\Sigma_t$ since we don't have enough regularity for the metric in order to use a coordinate dependent Littlewood-Paley decomposition}. We refer the reader to  section 4.4 in \cite{param3} for the details.

Finally, we estimate $k$ on $\HH_u$. To this end, we use the decomposition of $k$ \eqref{ch4:decompositionk} in $\d, \kep$ and $\eta$, and obtain a Hodge system for $\d, \kep$ and $\eta$ on $\HH_u$. This allows us to derive the following estimate
\be\lab{ch4:hodgk8}
\no(k)\les\no(\eta)+\no(\kep)+\no(\d)\lesssim \ep.
\ee
Deriving an estimate for $T(\d)$ and $\ddb_T\kep$, together with \eqref{ch4:hodgk8}, then yields
\be\lab{ch4:hodgk10}
\norm{\dd_{\lb}\d}_{\li{\infty}{2}}+\norm{\ddb_{\lb}\kep}_{\li{\infty}{2}}\lesssim \ep.
\ee

\subsection{Time foliation versus geodesic foliation}\label{ch4:sec:tfvsgf}

While we work with a time foliation, we recall that the estimates corresponding to the bootstrap assumptions on $\chi$ and $\z$ have already been proved in the context of a geodesic foliation in \cite{Kl-R4} \cite{Kl-R6} \cite{Kl-R5}. One may reprove these estimates by adapting the proofs to the context of a time foliation. However, this would be rather lengthy and we suggest a more elegant solution which consists in translating certain estimates from the geodesic foliation to the time foliation, and in obtaining directly the rest of the estimates. More precisely, we wish to obtain the $L^\infty$ bound from $\trc$, and the trace bounds for $\hch$ and $\z$ by exploiting the corresponding estimates in the geodesic foliation. We will obtain the trace bounds for $\d$ and $\kep$ by reducing to estimates in the geodesic foliation in section \ref{ch4:sec:tracenormk}. Finally, these trace bounds and the null structure equations will allow us to get all the remaining estimates in section \ref{ch4:sec:remainest}. 
We start by recalling some of the results obtained in the context of the geodesic foliation in \cite{Kl-R4} \cite{Kl-R6} \cite{Kl-R5}.

\subsubsection{The case of the geodesic foliation}\lab{ch4:sec:defgeodfol}

Recall that $L'=-\gg^{\a\b}\pr_\b u \pr_\a$ is the geodesic null generator of $\H_u$. Let $s$ denote its affine parameter, i.e. $L'(s)=1$. We denote by $P'_{s,u}$  the level surfaces of $s$ in $\H_u$. 

\begin{definition} A  null frame   $e'_1,e'_2,e'_3,e'_4$ at a point $p\in P'_{s,u}$ consists,
in addition to $e'_4=L'$, of {\sl  arbitrary  orthonormal}  vectors  $e'_1,e'_2$ tangent
to $P'_{s,u}$ and the unique vectorfield $e'_3=\lb'$ satisfying the relations:
$$\gg(e'_3,e'_4)=-2,\,\gg(e'_3,e'_3)=0,\,\gg(e'_3,e'_1)=0,\,\gg(e'_3,e'_2)=0.$$
\end{definition}

\begin{definition}[\textit{Ricci coefficients in the geodesic foliation}]
 Let  $e'_1,e'_2,e'_3,e'_4$ be a null frame on
$P'_{s,u}$ as above.   The following tensors on  $P'_{s,u}$ 
\be\label{ch4:chip}
\begin{array}{ll}
\chi'_{AB}=<\dd_{e'_A} e'_4,e'_B>, & \chb'_{AB}=<\dd_{e'_A} e'_3,e'_B>,\\
\z'_{A}=\half <\dd_{e'_A} e'_4,e'_3> &
\end{array}
\ee
are called the Ricci coefficients associated to the geodesic foliation.

We decompose $\chi'$ and $\chb'$ into
their  trace and traceless components.
\begin{alignat}{2}
&\trc' = \gg^{AB}\chi'_{AB},&\quad &\trchb' = \gg^{AB}\chb'_{AB},
\label{ch4:trchip}\\
&\hch'_{AB}=\chi'_{AB}-\half \trc' \gg_{AB},&\quad 
&\hchb'_{AB}=\chb'_{AB}-\half \trchb' \gg_{AB}.
\label{ch4:chihp} 
\end{alignat}
\end{definition}

\begin{definition}
The null components of the curvature tensor
$\rr$ of the space-time metric $\gg$ in the geodesic foliation are given
by:
\bea
\a'_{ab}&=&\rr(L',e'_a,L', e'_b)\,,\qquad \b'_a=\half \rr(e'_a, L',\lb', L') ,\\ \r' &=&\frac{1}{4}\rr(\lb', L', \lb',
L')\,,
\qquad
\s'=\frac{1}{4}\, ^{\star} \rr(\lb', L', \lb', L')\\
\bb'_a&=&\half \rr(e'_a,\lb',\lb', L')\,,\qquad \underline{\a'}_{ab}=\rr(\lb', e'_a,\lb', e'_b)
\eea
where $^\star \rr$ denotes the Hodge dual of $\rr$. 
\end{definition}

We now recall the main estimates obtained in \cite{Kl-R4} \cite{Kl-R6} \cite{Kl-R5}. We have:
\be\lab{ch4:estgeodesicfol}
\norm{\trc'}_{\lh{\infty}}+\norm{\hch'}_{L^{2}_{x'}L^{\infty}_s}+\norm{\z'}_{L^{2}_{x'}L^{\infty}_s}\lesssim\ep
\ee
and 
\be\lab{ch4:estgeodesicfolbis}
\norm{\chb'}_{L^2_{x'}L^\infty_s}+\no'(\chi')+\no'(\z')\lesssim\ep,
\ee
where the norm $\no'$ is given by 
$$\no'(F)=\norm{F}_{\lh{2}}+\norm{\nabb' F}_{\lh{2}} +\norm{\ddb_{L'}F}_{\lh{2}}.$$

\begin{remark}\lab{ch4:remark:equivnorm}
Note that the norm $\lh{\infty}$ does not depend on the particular foliation. Now, this is also the case for  the trace norm $L^{2}_{x'}L^{\infty}_s$. Indeed, recall the definition of the null geodesic $\kappa_x$ in Remark \ref{ch4:defnullgeod}. Then, we have:
\bee
\norm{F}_{\xt{\infty}{2}}^2=\sup_{(0,x)\in\Si_0}\int_0^1|F(\kappa_x(t))|^2dt=\sup_{(0,x)\in\Si_0}\int_0^1|F(\kappa_x(s))|^2n^{-1}b^{-1}ds\sim \norm{F}_{L^{\infty}_{x'}L^{2}_s}^2
\eee
where we used the fact that $\frac{dt}{ds}=n^{-1}b^{-1}$ and the fact that $nb\sim 1$ by the bootstrap assumption \eqref{ch4:boot1}.
\end{remark}

In the next section, we will obtain the estimates corresponding to \eqref{ch4:estgeodesicfol} in the time foliation. For now, we conclude this section by recalling the definition and some properties of the Besov spaces constructed in \cite{Kl-R4} \cite{Kl-R6} \cite{Kl-R5}. For $P'_{s,u}$-tangent tensors $F$ on $\H_u$, $0\leq a\leq 1$,  we introduce the Besov norms:
\bea
\|F\|_{{\BB'}^a}&=&\sum_{j\geq 0} 2^{ja} \sup_{0\leq s\leq 1}\|
P'_jF\|_{L^2(P'_{s,u})} +  \sup_{0\leq s\leq 1}\|
P'_{<0}F\|_{L^2(P'_{s,u})}    ,\label{ch4:eq:Besovnorm}\\
\|F\|_{{\PP'}^a}&=&\sum_{j\geq 0} 2^{ja} \| P'_jF\|_{L^2(\H_u)} +
\| P'_{<0}F\|_{L^2(\H_u)}\label{ch4:eq:Penrosenorm}
\eea
where $P'_j$ are the geometric Littlewood-Paley projections on the 2-surfaces $P'_{s,u}$. Using the definition of these Besov spaces, we have (see \cite{Kl-R4} \cite{Kl-R6} \cite{Kl-R5})
\be\lab{ch4:estgeodesicfolter}
\norm{\chb'}_{{\BB'}^0}\lesssim\ep.
\ee
Furthermore, we have for scalar functions on  $\H_u$ (see \cite{Kl-R4} section 5):
\be\lab{ch4:eq:Besov-Sobolev-BB0}
\|f\|_{\lh{\infty}}\lesssim \|f\|_{{\BB'}^1}\lesssim \|f\|_{L_s^\infty L_{x'}^2}+\|\nabb' f\|_{{\BB'}^0}.
\end{equation}
Finally, we have the following version of the sharp classical trace theorem (see Corollary 4.21 in \cite{param3} for a proof). 

\begin{proposition} Assume $F$ is an $P'_{s,u}$-tangent tensor which
admits a decomposition
of the form, $\nabb' F=A\ddb_{L'}P+E$.
Then,
\be\lab{ch4:eq:funny-classical-trace}
\|F\|_{L_{x'}^\infty L_s^2}\lesssim \no'(F)+\no'(P)(\norm{A}_{L^\infty}+\norm{\nabb'A}_{L^2_{x'}L^\infty_s}+\norm{\ddb_{L'}A}_{L^2_{x'}L^\infty_s})
+\|E\|_{{\PP'}^0}.
\end{equation}
\label{ch4:corr:funny-classical-trace}
\end{proposition}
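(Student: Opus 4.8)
\emph{Proof idea.} This proposition is the sharp classical trace theorem of Klainerman--Rodnianski (\cite{Kl-R4}, \cite{Kl-R6}, \cite{Kl-R5}) transplanted to the geodesic foliation of $\H_u$ used here, and the plan is to follow that argument (cf.\ Corollary~4.21 of \cite{param3}). The first move is to reduce to the case where $F$, $P$, $E$ are \emph{scalar} functions on $\H_u$ and $A$ is a $P'_{s,u}$-tangent one-form: decomposing a tensor $F$ on the null frame $e'_1,\dots,e'_4$ and commuting the frame with $\nabb'$, $\ddb_{L'}$ and the geometric Littlewood--Paley projections $P'_j$ produces only terms of the schematic type $(\chi',\chb',\z')\cdot F$, $(\chi',\chb',\z')\cdot P$ and curvature, all of which are dominated by $\no'(F)$, $\no'(P)$ and the curvature flux via \eqref{ch4:curvflux1}, \eqref{ch4:estgeodesicfol}--\eqref{ch4:estgeodesicfolbis}; moreover the commutators preserve the structural identity $\nabb'F=A\,\ddb_{L'}P+E$ up to such acceptable error terms. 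From now on $F$, $P$, $E$ are scalars.

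Second, I would run a Littlewood--Paley decomposition on the $2$-surfaces $P'_{s,u}$: write $F=P'_{<0}F+\sum_{j\ge 0}P'_jF$, estimate each piece in $L^\infty_{x'}L^2_s$, and sum. The low frequency piece is harmless: by the weak Bernstein inequality for $P'_{<0}$ on $P'_{s,u}$ (the geodesic analogue of Theorem~\ref{ch3:thm:LP}) and Fubini,
\[
\norm{P'_{<0}F}_{L^\infty_{x'}L^2_s}\le\left(\int_0^1\sup_{x'}|P'_{<0}F(s,\cdot)|^2\,ds\right)^{\!1/2}\lesssim\left(\int_0^1\norm{F(s,\cdot)}^2_{L^2(P'_{s,u})}\,ds\right)^{\!1/2}=\norm{F}_{L^2(\H_u)}\le\no'(F).
\]
The same computation for $P'_jF$ with $\norm{P'_jF(s,\cdot)}_{L^\infty(P'_{s,u})}\lesssim 2^j\norm{P'_jF(s,\cdot)}_{L^2(P'_{s,u})}$ gives only $\sum_j 2^j\norm{P'_jF}_{L^2(\H_u)}\sim\norm{\nabb'F}_{{\PP'}^0}+\norm{F}_{L^2(\H_u)}$, which is \emph{not} controlled by $\no'(F)$ -- this half-derivative loss is exactly what the hypothesis $\nabb'F=A\,\ddb_{L'}P+E$ is there to repair. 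Inserting it, the contribution of $E$ is directly bounded by $\norm{E}_{{\PP'}^0}$, and it remains to recover the lost power of $2^j$ for the bilinear term $A\,\ddb_{L'}P$.

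The bilinear term is the crux and the step I expect to be the main obstacle. The naive route -- a product estimate $\norm{A\,\ddb_{L'}P}_{{\PP'}^0}\lesssim\norm{A}_{L^\infty}\norm{\ddb_{L'}P}_{{\PP'}^0}$ -- fails, since $\ddb_{L'}P$ is only known to lie in $L^2(\H_u)$, not in ${\PP'}^0$. Instead one keeps the $s$-integration and integrates by parts along the null generators of $\H_u$, using that $\ddb_{L'}$ is a pure transport derivative ($L'(s)=1$), the $L^\infty$ bound on $A$, and the $\no'$-control of $\nabb'P$, $\ddb_{L'}P$. Concretely, one aims for a frequency-localized estimate of the schematic form
\[
\norm{P'_j\big(A\,\ddb_{L'}P\big)\text{-contribution to }F}_{L^\infty_{x'}L^2_s}\lesssim 2^{-\delta j}\Big(\norm{A}_{L^\infty}+\norm{\nabb'A}_{L^2_{x'}L^\infty_s}+\norm{\ddb_{L'}A}_{L^2_{x'}L^\infty_s}\Big)\no'(P)+(\text{good terms}),
\]
where the gain $2^{-\delta j}$ arises precisely because the integration by parts along $L'$ trades the dangerous high-frequency tangential derivative for a transport derivative falling on $P$ or on $A$. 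The price is a collection of commutators $[\ddb_{L'},\nabb']$, $[\ddb_{L'},P'_j]$ and $[\nabb',P'_j]$, each producing Ricci-coefficient $\times$ (derivative of $F$ or $P$) terms which are again absorbed using \eqref{ch4:curvflux1} and the estimates of Theorem~\ref{ch4:thregx}. Summing the resulting bounds over $j\ge 0$, adding the low-frequency and $E$ contributions, and undoing the reduction to scalars yields \eqref{ch4:eq:funny-classical-trace}.
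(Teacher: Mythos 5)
Your proposal correctly identifies the overall strategy --- this is the Klainerman--Rodnianski sharp trace theorem (\cite{Kl-R5}) transplanted to the present setting, and the paper indeed defers the proof to Corollary~4.21 of \cite{param3}, which is an adaptation of precisely that argument. The skeleton you give --- reduce to scalars, expand $F$ in geometric Littlewood--Paley projections $P'_j$, observe that direct Bernstein yields only the Besov-type bound $\sum_j 2^j\norm{P'_jF}_{L^2(\H_u)}$ which overshoots $\no'(F)$ by half a derivative, and then use the hypothesis $\nabb'F=A\,\ddb_{L'}P+E$ to make up the deficit --- is the right shape and identifies the correct role of each term in the final estimate.

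However, the argument stops short at exactly the place where the theorem is hard, and that gap is not cosmetic. For the bilinear contribution $A\,\ddb_{L'}P$ you write that one ``aims for'' a frequency-localized bound with a gain $2^{-\delta j}$ coming from integration by parts along $L'$, but you do not carry out the integration by parts, do not say what boundary terms it produces, and do not show how the specific combination $\no'(P)\bigl(\norm{A}_{L^\infty}+\norm{\nabb'A}_{L^2_{x'}L^\infty_s}+\norm{\ddb_{L'}A}_{L^2_{x'}L^\infty_s}\bigr)$ emerges. These are precisely the nontrivial points: one must track how $\ddb_{L'}$ interacts with the $P'_j$ projections (the commutator $[\ddb_{L'},P'_j]$ is itself a delicate object, controlled in \cite{Kl-R6} in terms of $\chi'$), how the $s$-integration against the LP kernel produces the mixed norms $L^2_{x'}L^\infty_s$ on $A$, and why the sum over $j$ of the resulting bounds is finite. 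Declaring that the gain ``arises precisely because the integration by parts trades a tangential derivative for a transport derivative'' is a correct heuristic, not a proof, and the whole content of the sharp trace theorem is making that heuristic rigorous. A secondary point: in your motivational step you invoke the sharp $L^\infty$ Bernstein inequality $\norm{P'_jF(s,\cdot)}_{L^\infty(P'_{s,u})}\lesssim 2^j\norm{P'_jF(s,\cdot)}_{L^2(P'_{s,u})}$, but Theorem~\ref{ch3:thm:LP} supplies only the \emph{weak} Bernstein inequality for $2\le p<\infty$; the endpoint $p=\infty$ is not available for the geometric Littlewood--Paley projections without a Besov improvement such as \eqref{ch4:eq:Besov-Sobolev-BB0}, and the careful proof avoids the endpoint by working in Besov spaces rather than via sharp Bernstein.
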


\subsubsection{Estimates in the time foliation}

In this section, we obtain the $L^\infty$ bound for $\trc$, and the trace bounds for $\hch$ and $\z$ by relying on the corresponding estimates in the geodesic foliation \eqref{ch4:estgeodesicfol}. We start by establishing the relation between the Ricci coefficients in the time and in the geodesic foliation. Recall from \eqref{ch4:it2} that $L=bL'$. Since $(e_1, e_2)$ and $(e'_1, e'_2)$ are both orthonormal vectors in the tangent space of $\H_u$ which are both orthogonal to $L$, we may chose these vectors such that there is a tensor $F'$ on $P'_{s,u}$ satisfying:
$$e_A=e'_A+F'_AL',\, A=1, 2.$$
We then easily express $\lb$ in the frame $(L', \lb', e_A')$. Finally, we have the following relations:
\be\lab{ch4:comptg}
\begin{array}{l}
L=bL',\\
e_A=e'_A+F'_AL',\, A=1, 2,\\
\lb=b^{-1}\lb'+2b^{-1}F'_Ae'_A+b^{-1}|F'|^2L'.
\end{array}
\ee

Next, using the definition \eqref{ch4:chi} and \eqref{ch4:chip} of the Ricci coefficients respectively in the time and geodesic foliation, and the identities \eqref{ch4:comptg}, we easily obtain
\be\lab{ch4:comptg0}
\chi=b\chi',\,\trc=b\trc',\,\hch=b\hch',\, \z_A=\z'_A+\chi'_{AC}F'_C.
\ee
\eqref{ch4:comptg0} together with the bootstrap assumption \eqref{ch4:boot1} and the estimate \eqref{ch4:estgeodesicfol} yields:
\be\lab{ch4:comptg1}
\begin{array}{l}
\norm{\trc}_{\lh{\infty}}\leq\norm{b}_{\lh{\infty}}\norm{\trc'}_{\lh{\infty}}\lesssim\ep,\\
\norm{\hch}_{\xt{\infty}{2}}\leq\norm{b}_{\lh{\infty}}\norm{\hch'}_{L^{2}_{x'}L^{\infty}_s}\lesssim\ep,\\
\norm{\z}_{\xt{\infty}{2}}\lesssim \norm{\z'}_{L^\infty_{x'}L^2_s}+\norm{\chi'}_{L^\infty_{x'}L^2_s}\norm{F'}_{L^\infty}\lesssim\ep+\ep\norm{F'}_{L^\infty},
\end{array}
\ee
where we have used the fact that the trace norms $L^{2}_{x'}L^{\infty}_t$ and $L^{2}_{x'}L^{\infty}_s$ are equivalent by Remark \ref{ch4:remark:equivnorm}. 

In view of the trace estimate for $\z$ given by \eqref{ch4:comptg1}, we need to estimate $\norm{F'}_{L^\infty}$. To this end, we  estimate $\nabb'F'$. Using the definition \eqref{ch4:chi} of $\chb$ and \eqref{ch4:chip} of $\chb'$, and the identities \eqref{ch4:comptg}, we obtain:
$$\gg(\dd_{e'_A}F',e'_B)= -\half\chb'_{AB}+\cdots,$$
where we only kept the main term. Together with the estimate for $\chb'$ \eqref{ch4:estgeodesicfolter}, this yields
$$\norm{\nabb'F'}_{{\BB'}^0}\lesssim D\ep$$
which together with \eqref{ch4:eq:Besov-Sobolev-BB0} implies:
\be\lab{ch4:comptg15}
\norm{F'}_{L^\infty}\lesssim D\ep.
\ee
In particular, \eqref{ch4:comptg1} and \eqref{ch4:comptg15} imply:
\be\lab{ch4:comptg16}
\norm{\z}_{\xt{\infty}{2}}\lesssim \ep.
\ee
Note that \eqref{ch4:comptg1} and \eqref{ch4:comptg16} are improvements of the corresponding estimates in the bootstrap assumptions \eqref{ch4:boot4}-\eqref{ch4:boot6}.

\subsection{Trace norm bounds for $\db$ and $\kepb$}\lab{ch4:sec:tracenormk}

The goal of this section is to improve the estimate for $\norm{\db}_{\xt{\infty}{2}}$ and $\norm{\kepb}_{\xt{\infty}{2}}$ given by the bootstrap assumption \eqref{ch4:boot3}, where $\db$ and $\kepb$ are defined in \eqref{ch4:newk}. Let us first define $k_{LL}$ and $k_{LA}$:
\be\lab{ch4:tracek}
k_{LL}=-\gg(\dd_LT,L),\,k_{LA}=-\gg(\dd_LT,e_A),\,A=1,2.
\ee
Then, using in particular the definition \eqref{ch4:newk}, we have:
\be\lab{ch4:tracek1}
\db = k_{LL}\textrm{ and }\kepb_{A}=k_{LA}.
\ee
We also define $k_{L'L'}$ and $k_{L'A}$:
\be\lab{ch4:tracek3}
k_{L'L'}=-\gg(\dd_{L'}T,L'),\,k_{L'A}=-\gg(\dd_{L'}T,e'_A),\,A=1,2.
\ee
Then, the relations \eqref{ch4:comptg} between $L, e_1, e_2$ and $L', e'_1, e'_2$ together with the definitions \eqref{ch4:tracek} and \eqref{ch4:tracek3} yield:
\be\lab{ch4:tracek4}
k_{LL}=b^2k_{L'L'}\textrm{ and }k_{LA}=bk_{L'A}+bF'_Ak_{L'L'}.
\ee
Thus, \eqref{ch4:tracek1} and \eqref{ch4:tracek4} imply:
\bea
\lab{ch4:tracek5}\norm{\db  }_{\xt{\infty}{2}}&\lesssim& \norm{bk_{L'L'}}_{L^\infty_{x'}L^2_s}\lesssim \norm{k_{L'L'}}_{L^\infty_{x'}L^2_s}\\
\nn\norm{\kepb}_{\xt{\infty}{2}}&\lesssim& \norm{bk_{L'A}}_{L^\infty_{x'}L^2_s}+\norm{bF'_Ak_{L'L'}}_{L^\infty_{x'}L^2_s}\lesssim \norm{k_{L'L'}}_{L^\infty_{x'}L^2_s}+\norm{k_{L'A}}_{L^\infty_{x'}L^2_s}
\eea
where we used the bootstrap assumption \eqref{ch4:boot1}, the $L^\infty$ bound for $F'$ \eqref{ch4:comptg15} and Remark \ref{ch4:remark:equivnorm}.

In view of \eqref{ch4:tracek5}, it is enough to bound the trace norms $\norm{k_{L'L'}}_{L^\infty_{x'}L^2_s}$ and $\norm{k_{L'A}}_{L^\infty_{x'}L^2_s}$. To this end, we would like to apply  the trace estimate \eqref{ch4:eq:funny-classical-trace}, which requires to show that $\nabb'k_{L'L'}$ and $\nabb'k_{L'A}$ admit a decomposition of the form, $A\ddb_{L'}P+E$. We only discuss the estimate for $k_{L'L'}$, and we refer the reader to \cite{param3} for $k_{L'A}$. We have:
$$\nabb'_{e'_A}k_{L'L'}=-\dd_{e'_A}\gg(\dd_{L'}T,L')=-\gg(\dd_{e'_A}\dd_{L'}T,L')-\gg(\dd_{L'}T,D_{e'_A}L').$$
Introducing the commutator term $[\dd_{e'_A},\dd_{L'}]$, and decomposing the corresponding component of $\rr$, we obtain
\be\label{ch4:tracek11}
\nabb'_{e'_A}k_{L'L'}=-b^{-1}F'_B\a'_{AB}+\cdots,
\ee
where we only kept a typical term for simplicity. Relying on the Bianchi identities, the following decomposition for $\a'$ was obtained in \cite{Kl-R4}:
\be\lab{ch4:tracek12}
\a'=\ddb_{L'}(P)+E,
 \ee
where $P=\mathcal{D'}_2^{-1}\b'$, and 
\be\lab{ch4:tracek13}
\no'(P)+\norm{E}_{{\PP'}^0}\lesssim\ep.
\ee
Together with \eqref{ch4:tracek11}, we obtain a decomposition of the following form
\be\lab{ch4:tracek15}
\nabb'k_{L'L'}=A_1\ddb_{L'}P_1+E_1,
\ee
where 
\be\lab{ch4:tracek16}
\norm{A_1}_{L^\infty}+\norm{\nabb' A_1}_{L^2_{x'}L_s^\infty}+\norm{\ddb_{L'}A_1}_{L^2_{x'}L_s^\infty}+\no'(P_1)+\norm{E_1}_{{\PP'}^0}\lesssim\ep.
\ee
Using \eqref{ch4:tracek15}, \eqref{ch4:tracek16} and the trace estimate \eqref{ch4:eq:funny-classical-trace}, we deduce
$$\norm{k_{L'L'}}_{L^\infty_{x'}L^2_s}\lesssim\ep,$$
which together with \eqref{ch4:tracek5} allows us to improve the estimate for $\norm{\db}_{\xt{\infty}{2}}$ and $\norm{\kepb}_{\xt{\infty}{2}}$ given by the bootstrap assumption \eqref{ch4:boot3}.

\subsection{Remaining estimates for $\trc$, $\hch$, $\z$ and $b$}\lab{ch4:sec:remainest}

In order to improve the remaining estimates in the bootstrap assumptions \eqref{ch4:boot1}-\eqref{ch4:boot6}, we use the null structure equation of section \ref{ch4:sec:nullstructure}, which consists of transport equations along $L$ and Hodge systems on $\ptu$. We refer the reader to section 4.8 in \cite{param3}, where using the $L^\infty$ bound of $\trc$, the trace estimates for $\hch, \db$ and $\kepb$, and the estimates for the lapse $n$, we easily obtain the remaining estimates. Thus, there exists a universal constant $D>0$ such that \eqref{ch4:boot1}-\eqref{ch4:boot6} hold. This yields \eqref{ch4:estn}-\eqref{ch4:estzeta} which concludes the proof of Theorem \ref{ch4:thregx}.

\section{An estimate for \underline{L}\underline{L}tr$\chi$}

In this section, we outline the main ideas of the proof of Theorem \ref{ch4:thregx1}. Let $\mu_1=b\lb(\mu)$. Then, we first derive a transport equation for $\mu_1$, and a Hodge system for $\ddb_{\lb}\z$. For simplicity, we only discuss the transport equation for $\mu_1$. We differentiate the transport equation \eqref{ch4:D4tmu} satisfied by $\mu$ with respect to $\lb$ and multiply it by $nb$. We also use commutator formulas of section \ref{ch4:sec:commutationf}, the Bianchi identity \eqref{ch4:bianc3} for $\rho$, the curvature bound \eqref{ch4:curvflux1} and the estimates \eqref{ch4:estn}-\eqref{ch4:estzeta} obtained in Theorem \ref{ch4:thregx}. We obtain 
\bee
nL(\mu_1)+n\trc\mu_1&=&-2bn\ddb_{\lb}(\z)\c\nabb\trc
-2bn\hch\c\bigg(\nabb\widehat{\otimes}\ddb_{\lb}(\z)+b^{-1}\nabb b\ddb_{\lb}(\z)+2\ddb_{\lb}\z\widehat{\otimes}\z\bigg)\\
&&+2n\trc bn^{-1}\nabb n\c\ddb_{\lb}(\z)+\divb(F_1)+f_2,
\eee
where the tensor $F_1$ and the scalar $f_2$ satisfy
$$\norm{F_1}_{\lh{2}}+\norm{f_2}_{\lh{1}}\lesssim\ep.$$
This yields:
\be\lab{ch4:lbt2}
\norm{P_j(\mu_1)}_{\lh{2}}\les 2^{\frac{j}{2}}\lambda(u)\ep+2^j\ep+\normm{P_j\left(\int_0^t(bn\hch\c(\nabb\widehat{\otimes}\ddb_{\lb}(\z))d\tau\right)}_{\lh{2}}+\cdots.
\ee
Here, the term $2^j\ep$ comes from the estimate for $F_1$ and $f_2$ together with Bernstein and the finite band property for $P_j$, and the term $2^{\frac{j}{2}}\lambda(u)\ep$ comes from the initial data term for the transport equation - i.e. $\mu_1$ at $t=0$ which is estimated in \cite{param1} - together with Bernstein for $P_j$. We have only kept one typical term in the right-hand side of \eqref{ch4:lbt2} for the sake of simplicity. 

In view of the desired estimate \eqref{ch4:estlbzeta}, we are bootstrapping an estimate of the type 
$$\norm{P_j\ddb_{\lb}(\z)}_{\lh{2}}\lesssim D\ep+D 2^{-\frac{j}{2}}\lambda(u)$$
for some large enough bootstrap constant $D$, and where $\lambda$ is a function in $L^2(\mathbb{R})$. Thus, 
estimating directly the term in the right-hand side of \eqref{ch4:lbt2} would yield an upper bound of the type  
\be\lab{jfkwaiting}
\sum_{l,q}2^j2^{-\frac{|q-l|}{2}}\gamma^{(1)}_q\gamma^{(2)}_l,\textrm{ where }\gamma^{(1)}_q\in\ell^2(\NNN)\textrm{ and }\gamma^{(2)}_l\in\ell^\infty(\NNN)
\ee
which is not summable.  Instead, we rely on the following decomposition for $bn\hch$:
\be\lab{jfkwainting1}
\nabb (bn\hch)=\ddb_{nL}P+E
\ee
where $P$, $E$ are $\ptu$-tangent tensors, and $P$, $E$ satisfy:
$$\no(P)+\norm{E}_{\PP^0}\les\ep.$$

\begin{remark}
A similar decomposition has been proved in the geodesic foliation in \cite{Kl-R4}, and adapted to the time foliation 
in the spirit of section \ref{ch4:sec:tfvsgf}. In order to obtain \eqref{jfkwainting1}, we use the fact that the proof in the geodesic foliation relies on a specific structure of certain commutators and of the Bianchi identities, which can be recovered in the time foliation. We refer to \cite{param3} for the details.
\end{remark}

Using the decomposition\eqref{jfkwainting1}, we decompose the term in the right-hand side of \eqref{ch4:lbt2} in a sum of two terms which are estimated as follows (see \cite{param3} for the details):
\begin{itemize}
\item For the term involving $\ddb_{nL}P$, we integrate by parts in $\ddb_{nL}$, and consider the term where the $L$ derivative falls on $\ddb_{\lb}\z$. Differentiating the transport equation \eqref{ch4:D4eta} satisfied by $\z$ with respect to $\ddb_{\lb}$, commutators formula, and the Bianchi identity \eqref{ch4:bianc1bis}, we obtain
$$\ddb_{nL}\ddb_{\lb}\z=\ddb_{nL}\mathcal{D}^0(\bb)+\cdots$$ 
for some elliptic operator of order 0 $\mathcal{D}^0$ on $\ptu$. We then integrate by parts the $L$ derivatives, and  obtain for this term an upper bound of the type
$$2^j\no(P)\norm{\bb}_{\lh{2}}+\cdots.$$
Then, using the estimate for $P$ and the the curvature bound \eqref{ch4:curvflux1} for $\bb$, this is enough to bound  the term involving $\ddb_{nL}P$ in the right-hand side of \eqref{ch4:lbt2}.

\item For the term involving $E$, we we rely on the Besov improvement for $E$, and we derive an upper bound of the form 
$$\sum_{l,q}2^j2^{-\frac{|q-l|}{2}}\gamma^{(1)}_q\gamma^{(2)}_l,\textrm{ where }\gamma^{(1)}_q\in\ell^1(\NNN)\textrm{ and }\gamma^{(2)}_l\in\ell^\infty(\NNN),$$
which is summable unlike \eqref{jfkwaiting}. This is enough to bound the term involving $E$ in the right-hand side of \eqref{ch4:lbt2}.
\end{itemize}

\begin{remark}
The reader may wonder why the estimate for \underline{L}\underline{L}tr$\chi$ is not better than $L^2$ with respect to the variable $t$ - instead of $L^\infty$ as one should expect since we rely on a transport equation for the  corresponding quantity $\mu_1$. The reason is that boundary terms arise from several integration by parts in $\ddb_L$ in the course of the proof. The point is that we do not have better estimates than $L^2$ with respect to the variable $t$ for these terms. 
\end{remark}

\section{Regularity of the foliation with respect to $\omega$}

\subsection{First order derivatives with respect to $\omega$}\label{ch4:sec:firstomegader}

In this section, we outline the main ideas of the proof of Theorem \ref{ch4:thregomega}. Let us first explain how to control $\po\trc$. Differentiating the  Raychaudhuri equation \eqref{ch4:D4trchi} with respect to $\o$, we obtain
$$L(\po\trc)=[L, \po]\trc+\cdots.$$
Now, we have 
$$[L, \po]\trc=-\po N(\trc)=-\nabb_{\po N}\trc$$
where we used the fact that $\gg(N, N)=1$, which differentiated with respect to $\o$ implies that $\po N$ is a vectorfield tangent to $\ptu$. Thus, we obtain
$$L(\po\trc)=-\nabb_{\po N}\trc+\cdots$$
which together with the estimate \eqref{ch4:esttrc} for $\nabb\trc$ immediately yields
$$\norm{\po\trc}_{\xt{2}{\infty}}\les\ep.$$

\begin{remark}
In view of the commutator
$$[L, \po]=-\nabb_{\po N},$$
a derivative with respect to $\o$ has essentially the same regularity as a $\nabb$-derivative. 
\end{remark}

The estimates in \eqref{ch4:estNomega} and \eqref{ch4:estricciomega} are obtained in the same way, i.e. by differentiating 
the ricci equations \eqref{ch4:ricciform} and the transport equations \eqref{ch4:D4a} \eqref{ch4:D4trchi} \eqref{ch4:D4chih} and \eqref{ch4:D4eta} with respect to $\o$, computing the commutators $[\po, \dd_L]$ and $[\po, \ddb_L]$, and estimating the corresponding transport equations (see \cite{param3} for the details). 

Next, let us explain how to derive the decomposition \eqref{ch4:dechch} for $\hch$:
$$\hch=\chi_1+\chi_2,$$
where $\chi_1$ and $\chi_2$ are two symmetric traceless $\ptu$-tangent 2-tensors satisfying the estimates \eqref{ch4:dechch1} and \eqref{ch4:dechch2}. Recall the Codazzi type equation \eqref{ch4:Codaz} satisfied by $\hch$:
$$\divb \hch =\half \nabb \trc  - \b+\cdots.$$
This is an elliptic system on $\ptu$, and we may write formally
$$\hch= \half\mathcal{D}^{-1}\nabb\trc-\mathcal{D}^{-1}\b+\cdots,$$
where $\mathcal{D}^{-1}$ is a pseudodifferential operator of order -1 on $\ptu$. This allows us to define 
$\chi_1$ and $\chi_2$ as
$$\chi_1=\half\mathcal{D}^{-1}\nabb\trc+\cdots\textrm{ and }\chi_2=-\mathcal{D}^{-1}\b.$$
The estimate \eqref{ch4:dechch1} corresponds to the estimate \eqref{ch4:esthch} for $\hch$ and is prove similarly, so we 
focus on the estimate \eqref{ch4:dechch2}. For the sake of clarity, we only explain why, compared to $\hch$, $\chi_1$ has better regularity with respect to $(t,x)$ while $\chi_2$ has better regularity with respect to $\o$, which is the point of the decomposition \eqref{ch4:dechch} (see Remark \ref{ch4:rem:dehch}). Indeed, since the estimate \eqref{ch4:esttrc} for $\trc$ is better than the estimate \eqref{ch4:esthch} for $\hch$, and since $\nabb\mathcal{D}^{-1}$ is a pseudodifferential operator of order 0 on $\ptu$, we are able to obtain better regularity in $(t, x)$ for $\chi_1$ compared to $\hch$. Next, we focus on $\chi_2$. Now, note that the curvature tensor $\rr$ does not depend on $\o$. Thus, when differentiating $\b$ with respect to $\o$, the $\o$ derivative falls on the frame $(L, \lb, e_A)$, and we obtain schematically
$$\po\b=(\a+\r+\s)\po N.$$
In particular, we have
$$\norm{\po\b}_{\li{\infty}{2}}\les \norm{\po N}_{L^\infty}(\norm{\a}_{\li{\infty}{2}}+\norm{\r}_{\li{\infty}{2}}+\norm{\s}_{\li{\infty}{2}})\les\ep,
$$
where we used the curvature bound \eqref{ch4:curvflux1} for $\a, \r$ and $\s$, and the estimate \eqref{ch4:estNomega} for $\po N$. Thus, $\po\b$ has the same regularity with respect to $(t, x)$ than $\b$. In view of the definition of $\chi_2$, we obtain that $\po\chi_2$ has essentially the same regularity as $\chi_2$, while the estimate \eqref{ch4:estricciomega} for $\po\hch$ looses one $\nabb$-derivative with respect to the estimate \eqref{ch4:esthch} for $\hch$. Thus, the regularity of $\chi_2$ with respect to $\o$ is better than the corresponding regularity for $\hch$.

\subsection{Second order derivatives with respect to $\o$}

In this section, we outline the main ideas of the proof of Theorem \ref{ch4:thregomega2}. We focus on the estimate \eqref{ch4:estricciomega2} for $\po^2\z$ which is typical. Differentiating twice with respect to $\o$ the transport equation \eqref{ch4:D4eta} for $\z$, and computing the commutator $[\ddb_L, \po^2]$, we obtain
\be\lab{ch4:poo32}
\ddb_L(\Pi(\po^2\z))=-\chi\c\Pi(\po^2\z)+\nabb(F_1)+F_2+\cdots,
\ee
where the $\ptu$-tangent tensors $F_1$ and $F_2$ satisfy
$$\norm{F_1}_{\lh{2}}+\norm{F_2}_{\xt{1}{2}}\les\ep.$$
We first get rid of the first term in the right-hand side of \eqref{ch4:poo32} which is troublesome. To this end, we use the following lemma.
\begin{lemma}\lab{ch4:lemma:poo3}
Let $\ga$ denotes the metric induced by $\gg$ on $\ptu$. Let $M$ the $\ptu$-tangent 2-tensor defined as the solution of the following transport equation:
\be\lab{ch4:poo35}
\ddb_LM_{AB}=M_{AC}\chi_{CB},\,M_{AB}=\ga_{AB}\textrm{ on }\pou,
\ee
Then, $M_{AB}$ satisfies the following estimate:
\be\lab{ch4:poo36}
\norm{M-\ga}_{L^\infty}+\norm{\nabb M}_{\BB^0}\les\ep.
\ee
\end{lemma}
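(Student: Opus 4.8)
\textbf{Proof plan for Lemma \ref{ch4:lemma:poo3}.}

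The plan is to treat \eqref{ch4:poo36} as a standard transport estimate for the $\ptu$-tangent $2$-tensor $M$, using the already-established control on $\chi$ from Theorem \ref{ch4:thregx}, together with the geometric Littlewood--Paley calculus of Theorem \ref{ch3:thm:LP} to capture the Besov refinement $\norm{\nabb M}_{\BB^0}\les\ep$. First I would rewrite \eqref{ch4:poo35} for the difference $\widetilde M=M-\ga$. Since $\ddb_L\ga_{AB}=2\chi_{AB}+\cdots$ (the deformation of the induced metric along $L$ is governed by the second fundamental form $\chi$, cf. the Ricci equations \eqref{ch4:ricciform}), we obtain a transport equation of the schematic form
$$\ddb_L\widetilde M = \widetilde M\c\chi + \chi + \lot,$$
with zero data on $\pou$. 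The $L^\infty$ bound then follows by integrating along the null generators of $\H_u$ and invoking Gr\"onwall: the $\norm{\chi}$ source term is controlled by $\norm{\trc}_{L^\infty}+\norm{\hch}_{\xt{2}{\infty}}\les\ep$ from \eqref{ch4:esttrc}, \eqref{ch4:esthch}, and the homogeneous coefficient $\norm{\chi}$ is small enough (being $O(\ep)$ in the relevant norm) that the exponential factor is $1+O(\ep)$. This gives $\norm{M-\ga}_{L^\infty}\les\ep$.

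For the Besov bound on $\nabb M$, I would differentiate \eqref{ch4:poo35} tangentially, commute $\nabb$ past $\ddb_L$ using the commutation formula \eqref{ch4:comm1} (which produces only terms involving $\chi$, $\kepb$ and the null curvature component $\b$, all controlled by \eqref{ch4:estn}--\eqref{ch4:estzeta} and the curvature flux bound \eqref{ch4:curvflux1}), and thereby derive a transport equation for $\nabb M$ of the form $\ddb_L(\nabb M)=\nabb M\c\chi + (\nabb\chi + \b\c M + \cdots)$. One then applies the geometric LP projections $P_j$, commutes $P_j$ with $\ddb_L$ (the commutator $[\ddb_L,P_j]$ being one of the commutator terms listed at the end of Section \ref{ch4:sec:mainres}, already under control), and estimates $\norm{P_j(\nabb M)}_{\lh{2}}$ via the transport estimate. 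The source $\nabb\chi = \nabb\hch + \half\nabb\trc\,\ga$ is handled through $\no(\hch)\les\ep$ from \eqref{ch4:esthch} and, for the Besov-norm summability, through the decomposition $\nabb(bn\hch)=\ddb_{nL}P+E$ with $\no(P)+\norm{E}_{\PP^0}\les\ep$ established earlier in this chapter (the same device used in \eqref{jfkwainting1}): integrating by parts in $\ddb_{nL}$ trades the tangential derivative on $\hch$ for an $L$-derivative, whose transport structure yields an $\ell^1$-in-frequency bound rather than merely $\ell^\infty$. Summing the dyadic pieces with the finite band and Bernstein properties of Theorem \ref{ch3:thm:LP} then gives $\norm{\nabb M}_{\BB^0}\les\ep$.

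The main obstacle I anticipate is precisely the Besov summability for the $\nabb\hch$-type source term: a naive transport estimate would, as in \eqref{jfkwaiting}, produce a non-summable double sum $\sum_{l,q}2^{-|q-l|/2}\gamma_q^{(1)}\gamma_l^{(2)}$ with $\gamma^{(2)}\in\ell^\infty$ only, so one genuinely needs the $\ddb_{nL}P+E$ decomposition of $\nabb(bn\hch)$ and the Besov improvement for $E$ to close. The remaining ingredients --- the commutator identities, the Gr\"onwall argument, and the smallness of all coefficients --- are routine given Theorem \ref{ch4:thregx}, and I would defer their details to \cite{param3}.
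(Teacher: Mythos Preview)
The paper does not actually prove this lemma in the overview: it is stated in the course of proving \eqref{ch4:estricciomega2} and its proof is deferred to \cite{param3}. So there is no in-paper argument to compare against. Your plan is, in outline, the right one and matches what the machinery of \cite{param3} would produce: Gr\"onwall along null generators for the $L^\infty$ part, then a Besov transport estimate for $\nabb M$ in which the $\nabb\hch$ source must be upgraded from $\ell^2$- to $\ell^1$-dyadic summability, and you correctly flag the structural decomposition $\nabb(bn\hch)=\ddb_{nL}P+E$ with $\norm{E}_{\PP^0}\les\ep$ (the same device as \eqref{jfkwainting1}) as the ingredient that closes this.

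Two small corrections. First, $\ddb_L\ga_{AB}=0$, not $2\chi_{AB}$: the projected covariant derivative is metric-compatible, and you are confusing it with the Lie derivative $\mathcal{L}_L\ga=2\chi$. This does not affect your schematic equation, since $\ddb_L(M-\ga)=M\chi=(M-\ga)\chi+\chi$ anyway, but the justification is wrong as written. Second, for the pointwise Gr\"onwall step you need $\hch\in L^\infty_{x'}L^2_t$, i.e.\ $\norm{\hch}_{\xt{\infty}{2}}\les\ep$, not the $\xt{2}{\infty}$ norm you cite from \eqref{ch4:esthch}; the latter is $L^2_{x'}L^\infty_t$ and does not give a uniform bound on $\int_0^1|\hch(s,x')|\,ds$. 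The needed trace estimate $\norm{\hch}_{\xt{\infty}{2}}\les\ep$ is available (it appears among the estimates collected in \eqref{ch3:estneeded1} and is proved in \cite{param3}), so this is only a citation slip, but worth fixing.
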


Using the transport equation \eqref{ch4:poo32} for $\Pi(\po^2\z)$ and the transport equation \eqref{ch4:poo35}, for $M$ allows us to get rid of the troublesome term $\chi\c\Pi(\po^2\z)$:
$$\ddb_L(M\c \Pi(\po^2\z))=\nabb(M\c F_1)-\nabb(M)\c F_1+M\c F_2+\cdots.$$
Together with the finite band property and the Bernstein inequality for $P_j$, the estimates for $F_1$ and $F_2$, and the estimate \eqref{ch4:poo36} for $M$, we obtain for $M\c \Pi(\po^2\z)$ the estimate corresponding to \eqref{ch4:estricciomega2}. Then, we obtain the wanted estimate \eqref{ch4:estricciomega2} for $\Pi(\po^2\z)$ by proving that the estimate \eqref{ch4:poo36} for $M$ is enough to ensure that the multiplication by $M^{-1}$ preserves the estimate \eqref{ch4:estricciomega2}.

\section{Additional decompositions}\lab{ch4:sec:adddec}

In this section, we outline the main ideas of the proof of Theorem \ref{ch4:cor:xx1bis}. We need to compare $N$ and $\trc$ at two different angles $\o$ and $\nu$. The basic tool is the following lemma. 
\begin{lemma}\lab{ch4:lemma:xx2}
Let $\o$ and $\o'$ in $\S$. Let $u=u(t,x,\o)$ and $u'=u(t,x,\o')$. Then, 
for any tensor $F$, we have:
$$
\norm{F}_{L^\infty_{u'}L^2(\H_{u'})}\les \norm{F}_{L^\infty_uL^2(\H_u)}+|\o-\o'|^{\frac{1}{4}}\norm{F}^{\frac{1}{2}}_{L^\infty_uL^2(\H_u)}\left(\sup_u\left(\int_u^{u+|\o-\o'|}\norm{\dd F}^2_{L^2(\H_{\tau})}d\tau\right)\right)^{\frac{1}{4}}.
$$
\end{lemma}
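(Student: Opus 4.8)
\textbf{Plan of proof for Lemma \ref{ch4:lemma:xx2}.} The estimate is a quantitative statement that the $L^\infty_u L^2(\H_u)$-norm depends Hölder-continuously on the angle $\o$, with exponent $1/4$. The natural strategy is to express the difference between the two foliations as an integral in the optical parameter and then interpolate. First I would fix a point $(t,x)\in\MM$ and compare the values $u=u(t,x,\o)$ and $u'=u(t,x,\o')$; by the estimate \eqref{ch4:estricciomega} for $\po b$ together with $b^{-1}=T(u)$ (see \eqref{ch4:it3}) and the bound \eqref{ch4:estNomega} for $\po N$, one controls $|\po u|$ and hence
$$
|u(t,x,\o)-u(t,x,\o')|\les |\o-\o'|
$$
uniformly in $(t,x)$. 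This means that the hypersurface $\H_{u'}$ — as a subset of $\MM$ swept out as $(t,x)$ ranges over $\H_{u}$, reparametrized — is contained, up to the flow, in the ``slab'' $\{u\leq \tau\leq u+C|\o-\o'|\}$ around $\H_u$ for the appropriate value of $u$.

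Next I would reduce the $L^\infty_{u'}L^2(\H_{u'})$-norm to a computation in the $(\tau,x')$ coordinates associated to the $\o$-foliation, where $\tau$ plays the role of the optical function $u(\cdot,\o)$ and $x'$ the transported coordinates along $L_\o$ (as in section \ref{ch4:sec:lowerboundvolrad}). In these coordinates one can write $F$ restricted to $\H_{u'}$ via the fundamental theorem of calculus in $\tau$: for $\tau_0$ the value of $u$ corresponding to $u'$,
$$
|F|^2_{\H_{u'}}\les |F|^2_{\H_{\tau_0}} + \int_{\tau_0}^{\tau_0+C|\o-\o'|}\big|\partial_\tau |F|^2\big|\,d\tau
\les |F|^2_{\H_{\tau_0}} + \int_{\tau_0}^{\tau_0+C|\o-\o'|}|\dd F|\,|F|\,d\tau,
$$
using that $\partial_\tau$ is (a multiple of) a spacetime derivative. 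Integrating over the cross-sections $\ptu$ and then taking the square root gives a first term bounded by $\norm{F}_{L^\infty_uL^2(\H_u)}$ and a second term of the form
$$
\left(\int_{\tau_0}^{\tau_0+C|\o-\o'|}\!\!\int_{P_{t,\tau}}|\dd F|\,|F|\right)^{1/2}.
$$
Applying Cauchy–Schwarz on the $\tau$-interval of length $C|\o-\o'|$, and then a further Cauchy–Schwarz to split $\norm{\dd F\,F}$, yields the factor $|\o-\o'|^{1/4}$, a half-power $\norm{F}^{1/2}_{L^\infty_uL^2(\H_u)}$, and the quarter-power of $\int_u^{u+|\o-\o'|}\norm{\dd F}^2_{L^2(\H_\tau)}d\tau$, exactly as claimed. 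Taking the supremum over $u'$ (equivalently over the corresponding $\tau_0$) finishes the bound; the estimates \eqref{ch4:estb}, \eqref{ch4:estn} on $b$, $n$ guarantee that the Jacobian relating the $u'$-foliation volume element to the $\tau$-slab volume element is comparable to $1$, so no loss occurs there.

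The main obstacle I anticipate is purely bookkeeping: making rigorous the identification of $\H_{u'}$ as a graph over the $\o$-foliation and controlling the change of variables (including the $x'$ transported coordinates, which are adapted to $L_\o$ rather than $L_{\o'}$) so that the naive slab argument is legitimate. This requires the non-intersection of null geodesics and $\Sit=\cup_u\ptu$ from \eqref{ch4:conclusionfoliation}, plus the comparison-of-norms argument in the spirit of Remark \ref{ch4:remark:equivnorm} to pass between $L^2(\H_{u'})$ measured intrinsically and the $(\tau,x')$ integral. Once that geometric reduction is set up carefully, the analytic heart — two applications of Cauchy–Schwarz producing the $1/4$ exponent — is routine.
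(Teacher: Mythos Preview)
There is a genuine gap in the first step. Your slab containment is based on the claim that $|u(t,x,\o)-u(t,x,\o')|\lesssim |\o-\o'|$ uniformly in $(t,x)$, which you deduce from the $L^\infty$ bounds on $\po b$ and $\po N$. But $b$ and $N$ live at the level of the \emph{gradient} of $u$; these estimates control $\nabla(\po u)$, not $\po u$ itself. In fact $\po u$ is unbounded on $\Sigma_t$: already in the flat case $u=-t+x\cdot\o$ one has $\po u = x\cdot\po\o$, which grows linearly in $|x|$, and in general $(u,\po u)$ are precisely the global coordinates $\Phi_{t,\o}$ on $\Sigma_t$ ranging over all of $\R^3$ (see Proposition~\ref{gl20}). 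Consequently the null hypersurface $\H_{u'}$ is \emph{not} contained in a slab $\{|\tau-\tau_0|\lesssim |\o-\o'|\}$ of the $\o$-foliation: the two families of level sets are tilted relative to each other by an angle $\sim|\o-\o'|$ and diverge linearly at spatial infinity. The fundamental-theorem-of-calculus step over a $\tau$-interval of length $\sim|\o-\o'|$ therefore has no global meaning, and this is not merely bookkeeping in the change of variables.

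The paper circumvents exactly this obstacle by working in the coordinates $\Phi_{t,\o}(x)=u(t,x,\o)\o+\po u(t,x,\o)$ on each $\Sigma_t$. In the flat case $\Phi_{t,\o}=\mathrm{Id}$ for every $\o$, so the inverse maps coincide; in the curved case one shows that $\po\bigl[\Phi^{-1}_{t,u,\o}(y')\bigr]$ is bounded in $L^\infty$. One then writes $\norm{F}^2_{L^2(\H_u)}\simeq\int_0^1\!\int_{\R^2}|F(\Phi^{-1}_{t,u,\o}(y'))|^2\,dy'\,dt$ and compares $|F(\Phi^{-1}_{t,u,\o}(y'))|^2-|F(\Phi^{-1}_{t,u',\o'}(y'))|^2$ at \emph{fixed} $(t,y')$: the two points now genuinely differ by $O(|\o-\o'|)$, and your intended Cauchy--Schwarz argument producing the $|\o-\o'|^{1/4}$ factor goes through. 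The analytic core you describe is correct; what is missing is this choice of parametrization, without which the geometric reduction fails.
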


In order to compare the norms $L^\infty_{u'}L^2(\H_{u'})$ and $L^\infty_uL^2(\H_u)$, we need coordinate systems. We define $\Phi_{t,\o}:\Si_t\rightarrow \R^3$ defined by:
\begin{equation}\label{ch4:gl1}
\Phi_{t,\o}(t,x):=u(t,x,\o)\o+\po u(t,x,\o).
\end{equation}
Then we claim that $\Phi_{t,\o}$ is a global $C^1$ diffeomorphism from $\Si_t$ to $\R^3$ and therefore provides a global coordinate system on $\Si_t$ (see Proposition \ref{gl20} for a related result on $\Sigma_0$). Next, we prove that 
\bea\lab{ch4:lxx2:3}
\norm{F}^2_{L^2(\H_{u})}\simeq\int_0^1\int_{\RRR^2} |F(\Phi^{-1}_{t,u,\o}(y'))|^2dy'dt.
\eea
This formula allows us to compare the norms $L^\infty_{u'}L^2(\H_{u'})$ and $L^\infty_uL^2(\H_u)$. In turn, one needs to evaluate 
$$|F(\Phi^{-1}_{t,u,\o}(y'))|^2-|F(\Phi^{-1}_{t,u',\o'}(y'))|^2$$
In particular, we need to estimate
$$\normm{\po\left[\Phi^{-1}_{t,u,\o}(y')\right]}_{L^\infty}.$$
We refer the reader to \cite{param3} for details on the proof of Lemma \ref{ch4:lemma:xx2}. 

Using Lemma \ref{ch4:lemma:xx2} as well as commutator estimates for $[\dd, P_l]$ among others, we may prove the following corollary. 

\begin{corollary}\lab{ch4:lemma:xx3}
Let $f$ a scalar function and $\o, \o'$ in $\S$. Then, for any $l\geq 0$, we have:
$$\norm{P_lf}_{L^\infty_{u'}L^2(\H_{u'})}\les (2^{-l}+|\o-\o'|^{\half}2^{-\frac{l}{2}})(\norm{f}_{L^\infty_uL^2(\H_u)}+\norm{\dd f}_{L^\infty_uL^2(\H_u)}),$$
and
\bee
&&\norm{P_{\leq l}f}_{L^\infty_{u'}L^2(\H_{u'})}\\
&\les& (1+|\o-\o'|^{\half}2^{\frac{l}{2}})\norm{f}_{L^\infty_uL^2(\H_u)}+|\o-\o'|^{\frac{1}{4}}\norm{f}^{\half}_{L^\infty_uL^2(\H_u)}\\
&&\times\left(\sup_u\sum_{q\leq l}\int_u^{u+|\o-\o'|}(\norm{P_q(nL(f))}_{L^2(\H_{\tau})}^2+\norm{P_q(bN(f))}_{L^2(\H_{\tau})}^2)d\tau\right)^{\frac{1}{4}}.
\eee
\end{corollary}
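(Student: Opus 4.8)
\textbf{Proof plan for Corollary \ref{ch4:lemma:xx3}.}

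The plan is to derive both estimates from Lemma \ref{ch4:lemma:xx2} applied to the individual Littlewood--Paley pieces, combined with the finite band property (Theorem \ref{ch3:thm:LP}) for $P_l$ on the surfaces $\ptu$ and the commutator bounds between $\dd$ (or rather its components $nL$ and $bN$) and $P_l$ that are available from \cite{param3}. First I would apply Lemma \ref{ch4:lemma:xx2} with $F = P_l f$. For the first inequality this gives
$$\norm{P_lf}_{L^\infty_{u'}L^2(\H_{u'})}\les \norm{P_lf}_{L^\infty_uL^2(\H_u)}+|\o-\o'|^{\frac14}\norm{P_lf}^{\frac12}_{L^\infty_uL^2(\H_u)}\left(\sup_u\int_u^{u+|\o-\o'|}\norm{\dd (P_lf)}^2_{L^2(\H_{\tau})}d\tau\right)^{\frac14}.$$
Now the point is that $P_l f$ is ``almost tangential'': writing $\dd = nL$ in the $L$-direction and $bN$ in the $N$-direction modulo tangential ($\nabb$) components, and using the finite band property $\norm{\nabb P_l F}_{\lpt2}\les 2^l\norm{F}_{\lpt2}$ together with the commutator estimates $[nL,P_l]$, $[bN,P_l]$, one controls $\norm{\dd(P_lf)}_{L^2(\H_\tau)}$ by $2^l(\norm{f}_{L^\infty_uL^2(\H_u)}+\norm{\dd f}_{L^\infty_uL^2(\H_u)})$ on the short $\tau$-interval. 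Since that interval has length $|\o-\o'|$, the sup-integral contributes an extra factor $|\o-\o'|^{1/2}$, so the second term is bounded by $|\o-\o'|^{1/4}\cdot(\cdots)^{1/2}\cdot(|\o-\o'|^{1/2}2^{2l}(\cdots)^2)^{1/4} = |\o-\o'|^{1/2}2^{l/2}(\cdots)$; combined with the trivial bound $\norm{P_lf}_{L^\infty_uL^2}\les 2^{-l}\norm{\dd f}$-type estimate (again finite band, now used in the form $\norm{P_l F}_{\lpt2}\les 2^{-l}\norm{\nabb F}_{\lpt2}$) for the first term, one arrives at the factor $(2^{-l}+|\o-\o'|^{1/2}2^{-l/2})$. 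A small subtlety is that the first term should really be estimated as $2^{-l}(\norm{f}+\norm{\dd f})$ rather than $2^{-l}\norm{\dd f}$, to absorb the low-frequency part $P_{<0}$ cleanly; this is harmless.

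For the second estimate I would sum the first over dyadic scales up to $l$, but it is cleaner to apply Lemma \ref{ch4:lemma:xx2} directly with $F=P_{\leq l}f$. The first term $\norm{P_{\leq l}f}_{L^\infty_uL^2(\H_u)}\les\norm{f}_{L^\infty_uL^2(\H_u)}$ by $L^2$-boundedness of $P_{\leq l}$. For the error term, I decompose $\dd(P_{\leq l}f) = \sum_{q\leq l}\dd(P_q f)$ in its $nL$, $bN$ and $\nabb$ components; the tangential part is again handled by finite band, while the $nL$ and $bN$ parts are exactly what appear in the stated sum $\sum_{q\leq l}\int_u^{u+|\o-\o'|}(\norm{P_q(nL(f))}^2_{L^2(\H_\tau)}+\norm{P_q(bN(f))}^2_{L^2(\H_\tau)})d\tau$ after commuting $P_q$ past $nL$, $bN$ (the commutator terms being lower order and reabsorbed). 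Keeping $\sup_u$ outside and using the Cauchy--Schwarz-type bound $\sum_{q\leq l}\norm{P_q g}_{L^2}^2\les\norm{g}_{L^2}^2$ when one wants the crude version, or leaving the sum explicit when one wants the refined version as stated, yields the claimed inequality with the factor $(1+|\o-\o'|^{1/2}2^{l/2})$ on the first (``principal part of $\dd$ replaced by finite band'') contribution and $|\o-\o'|^{1/4}\norm{f}^{1/2}(\cdots)^{1/4}$ on the genuinely new contribution.

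The main obstacle is not any single estimate but the bookkeeping of the commutators $[nL,P_q]$, $[bN,P_q]$ and $[\nabb,P_q]$ on the surfaces $\ptu$, together with the fact that Lemma \ref{ch4:lemma:xx2} is stated for a fixed tensor $F$ with $\dd F$ in $L^2(\H_\tau)$, so one must verify that $P_l f$ and $P_{\leq l}f$ indeed have $L^\infty_u L^2(\H_u)$-controlled space-time derivatives in order to legitimately plug them in --- this is where the estimates \eqref{ch4:estn}--\eqref{ch4:estzeta} of Theorem \ref{ch4:thregx} and the geometric Littlewood--Paley calculus of \cite{Kl-R6} enter. Once those tools are granted, the derivation is a routine interpolation-plus-finite-band argument, and I would simply cite \cite{param3} for the detailed verification of the commutator bounds while presenting the frequency-counting as above.
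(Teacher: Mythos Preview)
Your overall strategy --- apply Lemma~\ref{ch4:lemma:xx2} with $F=P_l f$ (resp.\ $F=P_{\leq l}f$) and control $\dd(P_l f)$ via the finite band property together with the commutators $[nL,P_l]$, $[bN,P_l]$ --- is exactly the paper's route, and your argument for the second inequality is correct. However, your frequency count for the \emph{first} inequality does not close. You estimate the tangential part of $\dd(P_lf)$ by the crude finite band bound $\norm{\nabb P_l f}_{\lpt{2}}\les 2^l\norm{f}_{\lpt{2}}$, giving $\norm{\dd(P_lf)}_{L^2(\H_\tau)}\les 2^l X$ with $X=\norm{f}_{L^\infty_uL^2(\H_u)}+\norm{\dd f}_{L^\infty_uL^2(\H_u)}$. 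Feeding this into Lemma~\ref{ch4:lemma:xx2} together with $\norm{P_lf}^{1/2}\les 2^{-l/2}X^{1/2}$ yields for the second term
\[
|\o-\o'|^{1/4}\cdot 2^{-l/2}X^{1/2}\cdot\bigl(|\o-\o'|\,2^{2l}X^2\bigr)^{1/4}=|\o-\o'|^{1/2}X,
\]
which is too large by a factor $2^{l/2}$; your asserted outcome $|\o-\o'|^{1/2}2^{-l/2}$ is a sign slip in the exponent, not what your own computation gives.

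The missing ingredient is the sharper tangential bound $\norm{\nabb P_l f}_{\lpt{2}}\les\norm{\nabb f}_{\lpt{2}}$, which holds because $P_l$ commutes with $\lap$: integrating by parts,
\[
\norm{\nabb P_lf}_{\lpt{2}}^2=\langle(-\lap)P_lf,P_lf\rangle=\langle\nabb f,\nabb P_l^2 f\rangle\les\norm{\nabb f}_{\lpt{2}}\cdot 2^l\norm{P_lf}_{\lpt{2}}\les\norm{\nabb f}_{\lpt{2}}^2,
\]
using both directions of the finite band property. Combined with the $[\dd,P_l]$ commutator estimates this gives $\norm{\dd(P_lf)}_{L^2(\H_\tau)}\les X$ (no factor $2^l$), and then the second term of Lemma~\ref{ch4:lemma:xx2} produces the correct $|\o-\o'|^{1/2}2^{-l/2}X$.
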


We also need the following non sharp commutator lemma.
\begin{lemma}\lab{ch4:lemma:xx5}
Let $f$ a scalar function and $\o, \o'$ in $\S$. Then, for any $l\geq 0$, we have:
$$\norm{[\po,P_{\leq l}]f}_{L^\infty_{u'}L^2(\H_{u'})}\les \norm{\dd f}_{L^\infty_uL^2(\H_u)}.$$
\end{lemma}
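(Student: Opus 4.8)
\textbf{Proof proposal for Lemma \ref{ch4:lemma:xx5}.}

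The plan is to reduce the non-sharp commutator estimate $\norm{[\po,P_{\leq l}]f}_{L^\infty_{u'}L^2(\H_{u'})}\les \norm{\dd f}_{L^\infty_uL^2(\H_u)}$ to the already-established tools: the geometric Littlewood-Paley calculus of Theorem \ref{ch3:thm:LP} on the surfaces $\ptu$, the comparison-of-foliations Corollary \ref{ch4:lemma:xx3} (together with its underlying Lemma \ref{ch4:lemma:xx2}), and the regularity estimates for the foliation in $\o$ from Theorems \ref{ch4:thregomega} and \ref{ch4:thregomega2}. First I would pass to the foliation indexed by $\o$ and work with $\norm{[\po,P_{\leq l}]f}_{L^\infty_uL^2(\H_u)}$, recovering the $u'$-norm at the end via Corollary \ref{ch4:lemma:xx3} (the $|\o-\o'|^{1/2}2^{l/2}$-type losses there are harmless precisely because we only want a \emph{non-sharp} bound with no powers of $2^l$ on the right).

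The main computation is to commute $\po$ past the heat-flow definition of $P_{\leq l}$. Recall $P_{\leq l}$ is built from the heat semigroup $U(\tau)$ on $\ptu$ via $\pr_\tau U(\tau)F=\lapp U(\tau)F$. Differentiating the defining PDE in $\o$ gives, schematically,
\be
\pr_\tau(\po U(\tau)f)-\lapp(\po U(\tau)f)=[\po,\lapp]U(\tau)f,
\ee
so by Duhamel
\be
[\po,P_{\leq l}]f=\int_0^{c2^{-2l}}U\!\big(c2^{-2l}-\tau\big)\,[\po,\lapp]\,U(\tau)f\,d\tau+\cdots,
\ee
where the omitted terms come from $\po$ hitting the cutoff in $\tau$ and are handled the same way. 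The commutator $[\po,\lapp]$ is a second-order operator on $\ptu$ whose coefficients involve $\po$ of the metric $\ga$ on $\ptu$ and the Ricci coefficients differentiated once in $\o$; using the identity $[L,\po]=-\nabb_{\po N}$ (Remark after Theorem \ref{ch4:thregomega}) and the estimates $\norm{\po N}_{L^\infty}\les1$, $\norm{\po\chi}_{\xt{2}{\infty}}\les\ep$ from \eqref{ch4:estNomega}--\eqref{ch4:estricciomega}, one checks that $[\po,\lapp]$ can be written in the form $\nabb\circ(\text{bounded})\circ\nabb+(\text{bounded})\circ\nabb$ modulo better terms. Then the finite band property and the $L^2$ smoothing estimates $\norm{\nabb P_j F}_{\lpt2}\les 2^j\norm{F}_{\lpt2}$, $\norm{P_jF}_{\lpt2}\les 2^{-j}\norm{\nabb F}_{\lpt2}$ of Theorem \ref{ch3:thm:LP} let the two gradients in the Duhamel integral eat the $\int_0^{c2^{-2l}}d\tau$, leaving a bound by $\norm{\nabb f}_{\lh2}+\norm{f}_{\lh2}$ on each slice $P_{t,u}$, which after integrating in $t$ is controlled by $\norm{\dd f}_{L^\infty_uL^2(\H_u)}$.

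The step I expect to be the main obstacle is the estimate of the commutator $[\po,\lapp]$ itself: the Laplacian on $\ptu$ depends on $\o$ both through the induced metric $\ga$ and through the transported coordinates, so one must control $\po\ga$, $\po(\text{Christoffel symbols of }\ga)$, and the $\o$-variation of the volume form, and then organize these so that the two derivatives appearing in $[\po,\lapp]$ are genuinely of the form $\nabb(\cdot)\nabb$ with coefficients only in $L^\infty$ (or at worst $\xt{p}{\infty}$ for $p<\infty$, which still suffices after a H\"older estimate on the slices). This is where one leans on the second-order $\o$-regularity of the foliation from Theorem \ref{ch4:thregomega2} — in particular \eqref{ch4:estricciomega2} — to absorb the term where $\po$ falls on the coefficient of $\lapp$, and on the coordinate-comparison machinery of section \ref{ch4:sec:adddec} to control $\po$ of the coordinate charts $\Phi^{-1}_{t,u,\o}$. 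As this is a non-sharp estimate we have ample room: we are allowed to lose derivatives in $\o$ and to be wasteful in the powers of $\ep$, so none of these terms is genuinely dangerous; the work is purely bookkeeping, and I would refer to \cite{param3} for the complete verification of the commutator expansion.
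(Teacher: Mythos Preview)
The paper you are working from is an overview and does not itself prove Lemma~\ref{ch4:lemma:xx5}; it merely states the lemma and refers to \cite{param3} for the argument. So there is no proof in the paper to compare against line by line. Your Duhamel strategy---differentiate the heat equation in $\o$, reduce to the commutator $[\po,\lapp]$, and absorb the two derivatives using the finite-band and smoothing properties of $U(\tau)$---is the natural one and is almost certainly what is done in \cite{param3}. Your identification of the structure of $[\po,\lapp]$ as the main computational obstacle is also correct.

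There is, however, one step you treat too lightly: the passage from the $L^\infty_uL^2(\H_u)$ norm to the $L^\infty_{u'}L^2(\H_{u'})$ norm. You say you will ``recover the $u'$-norm at the end via Corollary~\ref{ch4:lemma:xx3}'' and that the $|\o-\o'|^{1/2}2^{l/2}$-type losses there are harmless. But Corollary~\ref{ch4:lemma:xx3} is stated for $P_l f$ and $P_{\leq l}f$, not for a general function such as $[\po,P_{\leq l}]f$; to transfer a generic $F$ between the two foliations you must instead invoke the underlying Lemma~\ref{ch4:lemma:xx2}, and that requires control on $\dd F$. Since $[\po,P_{\leq l}]f$ is frequency-localized at scales $\lesssim 2^l$, a crude bound on $\dd\big([\po,P_{\leq l}]f\big)$ picks up a factor $2^l$, and the second term in Lemma~\ref{ch4:lemma:xx2} then contributes roughly $|\o-\o'|^{1/2}2^{l/2}\norm{\dd f}$, which is \emph{not} uniformly bounded for arbitrary $\o,\o'\in\SSS^2$ and $l\ge 0$. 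So your dismissal of this loss is not justified as written.

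This is not a fatal gap. The Duhamel representation in fact yields a gain: estimating $\norm{U(s)\nabb\,a\,\nabb U(\tau)f}_{L^2}\lesssim s^{-1/2}\norm{\nabb f}_{L^2}$ and integrating over $\tau\in[0,c2^{-2l}]$ gives a factor $2^{-l}$ in the $u$-foliation bound, which exactly compensates the $2^{l}$ lost when you take one more $\dd$ for the transfer. But you need to make this balance explicit rather than assert that the losses are ``harmless because the estimate is non-sharp''---the non-sharpness refers to the absence of a \emph{gain} in $2^{-l}$ on the right-hand side, not to a license to lose powers of $2^l$ on the left.
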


Using Corollary \ref{ch4:lemma:xx3} and Lemma \ref{ch4:lemma:xx5} together with the estimates \eqref{ch4:esttrc} and \eqref{ch4:estricciomega} for $\trc$ and the fact that $|\o-\nu|\les 2^{-\frac{j}{2}}$, we are able to prove the decomposition \eqref{ch4:dectrcom} for $\trc$. 

Next, we consider $N-N_\nu$. We have
$$2^{\frac{j}{2}}(N-N_\nu)=\int_{[\o, \nu]}\po N(.,\o'')d\o'' (2^{\frac{j}{2}}(\o-\nu)),$$
where $[\o, \nu]$ denotes the arc of $\S$ joining $\o$ and $\nu$. Since $|\o-\nu|\les 2^{-\frac{j}{2}}$, we want to proceed as for the decomposition of $\trc$. More precisely, we 
want to use Corollary \ref{ch4:lemma:xx3} and Lemma \ref{ch4:lemma:xx5} together with the estimates  
 \eqref{ch4:estNomega}, \eqref{ch4:estricciomega}, \eqref{ch4:estNomega2}, \eqref{ch4:estNomega2bis} and \eqref{ch4:estricciomega2} for $\po N$, in order to prove the decomposition \eqref{ch4:decNom} for $N-N_\nu$. Now, unlike $\trc$ which is a scalar,  $\po N$ is a tensor. Since Corollary \ref{ch4:lemma:xx3} and Lemma \ref{ch4:lemma:xx5} only apply to scalars, we need one last ingredient to prove the decomposition \eqref{ch4:decNom} for $N-N_\nu$ and conclude the proof of Theorem \ref{ch4:cor:xx1bis}. Namely we need to scalarize $\po N$ using a basis of the tangent space of $\Sit$ which does not depend on $\o$. We refer to \cite{param3} for the details.

%%%%%%%%%%%%%%%%%%%%%%%%%%%%%%%%%%%%%%%

\chapter{Construction and control of the parametrix at initial time}\lab{part:paraminit}

%%%%%%%%%%%%%%%%%%%%%%%%%%%%%%%%%%%%%%%

\renewcommand{\l}[2]{L^{#1}_uL^{#2}(P_u)}
\renewcommand{\s}{\Sigma}

In this chapter, and the next one, we will only consider the leave $\Sigma_0$ of the foliation $\Sigma_t$ of $\MM$, and we denote it by $\Sigma$ for simplicity. Recall the plane wave type parametrix given by \eqref{parametrixinit.intr}\footnote{This is actually a half wave parametrix. See \eqref{ch5:param1} below for the full parametrix} 
$$\int_{\SSS^2} \int_0^\infty e^{i\la \uom(t,x)} \,  f(\la\om) \la^2 d\la  d\om$$
where $u(.,.,\o)$ is a solution to the eikonal equation ${\bf g}^{\alpha\beta}\partial_\alpha u\partial_\beta u=0$ on $\mathcal{M}$ such that $u(0,x,\o)\sim \xo$ when $|x|\rightarrow +\infty$ on $\Si$. The goal of this chapter is to outline the main ideas allowing us to obtain the control for that parametrix restricted to $\Sigma$ in \cite{param4}.

\section{Geometric set-up and main results}

\subsection{Presentation of the parametrix}\label{ch5:sec:fullparam}

In this section, we construct a parametrix for the following homogeneous wave equation:
\begin{equation}\label{ch5:waveeq}
\left\{\begin{array}{l}
\ds\square_{\bf g}\phi=0\textrm{ on }\mathcal{M},\\
\ds\phi_{|_\Sigma}=\phi_0,\, T(\phi)_{|_\Sigma}=\phi_1,
\end{array}\right.
\end{equation}
where $\phi_0$ and $\phi_1$ are two given functions on $\Sigma$ and $T$ is the future oriented unit normal to $\Sigma$ in the space-time $\mathcal{M}$.

We recall the plane wave representation of the solution of the flat wave equation. This corresponds to the case where ${\bf g}$ is the Minkowski metric. \eqref{ch5:waveeq} becomes:
\begin{equation}\label{ch5:flatwaveeq}
\left\{\begin{array}{l}
\ds\square\phi=0\textrm{ on }\R^{1+3},\\
\ds\phi(0,.)=\phi_0,\, \partial_t\phi(0,.)=\phi_1\textrm{ on }\R^3.
\end{array}\right.
\end{equation}
The plane wave representation of the solution $\phi$ of \eqref{ch5:flatwaveeq} is given by:
\begin{equation}\label{ch5:flatparam}
\begin{array}{l}
\ds\int_{\S}\int_0^{+\infty}e^{i(-t+\xo)\la}\frac{1}{2}\left(\mathcal{F}\phi_0(\la\o)+i\frac{\mathcal{F}\phi_1(\la\o)}{\la}\right)d\la d\o\\
\ds +\int_{\S}\int_0^{+\infty}e^{i(t+\xo)\la}\frac{1}{2}\left(\mathcal{F}\phi_0(\la\o)-i\frac{\mathcal{F}\phi_1(\la\o)}{\la}\right)d\la d\o,
\end{array}
\end{equation}
where $\mathcal{F}$ denotes the Fourier transform on $\R^3$.

We would like to construct a parametrix in the curved case similar to \eqref{ch5:flatparam}. We introduce two solutions $u_\pm$ of the eikonal equation
\begin{equation}\label{ch5:eikonal}
{\bf g}^{\alpha\beta}\partial_\alpha u_\pm\partial_\beta u_\pm=0\textrm{ on }\mathcal{M},
\end{equation}
such that:
\begin{equation}\label{ch5:eikonal1}
T(u_\pm)=\mp |\nabla u_\pm| =\mp a^{-1}_\pm\textrm{ on }\Sigma,
\end{equation}
where $\nabla$ is the covariant derivative on $\Sigma$ associated to the metric $g$ induced by $\g$ on $\Sigma$, $|\c|$ is the length associated to $g$ for vectorfields on $\Sigma$, and $a_\pm$ is the lapse of $u_\pm$ on $\Sigma$. We look for a parametrix for \eqref{ch5:waveeq} of the form:
\be\lab{ch5:param1}
S(t,x)=S_+f_+(t,x)+S_-f_-(t,x),
\ee
where
\be\lab{ch5:param1bis}
\ds S_{\pm}f_{\pm}(t,x)=  \ds\int_{\S}\int_{0}^{+\infty}e^{i\lambda u_\pm(t,x,\o)}f_\pm(\lambda\o)\lambda^2 d\lambda d\o.
\ee
In the next two sections, we specify the parametrix \eqref{ch5:param1} by prescribing $u_\pm$ on $\Sigma$ and by making our choice for $f_\pm$ explicit.

\subsubsection{Prescription of $u_+$ and $u_-$ on $\Sigma$}\lab{sec:prescriptionu}

\eqref{ch5:eikonal} and \eqref{ch5:eikonal1} are not enough to define $u_\pm$ in a unique manner. Indeed, we 
still need to prescribe $u_\pm$ on $\Sigma$. To motivate our choice, we need to introduce some geometric 
objects connected to $u_\pm$. Let $N_\pm$ the vectorfield on $\Sigma$ defined by:
\begin{equation}\label{ch5:eikonal2}
N_\pm=\frac{\nabla u_\pm}{|\nabla u_\pm|}=a_\pm\nabla u_\pm,
\end{equation}
and $L_\pm$ the vectorfield on $\mathcal{M}$ which is given on $\Sigma$ by: 
\begin{equation}\label{ch5:eikonal3}
L_\pm=a_\pm{\bf g}^{\alpha\beta}\partial_\alpha u_\pm\partial_\beta=a_\pm(-T(u_\pm)T+\nabla u_\pm)=\pm T+N_\pm.
\end{equation}
Let $P_{u_\pm}=\{x\in\Sigma/\,u_\pm(x)=u_\pm\}$ denote the level surfaces of $u_\pm$ in $\Sigma$. Since $N_\pm$ is the unit normal to $P_{u_\pm}$, the second fundamental form of $P_{u_\pm}$ in $\Sigma$ is given by:
\begin{equation}\label{ch5:eikonal4}
\th_\pm(e^\pm_A,e^\pm_{B})=g(D_{e^\pm_A}N_\pm,e^\pm_{B}),\,A, B=1,2,
\end{equation}
where $(e^\pm_1,e^\pm_2)$ is an arbitrary orthonormal frame of $TP_{u_\pm}$. Let$$\mathcal{H}_{u_\pm}=\{(t,x)\in\mathcal{M}/\,u_\pm(t,x)=u_\pm\}$$ 
denote the null level hypersurfaces of $u_\pm$ in $\mathcal{M}$. Since $L_\pm$ is null and orthogonal to $P_{u_\pm}$ in $\mathcal{H}_{u_\pm}$, the null second fundamental form $\chi_\pm$ is given on $P_{u_\pm}$ by:
\begin{equation}\label{ch5:eikonal5}
\chi_\pm(e^\pm_A,e^\pm_{B})=g(\dd_{e^\pm_A}L_\pm,e^\pm_{B}),\,A, B=1,2.
\end{equation}
Taking the trace in \eqref{ch5:eikonal4} and \eqref{ch5:eikonal5}, and using \eqref{ch5:eikonal3} and the fact that $k$ is the second fundamental form of $\Sigma$, we obtain:
\begin{equation}\label{ch5:eikonal6}
\textrm{tr}\chi_\pm=\pm\textrm{tr}k+\textrm{tr}\th_\pm.
\end{equation}
Note that $\tr_g k=\textrm{tr}k+k_{NN}$, where $\tr_g$ denotes the trace for 2-tensors on $\Sigma$. In view of the maximal foliation assumption \eqref{maxfoliation}, we have $\tr_g k=0$. Together with \eqref{ch5:eikonal6}, this yields:
\begin{equation}\label{ch5:eikonal7}
\textrm{tr}\chi_\pm=\mp k_{N_\pm N_\pm}+\textrm{tr}\th_\pm.
\end{equation}
Now, in \cite{param3} (see also Theorem \ref{ch4:thregx}), we prove that $\textrm{tr}\chi_\pm$ belongs to $L^\infty(\mathcal{M})$ using a transport equation (the Raychaudhuri equation, see \eqref{ch4:D4trchi}) provided that it belongs to $L^\infty(\Sigma)$ at $t=0$. Thus, one needs the following estimate
\begin{equation}\label{ch5:eikonal9}
\textrm{tr}\chi_\pm\in L^\infty(\Sigma),
\end{equation}
which in view of \eqref{ch5:eikonal7} is equivalent to:
\begin{equation}\label{ch5:eikonal10}
\mp k_{N_\pm N_\pm}+\textrm{tr}\th_\pm\in L^\infty(\Sigma).
\end{equation}
We construct in \cite{param1} (see also Chapter \ref{part:initialu}) a function $u(x,\o)$ on $\Sigma\times\S$ such that 
\begin{equation}\label{ch5:eikonal11}
- k_{NN}+\textrm{tr}\th\in L^\infty(\Sigma).
\end{equation}
Note that $-u(x,-\o)$ satisfies:
\begin{equation}\label{ch5:eikonal12}
k_{NN}+\textrm{tr}\th\in L^\infty(\Sigma).
\end{equation}
Thus, in view of \eqref{ch5:eikonal10}, \eqref{ch5:eikonal11} and \eqref{ch5:eikonal12}, we initialize $u_\pm$ on $\Sigma$ by:
\begin{equation}\label{ch5:eikonal13}
u_+(0,x,\o)=u(x,\o)\textrm{ and }u_-(0,x,\o)=-u(x,-\o)\textrm{ for }(x,\o)\in\Sigma\times\S.
\end{equation}

\begin{remark}
Note that in the particular case where $k\equiv 0$ - the so-called time symmetric case-, we may take 
$$u_+(0,x,\o)=u_-(0,x,\o)=u(x,\o)\textrm{ for }(x,\o)\in\Sigma\times\S.$$
In particular, we have $u_+(0,x,\o)=u_-(0,x,\o)=\xo$ in the flat case.
\end{remark}

\subsubsection{The choice of $f_+$ and $f_-$}\label{ch5:sec:choicef}

Having defined $u_\pm$, we still need to define $f_\pm$ in the parametrix \eqref{ch5:param1}. According to \eqref{ch5:waveeq}, the half wave parametrix $S_+$ and $S_-$ should satisfy on $\Sigma$:
\begin{equation}\label{ch5:choicef}
\left\{\begin{array}{l}
\ds S_+f_+(0,x)+S_-f_-(0,x)=\phi_0(x),\\
\ds T(S_+f_+)(0,x)+T(S_-f_-)(0,x)=\phi_1(x).
\end{array}\right.
\end{equation}
Let us introduce the following operators acting on functions of $\R^3$:
\begin{equation}\label{ch5:choicef1}
M_\pm f(x)=\int_{\S}\int_{0}^{+\infty}e^{\pm i\lambda u(x,\pm\o)}f(\lambda\o)\lambda^2 d\lambda d\o
\end{equation}
and 
\begin{equation}\label{ch5:choicef2}
Q_\pm f(x)=\int_{\S}\int_{0}^{+\infty}e^{\pm i\lambda u(x,\pm\o)}a(x,\pm\o)^{-1} f(\lambda\o)\lambda^2 d\lambda d\o,
\end{equation}
where $a(x,\o)=|\nabla u(x,\o)|^{-1}$ is the lapse of $u$. 
Using \eqref{ch5:eikonal1}, the definition of $S_\pm$ in \eqref{ch5:param1}, \eqref{ch5:eikonal13}, the definition \eqref{ch5:choicef1} of $M_\pm$ and the definition \eqref{ch5:choicef2} of $Q_\pm$, we may rewrite \eqref{ch5:choicef} as: 
\begin{equation}\label{ch5:choicef3}
\left\{\begin{array}{l}
M_+f_++M_-f_-=\phi_0,\\
Q_+(\la f_+)-Q_-(\la f_-)=i\phi_1.
\end{array}\right.
\end{equation}
The goal of this chapter will be to show that there exist a unique $(f_+,f_-)$ satisfying \eqref{ch5:choicef3}, 
and that $(f_+,f_-)$ satisfies the following estimate:
\begin{equation}\label{ch5:choicef4}
\norm{\la f_+}_{L^2(\R^3)}+\norm{\la f_-}_{L^2(\R^3)}\lesssim \norm{\nabla\phi_0}_{L^2(\Sigma)}+\norm{\phi_1}_{L^2(\Sigma)}.
\end{equation}

\begin{remark}\lab{rem:theflatcase}
In the case of the flat wave equation \eqref{ch5:flatwaveeq}, we have $(\Sigma,g, k)=(\R^3,\de, 0)$, $u_\pm(t,x,\o)=\mp t+\xo$, $u(x,\o)=\xo$ and $a(x,\o)= 1$. In particular, the operators $M_\pm$ and $Q_\pm$ defined respectively by \eqref{ch5:choicef1} and \eqref{ch5:choicef2} all coincide with the inverse Fourier transform. Then, the system \eqref{ch5:choicef3} admits the following solutions:
$$f_\pm (\la\o) =\frac{1}{2}\left(\mathcal{F}\phi_0(\la\o)\pm i\frac{\mathcal{F}\phi_1(\la\o)}{\la} \right),$$
which clearly satisfy the estimate \eqref{ch5:choicef4}.
\end{remark}

\subsection{Geometric set-up}

We define the lapse $a(x,\o)=|\nabla u(x,\o)|^{-1}$, and the unit vector $N$ such that $\nabla u(x,\o)=a(x,\o)^{-1}N(x,\o)$. We also define the level surfaces $\p=\{x\,/\,u(x,\o) =u\}$ so that $N$ is the normal to $\p$. 

For $1\leq p,q\leq +\infty$, we define the spaces $\l{p}{q}$ using the norm 
$$\norm{F}_{\l{p}{q}}=\left(\int_{u}\norm{F}^p_{L^q(\p)}du\right)^{1/p}.$$
We assume that $1/2\leq a(x)\leq 2$ for all $x\in\s$ (see \eqref{ch5:thregx1} below) so that $\l{p}{p}$ coincides with $L^p(\Sigma)$ for all $1\leq p\leq +\infty$. We denote by $\gamma$ the metric induced by $g$ on $\p$, and by $\nabb$ the induced covariant derivative. 

Before stating precisely the main results of this chapter, we first record the regularity obtained for the 
phase $u(x,\o)$ constructed in \cite{param1} (see also Chapter \ref{part:initialu}). 

\subsection{Regularity assumptions on the phase $u(x,\o)$}\lab{ch5:sec:estneeded}

In this section, we collect the estimates  for the phase $u(x,\o)$ of our Fourier integral operators that are needed to follow the discussion of the control of the parametrix at initial time contained in this chapter. An outline of the proof of these estimates will be given in Chapter \ref{part:initialu}  (see \cite{param1} for the complete proof).

We start with the regularity in $x$ of the lapse $a$. We need:
\begin{equation}\label{ch5:thregx1}
\ds\norm{\nabla a}_{\l{\infty}{2}}+\norm{a-1}_{L^\infty(\Sigma)}+\norm{\nabb\nabla a}_{L^2(\Sigma)}\lesssim\ep.
\end{equation}
We also need a decomposition for $\nabn a$. For all $j\geq 0$, there are scalar functions $a^j_1$ and $a^j_2$ such that\footnote{we choose $a_1^j=P_{>j/2}(\nabn a)$ and $a_2^j=P_{\leq j/2}(\nabn a)$, and then obtain \eqref{ch5:cordecfr1} using \eqref{ch5:thregx1} and an estimate for $\nabla_N^2a$ (see \cite{param1} for the details)}:
\begin{equation}\label{ch5:cordecfr1}
\begin{array}{l}
\ds\nabn a=a^j_1+a^j_2\textrm{ where }\norm{a^j_1}_{L^2(\Sigma)}\lesssim 2^{- j/2}\ep,\,\norm{a^j_2}_{\l{\infty}{2}}\lesssim \ep\\
\ds\textrm{and }\norm{\nabn a^j_2}_{L^2(\Sigma)}+\norm{a^j_2}_{\l{2}{\infty}}\lesssim 2^{j/2}\ep.
\end{array}
\end{equation}
Next, we consider the regularity with respect to $\o$. We have:
\begin{equation}\label{ch5:threomega1}
\ds\norm{\po a}_{L^2(\Sigma)}+\norm{\nabla\po a}_{L^2(\Sigma)}\lesssim\ep,
\end{equation}
\begin{equation}\label{ch5:threomega3}
\norm{\po^{\a}a}_{L^\infty(\Sigma)}\lesssim 1\textrm{ for some }0<\a<1,
\end{equation}
where \eqref{ch5:threomega3} should be understood in the H\"older sense,
\begin{equation}\label{ch5:threomega1bis}
\norm{\po N}_{L^\infty(\Sigma)}\lesssim 1,
\end{equation}
and 
\begin{equation}\label{ch5:threomega3bis}
\norm{\po^3u}_{L^\infty_{\textrm{loc}}(\Sigma)}\lesssim 1.
\end{equation}
We will need the following global change of variable on $\Sigma$. Let $\o\in\S$. Let $\phi_\o:\Sigma\rightarrow\R^3$ defined by:
\begin{equation}\label{ch5:gl21}
\phi_\o(x):=u(x,\o)\o+\po u(x,\o).
\end{equation}
Then $\phi_\o$ is a bijection, and the determinant of its Jacobian satisfies the following estimate:
\begin{equation}\label{ch5:gl22}
\norm{|\det(\textrm{Jac}\phi_\o)|-1}_{L^\infty(\Sigma)}\lesssim\ep.
\end{equation}
Finally, we can compare $u(x,\o)$ with a phase linear in $\o$. Let $\nu\in\S$ and $\phi_\nu$ the map defined in \eqref{ch5:gl21}. Then, we have:
\begin{equation}\label{ch5:gl23}
\begin{array}{l}
\ds u(x,\o)-\phi_\nu(x)\c\o=O(\ep|\o-\nu|^2),\\
\ds \po u(x,\o)-\po(\phi_\nu(x)\c\o)=O(\ep|\o-\nu|),\\
\ds \po^2 u(x,\o)-\po^2(\phi_\nu(x)\c\o)=O(\ep).
\end{array}
\end{equation}

\begin{remark}
In \eqref{ch5:thregx1}-\eqref{ch5:gl23}, all inequalities hold for any $\o\in\S$ with the constant in the right-hand side being independent of $\o$. Thus, one may take the supremum in $\o$ everywhere. To ease the notations, we do not explicitly write down this supremum. 
\end{remark}

\begin{remark}
In the case of the flat wave equation \eqref{ch5:flatwaveeq}, we have $(\Sigma,g)=(\R^3,\de)$, $u(x,\o)=\xo$, $a= 1$, $N=\o$ and $\phi_\o=I\! d_{\R^3}$. Thus, \eqref{ch5:thregx1}-\eqref{ch5:gl23} are clearly satisfied with $\ep=0$.
\end{remark}

\begin{remark}
Recall that the lapse $a$ is at the level of one derivative of $u$ with respect to $x$. Thus, we obtain from \eqref{ch5:thregx1} that some components of $\nabla^3u$ are in $L^2(\Sigma)$. Note that this is not true for all components since \eqref{ch5:cordecfr1} does not allow us to control $\nabn^2a$ in $L^2(\Sigma)$. In fact, \eqref{ch5:cordecfr1} is consistent with $3/2$ derivatives of $a$ with respect to $N$ in $L^2$. 
\end{remark}

\subsection{Main results}

We first state a result of boundedness on $L^2$ for Fourier integral operators with phase $u(x,\o)$. 
\begin{theorem}\label{ch5:th1}
Let $u$ be a function on $\Sigma\times\S$ satisfying suitable assumptions (we refer to \cite{param2} for the complete set of assumptions, and to section \ref{ch5:sec:estneeded} for some typical assumptions). Let $U$ the Fourier integral operator with phase $u(x,\o)$ and symbol $b(x,\o)$:
\begin{equation}\label{ch5:fio}
Uf(x)=\int_{\S}\int_{0}^{+\infty}e^{i\lambda u(x,\o)}b(x,\o)f(\lambda\o)\lambda^2 d\lambda d\o.
\end{equation}
Let $D>0$. We assume furthermore that $b(x,\o)$ satisfies:
\begin{equation}\label{ch5:thregx1s}
\norm{b}_{L^\infty(\Sigma)}+\norm{\nabla b}_{\l{\infty}{2}}+\norm{\nabb\nabla b}_{L^2(\Sigma)}\lesssim D,
\end{equation}
\begin{equation}\label{ch5:threomega1s}
\norm{\po b}_{L^2(\Sigma)}+\norm{\nabla\po b}_{L^2(\Sigma)}\lesssim D,
\end{equation}
and
\begin{equation}\label{ch5:cordecfr1s}
\begin{array}{l}
\ds\nabn b=b^j_1+b^j_2\textrm{ where }\norm{b^j_1}_{L^2(\Sigma)}\lesssim 2^{-\frac{j}{2}}D, \norm{b^j_2}_{\l{\infty}{2}}\lesssim D,\\
\ds\textrm{ and }\norm{\nabn b^j_2}_{L^2(\Sigma)}+\norm{b^j_2}_{\l{2}{\infty}}\lesssim 2^{\frac{j}{2}}D.
\end{array}
\end{equation}
Then, $U$ is bounded on $L^2$ and satisfies the estimate:
\begin{equation}\label{ch5:l2}
\norm{Uf}_{L^2(\Sigma)}\lesssim D\norm{f}_{L^2(\R^3)}.
\end{equation}
\end{theorem}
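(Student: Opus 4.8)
The strategy is the classical $TT^*$ reduction, but carried out so as to exploit \emph{both} the $x$-regularity and the $\o$-regularity of the phase, since neither alone suffices within the limited regularity \eqref{ch5:thregx1s}--\eqref{ch5:cordecfr1s}. First I would perform a first dyadic decomposition in $\la$: write $1=\varphi(\la)+\sum_{j\ge0}\psi(2^{-j}\la)$ and set $U_jf(x)=\int_{\S}\int_0^\infty e^{i\la u(x,\o)}b(x,\o)\psi(2^{-j}\la)f(\la\o)\la^2\,d\la\,d\o$. An almost-orthogonality argument in frequency (integration by parts in a suitable variable producing the off-diagonal gain $2^{-|j-k|/4}$, followed by Schur's lemma) reduces \eqref{ch5:l2} to the frequency-localized bound $\norm{U_jf}_{L^2(\Sigma)}\lesssim D\,\norm{\psi(2^{-j}\la)f}_{L^2(\R^3)}$, uniformly in $j$. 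This step mirrors the one carried out for the error term in Chapter \ref{part:paramtime}.

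Next, for fixed $j$ I would decompose the angular variable into caps: introduce a partition of unity $\sum_{\nu\in\Gamma}\eta^\nu_j(\o)=1$ where $\Gamma$ is a $2^{-j/2}$-lattice on $\S$ and $\mathrm{supp}(\eta^\nu_j)$ has diameter $\sim2^{-j/2}$, and write $U_jf=\sum_\nu U^\nu_jf$. Here the key geometric input is the nondegeneracy of $\o\mapsto N(x,\o)$, i.e.\ the analogue of \eqref{ch3:threomega1ter}, which follows from \eqref{ch5:threomega1bis} together with the change-of-variables estimates \eqref{ch5:gl22}--\eqref{ch5:gl23}. Integration by parts in directions tangent to the level surfaces $P_u$ (using $\nabla u=a^{-1}N$) produces, for $\nu\ne\nu'$, a gain of $(2^{j/2}|\nu-\nu'|)^{-1}$ per integration; iterating and handling the log-divergence exactly as in section \ref{ch3:sec:logloss}--\ref{sec:logremoval} (by always putting the extra tangential derivative on the lower-frequency factor via a geometric Littlewood--Paley decomposition of the symbol $b$) gives almost-orthogonality in angle: $\norm{U_jf}^2_{L^2(\Sigma)}\lesssim\sum_\nu\norm{U^\nu_jf}^2_{L^2(\Sigma)}+D^2\gamma_j^2$.

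It then remains to estimate the diagonal term $\norm{U^\nu_jf}_{L^2(\Sigma)}\lesssim D\,\gamma^\nu_j$ where $\gamma^\nu_j=\norm{\psi(2^{-j}\la)\eta^\nu_j f}_{L^2(\R^3)}$. For this I would freeze the $\o$-dependence of the symbol at the cap center $\nu$ using \eqref{ch5:threomega1s} (writing $b(x,\o)=b(x,\nu)+(\o-\nu)\int_0^1\po b(x,\o_\sigma)d\sigma$ and absorbing the error), reducing to the case $b\equiv1$; then straighten the phase by the global change of variables $\phi_\nu$ of \eqref{ch5:gl21}, whose Jacobian is controlled by \eqref{ch5:gl22} and whose deviation from linearity is controlled by \eqref{ch5:gl23}. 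On the cap of size $2^{-j/2}$ the phase $u(x,\o)-\phi_\nu(x)\cdot\o$ is $O(\ep\,2^{-j})$ and its $\o$-derivatives are correspondingly small, so after rescaling $\tilde\o=2^{j/2}(\o-\nu)$ the operator becomes a small perturbation of the (frequency- and angle-localized) flat Fourier transform, for which the bound is Plancherel. The estimates \eqref{ch5:cordecfr1} on $\nabn a$ (equivalently the $\tfrac32$-derivative control of the lapse) enter to justify the one integration by parts in the normal direction needed to close the frequency orthogonality with the correct power.

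\textbf{Main obstacle.} The delicate point is the angular almost-orthogonality in the presence of the log-loss exhibited in \eqref{ch3:bisoa5}: a naive double integration by parts (once tangentially, once along $L$) leaves a sum $\sum_{\nu'}(2^{j/2}|\nu-\nu'|)^{-2}\sim j$ which is not summable. Removing it requires the geometric Littlewood--Paley splitting $b=P_{\le j/2}b+\sum_{l>j/2}P_lb$ of the symbol and the careful bookkeeping of section \ref{sec:logremoval}, arranging that every surviving tangential derivative lands on the lowest available frequency; this is exactly where the better $(t,x)$-regularity of one piece versus the better $\o$-regularity of another (cf.\ the decomposition philosophy of \eqref{ch4:dechch}) has to be used, and it is the technically heaviest part of the argument. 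The remaining estimates are routine given the regularity package \eqref{ch5:thregx1}--\eqref{ch5:gl23}.
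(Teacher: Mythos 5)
Your overall scaffolding---first dyadic decomposition in $\la$, almost orthogonality in frequency, second dyadic decomposition in $\o$ into $2^{-j/2}$-caps, almost orthogonality in angle, diagonal term---is exactly the one the paper uses, and the frequency orthogonality step (one integration by parts in $\pr_u$ using $\nabla u=a^{-1}N$, then Schur) is essentially identical to the paper's.

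However, there is a genuine gap in your treatment of the angular almost orthogonality. You propose to remove the log-loss $\sum_{\nu'}(2^{j/2}|\nu-\nu'|)^{-2}\sim j$ ``exactly as in section \ref{ch3:sec:logloss}--\ref{sec:logremoval},'' i.e. by integrating by parts alternately in tangential directions and in the null vectorfield $L$, together with a geometric Littlewood--Paley decomposition of $\trc$ (here, of $b$) so that the surviving tangential derivative always lands on the lowest frequency. That mechanism hinges on the Lorentzian identities $L(u)=0$ and $\gg(L,L')\sim|\nu-\nu'|^2$, which let one trade a tangential derivative on the high-frequency piece against an $L$-derivative and pick up the crucial extra factor. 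On the initial slice $\Sigma$, which is Riemannian, there is no null vectorfield $L$ and no analogue of this structure, so the integration-by-parts bookkeeping you describe is simply not available. The paper flags this exact obstruction in the remark just before section \ref{ch5:sec:iremovelogloss}, and its fix is entirely different: a \emph{second} decomposition in $\la$, splitting $[2^{j-1},2^{j+1}]$ into subintervals $I_k$ of diameter $\sim 2^j|\nu-\nu'|^\alpha$. After one tangential and one normal integration by parts, the resulting symbol derivatives ($\nabb^2b$, $\nabb\nabn b$) lie in $L^2(\Sigma)$ by \eqref{ch5:thregx1s}, which forces the $\la$-integral to be estimated in $L^\infty_u$ by Cauchy--Schwarz rather than in $L^2_u$ by Plancherel; the Cauchy--Schwarz step then produces the factor $\sqrt{|I_k|}\sim 2^{j/2}|\nu-\nu'|^{\alpha/2}$, which both removes the log-loss and gives the summability in $k,k'$ via the gain in $|k-k'|$ coming from \eqref{ch5:bisoa43b}. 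This mechanism is specifically tied to having $L^2(\Sigma)$ control of the symbol, which is available at $t=0$ but not for the error term (whose symbol bounds are only $L^\infty_u L^2(\HH_u)$). So the two chapters deliberately use \emph{different} log-loss removal devices, and the one you chose is the one that does not work here.

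Your diagonal-term argument (freeze $b$ at the cap center, straighten the phase with $\phi_\nu$, rescale) is also not the route the paper takes for Theorem \ref{ch5:th1}: the paper proves the diagonal bound by a $TT^*$ kernel estimate (Propositions \ref{ch5:bisdiprop1}--\ref{ch5:di12}), using three integrations by parts in $\o$ (justified by $\po^3 u\in L^\infty_{\textrm{loc}}$) and two in $\la$, plus the change of variables $\phi_\nu$ only at the level of the kernel's $y$-integral. Your ``small perturbation of the flat Fourier transform'' reduction is closer in spirit to the refinement used for the \emph{lower} bound (Proposition \ref{ch5:re2}), where the comparison of $S^\nu_j$ with $\widetilde S^\nu_j$ via \eqref{ch5:gl23} is indeed the mechanism; for the plain upper bound it may work, but it is a different argument than the one in the paper and would need to be verified separately. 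In any case the angular orthogonality step is where your proposal breaks down.
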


\begin{remark}
We intend to apply Theorem \ref{ch5:th1} to the Fourier integral operators $M_\pm$ and $Q_\pm$ introduced in section \ref{ch5:sec:choicef} whose symbol are respectively 1 and $a^{-1}$. Thus, our assumptions on the regularity of the symbol $b(x,\o)$ are consistent with the assumptions on the regularity of $a(x,\o)$ given by \eqref{ch5:thregx1}-\eqref{ch5:threomega3bis}.  
\end{remark}

\begin{remark}
Under the additional assumption \eqref{ch5:gl23} on $u$, and under some restrictions on the constant $D$ appearing in \eqref{ch5:thregx1s}, \eqref{ch5:threomega1s}, and \eqref{ch5:cordecfr1s}, we may prove the opposite of \eqref{ch5:l2}:
$$\norm{f}_{\le{2}}\lesssim\norm{Uf}_{L^2(\Sigma)}$$
(see Proposition \ref{ch5:re2}). This will be a major ingredient in the proof of Theorem \ref{ch5:th2} below, and in particular of \eqref{ch5:choicef4bis}.
\end{remark}

Recall the definition of the Fourier integral operators $M_\pm$ and $Q_\pm$ introduced in section \ref{ch5:sec:choicef}: 
\begin{equation}\label{ch5:choicef1bis}
M_\pm f(x)=\int_{\S}\int_{0}^{+\infty}e^{\pm i\lambda u(x,\pm\o)}f(\lambda\o)\lambda^2 d\lambda d\o,
\end{equation}
and 
\begin{equation}\label{ch5:choicef2bis}
Q_\pm f(x)=\int_{\S}\int_{0}^{+\infty}e^{\pm i\lambda u(x,\pm\o)}a(x,\pm\o)^{-1} f(\lambda\o)\lambda^2 d\lambda d\o.
\end{equation}
The following theorem is the main result of this chapter.

\begin{theorem}\label{ch5:th2}
Let $u$ be a function on $\Sigma\times\S$ satisfying suitable assumptions (we refer to \cite{param2} for the complete set of assumptions, and to section \ref{ch5:sec:estneeded} for some typical assumptions). Then, there exist a unique $(f_+,f_-)$ satisfying:
\begin{equation}\label{ch5:choicef3bis}
\left\{\begin{array}{l}
M_+f_++M_-f_-=\phi_0,\\
Q_+(\la f_+)-Q_-(\la f_-)=i\phi_1.
\end{array}\right.
\end{equation}
Furthermore, $(f_+,f_-)$ satisfies the following estimate:
\begin{equation}\label{ch5:choicef4bis}
\norm{\la f_+}_{L^2(\R^3)}+\norm{\la f_-}_{L^2(\R^3)}\lesssim \norm{\nabla\phi_0}_{L^2(\Sigma)}+\norm{\phi_1}_{L^2(\Sigma)}.
\end{equation}
\end{theorem}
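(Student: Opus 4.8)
\textbf{Proof plan for Theorem \ref{ch5:th2}.}

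The plan is to treat the system \eqref{ch5:choicef3bis} as a perturbation of the flat case described in Remark \ref{rem:theflatcase}, where $M_\pm$ and $Q_\pm$ all reduce to the inverse Fourier transform. First I would invoke Theorem \ref{ch5:th1} to see that $M_\pm$ and $Q_\pm$, whose symbols are $1$ and $a(x,\pm\o)^{-1}$ respectively, are bounded operators from $L^2(\R^3)$ to $L^2(\Sigma)$; the symbol hypotheses \eqref{ch5:thregx1s}--\eqref{ch5:cordecfr1s} are verified by $1$ trivially and by $a^{-1}$ in view of the assumptions \eqref{ch5:thregx1}--\eqref{ch5:cordecfr1} on the lapse $a$. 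The system \eqref{ch5:choicef3bis} can then be rewritten, after replacing $\la f_\pm$ by new unknowns $h_\pm$ and composing the first equation with $|\nabla|=(-\Delta)^{1/2}$ on $\Sigma$, in the schematic matrix form
$$
\begin{pmatrix} |\nabla| M_+ |\nabla|^{-1} & |\nabla| M_- |\nabla|^{-1}\\ Q_+ & -Q_- \end{pmatrix}\begin{pmatrix} h_+\\ h_-\end{pmatrix}=\begin{pmatrix} |\nabla|\phi_0\\ i\phi_1\end{pmatrix},
$$
and the target estimate \eqref{ch5:choicef4bis} is exactly the statement that the inverse of this matrix operator is bounded on $L^2(\R^3)\times L^2(\R^3)$ with $\|h_\pm\|_{L^2}$ controlled by $\|\nabla\phi_0\|_{L^2(\Sigma)}+\|\phi_1\|_{L^2(\Sigma)}$.

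The key analytic input is the lower bound announced in the remark following Theorem \ref{ch5:th1} (Proposition \ref{ch5:re2}): under the extra hypothesis \eqref{ch5:gl23} comparing $u(x,\o)$ with the linear phase $\phi_\nu(x)\c\o$, one has $\|f\|_{L^2(\R^3)}\lesssim \|Uf\|_{L^2(\Sigma)}$ for the relevant Fourier integral operators. Combined with the $TT^*$-type computation, this shows that each of $M_\pm$ and $Q_\pm$ is, after the rescaling by $a$, an \emph{isomorphism} onto its range which is a small $L^2$-perturbation of the identity; concretely one proves $\|(M_\pm^*M_\pm - \mathrm{Id})f\|_{L^2}\lesssim\ep\|f\|_{L^2}$ and similarly for $Q_\pm$, using that the phase differs from a linear one by $O(\ep|\o-\nu|^2)$ together with the global change of variables \eqref{ch5:gl21}--\eqref{ch5:gl22} on $\Sigma$ (which turns integration over $\Sigma$ into integration over $\R^3$ with Jacobian $1+O(\ep)$). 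The cross terms $M_+^*M_-$ and $Q_+^*Q_-$, which in the flat case vanish by the opposite signs $\pm t+\xo$ of the two phases (a stationary phase / non-stationary phase gain), are shown to have small operator norm by the same mechanism: the phase $u(x,\o)+u(x,-\o')$ has gradient bounded away from zero for $\o$ near $\o'$, so repeated integration by parts in $\la$ and $\o$ — controlled by the regularity \eqref{ch5:threomega1}--\eqref{ch5:threomega3bis} of $u$ in $\o$ — yields an $O(\ep)$ bound. Once all four diagonal blocks are $\mathrm{Id}+O(\ep)$ and the off-diagonal blocks are $O(\ep)$, the $2\times 2$ block operator is invertible by a Neumann series for $\ep$ small, which simultaneously gives existence, uniqueness of $(f_+,f_-)$, and the estimate \eqref{ch5:choicef4bis}.

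I would organize the argument as: (i) reduce \eqref{ch5:choicef3bis} to the block-operator form above and record that $M_\pm,Q_\pm$ are $L^2$-bounded via Theorem \ref{ch5:th1}; (ii) prove the near-isometry property of each $M_\pm$ and $Q_\pm$ using Proposition \ref{ch5:re2}, the change of variables \eqref{ch5:gl21}--\eqref{ch5:gl22}, and the comparison \eqref{ch5:gl23} with a linear phase; (iii) prove the smallness of the cross terms by non-stationary phase in $(\la,\o)$; (iv) assemble the Neumann series to invert the block operator and conclude. The main obstacle is step (iii) together with the precise quantitative version of step (ii): one must control Fourier integral operators whose phase has only the limited $\o$-regularity \eqref{ch5:threomega1}--\eqref{ch5:threomega3bis} (in particular $\po^2 u$ is merely bounded, not Lipschitz, by \eqref{ch5:gl23}), so the standard stationary-phase machinery is unavailable and one is forced into the kind of careful dyadic decomposition in both $\la$ and $\o$ — first and second dyadic decompositions as in \cite{St} — together with an additional decomposition of the symbol $\nabn a$ as in \eqref{ch5:cordecfr1}, exactly the technology flagged in Step C2 of the introduction; the log-divergences mentioned there will have to be absorbed using the $\ep$-smallness coming from \eqref{ch4:curvflux1}.
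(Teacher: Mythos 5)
Your reduction of \eqref{ch5:choicef3bis} to a block-operator system and your use of Theorem \ref{ch5:th1} and Proposition \ref{ch5:re2} to obtain the near-isometry $M_\pm^*M_\pm \approx \mathrm{Id}$, $Q_\pm^*Q_\pm\approx\mathrm{Id}$ are consistent with the paper's sketch. But your handling of the off-diagonal blocks contains a genuine error, and the Neumann-series mechanism you propose for existence is not the one the paper uses and has a real gap.

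First, the flat-case structure you invoke is wrong. You claim $M_+^*M_-$ and $Q_+^*Q_-$ ``vanish by the opposite signs $\pm t + x\cdot\o$ of the two phases'' and propose a non-stationary phase argument exploiting this. But these operators are all restricted to $\Sigma = \Sigma_0$, i.e.\ to $t=0$: the phases are $u_+(0,x,\o)=u(x,\o)$ and $u_-(0,x,\o)=-u(x,-\o)$, which in the flat case are \emph{both} equal to $x\cdot\o$. Thus $M_+ = M_- = Q_+ = Q_- = \mathcal{F}^{-1}$ at $t=0$, and $M_+^*M_- = Q_+^*Q_- = \mathrm{Id}$, not zero. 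The off-diagonal block of $T^*T$ vanishes in the flat case only because it is the \emph{difference} $M_+^*M_- - Q_+^*Q_-$, and this is a cancellation — in the curved case it is small because $a^{-1}=1+O(\ep)$ so $Q_\pm$ and $M_\pm$ differ by an $O(\ep)$ symbol — not because the individual operators are small. Your proposed phase, $u(x,\o)+u(x,-\o')$, equals $x\cdot(\o-\o')$ in the flat case, whose gradient vanishes at $\o=\o'$; there is no non-stationary phase gain, and an integration-by-parts argument along those lines would fail.

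Second, even with the off-diagonal smallness corrected, a Neumann series on $T^*T=2\mathrm{Id}+O(\ep)$ only gives that $T$ is injective with closed range, i.e.\ the estimate \eqref{ch5:choicef4bis} and hence uniqueness; it does not give surjectivity, i.e.\ existence. To get existence from a Neumann series you would need to write $T=T_{\mathrm{flat}}+E$ with $\|T_{\mathrm{flat}}^{-1}E\|<1$, but $T$ and $T_{\mathrm{flat}}$ map into different spaces ($L^2(\Sigma)^2$ vs.\ $L^2(\R^3)^2$), and the identification requires a change of variables $\phi_\nu$ that depends on the angle $\nu$; the paper's Proposition \ref{ch5:re25} only compares the angularly localized pieces $S_j^\nu$ and $\widetilde S_j^\nu$, not the full operator to $\mathcal{F}^{-1}\circ\phi^{-1}$ for a single diffeomorphism. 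The paper sidesteps this entirely: it proves only the a priori estimate \eqref{ch5:choicef4bis} from Proposition \ref{ch5:re2}, deduces uniqueness, and then establishes existence by a soft \emph{connectedness} argument in the space of initial data — showing that the set $W\subset V$ of data sets for which existence holds is open, closed and nonempty (the flat data being in $W$) in the connected set $V$, hence $W=V$. If you want to complete your plan you would need either to supply the operator-norm perturbation bound carefully (handling the $\nu$-dependent change of variables), or adopt the paper's topological argument for existence.
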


Proving the estimates \eqref{ch5:l2} and \eqref{ch5:choicef4bis} for the Fourier integral operators $U$, $M_\pm$ and $Q_\pm$ will require taking several integrations by parts. The main difficulty in proving Theorem \ref{ch5:th1} and Theorem \ref{ch5:th2} will be to perform these integrations by parts within the very low level of regularity for the phase $u(x,\o)$ given by \eqref{ch5:thregx1}-\eqref{ch5:gl23} and for the symbol $b(x,\o)$ given by \eqref{ch5:thregx1s} \eqref{ch5:threomega1s} \eqref{ch5:cordecfr1s}. The proof will rely both on harmonic analysis decompositions and the geometry of the foliation of $\Sigma$ by $u$. Theorem \ref{ch5:th1} will be reviewed in section \ref{ch5:sec:th1} and Theorem \ref{ch5:th2} will be reviewed in section \ref{ch5:sec:th2}. 

\section{Control of Fourier integral operators}\lab{ch5:sec:th1}

\subsection{Structure of the proof of Theorem \ref{ch5:th1}}

The proof of Theorem \ref{ch5:th1} proceeds in three steps. We first localize in frequencies of size $\la\sim 2^j$. We then localize the angle $\o$ in patches on the sphere $\S$ of diameter $2^{-j/2}$. Finally, we estimate the diagonal terms. 

\begin{remark}
Note that the structure of the proof is analogous to the one on the control of the error term in Chapter \ref{part:paramtime} (see section \ref{ch3:sec:structureproof}). However, the proof each step (almost orthogonality in frequency, almost orthogonality in angle, and control of the diagonal term) is different, more particularly the last two steps. 
\end{remark}

\subsubsection{Step 1: decomposition in frequency}
 
For the first step, we introduce $\varphi$  and $\psi$ two smooth compactly supported functions on $\R$ such that: 
\begin{equation}\label{ch5:bisb7}
\varphi(\lambda)+\sum_{j\geq 0}\psi(2^{-j}\lambda)=1\textrm{ for all }\lambda\in\R.
\end{equation}
We use \eqref{ch5:bisb7} to decompose $Uf$ as follows:
\begin{equation}\label{ch5:bisb8}
Uf(x)=\sum_{j\geq -1}U_jf(x),
\end{equation}
where for $j\geq 0$:
\begin{equation}\label{ch5:bisb9}
U_jf(x)=\int_{\S}\int_{0}^{+\infty}e^{i\lambda u}b(x,\o)\psi(2^{-j}\lambda)f(\lambda\o)\lambda^2 d\lambda d\o,
\end{equation}
and 
\begin{equation}\label{ch5:bisb10}
U_{-1}f(x)=\int_{\S}\int_{0}^{+\infty}e^{i\lambda u}b(x,\o)\varphi(\lambda)f(\lambda\o)\lambda^2 d\lambda d\o.
\end{equation}
The goal of this first step is to prove the following proposition:
\begin{proposition}\label{ch5:bisorthofreq}
The decomposition \eqref{ch5:bisb8} satisfies an almost orthogonality property:
\begin{equation}\label{ch5:bisorthofreq1}
\norm{Uf}_{L^2(\Sigma)}^2\lesssim\sum_{j\geq -1}\norm{U_jf}_{L^2(\Sigma)}^2+D^2\norm{f}^2_{\le{2}}.
\end{equation}
\end{proposition}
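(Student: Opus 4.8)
\textbf{Proof strategy for Proposition \ref{ch5:bisorthofreq}.}

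The plan is to establish the almost orthogonality \eqref{ch5:bisorthofreq1} via a Cotlar--Stein / Schur-type argument, reducing it to off-diagonal decay estimates between the dyadic pieces $U_jf$ and $U_kf$. Precisely, I would first show that for $|j-k|>2$ one has
\begin{equation}\label{ch5:plan1}
\left|\int_\Sigma U_jf(x)\overline{U_kf(x)}\,dx\right|\lesssim 2^{-\frac{|j-k|}{N}}\norm{\psi(2^{-j}\lambda)f}_{\le 2}\norm{\psi(2^{-k}\lambda)f}_{\le 2}+\textrm{(error)},
\end{equation}
for some fixed $N>0$, where the error term is harmless (it contributes only the $D^2\norm{f}^2_{\le 2}$ term, coming e.g.\ from the lowest frequency block $U_{-1}$ and from crude bounds on a finite number of near-diagonal pairs). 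Once \eqref{ch5:plan1} is in hand, summing the geometric series in $|j-k|$ and invoking Schur's lemma gives \eqref{ch5:bisorthofreq1} exactly as in the proof of Proposition \ref{ch3:bisorthofreq} in Chapter \ref{part:paramtime} (compare \eqref{ch3:bisof2}--\eqref{ch3:bisof36}).

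The heart of the matter is proving \eqref{ch5:plan1}, and here I would proceed by integration by parts. Writing out the pairing, one gets an integral over $\Sigma\times(\S\times\RRR_+)\times(\S\times\RRR_+)$ with oscillatory factor $e^{i\lambda u(x,\o)-i\lambda' u(x,\o')}$ and amplitude $b(x,\o)\overline{b(x,\o')}$. Since $\nabla u(x,\o)=a(x,\o)^{-1}N(x,\o)$ with $a\sim 1$ by \eqref{ch5:thregx1}, the phase gradient in $x$ has size comparable to $|\lambda N(x,\o)-\lambda' N(x,\o')|$; for $|j-k|>2$ we have $\lambda'\le\lambda/2$ (or vice versa), so this gradient is nondegenerate in the direction $N(x,\o)$, with lower bound $\sim\lambda$. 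Thus I would integrate by parts repeatedly along the vectorfield $N(x,\o)$ (i.e.\ in the $u$-variable of the higher-frequency factor, using the foliation coordinates $(u,x')$ on $\Sigma$), each integration by parts gaining a factor $2^{-j}$ at the cost of landing a derivative either on $b$, on $N$, or on the cutoffs and the Jacobian. The regularity available — $\nabla b\in\l{\infty}{2}$, the decomposition \eqref{ch5:cordecfr1s} for $\nabn b$, the bound $\norm{\po N}_{L^\infty}\lesssim 1$ from \eqref{ch5:threomega1bis}, and the comparison estimate \eqref{ch5:gl23} controlling $\po u$ — is exactly what limits the number of integrations by parts one can afford, but one full integration by parts in $u$ plus careful bookkeeping (splitting $\nabn b$ via \eqref{ch5:cordecfr1s} into a piece summable in $L^2(\Sigma)$ with a $2^{-j/2}$ gain and a piece in $\l{\infty}{2}$) should already produce the decay $2^{-|j-k|/4}$ needed in \eqref{ch5:plan1}, after applying Plancherel in $\lambda$ and Cauchy--Schwarz in $\o$ just as in the basic computation \eqref{ch3:bisb5}. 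For the terms where the derivative falls on the tangential part one uses $\norm{\nabb\nabla b}_{L^2(\Sigma)}\lesssim D$.

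The main obstacle I anticipate is the same one that plagues the parallel argument in Chapter \ref{part:paramtime}: the limited regularity of the phase $u(x,\o)$ in the normal direction $N$. One cannot simply integrate by parts many times in $u$ because $\nabn^2 a$ is not controlled in $L^2(\Sigma)$ (only $3/2$ normal derivatives of $a$, i.e.\ roughly $5/2$ of $u$, are available, per the remark following \eqref{ch5:gl23}). The resolution is to combine the $x$-integration by parts with a further decomposition of the symbol using the geometric Littlewood--Paley projections $P_l$ on the surfaces $\p=P_u$ — splitting $b$ (and $\nabn b$) into a low-frequency part $P_{\le (j+k)/2}b$, which is smooth enough to absorb extra integrations by parts, and a high-frequency tail $\sum_{l>(j+k)/2}P_l b$, which is estimated directly using the finite band property and the Bessel inequality of Theorem \ref{ch3:thm:LP}, exactly in the spirit of the treatment of $A_1$ and $A_2$ in \eqref{ch3:bisof21}--\eqref{ch3:elias14}. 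Balancing the frequency cut $l\sim(j+k)/2$ against the gains $2^{-j}$ from $x$-integration by parts and $2^{-l}$ (or $2^{-2l}$) from the $\lap$-inversion produces the net decay $2^{-|j-k|/4}$. Collecting all typical terms and the numerous additional ones (derivatives on the Jacobian of the change of variables, on the cutoffs, the lower-order pieces of \eqref{ch5:cordecfr1s}) yields \eqref{ch5:plan1}, and hence \eqref{ch5:bisorthofreq1}; the details are carried out in \cite{param2}.
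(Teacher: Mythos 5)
Your proposal follows essentially the same route as the paper: reduce \eqref{ch5:bisorthofreq1} to off-diagonal decay of $\int_\Sigma U_jf\,\overline{U_kf}$ via Schur's lemma, integrate by parts along $\partial_u$ in the foliation coordinates $(u,x')$ using the identity \eqref{ch5:i:bisof4}, handle most terms with one integration by parts and the estimate \eqref{ch5:thregx1s} for $b$, and handle the hardest term with a second integration by parts combined with the decomposition \eqref{ch5:cordecfr1s} of $\nabn b$ (which, as you correctly note, encodes a geometric Littlewood--Paley split of $\nabn b$ on $P_u$), exactly as in sections \ref{ch3:sec:firstibpu}--\ref{ch3:sec:2ibpu} for the error term. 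The only small imprecision is the remark that ``one full integration by parts ... should already produce the decay'': as you yourself acknowledge in the final paragraph, the piece of $\nabn b$ that is only in $\l{\infty}{2}$ (the low-frequency block $b_2^{j}$) does require a genuine second integration by parts in $u$ to reach $\nabn b_2^j\in L^2(\Sigma)$; a single pass with bookkeeping does not suffice for that term.
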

The proof of Proposition \ref{ch5:bisorthofreq} is postponed to section \ref{ch5:bissec:orthofreq}. 

\subsubsection{Step 2: decomposition in angle}

Proposition \ref{ch5:bisorthofreq} allows us to  estimate $\norm{U_jf}_{L^2(\Sigma)}$ instead of 
$\norm{Uf}_{L^2(\Sigma)}$. We perform a second dyadic decomposition. We introduce a smooth partition of unity on the sphere $\S$:
\begin{equation}\label{ch5:bisb14}
\sum_{\nu\in\Gamma}\eta^\nu_j(\o)=1\textrm{ for all }\o\in\S, 
\end{equation}
where $\Gamma$ is a lattice on $\S$ of size $2^{-\frac{j}{2}}$, where the support of $\eta^\nu_j$ is a patch on $\S$ of diameter $\sim 2^{-j/2}$. We use \eqref{ch5:bisb14} to decompose $U_jf$ as follows:
\begin{equation}\label{ch5:bisb15}
U_jf(x)=\sum_{\nu\in\Gamma}U^\nu_jf(x),
\end{equation}
where:
\begin{equation}\label{ch5:bisb16}
U^\nu_jf(x)=\int_{\S}\int_{0}^{+\infty}e^{i\lambda u}b(x,\o)\psi(2^{-j}\lambda)\eta^\nu_j(\o)f(\lambda\o)\lambda^2 d\lambda d\o.
\end{equation}
We also define:
\begin{equation}\label{ch5:bisdecf}
\begin{array}{l}
\ds\ga_{-1}=\norm{\varphi(\la)f}_{\le{2}},\, \ga_j=\norm{\psi(2^{-j}\la)f}_{\le{2}},\,j\geq 0, \\
\ds\ga^\nu_j=\norm{\psi(2^{-j}\la)\eta^\nu_j(\o)f}_{\le{2}},\,j\geq 0,\,\nu\in\Gamma, 
\end{array}
\end{equation}
which satisfy:
\begin{equation}\label{ch5:bisdecf1}
\norm{f}_{\le{2}}^2=\sum_{j\geq -1}\ga_j^2=\sum_{j\geq -1}\sum_{\nu\in\Gamma}(\ga^\nu_j)^2.
\end{equation}
The goal of this second step is to prove the following proposition:
\begin{proposition}\label{ch5:bisorthoangle}
The decomposition \eqref{ch5:bisb15} satisfies an almost orthogonality property:
\begin{equation}\label{ch5:bisorthoangle1}
\norm{U_jf}_{L^2(\Sigma)}^2\lesssim\sum_{\nu\in\Gamma}\norm{U^\nu_jf}_{L^2(\Sigma)}^2+D^2\ga_j^2.
\end{equation}
\end{proposition}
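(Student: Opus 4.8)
The plan is to prove Proposition~\ref{ch5:bisorthoangle} by establishing an almost-orthogonality estimate between the angular pieces $U^\nu_jf$ and $U^{\nu'}_jf$ for $\nu\neq\nu'$ in $\Gamma$, and then summing via Schur's lemma exactly as was done for the frequency decomposition. Concretely, I would reduce matters to the bound
\begin{equation}\label{ch5:planbound}
\left|\int_\Sigma U^\nu_jf(x)\overline{U^{\nu'}_jf(x)}\,dx\right|\lesssim D^2\,c_{\nu\nu'}\,\ga^\nu_j\ga^{\nu'}_j,\qquad \sup_\nu\sum_{\nu'}c_{\nu\nu'}\lesssim 1,
\end{equation}
for a suitable rapidly-decaying kernel $c_{\nu\nu'}$ depending only on $2^{j/2}|\nu-\nu'|$. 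Once \eqref{ch5:planbound} holds, Cauchy--Schwarz in the frequency/angle labels together with \eqref{ch5:bisdecf1} gives \eqref{ch5:bisorthoangle1}. The diagonal contribution $\nu=\nu'$ will be handled separately by the control of the diagonal term (the analogue of Proposition~\ref{ch3:bisdiiiiagonal}), which I may invoke as it does not depend on the present proposition.

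First I would write out the bilinear quantity in \eqref{ch5:planbound} as an oscillatory integral over $\Sigma\times(\S\times\R_+)^2$ with phase $\lambda u(x,\o)-\lambda' u(x,\o')$, where $\o\in\mathrm{supp}(\eta^\nu_j)$, $\o'\in\mathrm{supp}(\eta^{\nu'}_j)$, both localized to $\lambda,\lambda'\sim 2^j$. The key geometric input is \eqref{ch5:threomega1bis}: since $\norm{\po N}_{L^\infty(\Sigma)}\lesssim 1$ and $N(x,\o)$ has non-degenerate dependence on $\o$ (as reflected by \eqref{ch3:threomega1ter} in the space-time setting, and its $\Sigma$-analogue), we have $|\nabla_x(u(x,\o)-u(x,\o'))|\sim|\o-\o'|\sim|\nu-\nu'|$ on the supports. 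This lets me integrate by parts in $x$ along the direction $N(x,\o)-N(x,\o')$ (which is, up to lower-order terms, tangent to the level surfaces $P_u$ of $u(\cdot,\o)$), as in the computation leading to \eqref{ch3:bisoa15}. Each integration by parts gains a factor $(2^j|\nu-\nu'|)^{-1}$ while landing a $\nabb$-derivative on the symbol $b$ or on the Jacobian/volume factors; the regularity \eqref{ch5:thregx1s}--\eqref{ch5:cordecfr1s} for $b$ (matching \eqref{ch5:thregx1}--\eqref{ch5:cordecfr1} for $a$), the Bochner and Sobolev inequalities on $P_u$, and the change-of-variables estimate \eqref{ch5:gl22} then control the resulting integrals, producing $c_{\nu\nu'}$ of the form $(1+2^{j/2}|\nu-\nu'|)^{-2}$, which is summable over the lattice $\Gamma$.

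The main obstacle I anticipate is exactly the one flagged in the error-term chapter in section~\ref{ch3:sec:logloss}: a naive double integration by parts (once tangentially, once along $L$ or $N$) produces a term with only tangential derivatives on the symbol whose kernel decays merely like $(2^{j/2}|\nu-\nu'|)^{-2}$ but accompanied by a bad power-of-$2^j$ prefactor, so that the angular sum \eqref{ch5:bisoa5} diverges logarithmically. To remove this log-loss I would mimic the mechanism of section~\ref{sec:logremoval}: decompose the symbol (here $b$, in the $Q_\pm$ case $a^{-1}$) using the geometric Littlewood-Paley projections $P_l$ on $P_u$, write $b=P_{\le j/2}b+\sum_{l>j/2}P_lb$, and in the resulting bilinear pieces always arrange the tangential derivative to fall on the \emph{lower}-frequency factor, using the decomposition \eqref{ch5:cordecfr1s} of $\nabn b$ and the finite-band/Bernstein properties of $P_l$ from Theorem~\ref{ch3:thm:LP}. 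This replaces the non-summable bound by one of the form $\sum_{l,q}2^{-|q-l|/2}\gamma^{(1)}_q\gamma^{(2)}_l$ with $\gamma^{(1)}\in\ell^1$, which is summable. A secondary technical point is that $\po N$ is a tensor rather than a scalar, so before applying the scalar comparison lemmas relating $u(\cdot,\o)$ and $u(\cdot,\nu)$ (the $\Sigma$-analogues of Lemma~\ref{ch4:lemma:xx2} and Corollary~\ref{ch4:lemma:xx3}) one must scalarize using a frame on $T\Sigma$ independent of $\o$; I would handle this exactly as indicated in section~\ref{ch4:sec:adddec}. I expect the remaining terms — those where the derivative falls on the volume element, on the denominators generated by the integrations by parts, or involving $L$- and $N$-type derivatives — to be estimated in the spirit of \eqref{ch3:fff2} and \eqref{ch3:fff4}, with kernels decaying faster than $(2^{j/2}|\nu-\nu'|)^{-2}$ and hence causing no difficulty.
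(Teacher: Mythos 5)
You correctly identify that the naive double integration by parts gives the bound \eqref{ch5:bisoa4}, hence the log-loss \eqref{ch5:bisoa5}, and that removing it is the crux. But the fix you propose --- a geometric Littlewood--Paley decomposition $b=P_{\le j/2}b+\sum_{l>j/2}P_l b$ of the symbol, together with arranging tangential derivatives to land on the lower-frequency piece in the spirit of section~\ref{sec:logremoval} --- is exactly the one the paper rules out for this proposition. The log-removal mechanism of section~\ref{sec:logremoval} is driven by an integration by parts in the null direction $L$ (which satisfies $L(u)=0$ for free) followed by the frame decomposition \eqref{ch3:encoreuneffort} of $L$ in the primed frame; it is \emph{that} step which transfers the tangential derivative from the high- to the low-frequency factor. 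On the Riemannian slice $\Sigma$ there is no null vectorfield $L$: the only directions available are $N$ (for which $N(u)=a^{-1}\neq 0$) and the tangential $\nabb$, and the $N$-integration by parts \eqref{ch5:bisoa43b} produces the denominator $\la-\la'\frac{a}{a'}g(N,N')$, which may be small when $\la\sim\la'$ and thus provides no useful gain on the diagonal. So a Littlewood--Paley decomposition of $b$ by itself gives you no handle on \emph{where} the tangential derivative lands, and the log-loss persists. The paper flags this explicitly in the remark preceding section~\ref{ch5:sec:iremovelogloss}.

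What the paper actually does is a \emph{second dyadic decomposition in $\lambda$}: the interval $[2^{j-1},2^{j+1}]$ is split into subintervals $I_k$ of length $\sim 2^j|\nu-\nu'|^\a$, giving the refined pieces $U^{\nu,k}_jf$ of \eqref{ch5:bisoa7}--\eqref{ch5:bisoa8}. On the $\la$-diagonal $k=k'$, two tangential integrations by parts land $\nabb^2 b\in L^2(\Sigma)$ (by \eqref{ch5:thregx1s}); estimating the $\lambda$-integral in $L^\infty_u$ by Cauchy--Schwarz produces the crucial extra factor $\sqrt{|I_k|}\sim 2^{j/2}|\nu-\nu'|^{\a/2}$ that breaks the log-loss (estimate \eqref{ch5:bisoa11}). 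Off the $\la$-diagonal $k\neq k'$, one combines a tangential integration by parts with an $N$-integration by parts \eqref{ch5:bisoa43b}; the separation of $\lambda$ and $\la'$ now makes the denominator $\gtrsim |k-k'|2^j|\nu-\nu'|^\a$, giving the additional $|k-k'|^{-1}$ summability in \eqref{ch5:bisoa12}. This $\la$-decomposition works here precisely because $\nabb^2 b$ and $\nabb\nabn b$ are controlled in full $L^2(\Sigma)$, not merely in $L^\infty_uL^2(\HH_u)$ --- see Remark~\ref{ch5:rem:seppi}, which also explains why the same device would \emph{not} work for the error term in Chapter~\ref{part:paramtime}. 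In short, your Schur-lemma framework and the tangential integrations by parts are on the right track, but the log-removal mechanism you reach for belongs to the space-time setting, and there is no route through a Littlewood--Paley decomposition of the symbol that circumvents the absence of a null direction on $\Sigma_0$.
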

The proof of Proposition \ref{ch5:bisorthoangle} is postponed to section \ref{ch5:bissec:orthoangle}. 

\subsubsection{Step 3: control of the diagonal term}

Proposition \ref{ch5:bisorthoangle} allows us to  estimate $\norm{U^\nu_jf}_{L^2(\Sigma)}$ instead of $\norm{U_jf}_{L^2(\Sigma)}$. The diagonal term is estimated as follows.
\begin{proposition}\label{ch5:bisdiagonal}
The diagonal term $U^\nu_jf$ satisfies the following estimate:
\begin{equation}\label{ch5:bisdiagonal1}
\norm{U^\nu_jf}_{L^2(\Sigma)}\lesssim D\ga^\nu_j.
\end{equation}
\end{proposition}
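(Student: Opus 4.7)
The plan is to reduce the estimate to a Plancherel bound on $\R^3$ via the global coordinate change $y=\phi_\nu(x)$ from \eqref{ch5:gl21}. By \eqref{ch5:gl22}, $\phi_\nu:\Sigma\to\R^3$ is a bi-Lipschitz bijection with Jacobian comparable to $1$, so $\norm{U_j^\nu f}_{L^2(\Sigma)}^2\sim\int_{\R^3}|U_j^\nu f(\phi_\nu^{-1}(y))|^2\,dy$. Writing $u(x,\o)=\phi_\nu(x)\cdot\o+v(x,\o)$ and noting that on the support of $\psi(2^{-j}\la)\eta_j^\nu(\o)$ one has $\la\sim 2^j$ and $|\o-\nu|\lesssim 2^{-j/2}$, estimate \eqref{ch5:gl23} gives the uniform bound $|\la v|\lesssim\ep$. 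Substituting $\xi=\la\o$, the operator takes the form
\[
U_j^\nu f(\phi_\nu^{-1}(y))=\int_{\R^3}e^{iy\cdot\xi}K(y,\xi)f(\xi)\,d\xi,
\]
with amplitude
\[
K(y,\xi)=e^{i|\xi|v(\phi_\nu^{-1}(y),\xi/|\xi|)}b(\phi_\nu^{-1}(y),\xi/|\xi|)\psi(2^{-j}|\xi|)\eta_j^\nu(\xi/|\xi|),
\]
supported in a dyadic shell of radius $2^j$ intersected with an angular cap of diameter $2^{-j/2}$ around $\nu$.

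The key step is to decouple the $y$- and $\xi$-dependence of $K$. I will first extract the principal piece by freezing $\o=\nu$ in the $y$-dependent factors, setting $K_0(y,\xi)=b(\phi_\nu^{-1}(y),\nu)\psi(2^{-j}|\xi|)\eta_j^\nu(\xi/|\xi|)$. Since the $y$-dependence of $K_0$ is now a pure multiplier, bounded in $L^\infty$ by $D$ thanks to \eqref{ch5:thregx1s}, a direct application of Plancherel in $\xi$ yields
\[
\normm{\int e^{iy\cdot\xi}K_0(y,\xi)f(\xi)\,d\xi}_{L^2_y(\R^3)}\lesssim D\gamma_j^\nu.
\]
The remainder $R=K-K_0$ will be handled by a Fourier-series decomposition on the rescaled patch: setting $\tau=2^{-j}|\xi|$ and $w=2^{j/2}(\o-\nu)$, I expand $R(y,\xi)=\sum_{k\in\mathbb{Z}^3}c_k(y)e^{ik\cdot(\tau,w)}$, so that insertion into the oscillatory integral produces a convergent sum of modulated inverse Fourier transforms; Plancherel applied termwise gives each piece an $L^2_y$-bound of size $\norm{c_k}_{L^\infty_y}\gamma_j^\nu$. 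Summability of $\{\norm{c_k}_{L^\infty_y}\}_k$ will follow from the smallness $|\la v|\lesssim\ep$, the quadratic vanishing of $v$ in $\o-\nu$ (cf.\ \eqref{ch5:gl23}), and the regularity of $v$ in $(\la,\o)$.

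The main obstacle is the very weak $\o$-regularity of $b$: \eqref{ch5:threomega1s} gives only $\po b\in L^2(\Sigma)$, which does not suffice to bound the Fourier coefficients of the $b$-dependent part of $R$ pointwise in $y$. To overcome this, rather than freeze $\o$ pointwise I will treat the operator associated with $b(\phi_\nu^{-1}(y),\o)-b(\phi_\nu^{-1}(y),\nu)$ by a $TT^*$ argument: its kernel involves the average of $\po b$ along the $\o$-segment from $\nu$ to $\o$, and the extra integration in $\o$ contributes a factor $|\o-\nu|\lesssim 2^{-j/2}$ that combines with the $L^2(\Sigma)$-bound on $\po b$ and the rescaling $w=2^{j/2}(\o-\nu)$ to produce an operator norm bounded by $D$. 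The gain of $2^{-j/2}$ from the size of the angular cap thus exactly compensates the loss incurred in trading an $L^\infty_\o$ bound for an $L^2_\o$ bound on $\po b$, yielding the overall estimate $D\gamma_j^\nu$ without logarithmic loss---in contrast with the analogous diagonal estimate of Chapter~\ref{part:paramtime}, where the absence of a global coordinate reduction to $\R^3$ forced the use of a transport equation along $L_\nu$ together with an energy inequality for the wave operator.
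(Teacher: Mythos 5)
Your proposal takes a genuinely different route from the paper, at least for the heart of the matter. Both approaches begin by splitting off the symbol at $\nu$, writing $U_j^\nu f = b(\cdot,\nu)\int_\S F_j(u)\eta_j^\nu\,d\o + \int_\S(b(\cdot,\o)-b(\cdot,\nu))F_j(u)\eta_j^\nu\,d\o$. For the first (frozen-symbol) piece, which is the substance of Proposition~\ref{ch5:bisdiprop1}, the paper runs a classical $TT^*$ argument, proves the pointwise kernel decay estimate of Proposition~\ref{ch5:di12} by three integrations by parts in $\o$ and two in $\la$, and concludes by Schur's lemma, with the global map $\phi_\nu$ of \eqref{ch5:gl21} entering only at the very end to evaluate the Schur integral. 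You instead apply $\phi_\nu$ as the very first step, recast the operator as a pseudodifferential operator on $\R^3$ with amplitude $e^{i|\xi|v}\psi\eta_j^\nu$, $v:=u-\phi_\nu\cdot\o$, get the main term by Plancherel, and treat the nonseparable perturbation $e^{i|\xi|v}-1$ by a Fourier-series expansion on the rescaled cell $(\tau,w)=(2^{-j}\la,\,2^{j/2}(\o-\nu))$. The quadratic vanishing of $v$ from \eqref{ch5:gl23} makes $|\xi|v=O(\ep)$ on the support, and the three $\o$-derivatives implied by \eqref{ch5:threomega3bis} give three bounded $w$-derivatives after rescaling, which is just enough for absolute summability of the two-dimensional $k_w$-sum. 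This uses the same amount of $\o$-regularity as the paper's three $\o$-integrations-by-parts, but replaces a stationary-phase kernel estimate by Plancherel-plus-Fourier-series; that is a defensible simplification, though it no longer produces the pointwise kernel bound \eqref{ch5:di13}, which the paper reuses in the proof of Proposition~\ref{ch5:re25}.

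The one place your argument is not quite right as stated is the $b$-variation piece. A $TT^*$ argument that Cauchy--Schwarzes the $T^*T$-kernel in $x$ (using $\po b\in L^2(\Sigma)$ together with $|\o-\nu|\les 2^{-j/2}$) and then applies Schur's lemma yields only $\norm{V}\les 2^{j/2}D$, because one has discarded the oscillation in $x$ and the remaining volume of the angular sector in $\xi$ is too large. The mechanism that actually closes this piece is a direct estimate: Minkowski in $\o$, the H\"older split $L^2(\Sigma)=L^\infty_uL^2(P_u)\cdot L^2_u$ applied to $(b(\cdot,\o)-b(\cdot,\nu))\cdot F_j(u)$, Plancherel in $\la$ giving $\bigl(\int_\S\norm{F_j}^2_{L^2_u}\eta_j^\nu\,d\o\bigr)^{1/2}\les 2^j\gamma_j^\nu$, Cauchy--Schwarz in $\o$ over the patch contributing a factor $2^{-j/2}$, and crucially the trace bound $\norm{\po b}_{L^\infty_uL^2(P_u)}\les D$ — which follows from the \emph{two} bounds $\po b\in L^2(\Sigma)$ and $\nabla\po b\in L^2(\Sigma)$ in \eqref{ch5:threomega1s} via the one-dimensional Sobolev trace in $u$, not from $\po b\in L^2(\Sigma)$ alone as you suggest. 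You do correctly isolate all the ingredients (the factor $|\o-\nu|\les 2^{-j/2}$, the $L^2$-control on $\po b$, the patch size), so the gap is in how they combine rather than in what is available; but "$TT^*$" is the wrong word for what is a direct Cauchy--Schwarz argument, and the $\nabla\po b$ part of the hypothesis is used in an essential way.
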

The proof of Proposition \ref{ch5:bisdiagonal} is postponed to section \ref{ch5:bissec:diagonal}. 

\subsubsection{Proof of Theorem \ref{ch5:th1}}

Proposition \ref{ch5:bisorthofreq}, \ref{ch5:bisorthoangle} and \ref{ch5:bisdiagonal} 
immediately yield the proof of Theorem \ref{ch5:th1}. Indeed, \eqref{ch5:bisorthofreq1}, \eqref{ch5:bisdecf1}, \eqref{ch5:bisorthoangle1} and \eqref{ch5:bisdiagonal1} imply:
\begin{equation}\label{ch5:biscclth3}
\begin{array}{ll}
\ds\norm{Uf}_{L^2(\Sigma)}^2 & \ds\lesssim\sum_{j\geq -1}\norm{U_jf}_{L^2(\Sigma)}^2+D^2\norm{f}^2_{\le{2}}\\
& \ds\lesssim\sum_{j\geq -1}\sum_{\nu\in\Gamma}\norm{U^\nu_jf}_{L^2(\Sigma)}^2+D^2\sum_{j\geq -1}\gamma_j^2+D^2\norm{f}^2_{\le{2}}\\
& \ds\lesssim D^2\sum_{j\geq -1}\sum_{\nu\in\Gamma}(\gamma_j^\nu)^2+D^2\sum_{j\geq -1}\gamma_j^2+D^2\norm{f}^2_{\le{2}}\\
& \ds\lesssim D^2\norm{f}^2_{\le{2}},
\end{array}
\end{equation}
which is the conclusion of Theorem \ref{ch5:th1}. 

The remainder of section \ref{ch5:sec:th1} is dedicated to the proof of Proposition \ref{ch5:bisorthofreq}, \ref{ch5:bisorthoangle} and \ref{ch5:bisdiagonal}.

\subsection{Proof of Proposition \ref{ch5:bisorthofreq} (almost orthogonality in frequency)}\label{ch5:bissec:orthofreq}

We have to prove \eqref{ch5:bisorthofreq1}:
\begin{equation}\label{ch5:i:bisof1}
\norm{Uf}_{L^2(\Sigma)}^2\lesssim\sum_{j\geq -1}\norm{U_jf}_{L^2(\Sigma)}^2+D^2\norm{f}^2_{\le{2}}.
\end{equation}
This will result from the following inequality using Shur's Lemma:
\begin{equation}\label{ch5:i:bisof2}
\left|\int_{\Sigma}U_jf(x)\overline{U_kf(x)}d\Sigma \right| \lesssim D^22^{-\frac{|j-k|}{2}}\gamma_j\ga_k\textrm{ for }|j-k|> 2.
\end{equation}

We consider a coordinate system $(u,x')$ on $\Sigma$ where $x'$ denotes a coordinate system on $P_u$, and we would like to integrate by parts with respect to $\pr_u$. Since $\nabla u=a^{-1}N$ and $\nabla u'={a'}^{-1}N'$, we have:
\begin{equation}\label{ch5:i:bisof4}
e^{i\lambda u-i\la' u'}=-\frac{i}{\la-\la'\frac{a}{a'}g(N,N')}\partial_u(e^{i\lambda u-i\la' u'}),
\end{equation}
where we use the notation $u$ for $u(x,\o)$, $a$ for $a(x,\o)$, $N$ for $N(x,\o)$, $u'$ for $u(x,\o')$, $a'$ for $a(x,\o')$ and $N'$ for $N(x,\o')$. Then, the proof of \eqref{ch5:i:bisof2} is analogous to the proof of \eqref{ch3:bisof2}, so we skip it for the sake of simplicity. Let us just say that, as for the proof of \eqref{ch3:bisof2}, most terms require one integration by parts using \eqref{ch5:i:bisof4} and the estimate \eqref{ch5:thregx1s} for $b$ (as in section \ref{ch3:sec:firstibpu}), while one term requires a second integration by parts using \eqref{ch5:i:bisof4} and the decomposition \eqref{ch5:cordecfr1s} for $\nabn b$ (as in sections \ref{ch3:sec:morepreciseest} and \ref{ch3:sec:2ibpu}).

\subsection{Proof of Proposition \ref{ch5:bisorthoangle} (almost orthogonality in angle)}\label{ch5:bissec:orthoangle}

We have to prove \eqref{ch5:bisorthoangle1}:
\begin{equation}\label{ch5:bisoa1}
\norm{U_jf}_{L^2(\Sigma)}^2\lesssim\sum_{\nu\in\Gamma}\norm{U^\nu_jf}_{L^2(\Sigma)}^2+D^2\ga_j^2.
\end{equation}

\subsubsection{Presence of a log-loss}

Integrating by parts twice in $\int_{\Sigma}U^\nu_jf(x)\overline{U^{\nu'}_jf(x)}d\Sigma $ would ultimately imply:
\begin{equation}\label{ch5:bisoa4}
\left|\int_{\Sigma}U^\nu_jf(x)\overline{U^{\nu'}_jf(x)}d\Sigma \right|\lesssim \frac{D^2\ga_j^\nu\ga_j^{\nu'}}{(2^{j/2}|\nu-\nu'|)^{2}},\,|\nu-\nu'|\neq 0.
\end{equation}
This yields to a log-loss since we have:
\begin{equation}\label{ch5:bisoa5}
\sup_{\nu}\sum_{\nu'} \frac{1}{(2^{j/2}|\nu-\nu'|)^{2}}\sim j.
\end{equation}

\begin{remark}
Recall that there is an analogous log-loss in the almost orthogonality argument in angle for the error term (see section \ref{ch3:sec:logloss}). In section \ref{ch3:bissec:orthoangle}, we removed the log-loss in particular by using integration by parts with respect to the null vectorfield $L$. On the other hand, we work here on $\Sigma$ which is Riemannian, so there is no equivalent of  the null vectorfield $L$. Instead, we will use a second decomposition in $\la$ (see section \ref{ch5:sec:iremovelogloss}). Note that such a strategy can not be used to control the error term (see Remark \ref{ch5:rem:seppi}). 
\end{remark}

To avoid the log-loss present in \eqref{ch5:bisoa4}, we will instead derive the following inequality:
\begin{equation}\label{ch5:bisoa2}
\left|\int_{\Sigma}U^\nu_jf(x)\overline{U^{\nu'}f(x)}d\Sigma \right|\lesssim \frac{D^2\ga_j^\nu\ga_j^{\nu'}}{2^{j\a/2}(2^{j/2}|\nu-\nu'|)^{2-\a}}+\frac{D^2\ga_j^\nu\ga_j^{\nu'}}{(2^{j/2}|\nu-\nu'|)^{3}},\,|\nu-\nu'|\neq 0,
\end{equation}
where $\a>0$. Indeed, since $\S$ is 2 dimensional and $1\leq 2^{j/2}|\nu-\nu'|\leq 2^{j/2}$ for $\nu, \nu'\in\Gamma$ and $\nu\neq\nu'$, we have:
\begin{equation}\label{ch5:bisoa3}
\sup_{\nu}\sum_{\nu'} \frac{1}{(2^{j/2}|\nu-\nu'|)^{3}}\leq C<+\infty,
\end{equation}
and 
\begin{equation}\label{ch5:bisoa3b}
\sup_{\nu}\sum_{\nu'} \frac{1}{2^{j\a/2}(2^{j/2}|\nu-\nu'|)^{2-\a}}\leq C_\a<+\infty\,\forall\a>0.
\end{equation}
Thus, \eqref{ch5:bisoa2}, \eqref{ch5:bisoa3} and \eqref{ch5:bisoa3b} together with Shur's Lemma imply \eqref{ch5:bisoa1}.

\subsubsection{A second decomposition in frequency}\lab{ch5:sec:iremovelogloss}

To avoid the log-loss present in \eqref{ch5:bisoa4}, we do a second decomposition in frequency. $\la$ belongs to the interval $[2^{j-1},2^{j+1}]$ which we decompose in intervals $I_k$:
\begin{equation}\label{ch5:bisoa6}
[2^{j-1},2^{j+1}]=\bigcup_{1\leq k\leq |\nu-\nu'|^{-\a}}I_k\textrm{ where }\textrm{diam}(I_k)\sim 2^j|\nu-\nu'|^\a.
\end{equation}
Let $\phi_k$ a partition of unity of the interval $[2^{j-1},2^{j+1}]$ associated to the $I_k$'s. We decompose $U^\nu_jf$ as follows:
\begin{equation}\label{ch5:bisoa7}
U^\nu_jf(x)=\sum_{1\leq k\leq |\nu-\nu'|^{-\a}}U^{\nu,k}_jf(x),
\end{equation}
where:
\begin{equation}\label{ch5:bisoa8}
U^{\nu,k}_jf(x)=\int_{\S}\int_{0}^{+\infty}e^{i\lambda u}b(x,\o)\psi(2^{-j}\lambda)\eta^\nu_j(\o)\phi_k(\la)f(\lambda\o)\lambda^2 d\lambda d\o.
\end{equation}

\begin{remark}
The point of this additional decomposition is to exploit the volume in $\la$. Indeed, after performing Cauchy-Schwarz in $\la$, we obtain
$$\sqrt{|I_k|}\sim 2^{\frac{j}{2}}|\nu-\nu'|^{\frac{\a}{2}}$$
which displays the crucial gain $|\nu-\nu'|^{\frac{\a}{2}}$. 
\end{remark}

We also define:
\begin{equation}\label{ch5:bisoa9}
\begin{array}{l}
\ds\ga^{\nu,k}_j=\norm{\psi(2^{-j}\la)\eta^\nu_j(\o)\phi_k(\la)f}_{\le{2}},\,j\geq 0,\,\nu\in\Gamma,1\leq k\leq |\nu-\nu'|^{-\a}, 
\end{array}
\end{equation}
which satisfy:
\begin{equation}\label{ch5:bisoa10}
(\gamma^{\nu}_j)^2=\sum_{1\leq k\leq |\nu-\nu'|^{-\a}}(\ga^{\nu,k}_j)^2.
\end{equation}

\subsubsection{The two key estimates}\label{ch5:bissec:keyest} 

We will prove the following two estimates:
\begin{equation}\label{ch5:bisoa11}
\begin{array}{r}
\ds\left|\int_{\Sigma}U^{\nu,k}_jf(x)\overline{U^{\nu',k}_jf(x)}d\Sigma \right|\lesssim \frac{D^2\ga_j^{\nu,k}\ga_j^{\nu',k}}{2^{j\a/2}(2^{j/2}|\nu-\nu'|)^{2-\a}}+\frac{D^2\ga_j^{\nu,k}\ga_j^{\nu',k}}{(2^{j/2}|\nu-\nu'|)^{3}}\\
\ds\textrm{for }|\nu-\nu'|\neq 0,\,1\leq k\leq |\nu-\nu'|^{-\a},
\end{array}
\end{equation}
and
\begin{equation}\label{ch5:bisoa12}
\begin{array}{r}
\ds\left|\int_{\Sigma}U^{\nu,k}_jf(x)\overline{U^{\nu',k'}_jf(x)}d\Sigma \right|\lesssim \frac{D^2\ga_j^{\nu,k}\ga_j^{\nu',k'}}{|k-k'|2^{j/2(1-4\a)}(2^{j/2}|\nu-\nu'|)^{1+4\a}},\\
\ds\textrm{ for }|\nu-\nu'|\neq 0,\,1\leq k,k'\leq |\nu-\nu'|^{-\a},\,k\neq k'.
\end{array}
\end{equation}
\eqref{ch5:bisoa11} and \eqref{ch5:bisoa12} imply:
\bea\label{ch5:bisoa13}
&&\left|\int_{\Sigma}U^{\nu}_jf(x)\overline{U^{\nu'}_jf(x)}d\Sigma \right| \\
\nn&\leq & \sum_{1\leq k\leq |\nu-\nu'|^{-\a}}\left|\int_{\Sigma}U^{\nu,k}_jf(x)\overline{U^{\nu',k}_jf(x)}d\Sigma \right|+\sum_{1\leq k\neq k'\leq |\nu-\nu'|^{-\a}}\left|\int_{\Sigma}U^{\nu,k}_jf(x)\overline{U^{\nu',k'}_jf(x)}d\Sigma \right|\\
\nn& \lesssim & \ds\sum_{1\leq k\leq |\nu-\nu'|^{-\a}}\frac{D^2\ga_j^{\nu,k}\ga_j^{\nu',k}}{2^{j\a/2}(2^{j/2}|\nu-\nu'|)^{2-\a}} +\sum_{1\leq k\leq |\nu-\nu'|^{-\a}}\frac{D^2\ga_j^{\nu,k}\ga_j^{\nu',k}}{(2^{j/2}|\nu-\nu'|)^{3}}\\
\nn& &\ds +\sum_{1\leq k\neq k'\leq |\nu-\nu'|^{-\a}}\frac{D^2\ga_j^{\nu,k}\ga_j^{\nu',k'}}{|k-k'|2^{\frac{j}{2}(1-4\a)}(2^{j/2}|\nu-\nu'|)^{1+4\a}}\\
\nn& \lesssim & \ds\frac{D^2\ga_j^{\nu}\ga_j^{\nu'}}{2^{j\a/2}(2^{j/2}|\nu-\nu'|)^{2-\a}}+\frac{D^2\ga_j^{\nu}\ga_j^{\nu'}}{(2^{j/2}|\nu-\nu'|)^{3}},
\eea
where we have used \eqref{ch5:bisoa10} and the fact that we may choose $0<\a<1/5$, together with the fact that:
\begin{equation}\label{ch5:bisoa14}
\sup_{1\leq k\leq |\nu-\nu'|^{-\a}}\sum_{1\leq k'\leq |\nu-\nu'|^{-\a},\,k'\neq k}\frac{1}{|k-k'|}\lesssim \a |\log(|\nu-\nu'|)|.
\end{equation}
Since \eqref{ch5:bisoa13} yields the wanted estimate \eqref{ch5:bisoa2}, we are left with proving \eqref{ch5:bisoa11} and \eqref{ch5:bisoa12}. The discussion in the following section will be very informal for the sake of simplicity. We refer to \cite{param2} for the details.

\subsubsection{Proof of \eqref{ch5:bisoa11}} The estimate \eqref{ch5:bisoa11} will result of two integrations by parts with respect to tangential derivatives (in the spirit of section \ref{ch3:sec:ibptangential}). Let us consider for instance the case where the two tangential derivatives fall on the symbol $b$ of $U^{\nu,k}_jf$ defined in \eqref{ch5:bisoa8}. This yields a term of the form
\be\lab{ch5:seppi}
\int_{\S}\nabb^2b\left(\int_{0}^{+\infty}e^{i\lambda u}\psi(2^{-j}\lambda)\phi_k(\la)f(\lambda\o)\lambda^2d\lambda\right)\eta_j^\nu(\o)d\o.
\ee
Then, in view of the estimate \eqref{ch5:thregx1s} for $b$, we have in particular $\nabb^2b\in L^2(\Sigma)$ which will force us to estimate the $\la$ integral in \eqref{ch5:seppi} in $L^\infty_u$. To this end, we do Cauchy-Schwarz and obtain in particular the square root of the diameter of $I_k$. Due to \eqref{ch5:bisoa6}, we thus gain an additional factor of $|\nu-\nu'|^\a$ with respect to \eqref{ch5:bisoa4}, which yields \eqref{ch5:bisoa11}. 

\begin{remark}\lab{ch5:rem:seppi}
Here, the log-loss is removed by exploiting the size of the diameter of $I_k$. This is possible since we estimate the $\la$ integral of \eqref{ch5:seppi} in $L^\infty_u$ using Cauchy Schwartz. In turn, this is a consequence of our estimate for $\nabb^2b$ in $L^2(\Sigma)$. This explains why this method cannot be used to remove the log-loss of the error term in Chapter \ref{part:paramtime}. Indeed, our estimates for the space-time foliation in Chapter \ref{part:spacetimeu} are typically of the type $L^\infty_uL^2(\HH_u)$, so that the integral in $\la$ is estimated in $L^2_u$ using Plancherel. In turn, this does not allow us to see the size of the localization in $\la$, so that a second decomposition in frequency of the type \eqref{ch5:bisoa7} would be useless in that case.
\end{remark}

\subsubsection{Proof of \eqref{ch5:bisoa12}} Note that we not only need to gain summability in $(\nu, \nu')$ for this term, but also in $(k, k')$. This is achieved through the presence of the additional gain $k-k'$ in the right-hand side of \eqref{ch5:bisoa12}. The estimate \eqref{ch5:bisoa12} will result of two integrations by parts, one with respect to the normal derivative $N$, and one with respect to tangential derivatives. We obtain a term analogous to \eqref{ch5:seppi} 
\be\lab{ch5:seppi1}
\int_{\S}\nabb\nabn b\left(\int_{0}^{+\infty}e^{i\lambda u}\psi(2^{-j}\lambda)\phi_k(\la)f(\lambda\o)\lambda^2d\lambda\right)\eta_j^\nu(\o)d\o.
\ee
In view of the estimate \eqref{ch5:thregx1s} for $b$, we have in particular $\nabb\nabn b\in L^2(\Sigma)$. Thus, the log-loss of the summation in $(\nu, \nu')$ is removed as in the previous section, in particular using the size of the diameter of $I_k$. Note also that the gain $k-k'$ in the right-hand side of \eqref{ch5:bisoa12} comes from the integration by parts in $N$. Indeed, we use
\begin{equation}\label{ch5:bisoa43b}
e^{i\la u-i\la' u'}=-\frac{ia}{\la-\la'\frac{a}{a'}g(N,N')}\nabn(e^{i\la u-i\la' u'}).
\end{equation}
Now, since $\la\in I_k$, $\la\in I_{k'}$, we have in view of \eqref{ch5:bisoa6}, the assumption \eqref{ch5:threomega3} for $\po^\a a$, and the assumption \eqref{ch5:threomega1bis} for $\po N$
$$\left|\la-\la'\frac{a}{a'}g(N,N')\right|\sim |k-k'|2^j|\nu-\nu'|^\a,$$
which yields the gain $k-k'$ in the right-hand side of \eqref{ch5:bisoa12}. 

\subsection{Proof of Proposition \ref{ch5:bisdiagonal} (control of the diagonal term)}\label{ch5:bissec:diagonal}

We have to prove \eqref{ch5:bisdiagonal1}:
\begin{equation}\label{ch5:bisdi1}
\norm{U^\nu_jf}_{L^2(\Sigma)}\lesssim D\ga^\nu_j.
\end{equation}
Recall that $U^\nu_j$ is given by:
\begin{equation}\label{ch5:bisdi2}
U^\nu_jf(x)=\int_{\S}bF_j( u)\eta_j^\nu(\o)d\o,
\end{equation}
where $F_j( u)$ is defined by:
\begin{equation}\label{ch5:bisdi3}
F_j( u)=\int_0^{+\infty}e^{i\la u}\psi(2^{-j}\la)f(\la\o)\la^2d\la.
\end{equation}
We decompose $U^\nu_j$ in the sum of two terms:
$$U^\nu_jf(x)=b(x,\nu)\int_{\S}F_j( u)\eta_j^\nu(\o)d\o+\int_{\S}(b(x,\o)-b(x,\nu))F_j( u)\eta_j^\nu(\o)d\o.$$
Then, using in particular the assumption \eqref{ch5:thregx1s} for $b$ and the assumption \eqref{ch5:threomega1s} for $\po b$, we 
obtain
\be\lab{ch5:bisdi4}
\norm{U^\nu_jf}_{L^2(\Sigma)}\lesssim D\normm{\int_{\S}F_j( u)\eta_j^\nu(\o)d\o}_{L^2(\Sigma)}+D\ga^\nu_j.
\ee
In order to estimate the right-hand side of \eqref{ch5:bisdi4}, we use the following proposition.
\begin{proposition}\label{ch5:bisdiprop1}
We have the following bound:
\begin{equation}\label{ch5:bisdiprop2}
\normm{\int_{\S}F_j( u)\eta_j^\nu(\o)d\o}_{L^2(\Sigma)}\lesssim\gamma_j^\nu.
\end{equation}
\end{proposition}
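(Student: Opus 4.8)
\textbf{Proof proposal for Proposition \ref{ch5:bisdiprop1}.}

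The plan is to reduce the estimate \eqref{ch5:bisdiprop2} to an $L^2$ bound for a Fourier integral operator whose phase has been ``frozen'' at the center $\nu$ of the patch, using the comparison estimates \eqref{ch5:gl23} between $u(x,\o)$ and the linear-in-$\o$ phase $\phi_\nu(x)\c\o$, together with the global change of variables $\phi_\nu$ furnished by \eqref{ch5:gl21}--\eqref{ch5:gl22}. Concretely, I would first write, for $\o$ in the support of $\eta_j^\nu$ (so $|\o-\nu|\lesssim 2^{-j/2}$),
$$u(x,\o)=\phi_\nu(x)\c\o+O(\ep|\o-\nu|^2)=\phi_\nu(x)\c\o+O(\ep 2^{-j}),$$
by the first line of \eqref{ch5:gl23}. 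Since $\la\sim 2^j$ on the support of $\psi(2^{-j}\la)$, the error $\la\cdot O(\ep 2^{-j})=O(\ep)$ is bounded, so the oscillatory factor $e^{i\la(u(x,\o)-\phi_\nu(x)\c\o)}$ is a bounded symbol; one then has to check (as in the proof of Theorem \ref{ch5:th1}, via the regularity of $\po^k u$ in \eqref{ch5:gl23} combined with \eqref{ch5:threomega1bis}--\eqref{ch5:threomega3bis}) that it satisfies the symbol-type bounds \eqref{ch5:thregx1s}--\eqref{ch5:cordecfr1s} with a universal constant $D$. This reduces matters to estimating
$$\normm{\int_{\S}\int_0^{+\infty}e^{i\la\phi_\nu(x)\c\o}\,\psi(2^{-j}\la)\eta_j^\nu(\o)f(\la\o)\la^2\,d\la\,d\o}_{L^2(\Sigma)}.$$

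Next I would change variables $y=\phi_\nu(x)$ on $\Sigma$; by \eqref{ch5:gl22} this is a bi-Lipschitz bijection $\Sigma\to\RRR^3$ with Jacobian $1+O(\ep)$, so it preserves $L^2$ norms up to a constant. The displayed integral then becomes (a harmless bounded factor times) the value at $y$ of the inverse Fourier transform of the function $\xi\mapsto \psi(2^{-j}|\xi|)\eta_j^\nu(\xi/|\xi|)f(\xi)$ on $\RRR^3$ (writing $\xi=\la\o$, so $d\xi=\la^2\,d\la\,d\o$). By Plancherel on $\RRR^3$, its $L^2(\RRR^3_y)$ norm equals $\norm{\psi(2^{-j}|\xi|)\eta_j^\nu(\xi/|\xi|)f(\xi)}_{L^2(\RRR^3)}=\ga_j^\nu$ by the definition \eqref{ch5:bisdecf}. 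Pulling back through $\phi_\nu^{-1}$ and absorbing the $O(\ep)$-distortion gives \eqref{ch5:bisdiprop2}.

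The main obstacle is the first step: one cannot literally replace $u(x,\o)$ by its linearization inside the exponential and discard the remainder, because although $\la\cdot O(\ep 2^{-j})$ is $O(\ep)$ in size, differentiating it in $x$ or $\o$ produces factors of $\la\sim 2^j$ that must be controlled by the better-than-expected regularity of the remainder $u(x,\o)-\phi_\nu(x)\c\o$ recorded in \eqref{ch5:gl23} (one derivative in $\o$ costs only $|\o-\nu|\sim 2^{-j/2}$, two cost only $O(1)$, and the $x$-derivatives are governed by \eqref{ch5:thregx1}). Making this precise amounts to verifying that the ``frozen-phase'' reduction preserves the hypotheses of Theorem \ref{ch5:th1} uniformly in $j$ and $\nu$; this is exactly the bookkeeping carried out in \cite{param2}, and I would only indicate the mechanism here, referring to that paper for the routine verifications. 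Once that reduction is in place, the rest is Plancherel plus the change of variables \eqref{ch5:gl22}, which is immediate.
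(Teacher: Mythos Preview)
Your overall strategy—linearize the phase at $\nu$, change variables via $\phi_\nu$, and invoke Plancherel—is natural, and indeed the paper uses exactly this comparison later (see the operator $\widetilde{S}_j^\nu$ in \eqref{ch5:re24} and Proposition \ref{ch5:re25}). But the reduction step you sketch has a genuine gap, and the paper's actual proof of Proposition \ref{ch5:bisdiprop1} proceeds quite differently.

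The problem is with your proposal to treat $e^{i\la r(x,\o)}$, $r=u(x,\o)-\phi_\nu(x)\cdot\o$, as a symbol and appeal to Theorem \ref{ch5:th1}. First, this factor depends on $\la$, whereas the symbol $b(x,\o)$ in Theorem \ref{ch5:th1} does not; the theorem simply does not apply. Second, if you try to push through anyway and check the conditions \eqref{ch5:thregx1s}--\eqref{ch5:cordecfr1s} uniformly in $\la\sim 2^j$, every $x$-derivative of $e^{i\la r}$ brings down $\la\,\nabla_x r$. To keep $\la\,\nabla_x r$ and $\la\,\nabb\nabla_x r$ bounded (in the norms required by \eqref{ch5:thregx1s}) you would need $\nabla_x r=O(2^{-j})$ and $\nabla_x^2 r=O(2^{-j})$, i.e.\ that the $x$-derivatives of $r$ inherit the $O(|\o-\nu|^2)$ smallness. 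That in turn requires $\po^2\nabla_x u$ and $\po^2\nabla_x^2 u$ to be controlled; the available regularity (e.g.\ $\po^2 a$ is only in $L^\infty_u H^{1/2}(P_u)$, not in $L^\infty$, see \eqref{threomega2}) does not give this. So the ``routine bookkeeping'' you defer is precisely where the argument breaks. Third, and more structurally: once you are already localized in both frequency $j$ and angle $\nu$, invoking Theorem \ref{ch5:th1} with the linearized phase amounts, at the diagonal level, to re-invoking Proposition \ref{ch5:bisdiprop1} itself.

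The paper's proof avoids $x$-derivatives entirely. It runs the $TT^*$ argument directly on the operator with phase $u$, obtaining a kernel $K(x,y)$ as in \eqref{ch5:di11}, and then proves a pointwise decay estimate for $K$ (Proposition \ref{ch5:di12}) by integrating by parts twice in $\la$ and three times in $\o$. This uses only the $\o$-regularity $u,\po u,\po^2 u,\po^3 u\in L^\infty_{\textrm{loc}}$ recorded in \eqref{ch5:reguomega}, and nothing about $\nabla_x^2 u$. The resulting kernel bound \eqref{ch5:di13} is expressed in the coordinates $(u(\cdot,\nu),\po u(\cdot,\nu))$, and Schur's lemma together with the change of variables $\phi_\nu$ and the Jacobian bound \eqref{ch5:gl22} then gives the $L^2$ boundedness. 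If you want to rescue your approach, the right vehicle is Proposition \ref{ch5:re25}, which compares $S_j^\nu f$ with $\widetilde{S}_j^\nu f$; but its proof is again a $TT^*$ kernel estimate of the same type, so you are not avoiding the work, only relocating it.
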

The proof of Proposition \ref{ch5:bisdiprop1} is postponed to the next section. Finally, \eqref{ch5:bisdi4} and \eqref{ch5:bisdiprop2} 
yield  the wanted estimate \eqref{ch5:bisdi1} which concludes the proof of Proposition \ref{ch5:bisdiagonal}

\subsubsection{Proof of Proposition \ref{ch5:bisdiprop1}}\label{ch5:sec:bisdiprop1}

Recall that $\int_{\S}F_j( u)\eta_j^\nu(\o)d\o$ is given by:
\begin{equation}\label{ch5:di10}
\int_{\S}F_j( u)\eta_j^\nu(\o)d\o=\int_{\S}\int_0^{+\infty}e^{i\la u}\psi(2^{-j}\la)\eta_j^\nu(\o)f(\la\o)\la^2d\la d\o.
\end{equation}
Relying on the classical $TT^*$ argument, \eqref{ch5:bisdiprop2} is equivalent to proving the boundedness on $L^2(\Sigma)$ of the operator whose kernel $K$ is given by:
\begin{equation}\label{ch5:di11}
K(x,y)=\int_{\S}\int_0^{+\infty}e^{i\la u(x,\o)-i\la u(y,\o)}\psi(2^{-j}\la)\eta_j^\nu(\o)\la^2d\la d\o,\,x, y\in\s.
\end{equation}
The decay satisfied by this kernel is stated in the following proposition.
\begin{proposition}\label{ch5:di12}
The kernel $K$ defined in \eqref{ch5:di11} satisfies the following decay estimate for all $x, y$ in $\Sigma$:
\begin{equation}\label{ch5:di13}
\begin{array}{ll}
\ds |K(x,y)|\lesssim & \ds\frac{2^j}{(1+|2^j|u(x,\nu)-u(y,\nu)|-2^{j/2}|\po u(x,\nu)-\po u(y,\nu)||)^2}\\
& \ds\times\frac{2^j}{(1+2^{j/2}|\po u(x,\nu)-\po u(y,\nu)|)^3}.
\end{array}
\end{equation}
\end{proposition}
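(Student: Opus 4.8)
\textbf{Proof plan for Proposition \ref{ch5:di12}.}

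The plan is to estimate the oscillatory integral in \eqref{ch5:di11} by a stationary-phase-type argument carried out directly in the angular variable $\o$, after freezing the $\o$-dependence of the phase at $\nu$ using the comparison estimate \eqref{ch5:gl23}. Concretely, on the support of $\eta_j^\nu$ we have $|\o-\nu|\lesssim 2^{-j/2}$, so writing $\o=\nu+\theta$ with $|\theta|\lesssim 2^{-j/2}$ and Taylor-expanding, \eqref{ch5:gl23} gives
$$u(x,\o)-u(y,\o)=(\phi_\nu(x)-\phi_\nu(y))\c\o+O(\ep|\theta|^2),$$
and similarly for the first and second $\o$-derivatives. Thus, up to controlled errors, the phase $\lambda(u(x,\o)-u(y,\o))$ behaves like the \emph{linear} phase $\lambda(\phi_\nu(x)-\phi_\nu(y))\c\o$, for which the kernel is explicitly computable as an inverse Fourier transform localized to the cap, producing exactly the two factors displayed in \eqref{ch5:di13}: the first factor, with the combination $2^j|u(x,\nu)-u(y,\nu)|-2^{j/2}|\po u(x,\nu)-\po u(y,\nu)|$, records the decay in the radial direction of the cap (the component of $\phi_\nu(x)-\phi_\nu(y)$ along $\nu$, with the angular correction $\po u$), while the second factor, with $2^{j/2}|\po u(x,\nu)-\po u(y,\nu)|$, records the decay in the two tangential directions of the cap of size $2^{-j/2}$.

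The key steps, in order, are: (1) insert the partition localization and rescale $\o=\nu+\theta$, $\lambda=2^j\mu$ with $\mu\sim 1$ and $|\theta|\lesssim 2^{-j/2}$; (2) use \eqref{ch5:gl23} to replace the phase by its linearization at $\nu$ plus an error of size $\ep 2^j|\theta|^2\lesssim\ep$, which is harmless and can be absorbed by a non-oscillatory bound after splitting cases according to whether the argument of the first factor in \eqref{ch5:di13} is $\gtrsim 1$ or $\lesssim 1$; (3) in the non-oscillatory regime, bound the integral trivially by $\int \lambda^2 d\lambda\,d\o\sim 2^j\c 2^{-j}=2^{j}\c (\text{vol of cap})\cdot 2^{2j}$ --- wait, more carefully, $\int_{I}\lambda^2 d\lambda\sim 2^{3j}$ over the unit-width shell and $\int_{\text{cap}}d\o\sim 2^{-j}$, giving $2^{2j}$, then check this matches \eqref{ch5:di13} when all denominators are $O(1)$; (4) in the oscillatory regime, integrate by parts repeatedly in $\theta$ (tangential to the cap) to gain the factor $(2^{j/2}|\po u(x,\nu)-\po u(y,\nu)|)^{-3}$, and integrate by parts in $\mu$ (equivalently $\lambda$) to gain the factor $(2^j|u(x,\nu)-u(y,\nu)|-2^{j/2}|\po u(x,\nu)-\po u(y,\nu)|)^{-2}$; one must organize the two integrations by parts so that the vector fields used in the $\theta$-integration by parts are the ones for which the phase gradient is $\sim 2^{j/2}|\po u(x,\nu)-\po u(y,\nu)|$ and the $\mu$-integration by parts isolates the remaining direction. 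Each integration by parts is legitimate since differentiating the phase costs only $\po u, \po^2 u\in L^\infty_{\mathrm{loc}}$ by \eqref{ch5:threomega3bis} and the errors from \eqref{ch5:gl23} differentiate favorably.

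The main obstacle, as flagged in the overview's discussion of Step C2, is the \emph{very limited regularity} of $u$ in $\o$: only \eqref{ch5:threomega3bis} ($\po^3 u\in L^\infty_{\mathrm{loc}}$) and the $C^\a$ bound \eqref{ch5:threomega3} are available, so the number of tangential integrations by parts is capped at essentially three, which is exactly why the exponent $3$ (and not more) appears in the second factor of \eqref{ch5:di13} and why one cannot afford a cleaner stationary-phase change of variables. The delicate point is therefore to verify that three integrations by parts suffice for $L^2$-boundedness of the kernel via Schur's test: one needs $\sup_x\int_\Sigma |K(x,y)|\,d\Sigma(y)\lesssim 1$, which after the change of variables $y\mapsto\phi_\nu(y)$ (a bijection with Jacobian $\sim 1$ by \eqref{ch5:gl22}) reduces \eqref{ch5:di13} to integrating $2^{2j}(1+2^j|z_1|)^{-2}(1+2^{j/2}|z'|)^{-3}$ over $z=(z_1,z')\in\R\times\R^2$, which is indeed $O(1)$ since $\int 2^j(1+2^j|z_1|)^{-2}dz_1\lesssim 1$ and $\int 2^j(1+2^{j/2}|z'|)^{-3}dz'\lesssim 1$. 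Once Proposition \ref{ch5:di12} is established, Proposition \ref{ch5:bisdiprop1}, and hence Proposition \ref{ch5:bisdiagonal}, follow by the $TT^*$ argument combined with Schur's test as just described.
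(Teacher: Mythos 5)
Your proposal is correct and follows essentially the same strategy as the paper's (very terse) argument: use the regularity of $u$ in $\o$ recorded in \eqref{ch5:threomega3bis} and the comparison to the linear phase in \eqref{ch5:gl23} to perform three integrations by parts in $\o$ (giving the cubic decay in $2^{j/2}|\po u(x,\nu)-\po u(y,\nu)|$) and two in $\la$ (giving the quadratic decay in the combination $2^j|u(x,\nu)-u(y,\nu)|-2^{j/2}|\po u(x,\nu)-\po u(y,\nu)|$), with the trivial bound $2^{2j}$ handling the non-oscillatory regime. One caveat on phrasing: step (2), ``replace the phase by its linearization plus a harmless error,'' is not quite how the argument can go---the $O(\ep)$ phase error, multiplied against the trivial bound $2^{2j}$, would swamp the claimed estimate in the oscillatory regime, so one must (as you in fact do in step (4)) integrate by parts on the \emph{true} phase and use \eqref{ch5:gl23} only to relate the phase derivatives over the cap to their values at $\nu$; likewise your closing Schur-test sketch drops the $-2^{j/2}|\po u(x,\nu)-\po u(y,\nu)|$ in the first denominator, though the coupled integral still converges by the same substitution.
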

The proof of Proposition \ref{ch5:di12} is postponed to section \ref{ch5:sec:di20}. In the rest of this section, we show how \eqref{ch5:di13} implies Proposition \ref{ch5:bisdiprop1}. According to Schur's Lemma, the operator whose kernel is $K$ is bounded on $L^2(\Sigma)$ provided we can prove the following bound:
\begin{equation}\label{ch5:di14}
\sup_{x\in\Sigma}\int_{\Sigma} |K(x,y)|dy<+\infty,\,\sup_{y\in\Sigma}\int_{\Sigma} |K(x,y)|dx<+\infty.
\end{equation}
Due to the symmetry of $K$ in $x, y$, the two bounds in \eqref{ch5:di14} are obtained in the same way. We 
focus on establishing the first bound. Using in particular \eqref{ch5:di13} and the global change of variable on $\Sigma$ given by \eqref{ch5:gl21}\footnote{using also the bound on the Jacobian \eqref{ch5:gl22}}, we are able to obtain:
\begin{equation}\label{ch5:di18}
\ds \int_{\Sigma} |K(x,y)|dy\lesssim\ds\int_{\R^3}\frac{2^j}{(1+|2^j|\underline{y}\c\nu|-2^{j/2}|\underline{y}'||)^2}\frac{2^j}{(1+2^{j/2}|\underline{y}'|)^3}d\underline{y},
\end{equation}
where $y=y\c\nu+y'$ and $y'\c\nu=0$. Making the change of variable $y\rightarrow z$ where $z$ is defined by $z\c\nu=2^j\underline{y}\c\nu$ and $z'=2^{j/2}\underline{y}'$ in the right-hand side of \eqref{ch5:di18}, and remarking that $z\cdot\nu$ is one dimensional, and $z'$ is two dimensional, we obtain:
\begin{equation}\label{ch5:di19}
\ds \int_{\Sigma} |K(x,y)|dy\lesssim\ds\int_{\R^3}\frac{dz}{(1+||z\c\nu|-|z'||)^2(1+|z'|)^3}\lesssim 1.
\end{equation}
\eqref{ch5:di19} implies the first bound in \eqref{ch5:di14}. $K$ being symmetric with respect to $x, y$, the second bound in \eqref{ch5:di14} is also true. Thus, the operator whose kernel is $K$ is bounded on $L^2(\Sigma)$ which concludes the proof of Proposition \ref{ch5:bisdiprop1}. 

\subsubsection{Proof of Proposition \ref{ch5:di12}}\label{ch5:sec:di20}

Recall the definition of $K$:
\begin{equation}\label{ch5:di21}
K(x,y)=\int_{\S}\int_0^{+\infty}e^{i\la u(x,\o)-i\la u(y,\o)}\psi(2^{-j}\la)\eta_j^\nu(\o)\la^2d\la d\o,\,x, y\in\s.
\end{equation}
We need to prove that $K$ satisfies the following decay estimate for all $x, y$ in $\Sigma$:
\begin{equation}\label{ch5:di22}
\begin{array}{ll}
\ds |K(x,y)|\lesssim & \ds\frac{2^j}{(1+|2^j|u(x,\nu)-u(y,\nu)|-2^{j/2}|\po u(x,\nu)-\po u(y,\nu)||)^2}\\
& \ds\times\frac{2^j}{(1+2^{j/2}|\po u(x,\nu)-\po u(y,\nu)|)^3}.
\end{array}
\end{equation}
For the sake of simplicity, let us just describe the general strategy of the proof of Proposition \ref{ch5:di12}. In view of the regularity for $u(x,\o)$ with respect to $\o$ provided by \eqref{ch5:thregx1} and \eqref{ch5:threomega1}-\eqref{ch5:threomega3bis}, we have
\begin{equation}\label{ch5:reguomega}
|u(x,\o)|+|\po u(x,\o)|+|\po^2u(x,\o)|+|\po^3u(x,\o)|\lesssim 1+|x|,\,\forall x\in\Sigma,\,\forall \o\in\S.
\end{equation}
This regularity allows us to integrate by part three times with respect to $\o$, while we may integrate as much as we want with respect to $\la$. The estimate \eqref{ch5:di22} is then obtained after performing in \eqref{ch5:di21} three integrations  by parts with respect to $\o$ and two integrations by parts with respect to $\la$.

\section{Control of the parametrix at initial time}\label{ch5:sec:th2}

In this section, we discuss the proof of Theorem \ref{ch5:th2}. To this end, we first show that the Fourier integral operator $U$ of Theorem \ref{ch5:th1} almost preserves the $L^2$ norm provided we make additional assumptions on its symbol. We then use this observation to prove the estimate \eqref{ch5:choicef4bis}. Finally, we conclude the proof of Theorem \ref{ch5:th2} by establishing the existence and uniqueness of $(f_+,f_-)$ solution of the system \eqref{ch5:choicef3bis}. 

\subsection{A refinement of Theorem \ref{ch5:th1}}

In Theorem \ref{ch5:th1}, we have proved that the Fourier integral operator $U$ with phase $u$ and symbol 
$b$ is bounded on $L^2(\Sigma)$ provided $u$ satisfies \eqref{ch5:thregx1}, \eqref{ch5:threomega1}-\eqref{ch5:threomega3bis} and \eqref{ch5:gl21}-\eqref{ch5:gl22}, and the symbol $b$ satisfies \eqref{ch5:thregx1s} \eqref{ch5:threomega1s}. We now would like to prove that $U$ satisfies the following bound from below:
\begin{equation}\label{ch5:re1}
\norm{f}_{\le{2}}\lesssim\norm{Uf}_{L^2(\Sigma)},
\end{equation}
provided $u$ also satisfies \eqref{ch5:gl23} and under additional assumptions on the symbol $b$. This is the aim of the following proposition.

\begin{proposition}\label{ch5:re2}
Let $u$ be a function on $\Sigma\times\S$ satisfying suitable assumptions (we refer to \cite{param2} for the complete set of assumptions, and to section \ref{ch5:sec:estneeded} for some typical assumptions). Let $U$ the Fourier integral operator with phase $u(x,\o)$ and symbol $b(x,\o)$:
\begin{equation}\label{ch5:fio1}
Uf(x)=\int_{\S}\int_{0}^{+\infty}e^{i\lambda u(x,\o)}b(x,\o)f(\lambda\o)\lambda^2 d\lambda d\o.
\end{equation}
We assume furthermore that $b(x,\o)$ satisfies:
\begin{equation}\label{ch5:re3}
\norm{\po b}_{L^2(\Sigma)}+\norm{\nabla\po b}_{L^2(\Sigma)}\lesssim 1,
\end{equation}
\begin{equation}\label{ch5:re4}
\norm{b-1}_{\ll{\infty}}+\norm{\nabla b}_{\l{\infty}{2}}+\norm{\nabb\nabla b}_{L^2(\Sigma)}\lesssim \ep,
\end{equation}
and
\begin{equation}\label{ch5:re5}
\begin{array}{l}
\ds\nabn b=b^j_1+b^j_2\textrm{ where }\norm{b^j_1}_{L^2(\Sigma)}\lesssim 2^{-\frac{j}{2}}\ep,\,\norm{b^j_2}_{\l{\infty}{2}}\lesssim\ep\\
\ds\textrm{and }\norm{\nabn b^j_2}_{L^2(\Sigma)}+\norm{b^j_2}_{\l{2}{\infty}}\lesssim 2^{\frac{j}{2}}\ep.
\end{array}
\end{equation}
Then, $U$ is bounded on $L^2$ and satisfies the estimate:
\begin{equation}\label{ch5:re6}
\norm{f}_{\le{2}}\lesssim\norm{Uf}_{L^2(\Sigma)}.
\end{equation}
\end{proposition}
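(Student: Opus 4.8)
\textbf{Proof plan for Proposition \ref{ch5:re2}.}

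The plan is to prove the lower bound \eqref{ch5:re6} by a perturbative argument, writing $U = M_0 + (U - M_0)$ where $M_0$ is a ``reference'' Fourier integral operator whose symbol is $1$, and showing that $M_0$ almost preserves the $L^2$ norm while $U - M_0$ is a small perturbation on $L^2$. First I would establish that $M_0$ satisfies $\norm{f}_{\le{2}} \lesssim \norm{M_0 f}_{L^2(\Sigma)}$ with a constant close to $1$. This is where the additional assumption \eqref{ch5:gl23} enters crucially: it allows one to compare the phase $u(x,\o)$ with the linear phase $\phi_\nu(x)\cdot\o$ on each angular patch of size $2^{-j/2}$, up to errors of size $O(\ep|\o-\nu|^2)$ in the phase, $O(\ep|\o-\nu|)$ in the first $\o$-derivative, and $O(\ep)$ in the second $\o$-derivative. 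On such a patch, after the frequency localization $\la\sim 2^j$ and the global change of variables $\phi_\nu$ of \eqref{ch5:gl21} (whose Jacobian is $1+O(\ep)$ by \eqref{ch5:gl22}), the operator $M_0$ is, up to an $O(\ep)$ error, the inverse Fourier transform composed with a diffeomorphism close to the identity; a $TT^*$ computation, using the kernel decay of Proposition \ref{ch5:di12} for the off-diagonal contributions, gives $\norm{M_0 f}_{L^2(\Sigma)}^2 = \norm{f}_{\le{2}}^2 + O(\ep)\norm{f}_{\le{2}}^2$. Summing over patches and frequencies with the almost-orthogonality of Propositions \ref{ch5:bisorthofreq} and \ref{ch5:bisorthoangle} yields $\norm{M_0 f}_{L^2(\Sigma)} \geq (1 - C\ep)\norm{f}_{\le{2}}$.

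Next I would estimate the difference $U - M_0$, which is the Fourier integral operator with phase $u(x,\o)$ and symbol $b(x,\o) - 1$. By assumption \eqref{ch5:re4} the symbol $b-1$ satisfies $\norm{b-1}_{\ll{\infty}} + \norm{\nabla(b-1)}_{\l{\infty}{2}} + \norm{\nabb\nabla(b-1)}_{L^2(\Sigma)} \lesssim \ep$, by \eqref{ch5:re3} it satisfies $\norm{\po(b-1)}_{L^2(\Sigma)} + \norm{\nabla\po(b-1)}_{L^2(\Sigma)} \lesssim 1$, and by \eqref{ch5:re5} its normal derivative admits the decomposition required in \eqref{ch5:cordecfr1s}. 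The point is that these are exactly the hypotheses of Theorem \ref{ch5:th1} with $D = \ep$ for the first group of estimates (and $D \lesssim 1$ for the $\o$-derivative estimates, which only enter through the almost-orthogonality in angle and do not affect the overall power of $\ep$ in the diagonal term; here one must be slightly careful and re-run the diagonal estimate of Proposition \ref{ch5:bisdiagonal} tracking that the gain is $\ep$ and not merely $D$, using \eqref{ch5:re4} rather than \eqref{ch5:thregx1s}). This gives $\norm{(U - M_0)f}_{L^2(\Sigma)} \lesssim \ep\norm{f}_{\le{2}}$. Combining, $\norm{Uf}_{L^2(\Sigma)} \geq \norm{M_0 f}_{L^2(\Sigma)} - \norm{(U-M_0)f}_{L^2(\Sigma)} \geq (1 - C\ep)\norm{f}_{\le{2}} - C\ep\norm{f}_{\le{2}} = (1 - C\ep)\norm{f}_{\le{2}}$, which for $\ep$ small enough is \eqref{ch5:re6}.

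The main obstacle I expect is the careful bookkeeping in the first step: proving that $M_0$ almost preserves the $L^2$ norm, i.e. quantifying the $O(\ep)$ error in $\norm{M_0 f}_{L^2}^2 = \norm{f}_{L^2}^2 + O(\ep)\norm{f}_{L^2}^2$. This requires a refined version of the kernel estimate of Proposition \ref{ch5:di12} in which one separates the ``main term'' coming from the linear phase $\phi_\nu(x)\cdot\o$ (which after the change of variables \eqref{ch5:gl21} integrates to exactly the Fourier inversion kernel) from the genuinely curved correction, and one must show that the correction, controlled by the $O(\ep)$ errors in \eqref{ch5:gl23} together with the Jacobian bound \eqref{ch5:gl22}, contributes a kernel with the same decay as \eqref{ch5:di13} but with an extra factor $\ep$. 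One then runs Schur's test on this corrected kernel exactly as in \eqref{ch5:di18}--\eqref{ch5:di19}. The second source of difficulty is making sure that the almost-orthogonality estimates (Propositions \ref{ch5:bisorthofreq}, \ref{ch5:bisorthoangle}) can be applied to $M_0 f$ \emph{from below} — i.e. that the cross terms, which are $O(\ep)$ relative to the diagonal sum, do not destroy the lower bound — which is a standard but delicate Cotlar--Stein-type argument. I would refer to \cite{param2} for these computations, presenting here only the reduction to $M_0$ plus perturbation and the key role of assumption \eqref{ch5:gl23}.
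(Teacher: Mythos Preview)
Your overall strategy---compare the curved-phase operator with a linear-phase model via the global change of variables $\phi_\nu$, and use the almost-orthogonality machinery of Propositions~\ref{ch5:bisorthofreq} and~\ref{ch5:bisorthoangle} to control the rest---is exactly the paper's. But there is a genuine gap: you are missing the auxiliary parameter $\delta$. The paper shrinks the angular patches from diameter $2^{-j/2}$ to $\delta\,2^{-j/2}$ with $\sqrt{\ep}\ll\delta\ll 1$ (see~\eqref{ch5:re12}), and this is not cosmetic. The comparison between $S_j^\nu$ (curved phase, symbol~$1$) and $\widetilde S_j^\nu$ (linear phase $\phi_\nu(x)\cdot\o$) in Proposition~\ref{ch5:re25} produces an error of size $\delta^{1/2}$, \emph{not} $O(\ep)$; with the standard patch size $\delta=1$ this error is $O(1)$ and destroys the lower bound. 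On the other side, shrinking the patches makes the off-diagonal angular sum longer, and the almost-orthogonality error becomes $O(\ep/\delta^2)$. The final identity is $\norm{Uf}_{L^2(\Sigma)}^2=\norm{f}_{L^2}^2+O(\ep/\delta^2+\delta^{1/2})\norm{f}_{L^2}^2$ (see~\eqref{ch5:re30}), and one only gets a small remainder by balancing $\delta$. Your claimed $O(\ep)$ error for the near-diagonal comparison would require a sharper kernel estimate than Proposition~\ref{ch5:di12} provides, and you do not supply one.

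A second, related issue: the paper does \emph{not} split $U=M_0+(U-M_0)$. It works directly with $U$ and absorbs the passage from symbol $b$ to symbol $1$ into the same $O(\ep/\delta^2+\delta)$ error term in~\eqref{ch5:re13}, tracking carefully through the almost-orthogonality and diagonal arguments. Your separate bound $\norm{(U-M_0)f}_{L^2}\lesssim\ep\norm{f}_{L^2}$ cannot be read off from Theorem~\ref{ch5:th1} with $D=\ep$, precisely because assumption~\eqref{ch5:re3} carries constant $1$, not $\ep$; the diagonal estimate of Proposition~\ref{ch5:bisdiagonal} uses $\po b$ in an essential way (see the splitting leading to~\eqref{ch5:bisdi4}), and for the symbol $b-1$ this contribution is $O(1)$. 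You note this and propose to ``re-run'' the argument, but the fix is not obvious---indeed the paper's route, which avoids the splitting altogether and introduces $\delta$, is the mechanism that actually closes.
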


\begin{remark}
Notice that the only difference in the assumptions with respect to Theorem \ref{ch5:th1} lies in the fact that $u$ also satisfies \eqref{ch5:gl23} and in the constant $D$ which has been replaced by 1 in \eqref{ch5:re3} and by $\ep$ in \eqref{ch5:re4} \eqref{ch5:re5}.
\end{remark}

The proof of Proposition \ref{ch5:re2} uses the decomposition in frequency and angle of the operator $U$ introduced in section \ref{ch5:sec:th1}. In order to control the diagonal term in a third step (see next section), we have to modify slightly the size of the support of our partition of unity $\eta_j^\nu$ on $\S$ introduced in \eqref{ch5:bisb14}. Let $\de>0$ such that:
\begin{equation}\label{ch5:re12}
0<\sqrt{\ep} <\!\!< \de <\!\!< 1.
\end{equation}
We now require that the support of $\eta^\nu_j$ is a patch on $\S$ of diameter $\sim \de 2^{-j/2}$. With this modification, the assumptions for $b$ in Proposition \ref{ch5:re2}, and by carefully tracking the size of the various terms in the almost orthogonality argument in frequency  and angle, and the control of the diagonal term, we obtain
\begin{equation}\label{ch5:re13}
\norm{Uf}_{L^2(\Sigma)}^2=\sum_{|j-l|\leq 2}\sum_{|\nu-\nu'|\leq 2\de 2^{-j/2}}\int_{\Sigma}S_j^\nu f(x)\overline{S_l^{\nu'}f(x)}d\Sigma+O\left(\frac{\ep}{\de^2}+\delta\right)\norm{f}^2_{\le{2}},
\end{equation}
where the operator $S_j^\nu$ is defined on $\Sigma$ by:
\begin{equation}\label{ch5:re14}
S_j^\nu f(x)=\int_{\Sigma}\int_{0}^{+\infty}e^{i\la u(x,\o)}\psi(2^{-j}\la)\eta_j^\nu(\o) f(\la\o)\la^2 d\la d\o.
\end{equation}

Recall \eqref{ch5:gl21}-\eqref{ch5:gl22} which states that the map $\phi_\nu:\Sigma\rightarrow\R^3$ defined by:
\begin{equation}\label{ch5:re21}
\phi_\nu(x):=u(x,\nu)\nu+\po u(x,\nu),
\end{equation}
is a bijection, such that the determinant of its Jacobian satisfies the following estimate:
\begin{equation}\label{ch5:re22}
\norm{|\det(\textrm{Jac}\phi_\nu)|-1}_{\ll{\infty}}\lesssim\ep.
\end{equation}
Let us note $\FF^{-1}$ the inverse Fourier transform on $\R^3$. We introduce the operator $\widetilde{S}_j^\nu$ on $\Sigma$ defined by:
\begin{equation}\label{ch5:re24}
\widetilde{S}_j^\nu f(x)=\FF^{-1}(\psi(2^{-j}\c)\eta_j^\nu f)(\phi_\nu(x))=\int_{\R^3}e^{i\la\phi_\nu(x)\c\o}\psi(2^{-j}\la)\eta_j^\nu(\o)f(\la\o)\la^2 d\la d\o.
\end{equation}
The following proposition shows that the term $\int_{\Sigma}S_j^\nu f(x)\overline{S_l^{\nu'}f(x)}d\Sigma$ is close to the term $\int_{\Sigma}\widetilde{S}_j^\nu f(x)\overline{\widetilde{S}_l^{\nu'}f(x)}d\Sigma$.
\begin{proposition}\label{ch5:re25}
We have the following bound:
\begin{equation}\label{ch5:re26}
\norm{S_j^\nu f-\widetilde{S}_j^\nu f}_{L^2(\Sigma)}\lesssim\de^{\frac{1}{2}}\gamma_j^\nu.
\end{equation}
\end{proposition}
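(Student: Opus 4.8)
\textbf{Proof plan for Proposition \ref{ch5:re25}.}

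The plan is to estimate $S_j^\nu f - \widetilde S_j^\nu f$ by comparing the two phases $\lambda u(x,\o)$ and $\lambda \phi_\nu(x)\cdot\o$ directly on the support of $\eta_j^\nu$, and to absorb the difference into the amplitude using the approximation estimates \eqref{ch5:gl23}. Writing $S_j^\nu f(x) - \widetilde S_j^\nu f(x)$ as a single oscillatory integral in $(\la,\o)$, I would factor $e^{i\la u(x,\o)} = e^{i\la\phi_\nu(x)\cdot\o}\,e^{i\la(u(x,\o)-\phi_\nu(x)\cdot\o)}$ and expand $e^{i\la(u(x,\o)-\phi_\nu(x)\cdot\o)} - 1$. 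On the patch $\mathrm{supp}(\eta_j^\nu)$ we have $|\o-\nu|\lesssim \de 2^{-j/2}$ and $\la\sim 2^j$, so by the first line of \eqref{ch5:gl23}, $|\la(u(x,\o)-\phi_\nu(x)\cdot\o)|\lesssim 2^j \ep \de^2 2^{-j}=\ep\de^2$, which is $\ll 1$ in view of \eqref{ch5:re12}. Hence $|e^{i\la(u-\phi_\nu\cdot\o)}-1|\lesssim \ep\de^2$, so pointwise $|S_j^\nu f(x)-\widetilde S_j^\nu f(x)|$ is bounded by $\ep\de^2$ times an expression of the same form as $\widetilde S_j^\nu |f|$ (with a nonnegative amplitude). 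More precisely, I would write the difference as a new Fourier integral operator with phase $\phi_\nu(x)\cdot\o$ and amplitude $b(x,\o):=e^{i\la(u(x,\o)-\phi_\nu(x)\cdot\o)}-1$, and verify that this amplitude obeys the hypotheses \eqref{ch5:thregx1s}, \eqref{ch5:threomega1s}, \eqref{ch5:cordecfr1s} of Theorem \ref{ch5:th1} with constant $D\lesssim \ep\de^2\cdot 2^j\cdot(\text{angular support factors})$; here I must differentiate $b$ once in $x$ and once in $\o$, which by \eqref{ch5:gl23} and \eqref{ch5:thregx1}, \eqref{ch5:threomega1}--\eqref{ch5:threomega3bis} produces at worst $\la\,\partial(u-\phi_\nu\cdot\o)\lesssim 2^j\ep\de 2^{-j/2}$ type terms.

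The point is then that applying Theorem \ref{ch5:th1} to this operator, restricted to frequencies $\la\sim 2^j$ and to the single patch $\eta_j^\nu$, gives the $L^2(\Sigma)$ bound with the right-hand side $\lesssim \de^{1/2}$ times the relevant piece $\gamma_j^\nu$ of the data norm: the localization to the $2^{-j/2}\de$-patch gains a volume factor $\de^{2}$ in the angular variable (Cauchy--Schwarz in $\o$), the size $\ep\de^2$ of the amplitude and the $2^j$ from the differentiation combine against the $2^{-j}$ from the $\la^{-1}$ arising in the integrations by parts, and collecting these powers of $\ep$ and $\de$ against \eqref{ch5:re12} leaves $O(\de^{1/2})\gamma_j^\nu$. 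In fact, since $\widetilde S_j^\nu$ is (after the change of variables $\phi_\nu$, using \eqref{ch5:re22}) an honest frequency-and-angle localized piece of the inverse Fourier transform, it is trivially bounded by $\gamma_j^\nu$; so it suffices to run the Theorem \ref{ch5:th1} machinery only on the difference operator, where the smallness of the amplitude is what produces the gain $\de^{1/2}$.

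The main obstacle I anticipate is the bookkeeping of the $\de$ and $\ep$ powers in the integration-by-parts scheme of Theorem \ref{ch5:th1}: because the angular patch has the modified diameter $\de 2^{-j/2}$ rather than $2^{-j/2}$, each tangential integration by parts in $\o$ costs a factor $\de^{-1}2^{-j/2}$ and each $x$-derivative of the amplitude costs $2^j \ep \de 2^{-j/2}$, so one must check carefully that the net effect, after the Cauchy--Schwarz step that recovers the patch volume $\de^2 2^{-j}$, is genuinely $O(\ep \de^{2}/\de^{2}\cdot \de^{2})^{1/2}=O(\ep^{1/2}\de)$ or better, which is $\ll \de^{1/2}$ under \eqref{ch5:re12}. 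A secondary point requiring care is that $\partial_\o$ hitting the exponent produces a factor of $\la$, and hitting it twice produces $\la^2$ along with a term involving $\partial_\o^2(u-\phi_\nu\cdot\o)=O(\ep)$ by the third line of \eqref{ch5:gl23}; one needs \eqref{ch5:threomega3bis} to control the third $\o$-derivative that appears when one also integrates by parts in $\o$ inside the decay estimate for the kernel, exactly as in Proposition \ref{ch5:di12}. These are all routine once the structure is set up, but getting every power of $\de$ on the right side of the inequality is the delicate part; I would simply quote \cite{param2} for the detailed verification and present here only the reduction to Theorem \ref{ch5:th1} applied to the small-amplitude difference operator.
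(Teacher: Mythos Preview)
Your reduction to Theorem \ref{ch5:th1} does not go through, and the gap is structural rather than a matter of bookkeeping. The symbol you produce, $b(x,\o,\la)=e^{i\la(u(x,\o)-\phi_\nu(x)\cdot\o)}-1$, is $\la$-dependent, so Theorem \ref{ch5:th1} does not literally apply. More seriously, the hypotheses \eqref{ch5:thregx1s} and \eqref{ch5:cordecfr1s} of Theorem \ref{ch5:th1} ask for control of $\nabla_x b$ and $\nabb\nabla_x b$ (these enter in the almost-orthogonality steps, which integrate by parts in $\pr_u$ and in tangential $x$-directions). For your symbol this means controlling $\la\,\nabla_x(u-\phi_\nu\cdot\o)$ and $\la\,\nabla_x^2(u-\phi_\nu\cdot\o)$. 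But \eqref{ch5:gl23} gives only \emph{$\o$-derivative} bounds on $u-\phi_\nu\cdot\o$; no $x$-derivative bounds on this difference are stated, and your claim that ``$\la\,\partial(u-\phi_\nu\cdot\o)\lesssim 2^j\ep\de 2^{-j/2}$'' is not justified by the listed assumptions. Extracting such bounds would require second $\o$-derivatives of $\nabla_x u$ in $L^\infty$, which is beyond \eqref{ch5:thregx1}--\eqref{ch5:threomega3bis}.

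The paper's route avoids this entirely by going directly to a $TT^*$ kernel estimate, as in Proposition \ref{ch5:di12}. The kernel of $(S_j^\nu-\widetilde S_j^\nu)(S_j^\nu-\widetilde S_j^\nu)^*$ carries the extra amplitude factors $(1-e^{i\la(\phi_\nu(x)\cdot\o-u(x,\o))})(1-e^{-i\la(\phi_\nu(y)\cdot\o-u(y,\o))})$, each of size $O(\ep\de^2)$; the decay in $(x,y)$ is then obtained by integrating by parts only in $\la$ and in $\o$, never in $x$. Those integrations by parts hit the amplitude through $\la\,\po(u-\phi_\nu\cdot\o)$ and $\la\,\po^2(u-\phi_\nu\cdot\o)$, which are exactly what the second and third lines of \eqref{ch5:gl23} control. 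So the $TT^*$ argument is tailored to the available regularity, whereas invoking Theorem \ref{ch5:th1} asks for regularity in the wrong variables. Your pointwise observation $|e^{i\la(u-\phi_\nu\cdot\o)}-1|\lesssim\ep\de^2$ is the correct starting point; what needs to change is the mechanism that turns it into an $L^2$ bound.
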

The proof of Proposition \ref{ch5:re25} relies on the classical $TT^*$ argument, and the comparison between $u(x,\o)$ and $\phi_\nu(x)\cdot\o$ provided by \eqref{ch5:gl23} (see \cite{param2} for the details). Now, \eqref{ch5:re13} and \eqref{ch5:re26} yield:
\begin{equation}\label{ch5:re27}
\norm{Uf}_{L^2(\Sigma)}^2=\sum_{|j-l|\leq 2}\sum_{|\nu-\nu'|\leq 2\de 2^{-j/2}}\int_{\Sigma}\widetilde{S}_j^\nu f(x)\overline{\widetilde{S}_l^{\nu'}f(x)}d\Sigma+O\left(\frac{\ep}{\de^2}+\de^{\frac{1}{2}}\right)\norm{f}^2_{\le{2}}.
\end{equation}
Making the change of variable $y=\phi_\nu(x)$ in $\int_{\Sigma}\widetilde{S}_j^\nu f(x)\overline{\widetilde{S}_l^{\nu'}f(x)}d\Sigma$ and using \eqref{ch5:re22} and \eqref{ch5:re24} implies:
\begin{equation}\label{ch5:re28}
\begin{array}{ll}
&\ds\sum_{|j-l|\leq 2}\sum_{|\nu-\nu'|\leq 2\de 2^{-j/2}}\int_{\Sigma}\widetilde{S}_j^\nu f(x)\overline{\widetilde{S}_l^{\nu'}f(x)}d\Sigma\\
\ds = &\ds\sum_{|j-l|\leq 2}\sum_{|\nu-\nu'|\leq 2\de 2^{-j/2}}\int_{\R^3}\FF^{-1}(\psi(2^{-j}\c)\eta_j^\nu f)(y)\overline{\FF^{-1}(\psi(2^{-l}\c)\eta_j^{\nu'} f)(y)}dy\\
&\ds +O(\ep)\norm{f}^2_{\le{2}}\\
\ds = &\ds\sum_{|j-l|\leq 2}\sum_{|\nu-\nu'|\leq 2\de 2^{-j/2}}\int_{\R^3}\psi(2^{-j}\la)\eta_j^\nu(\o) f(\la\o)\overline{\psi(2^{-l}\la)\eta_j^{\nu'}(\o) f(\la\o)}dy\\
&\ds +O(\ep)\norm{f}^2_{\le{2}},
\end{array}
\end{equation}
where we have used the fact that $\FF^{-1}$ is an isomorphism on $\le{2}$ in the last equality of \eqref{ch5:re28}. 
Now, we have:
\begin{equation}\label{ch5:re29}
\ds\sum_{|j-l|\leq 2}\sum_{|\nu-\nu'|\leq 2\de 2^{-j/2}}\int_{\R^3}\psi(2^{-j}\la)\eta_j^\nu(\o) f(\la\o)\overline{\psi(2^{-l}\la)\eta_j^{\nu'}(\o) f(\la\o)}dy=\norm{f}^2_{\le{2}},
\end{equation}
which  together with \eqref{ch5:re27} and \eqref{ch5:re28} yields:
\begin{equation}\label{ch5:re30}
\norm{Uf}_{L^2(\Sigma)}^2=\norm{f}_{\le{2}}^2+O\left(\frac{\ep}{\de^2}+\de^{\frac{1}{2}}\right)\norm{f}^2_{\le{2}}.
\end{equation}
Choosing $\de^{\frac{1}{2}}$ and $\ep\de^{-2}$ small enough, we deduce from \eqref{ch5:re30}:
\begin{equation}\label{ch5:re31}
\norm{f}_{\le{2}}\lesssim\norm{Uf}_{L^2(\Sigma)},
\end{equation}
which is the wanted estimate. This conclude the proof of Proposition \ref{ch5:re2}. 

\subsection{Proof of Theorem \ref{ch5:th2}}

The symbol of the Fourier integral operators $M_\pm$ and $Q_\pm$ are respectively given by 1 and $a(x,\pm\o)^{-1}$. Thus, they clearly satisfy the assumptions of Proposition \ref{ch5:re2}. Relying on Proposition \ref{ch5:re2}, we are then able to prove the estimate \eqref{ch5:choicef4bis}. We refer to \cite{param2} for the details.

The uniqueness of $(f_+,f_-)$ solution of the system \eqref{ch5:choicef3bis} is an immediate consequence of the estimate \eqref{ch5:choicef4bis}, so there only remains to prove the existence of $(f_+,f_-)$ to conclude the proof 
of Theorem \ref{ch5:th2}. Recall that the phase $u(x,\o)$ of our Fourier integral operators has been constructed in \cite{param1} (see also Chapter \ref{part:initialu}) on $\Sigma\times\S$ under the assumption that $(\Sigma,g,k)$ satisfies the following bounds consistent with the bounded $L^2$ curvature conjecture:
\be\lab{ch5:l2bounds}
\norm{R}_{L^2(\Sigma)}\leq\ep,\,\norm{\nabla k}_{L^2(\Sigma)}\leq\ep,
\ee
where the fact that we may take $\ep>0$ small comes from a reduction to the small data case. $(\Sigma,g,k)$ also satisfies the constraint equations and the maximal foliation assumption
\be\lab{ch5:const1}
\left\{\begin{array}{l}
\nabla^j k_{ij}=0,\\
 R=|k|^2,\\
 \textrm{Tr}k=0.
\end{array}\right.
\ee
We introduce two sets $V$ and $W$:
\be\lab{ch5:defv}
V=\{(\Sigma,g,k)\textrm{ such that \eqref{ch5:l2bounds} and \eqref{ch5:const1} are satisfied}\},
\ee
and 
\be\lab{ch5:defw}
\begin{array}{ll}
\ds W=\{ & \ds(\Sigma,g,k)\in V\textrm{ such that }(f_+,f_-)\textrm{ solution of }\eqref{ch5:choicef3bis}\textrm{ exist for all }(\phi_0,\phi_1)\\
& \ds\textrm{ such that }\nabla\phi_0\in L^2(\Sigma)\textrm{ and }\phi_1\in L^2(\Sigma)\}.
\end{array}
\ee
Not first that $W$ is not empty since $(\Sigma,g,k)=(\R^3,\de,0)$ belongs to $W$ in view of Remark \ref{rem:theflatcase}. We then show that $V$ is connected and $W$ is both open and closed in $V$ for a suitable topology (see \cite{param2} for the details). We infer $W=V$. This proves the existence of $(f_+,f_-)$ solution of \eqref{ch5:choicef3bis} and concludes the proof of Theorem \ref{ch5:th2}.

%%%%%%%%%%%%%%%%%%%%%%%%%%%%%%%%%%%%%%%

\chapter{Control of the foliation at initial time}\lab{part:initialu}

%%%%%%%%%%%%%%%%%%%%%%%%%%%%%%%%%%%%%%%

%% Control of the foliation at initial time

In this chapter, we will only consider $u(t,x,\o)$ and $\Sigma_t$ at $t=0$. Thus, for simplicity, we denote in the rest of this chapter $u(0, x,\o)$ by $u(x,\o)$ and $\Sigma_0$ by $\Sigma$. The goal of this chapter is to prove the estimates on the control of the foliation of $\Sigma$ by $u(x,\o)$ which are needed for the proof of Theorem \ref{ch5:th2} (see section \ref{ch5:sec:estneeded}), i.e. for the control of the parametrix at initial time. The estimates obtained for $u(x,\o)$ in this chapter must also be consistent with the control on $\MM$ for $u(t,x,\o)$ obtained in Chapter \ref{part:spacetimeu} (see section \ref{ch4:sec:mainres}). Here, we outline the main ideas and we refer to \cite{param1} for the details. 

\section{Geometric set-up and main results}

\subsection{Reduction to small data}\label{reducsmall}

Recall from section \ref{sec:reductionsmall} that we have reduced ourselves to an asymptotically flat initial data set $(\Sigma, g,k)$ solution to the constraint equations which satisfies the bounds
\be\lab{small1}
\norm{R}_{\lli{2}}\leq\varepsilon,\,\norm{\nabla k}_{\lli{2}}\leq\varepsilon, 
\ee
and is smooth outside of a small neighborhood $U$. In order to construct $u(x,\o)$ satisfying the asymptotic behavior 
$u(x,\o)\sim \xo$ when $|x|\rightarrow +\infty$ on $\Sigma$, we need to modify $(\Sigma, g,k)$ outside of $U$. We can glue it in a trivial way to $(\R^3,\delta,0)$ so that the new initial data set is still smooth outside of $U$, satisfies \eqref{small1}, and coincides with $(\R^3,\delta,0)$ outside of a slightly larger neighborhood. We still denote this initial data set $(\Sigma, g,k)$. Of course, $(\Sigma, g,k)$ does not satisfies the constraint equations in the annulus where the gluing takes place. However, for the construction of $u(x,\o)$, we only require $(\Sigma, g,k)$ to satisfy the constraint equations in $U$. Outside of $U$, $(\Sigma, g,k)$ is smooth, so things are much easier.

\subsection{Geometry of the foliation of $\s$ by a scalar function $u$}\label{sec:foliation}

We define the lapse $a=|\nabla u|^{-1}$, and the unit vector $N$ such that $\nabla u=a^{-1}N$. We also define the level surfaces $P_{u_0}=\{x\,/\,u=u_0\}$ so that $N$ is the normal to $\p$. The second fundamental form $\th$ of $\p$ is defined by 
\be\lab{eq:I6}
\th(X,Y)=g(\nabla_{X}N,Y)
\end{equation}
with $X,Y$ arbitrary vectorfields tangent to the  $u$-foliation $P_{u}$ of $\s$ and where $\nabla$ denotes the covariant differentiation with respect to $g$. We denote by $\trt$ the trace of $\th$,
i.e. $\trt=\delta^{AB}\th_{AB}$ where $\th_{AB}$ are the components of $\th$ relative to an
orthonormal frame $(e_A)_{A=1,2}$ on $P_u$. 

\subsection{Structure equations of the foliation of $\s$ by a scalar function $u$}

We recall some of the structure equations of the foliation of $\s$ by a scalar function $u$ which will be needed for the discussion of the main result of this chapter (see \cite{ChKl} for a proof). 
\begin{proposition}
The orthonormal frame frame $N, e_A, A=1, 2$ of $\s$ satisfies the following system:
\be\lab{frame}
\left\{\begin{array}{l}
\nabla_AN=\th_{AB}e_B,\\[1mm]
\nabn N=-a^{-1}\nabb a.
\end{array}\right.
\ee
Also, the lapse $a$ satisfies the following equation
\be\lab{struct}
\lapa=-\nabn\trt-|\th|^2+R_{NN},
\ee
where $\lap$ is the Laplace-Beltrami for the metric $\gamma$ on $\p$ induced by $g$. Finally, the second fundamental form $\th$ satisfies the following Codazzi equation
\be\lab{codazzith}
\nabb^B\th_{AB}=\nabb_A\trt+R_{NA}.
\ee
\end{proposition}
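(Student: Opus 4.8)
All three identities are classical structure equations for the codimension-one foliation of $\s$ by the level sets of $u$, and the plan is to derive them directly from the definitions of $N$, $a$ and $\th$, in the spirit of \cite{ChKl}. For the frame system \eqref{frame}, the starting point is that $N$ is a unit vector field, so differentiating $g(N,N)=1$ in any direction $X$ gives $g(\nabla_X N,N)=0$; hence $\nabla_X N$ is tangent to $P_u$. Taking $X=e_A$ and pairing with $e_B$ reproduces $g(\nabla_A N,e_B)=\th_{AB}$ by the definition \eqref{eq:I6} of $\th$, which is the first line. For the second line, write $N=a\nabla u$, so that $\nabla_N N=N(a)\nabla u+a\,\nabla_N\nabla u$; the first term is proportional to $N$ and contributes nothing along $e_A$. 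For the second term one uses the symmetry of the Hessian of $u$: $g(\nabla_N\nabla u,e_A)=g(\nabla_{e_A}\nabla u,N)$, and since $\nabla u=a^{-1}N$ with $g(N,N)=1$ the right-hand side equals $e_A(a^{-1})=-a^{-2}e_A(a)$. Thus $g(\nabla_N N,e_A)=-a^{-1}(\nabb a)_A$, and since $\nabla_N N$ is tangent to $P_u$ this yields $\nabn N=-a^{-1}\nabb a$.

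The Codazzi equation \eqref{codazzith} is the Codazzi--Mainardi identity for the hypersurface $P_u\subset\s$. Starting from $\th_{AB}=g(\nabla_A N,e_B)$, one computes the antisymmetrized tangential derivative $\nabb_C\th_{AB}-\nabb_A\th_{CB}$ by expanding the covariant derivatives, substituting the frame equations \eqref{frame}, and isolating the commutator $[\nabla_C,\nabla_A]N$; this produces an ambient curvature term $R(e_B,N,e_A,e_C)$, with the sign fixed by the curvature convention of the excerpt. Contracting the free index $C$ with $B$ and using that, in three dimensions, $\sum_B R(e_B,N,e_B,N)=R_{NN}$ while $\sum_B R(e_B,N,e_B,e_A)=R_{NA}$, one obtains $\nabb^B\th_{AB}=\nabb_A\trt+R_{NA}$.

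For the lapse equation \eqref{struct} the plan is to derive the Riccati (focusing) equation for $\th$ along $N$ and then take its trace. Differentiating the first equation in \eqref{frame} along $N$, commuting $\nabla_N$ past the tangential derivative $\nabla_A$ to bring in a curvature term, and using the second equation in \eqref{frame} to account for the non-geodesic character of $N$, one arrives at a transport equation of the schematic form $\nabn\th_{AB}=-\th_{AC}\th_C{}^B-R(e_A,N,e_B,N)-a^{-1}\nabb^2_{AB}a+(\text{terms built from }a,\nabb a)$, where $\nabb^2$ denotes the Hessian on $P_u$. Taking the trace with respect to the induced metric $\gamma$, using $\sum_A R(e_A,N,e_A,N)=R_{NN}$ and $\gamma^{AB}\nabb^2_{AB}a=\lap a$, and checking that the remaining lower-order terms combine to leave only $-a^{-1}\lap a$, one gets $\nabn\trt=-|\th|^2+R_{NN}-a^{-1}\lap a$; rearranging and writing $\lapa=a^{-1}\lap a$ gives exactly \eqref{struct}.

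I expect the only delicate point to be the bookkeeping in the Riccati computation: one must carefully track how the lapse $a$ enters when commuting $\nabla_N$ with $\nabla_A$ and when projecting $\nabn N$ back onto $TP_u$, so that precisely the tangential Hessian $\nabb^2 a$ (and hence, after contraction, exactly $\lap a$ with no leftover $|\nabb a|^2$) is produced, and that all curvature contributions assemble into the single Ricci component $R_{NN}$. Everything else is a routine application of the definitions together with the Gauss--Codazzi formalism, for which I would refer the reader to \cite{ChKl}.
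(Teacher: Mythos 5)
The paper does not actually prove this proposition: it simply cites \cite{ChKl}. Your proposal reconstructs exactly the standard computation one would find there, and the derivations of \eqref{frame} and \eqref{codazzith} are correct as written. For \eqref{frame}, the symmetry-of-the-Hessian argument for $\nabn N$ is the right shortcut; for \eqref{codazzith}, antisymmetrizing $\nabb\th$, isolating the commutator $[\nabla_C,\nabla_A]N$, and contracting is precisely the Codazzi--Mainardi argument, and the resulting sign $\nabb^B\th_{AB}-\nabb_A\trt=\sum_B R(e_B,N,e_B,e_A)=R_{NA}$ is correct with the paper's convention $R(X,Y,U,V)=g(X,R(U,V)Y)$, $R_{\mu\nu}=g^{\alpha\beta}R_{\alpha\mu\beta\nu}$. (The parenthetical mention of $\sum_B R(e_B,N,e_B,N)=R_{NN}$ in the Codazzi paragraph is harmless but plays no role there.)

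For the lapse equation you have an internal sign slip, and it is worth spelling out because it is not just bookkeeping. You write the Riccati transport in the form $\nabn\th_{AB}=-\th_{AC}\th_C{}^B-R(e_A,N,e_B,N)-a^{-1}\nabb^2_{AB}a+\cdots$, which is correct: a direct computation gives $g\big((D_N S)e_A,e_B\big)=g\big(D_A(\nabla_N N),e_B\big)-(S^2)_{AB}+g\big(R(N,e_A)N,e_B\big)$ with $S=D_\cdot N$, and $g(R(N,e_A)N,e_B)=R(e_B,N,N,e_A)=-R(e_A,N,e_B,N)$ by the pair-interchange and first-pair antisymmetry of $R$; tracing and checking (as you indicate) that the $a^{-2}|\nabb a|^2$ contributions from $\operatorname{div}_{P_u}(-a^{-1}\nabb a)$ and from $g\big((D_N S)N,N\big)$ cancel leaves exactly $-a^{-1}\lap a$. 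But then, using your own identity $\sum_A R(e_A,N,e_A,N)=R_{NN}$, the traced curvature term is $-R_{NN}$, not $+R_{NN}$; the trace of your Riccati equation is
\[
\nabn\trt=-|\th|^2-R_{NN}-a^{-1}\lap a, \qquad\text{i.e.}\qquad a^{-1}\lap a=-\nabn\trt-|\th|^2-R_{NN},
\]
and your stated conclusion $\nabn\trt=-|\th|^2+R_{NN}-a^{-1}\lap a$ flips the sign without justification. In fact the version with $-R_{NN}$ is the correct one: on the warped product $g=dr^2+f(r)^2\,d\Omega^2$ with $u=r$, $a\equiv 1$, $\th=\frac{f'}{f}\gamma$, $R_{NN}=-2f''/f$, one checks $-\nabn\trt-|\th|^2-R_{NN}=0=\lap a$, whereas the $+R_{NN}$ version gives $-4f''/f\neq 0$ in general. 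The discrepancy is actually a typo in the paper's equation \eqref{struct}: a few paragraphs later equation \eqref{eqlapse} reads $\lapa=-\nabn\trt-|\th|^2-R_{NN}$, and the parabolic equation \eqref{struct1} obtained after inserting $\trt-k_{NN}=1-a$ is only consistent with the $-R_{NN}$ sign. So your Riccati computation is correct; you should carry the trace through honestly to $-R_{NN}$ rather than forcing agreement with the misprinted \eqref{struct}.
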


%\subsection{Commutation formulas}

%We have the following useful commutation formulas between $\nabb$ and $\nabn$ (see \cite{ChKl} page 64).
%\begin{lemma}
%For any $\p$-tangent tensor $F$ on $\Sigma$, we have schematically:
%\be\lab{commut}
%[\nabn,\nabb]F=\ana\nabn F-\theta\nabb F+R_{N.}F+\theta\ana F.
%\ee
%In particular, we obtain for any scalar $f$ on $\Sigma$:
%\be\lab{scommut}
%[\nabn,\nabb]f=\ana\nabn f-\th\nabb f,
%\ee
%and:
%\begin{equation}\label{ad12}
%\begin{array}{ll}
%\ds [\nabn,\lap]f = & \ds -\trt\lap f -2\hth\nabb^2f+2a^{-1}\nabb a\nabb\nabn f+a^{-1}\lap a \nabn f-2R_{N.}\nabb f\\
%&\ds -\nabb\trt\nabb f +2\hth a^{-1}\nabb a\nabb f.
%\end{array}
%\end{equation}
%\end{lemma}

\subsection{Choice of $u(x,\o)$}\label{sec:choice}

We look for $u(x,\o)$ satisfying the three following conditions:
{\em\begin{enumerate}
\item[{\bf (a)}] $u(x,\o)\sim x.\o$ when $|x|\rightarrow +\infty$ on $\s$

\item[{\bf (b)}] The regularity of $u(x,\o)$ with respect to $x$ and $\o$ is consistent with the regularity of $u(t,x,\o)$ with respect to $(t,x)$ and $\o$ obtained in Chapter \ref{part:spacetimeu} (see section \ref{ch4:sec:mainres}). In particular, we have $\trt - k_{NN}\in\lli{\infty}$ (see the discussion in section \ref{sec:prescriptionu}) 

\item[{\bf (c)}] $u(x,\o)$ has as enough regularity in $x$ and $\o$ to control the parametrix at initial time, i.e. to obtain the conclusion of Theorem \ref{ch5:th2} 
\end{enumerate}
}
\noindent where the initial data set $(\Sigma, g,k)$ satisfies
\be\lab{const1}
\left\{\begin{array}{l}
\nabla^j k_{ij}=0,\\
 R=|k|^2,\\
\tr_g k=0, 
\end{array}\right.
\ee
in $U$ (see section \ref{reducsmall}), and where $R$ and $\nabla k$ are in $\lli{2}$ and satisfy the smallness assumption \eqref{small1}.

In order to motivate our choice of $u(x,\o)$, we investigate the regularity of the lapse $a$, which by \eqref{struct} satisfies the following equation:
\be\lab{eqlapse}
\lapa=-\nabn\trt-|\th|^2-R_{NN}.
\ee
Since $R$ is in $\lli{2}$, \eqref{eqlapse} implies that $a$ has at most two derivatives in $\lli{2}$. Thus, $u(x,\o)$ has at most three derivatives with respect to $x$ in $\lli{2}$. This is not enough to satisfy {\bf (c)}. In fact, the classical $T^*T$ argument (see for example \cite{St}) relies on integrations by parts in $x$ and would require at least one more derivative since $\s$ has dimension 3. 

Alternatively, we could try to use the $TT^*$ argument which relies on integrations by parts in $\o$. Indeed, $R$ being independent of $\o$, one would expect the regularity of $u(x,\o)$ with respect to $\o$ to be better. Differentiating \eqref{eqlapse} with respect to $\o$, we obtain:
\be\lab{diffom}
a^{-1}\lap(\partial_\o a)=2\nabb\nabn a+\cdots,
\ee
where the term on the right-hand side comes from the commutator $[\partial_\o,\lap]$ (see section \ref{regomega}). Thus, obtaining an estimate for $\partial_\o a$ from \eqref{diffom} requires to control $\nabn a$. Unfortunately, \eqref{eqlapse} seems to give control of tangential derivatives of $a$ only. This is where the specific choice of $u(x,\o)$ comes into play. 

Having in mind the equation of minimal surfaces (i.e. $\trt=0$), condition {\bf (b)} suggest the choice $\trt - k_{NN}=0$. Unfortunately, this equation together with \eqref{eqlapse} does not provide any control of $\nabn a$. We might propose as a second natural guess to take instead $\trt -k_{NN}=\nabn a$. Plugging in \eqref{eqlapse} yields an elliptic equation for $a$: $\nabn^2a+\lapa=-|\th|^2-\nabn k_{NN}-R_{NN}$. This allows us to control $\nabn^2a$ in $\lli{2}$. However, $\trt-k_{NN}=\nabn a$, and $\nabn a$ is at most in $H^1(\Sigma)$ which does not embed in $\lli{\infty}$ - since $\s$ has dimension 3 - so that condition {\bf (b)} is not satisfied. To sum up, the first guess $\trt-k_{NN}=0$ satisfies {\bf (b)}, but not {\bf (c)}, whereas the second guess $\trt-k_{NN}=\nabn a$ might satisfy {\bf (c)}, but does not satisfy {\bf (b)}. 

The correct choice is the intermediate one
\be\lab{choice}
\trt-k_{NN}=1-a.
\ee
We will see in section \ref{regx} that $a-1$ belongs to $\lli{\infty}$ so that {\bf (b)} is satisfied. Also, plugging \eqref{choice} in \eqref{eqlapse} yields the parabolic equation:
\be\lab{eqlapse1}
\nabn a-\lapa=|\th|^2+\nabn k_{NN}+R_{NN}.
\ee
This  will allow us to control normal derivatives of $a$. In turn, we will control derivatives of $a$ with respect to $\o$ using \eqref{diffom}. Ultimately, we will prove enough regularity with respect to both $x$ and $\o$, such that {\bf (c)} is  satisfied. 

\subsection{Main results}\label{sec:mainres}

For $1\leq p,q\leq +\infty$, we define the spaces $\l{p}{q}$ for tensors $F$ on $\s$ using the norm: 
$$\norm{F}_{\l{p}{q}}=\left(\int_u\norm{F}^p_{L^q(\p)}du\right)^{1/p}.$$

\begin{remark}
In the rest of the paper, all inequalities hold for any $\o\in\S$ with the constant in the right-hand side being independent of $\o$. Thus, one may take the supremum in $\o$ everywhere. To ease the notations, we do not explicitly write down this supremum. 
\end{remark}

We first state a result of existence and regularity with respect to $x$ for $u$.
\begin{theorem}\label{thregx}
Let $(\Sigma, g,k)$ chosen as in section \ref{reducsmall}. There exists a scalar function $u$ on $\s\times\S$ satisfying assumption {\bf (a)} and such that:
\begin{equation}\label{thregx1} 
\begin{array}{l}
\norm{a-1}_{\l{\infty}{2}}+\norm{\nabla a}_{\l{\infty}{2}}+\norm{a-1}_{\lli{\infty}}+\norm{\nabb\nabla a}_{\lli{2}}\lesssim\ep,\\
\norm{\trt-k_{NN}}_{\lli{\infty}}+\norm{\nabla\th}_{\lli{2}}\lesssim\ep,
\end{array}
\end{equation}
where $\p$, $a$, $N$ and $\th$ are associated to $u$ as in section \ref{sec:foliation}. 
\end{theorem}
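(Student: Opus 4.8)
\textbf{Proof plan for Theorem \ref{thregx}.}

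The plan is to construct $u(x,\o)$ as the limit of a solution of a parabolic flow dictated by the choice \eqref{choice}, i.e. $\trt - k_{NN} = 1 - a$, which via the structure equation \eqref{struct} becomes the parabolic equation \eqref{eqlapse1} for the lapse $a$ along the normal direction. First I would set up an iteration scheme: starting from the flat foliation $u = x\cdot\o$ outside $U$ (which is legitimate after the trivial gluing of section \ref{reducsmall}), I would propagate the level surfaces $P_u$ inward, at each stage solving the coupled system consisting of the frame transport equations \eqref{frame}, the parabolic equation \eqref{eqlapse1} for $a$, and the definition $\trt - k_{NN} = 1-a$. The key structural point, which should be emphasized, is that \eqref{eqlapse1} is genuinely parabolic in the normal direction (the $\nabn a$ term is first order in $N$, the Laplace-Beltrami $\lapa$ is second order tangentially), so that the maximum principle and parabolic energy estimates give control of $a$ — and crucially of $\nabn a$ — in terms of the right-hand side $|\th|^2 + \nabn k_{NN} + R_{NN}$, which only involves the curvature and one derivative of $k$, both assumed small in $L^2(\Sigma)$ by \eqref{small1}.

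The main estimates would then be obtained in the following order. Step one: a maximum-principle / $L^\infty$ bound for $a-1$ from \eqref{eqlapse1}, using that the zeroth-order term has a favorable sign and that the forcing term is quadratically small; this yields $\norm{a-1}_{\lli{\infty}} \les \ep$ and in particular keeps $1/2 \le a \le 2$, so that the foliation is non-degenerate and the spaces $\l{p}{p}$ coincide with $L^p(\Sigma)$. Step two: a parabolic energy estimate for \eqref{eqlapse1}, integrating against $a$ and against $\nabn a$ and using the Codazzi equation \eqref{codazzith} to trade tangential derivatives of $\th$ against $\nabb\trt + R_{NA}$; combined with the Gauss equation on $P_u$ (to control the Gauss curvature of the leaves and hence the Sobolev/Bochner constants on $P_u$), this gives $\norm{\nabla a}_{\l{\infty}{2}}$, $\norm{\nabb\nabla a}_{\lli{2}}$ and, through \eqref{choice}, $\norm{\trt - k_{NN}}_{\lli{\infty}}$ and $\norm{\nabla\th}_{\lli{2}}$. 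Step three: a fixed-point / continuity argument showing the iteration converges and produces a genuine solution $u$ on all of $\Sigma \times \SSS^2$ satisfying {\bf (a)}; here one exploits that, outside $U$, the data is exactly flat so the flow is trivial there, localizing all the analysis to a fixed compact region.

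The hard part will be Step two: making the parabolic energy estimates close at exactly the level of regularity available, namely only $L^2$ bounds on $R$ and $\nabla k$. The difficulty is that \eqref{eqlapse1} does not by itself control the full Hessian of $a$ — it controls $\nabn^2 a$ and $\lapa$ but not the mixed normal-tangential second derivatives directly — so one must commute \eqref{eqlapse1} with tangential derivatives, and the commutators $[\nabb, \nabn]$ and $[\nabb, \lap]$ generate terms involving $\th$ and the curvature of $P_u$ which are borderline. One has to arrange the estimates so that every such term is either quadratic in the small quantities or can be absorbed by the good parabolic term $\int |\nabn a|^2$; this is where the specific intermediate choice \eqref{choice} is essential — it produces a parabolic rather than elliptic or transport equation, and the parabolic gain of regularity in the normal direction is precisely what is needed to beat the critical scaling. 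A secondary technical obstacle is controlling the geometry of the leaves $P_u$ uniformly (area, Gauss curvature, injectivity radius) so that the Sobolev inequalities on $P_u$ have uniform constants; this is handled by propagating the control of $\trt$ and $\th$ along the flow and using the lower bound on the volume radius inherited from the hypotheses. The remaining claimed bounds then follow by inserting these estimates back into \eqref{frame}, \eqref{struct} and \eqref{codazzith}.
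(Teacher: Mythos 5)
Your overall plan matches the paper's: propagate $u$ inward from $x\cdot\o=-2$ via the parabolic equation \eqref{eqlapse1}, derive a priori estimates on the strip $-2<u<2$, and glue to $x\cdot\o$ in the flat annulus $1\le|x|\le 2$. The flow-along-the-normal setup, the use of Codazzi \eqref{codazzith} for $\theta$, and the propagation of the intrinsic geometry of the leaves $P_u$ are all the right ingredients.

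However, you have not identified, and your plan would not resolve, the central obstruction in Step two. To obtain $\nabb\nabn a\in L^2(\Sigma)$ one needs a parabolic equation for $\nabn a$, which means differentiating \eqref{eqlapse1} in the normal direction. This produces the source terms $\nabn^2 k_{NN}$ and $\nabn R_{NN}$, which are at the level of $\nabla^2 k$ and $\nabla R$ — exactly one derivative beyond the available $L^2$ control on $\nabla k$ and $R$. No amount of bookkeeping of the commutators $[\nabb,\nabn]$, $[\nabb,\lap]$, nor the absorption into $\int|\nabn a|^2$ that you propose, touches this: it is a regularity deficit in the forcing, not in the principal part. The paper's crucial move is to use the twice-contracted Bianchi identity on $\Sigma$ together with the constraint equations (divergence-free $k$ and the maximal gauge) to convert these into a tangential divergence, $\nabn R_{NN}=-\nabla_A R_{AN}+\cdots$ and $\nabn^2 k_{NN}=-\nabla_A\nabn k_{AN}+\cdots$, so that the equation for $\nabn a$ takes the form $(\nabn-a^{-1}\lap)\nabn a=\divb(H)+h_1$ with $H\in L^2(\Sigma)$ controlled by \eqref{small1}. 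The divergence structure is then exploited by a weak-form parabolic estimate (pairing with $\nabn a$ and integrating the $\divb$ by parts onto $\nabb\nabn a$), which is what actually closes the estimate for $\nabb\nabn a$. Without this structural observation your Step two would fail, because the naive estimate of the source requires data regularity that is not available. A secondary point: your claim to get $\|a-1\|_{L^\infty(\Sigma)}$ from a maximum principle is plausible but is not what the paper does — the $L^\infty$ bound there comes from the $L^2$-based parabolic estimates and the resulting control of $\nabb\nabla a$, and a clean maximum-principle argument would need a sign for the zeroth-order terms that is not obvious from \eqref{eqlapse1}.
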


Notice that condition {\bf (b)} is included in \eqref{thregx1}. In order to state our second result, we introduce fractional Sobolev spaces $H^b(\p)$ on the surfaces $\p$ for any $b\in\mathbb{R}$ (see \cite{param1} for their precise definition). We have the following estimate for $\nabn^2a$, and improved estimate for $\nabn a$.  

\begin{theorem}\label{thnabn2a}
Let $(\Sigma, g,k)$ chosen as in section \ref{reducsmall}. Let $u$ the scalar function on $\s\times\S$ constructed in theorem \ref{thregx}, and let $\p$, $a$ and $N$ be associated to $u$ as in section \ref{sec:foliation}. We have:
\be\lab{nabn2a1}
\norm{\nabn a}_{\l{\infty}{4}}+\norm{\nabn^2a}_{\lhs{2}{-\frac{1}{2}}}\les \ep.
\ee
\end{theorem}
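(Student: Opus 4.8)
\textbf{Proof proposal for Theorem \ref{thnabn2a}.}

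The plan is to extract the estimates for $\nabn a$ and $\nabn^2 a$ from the parabolic equation \eqref{eqlapse1}, namely $\nabn a-\lapa=|\th|^2+\nabn k_{NN}+R_{NN}$, using the regularity already established in Theorem \ref{thregx} as a baseline. First I would treat \eqref{eqlapse1} as a transport-type (parabolic) equation in the normal direction along the $u$-foliation: the operator $\nabn$ plays the role of the time derivative, while $\lap$ is the spatial Laplacian on the leaves $\p$. The right-hand side is controlled in $\lli{2}$ by Theorem \ref{thregx} (for $|\th|^2$, using $\norm{\nabla\th}_{\lli{2}}\lesssim\ep$ together with a Sobolev embedding on $\p$ to get $\th\in\l{\infty}{4}$ and hence $|\th|^2\in\l{\infty}{2}$), by the curvature bound $\norm{R}_{\lli{2}}\leq\ep$ (for $R_{NN}$), and by the bound $\norm{\nabla k}_{L^2(\Sigma)}\leq\ep$ together with $\tr_g k=0$ and the constraint/decomposition of $k$ (for $\nabn k_{NN}$; one must be careful that differentiating $k_{NN}$ in the $N$ direction produces, besides $\nabla k$, lower-order terms involving $\th$ and $a^{-1}\nabb a$, all of which are controlled). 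With the source term in $\lli{2}$, standard parabolic energy estimates on the slab $\Sigma=\cup_u\p$—multiplying \eqref{eqlapse1} by $a-1$ and by $\nabn a$ and integrating—would yield $\nabn a\in\l{\infty}{2}$ with a gain of a half derivative in the leaf direction, i.e. $\nabn a\in L^\infty_u H^{1/2}(\p)$; the Sobolev embedding $H^{1/2}(\p)\hookrightarrow L^4(\p)$ on the 2-surface $\p$ then gives $\norm{\nabn a}_{\l{\infty}{4}}\lesssim\ep$, which is the first half of \eqref{nabn2a1}.

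For the second half, $\norm{\nabn^2 a}_{\lhs{2}{-1/2}}\lesssim\ep$, I would differentiate \eqref{eqlapse1} once more in the normal direction, or equivalently use the equation to write $\nabn^2 a=\nabn\lapa+\nabn(|\th|^2+\nabn k_{NN}+R_{NN})$, and estimate each term in $L^2_u H^{-1/2}(\p)$. The first term $\nabn\lapa$ is handled by commuting $\nabn$ past $\lap$ (picking up commutator terms schematically of the form $\th\cdot\nabla^2 a$ and $\nabla\th\cdot\nabla a$, controlled by the bounds in Theorem \ref{thregx} and the just-obtained $\nabn a\in\l{\infty}{4}$) and then using the already established $\nabb\nabla a\in\lli{2}$ to place $\lap(\nabn a)$ — or rather $\nabn a$ itself — in a space whose $H^{-1/2}$-in-$\p$, $L^2$-in-$u$ norm is controlled: losing one derivative via the negative Sobolev index on $\p$ exactly compensates the second derivative. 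The genuinely dangerous term is $\nabn^2 k_{NN}$ appearing from $\nabn(\nabn k_{NN})$, since we only assume $\nabla k\in L^2(\Sigma)$ and have no control on two normal derivatives of $k$; here one must exploit that $\nabn^2 k_{NN}$ only needs to be estimated in $H^{-1/2}(\p)$, integrating by parts one $\nabn$ onto test functions and absorbing it against the parabolic smoothing, or else re-expressing it using the constraint equations and the Codazzi equation \eqref{codazzith} to trade a normal derivative of $k$ for tangential derivatives plus curvature.

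The main obstacle I expect is precisely this interplay between the limited regularity of $k$ (only $\nabla k\in L^2$) and the need to control $\nabn k_{NN}$ and its normal derivative: naive differentiation produces terms that are not in any controlled space, and the resolution must go through the structure of the constraint equations \eqref{const1}, the Codazzi equation, and the anisotropic nature of the estimates (full strength tangentially, only $H^{-1/2}$ or $3/2$-derivatives normally, as emphasized in the remarks following section \ref{ch5:sec:estneeded}). A secondary technical point is that all of this must be carried out uniformly in $\o\in\S$, but since $R$ and $k$ do not depend on $\o$ and the $\o$-dependence enters only through $N(x,\o)$ and the foliation geometry — already controlled in Theorem \ref{thregx} — this causes no additional loss. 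I would refer to \cite{param1} for the full details of the parabolic estimates and the commutator bookkeeping.
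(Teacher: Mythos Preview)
Your overall strategy---work with the parabolic equation for $a$, differentiate in the normal direction, and use the contracted Bianchi identity together with the constraint $\nabla^j k_{ij}=0$ to trade dangerous normal derivatives for tangential ones---is exactly the right circle of ideas. However, there is a genuine gap in the first half of your argument, and it propagates into the second half.

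The claim that ``standard parabolic energy estimates\ldots\ multiplying \eqref{eqlapse1} by $a-1$ and by $\nabn a$'' yield $\nabn a\in L^\infty_u H^{1/2}(\p)$ is not justified. Multiplying by $a-1$ gives \eqref{r19}; multiplying by $\nabn a$ (or, better, working with the differentiated equation \eqref{r21} for $\nabn a$) gives only $\nabn a\in L^\infty_u L^2\cap L^2_u H^1$, which is \eqref{r28}. The obstruction is that after you perform the Bianchi/constraint conversion you correctly identify, the source in \eqref{r21} splits as $\divb(H)+h_1$ with $H=-\nabn k_{\cdot N}-R_{\cdot N}\in L^2(\Sigma)$ and $h_1\in L^2_u L^{4/3}$. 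The piece $h_1$ embeds into $L^2_u H^{-1/2}(\p)$ (by duality of $H^{1/2}\hookrightarrow L^4$ on the $2$-surface $\p$) and \emph{would} give the half-derivative gain you want; but $\divb(H)$ is only in $L^2_u H^{-1}(\p)$, and parabolic regularity with such a source yields no more than $L^\infty_u L^2\cap L^2_u H^1$ for the solution and $L^2_u H^{-1}$ for its $\nabn$-derivative. Interpolation between $L^\infty_u L^2$ and $L^2_u H^1$ does not produce $L^\infty_u H^{1/2}$ either.

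The paper's remedy (section \ref{sec:nabla2a}) is to split $\nabn a=a_1+a_2$ according to the two source terms via \eqref{zoc3}--\eqref{zoc5}. For $a_1$ the good source $h_1\in L^2_u L^{4/3}$ gives directly $a_1\in L^\infty_u L^4$ and $\nabn a_1\in L^2_u H^{-1/2}$ (Proposition \ref{prop:zoc1}). For $a_2$ one differentiates the $a_2$-equation \emph{a second time} in $\nabn$ and applies the Bianchi/constraint trick \emph{again} to $\nabn\divb(H)$, producing a source of the form $\divb\divb(H_1)+\divb(H_2)+h_2$ as in \eqref{zoc20}; only then do refined parabolic estimates deliver $a_2\in L^\infty_u L^4$ and $\nabn a_2\in L^2_u H^{-1/2}$ (Proposition \ref{prop:zoc2}). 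In short, you correctly anticipated the structure trick, but it must be iterated twice, and the decomposition \eqref{zoc5} is essential precisely because the two pieces of the source require different treatments.
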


The third theorem investigates the regularity of $u$ with respect to $\o$:
\begin{theorem}\label{thregomega}
Let $(\Sigma, g,k)$ chosen as in section \ref{reducsmall}. Let $u$ the scalar function on $\s\times\S$ constructed in theorem \ref{thregx}, and let $\p$, $a$, $N$ and $\th$ be associated to $u$ as in section \ref{sec:foliation}. We have:
\begin{equation}\label{threomega1}
\begin{array}{r}
\ds\norm{\po a}_{\lli{\infty}}+\norm{\nabb\po a}_{\l{\infty}{2}}+\norm{\nabb^2\po a}_{\lli{2}}+\norm{\nabn\po a}_{L^2_u H^{\frac{1}{2}}(\p)}\\
\ds +\norm{\nabn^2\po a}_{L^2_u H^{-\frac{3}{2}}(\p)}+\norm{\nabla\po\th}_{\lli{2}}\lesssim\ep,\,\,\,\,\norm{\po N}_{\lli{\infty}}\lesssim 1,
\end{array}
\end{equation}
\begin{equation}\label{threomega2}
\begin{array}{r}
\ds\norm{\po^2a}_{L^2_u H^{\frac{3}{2}}(\p)}+\norm{\po^2a}_{L^\infty_u H^{\frac{1}{2}}(\p)}+\norm{\nabn\po^2a}_{L^2_u H^{-\frac{1}{2}}(\p)}+\norm{\nabla\po^2\th}_{\lli{2}}\lesssim\ep,\\
\ds\norm{\po^2 N}_{\lli{\infty}}\lesssim 1,
\end{array}
\end{equation}
and
\begin{equation}\label{threomega3}
\norm{\po^3u}_{L^{\infty}_{\textrm{loc}}(\Sigma)}\lesssim 1.
\end{equation}
\end{theorem}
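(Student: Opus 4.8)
\textbf{Proof plan for Theorem \ref{thregomega}.}

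The plan is to follow exactly the scheme that was already used for the $(t,x)$-regularity in Theorem \ref{thregx} and for $\nabn^2 a$ in Theorem \ref{thnabn2a}, but now differentiating the governing equations with respect to the angular parameter $\o$. The three governing equations are the parabolic equation \eqref{eqlapse1} for the lapse $a$, the frame system \eqref{frame} for $N$, and the Codazzi equation \eqref{codazzith} for $\th$, together with the defining relation \eqref{choice}. First I would compute the commutators $[\po,\nabn]$, $[\po,\nabb]$ and $[\po,\lap]$, which are the genuinely new ingredient; since $u$ depends on $\o$ both through the level surfaces $\p$ and through $N$, differentiating $\nabla u=a^{-1}N$ shows $\po N$ is tangent to $\p$ (from $g(N,N)=1$), and a short computation gives $[\po,\lap](\cdot)=2\nabb\nabn(\cdot)\po N+\cdots$, reproducing \eqref{diffom}. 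The crucial point, already flagged in the excerpt, is that $R$, $k$ and their derivatives do \emph{not} depend on $\o$, so differentiating the right-hand sides of \eqref{eqlapse1} and \eqref{codazzith} in $\o$ only lets the $\o$-derivative fall on the frame $(N,e_A)$, producing terms like $(\text{curvature})\,\po N$ rather than genuine new curvature.

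Next I would set up the bootstrap. Differentiating \eqref{choice} gives $\po\trt-\po(k_{NN})=-\po a$, i.e.\ $\po\trt$ is controlled once $\po a$ and $\po N$ are. Differentiating \eqref{eqlapse1} in $\o$ and using $[\po,\lap]$, $[\po,\nabn]$, one obtains a parabolic equation
\[
\nabn(\po a)-\lap(\po a)=2\nabb\nabn a\cdot\po N+\text{(curvature)}\cdot\po N+\th\cdot\po\th+\cdots,
\]
whose right-hand side is estimated in $\l{2}{2}$-type norms using the $(t,x)$-estimates \eqref{thregx1}, \eqref{nabn2a1} and the not-yet-proven bounds on $\po N$ and $\po\th$; parabolic regularity on the $u$-foliation then yields $\norm{\po a}_{\lli{\infty}}$, $\norm{\nabb\po a}_{\l{\infty}{2}}$, $\norm{\nabb^2\po a}_{\lli{2}}$ and the $\nabn$-bounds in \eqref{threomega1}. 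For $\po N$ one integrates the system \eqref{frame} differentiated in $\o$ (a transport/ODE system along $N$ coupled to $\th$ and $a^{-1}\nabb a$) to get $\norm{\po N}_{\lli{\infty}}\lesssim 1$; for $\po\th$ one differentiates the Codazzi equation \eqref{codazzith}, which is a Hodge system on $\p$, and uses elliptic estimates on $\p$ together with the bound on $\po\trt$, $\po N$ and the curvature. The second-order estimates \eqref{threomega2} are obtained by the same procedure applied one more time: differentiate twice, compute the second commutators $[\po^2,\lap]$ etc., and feed in the first-order bounds just established, exactly as the passage from Theorem \ref{ch4:thregomega} to Theorem \ref{ch4:thregomega2} was carried out in Chapter \ref{part:spacetimeu} (using the auxiliary transport-equation trick to absorb the troublesome $\chi\cdot\po^2(\cdot)$-type term). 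Finally \eqref{threomega3} is a pointwise statement localized away from infinity; since $R$ and $\nabla k$ are smooth on a fixed compact set minus $U$, and the data inside $U$ give the $\lli{2}$-controlled quantities above, one closes $\norm{\po^3 u}_{L^\infty_{\textrm{loc}}}$ by a third differentiation of the transport equation for $u$ itself, keeping only local bounds and avoiding the global scaling issues.

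The main obstacle is the first-order parabolic estimate for $\po a$: the term $2\nabb\nabn a\cdot\po N$ involves $\nabn a$, for which by \eqref{nabn2a1} we only control $\norm{\nabn a}_{\l{\infty}{4}}$ and $\norm{\nabn^2 a}_{\lhs{2}{-1/2}}$ — i.e.\ essentially $3/2$ derivatives of $a$ in the $N$-direction, not two. Matching this deficit against the $\nabb$-derivative it is paired with, and against the fractional-Sobolev norms $H^{\pm 1/2}(\p)$, $H^{\pm 3/2}(\p)$ appearing on the left of \eqref{threomega1}–\eqref{threomega2}, requires a careful product estimate in these anisotropic spaces; this is precisely why the statement is phrased with the mixed fractional norms rather than plain $\lli{2}$, and why the choice \eqref{choice} (parabolic rather than elliptic or minimal-surface) is essential — any other normalization would either break condition {\bf(b)} or fail to give the $\nabn$-control needed to even write down the equation for $\po a$. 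I would handle this by splitting $\nabn a$ using the Littlewood–Paley decomposition on $\p$ as in the proof of \eqref{nabn2a1} (the $a_1^j,a_2^j$ decomposition of \eqref{ch5:cordecfr1s}), estimating the high-frequency part in $L^2$ with the $2^{-j/2}$ gain and the low-frequency part in $\l{\infty}{2}$, so that the product with $\po N\in L^\infty$ and the subsequent $\nabb$ land in the correct fractional space. The remaining steps are routine given the machinery of \cite{param1} and the estimates already quoted.
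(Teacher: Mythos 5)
Your skeleton matches the paper's: differentiate the parabolic equation \eqref{eqlapse1} for $a$ in $\o$, compute the commutator $[\po,\lap]$, and iterate. But you misidentify the main obstacle and omit the one ingredient that actually closes the $L^\infty$ bound in \eqref{threomega1}. The term $\nabb\nabn a$ on the right-hand side of the $\po a$ equation \eqref{diffom1} is not the bottleneck: it is already in $L^2(\Sigma)$ by \eqref{thregx1}, so the basic $L^2$ parabolic estimate for $(\nabn-a^{-1}\lap)$ gives $\nabb\po a\in L^\infty_u L^2(P_u)$, $\nabb^2\po a\in L^2(\Sigma)$, $\nabn\po a\in L^2(\Sigma)$ directly, with no Littlewood--Paley splitting whatsoever. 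The decomposition $\nabn a=a^j_1+a^j_2$ of Proposition \ref{cordecfr} is a \emph{consequence} of Theorem \ref{thnabn2a}, used downstream in the parametrix chapter \ref{part:paraminit}; it plays no role in the proof of the present theorem. What the basic estimate does \emph{not} give is $\po a\in L^\infty(\Sigma)$. The paper's key step is to differentiate \eqref{diffom1} once more in $\nabn$, encounter the dangerous term $\nabn R_{N\,\po N}$, and rewrite it as a tangential divergence using the twice-contracted Bianchi identity \eqref{bianchi} together with the constraints \eqref{const1} -- the same trick already used for $\nabn R_{NN}$ in section \ref{regx}. This puts the right-hand side in $L^2_u H^{-1}(P_u)$ and, after a refined parabolic estimate, yields $\nabn\po a\in L^2_u H^{1/2}(P_u)$, $\nabn^2\po a\in L^2_u H^{-3/2}(P_u)$; interpolating against the basic estimate then gives $\po a\in L^\infty_u H^{5/4}(P_u)\hookrightarrow L^\infty(\Sigma)$. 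This Bianchi mechanism is entirely absent from your plan, and without it the $L^\infty$ conclusion does not follow.

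At second order you appeal to the auxiliary transport-equation trick of Lemma \ref{ch4:lemma:poo3} from Chapter \ref{part:spacetimeu}; that lemma absorbs a $\chi\c\po^2(\cdot)$ source term in a \emph{null transport equation along $L$}, and has no counterpart in the parabolic setting on $\Sigma$. The genuine second-order obstruction is the one recorded in Remark \ref{theend}: one cannot differentiate the $\po^2 a$ equation in $\nabn$, because $\nabn R_{\po N\,\po N}$ has no exploitable structure -- $\po N$ is tangent to $P_u$, so there is no $N$-contraction to trade for a tangential derivative via Bianchi. This is precisely why \eqref{threomega2} stops at $\nabn\po^2 a\in L^2_u H^{-1/2}(P_u)$ and contains no $L^\infty$ bound on $\po^2 a$, and why the paper obtains \eqref{threomega3} by a different route: differentiate the scalar equation $\textrm{div}\left(\frac{\nabla u}{|\nabla u|}\right)=1-\frac{1}{|\nabla u|}+k_{NN}$ three times in $\o$, obtain a parabolic equation for $\po^3 u$ with source $\nabb\po^2\log a+\cdots$, and land in $L^\infty_u H^{3/2}(P_u)\hookrightarrow L^\infty_{\textrm{loc}}(\Sigma)$ directly, without ever controlling $\po^3 a$. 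Your plan as written would try to push the $\po^k a$ hierarchy further than the structure allows.
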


\subsection{Additional results}

The following proposition establishes the existence of a global coordinate system on $\Sigma$.
\begin{proposition}\label{gl20}
Let $\o\in\S$. Let $\phi_\o:\s\rightarrow\R^3$ defined by:
\begin{equation}\label{gl21}
\phi_\o(x):=u(x,\o)\o+\po u(x,\o).
\end{equation}
Then $\phi_\o$ is a bijection, and the determinant of its Jacobian satisfies the following estimate:
\begin{equation}\label{gl22}
\norm{|\det(\textrm{Jac}\phi_\o)|-1}_{\lli{\infty}}\lesssim\ep.
\end{equation}
\end{proposition}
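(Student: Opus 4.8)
\textbf{Proof proposal for Proposition \ref{gl20}.}

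The plan is to establish the claim in two stages: first the quantitative Jacobian estimate \eqref{gl22}, then the global bijectivity of $\phi_\o$, using the former together with a degree-theoretic (or Hadamard-type) argument for the latter. Throughout, $\o\in\S$ is fixed and all estimates carry the uniformity in $\o$ already noted in the remark before Theorem \ref{thregx}.

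First I would compute $\textrm{Jac}\,\phi_\o$ directly from the definition \eqref{gl21}: for a vector field $X$ on $\s$, $d\phi_\o(X) = (Xu)\o + X(\po u)$. Writing this in a convenient frame, the natural choice is the orthonormal frame $N, e_1, e_2$ adapted to the $u$-foliation, since $Nu = a^{-1}$ and $e_A u = 0$. Hence $d\phi_\o(N) = a^{-1}\o + N(\po u)$ and $d\phi_\o(e_A) = e_A(\po u)$. In the flat model $(\R^3,\de)$ one has $u = \xo$, $a=1$, $\po u = $ the tangential part of $x$, and $\phi_\o = \mathrm{Id}$, so $\det(\textrm{Jac}\,\phi_\o) = 1$ exactly. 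The strategy is then to expand $\det(\textrm{Jac}\,\phi_\o)$ as $1$ plus error terms, each of which is a product of quantities controlled by the estimates of Theorems \ref{thregx} and \ref{thregomega}: the key inputs are $\|a-1\|_{\lli{\infty}}\les\ep$, $\|\nabla a\|_{\l{\infty}{2}}\les\ep$ (equivalently $\|N(\po u)\|$-type bounds via $\po u$ and its $x$-derivatives), $\|\po N\|_{\lli{\infty}}\les 1$, and $\|\nabb \po u\|$, $\|\nabn\po u\|$ bounds which follow by integrating the estimates for $\po a$ and $\nabn \po a$ in \eqref{threomega1}. One must be slightly careful that $\det$ is being taken with respect to $g$ (i.e. comparing the frame $N, e_A$ on $\Sigma$ against the standard frame on $\R^3$); the metric comparison $\|g - \de\|$, which is itself of size $O(\ep)$ outside and inside $U$ by \eqref{coorharmth1}-type bounds and the gluing construction of section \ref{reducsmall}, contributes another $O(\ep)$ factor. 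Collecting terms gives $\||\det(\textrm{Jac}\,\phi_\o)| - 1\|_{\lli{\infty}}\les \ep$, which is \eqref{gl22}.

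For the bijectivity, I would argue as follows. The estimate \eqref{gl22} (with $\ep$ small) shows $\det(\textrm{Jac}\,\phi_\o)$ never vanishes, so $\phi_\o$ is a local $C^1$ diffeomorphism. By the asymptotic condition {\bf (a)}, $u(x,\o)\sim\xo$ as $|x|\to\infty$, and correspondingly $\po u(x,\o)$ matches the tangential part of $x$ at infinity (this is the content of the reduction in section \ref{reducsmall}, where $(\Sigma,g,k)$ is glued to $(\R^3,\de,0)$ outside a compact set, and where the corresponding $\o$-derivative estimates degenerate appropriately); hence $\phi_\o$ is a proper map which agrees with the identity near infinity. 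A proper local diffeomorphism between connected manifolds is a covering map, and since $\R^3$ is simply connected and $\phi_\o$ has degree $1$ (it equals the identity outside a compact set), it is a global bijection. Equivalently, one can invoke the global inverse function theorem of Hadamard--Caccioppoli: $\phi_\o$ is a $C^1$ local diffeomorphism that is proper, hence a diffeomorphism onto $\R^3$.

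The main obstacle I anticipate is the bookkeeping in the Jacobian expansion: one has to verify that \emph{every} error term appearing in $\det(\textrm{Jac}\,\phi_\o) - 1$ is a product in which at least one factor is $O(\ep)$ in the relevant norm, and that the norms match up — in particular that the only-$H^1$-type control on $\po a$ (which does \emph{not} embed in $L^\infty$ on the $3$-dimensional $\Sigma$) is never needed in $L^\infty$ for this computation, since the $L^\infty$ control we actually use is on $N(\po u)$ and on $\po N$ rather than on all first derivatives of $\po u$. This is exactly the delicate point that motivated the special choice \eqref{choice} of $u$, so the estimate should close, but the precise routing of which derivative is estimated in which space requires care. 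The properness/degree argument for bijectivity is, by contrast, essentially standard once the gluing to the flat exterior is invoked; I would refer to \cite{param1} for the routine details.
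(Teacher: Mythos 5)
Your proposal is essentially sound and for the Jacobian estimate follows the same path as the paper: compute $d\phi_\o$ in a $g$-orthonormal frame adapted to the $u$-foliation (so $Nu=a^{-1}$, $e_Au=0$), observe that the entries involve only $a$, $\po a$ and $\po N$, and invoke the $L^\infty$ bounds of \eqref{thregx1} and \eqref{threomega1}. What you do not spot, and what makes the paper's computation a one-liner, is the factorization of the Gram matrix which gives (their \eqref{gl27})
\[
\det\bigl((\textrm{Jac}\,\phi_\o)^T\textrm{Jac}\,\phi_\o\bigr)=a^{-2}\det\bigl((\textrm{Jac}\,\Phi_u)^T\textrm{Jac}\,\Phi_u\bigr),
\]
where $\Phi_u(x)=\po u(x,\o)$ is the $2$-dimensional map on the level set $P_u$. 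This reduces the $3\times 3$ determinant to the $2\times 2$ one of Proposition \ref{gl0}, for which the estimate $\norm{(\textrm{Jac}\,\Phi_u)^T\textrm{Jac}\,\Phi_u-I}_{\lli{\infty}}\les\ep$ has already been established; your direct expansion would reach the same terms but with more bookkeeping. Two small misstatements worth flagging: you cite $\norm{\nabla a}_{\l{\infty}{2}}\les\ep$ as a ``key input'', but an $L^\infty_uL^2(P_u)$ bound is of no use for an $L^\infty(\Sigma)$ determinant estimate — the input actually used is $\norm{\po a}_{\lli{\infty}}\les\ep$, since $N(\po u)=-a^{-2}\po a$ after using $g(\po N,N)=0$; and the appeal to a ``metric comparison $\norm{g-\de}=O(\ep)$'' is a red herring — once you work in a $g$-orthonormal frame no such comparison is needed, which is fortunate because no $L^\infty$ bound on $g-\de$ in the $\phi_\o$-coordinates is available at this regularity.

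For the bijectivity your route is genuinely different. You propose a direct $3$-dimensional argument: $\phi_\o$ is a proper $C^1$ local diffeomorphism equal to the identity outside a compact set, hence a global bijection by the Hadamard--Caccioppoli (or covering-space / degree) theorem. The paper instead establishes bijectivity fiberwise — Proposition \ref{gl0} shows each $\Phi_u:P_u\to T_\o\S$ is a global diffeomorphism (using the $2\times 2$ Jacobian bound \eqref{gl5}, the coincidence of $u$ with $x\c\o$ for $|x|\geq 2$, and level-set considerations for $\partial_\varphi u,\partial_\psi u$) — and then deduces bijectivity of $\phi_\o$ ``as an easy consequence'' from the triangular structure $\phi_\o=(u,\Phi_u)$. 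Your degree-theoretic argument is cleaner if the sole aim is Proposition \ref{gl20}; the paper's fiberwise route has the advantage that $\Phi_u$ is needed on its own as a global coordinate system on the $2$-surfaces $P_u$ elsewhere in the analysis, so no work is wasted. Both are correct.
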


Below, we state several additional estimates. We start with a first proposition.  
\begin{proposition}\lab{prop:estimatesadded}
Let $(\Sigma, g,k)$ chosen as in section \ref{reducsmall}. Let $u$ the scalar function on $\s\times\S$ constructed in theorem \ref{thregx}. Let $\nu\in\S$ and $\phi_\nu$ the map defined in \eqref{gl21}. Then, we have for all $x\in\s$ and $\o\in\S$
\begin{equation}\label{gogol2}
\begin{array}{l}
\ds u(x,\o)-\phi_\nu(x)\c\o=O(\ep|\o-\nu|^2),\\
\ds \po u(x,\o)-\po(\phi_\nu(x)\c\o)=O(\ep|\o-\nu|),\\
\ds \po^2 u(x,\o)-\po^2(\phi_\nu(x)\c\o)=O(\ep).
\end{array}
\end{equation}
\end{proposition}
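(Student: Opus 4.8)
\textbf{Proof plan for Proposition \ref{prop:estimatesadded}.}

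The plan is to derive the three estimates in \eqref{gogol2} by Taylor-expanding $u(x,\o)$ in $\o$ around the fixed direction $\nu$, and controlling the error terms using the regularity of $u$ with respect to $\o$ recorded in Theorem \ref{thregomega} (together with Theorem \ref{thregx} for the lower-order terms). Recall that $\phi_\nu(x)=u(x,\nu)\nu+\po u(x,\nu)$, so that $\phi_\nu(x)\c\o = u(x,\nu)(\nu\c\o) + (\po u(x,\nu))\c\o$. First I would compute a first-order Taylor expansion of the scalar function $\o\mapsto u(x,\o)$ (for fixed $x$) along the arc of $\S$ joining $\nu$ to $\o$; since $|\o-\nu|$ is the relevant small parameter, I parametrize this arc by $\o_\sigma$, $\sigma\in[0,1]$, with $\o_0=\nu$, $\o_1=\o$, and write
\[
u(x,\o) = u(x,\nu) + \po u(x,\nu)\c(\o-\nu) + \int_0^1 (1-\sigma)\,\po^2 u(x,\o_\sigma)[\o-\nu,\o-\nu]\,d\sigma + \textrm{(arc correction)},
\]
where the arc correction accounts for the fact that $\o_\sigma$ runs along a geodesic of $\S$ rather than a straight segment; this correction is itself quadratic in $|\o-\nu|$ with a coefficient controlled by $\norm{\po u}_{L^\infty_{\textrm{loc}}}$, which by Theorem \ref{thregx} and the asymptotic normalization {\bf (a)} is $O(1+|x|)$, hence $O(1)$ after absorbing into the implicit constant on the relevant region. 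The key point is then to observe that $u(x,\nu)+\po u(x,\nu)\c(\o-\nu)$ is, up to the first-order expansion of $\nu\c\o$ and $\o\c\o$, exactly $\phi_\nu(x)\c\o$; more precisely $\phi_\nu(x)\c\o - u(x,\nu) - \po u(x,\nu)\c(\o-\nu) = u(x,\nu)(\nu\c\o - 1) + \po u(x,\nu)\c\nu$, and since $\nu\c\o - 1 = -\tfrac12|\o-\nu|^2$ and $\po u(x,\nu)\c\nu = 0$ (differentiate $|\nu|^2=1$ — equivalently, $\po u$ is tangent to $\S$), this difference is exactly $-\tfrac12 u(x,\nu)|\o-\nu|^2 = O(\ep|\o-\nu|^2)$ on the region where $u(x,\nu)-x\c\nu$ is controlled by $\ep$ by Theorem \ref{thregx}. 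Combining with the Taylor remainder, which is bounded by $\tfrac12\sup_\sigma\norm{\po^2 u(x,\o_\sigma)}\,|\o-\nu|^2$ and hence $O(\ep|\o-\nu|^2)$ using the $\po^2 N$-type bounds of \eqref{threomega2} (noting $\po^2 u$ differs from the relevant component of $\po^2 N$ and lower-order pieces by quantities controlled by $\ep$), gives the first line of \eqref{gogol2}.

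For the second line I would differentiate the above expansion once in $\o$: $\po u(x,\o) = \po u(x,\nu) + \int_0^1 \po^2 u(x,\o_\sigma)[\o-\nu,\cdot]\,d\sigma + \textrm{(arc correction)}$, and compare with $\po(\phi_\nu(x)\c\o) = u(x,\nu)\nu + \po u(x,\nu)$ restricted appropriately; the difference is $\int_0^1\po^2 u(x,\o_\sigma)[\o-\nu,\cdot]\,d\sigma$ plus a term of size $O(\ep|\o-\nu|)$ coming from the discrepancy between $\po(\phi_\nu(x)\c\o)$ and $\phi_\nu(x)$, and the remainder integral is bounded by $\sup_\sigma\norm{\po^2 u(x,\o_\sigma)}\,|\o-\nu| = O(\ep|\o-\nu|)$ by \eqref{threomega2}. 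The third line follows the same pattern, differentiating twice: $\po^2 u(x,\o) - \po^2 u(x,\nu) = \int_0^1 \po^3 u(x,\o_\sigma)[\o-\nu,\cdot,\cdot]\,d\sigma + \textrm{(arc correction)}$, and since $\po^2(\phi_\nu(x)\c\o)$ is essentially $\po^2 u(x,\nu)$ up to $O(\ep)$ terms, the difference is $O(\norm{\po^3 u}_{L^\infty_{\textrm{loc}}}|\o-\nu|) + O(\ep) = O(\ep)$ — here one uses \eqref{threomega3} and the fact that $|\o-\nu|\leq 2$ on $\S$, so even the first-order-in-$|\o-\nu|$ term is bounded by a constant, which suffices for the stated $O(\ep)$ once we track that the leading obstruction is the $\ep$-sized comparison rather than the $\po^3 u$ term, or alternatively restrict to the regime $|\o-\nu|\lesssim 2^{-j/2}$ relevant in the applications.

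The main obstacle will be bookkeeping the $\ep$-dependence carefully: the estimates of Theorem \ref{thregomega} control quantities like $\po^2 a$, $\po^2 N$, $\nabla\po^2\th$ in $\ep$-weighted norms, but what appears in the Taylor expansion is $\po^2 u$, $\po^3 u$ directly, which are "one integration" above $a = |\nabla u|^{-1}$. One must therefore convert the $\ep$-bounds on Ricci-type coefficients ($\th$, $a$, $N$ and their $\o$-derivatives) into pointwise bounds on $\po^k u$ itself, using the defining relations $\nabla u = a^{-1}N$ and integrating along appropriate curves from infinity (where $u\sim x\c\o$ makes the $\o$-derivatives explicit and of size $O(1+|x|)$); the $\ep$-smallness enters only through the deviation of these quantities from their flat values, so the cleanest route is to write $\po^k u = \po^k(x\c\o) + (\textrm{correction})$ and bound the correction by $\ep$ using Theorem \ref{thregx} and Theorem \ref{thregomega}, exactly as is done for $\det(\textrm{Jac}\,\phi_\o)$ in Proposition \ref{gl20}. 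The geodesic-versus-straight-line arc corrections on $\S$ are routine and contribute only quadratic-in-$|\o-\nu|$ errors with $O(1)$ (in fact, after subtracting flat part, $O(\ep)$) coefficients, so they are harmless. I would refer to \cite{param1} for the detailed verification of the $\ep$-bookkeeping, following the template of the proof of \eqref{gl22}.
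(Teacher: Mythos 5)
Your overall plan (Taylor expansion in $\o$ around $\nu$, then comparison with the linear function $\o\mapsto\phi_\nu(x)\c\o$) is the right one, and your algebraic identity
\[
u(x,\o)-\phi_\nu(x)\c\o=\Bigl[u(x,\o)-u(x,\nu)-\po u(x,\nu)\c(\o-\nu)\Bigr]+\tfrac12\,u(x,\nu)\,|\o-\nu|^2
\]
is correct and is the right starting point. But the two estimates you then assert for the two bracketed pieces are both wrong, and they are wrong in a way that cannot be patched by bookkeeping: you claim $-\tfrac12 u(x,\nu)|\o-\nu|^2=O(\ep|\o-\nu|^2)$, which would need $u(x,\nu)=O(\ep)$, whereas Theorem \ref{thregx} only gives $u(x,\nu)-x\c\nu=O(\ep)$ and $x\c\nu$ is of order one on the relevant strip; and you claim the Taylor remainder is $O(\ep|\o-\nu|^2)$ via a bound $\|\po^2 u\|=O(\ep)$, which is also false — in the flat case the remainder equals $-\tfrac12(x\c\nu)|\o-\nu|^2$ exactly (order one, not $\ep$), and \eqref{threomega2} only gives $\|\po^2 N\|_{L^\infty}\les 1$, not $\ep$. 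Neither piece is small; they cancel.

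The actual content of the estimate is precisely this cancellation: the flat reference phase $x\c\o$ has Taylor remainder $-\tfrac12(x\c\nu)|\o-\nu|^2$, which is exactly offset by $\tfrac12 u_{\mathrm{flat}}(x,\nu)|\o-\nu|^2=\tfrac12(x\c\nu)|\o-\nu|^2$; so the thing one must estimate is the pair of corrections $\bigl(u(x,\nu)-x\c\nu\bigr)\,\tfrac12|\o-\nu|^2$ and $\int_0^1(1-\sigma)\bigl[\po^2u(x,\o_\sigma)-\po^2(x\c\o_\sigma)\bigr][\o-\nu,\o-\nu]\,d\sigma$, each of which is genuinely $O(\ep|\o-\nu|^2)$. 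You do gesture at this — ``write $\po^k u=\po^k(x\c\o)+(\textrm{correction})$'' — in your final paragraph, but you leave it as a remark about bookkeeping rather than using it to replace the erroneous estimates, so as written the first paragraph's conclusion does not follow. The same issue recurs at the level of one and two $\o$-derivatives: the second and third lines of \eqref{gogol2} again rest on cancellations against the flat phase, not on direct smallness of $\po^2u$ or $\po^3u$ (the latter is only $L^\infty_{\mathrm{loc}}$-bounded by $1$, per \eqref{threomega3}). To make this proof work one must genuinely carry out the flat subtraction: establish $\po^k(u-x\c\o)=O(\ep)$ pointwise (on the strip, and trivially outside $|x|\le2$ where $u\equiv x\c\o$) from the structure $\nabla u=a^{-1}N$, the $\ep$-smallness of $a-1,\po a,\po^2a$, the decomposition $N=\o+(N-\o)$, and integration from the region where $u=x\c\o$; then feed these into the Taylor identity above. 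The paper defers exactly this verification to \cite{param1}.
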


Using the geometric Littlewood Paley projections $P_j$ on $P_u$ constructed in \cite{Kl-R6} (see section \ref{sec:LP}) together with the estimates for $\nabn a$ in \eqref{thregx1}, and the estimate for $\nabn^2a$ in \eqref{nabn2a1}, we obtain the following proposition:
\begin{proposition}\label{cordecfr}
Let $(\Sigma, g,k)$ chosen as in section \ref{reducsmall}. Let $u$ the scalar function on $\s\times\S$ constructed in theorem \ref{thregx}, and let $a$ and $N$ be associated to $u$ as in section \ref{sec:foliation}. For all $j\geq 0$, there are scalar functions $a^j_1$ and $a^j_2$ such that:
\begin{equation}\label{cordecfr1}
\nabn a=a^j_1+a^j_2\textrm{ where }\norm{a^j_1}_{\lli{2}}\lesssim 2^{-\frac{j}{2}}\ep\textrm{ and }\norm{\nabn a^j_2}_{\lli{2}}\lesssim 2^{\frac{j}{4}}\ep.
\end{equation}
\end{proposition}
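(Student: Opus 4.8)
\textbf{Proof proposal for Proposition \ref{cordecfr}.}

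The plan is to obtain the decomposition by applying the geometric Littlewood--Paley projections $P_j$ on the surfaces $P_u$ to $\nabn a$ and splitting at the dyadic threshold $2^{j/2}$: set
$$a^j_1 = P_{>j/2}(\nabn a), \qquad a^j_2 = P_{\leq j/2}(\nabn a),$$
so that $\nabn a = a^j_1 + a^j_2$ by the decomposition of the identity into Littlewood--Paley pieces. The first estimate, $\norm{a^j_1}_{\lli{2}}\lesssim 2^{-j/2}\ep$, comes from summing the finite band property $\norm{P_l F}_{L^2(P_u)}\lesssim 2^{-l}\norm{\nabb F}_{L^2(P_u)}$ over $l>j/2$ together with the control of one \emph{tangential} derivative of $\nabn a$: indeed Theorem \ref{thregx1} gives $\norm{\nabb\nabla a}_{\lli{2}}\lesssim\ep$, and in particular $\norm{\nabb(\nabn a)}_{\lli{2}}\lesssim\ep$ (the tangential derivative of the scalar $\nabn a$ is controlled by $\nabb\nabla a$ plus lower-order terms involving $\th$ and $\nabla a$, all of which are in $\lli{2}$ at size $\ep$). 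Summing the geometric series $\sum_{l>j/2}2^{-l}\lesssim 2^{-j/2}$ yields the claimed bound, using the $L^2_u$-integrability afforded by the $\lli{2}$ norm.

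For the second estimate, $\norm{\nabn a^j_2}_{\lli{2}}\lesssim 2^{j/4}\ep$, one commutes $\nabn$ past the projection $P_{\leq j/2}$ and uses the finite band property in the reverse direction, $\norm{\lap P_l F}_{L^2}\lesssim 2^{2l}\norm{F}_{L^2}$ and the $L^2$ bounds $\norm{\nabb P_j F}_{L^2}\lesssim 2^j\norm{F}_{L^2}$, to gain powers of $2^{l}$ on the low-frequency part. The input here is the control of $\nabn^2 a$ provided by Theorem \ref{thnabn2a}, namely $\norm{\nabn^2 a}_{\lhs{2}{-1/2}}\lesssim\ep$: this $H^{-1/2}(P_u)$ control, combined with the Bernstein/finite-band estimates on each dyadic shell $l\leq j/2$, gives $\norm{\nabn(P_l(\nabn a))}_{L^2(P_u)}\lesssim 2^{l/2}\norm{\nabn^2 a}_{H^{-1/2}(P_u)}+\cdots$, where the dots are commutator terms $[\nabn, P_l]$ which are lower order. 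Summing $\sum_{l\leq j/2}2^{l/2}\lesssim 2^{j/4}$ over the low-frequency shells and integrating in $u$ produces the factor $2^{j/4}\ep$. One must also keep track of the purely tangential commutator $[\nabn,P_l]$, whose contribution is estimated using the commutator lemmas between $\ddb_{\lb}$-type transport operators and $P_j$ alluded to at the end of Chapter \ref{part:spacetimeu} (here adapted to the $\Sigma$-foliation), together with the bounds on $\th$ and $a$ from \eqref{thregx1}.

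The main obstacle is the second estimate: one only has $\nabn^2 a$ in the negative-order space $H^{-1/2}(P_u)$ (consistent with $a$ having, roughly, $3/2$ normal derivatives in $L^2$), so the gain of $2^{j/4}$ rather than a full $2^{j/2}$ is sharp and leaves no room. Controlling the commutator terms $[\nabn, P_l](\nabn a)$ within this narrow budget --- in particular ensuring that they do not cost more than $2^{j/4}$ --- requires care, since $\nabn$ is a transport derivative and does not commute with the heat-flow--based projections $P_l$; this is precisely where the structure equation $\nabn a - \lapa = |\th|^2 + \nabn k_{NN} + R_{NN}$ must be used to trade the troublesome $\nabn^2 a$ for $\lap(\nabn a)$ plus terms of acceptable regularity. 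We refer to \cite{param1} for the complete argument.
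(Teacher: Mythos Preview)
Your proposal is correct and follows exactly the approach indicated in the paper: the footnote to \eqref{ch5:cordecfr1} in Chapter~\ref{part:paraminit} explicitly states that one takes $a_1^j=P_{>j/2}(\nabn a)$ and $a_2^j=P_{\leq j/2}(\nabn a)$, and the paragraph preceding Proposition~\ref{cordecfr} confirms that the inputs are the tangential estimate $\nabb\nabla a\in L^2(\Sigma)$ from Theorem~\ref{thregx} and the $\nabn^2 a\in L^2_uH^{-1/2}(P_u)$ bound from Theorem~\ref{thnabn2a}. Your identification of the $2^{j/4}$ gain as coming from summing $2^{l/2}$ over $l\leq j/2$ against the $H^{-1/2}$ control is the right mechanism; the commutator handling is indeed deferred to \cite{param1} in the paper as well.
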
 

\begin{remark}\label{paslinfty}
Recall from section \ref{sec:choice} that we do not have enough regularity in $x$ to apply the $T^*T$ method. Alternatively, we could try the $TT^*$ method which relies on integration by parts in $\o$. But $\po^3u\in\lli{\infty}$ is also not enough and we would need at least one more derivative in $\o$. Nevertheless, using the regularity in $x$ and $\o$ obtained for $u$ in the present and the previous section, we are able to control the parametrix at initial time (see Chapter \ref{part:paraminit}).
\end{remark}

Let us conclude this section by mentioning several ingredients of \cite{param1} that have been omitted here for the 
sake of simplicity, and that have to be proved by relying on low regularity assumptions for $u$ which are consistent with the results stated in this section and the previous one: 
\begin{itemize}
\item estimates for the parabolic operator $(\nabn-a^{-1}\lap)$

\item estimates for $\th$ and $N$

\item product estimates in the Sobolev spaces $H^b(\p)$

\item embeddings on $\Sigma$ and $\p$

\item a control of the Gauss curvature of $\p$ 

\item Bochner inequalities on $\p$ 

\item estimates for various commutator terms of the type: $[\nabn, \nabb]$, $[\nabn, P_j]$, ...
\end{itemize}

The rest of this chapter is as follows. In section \ref{regx}, we discuss the proof of Theorem \ref{thregx}. In section \ref{sec:nabla2a}, we discuss the proof of Theorem \ref{thnabn2a}.  In section \ref{regomega}, we discuss the proof of Theorem \ref{thregomega}. In  section \ref{sec:globalcoord}, we discuss the proof of Proposition \ref{gl0} and Proposition \ref{gl20}. Finally, Proposition \ref{prop:estimatesadded} and Proposition \ref{cordecfr} are discussed in section \ref{sec:addition}.

\section{Construction of the foliation and regularity with respect to $x$}\label{regx}

In this section, we discuss the proof of Theorem \ref{thregx}. By section \ref{reducsmall}, we may assume that $(\Sigma, g,k)$ coincides with $(\R^3,\delta,0)$ outside of a compact, say $|x|\geq 1$. Notice that in $|x|\geq 1$ and for all $\o\in\S$, the scalar function $x.\o$ satisfies the equation \eqref{choice} and the estimate \eqref{thregx1}, since $a\equiv 1, \th\equiv 0$ and $N\equiv\o$ in this region. Thus, we would like to construct a function $u$ solution of \eqref{choice} satisfying \eqref{thregx1} in a region containing $|x|\leq 2$ and to glue it to $x.\o$ in $1\leq |x|\leq 2$. Now, \eqref{choice} is of parabolic type - see \eqref{eqlapse1} - where $u$ plays the role of time. For each fixed $\o\in\S$, we start with $u=-2$ on $x\c\omega=-2$. Then, we propagate with the parabolic equation \eqref{eqlapse1}, coupled with the equation for $\th$ \eqref{codazzith}, to the strip $S=\{x\in\s\textrm{ such that }-2< u(x,\o)< 2\}$. This strip covers the entire region $|x|\leq 1$, and we then glue $u$ to $x\c\omega$ outside of $|x|\leq 1$ (see section \ref{sec:thisistheend}). In the next section, we prove a priori estimates consistent with the estimate \eqref{thregx1} and valid on $-2<u<2$ for the solution $u$ of:
\begin{equation}\label{choice1}
\trt-k_{NN}=1-a,\textrm{ on }-2<u<2,
\end{equation}
where $u$ is initialized on $x.\o=-2$ by:
\begin{equation}\label{init}
u(x,\o)=-2\textrm{ on }x.\o=-2.
\end{equation}
Note that the first equation of \eqref{struct1}, \eqref{init} and the fact that $(g,k,\s)$ coincides with $(\delta,0,\R^3)$ for $|x|\geq 2$ yields:
\begin{equation}\label{init1}
\nabla^p(a-1)=0,\,\nabla^p\theta=0,\,\nabla^p(N-\o)=0\textrm{ for all }p\in\N\textrm{ on }u=-2.
\end{equation} 

\subsection{A priori estimates for lower order derivatives}\lab{sec:loworderder}

Let $(\Sigma, g,k)$ chosen as in section \ref{reducsmall}. In particular, we assume:
\begin{equation}\label{small2}
\norm{\nabla k}_{L^2(\Sigma)}+\norm{R}_{L^2(\Sigma)}\leq\ep.
\end{equation}
Let $u$ a scalar function on $\s\times\S$, and let $\p$, $a$, $N$ and $\th$ be associated to $u$ as in section \ref{sec:foliation}. Assume that $u$ satisfies the additional equation \eqref{choice}, which we recall below together with  \eqref{frame} and \eqref{struct}:
\be\lab{frame1}
\left\{\begin{array}{l}
\nabla_AN=\th_{AB}e_B,\\[1mm]
\nabn N=-\nabb\lg,
\end{array}\right.
\ee
and
\be\lab{struct1}
\left\{\begin{array}{l}
\trt-k_{NN}=1-a,\\[1mm]
\nabn a-\lapa=|\th|^2+\nabn k_{NN}+R_{NN}.
\end{array}\right.
\ee
In this section, we establish a priori estimates for $a$, $N$ and $\th$ corresponding to \eqref{thregx1} in the region $S$ of $\s$ between $P_{-2}$ and $P_2$ (i.e. $S=\{x\,/\,-2< u(x,\o)< 2\}$) where $u$ is initialized on $x.\o=-2$ by \eqref{init}. In particular, we have \eqref{init1}, so that the subsequent integrations by parts will not create boundary terms at $u=-2$.

For the sake of simplicity, let us just discuss the estimate \eqref{thregx1} for the lapse $a$. We rewrite the second equation of \eqref{struct1} as:
\begin{equation}\label{r11}
(\nabn-a^{-1}\lap)(a-1)=h,
\end{equation}
where $h$ is given by:
\begin{equation}\label{r12}
h=\nabn k_{NN}+R_{NN}+\cdots.
\end{equation}
Using in particular \eqref{small2}, we obtain:
$$\norm{h}_{\lli{2}}\lesssim \ep.$$
Together with \eqref{r11} and an $L^2$ parabolic estimate for the  operator $(\nabn-a^{-1}\lap)$, we obtain
\begin{equation}\label{r19}
\ds\norm{a-1}_{\l{\infty}{2}}+\norm{\nabb a}_{\l{\infty}{2}}+\norm{\nabn a}_{\lli{2}}+\norm{\nabb^2a}_{\lli{2}}\lesssim \ep.
\end{equation}

In order to obtain estimates for $\nabb\nabn a$ and $\nabn^2a$, we differentiate the second equation of \eqref{struct1} by $\nabn$:
\begin{equation}\label{r20}
(\nabn - a^{-1}\lap)\nabn a=\nabn^2k_{NN}+\nabn R_{NN}+\cdots,
\end{equation}
where we only kept two typical terms. Note that $\nabn^2k_{NN}$ and $\nabn R_{NN}$ are dangerous terms which cannot be estimated directly. We first need to trade a $\nabn$ derivative with a $\nabb$ derivative using Bianchi identities and the constraints equations. We use the twice-contracted Bianchi identity on $\s$ 
\begin{equation}\label{bianchi}
\nabla^jR_{ij}=\frac{1}{2}\nabla_iR.
\end{equation}
In particular, using also the constraint equations \eqref{const1}, we have 
$$\nabn R_{NN}=-\nabla_AR_{AN}+\nabn |k|^2.$$
Also, the constraint equations \eqref{const1} yield
$$\nabn^2k_{NN}=-\nabn\nabla_Ak_{AN}+\cdots=-\nabla_A\nabn k_{AN}+\cdots.$$
Together with \eqref{r20}, we obtain
\begin{equation}\label{r21}
(\nabn - a^{-1}\lap)\nabn a=\divb(H)+h_1, 
\end{equation}
where
\begin{equation}\label{r23bis}
H=-\nabn k_{.N}-R_{.N}, 
\end{equation}
and $h_1$ satisfies
\begin{equation}\lab{bazot}
\norm{h_1}_{\l{2}{\frac{4}{3}}}\lesssim\ep.
\end{equation}
Using the smallness assumption \eqref{small2} and the definition of $H$ \eqref{r23bis}, we have
\be\lab{bazot1}
\norm{H}_{\lli{2}}\les\ep
\ee
which together with \eqref{bazot}, \eqref{r21}, and an $L^2$ parabolic estimate for the  operator $(\nabn-a^{-1}\lap)$, yields
\begin{equation}\label{r28}
\norm{\nabn a}_{\l{\infty}{2}}+\norm{\nabb\nabn a}_{\lli{2}}\lesssim \ep.
\end{equation}
Finally, \eqref{r19} and \eqref{r28} yield the wanted estimate \eqref{thregx1}. 

\subsection{End of  the proof of Theorem \ref{thregx}}\lab{sec:thisistheend}

We briefly sketch the rest of the proof of Theorem \ref{thregx}, and we refer to \cite{param1} for the details. In \eqref{r19} and \eqref{r28}, we have obtained a priori estimates consistent with the estimate \eqref{thregx1} and valid on $-2<u<2$ for the solution $u$ of \eqref{choice1}. Then, we also prove on $-2<u<2$ a priori estimates for higher derivatives of the solution $u$ of \eqref{choice1}. We then use the existence of $u$ solution to\footnote{this local existence result could be proved either using a Nash Moser procedure or a combination of Cauchy-Kowalewska and enhanced a priori estimates for all derivatives}:
\begin{equation}\label{choice2}
\left\{\begin{array}{l}
\trt-k_{NN}=1-a,\textrm{ on }\alpha<u<\alpha+T,\\
u=\alpha\textrm{ on }\underline{u}=\alpha,
\end{array}\right.
\end{equation}
where $-2\leq \alpha\leq 2$, $\underline{u}$ is smooth, and $T>0$ is small enough. Together with the a  priori estimates, this allows us to control the solution of \eqref{choice2} on $-2+kT<u<-2+(k+1)T$ uniformly with respect to $k=0,\dots,[4/T]$ in order to obtain a solution $u$ of \eqref{choice1} on $-2<u<2$. Finally, we conclude the proof of Theorem \ref{thregx} by showing how to glue the solution $u$ of \eqref{choice1} to $x.\o$ in $1\leq |x|\leq 2$ in order to obtain a solution on $\s$ satisfying \eqref{thregx1}.

\section{Estimates for $\nabn a$ and $\nabn^2a$}\label{sec:nabla2a}

In this section, we discuss the proof of Theorem \ref{thnabn2a}. Recall the decomposition \eqref{r21}, \eqref{r23bis}, and the estimate \eqref{bazot}. We introduce the scalar functions on $S$ $a_1$ and $a_2$ solutions of:
\begin{equation}\label{zoc3}
(\nabn - a^{-1}\lap)a_1=h_1\textrm{ on S},\, a_1(-2,.)=0,
\end{equation}
and:
\begin{equation}\label{zoc4}
(\nabn - a^{-1}\lap)a_2=\divb(H)\textrm{ on S},\, a_2(-2,.)=0,
\end{equation}
which yields, in view of \eqref{r21} and \eqref{init1}, the decomposition:
\be\lab{zoc5}
\nabn a=a_1+a_2.
\ee

The idea behind the decomposition \eqref{zoc5} is to take advantage of the better regularity of $h_1$ for $a_1$ (see \eqref{bazot} compared to \eqref{bazot1}), and to use the structure of $\divb(H)$ to obtain a useful equation for $\nabn a_2$. Indeed, in view of the equation \eqref{zoc4} satisfied by $a_2$, $\nabn a_2$ satisfies:
\be\lab{zoc14}
(\nabn-a^{-1}\lap)(\nabn a_2)=\nabn(\divb(H))+\cdots,
\ee
and using in particular the twice-contracted Bianchi identity on $\s$, the constraint equations in the maximal foliation \eqref{const1}, and \eqref{zoc14}, we obtain
\begin{equation}\label{zoc20}
(\nabn - a^{-1}\lap)\nabn a_2=\divb\divb(H_1)+\divb(H_2)+h_2+\cdots, 
\end{equation}
where the tensors $H_1, H_2$ and the scalar $h_2$ satisfy
$$\norm{H_1}_{L^2(S)}+\norm{H_2}_{\l{2}{\frac{4}{3}}}+\norm{h_2}_{L^1(S)}\les \ep.$$
These ideas allow us to derive the following two propositions (see \cite{param1} for the detailed proof of these propositions). 

\begin{proposition}\label{prop:zoc1}
Let $a_1$ be the solution of \eqref{zoc3}, where $h_1$ satisfies \eqref{bazot}. Then, we have:
\begin{equation}\label{zoc6}
\norm{a_1}_{\l{\infty}{4}}\lesssim\ep,
\end{equation}
and:
\be\lab{zoc7}
\sum_{j\geq 0}2^{-j}\norm{P_j(\nabn a_1)}_{\lli{2}}^2\les\ep^2,
\ee 
\end{proposition}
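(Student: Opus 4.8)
\textbf{Plan for the proof of Proposition \ref{prop:zoc1}.}

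The plan is to treat the two estimates separately, both by exploiting parabolic smoothing for the operator $\nabn-a^{-1}\lap$, which is the natural tool here since $u$ plays the role of time and \eqref{zoc3} is a genuine (degenerate but uniformly so, thanks to $1/2\le a\le 2$) parabolic equation on the strip $S$ with zero data on $P_{-2}$. For \eqref{zoc6}, the input is $h_1\in\l{2}{4/3}$ from \eqref{bazot}. One degree of parabolic smoothing gives two $\nabb$-derivatives in $L^2$, or equivalently, in the scaling of the operator, one should be able to trade the $4/3$ integrability on $\p$ against the loss coming from the source being only $L^2$ in $u$: by the standard parabolic $L^2_uL^{4/3}(\p)\to L^\infty_uL^{4}(\p)$ estimate (the endpoint version of the smoothing estimate recorded among the omitted parabolic estimates for $\nabn-a^{-1}\lap$ mentioned at the end of section \ref{sec:mainres}), together with the Sobolev embedding $H^{1/2}(\p)\hookrightarrow L^4(\p)$ on the $2$-surfaces $\p$, one controls $\norm{a_1}_{\l{\infty}{4}}$ by $\norm{h_1}_{\l{2}{4/3}}\lesssim\ep$. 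First I would set up this parabolic estimate precisely, being careful about the curvature of $\p$ and the fact that the metric $\gamma$ on $\p$ is only controlled in the weak norms of Theorem \ref{thregx}; the Bochner inequality on $\p$ and the Gauss curvature control (both listed as available ingredients) are what make the $H^{1/2}$–Sobolev step legitimate at this regularity.

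For \eqref{zoc7}, the point is that $\nabn a_1$ is one derivative worse than $a_1$, so we cannot hope for $\nabn a_1\in\lli{2}$ uniformly; instead we want the Littlewood–Paley–summed bound $\sum_j 2^{-j}\norm{P_j(\nabn a_1)}_{\lli{2}}^2\lesssim\ep^2$, which is exactly a statement that $\nabn a_1$ lies in the Besov-type space $B^{-1/2}_{2,2}$ relative to the geometric LP projections $P_j$ on $\p$ from \cite{Kl-R6}. I would derive this by commuting $P_j$ through \eqref{zoc3}: write $(\nabn-a^{-1}\lap)P_j(\nabn a_1)=P_j(\nabn h_1)+[\text{commutators}]$, use the finite band property $\norm{\lap P_j F}\lesssim 2^{2j}\norm{P_jF}$ and the dual estimate $\norm{P_j\nabb F}\lesssim 2^j\norm F$ from Theorem \ref{ch3:thm:LP}, and apply the parabolic $L^2$ estimate together with the frequency-localized smoothing to absorb the $2^{2j}$ against $2^{-j}\cdot 2^{j}$ coming from two tangential derivatives applied at frequency $2^j$, leaving a net favorable power that sums. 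The commutator terms $[\nabn,\nabb]$, $[\nabn,P_j]$ (again among the omitted ingredients) must be shown to be lower order; their contributions will be estimated by the same machinery plus the structure equations \eqref{frame1}–\eqref{struct1} and the bounds of Theorem \ref{thregx}. Finally, Bessel's inequality for the $P_j$ and $\norm{h_1}_{\l{2}{4/3}}\lesssim\ep$ (via the weak Bernstein inequality to pass from $L^{4/3}$ to $L^2$ at each frequency with an acceptable power of $2^j$) close the sum.

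The main obstacle will be carrying out these parabolic estimates \emph{within the very limited regularity} available for the metric $\gamma$ on $\p$ and for the lapse $a$: the coefficients of $\nabn-a^{-1}\lap$ are only as regular as Theorem \ref{thregx} allows, so one cannot invoke classical parabolic theory and must instead use the heat-flow construction of the LP projections and the geometric Bochner/Gauss-curvature estimates to get every smoothing and commutator bound by hand. In particular, the borderline nature of \eqref{zoc6} — it is an endpoint $L^2_uL^{4/3}\to L^\infty_uL^4$ gain with no room to spare, mirroring the $3/2$-derivative heuristic for $a$ in the normal direction noted in the remarks — means the argument is tight, and the delicate point is to organize the frequency decomposition so that the logarithmically divergent-looking sums (which appear naturally when a tangential derivative meets a normal derivative) are in fact controlled by the square-summability built into \eqref{zoc7} rather than merely by an $\ell^\infty$ bound.
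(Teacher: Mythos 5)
The paper itself does not prove Proposition~\ref{prop:zoc1}; it only states it, explains the motivation for the decomposition $\nabn a=a_1+a_2$, and refers to \cite{param1} for the proof. So there is no paper proof to compare yours against, and I am judging the proposal against the machinery the chapter announces (the parabolic estimates for $\nabn-a^{-1}\lap$, the geometric Littlewood--Paley projections, Bochner on $P_u$, commutator estimates $[\nabn,\nabb]$, $[\nabn,P_j]$). Your overall plan is the right one: \eqref{zoc6} is indeed an endpoint parabolic smoothing estimate $L^2_uL^{4/3}(P_u)\to L^\infty_uL^4(P_u)$, and your scaling check is correct; \eqref{zoc7} is a Besov--$B^{-1/2}_{2,2}$ bound for $\nabn a_1$ that should come from frequency-localized parabolic smoothing combined with the $2$-dimensional dual Sobolev embedding $L^{4/3}(P_u)\hookrightarrow B^{-1/2}_{2,2}(P_u)$. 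This is consistent with everything the chapter hints at.

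There is one concrete step that would fail as written. You propose to commute $P_j$ through \eqref{zoc3} and arrive at an equation of the form $(\nabn-a^{-1}\lap)P_j(\nabn a_1)=P_j(\nabn h_1)+[\text{commutators}]$. This requires differentiating the equation in the normal direction, which introduces $\nabn h_1$ as a source. But the only control on $h_1$ is \eqref{bazot}, namely $h_1\in L^2_uL^{4/3}(P_u)$; $\nabn h_1$ is one normal derivative worse and is not in any space controlled here. The way around this is to avoid differentiating the equation at all: since $a_1$ solves $(\nabn-a^{-1}\lap)a_1=h_1$, you have the algebraic identity $\nabn a_1 = a^{-1}\lap a_1 + h_1$, and \eqref{zoc7} splits into two pieces, $\sum_j 2^{-j}\|P_j(a^{-1}\lap a_1)\|^2_{L^2(\Sigma)}$ and $\sum_j 2^{-j}\|P_j h_1\|^2_{L^2(\Sigma)}$. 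The second is handled directly by the dual Sobolev embedding on $P_u$ and the bound on $h_1$; the first reduces, via the finite band property and the $[P_j,a^{-1}\lap]$ commutator estimate, to $a_1\in L^2_u B^{3/2}_{2,2}(P_u)$, which is the frequency-localized parabolic smoothing statement applied to \eqref{zoc3} itself (apply $P_j$, use the energy identity to gain $2^{2j}\|P_ja_1\|_{L^2}$ from the $a^{-1}\lap$ term, estimate $\|P_jh_1\|_{L^2}$ by Bernstein, sum). With that substitution your plan is sound; the rest of the objections you raise (Gauss curvature control to justify Bochner, the fact that the LP projections are heat-flow based so $P_jP_j\neq P_j$, the sharpness of the endpoint) are exactly the technical points one has to clear in \cite{param1}, and you have identified them correctly.
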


\begin{proposition}\label{prop:zoc2}
Let $a_2$ be the solution of \eqref{zoc4}, where $H$ is defined in \eqref{r23bis}. Then, we have:
\begin{equation}\label{zoc8}
\norm{a_2}_{\l{\infty}{4}}\lesssim\ep,
\end{equation}
and:
\be\lab{zoc10}
\sup_{j\geq 0}\norm{P_j(\nabn a_2)}_{\lli{2}}\les\ep.
\ee
\end{proposition}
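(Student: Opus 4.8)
\textbf{Proof plan for Proposition \ref{prop:zoc2}.}

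The plan is to exploit the structural refinement \eqref{zoc20} for $\nabn a_2$, which is the reason for introducing $a_2$ in the first place: the source term $\divb(H)$ in \eqref{zoc4} has a divergence structure that, after differentiating by $\nabn$ and trading a normal derivative for a tangential one via the twice-contracted Bianchi identity \eqref{bianchi} and the constraint equations \eqref{const1}, produces the double-divergence form $\divb\divb(H_1)+\divb(H_2)+h_2$ with $H_1\in L^2(S)$, $H_2\in\l{2}{4/3}$ and $h_2\in L^1(S)$. First I would record the $L^2$-parabolic estimates for the operator $(\nabn-a^{-1}\lap)$ on the strip $S$, in the form stated in \cite{param1}: these are the tool that converts the regularity of the right-hand side of an equation $(\nabn-a^{-1}\lap)F=\mathrm{source}$ into control of $F$ in mixed norms $\l{\infty}{q}$ and of $\nabb F$, $P_j$-localized, in $\lh{2}$. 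Throughout one uses the a priori bounds \eqref{thregx1} for $a,N,\th$ established in Theorem \ref{thregx} (in particular $\|a-1\|_{\lli{\infty}}\lesssim\ep$, so $a^{-1}$ is comparable to $1$ and the parabolic operator is uniformly parabolic), together with the smallness assumption \eqref{small2} on $R$ and $\nabla k$ which is what makes $H$, $H_1$, $H_2$, $h_1$, $h_2$ all of size $\ep$.

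The estimate \eqref{zoc8}, i.e. $\|a_2\|_{\l{\infty}{4}}\lesssim\ep$, I would obtain directly from the equation \eqref{zoc4}: apply a parabolic estimate adapted to a source in divergence form $\divb(H)$ with $H\in\lli{2}$ (by \eqref{bazot1}), which gains one derivative and lands $a_2$ in a space embedding into $\l{\infty}{4}(\s)$ — here one uses the Sobolev embedding $H^{1/2}(\p)\hookrightarrow L^4(\p)$ on the $2$-surfaces, whose constants are controlled via the bound on the Gauss curvature of $\p$ listed among the omitted ingredients of \cite{param1}, and the null-lapse comparison $a\sim 1$. For the second estimate \eqref{zoc10}, $\sup_{j\geq 0}\|P_j(\nabn a_2)\|_{\lli{2}}\lesssim\ep$, I would start from \eqref{zoc20}, apply $P_j$, and run a parabolic energy estimate at frequency $2^j$: the term $\divb\divb(H_1)$ contributes $\lesssim 2^j\|P_j H_1\|$-type bounds that, after the parabolic gain of one derivative and the finite band / Bernstein properties of the $P_j$ (section \ref{sec:LP}), are summable in $L^2_u$ with the $\ep$ coming from $\|H_1\|_{L^2(S)}$; the term $\divb(H_2)$ with $H_2\in\l{2}{4/3}$ contributes via the dual Bernstein inequality on $\p$; and $h_2\in L^1(S)$ is the lowest-order term. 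The commutator terms hidden in the "$+\cdots$" of \eqref{zoc20} (of the type $[\nabn,\nabb]$, $[\nabn,P_j]$, and errors from $\nabn$ hitting $a^{-1}$ or the metric on $\p$) are handled by the commutator estimates in \cite{param1}, using \eqref{thregx1} and the already-established \eqref{zoc8}.

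The main obstacle I anticipate is precisely the first term on the right of \eqref{zoc20}, $\divb\divb(H_1)$: this is two tangential derivatives of a merely $L^2$ tensor, so one is bootstrapping an estimate for $\nabn a_2$ that is only $L^2$ in the frequency-localized pieces and cannot be summed to anything better — this is why the statement is phrased with $\sup_j$ rather than an $\ell^2$ sum or a genuine second-derivative bound, and one must be careful to set up the parabolic estimate so that the two tangential derivatives are distributed as $\lap$-type operators (absorbed by the parabolic smoothing) rather than as $\nabb^2$ acting on something with no spare regularity. A secondary difficulty is that $H_1$ involves $\nabn k$ and $R$ contracted into frame vectors whose $\nabn$-derivatives are only controlled through $\th$ and $\nabb\lg$ via \eqref{frame1}, so extracting the clean double-divergence structure \eqref{zoc20} requires carefully tracking which contractions of the Bianchi identity \eqref{bianchi} and the Codazzi equation \eqref{codazzith} are available; I would treat this as a bookkeeping step, quoting the detailed computation from \cite{param1}, and focus the write-up on the two parabolic estimates and the frequency summation.
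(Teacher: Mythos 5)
Your proposal is correct and follows the same route the paper takes: the paper itself omits the detailed proof (deferring to \cite{param1}) but explicitly says the ingredients are the decomposition \eqref{zoc5} together with the refined equation \eqref{zoc20}, and you use precisely these -- an $L^2$-parabolic estimate on \eqref{zoc4} with the divergence-form source $\divb(H)$, $H\in L^2$, plus the 2-surface embedding $H^{1/2}(P_u)\hookrightarrow L^4(P_u)$, to get \eqref{zoc8}, and a frequency-localized parabolic estimate on \eqref{zoc20} for \eqref{zoc10}. Your structural observation that the double-divergence term $\divb\divb(H_1)$ with $H_1$ only in $L^2$ is exactly what forces the $\sup_j$ (rather than an $\ell^2$ sum as in \eqref{zoc7} for $a_1$) is the correct reading of why the two propositions are stated asymmetrically.
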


In view of the decomposition \eqref{zoc5} for $\nabn a$, the estimates \eqref{zoc6} \eqref{zoc7} for $a_1$, and the estimates \eqref{zoc8} \eqref{zoc10} for $a_2$, we immediately obtain the estimate \eqref{nabn2a1} for $\nabn a$ and $\nabn^2a$. This concludes the proof of Theorem  \ref{thnabn2a}. 

\section{Regularity of the foliation with respect to $\omega$}\label{regomega}

Let $u(x,\o)$ the function constructed in section \ref{regx}. In this section, we discuss the proof of Theorem \ref{thregomega} which deals with the control of the derivatives with respect to $\o$ of the foliation 
of $\s$ provided by $u(x,\o)$. Recall that $(\Sigma, g,k)$ coincides with $(\R^3,\de,0)$ in $|x|\geq 2$. Also, $u(x,\o)$ coincides with $x.\o$ in $|x|\geq 2$, and so $a\equiv 1$, $N\equiv\o$ and $\th\equiv 0$ in this region. Thus, $u$ clearly satisfies the estimates of Theorem \ref{thregomega} in $|x|\geq 2$ and it is enough to control the derivatives with respect to $\o$ of the function $u(x,\o)$ solution to: 
\begin{equation}\label{choice6}
\left\{\begin{array}{l}
\trt-k_{NN}=1-a,\textrm{ on }-2<u<2,\\
u(.,\o)=-2\textrm{ on }x.\o=-2,
\end{array}\right.
\end{equation}
in the strip $S=\{x/\,-2<u<2\}$.
%In section \ref{regx}, we have constructed a smooth solution $u(.,\o)$ of \eqref{choice6} using 
%a priori estimates for higher order derivatives together with a Nash Moser procedure. We could 
%easily modify the proof to show the smoothness of our solution with respect to $\o$ as  well. Thus, 
%in this section, we only need to prove a priori estimates for the solution $u(x,\o)$ of \eqref{choice6} consistent with the estimates of Theorem \ref{thregomega}. 

\subsection{First order derivatives with respect to $\o$}\lab{sec:regomega1} 

The goal of this section is to prove \eqref{threomega1}. For the sake of simplicity, we only outline of the proof of the estimate for $\po a$. Differentiating the second equation of \eqref{struct1} with respect to $\o$, we obtain:
\begin{equation}\label{diffom1}
(\nabn-a^{-1}\lap)\po a = 2\nabb\nabn a+2R_{N \po N}+\cdots
\end{equation}
where the first term on the right-hand side comes from the commutator $[\po,\lap]$ (see \cite{param1}).  
Since $\nabb\nabn a$ and $R$ are in $\lli{2}$ respectively by \eqref{thregx1} and \eqref{small2}, we obtain using in particular an $L^2$ parabolic estimate for the  operator $(\nabn-a^{-1}\lap)$
\begin{equation}\label{diffom2}
\norm{\nabn\po a}_{\lli{2}}+\norm{\nabb\po a}_{\l{\infty}{2}}+\norm{\nabb^2\po a}_{\lli{2}} \lesssim \ep.
\end{equation}
Next, we differentiate \eqref{diffom1} with respect to $\nabn$. We obtain:
\begin{equation}\label{diffom3}
(\nabn-a^{-1}\lap)\nabn\po a = 2\nabb\nabn^2a+2\nabn R_{N \po N}+\cdots.
\end{equation} 
The term $\nabn R_{N \po N}$ may be treated using the contracted Bianchi identity for $R$ - as we did for $\nabn R_{NN}$ in section \ref{sec:loworderder} - and turns out to be in $\lhs{2}{-1}$. On the other hand, in view of the estimate \eqref{nabn2a1} for $\nabn^2a$, $\nabb\nabn^2a$ belongs to $\lhs{2}{-\frac{3}{2}}$. We obtain using in particular a refined parabolic estimate for the operator $(\nabn-a^{-1}\lap)$ 
\begin{equation}\label{diffom4bis}
\norm{\nabn\po a}_{\lhs{2}{\frac{1}{2}}}+\norm{\nabn^2\po a}_{\lhs{2}{-\frac{3}{2}}} \lesssim \ep.
\end{equation}
Finally, by interpolation between \eqref{diffom2} and \eqref{diffom4bis}, we obtain $\po a$ in $\lhs{\infty}{\frac{5}{4}}$ which embeds in $\lli{\infty}$ since $\p$ has dimension 2. Together with \eqref{diffom2} and \eqref{diffom4bis}, we obtain the estimate corresponding to $\po a$ in \eqref{threomega1}. 

\subsection{Second order derivatives with respect to $\o$}\lab{sec:regomega2}

The goal of this section is to prove \eqref{threomega2}. For the sake of simplicity, we only outline of the proof of the estimate for $\po^2a$. Differentiating the equation \eqref{diffom1} for $\po a$ with respect to $\o$, we obtain:
\begin{equation}\label{diffomm1}
(\nabn-a^{-1}\lap)\po^2a = 2\nabn^2a+\nabb\nabn\po a+2R_{\po N \po N}+\cdots
\end{equation}
where the first two terms on the right-hand side come respectively from the commutators $[\po,\nabb]$ and $[\po,\lap]$. Since $R$ is in $\lli{2}$ by \eqref{small2}, $\nabn^2a$ is in $\lhs{2}{-\frac{1}{2}}$ by \eqref{nabn2a1}, and $\nabn\po a $ is in $\lhs{2}{\frac{1}{2}}$ by \eqref{threomega1}, the right-hand side of \eqref{diffomm1} belongs to $\lhs{2}{-\frac{1}{2}}$. Using in particular estimates for the parabolic operator $(\nabn-a^{-1}\lap)$, we deduce 
\begin{equation}\label{diffomm2}
\norm{\po^2a}_{\lhs{2}{\frac{3}{2}}}+\norm{\po^2a}_{\lhs{\infty}{\frac{1}{2}}}+\norm{\nabn\po^2a}_{\lhs{2}{-\frac{1}{2}}} \lesssim \ep,
\end{equation}
which is the estimate corresponding to $\po^2a$ in \eqref{threomega2}.

\begin{remark}\lab{theend}
Note that we may not differentiate the equation \eqref{diffomm1} for $\po^2a$ with respect to $\nabn$. Indeed, 
the term $\nabn R_{\po N \po N}$ has no structure: unlike $R_{NN}$ and $R_{N \po N}$ which were involved in the equation for $a$ and $\po a$, $R_{\po N \po N}$ does not contain any contraction with $N$ since $\po N$ is tangent to $\p$. Thus, unlike $\nabn R_{NN}$ and $\nabn R_{N \po N}$, we can not write $\nabn R_{\po N\po N}$ as a tangential derivative using the contracted Bianchi identities for $R$. Consequently, we can not obtain any estimate for $\nabn^2\po^2a$. 
\end{remark}

\subsection{Third order derivatives with respect to $\o$}\label{thirdorom}

The goal of this section is to prove \eqref{threomega3}. Recall that  
div$(N)=\trt$, $N=\nabla u/|\nabla u|$, $a=1/|\nabla u|$ and $\trt=1-a+k_{NN}$, so that:
\begin{equation}\label{diffommm1}
\textrm{div}\left(\frac{\nabla u}{|\nabla u|}\right)=1-\frac{1}{|\nabla u|}+k_{NN}.
\end{equation}
Differentiating \eqref{diffommm1} three times with respect to $\o$ yields:
\be\lab{diffommm2}
(\nabn-a^{-1}\lap)\po^3u=\nabb\po^2\log(a)+\cdots.
\ee
Using in particular \eqref{diffommm2}, the estimate \eqref{threomega2} for $\po^2a$ and refined parabolic estimates for the operator $(\nabn-a^{-1}\lap)$ we obtain
$$\norm{\po^3u}_{\lhs{2}{\frac{5}{2}}}+\norm{\po^3u}_{\lhs{\infty}{\frac{3}{2}}}+\norm{\nabn\po^3u}_{\lhs{2}{\frac{1}{2}}}\les 1.$$
Now, since $\po^3u\in \lhs{\infty}{\frac{3}{2}}$ and $\p$ is 2-dimensional, we obtain that $\po^3u$ belongs to $L^\infty_{loc}(\Sigma)$, which is the desired estimate \eqref{threomega3}.

\section{A global coordinate system on $\p$ and $\Sigma$}\label{sec:globalcoord}

The goal of this section is to discuss the proof of Proposition \ref{gl20}. We start by constructing a global coordinate system on $\p$.

\subsection{A global coordinate system on $\p$} We have the following proposition 
\begin{proposition}\label{gl0}
Let $\o\in\S$. Let $\Phi_u:\p\rightarrow T_\o\S$ defined by:
\begin{equation}\label{gl1}
\Phi_u(x):=\po u(x,\o),
\end{equation}
where $T_\o\S$ is the tangent space to $\S$ at $\o$. Then $\Phi_u$ is a global $C^1$ diffeomorphism from $\p$ to $T_\o\S$.
\end{proposition}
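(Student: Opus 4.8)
\textbf{Proof plan for Proposition \ref{gl0}.}

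The statement asserts that $\Phi_u(x) = \po u(x,\o)$ is a global $C^1$ diffeomorphism from $\p$ onto $T_\o\S$. The plan has three ingredients: (i) $\Phi_u$ is $C^1$; (ii) $\Phi_u$ is a local diffeomorphism, i.e.\ its differential is invertible at every point of $\p$; (iii) $\Phi_u$ is a bijection, which by (ii) and a covering-space / properness argument can be upgraded to a global diffeomorphism. Throughout I would work with the foliation data $a$, $N$, $\th$ attached to $u$ as in section \ref{sec:foliation}, and rely on the regularity estimates of Theorems \ref{thregx} and \ref{thregomega}.

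For (i), the $C^1$ regularity of $\Phi_u$ on $\p$ follows from the estimates for $\po u$, $\nabla\po u$ (equivalently $\po a$, $\po N$), and their tangential derivatives established in \eqref{threomega1}: these place $\po u$ in a space that embeds in $C^1$ along the $2$-dimensional leaves $\p$ (recall $\po^2 u$, $\po(\nabla u)$ are controlled, and $\p$ has dimension $2$ so the relevant Sobolev embedding closes). For (ii), I would compute the tangential differential of $\Phi_u$ at a point $x\in\p$: for a vector $X$ tangent to $\p$, $d\Phi_u(X) = X(\po u) = \po(X(u)) - (\po X)(u)$, and since $X(u)=0$ on $\p$ the derivative reduces to an expression in the Ricci coefficients of the $u$-foliation — schematically a perturbation of the identity map $T_\o\S \to T_\o\S$ of size $O(\ep)$ because of \eqref{thregx1} and \eqref{threomega1}. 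Hence $d\Phi_u$ is invertible pointwise for $\ep$ small, so $\Phi_u$ is an immersion between $2$-manifolds, i.e.\ a local diffeomorphism.

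The main obstacle is (iii): passing from a local diffeomorphism to a \emph{global} bijection onto all of $T_\o\S \cong \R^2$. Here I would exploit the asymptotic normalization: outside a compact set $(\Sigma,g,k)$ coincides with $(\R^3,\de,0)$ and $u(x,\o)=x\c\o$, so on the corresponding part of each leaf $\p$ one has $\po u(x,\o) = $ (the projection of $x$ onto $\o^\perp$), and $\Phi_u$ there is exactly the linear identification $\p\cap\{|x|\ge 2\} \to T_\o\S$ minus a compact disc; in particular $\Phi_u$ is proper and has degree one at infinity. A proper local diffeomorphism into a connected, simply connected target is a covering map, and a covering map of $\R^2$ of degree one is a diffeomorphism — this gives injectivity and surjectivity simultaneously. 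I would phrase this via a continuity/connectedness argument (the set of $\xi\in T_\o\S$ with exactly one preimage is open, closed and nonempty), or equivalently via a Hadamard-type global inverse function theorem using properness. Once $\Phi_u$ is established as a global $C^1$ diffeomorphism on $\p$, Proposition \ref{gl20} follows by combining it with the control of $u$ itself along the normal direction (the estimate $\norm{a-1}_{\lli\infty}\les\ep$), since $\phi_\o(x) = u(x,\o)\o + \po u(x,\o)$ amounts to $\Phi_u$ in the leaf directions and the $u$-coordinate transversally, with the Jacobian bound \eqref{gl22} coming from $\norm{a-1}_{\lli\infty}+\norm{\po(\nabla u)}\les\ep$.
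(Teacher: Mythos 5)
Your plan is essentially the paper's own argument. The paper also first shows $\Phi_u$ is a local $C^1$ diffeomorphism by computing the Gram matrix $(\textrm{Jac}\,\Phi_u)^T\textrm{Jac}\,\Phi_u = a^{-2}\,g(\po N,\po N)$ (written out in spherical coordinates $(\varphi,\psi)$ on $\S$) and showing via \eqref{thregx1} and \eqref{threomega1} that it is an $O(\ep)$-perturbation of the identity, and it then passes to global bijectivity using the Jacobian bound together with the fact that $u=x\c\o$ for $|x|\geq 2$. The only difference in flavor is at the global step: the paper's sketch appeals to ``geometric considerations on the level sets of $\partial_\varphi u$ and $\partial_\psi u$'', whereas you package the same input (properness from exterior flatness, degree one at infinity) into a covering-space / Hadamard argument — both are standard once the local part and the exterior behavior are in place.

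One small remark on your local computation. The identity $X(\po u)=\po(X(u))-(\po X)(u)$ together with ``$X(u)=0$ on $\p$'' is coherent only if $X$ is an $\o$-dependent family chosen tangent to each $u(\cdot,\o)$-leaf, so that $X(u)\equiv 0$ identically in $\o$ and $\po X\neq 0$. For a fixed $X\in T_x\p$ the commutator term vanishes and $X(u)=0$ at $\o_0$ does not make $\po\bigl(X(u)\bigr)\big|_{\o_0}$ vanish, so the argument as written would wrongly give $d\Phi_u=0$. The cleaner route, which the paper's Jacobian formula records, is simply $d\Phi_u(X)=g(X,\po\nabla u)=g\bigl(X,\po(a^{-1}N)\bigr)=a^{-1}g(X,\po N)$, using $g(X,N)=0$ on $\p$ and $g(N,\po N)=0$; the Gram matrix $a^{-2}g(\po N,\po N)$ then reads off directly.
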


For the sake of simplicity, we only briefly sketch the proof. We start by showing that $\Phi_u$ is a local $C^1$ diffeomorphism. We have
\begin{displaymath}
\begin{array}{l}
(\textrm{Jac}\Phi_u)^T\textrm{Jac}\Phi_u=a^{-2}\left(\begin{array}{cc}
g(\partial_\varphi N,\partial_\varphi N) &  g(\partial_\psi N,\partial_\varphi N)\\
g(\partial_\psi N,\partial_\varphi N) & g(\partial_\psi N,\partial_\psi N)
\end{array}\right),
\end{array}
\end{displaymath}
where $(\varphi,\psi)$ denotes the usual spherical coordinates  on $\S$. Using the estimates \eqref{thregx1} and \eqref{threomega1}, we are able to derive the following estimate
\begin{equation}\label{gl5}
\norm{(\textrm{Jac}\Phi_u)^T\textrm{Jac}\Phi_u-I}_{\lli{\infty}}\lesssim\ep,
\end{equation}
so that $\Phi_u$ is a $C^1$ local diffeomorphism. In turn, this yields:
\begin{equation}\label{gl6}
\norm{|\det(\textrm{Jac}\Phi_u)|-1}_{\lli{\infty}}\lesssim\ep.
\end{equation}
 
It remains to show that $\Phi_u$ is onto and one-to-one. The proof relies on the estimate \eqref{gl5} for the Jacobian of $\Phi_u$, the fact that $u$ coincides with $x.\o$ in the region $|x|\geq 2$ and geometric considerations on the level sets of $\partial_\varphi u$ and $\partial_\psi u$. We refer to \cite{param1} for the details.
 
\subsection{Proof of Proposition \ref{gl20}}

Let $\o\in\S$. Recall the definition \eqref{gl21} of $\phi_\o:\s\rightarrow\R^3$:
$$\phi_\o(x):=u(x,\o)\o+\po u(x,\o)=u(x,\o)\o+\Phi_u(x),$$
where $\Phi_u$ has been defined in Proposition \ref{gl0}. The fact that $\phi_\o$ is a bijection is an easy consequence of the fact that $\Phi_u$ is a bijection for all $u$. Then, it remains to prove \eqref{gl22}. We are able to obtain
\begin{displaymath}
\begin{array}{l}
(\textrm{Jac}\phi_\o)^T\textrm{Jac}\phi_\o=a^{-2}\\
\times\left(\begin{array}{ccc}
1 & -\partial_\varphi\log(a) & -\partial_\psi\log(a)\\
-\partial_\varphi\log(a) & (\partial_\varphi\log(a))^2+g(\partial_\varphi N,\partial_\varphi N) & \partial_\varphi\log(a)\partial_\psi\log(a)+ g(\partial_\psi N,\partial_\varphi N)\\
-\partial_\psi\log(a) & \partial_\varphi\log(a)\partial_\psi\log(a)+ g(\partial_\psi N,\partial_\varphi N) & (\partial_\psi\log(a))^2+g(\partial_\psi N,\partial_\psi N)
\end{array}\right).
\end{array}
\end{displaymath}
Taking the determinant yields:
\begin{equation}\label{gl27}
\det((\textrm{Jac}\phi_\o)^T\textrm{Jac}\phi_\o)=a^{-2}\det((\textrm{Jac}\Phi_u)^T\textrm{Jac}\Phi_u),
\end{equation}
which together with \eqref{gl5} and the estimate \eqref{thregx1} for $a$ implies:
\begin{equation}\label{gl28}
\norm{\det((\textrm{Jac}\phi_\o)^T\textrm{Jac}\phi_\o)-1}_{\lli{\infty}}\lesssim\ep.
\end{equation}
\eqref{gl28} yields \eqref{gl22}. This concludes the proof of Proposition \ref{gl20}.

\section{Additional estimates}\label{sec:addition} 

The proof of Proposition \ref{prop:estimatesadded} and Proposition \ref{cordecfr} follow from the estimates of Theorem \ref{thregx}, Theorem \ref{thnabn2a}, and Theorem \ref{thregomega} using also for some estimates the fact that $u$ coincides with $x.\o$ in the region $|x|\geq 2$ or the properties of the Littlewood-Paley projections $P_j$. For the sake of simplicity, we skip these proofs and refer the reader to \cite{param1} for the details.

%%%%%%%%%%%%%%%%%%%%%%%%%%%%%%%%%%%%%%%

\chapter{The Strichartz estimates}\lab{part:strich}

Recall Steps A, B, C and D introduced in section \ref{sec:strategyproof}. In this chapter, we perform Step D, i.e. we prove Proposition \ref{prop:L4strichartz}. More precisely, let $j\geq 0$, and let $\psi$ a smooth function on $\RRR^3$ supported in 
$$\frac 1 2 \leq |\xi|\leq 2.$$
Let $\varphi_j$ the parametrix \eqref{parametrix.intr} with an additional frequency localization $\la\sim 2^j$
\begin{equation}\lab{chap7:paraml}
\varphi_j(t,x)=\int_{\SSS^2} \int_0^\infty  e^{i \la u(t,x,\o)}\psi(2^{-j}\la)f(\la\om)\la^2d\la d\om,
\end{equation}
where $u(.,.,\o)$ is a solution to the eikonal equation ${\bf g}^{\alpha\beta}\partial_\alpha u\partial_\beta u=0$ which depends on an extra parameter $\o\in\SSS^2$. Assume that the space-time $\mathcal{M}$ is foliated by space-like hypersurfaces $\Sit$ defined as level hypersurfaces of a time function $t$. Let $(p,q,r)$ such that $p, q\geq 2$, $q<+\infty$, and 
$$\frac{1}{p}+\frac{1}{q}\leq\frac{1}{2},\,r=\frac{3}{2}-\frac{1}{p}-\frac{3}{q}.$$
In this chapter, we outline the proof of the following sharp\footnote{Note in particular that the corresponding estimates in the flat case are sharp.} Strichartz estimates
\be\lab{chap7:strichgeneralintro}
\norm{\varphi_j}_{L^p_{[0,1]}L^q(\Sigma_t)}\les 2^{jr}\norm{\psi(2^{-j}\la)f}_{L^2(\R^3)}.
\ee
The proof of Proposition \ref{prop:L4strichartz} will then be a simple consequence of \eqref{chap7:strichgeneralintro} with the choice  $p=q=4$.

\begin{remark}
Even though we only need $L^4(\MM)$ Strichartz estimates - which corresponds to $p=q=4$ in \eqref{chap7:strichgeneralintro} - to prove Proposition \ref{prop:L4strichartz}, it turns out that this particular case is not easier to prove than the general case.
\end{remark}

\section{Assumptions on the phase $u(t,x,\omega)$ and main results}

\subsection{Time foliation on $\mathcal{M}$}

We foliate the space-time $\mathcal{M}$ by space-like hypersurfaces $\Sit$ defined as level hypersurfaces of a time function $t$. We  assume $0\leq t\leq 1$ so that
\be\lab{chap7:decopmpoM}
\MM=\bigcup_{0\leq t\leq 1}\Sigma_t.
\ee
We denote by 
$T$ the unit, future oriented, normal to $\Sigma_t$. We also define the lapse $n$ as 
\be\lab{chap7:lapsen}
n^{-1}=T(t).
\ee
Note that we have the following identity between the volume element of $\MM$ and the volume element corresponding to the induced metric on $\Sigma_t$
\be\lab{chap7:compvolume}
d\MM=n\,d\Sigma_t\,dt.
\ee
We will assume the following assumption on $n$
\be\lab{chap7:assonn}
\frac{1}{2}\leq n\leq 2
\ee
which together with \eqref{chap7:compvolume} yields
\be\lab{chap7:compvolume1}
d\MM\simeq d\Sigma_t\,dt.
\ee

\begin{remark}
The assumption \eqref{chap7:assonn} is very mild. In particular, it is compatible with the estimates for $n$ derived in \cite{param3} (see also \eqref{ch4:estn}).
\end{remark}

\subsection{Geometry of the foliation generated by $u$ on $\mathcal{M}$}

Recall that $u$ is a solution to the eikonal equation $\gg^{\alpha\beta}\partial_\alpha u\partial_\beta u=0$ on $\mathcal{M}$ depending on an extra parameter $\o\in \S$.  The level hypersufaces $u(t,x,\o)=u$  of the optical function $u$ are denoted by  $\H_u$. Let $L'$ denote the space-time gradient of $u$, i.e.:
\be\lab{chap7:def:L'0}
L'=\gg^{\a\b}\pr_\b u \pr_\a.
\ee
Using the fact that $u$ satisfies the eikonal equation, we obtain:
\be\lab{chap7:def:L'1}
\dd_{L'}L'=0,
\ee
which implies that $L'$ is the geodesic null generator of $\H_u$.

We have: 
$$T(u)=\pm |\nab u|$$
where $|\nab u|^2=\sum_{i=1}^3|e_i(u)|^2$ relative to an orthonormal frame $e_i$ on $\Sigma_t$. Since the sign of $T(u)$ is irrelevant, we choose by convention:
\be\lab{chap7:it1'}
T(u)=-|\nab u|
\end{equation}
so that $u$ corresponds to $-t+x\c\o$ in the flat case.

Let
 \be\lab{chap7:it2}
L=bL'=T+N,
\end{equation}
where $L'$ is the space-time gradient of $u$ \eqref{chap7:def:L'0}, $b$  is  the  \textit{lapse of the null foliation} (or shortly null lapse)
\be\lab{chap7:it3}
b^{-1}=-<L', T>=-T(u),
\end{equation} 
and $N$ is a unit vectorfield given by
\be\lab{chap7:it3bis}
N=\frac{\nabla u}{|\nabla u|}.
\end{equation}

Note that we have the following identities.
\begin{lemma}
\be\lab{chap7:identities}
L(u)=0,\, L(\po u)=0
\ee
and
\be\lab{chap7:identities1}
\gg(N, \po N)=0.
\ee
\end{lemma}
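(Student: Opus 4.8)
\textbf{Proof plan for Lemma \eqref{chap7:identities}--\eqref{chap7:identities1}.}

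The three identities are purely algebraic consequences of the eikonal equation and the definitions of $L$, $N$, $b$, so the plan is to establish them by direct computation. For the first identity $L(u)=0$: by definition $L=bL'$ with $L'=\gg^{\a\b}\pr_\b u\,\pr_\a$, hence $L'(u)=\gg^{\a\b}\pr_\b u\,\pr_\a u=0$ is exactly the eikonal equation $\gg^{\a\b}\partial_\alpha u\partial_\beta u=0$. Multiplying by the scalar $b$ gives $L(u)=bL'(u)=0$. Alternatively, and perhaps more transparently, one can verify directly from \eqref{chap7:it2}, \eqref{chap7:it1'} and \eqref{chap7:it3bis} that $L(u)=(T+N)(u)=T(u)+N(u)=-|\nab u|+|\nab u|=0$, using $N(u)=\gg(N,\nabla u)=|\nabla u|^{-1}|\nabla u|^2=|\nabla u|$. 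Both routes should be recorded; the second makes the role of the sign convention \eqref{chap7:it1'} explicit.

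For the second identity $L(\po u)=0$: differentiate the eikonal equation $\gg^{\a\b}\pr_\a u\,\pr_\b u=0$ with respect to the parameter $\o$. Since $\gg$ is independent of $\o$, this yields $2\gg^{\a\b}\pr_\a u\,\pr_\b(\po u)=0$, i.e. $2L'(\po u)=0$, and multiplying by $b$ gives $L(\po u)=0$. One subtlety worth a sentence: the identity holds componentwise in $\o$ (i.e. for each of the angular derivatives $\partial_{\o}$), and the differentiation of a function on $\MM\times\S$ with respect to the $\S$-variable commutes with the coordinate vector fields $\pr_\a$ on $\MM$, so the chain rule applies without extra terms.

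For the third identity $\gg(N,\po N)=0$: this follows from $\gg(N,N)=1$, which holds for every $\o$ since $N=\nabla u/|\nabla u|$ is by construction a unit vector tangent to $\Sigma_t$. Differentiating $\gg(N,N)=1$ in $\o$ gives $2\gg(N,\po N)=0$, hence $\gg(N,\po N)=0$. Again one should note that $\po N$ is a well-defined tangent vector to $\Sigma_t$ (the $\o$-derivative of a $\Sigma_t$-tangent vector field, taken at fixed $(t,x)$), so the pairing makes sense. There is no real obstacle here; the only care needed is bookkeeping: making sure the sign convention \eqref{chap7:it1'} is used consistently so that $L(u)=0$ comes out with the right sign, and noting that all three computations are pointwise in $(t,x,\o)$ and require only smoothness of $u$ in its arguments (which is part of the standing assumptions on the phase). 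The ``hard part,'' such as it is, is merely to state clearly that these are formal identities valid wherever $u$ is defined and $C^1$, so that they may be freely invoked in the subsequent Strichartz estimate analysis.
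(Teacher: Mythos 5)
Your proof is correct, and it is exactly the kind of elementary argument the paper has in mind: the paper states only ``The proof is elementary and can be found in \cite{bil2}'' without writing it out, and your three computations (differentiating the eikonal equation for the first two identities and differentiating $\gg(N,N)=1$ for the third) are the standard route there. A small but worthwhile detail you handle correctly is the Chapter~\ref{part:strich} sign convention $T(u)=-|\nab u|$ and the corresponding $L'=\gg^{\a\b}\pr_\b u\,\pr_\a$ (without the minus sign used in Chapter~\ref{part:paramtime}); your alternative direct check $L(u)=T(u)+N(u)=-|\nab u|+|\nab u|=0$ shows the sign conventions hang together.
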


The proof is elementary and can be found in \cite{bil2}.

\subsection{Regularity assumptions for $u(t,x,\o)$}

We now state our assumptions for the phase $u(t,x,\o)$. These assumptions are compatible with the regularity obtained for the function $u(t,x,\o)$ constructed in \cite{param3} (see also \eqref{ch4:estb}, \eqref{ch4:estNomega}, \eqref{ch4:estricciomega}). Let $0<\ep<1$ a small enough universal constant. $b$ and $N$ satisfy
\be\lab{chap7:regb}
\norm{b-1}_{L^\infty}+\norm{\po b}_{L^\infty}\les\ep.
\ee

\be\lab{chap7:regpoN}
\norm{\gg(\po N,\po N)-I_2}_{L^\infty}\les \ep.
\ee

\be\lab{chap7:ad1}
|N(., \o)-N(., \o')|=|\o-\o'|(1+O(\ep)).
\ee

\begin{remark}
In the flat case, we have $\mathcal{M}=(\R^{1+3},{\bf m})$,  where ${\bf m}$ is the Minkowski metric, $u(t,x,\o)=-t+\xo$, $b= 1$, $N=\o$ and $L=\partial_t+\o\cdot\partial_x$. Thus, the assumptions \eqref{chap7:regb} \eqref{chap7:regpoN} \eqref{chap7:ad1}  are clearly satisfied with $\ep=0$.
\end{remark}

\begin{remark}
In terms of the regularity of $u(t,x,\o)$, the assumptions \eqref{chap7:regb} \eqref{chap7:regpoN} correspond to 
$$\nabla u\in L^\infty\textrm{ and }\nabla\po u\in L^\infty$$
which is very weak. In particular, the classical proof for obtaining Strichartz estimates for the wave equation relies on the stationary phase for an oscillatory integral involving $u$ as a phase, and typically requires at the least one more derivative for $u$ (see Remark \ref{chap7:rem:compstatphase}).
\end{remark}

\subsection{A global coordinate system on $\Sigma_t$}\lab{chap7:sec:assumptioncoord}

For all $0\leq t\leq 1$, and for all $\o\in\S$, $(u(t,x,\o), \po u(t,x,\o))$ is a global coordinate system on $\Sigma_t$. Furthermore, the volume element is under control in the sense that in this coordinate system, we have
\be\lab{chap7:assglobalcoordvol}
\frac{1}{2}\leq \sqrt{\det g}\leq 2
\ee
where $g$ is the induced metric on $\Sigma_t$, and where $\det g$ denotes the determinant of the matrix of the coefficients of $g$. 

%% Sur P_{t,u}, se deduit de cas t=0 sur P_{0,u} grace a \eqref{chap7:identities}. Puis sur $\Sigma_t$ car $\Sigma_t=\cup P_{t,u}$ avec union disjointe 

\begin{remark}
In the flat case, we have $\Sigma_t=\{t\}\times\R^3$ and $u(t,x,\o)=-t+\xo$ so that $(u(t,x,\o), \po u(t,x,\o))$ is clearly a global coordinate system on $\Sigma_t$ and $\det g=1$ in this case. These assumptions are also satisfied by the function $u(t,x, \o)$ constructed in \cite{param3}. 
\end{remark}

\subsection{Main results}

We next state the main result of this chapter concerning general Strichartz inequalities in mixed space-time norms
of the form $L^p_{[0,1]}L^q(\Sigma_t)$  defined as follows,
$$\|F\|_{L^p_{[0,1]}L^q(\Sigma_t)}=\left(\int_0^1 \|F(t,\cdot)\|_{L^p(\Si_t)}^pdt\right)^{\frac{1}{p}}.$$
 
\begin{theorem}\lab{chap7:mainth}
Let $(p,q)$ such that $p, q\geq 2$, $q<+\infty$, and 
$$\frac{1}{p}+\frac{1}{q}\leq\frac{1}{2}.$$
Let $r$ defined by
$$r=\frac{3}{2}-\frac{1}{p}-\frac{3}{q}.$$
Then, the parametrix localized at frequency $j$ defined in \eqref{chap7:paraml} satisfies under the assumptions \eqref{chap7:assonn}, 
\eqref{chap7:regb}, \eqref{chap7:regpoN}, \eqref{chap7:ad1} and the assumptions in section \ref{chap7:sec:assumptioncoord} the following Strichartz inequalities
\be\lab{chap7:strichgeneral}
\norm{\varphi_j}_{L^p_{[0,1]}L^q(\Sigma_t)}\les 2^{jr}\norm{\psi(2^{-j}\la)f}_{L^2(\R^3)}.
\ee
\end{theorem}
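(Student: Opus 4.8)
The strategy is the one dictated by the earlier chapters: reduce the mixed-norm Strichartz estimate \eqref{chap7:strichgeneral} to a $TT^*$ argument, but — because the phase $u(t,x,\o)$ has too little regularity for the classical stationary phase method (we only control $\nabla u\in L^\infty$ and $\nabla\po u\in L^\infty$, cf. the remarks following \eqref{chap7:ad1}) — replace stationary phase by an overlap/decomposition argument in the spirit of Smith \cite{Sm} and Smith--Tataru \cite{Sm-Ta}, while crucially avoiding a wave-packet decomposition. First I would set up the usual $TT^*$ reduction: writing $\varphi_j=T_jf$ with $T_j$ the Fourier integral operator in \eqref{chap7:paraml}, the estimate \eqref{chap7:strichgeneral} follows, after a standard duality and an interpolation argument reducing the whole family $(p,q)$ to the endpoint case, from an $L^1_{[0,1]}L^{\infty}(\Sigma_t)$-type kernel bound for $T_jT_j^*$. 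Using the global coordinate system $(u,\po u)$ on $\Sigma_t$ from section \ref{chap7:sec:assumptioncoord} together with the volume bound \eqref{chap7:assglobalcoordvol}, and $d\MM\simeq d\Sigma_t\,dt$ from \eqref{chap7:compvolume1}, the kernel of $T_jT_j^*$ is an oscillatory integral in $(\la,\o)$ whose phase is $\la u(t,x,\o)-\la' u(t',x',\o)$ localized at $\la,\la'\sim 2^j$.

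The core of the argument is then to establish dispersive decay for this kernel \emph{without} differentiating the phase more than twice in $\o$ and more than once in the space-time variables. Here I would: (i) perform a second dyadic decomposition of the $\o$-integral into caps of diameter $2^{-j/2}$, indexed by $\nu\in\Gamma$, exactly as in Chapters \ref{part:paramtime} and \ref{part:paraminit}; (ii) on each cap, exploit \eqref{chap7:regpoN} and \eqref{chap7:ad1} — which say that $\o\mapsto N(t,x,\o)$ and $\o\mapsto\po u(t,x,\o)$ are bi-Lipschitz with constant $1+O(\ep)$ — to compare $u(t,x,\o)$ on the cap with the linear-in-$\o$ phase $\phi_\nu(t,x)\c\o$ built from $(u,\po u)$ at the center $\nu$, picking up errors $O(\ep|\o-\nu|^2)$ in the phase and $O(\ep)$ in two $\o$-derivatives, again in analogy with \eqref{ch5:gl23}/\eqref{gogol2}; (iii) integrate by parts in $\o$ (at most twice) and in $\la$ (as many times as we like, since the symbol is just $\psi(2^{-j}\la)$) to obtain, for the frozen-coefficient piece, the flat dispersive bound $|K(t,x;t',x')|\les 2^{3j}(1+2^j|t-t'|)^{-1}(1+\cdots)^{-1}$, and show that the error terms generated by (ii) are lower order because each carries a factor $\ep$ and no worse $\la$-weight. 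The $L^1_tL^\infty_x$ bound then follows from $\int_0^1(1+2^j|t-t'|)^{-1}\,dt\les j 2^{-j}$ absorbed by a half-power gain, or — to get the sharp loss-free estimate — by the standard trick of also decomposing in time on scale $2^{-j}$ and summing, exactly the point where the $L^p$ rather than $L^\infty$ in $t$ in the mixed norm is used.

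\textbf{Main obstacle.} The hard part is \emph{not} the frozen-coefficient estimate but controlling the cumulative error when one freezes the $\o$-dependence of $u$ on a $2^{-j/2}$-cap: naively each integration by parts in $\o$ costs a factor $2^{j/2}$ while the cap contributes volume $2^{-j/2}$ per $\o$-direction, so the budget is tight, and the error terms from \eqref{chap7:regpoN}, \eqref{chap7:ad1} must be organized so that the two $\o$-derivatives that do land on the phase are each accompanied by a genuine $\ep$ gain (from the $O(\ep)$ in the second $\o$-derivative of the phase comparison) rather than merely being bounded. This is precisely where the Smith--Tataru overlap philosophy enters: instead of a pointwise stationary-phase estimate one proves an $\ell^2$ almost-orthogonality statement among the cap-localized pieces $T_j^\nu f$, using that two caps $\nu\neq\nu'$ contribute a kernel decaying in $2^{j/2}|\nu-\nu'|$, and then sums. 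I expect this almost-orthogonality-in-angle step, together with making the time-decomposition interact cleanly with the angular decomposition so as to recover the \emph{loss-free} exponent $r=\tfrac32-\tfrac1p-\tfrac3q$, to be where essentially all the real work (and the delicate bookkeeping, deferred to \cite{bil2}) resides; once those two decompositions are in place, interpolation in $(p,q)$ and the final specialization $p=q=4$ to recover Proposition \ref{prop:L4strichartz} are routine.
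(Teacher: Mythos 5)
Your proposal departs from the paper's argument in a way that runs into a genuine obstruction, which the paper itself explicitly identifies. The cap-decomposition in $\o$ on scale $2^{-j/2}$ together with integrations by parts in $\o$ on each cap requires control of at least $\po^2 u$ (one integration by parts already brings out a factor $\po^2\phi/(\po\phi)^2$), and the comparison $u(t,x,\o)-\phi_\nu(t,x)\cdot\o=O(\ep|\o-\nu|^2)$ with the corresponding estimate for two $\o$-derivatives — which you import from \eqref{ch5:gl23}/\eqref{gogol2} — was established in Chapters \ref{part:paraminit}--\ref{part:initialu} only on $\Sigma_0$, and only under assumptions like $\po^2 a$, $\po^3 u\in L^\infty_{\mathrm{loc}}$ that are far stronger than the hypotheses \eqref{chap7:regb}--\eqref{chap7:ad1} of this theorem. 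Those hypotheses amount to $\pr_{t,x}u\in L^\infty$ and $\pr_{t,x}\po u\in L^\infty$ only, as Remark \ref{chap7:rem:compstatphase} states; no control of $\pr_{t,x}\po^2 u$ or $\po^2 N$ is available in the space-time. Consequently the Taylor-type phase comparison on caps, and hence any budget argument built on two $\o$-integrations by parts per cap, cannot be carried out. A separate problem is that the almost-orthogonality in angle developed in Chapters \ref{part:paramtime} and \ref{part:paraminit} is an $L^2$-type tool used to prove $L^2$ operator bounds; it does not by itself yield the pointwise dispersive kernel estimate $|K(t,x,s,y)|\lesssim|t-s|^{-1}$ that the $TT^*$ argument here requires.

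The paper's route is quite different and was designed precisely to sidestep this regularity barrier. After the $TT^*$ reduction one integrates by parts only in $\la$ (Proposition \ref{chap7:prop:estkernel}, leading to \eqref{chap7:ibpkernel}); this costs nothing in regularity of $u$. The whole burden is then transferred to a pointwise \emph{lower} bound on the phase difference $\phi=u(t,x,\cdot)-u(s,y,\cdot)$ (Lemma \ref{chap7:lemma:key}), proved by geometric arguments using only the identity \eqref{thekoikoi} integrated along well-chosen curves, the bi-Lipschitz control \eqref{chap7:ad1}, and the global coordinate system of section \ref{chap7:sec:assumptioncoord}. The key structural observation — flagged explicitly in Remark \ref{chap7:rem:compstatphase} — is that the inequality $|\phi|\gtrsim|t-s||\o-\o_0|^2$ needs strictly less regularity than the Taylor \emph{identity} that stationary phase requires, so no $\po^2 u$ control is ever invoked. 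Your write-up correctly identifies the overall $TT^*$ reduction and the Smith--Tataru inspiration, but the mechanism you propose for the dispersive bound — angular caps plus phase linearization plus almost-orthogonality — is exactly the one that fails at this level of regularity, and it would need to be replaced by the phase lower-bound strategy to make the proof go through.
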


Proposition \ref{prop:L4strichartz} - which corresponds to Corollary 2.8 in \cite{bil2} - is then a simple consequence of Theorem \ref{chap7:mainth}, see \cite{bil2} for the details. 

\vspace{0.3cm}

The rest of the chapter is organized as follows. In section \ref{chap7:sec:proofmainthbis}, we use the standard $TT^*$ argument to reduce the proof of Theorem \ref{chap7:mainth} to an upper bound on the kernel   $K$ of a certain operator. This kernel is an oscillatory integral with a phase $\phi$. In section \ref{chap7:sec:estkernel}, we prove the upper bound on the kernel $K$ provided we have a suitable lower bound on $\phi$. Finally, in section \ref{chap7:sec:lowerb}, we prove the lower bound for $\phi$ used in section \ref{chap7:sec:estkernel}. 

\section{Proof of the Strichartz estimates}\lab{chap7:sec:proofmainthbis} 
 
The goal of this section is to prove Theorem \ref{chap7:mainth}. We start with the following remark.
\begin{remark}\lab{chap7:rem:globalcoordsimple}
Fixing  a global system of coordinates  $x=(x^1, x^2, x^3)$ in $\Si_t$, such as the one described in section  \ref{chap7:sec:assumptioncoord}, we note in view of \eqref{chap7:assglobalcoordvol} that \eqref{chap7:strichgeneral} is equivalent with the same inequality where the norm $L^q(\Sigma_t)$ on the  left-hand side is replaced by the corresponding  euclidean norm in the  given coordinates. More precisely we can assume from now on that
$$\norm{F}_{L^p_{[0,1]}L^q(\Sigma_t)}=\left(\int_0^1\left(\int_{\R^3}|F(t,x)|^q dx\right)^{\frac{p}{q}}dt\right)^{\frac{1}{q}}$$
which we will denote by a slight abuse of notation by
$$\norm{F}_{L^p_{[0,1]}L^q(\R^3)}.$$
Note also that in the $(t,x)$  coordinates $\MM=[0,1]\times \R^3$. 
\end{remark}

For convenience, let us introduce the operator $T_j$ acting on functions $f\in L^2(\R^3)$,
\bea
\lab{chap7:eq:op-Tj}
T_jf(t,x)= \int_{\S} \int_0^\infty  e^{i \la u(t,x,\o)}\psi(2^{-j}\la)f(\la\o)\la^2d\la d\o.
\eea
Note in particular that 
\bea\lab{chap7:idvarphijTj}
T_jf=\varphi_j
\eea
where $\varphi_j$ is the parametrix localized at frequency $2^j$ defined in \eqref{chap7:paraml}. To prove Theorem \ref{chap7:mainth}, we rely on  the standard  $TT^*$ argument for
 the Fourier integral operator \eqref{chap7:eq:op-Tj}. Note that  the operator  $T_j^*  $  takes real valued  functions $h$ on $\MM$ to complex valued functions on $\R^3$
$$T_j^*h(\la\o)=\psi(2^{-j} \la) \int_\MM  e^{-i\la u(s,y, \o)} h(s,y) ds dy.$$
Therefore, the operator $U_j:=T_j T^*_j$ is given by the formula,
$$U_j h(t,x)=\int_{\S} \int_0^\infty \int_\MM e^{i \la u(t,x,\o)-i\la u(s,y,\o)}\psi(2^{-j}\la)^2h(s,y)\la^2d\la d\o   ds dy.$$
Note, in view of Remark \ref{chap7:rem:globalcoordsimple} and \eqref{chap7:idvarphijTj}, that \eqref{chap7:strichgeneral} is equivalent to the following estimate
\be\lab{chap7:ttsineq}
\norm{ U_j h}_{L^p_{[0,1]}L^q(\R^3)}\les 2^{2jr}\norm{h}_{L^{p'}_{[0,1]}L^{q'}(\R^3)},
\ee
where $p'$ (resp. $q'$) is the conjugate exponent to $p$ (resp. $q$).
Observe that,
$$U_jh\left(\frac{t}{2^j},\frac{x}{2^j}\right) = 2^{-j}\int_{\S} \int_0^\infty \int_{2^j\MM} e^{i \la 2^j u\left(\frac{t}{2^j},\frac{x}{2^j},\o\right)-i\la 2^j u\left(\frac{s}{2^j},\frac{y}{2^j},\o\right)}\psi(\la)^2h\left(\frac{s}{2^j},\frac{y}{2^j}\right)\la^2d\la d\o  ds dy$$
with $2^j\MM=[0,  2^j]\times \R^3$ relative to the rescaled  variables $(s,y)$.  Thus,
 setting,
$$Ah(t,x) := \ds\int_{\S} \int_0^\infty \int_{2^j\MM} e^{i \la 2^j u\left(\frac{t}{2^j},\frac{x}{2^j},\o\right)-i\la 2^j u\left(\frac{s}{2^j},\frac{y}{2^j},\o\right)}\psi(\la)^2h(s,y)\la^2d\la d\o  ds dy$$
we have 
$$U_jh\left(\frac{t}{2^j},\frac{x}{2^j}\right)=2^{-j}A h_j(t,x),\,\,\, h_j(s,y)=h\left(\frac{s}{2^j},\frac{y}{2^j}\right).$$ 
We easily infer that \eqref{chap7:ttsineq} is equivalent to  the estimate,
\be\lab{chap7:ttsineq1}
\norm{Ah}_{L^p_{[0,2^j]}L^q(\R^3)}\les \norm{h}_{L^{p'}_{[0,2^j]}L^{q'}(\R^3)}.
\ee

We introduce the kernel $K$ of $A$
\bea\lab{chap7:defkernel}
K(t,x,s,y)=\int_{\S} \int_0^\infty e^{i \la 2^j u\left(\frac{t}{2^j},\frac{x}{2^j},\o\right)-i\la 2^j u\left(\frac{s}{2^j},\frac{y}{2^j},\o\right)}\psi(\la)^2\la^2d\la d\o.
\eea

\begin{remark}
In the flat case, we have $u(t,x,\o)=-t+x\cdot\o$ so that 
$$2^ju\left(\frac{t}{2^j},\frac{x}{2^j},\o\right)=u(t,x,\o).$$
In particular, $K$ is independent of $j$ 
$$K(t,x,s,y)=\int_{\S} \int_0^\infty e^{i \la u(t,x,\o)-i\la u(s,y,\o)}\psi(\la)^2\la^2d\la d\o.$$
\end{remark}

We have the following proposition.
\begin{proposition}\lab{chap7:prop:estkernel}
The kernel $K$ of the operator $A$ satisfies the  dispersive estimates,
\be\lab{chap7:estkernel} 
|K(t,x,s,y)|\les \frac{1}{|t-s|},\,\,\,\forall (t,x)\in 2^j\MM,\,\,\, \forall (s,y)\in 2^j\MM. 
\ee
\end{proposition}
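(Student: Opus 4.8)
\textbf{Proof proposal for Proposition \ref{chap7:prop:estkernel}.}

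The plan is to bound the oscillatory integral
$$K(t,x,s,y)=\int_{\S}\int_0^\infty e^{i\lambda\phi(\o)}\psi(\lambda)^2\lambda^2\,d\lambda\,d\o,\qquad
\phi(\o)=2^j u\Big(\tfrac{t}{2^j},\tfrac{x}{2^j},\o\Big)-2^j u\Big(\tfrac{s}{2^j},\tfrac{y}{2^j},\o\Big),$$
by the standard stationary/nonstationary phase dichotomy, but carried out at the very low regularity level permitted by the assumptions \eqref{chap7:regb}, \eqref{chap7:regpoN}, \eqref{chap7:ad1} (i.e. only $\nabla u, \nabla\partial_\omega u\in L^\infty$, no second $\omega$-derivatives of $u$ available). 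First I would record that $\lambda$-integration is harmless: since $\psi$ is compactly supported in $\tfrac12\le\lambda\le 2$, integrating by parts as many times as we like in $\lambda$ produces, for any $M$, the bound $|K(t,x,s,y)|\lesssim \int_{\S}(1+|\phi(\o)|)^{-M}\,d\o$ (with constants depending on $M$ and on $L^\infty$ bounds of $\psi$ and its derivatives). So the whole problem reduces to a purely $\omega$-space integral governed by the size of the phase difference $\phi$ and its first $\omega$-derivative.

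Second, the core analytic input is a \emph{lower bound on $\phi$ and $\partial_\omega\phi$} in terms of the space-time separation $|t-s|$ and the normalized spatial separation — precisely the content of what the excerpt announces will be proved in its section \ref{chap7:sec:lowerb}. Using the identities \eqref{chap7:identities}, \eqref{chap7:identities1} ($L(u)=0$, $L(\partial_\omega u)=0$, $\g(N,\partial_\omega N)=0$) together with $L=T+N$, $b^{-1}=-T(u)$, and the fact that $u$ corresponds to $-t+x\cdot\o$ in the flat case, one writes $\partial_\omega\phi$ in terms of $\partial_\omega u$ evaluated at the two points and controls it using \eqref{chap7:regb}, \eqref{chap7:regpoN}, \eqref{chap7:ad1}; this yields, after the rescaling $2^j u(\cdot/2^j)$ which is why $\phi$ is written with that prefactor, a quantitative statement of the shape: there is a vector $v=v(t,x,s,y)\in\R^3$ with $|v|\simeq |t-s|$ such that, in the coordinates provided by section \ref{chap7:sec:assumptioncoord}, $\phi(\o)$ behaves like the flat phase $(-(t-s)+(x-y)\cdot\o)$ up to $O(\ep)$ perturbations with controlled $\omega$-derivatives. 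Granting such a lower bound, one splits $\S$ into the region where $|\phi|$ is large (where the $\lambda$-by-parts bound above already gives integrable decay) and a neighborhood of the critical set of $\phi$, whose measure is controlled by the nondegeneracy of $\partial_\omega\phi$; the resulting integral is exactly the one that produces the sharp $|t-s|^{-1}$ factor, just as in the flat computation
$$\Big|\int_{\S}\int_0^\infty e^{i\lambda(-(t-s)+(x-y)\cdot\o)}\psi(\lambda)^2\lambda^2\,d\lambda\,d\o\Big|\lesssim \frac{1}{|t-s|}.$$

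Third, I would carry the steps out in this order: (i) reduce to the $\omega$-integral by $\lambda$-integration by parts; (ii) change variables on $\Sigma_t$ to the global coordinates of section \ref{chap7:sec:assumptioncoord}, using \eqref{chap7:assglobalcoordvol} so that volume elements are comparable to Euclidean ones and the $TT^*$ reductions of section \ref{chap7:sec:proofmainthbis} apply cleanly; (iii) invoke the lower bound on $\phi$, $\partial_\omega\phi$ from section \ref{chap7:sec:lowerb}; (iv) perform the stationary-phase-type estimate on $\S$, treating the $O(\ep)$ corrections as perturbations of the flat phase and absorbing them since $\ep$ is small. The main obstacle is step (iv) combined with (iii): because we lack $\partial_\omega^2 u\in L^\infty$, a textbook stationary phase argument (which needs the Hessian of the phase in $\omega$, cf. the remark in the excerpt about requiring one more derivative of $u$) is unavailable, so the nondegeneracy has to be extracted purely from first-order information — the transversality encoded in \eqref{chap7:ad1} that $|N(\cdot,\o)-N(\cdot,\o')|=|\o-\o'|(1+O(\ep))$ — via a $|\omega-\omega'|$-scale decomposition rather than a classical critical-point expansion. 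Once that lower bound is in hand (which is the substance deferred to section \ref{chap7:sec:lowerb} and section \ref{chap7:sec:estkernel}), the dispersive estimate \eqref{chap7:estkernel} follows, and then \eqref{chap7:ttsineq1} — hence \eqref{chap7:strichgeneral} — is obtained from \eqref{chap7:estkernel} by the Hardy–Littlewood–Sobolev inequality in the $t$-variable exactly as in the classical proof of Strichartz estimates from dispersive bounds.
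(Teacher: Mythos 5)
Your proposal correctly identifies the two essential ingredients and roughly tracks the paper's strategy: integrate by parts in $\lambda$ (twice, using the compact support of $\psi$) to reduce to the purely $\omega$-space estimate $|K|\lesssim\int_{\S}(1+|\phi|)^{-2}\,d\o$, then invoke a lower bound on $|\phi|$ to close the integral — and you correctly recognize, in your final paragraph, that this lower bound cannot come from a classical critical-point expansion because $\pr_\o^2 u\notin L^\infty$. That diagnosis matches Remark \ref{chap7:rem:compstatphase} exactly.

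Where your description departs from the paper is in the mechanism you sketch for converting the lower bound into the $|t-s|^{-1}$ decay. You frame it as a split of $\S$ into \emph{a region where $|\phi|$ is large} and \emph{a neighborhood of the critical set of $\phi$ whose measure is controlled by the nondegeneracy of $\pr_\o\phi$}. The paper does not do this, and does not need a lower bound on $\pr_\o\phi$ at all. The Key Lemma \ref{chap7:lemma:key} produces a \emph{pointwise} lower bound of the form $|\phi(\o)|\gtrsim|t-s|\,|\o-\o_0|^2$ (case $(s,y)\in S$, and analogous quadratic-in-angle bounds in the other regions), where $\o_0$ is determined by $(t,x,s,y)$. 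Substituting this directly into $\int_{\S}(1+2^{2j}|\phi|^2)^{-1}d\o$ and making the single change of variables $z=|t-s|(1-\cos\th)$ in spherical coordinates centered at $\o_0$ immediately yields $\int_0^\infty\frac{dz}{|t-s|(1+z^2)}\lesssim|t-s|^{-1}$, with no region decomposition and no reference to critical points or to $\pr_\o\phi$. The distinction matters: controlling $\phi$ \emph{near its critical set via nondegeneracy of $\pr_\o\phi$} is precisely the step that would reintroduce second $\o$-derivatives. The paper sidesteps it entirely by comparing to the flat case through the geometric decomposition $\Sigma_s=S\sqcup A_{int}\sqcup A_{ext}$ and the null geodesics $\ga_\o$, deriving the quadratic bound on $|\phi|$ directly. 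So the substance of your proof (reduce via $\lambda$-by-parts, use the deferred lower bound on $\phi$) is the right one, but the intermediate "nonstationary/stationary split governed by $\pr_\o\phi$" you describe would, if pursued literally, run into the very regularity obstruction you correctly identified a paragraph earlier.
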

The proof of Proposition \ref{chap7:prop:estkernel} is postponed to section \ref{chap7:sec:estkernel}. We now conclude the proof of Theorem \ref{chap7:mainth}. \eqref{chap7:ttsineq1} follows from \eqref{chap7:estkernel} using interpolation and the Hardy-Littlewood inequality according to the standard procedure, see for example \cite{sog} and \cite{St}. Finally, in view of the discussion above, \eqref{chap7:ttsineq1} yields \eqref{chap7:ttsineq} which in turn implies \eqref{chap7:strichgeneral} in view of \eqref{chap7:idvarphijTj}. This concludes the proof of Theorem \ref{chap7:mainth}.

\section{Upper bound on the kernel $K$}\lab{chap7:sec:estkernel}

The goal of this section is to prove Proposition \ref{chap7:prop:estkernel}. Let $\phi$ the scalar function on $\MM\times\MM\times\S$ defined as
\be\lab{chap7:defphi}
\phi(t,x,s,y,\o)=u(t,x,\o)-u(s,y,\o).
\ee
In view of \eqref{chap7:defkernel}, we may rewrite $K$ as 
$$K(t,x,s,y)=\int_{\S} \int_0^\infty e^{i \la 2^j \phi\left(\frac{t}{2^j},\frac{x}{2^j},\frac{s}{2^j},\frac{y}{2^j},\o\right)}\la^2d\la d\o.$$
After integrating by parts twice in $\la$, and using the size of the support of $\psi$, this yields
\be\lab{chap7:ibpkernel}
|K(t,x,s,y)|\les \int_{\S}\frac{1}{1+2^{2j}\phi\left(\frac{t}{2^j},\frac{x}{2^j},\frac{s}{2^j},\frac{y}{2^j},\o\right)^2}d\o.
\ee
The next section is dedicated to the obtention of a lower bound on $|\phi|$ which will allow us to deduce \eqref{chap7:estkernel} from \eqref{chap7:ibpkernel}.

\begin{remark}\lab{chap7:rem:compstatphase}
It is at this stage that we depart from the standard strategy for proving Strichartz estimates. Indeed, the usual method  consists in using the stationary phase method to derive \eqref{chap7:estkernel}. To this end, one considers the neighborhood in $\S$ of stationary points $\o_0$, i.e. such that $\po\phi_{|_{\o=\o_0}}=0$. One then needs an identity of the type
\be\lab{chap7:usualstatmeth}
\phi=(s-t)A(\o-\o_0)\c (\o-\o_0)+o\left((s-t)(\o-\o_0)^2\right)
\ee 
for $\o$ in the neighborhood of $\o_0$ and for some $3\times 3$ invertible matrix $A$. \eqref{chap7:usualstatmeth} then allows to perform a change of variables in $\o$ which ultimately leads to \eqref{chap7:estkernel}. In particular, the standard method requires at the least\footnote{One also needs to take care of the contribution to $K$ of the angles $\o\in\S$ corresponding to the exterior of the neighborhood of stationary points which may increase the needed regularity.} 
$\partial_{t,x}\po^2 u\in L^\infty$
just to derive \eqref{chap7:usualstatmeth}. 

Our assumptions correspond only to $\partial_{t,x}\po u\in L^\infty$. Thus, in order to obtain \eqref{chap7:estkernel}, we instead integrate by parts in $\la$ to obtain \eqref{chap7:ibpkernel}, and then look for a suitable lower bound on $|\phi|$. In particular, we obtain lower bounds of the following type (see details in Lemma \ref{chap7:lemma:key})
\be\lab{chap7:usualstatmeth1}
|\phi|\gtrsim |s-t||\o-\o_0|^2
\ee 
for $\o$ in the neighborhood of some $\o_0\in\S$. The fundamental observation is that, as it turns out, the inequality \eqref{chap7:usualstatmeth1} requires less regularity than the equality \eqref{chap7:usualstatmeth}.
\end{remark}

\subsection{The key lemma}

Let $(t,x)$ and $(s,y)$ in $\MM$, and let $\o\in\S$. In this section, we obtain a lower bound on $\phi(t,x,s,y,\o)$. We may assume
$$0\leq t<s\leq 1.$$
\begin{definition}
For any $\o\in\S$ and $\sigma\in\R$, let $\gamma_{\o}(\sigma)$ denote the null geodesic parametrized by the time function and with initial data
$$\gamma_\o(0)=(t,x),\, \gamma_\o'(0)=b^{-1}(t,x,\o)L(t,x,\o).$$
\end{definition}

\begin{definition}
Let us define the subset $S$ of $\Sigma_s$ as
\be\lab{chap7:def:S}
S=\bigcup_{\o\in\S}\{\ga_\o(s-t)\}.
\ee
\end{definition}

We also define for all $(s,z)\in\Sigma_s$
\be\lab{chap7:def:m}
m(s,z)=\max_{\o\in\S}(u(s,z,\o)-u(t,x,\o)).
\ee
We have the following lemma characterizing the zeros of $m$ (see \cite{bil2} for a proof).

\begin{lemma}\lab{chap7:lemma:S}
We have
$$S=\{p\in\Sigma_s,\,/\, m(p)=0\}.$$
\end{lemma}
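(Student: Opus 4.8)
The plan is to prove Lemma \ref{chap7:lemma:S}, which identifies the ``cut'' set $S$ of null geodesics emanating from $(t,x)$ with the zero set of the function $m$ on $\Sigma_s$. The two inclusions are of quite different character, so I would treat them separately.

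\textbf{The inclusion $S \subseteq \{m = 0\}$.} Fix $\o_0 \in \S$ and let $p = \ga_{\o_0}(s-t) \in \Sigma_s$. Since $\ga_{\o_0}$ is the null geodesic with $\ga_{\o_0}(0) = (t,x)$ and $\ga_{\o_0}'(0) = b^{-1}(t,x,\o_0) L(t,x,\o_0)$, Remark \ref{ch4:defnullgeod} (specifically \eqref{ch4:defnullgeod1}) tells us that $\ga_{\o_0}$ stays tangent to $L'= b^{-1}L$ for the parameter value $\o = \o_0$, i.e.\ $\ga_{\o_0}$ is a null generator of the level hypersurface $\H_{u_0}$ of $u(\cdot,\cdot,\o_0)$ through $(t,x)$, where $u_0 = u(t,x,\o_0)$. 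Using the first identity in \eqref{chap7:identities}, $L(u) = 0$, the function $\sigma \mapsto u(\ga_{\o_0}(\sigma),\o_0)$ is constant, hence $u(p,\o_0) = u(t,x,\o_0)$, which shows $m(p) \geq u(p,\o_0) - u(t,x,\o_0) = 0$. For the reverse inequality $m(p) \leq 0$ I would argue that for every $\o$ the null geodesic $\ga_\o$ reaches $\Sigma_s$ at a point $\ga_\o(s-t)$ with $u(\ga_\o(s-t),\o) = u(t,x,\o)$, and then use a comparison/monotonicity statement showing that $p = \ga_{\o_0}(s-t)$ cannot lie strictly to the future of the hypersurface $\{u(\cdot,\cdot,\o) = u(t,x,\o)\}$ inside $\Sigma_s$; concretely, $u(p,\o) - u(t,x,\o)$ has a fixed sign determined by our convention $T(u) = -|\nabla u|$ in \eqref{chap7:it1'} together with the fact that $p$ lies on the future null cone of $(t,x)$. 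This is the content of Lemma \ref{chap7:lemma:key} invoked in Remark \ref{chap7:rem:compstatphase}, and the sign bookkeeping is the delicate point.

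\textbf{The inclusion $\{m = 0\} \subseteq S$.} Suppose $p \in \Sigma_s$ with $m(p) = 0$, so $u(p,\o) \leq u(t,x,\o)$ for all $\o$ with equality for some $\o_0$. At such a maximizing $\o_0$ the first-order condition $\po\big(u(p,\o) - u(t,x,\o)\big)\big|_{\o=\o_0} = 0$ holds (treating $\o_0$ as an interior maximum on $\S$; the smooth dependence on $\o$ comes from Theorem \ref{ch4:thregomega}). Combined with $u(p,\o_0) = u(t,x,\o_0)$, this says that $p$ and $(t,x)$ have the same value for both $u(\cdot,\cdot,\o_0)$ and $\po u(\cdot,\cdot,\o_0)$. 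By the global coordinate system hypothesis of section \ref{chap7:sec:assumptioncoord}, the map $z \mapsto (u(s,z,\o_0), \po u(s,z,\o_0))$ is a global diffeomorphism of $\Sigma_s$; I would use this together with the fact that the null generator $\ga_{\o_0}$ of $\H_{u(t,x,\o_0)}$ through $(t,x)$ carries both $u(\cdot,\cdot,\o_0)$ and $\po u(\cdot,\cdot,\o_0)$ as constants along it (the two identities in \eqref{chap7:identities}) to conclude that $p = \ga_{\o_0}(s-t)$, hence $p \in S$.

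\textbf{Main obstacle.} The hard part is the $S \subseteq \{m=0\}$ direction, specifically establishing that $u(\ga_{\o_0}(s-t),\o) - u(t,x,\o) \le 0$ for \emph{all} $\o$, not just the comfortable equality at $\o = \o_0$; this is a genuine causality/monotonicity statement about how the level sets of $u(\cdot,\cdot,\o)$ sit relative to the future null cone from $(t,x)$, and it relies on the sign convention \eqref{chap7:it1'}, the non-intersection of null geodesics and absence of conjugate points (Theorem \ref{ch4:thregx}, via \eqref{ch4:conclusionfoliation}), and the regularity bounds \eqref{chap7:regb}, \eqref{chap7:regpoN}, \eqref{chap7:ad1}. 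I would expect to need a careful argument comparing $\ga_{\o_0}$ with the congruence of null geodesics foliating $\H_{u(t,x,\o)}$, which is precisely the geometric input that Lemma \ref{chap7:lemma:key} packages; the details should be carried out there and merely quoted here.
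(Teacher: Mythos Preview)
Your second inclusion $\{m=0\}\subseteq S$ is fine: the first-order condition at the maximizer $\o_0$ gives $\po u(p,\o_0)=\po u(t,x,\o_0)$, the identities in \eqref{chap7:identities} transport both $u(\cdot,\o_0)$ and $\po u(\cdot,\o_0)$ along $\ga_{\o_0}$, and the global coordinates of section \ref{chap7:sec:assumptioncoord} then force $p=\ga_{\o_0}(s-t)$.

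The gap is in the first inclusion. You isolate the right inequality, namely $u(\ga_{\o_0}(s-t),\o)\le u(t,x,\o)$ for every $\o$, but then defer it to Lemma \ref{chap7:lemma:key}. That is misplaced on two counts: Lemma \ref{chap7:lemma:key} bounds $|\phi|$ from below and says nothing about the sign of $\phi$; and its very formulation (the $\o_0$ appearing in case (1) is the maximizer of $m$) already presupposes the identification $S=\{m=0\}$ you are trying to establish. The inequality you need is in fact elementary and should be proved here. Since $\ga_{\o_0}'(\sigma)$ is a positive multiple of $L(\ga_{\o_0}(\sigma),\o_0)=T+N(\cdot,\o_0)$, one computes directly from \eqref{chap7:it1'}--\eqref{chap7:it3bis}
\[
\frac{d}{d\sigma}\,u(\ga_{\o_0}(\sigma),\o)\ \propto\ T(u(\cdot,\o))+\gg\bigl(N(\cdot,\o_0),\nabla u(\cdot,\o)\bigr)
= b^{-1}(\cdot,\o)\bigl(\gg(N(\cdot,\o_0),N(\cdot,\o))-1\bigr)\le 0,
\]
the proportionality constant being positive. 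Integrating in $\sigma$ from $0$ to $s-t$ yields $u(p,\o)\le u(t,x,\o)$ for all $\o$, hence $m(p)\le 0$; combined with the trivial $m(p)\ge 0$ at $\o=\o_0$ this finishes $S\subseteq\{m=0\}$. This is exactly the identity \eqref{thekoikoi} of Remark \ref{chap7:rem:generalcasemodif} with $\eta=\ga_{\o_0}$, and it requires none of the apparatus you invoke (no non-intersection of geodesics, no appeal to the Key Lemma).
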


Next, we define the following two  subsets of $\Sigma_s$
\be\lab{chap7:defAintAext}
A_{int}=\{p\in\Sigma_s\,/\, m(p)<0\},\, A_{ext}=\{p\in\Sigma_s\,/\, m(p)>0\}.
\ee
Note in view of Lemma \ref{chap7:lemma:S} that 
\be\lab{chap7:union}
\Sigma_s=S\sqcup  A_{int}\sqcup A_{ext}.
\ee

\begin{remark}
In the flat case, the picture is the following:
\begin{enumerate}
\item The null geodesics\footnote{which are straight lines in this case} $\ga_\o$ span the light cone from $(t,x)$. In particular, the null geodesics $\ga_\o$ do not intersect except at $(t,x)$.

\item $S$ is the intersection\footnote{$S$ is a sphere in this case} of the forward light cone from $(t,x)$ with $\{s\}\times\R^3$.

\item $A_{int}$ and $A_{ext}$ correspond respectively to the interior and the exterior of $S$.
\end{enumerate}
Note that we do not need to prove these statements in our case. This is fortunate since these statements - while probably true in our general setting - would be delicate to establish (see for instance \cite{Kl-R7} for a proof of (1) on a space-time $(\MM,\gg)$ with limited regularity).
\end{remark}

Next, we introduce some further notations. First, we denote by $m_0$ the value of $m$ at $(s,y)$, i.e.
\be\lab{chap7:def:m0}
m_0=\max_{\o\in\S}(u(s,y,\o)-u(t,x,\o)).
\ee
We also denote by $\o_0$ an angle in $\S$ where the maximum in \eqref{chap7:def:m0} is achieved, i.e.
\be\lab{chap7:def:om0}
m_0=u(s,y,\o_0)-u(t,x,\o_0).
\ee

\begin{remark}
In the flat case, $\o_0$ is unique and corresponds to the angle of the projection of $(s,y)$ on $S$. Again, while this may be also true in our general setting, we do not need to prove this statement in our case.
\end{remark}

Note that if $(s,y)\in A_{ext}$, the function $u(s,y,\o)-u(t,x,\o)$ may change sign as $\o$ varies on $\S$.
We define
\be\lab{chap7:eo1}
D=\{\o\in\S\,/\, u(t,x,\o)=u(s,y,\o)\}.
\ee
The following lemma gives a precise description of $D$ (see \cite{bil2} for a proof).
\begin{lemma}\lab{chap7:lemma:D}
Let $(s,y)\in A_{ext}$. Let $D$ defined as in \eqref{chap7:eo1}. Let $(\theta, \varphi)$ denote the spherical coordinates with axis $\o_0$. Then, there exists a $C^1$ $2\pi$-periodic function  
$$\theta_1:[0,2\pi)\rightarrow (0,\pi)$$
such that in the coordinate system $(\theta, \varphi)$, $D$ is parametrized by
$$D=\{\theta=\theta_1(\varphi),\, 0\leq\varphi<2\pi\}.$$
\end{lemma}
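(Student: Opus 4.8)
\textbf{Proof approach for Lemma \ref{chap7:lemma:D}.}

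The plan is to study the function $g(\o) = u(s,y,\o) - u(t,x,\o)$ on $\S$ and show that its zero set $D$ is a $C^1$ curve which, in spherical coordinates centered at $\o_0$, is a graph $\theta = \theta_1(\varphi)$. Since $(s,y) \in A_{ext}$ means $m_0 > 0$, and the maximum of $g$ over $\S$ is achieved at $\o_0$ with value $m_0 > 0$, we already know $g(\o_0) > 0$. The first thing I would do is control the behavior of $g$ near the antipode $-\o_0$: using \eqref{chap7:it1'} (so that $u$ is decreasing along the null direction in the appropriate sense) together with the geometry of the null geodesics from $(t,x)$, one expects $g(-\o_0) < 0$, so $g$ changes sign. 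The quantitative version of this is exactly where the regularity assumptions \eqref{chap7:regb}, \eqref{chap7:regpoN}, \eqref{chap7:ad1} enter, via the lower bounds on $|\phi|$ of the type discussed in Remark \ref{chap7:rem:compstatphase} and in the key Lemma \ref{chap7:lemma:key} referenced there.

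Next I would show monotonicity of $g$ along meridians emanating from $\o_0$. Write $\o = \o(\theta,\varphi)$ in spherical coordinates with axis $\o_0$, and consider $h_\varphi(\theta) = g(\o(\theta,\varphi))$ for fixed $\varphi$. Since $\o_0$ is an interior maximum of $g$ on $\S$, we have $\partial_\theta h_\varphi(0) = 0$; the claim is that $\partial_\theta h_\varphi(\theta) < 0$ for $\theta \in (0,\pi)$, which gives for each $\varphi$ a unique zero $\theta_1(\varphi) \in (0,\pi)$. To establish this strict monotonicity one differentiates $g$ with respect to $\theta$, which produces a term of the form $\po u(s,y,\cdot) - \po u(t,x,\cdot)$ contracted with $\partial_\theta \o$; the lower bound \eqref{chap7:usualstatmeth1} (equivalently the output of Lemma \ref{chap7:lemma:key}) shows that this quantity has a definite sign for $\theta$ bounded away from $0$, and a careful second-order expansion near $\theta = 0$ — using \eqref{chap7:regpoN} to see that the Hessian of $g$ at $\o_0$ is negative definite up to $O(\ep)$ — handles the region $\theta$ small. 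The $2\pi$-periodicity of $\theta_1$ is automatic from periodicity in $\varphi$, and the $C^1$ regularity of $\theta_1$ follows from the implicit function theorem applied to $g(\o(\theta,\varphi)) = 0$, whose hypothesis $\partial_\theta g \neq 0$ on $D$ is precisely the strict monotonicity just proved, combined with the $C^1$ regularity of $u$ in $\o$ that is built into the standing assumptions (and proved in \cite{param1}, see Theorem \ref{ch4:thregomega}).

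The main obstacle I anticipate is the strict monotonicity statement $\partial_\theta h_\varphi < 0$ on all of $(0,\pi)$, and in particular patching the near-$\o_0$ regime (second-order/Morse-type analysis of the maximum) with the far regime (where one relies on the quantitative lower bound for $|\phi|$). One must be careful that the constants degrade only by $O(\ep)$ and that the two regimes overlap; this is the same mechanism, with the same delicacy, as in the proof of Proposition \ref{chap7:prop:estkernel} via Lemma \ref{chap7:lemma:key}, so I would set up the estimates so that Lemma \ref{chap7:lemma:D} and Lemma \ref{chap7:lemma:key} are proved together from a common lower bound on $\phi$ and its $\o$-derivatives. A secondary technical point is ensuring $\theta_1$ stays strictly inside $(0,\pi)$, i.e. that $D$ never touches $\o_0$ or $-\o_0$, which follows from $g(\o_0) = m_0 > 0$ and $g(-\o_0) < 0$ being strict. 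For the details of all these estimates I would refer the reader to \cite{bil2}.
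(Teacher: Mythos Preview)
Your overall structure is sensible: study $g(\o)=u(s,y,\o)-u(t,x,\o)$, show it is strictly decreasing along each meridian through $\o_0$, and then invoke the implicit function theorem to produce $\theta_1(\varphi)$. This is also the route taken in \cite{bil2}, to which the paper defers. However, your justification of the central step --- the strict monotonicity $\partial_\theta h_\varphi<0$ on $(0,\pi)$ --- has a genuine gap.

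You claim that the lower bound \eqref{chap7:usualstatmeth1}, i.e.\ $|\phi|\gtrsim|s-t||\o-\o_0|^2$, gives a definite sign to $\partial_\theta g$ away from $\theta=0$. It does not: a lower bound on $|g|$ says nothing about the sign of $\partial_\theta g$. Moreover, invoking Lemma~\ref{chap7:lemma:key} here is circular: parts (3) and (4) of that lemma are stated in terms of $\theta_1$ and $\o_1\in D$, which only make sense once Lemma~\ref{chap7:lemma:D} is established. Your suggestion to ``prove them together'' does not resolve this, since the very statement of Lemma~\ref{chap7:lemma:key} in the $A_{ext}$ case presupposes the parametrization of $D$.

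The correct argument, in the spirit of Remark~\ref{chap7:rem:generalcasemodif}, is direct and self-contained. In the flat case $g(\o)=-(s-t)+|y-x|\cos\theta$, so $\partial_\theta g=-|y-x|\sin\theta<0$ on $(0,\pi)$ and the zero $\theta_1=\arccos\big((s-t)/|y-x|\big)\in(0,\pi/2)$ is independent of $\varphi$. In the general case one uses the integral identity \eqref{thekoikoi} with a suitably chosen curve $\eta$ joining $(t,x)$ to $(s,y)$ (or the companion identity $L(\po u)=0$ from \eqref{chap7:identities} to transport $\po u$ along null geodesics and compare $\po u(t,x,\o)$ with $\po u(s,z,\o)$ for $z\in S$, then use the global coordinates of section~\ref{chap7:sec:assumptioncoord} on $\Sigma_s$). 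Either way one obtains $\partial_\theta g = -|y-x|\sin\theta\,(1+O(\ep))$, which is strictly negative on $(0,\pi)$ for $\ep$ small, without any appeal to Lemma~\ref{chap7:lemma:key}. The endpoint statements $g(\o_0)=m_0>0$ and $g(-\o_0)<0$ follow from the same computation, and the $C^1$ regularity of $\theta_1$ from the implicit function theorem together with $\partial_{t,x}\po u\in L^\infty$.
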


\begin{remark}
In the flat case, recall that $u(t,x,\o)=-t+x\c\o$. In this case, one easily checks that $D$ is a circle of axis $\o_0$ 
on the sphere $S$ which is generated by the tangents to $S$ through $y$ (see figure \ref{chap7:figbis}).
\end{remark}

Let $\o\in\S$. According to Lemma \ref{chap7:lemma:D}, the great half circle on $\S$ originating at $\o_0$ and containing $\o$ intersects  $D$ at a fixed point $\o_1$. Let $\th$ and $\th_1$ respectively denote  the positive angles between $\o_0$ and $\o$  (resp. $\o_0$ and $\o_1$). \\

In order to obtain a lower bound for $|\phi|$, we will argue differently according to whether $(s,y)$ belongs to the region $S$, $A_{int}$ or $A_{ext}$.
\begin{lemma}[Key lemma]\lab{chap7:lemma:key}
$|\phi|$ satisfies the following lower bounds
\begin{enumerate}
\item If $(s,y)\in S$, we have
\be\lab{chap7:keylowb1}
|\phi(t,x,s,y,\o)|\geq \frac{1}{4}|t-s||\o-\o_0|^2.
\ee

\item If $(s,y)\in A_{int}$, we have
\be\lab{chap7:keylowb2}
|\phi(t,x,s,y,\o)|\geq \frac{1}{8}|t-s||\o-\o_0|^2.
\ee

\item If $(s,y)\in A_{ext}$ and $\theta_1\leq\theta\leq \pi$, we have
\be\lab{chap7:keylowb3}
|\phi(t,x,s,y,\o)|\geq \frac{1}{4}|t-s||\o-\o_1|^2.
\ee

\item If $(s,y)\in A_{ext}$ and $0\leq\theta\leq\theta_1$, we have
\be\lab{chap7:keylowb4}
|\phi(t,x,s,y,\o)| \gtrsim \sqrt{\frac{1-\cos(\theta-\theta_1)}{1-\cos(\theta_1)}}m_0
\ee
\end{enumerate}
\end{lemma}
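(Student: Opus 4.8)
\textbf{Plan of proof for the Key Lemma (Lemma \ref{chap7:lemma:key}).}

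The strategy is to reduce every case to a single elementary estimate for the phase $\phi$ along the null geodesics $\ga_\o$, using only the weak regularity \eqref{chap7:regb}, \eqref{chap7:regpoN}, \eqref{chap7:ad1} and the identities \eqref{chap7:identities}, \eqref{chap7:identities1}. First I would record the fundamental one-dimensional computation: for fixed $(t,x)$ and $\o$, set $F_\o(\sigma)=u(\ga_{\o'}(\sigma),\o)-u(t,x,\o)$ for a second angle $\o'$, and differentiate along $\ga_{\o'}$. Since $L(u(\cdot,\o'))=0$ by \eqref{chap7:identities}, the derivative of $u(\cdot,\o)$ along $b^{-1}L(\cdot,\o')$ is governed by $\gg(L(\cdot,\o),L(\cdot,\o'))=\tfrac12|N(\cdot,\o)-N(\cdot,\o')|^2$, which by \eqref{chap7:ad1} equals $\tfrac12|\o-\o'|^2(1+O(\ep))$. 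Integrating in $\sigma$ from $0$ to $s-t$ and using $b=1+O(\ep)$ gives the model estimate
\be\lab{chap7:plan:model}
u(\ga_{\o'}(s-t),\o)-u(t,x,\o)=\frac{s-t}{2}|\o-\o'|^2(1+O(\ep)),
\ee
which, after choosing $\ep$ small, yields two-sided bounds with constants between $1/4$ and $1$. This single formula is the engine behind all four cases.

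For case (1), $(s,y)\in S$ means $(s,y)=\ga_{\o_0}(s-t)$ by definition \eqref{chap7:def:S}, so $\phi(t,x,s,y,\o)=u(\ga_{\o_0}(s-t),\o)-u(t,x,\o)$ and \eqref{chap7:plan:model} with $\o'=\o_0$ gives \eqref{chap7:keylowb1} directly (note $|t-s|=|s-t|$). For case (2), $(s,y)\in A_{int}$: here $(s,y)$ is not on $S$, but I would write $\phi(t,x,s,y,\o)=(u(s,y,\o)-u(s,\ga_{\o_0}(s-t)\cdot,\o))+(u(\ga_{\o_0}(s-t),\o)-u(t,x,\o))$; the second parenthesis is controlled by \eqref{chap7:plan:model}, and the first is controlled by $m(s,y)<0$ together with the fact that $m_0$ is attained at $\o_0$ — one shows the first term has a sign that reinforces rather than cancels the main term (this is where $A_{int}$, i.e. $m<0$, is used), losing only a factor $2$ and producing the constant $1/8$ in \eqref{chap7:keylowb2}. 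For case (3), $(s,y)\in A_{ext}$ with $\theta\ge\theta_1$: on the half-circle through $\o$ the sign of $u(s,y,\cdot)-u(t,x,\cdot)$ is negative beyond $\o_1\in D$ (by Lemma \ref{chap7:lemma:D} and the definition \eqref{chap7:eo1} of $D$), so one repeats the argument of case (2) but now comparing to the null geodesic through $\o_1$ rather than $\o_0$, giving \eqref{chap7:keylowb3} with constant $1/4$.

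The main obstacle is case (4): $(s,y)\in A_{ext}$ with $0\le\theta\le\theta_1$, where $u(s,y,\cdot)-u(t,x,\cdot)$ is positive and one cannot simply exploit a sign. Here I would expand $\phi$ along the great circle from $\o_0$: using that $\o_0$ realizes the maximum $m_0$ (so $\po(u(s,y,\cdot)-u(t,x,\cdot))$ vanishes to first order at $\o_0$) and that $\o_1$ is a zero, a Taylor-type argument with the weak bound $\partial_{t,x}\po u\in L^\infty$ from \eqref{chap7:regpoN} gives that along this arc $\phi$ behaves like $m_0$ times a function comparing $\cos(\theta-\theta_1)$ with $\cos\theta_1$; quantitatively $\phi\gtrsim m_0\,(1-\cos(\theta-\theta_1))/(1-\cos\theta_1)$ up to $O(\ep)$ corrections, whose square root is \eqref{chap7:keylowb4}. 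The delicate point is that one only has one $\o$-derivative of $u$ in $L^\infty$, so the interpolation between the maximum at $\o_0$ and the zero at $\o_1$ must be carried out carefully, using convexity/monotonicity of $\sigma\mapsto u(\ga_\cdot(\sigma),\cdot)$ rather than a second-order Taylor expansion; I expect this to be the only place requiring genuine work, with the other three cases following quickly from \eqref{chap7:plan:model}. Finally, once Lemma \ref{chap7:lemma:key} is in hand, one substitutes the four lower bounds into \eqref{chap7:ibpkernel}, splits $\S$ into the regions $\{\theta\le\theta_1\}$ and $\{\theta\ge\theta_1\}$ (or uses $\o_0$ throughout in cases (1)–(2)), and performs the resulting $\o$-integral in polar coordinates centered at $\o_0$ or $\o_1$; the quadratic lower bound $|\phi|\gtrsim|t-s||\o-\o_\star|^2$ makes $\int_\S (1+2^{2j}\phi^2)^{-1}\,d\o\lesssim 1/|t-s|$ after the rescaling $2^j\MM$, which is exactly \eqref{chap7:estkernel}, and in case (4) the extra factor coming from \eqref{chap7:keylowb4} is integrable against $d\o=\sin\theta\,d\theta\,d\varphi$ near $\theta=\theta_1$, completing Proposition \ref{chap7:prop:estkernel}.
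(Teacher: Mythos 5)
Your ``model estimate'' along null geodesics — differentiating $u(\ga_{\o'}(\sigma),\o)$ in $\sigma$ via $\gg(L'_{\o'},L'_\o)$, controlling the integrand by \eqref{chap7:ad1}, and integrating to $\sigma=s-t$ — is precisely the mechanism the paper uses (it is the specialization of \eqref{thekoikoi} to $\eta=\ga_{\o'}$, made explicit in Remark \ref{chap7:rem:generalcasemodif}); note only that $\gg(L(\cdot,\o),L(\cdot,\o'))=-\tfrac12|N-N'|^2$, not $+\tfrac12|N-N'|^2$. Cases (1)--(3) then go through essentially as you describe: (1) is exactly the model estimate with $\o'=\o_0$; (2) is the same decomposition $\phi=(u(s,y,\o)-u(s,z_0,\o))+(u(s,z_0,\o)-u(t,x,\o))$, though you should make the case split on $|x-y|$ versus $s-t$ explicit (the ``reinforce'' heuristic fails for $\o$ far from $\o_0$, where $m_0\,\o_0\cdot\o$ changes sign; what saves you is that the quadratic term then dominates); (3) is again the model estimate expanded around $\o_1$, using that $v_0\cdot(\o-\o_1)\le0$ when $\theta\ge\theta_1$, so ``repeating (2)'' is a fair summary even if the mechanism (a sign condition coming from $\o_1\in D$, not from $m<0$) is slightly different.

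Case (4) is where your sketch has a genuine gap. A Taylor-type argument anchored at the critical point $\o_0$ would require a two-sided bound on the second $\o$-derivative of $\phi$ to convert ``$\po\phi(\o_0)=0$, $\phi(\o_1)=0$'' into a lower bound on $[\,0,\theta_1]$, and as you correctly observe this is exactly what the regularity $\partial_{t,x}\po u\in L^\infty$ does \emph{not} provide; and the ``convexity/monotonicity of $\sigma\mapsto u(\ga_\cdot(\sigma),\cdot)$'' you propose as a substitute is not available either, since it would again control a second derivative (of $\gg(L'_{\o'},L'_\o)$ along the geodesic). The paper sidesteps Taylor expansion entirely. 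It introduces the auxiliary point $z=\ga_{\o_1}(s-t)\in S$ and the vector $v_0=y-z$, which satisfies the crucial orthogonality $\gg(v_0,\o_1)=0$ \emph{precisely because} $\o_1\in D$, so that expanding around $\o_1$ (not $\o_0$) one gets the exact decomposition \eqref{eq:dif-om}, $\phi=v_0\cdot(\o-\o_1)-\tfrac12(s-t)|\o-\o_1|^2$, in which the quadratic term has a definite sign. The lower bound then follows from two elementary planar angle relations, $\theta_1=2\alpha_1$ and $\alpha=\tfrac{\theta_1-\theta}{2}$ (derived from the isoceles structure of the triangle $xzy$ with $|z-x|=s-t$), which let one rewrite $v_0\cdot(\o-\o_1)$ as $m_0$ times the ratio of chord lengths $|\o-\o_1|/|\o_0-\o_1|$ — exactly the square-root factor in \eqref{chap7:keylowb4} — plus a manifestly nonnegative correction $\tfrac12(s-t)|\o-\o_1|A(\o)$ with $A(\o)\ge0$. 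None of this is a regularity argument; it is a rigid geometric identity in the flat model, stable under the $O(\ep)$ perturbations allowed by \eqref{chap7:regb}, \eqref{chap7:regpoN}, \eqref{chap7:ad1}. Without this structure your interpolation between $\o_0$ and $\o_1$ cannot close, and the estimate you write before ``whose square root'' (which drops the square root) is actually strictly weaker than \eqref{chap7:keylowb4} near $\theta=\theta_1$ and would not suffice for the kernel bound.
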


The proof of Lemma \ref{chap7:lemma:key} is postponed to section \ref{chap7:sec:lowerb}. 

\begin{figure}[t]
\vspace{-4cm}
\begin{center}
\hspace{-4.0005cm}\includegraphics[width=20cm, height=13cm]{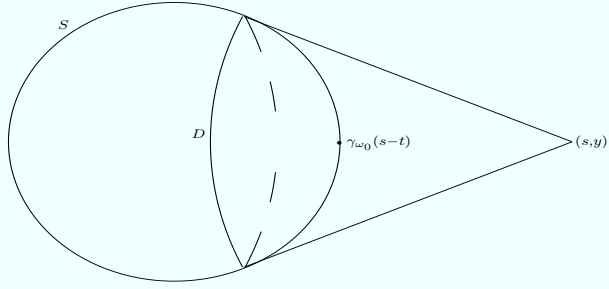}
\vspace{-6.5cm}
\caption{Representation of $D$ in the flat case}\lab{chap7:figbis}
\end{center}
\end{figure}

\begin{remark}\lab{chap7:reminiscentoverlap}
The proof of Lemma \ref{chap7:lemma:key} is inspired by the overlap estimates for wave packets derived in \cite{Sm} and \cite{Sm-Ta} in the context of Strichartz estimates respectively for $C^{1,1}$ and $H^{2+\ep}$ metrics. Note however that the estimates in these papers rely heavily on a direct comparison of various quantities with the corresponding ones in the flat case. Such direct comparisons do not hold in our framework. Here, the closeness to the flat case manifests itself in the small constant $\ep$ in the right-hand side of \eqref{chap7:regb}, \eqref{chap7:regpoN} and \eqref{chap7:ad1}, and in the existence of the global coordinates systems of section \ref{chap7:sec:assumptioncoord}.
\end{remark}

\subsection{Proof of Proposition \ref{chap7:prop:estkernel}}

Recall that we need to show that the kernel $K$ defined in \eqref{chap7:defkernel} satisfies the upper bound \eqref{chap7:estkernel}. To this end, we will use the estimate \eqref{chap7:ibpkernel} for $K$ together with the estimates provided by Lemma \ref{chap7:lemma:key}. We argue differently according according to whether $(s,y)$ belongs to $S$, $A_{int}$ or $A_{ext}$. 

If $(s,y)$ belongs to $S$, we have the lower bound \eqref{chap7:keylowb1} for $|\phi|$
$$|\phi(t,x,s,y,\o)|\geq \frac{1}{4}|t-s||\o-\o_0|^2,$$
where $\o_0\in\S$ is an angle satisfying \eqref{chap7:def:om0}. Then, we deduce
$$2^j\left|\phi\left(\frac{t}{2^j},\frac{x}{2^j},\frac{s}{2^j},\frac{y}{2^j},\o\right)\right|\geq \frac{1}{4}|t-s||\o-\o_0|^2.$$
Together with \eqref{chap7:ibpkernel}, this yields
$$|K(t,x,s,y)|\les \int_{\S}\frac{d\o}{1+|t-s|^2|\o-\o_0|^4}.$$
Using the spherical coordinates $(\theta, \varphi)$ with axis $\o_0$, we obtain
$$|K(t,x,s,y)|\les \int_0^{\pi}\frac{\sin(\theta)d\theta}{1+|t-s|^2(1-\cos(\theta))^2}.$$
Performing the change of variables
$$z=|t-s|(1-\cos(\theta))$$
we obtain
$$|K(t,x,s,y)|\les \frac{1}{|t-s|}\int_0^{+\infty}\frac{dz}{1+z^2}.$$
This implies
\be\lab{chap7:ok}
|K(t,x,s,y)|\les \frac{1}{|t-s|},\,\,\,\forall (t,x)\in 2^j\MM,\,\,\, \forall \left(\frac{s}{2^j},\frac{y}{2^j}\right)\in S 
\ee
which is the desired estimate.

The estimates corresponding to the cases where  $(s,y)$ belongs to $A_{int}$ or $A_{ext}$ are similar (see \cite{bil2} for the details). This concludes the proof of Proposition \ref{chap7:prop:estkernel}.

\section{Lower bound for $|\phi|$}\lab{chap7:sec:lowerb}

The goal of this section is to prove Lemma \ref{chap7:lemma:key}. The main ingredients of the proof are already present in the flat case. Thus, to simplify the exposition, we will prove Lemma \ref{chap7:lemma:key} for the phase function $u=-t+x\c\o$ of the flat case. We will then explain what are the modifications in the general case (see Remark \ref{chap7:rem:generalcasemodif}). We refer to \cite{bil2} for the proof in the general case. 

\subsection{A lower bound for $|\phi|$ when $(s,y)\in S$ (proof of \eqref{chap7:keylowb1})}\lab{chap7:sec:lowerbinS}

 In view of the definition of $m_0$ in \eqref{chap7:def:m0} and $\om_0$ in \eqref{chap7:def:om0}, we have in the flat case
\begin{equation}\lab{chap7:flatcasevalues}
u(t,x,\o)=-t+x\c\o,\, \o_0=\frac{y-x}{|y-x|},\, m_0=-(s-t)+|y-x|.
\end{equation}
If $(s,y)\in S$, we have $|y-x|=s-t$. Together with \eqref{chap7:flatcasevalues}, this yields
\bea
\nn u(s,y,\o)-u(t,x,\o) &=&-s+t+  (y-x)\c\om\\
\nn& =& -(s-t)+(s-t) \om_0\c\om\\
\lab{estfundch7:1}&=&- \frac 1 2 (s-t)|\om-\om_0|^2
\eea
which is the desired estimate \eqref{chap7:keylowb1}.

\subsection{A lower bound for $|\phi|$ when $(s,y)\in A_{int}$ (proof of \eqref{chap7:keylowb2})}\lab{chap7:sec:cccc}

\eqref{chap7:flatcasevalues} yields
 \bea
\nn u(s,y,\o)-u(t,x,\o) &=&-(s-t)+(y-x)\c\om\\
\nn& =& -(s-t)+|y-x|\om\c\om_0\\
 &=&-(s-t)+|y-x| -   \frac 1 2 |x-y||\om-\om_0|^2.\lab{estfundch7:2}
 \eea
 Now,
 if          $|x-y|\leq\frac 1 4  (s-t)$ we have,
 \beaa
 u(s,y,\o)-u(t,x,\o) &\leq &- \frac 3 4 (s-t)+       \frac 1 4 (s-t)   \frac 1 2    |\om-\om_0|^2  \leq- \frac 1 2 (s-t).
 \eeaa
 On the other hand,  if $|x-y|\ge\frac   1 4  (s-t)$
 \beaa
  u(s,y,\o)-u(t,x,\o) \leq- \frac 1 2 |x-y|        |\om-\om_0|^2\leq - \frac 1 4( s-t)|\om-\om_0|^2.
 \eeaa
 Thus,
 in both cases,
  \beaa
  u(s,y,\o)-u(t,x,\o) \leq- \frac 1 2 |x-y|        |\om-\om_0|^2\leq - \frac 1 4( s-t)|\om-\om_0|^2
 \eeaa
which is the desired estimate \eqref{chap7:keylowb2}.

\subsection{A lower bound for $|\phi|$ when $(s,y)\in A_{ext}$ (proof of \eqref{chap7:keylowb3} \eqref{chap7:keylowb4})}

\eqref{chap7:flatcasevalues} yields 
\bea
\nn u(s,y,\o)-u(t,x,\o) &=&-(s-t)+(y-x)\c\om\\
& =& -(s-t)  +|x-y| \om\c\om_0\nn\\
 &=&-\frac 1 2 (s-t)|\om-\om_0|^2+ m_0 \om\c\om_0. \label{eq:form0}
 \eea
Recall  that we have defined the set $D$ by
\beaa
D=\{\o\in\S\,/\, u(t,x,\o)=u(s,y,\o)\}.
\eeaa
Also, for fixed $\om_0, \om$  we  defined $\om_1\in D$   to lie   on the same plane great circle of $ \SSS^2$ as $\om_0, \om$. Clearly,    since $\om_1\in D$, and in view of \eqref{chap7:flatcasevalues}, \eqref{eq:form0} and the definition of $D$, we have
\bea
\om_1\c\om_0=\frac{(s-t)}   {|x-y|} \label{angle-0m10m0}
\eea

Fix now $\om_1\in D$ and let $z=\gamma_{\o_1}(s-t)$, i.e.  
\bea
z=x+(s-t)\om_1\in S.\label{def:z}
\eea
Note that   in view of the definition of 
$D$,  
$$(y-z)\c\om_1=-(s-t)+(y-x)\c\om_1=u(s,y,\om_1)-u(t,x,\om_1)=0.$$
Hence, with the notation 
$$v_0=y-z,$$
we obtain
\bea
v_0\c\om_1=0.
\eea
Now, we have
\bea
\nn u(s,y,\o)-u(t,x,\o) &=&u(s,y,\o)-u(s, z,\o) +u(s, z,\o) -u(t,x,\o)\\
\nn &=&v_0\c\om+u(s, z,\o) -u(t,x,\o)\\
\lab{estfundch7:3} &=&v_0\c(\om-\om_1)+u(s, z,\o) -u(t,x,\o).
\eea
Note that  since  $z\in S$ we can apply the estimate   obtained  in section \ref{chap7:sec:lowerbinS}. Since the maximum in $m(s,z)$ is attained at $\o=\o_1$, we have 
\bea\lab{estfundch7:4}
u(s, z,\o) -u(t,x,\o)=- \frac 1 2 (s-t)|\om-\om_1|^2
\eea
and we infer that,
\bea
 u(s,y,\o)-u(t,x,\o) &=&- \frac 1 2 (s-t)|\om-\om_1|^2+v_0\c(\om-\om_1).\label{eq:dif-om}
 \eea
 
Recall  that we   have also   denoted   by $\th$  and $ \th_1$  the        positive  angles between $\om_0$, $\om$  and respectively  
 $\om_0$, $\om_1$. If $\th_1\leq\th\leq \pi$ - which corresponds to $v_0\c(\om-\om_1)\leq 0$ - we have in view of \eqref{eq:dif-om} 
 \bea
  u(s,y,\o)-u(t,x,\o) &\leq &- \frac 1 2 (s-t)|\om-\om_1|^2
 \eea
 which is the desired estimate \eqref{chap7:keylowb3}. 
 
 The delicate  case is when $0\leq\th<\th_1$ which corresponds to
 \bea
 v_0\c(\om-\om_1)>0.\label{angle1}
 \eea
 In the rest of the proof, we assume \eqref{angle1}, and we focus on the remaining estimate \eqref{chap7:keylowb4}. In view of the definition of $\om_0$ and \eqref{eq:dif-om}, we have
 \beaa
 -(s-t)+|x-y|&=&u(s,y,\o_0)-u(t,x,\o_0) \\&=&- \frac 1 2 (s-t)|\om_0-\om_1|^2+v_0\c(\om_0-\om_1).
 \eeaa
 Thus,
 \bea
 m_0= -(s-t)+|x-y|&=&- \frac 1 2 (s-t)|\om_0-\om_1|^2+v_0\c(\om_0-\om_1).
 \eea
 Since $m_0>0$ we deduce,
 \bea
 v_0\c(\om_0-\om_1)> 0. \label{angle2}
 \eea
 Let $\alpha$, $\alpha_1$  be the positive angles  between  $v_0$ and $ (\om-\om_1)$ and, respectively
 $v_0$ and $ (\om_0-\om_1)$. In view of   \eqref{angle1} and \eqref{angle2} we infer that
 \bea
 0<\alpha, \alpha_1<\pi/2.\label{eq:ass-angles}
 \eea
Also, in view  of  \eqref{angle-0m10m0}  we  have,
 \beaa
 0<\th_1<\pi/2.
 \eeaa
 Simple considerations on angles imply\footnote{Let $\varphi_1$ the angle defined on figure \ref{chap7:fig6}. Then $2\varphi_1+\th_1=\pi$, and $\varphi_1+\alpha_1=\frac{\pi}{2}$. Hence $\th_1=2\alpha_1$} (see figure \ref{chap7:fig6}),
 \bea
 \th_1=2\a_1.\label{eq:angle1}
 \eea
  \begin{figure}[t]
\vspace{-4cm}
\begin{center}
\hspace{-4.0005cm}\includegraphics[width=20cm, height=14.5cm]{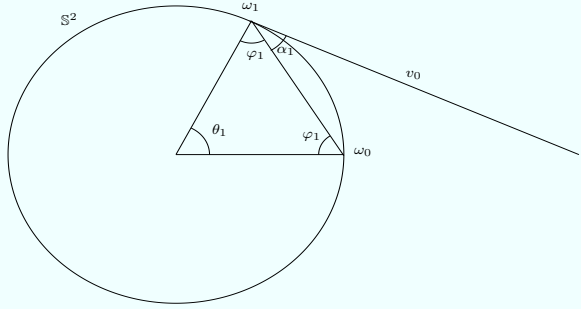}
\vspace{-8cm}
\caption{Definition of the angles $\th_1$ and $\alpha_1$}\lab{chap7:fig6}
\end{center}
\end{figure}
Therefore,
 \beaa
 m_0 &=&- \frac 1 2 (s-t)|\om_0-\om_1|^2+      |z-y|       |\om_0-\om_1|\cos\left(\frac{\th_1}{2}\right)
 \eeaa
 and
 \bea
 |v_0|=\frac{m_0+\frac 1 2 (s-t)|\om_0-\om_1|^2}{|\om_0-\om_1|\cos\left(\frac{\th_1}{2}\right)}.\label{eq:|z-y|}
 \eea
 Using the same type of argument\footnote{Let $\varphi$ the angle defined on figure \ref{chap7:fig7}. Then $2\varphi+|\th_1-\th|=\pi$, and $\varphi+\alpha=\frac{\pi}{2}$. Hence $|\th_1-\th|=2\alpha$} as  in \eqref{eq:angle1} we also deduce (see figure \ref{chap7:fig7})
 \bea
 \label{eq:angle2}
 \a=\frac{\th_1-\th}{2}.
 \eea 
 \begin{figure}[t]
\vspace{-3.75cm}
\begin{center}
\hspace{-4.0005cm}\includegraphics[width=20cm, height=15cm]{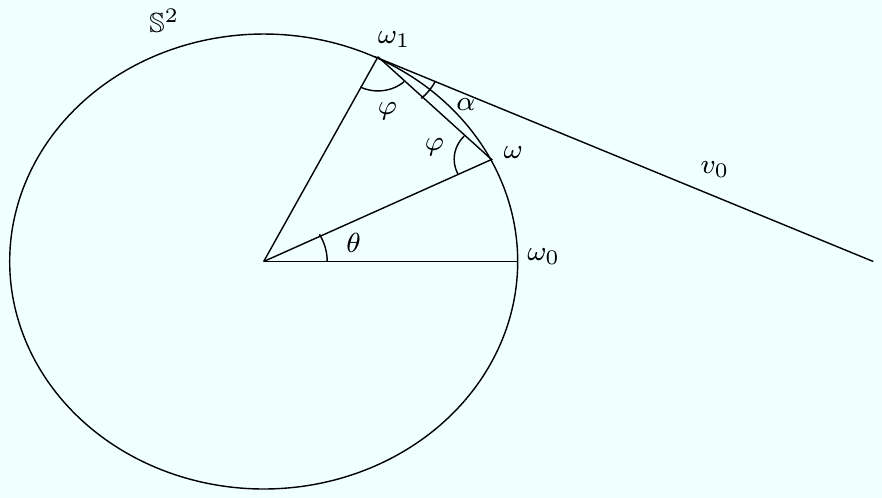}
\vspace{-8.3cm}
\caption{Definition of the angles $\th$ and $\alpha$}\lab{chap7:fig7}
\end{center}
\end{figure}
Therefore,  according to  \eqref{eq:dif-om}, \eqref{eq:|z-y|} and \eqref{eq:angle2}, we obtain
 \bea
  u(s,y,\o)-u(t,x,\o) &=&- \frac 1 2 (s-t)|\om-\om_1|^2+|v_0| \, |\om-\om_1| \cos\left(\frac{\th_1-\th}{2}\right)\nn\\
  &=&  \frac{|\o-\o_1|}{|\o_0-\o_1|}\frac{\cos\left(\frac{\theta_1-\theta}{2}\right)}{\cos\left(\frac{\theta_1}{2}\right)}m_0+\frac{1}{2}(s-t)|\o-\o_1| A(\o), \label{oo13}
\eea
where $A(\o)$ is given by
\be\lab{oo14}
A(\o)=-|\o-\o_1|+\frac{\cos\left(\frac{\theta_1-\theta}{2}\right)}{\cos\left(\frac{\theta_1}{2}\right)}|\o_0-\o_1|.
\ee

We have the following lemma (see \cite{bil2} for a proof).
\begin{lemma}
For all $0\leq\th\leq\th_1$, we have
\bea
A(\om)\geq 0.    \label{eq:angle3}
\eea
\end{lemma}
  
Back to  \eqref{oo13}, we thus   derive,
 \beaa
   u(s,y,\o)-u(t,x,\o) &\ge& \frac{|\o-\o_1|}{|\o_0-\o_1|}\frac{\cos\left(\frac{\theta_1-\theta}{2}\right)}{\cos\left(\frac{\theta_1}{2}\right)}m_0.
   \eeaa
Using our angle restriction
$$0\leq \theta\leq \theta_1  <\frac{\pi}{2},$$
we deduce
\be\lab{presque}
u(s,y,\o)-u(t,x,\o)\geq \frac{\sqrt{2}}{2}\frac{|\o-\o_1|}{|\o_0-\o_1|}m_0.
\ee   
Since $\theta$ is the angle between $\o$ and $\o_0$, and $\theta_1$ is the angle between $\o_1$ and $\o_0$, we have
\be\lab{oo14bis}
|\o-\o_1|=\sqrt{2}\sqrt{1-\cos(\theta_1-\theta)},\, |\o_1-\o_0|=\sqrt{2}\sqrt{1-\cos(\theta_1)}.
\ee
In view of \eqref{oo14bis}, we  can rewrite \eqref{presque}
in the form,
\be\lab{oo15}
\phi(t,x,s,y,\o) \gtrsim  m_0 \sqrt{\frac{1-\cos(\theta-\theta_1)}{1-\cos(\theta_1)}}
\ee
which is the desired estimate \eqref{chap7:keylowb4}. This concludes the proof of Lemma \ref{chap7:lemma:key} in the flat case.

\begin{remark}\lab{chap7:rem:generalcasemodif}
Let us indicate how to prove Lemma \ref{chap7:lemma:key} in the general case. The whole point is to realize that the only estimates for which the precise regularity of $u$ matters are the ones corresponding to \eqref{estfundch7:1}, \eqref{estfundch7:2}, \eqref{eq:form0}, \eqref{estfundch7:3} and \eqref{estfundch7:4}. Indeed, once this has been achieved, the rest of the argument is then essentially the one of the flat case. 

Now, to prove the estimates corresponding to \eqref{estfundch7:1}, \eqref{estfundch7:2}, \eqref{eq:form0}, \eqref{estfundch7:3} and \eqref{estfundch7:4} in the general case, one needs the following two additional ingredients (see \cite{bil2} for the details):
\begin{enumerate}
\item These estimates are obtained by using the following standard identity
\begin{equation}\lab{thekoikoi}
u(\eta(1),\o)=u(\eta(0),\o)+\int_0^1 \gg(L_{\eta(\sigma)},\eta'(\sigma))d\sigma,
\end{equation}
where $\eta$ is a curve in $\MM$, and where $L$ denotes the space-time gradient of $u$. It turns out that one may chose suitable curves\footnote{In the flat case, the corresponding curves $\eta$ are straight lines.} $\eta$ allowing us to deduce from \eqref{thekoikoi} the estimates corresponding to \eqref{estfundch7:1}, \eqref{estfundch7:2}, \eqref{eq:form0}, \eqref{estfundch7:3} and \eqref{estfundch7:4} under our assumptions  \eqref{chap7:regb}, \eqref{chap7:regpoN} and \eqref{chap7:ad1}. This changes the constants in the inequalities due to the presence of additional $O(\ep)$ terms, but does not change the nature of the estimates for $\ep>0$ small enough.

\item The above mentioned curves $\eta$ start on  $S$, and a crucial point is to check that such curves end up exactly at $(s,y)$. To this end, one uses the global coordinate system $(u(t,x,\o_0), \po u(t,x,\o_0))$ of section \ref{chap7:sec:assumptioncoord} on $\Sigma_s$ for a well-chosen angle $\o_0\in\S$, which allows us to identify $(s,y)$ as the unique point $p$ on $\Sigma_s$ such that 
$$u(p,\o_0)=u(s,y,\o_0)\textrm{ and }\po u(p,\o_0)=\po u(s,y,\o_0).$$ 
\end{enumerate}
\end{remark}

\end{document}